\theoremstyle{remark}
\theoremstyle{plain}
\newcounter{theoremintro} 
\newtheorem*{definition*}{Definition} 
\newtheorem*{theorem*}{Theorem} 
\newtheorem*{lemma*}{Lemma}
\newtheorem*{corollary*}{Corollary} 
\newtheorem*{proposition*}{Proposition} 
\newtheorem{theorem}[subsection]{Theorem} 
\newtheorem{lemma}[subsection]{Lemma}
\newtheorem{corollary}[subsection]{Corollary}
\newtheorem{proposition}[subsection]{Proposition}
\newtheorem{definition}[subsection]{Definition}
\newtheorem{example}[subsection]{Example}
\newtheorem{remark}[subsection]{Remark}
\numberwithin{equation}{section}
\newcommand{\as}{\mathfrak{A}}
\newcommand{\R}{\mathbb{R}}
\newcommand{\bC}{\mathbb{C}} 
\newcommand{\bD}{\mathbb{D}} 
\newcommand{\N}{\mathbb{N}}
\newcommand{\cf}{\mathcal{F}}
\newcommand{\E}{\mathcal{E}}
\newcommand{\bL}{\mathbb{L}}
\newcommand{\Compacts}{\mathfrak{K}}
\newcommand{\Linears}{\mathfrak{L}}
\newcommand{\limt}{\lim_{t\to \infty}} 
\newcommand{\limn}{\lim_{n\to \infty}} 
\newcommand{\cA}{\mathcal{A}}
\newcommand{\cC}{\mathcal{C}}
\newcommand{\cD}{\mathcal{D}}
\newcommand{\cE}{\mathcal{E}}
\newcommand{\cF}{\mathcal{F}}
\newcommand{\cP}{\mathcal{P}}
\newcommand{\rmax}{\mathrm{max}}
\newcommand{\supp}{\mathrm{supp}}
\newcommand{\prop}{\mathrm{prop}}
\newcommand{\EG}{\underline{E}G}
\newcommand{\s}[1]{\langle #1 \rangle}
\newcommand{\hill}{\mathcal{H}}
\newcommand{\proper}{\mathrm{proper}}
\newcommand{\Gcont}{\mathrm{Gcont}}
\newtheorem*{theoremA*}{Theorem A}
\newtheorem*{theoremB*}{Theorem B}
\newtheorem*{theoremC*}{Theorem C}
\newtheorem*{theoremD*}{Theorem D}
\newtheorem*{theoremE*}{Theorem E}
\newtheorem*{theoremF*}{Theorem F}
\newtheorem*{theoremG*}{Theorem G}
\newtheorem*{theoremH*}{Theorem H}
\newtheorem*{theoremI*}{Theorem I}
\newtheorem*{theoremJ*}{Theorem J}
\newtheorem*{corollaryA*}{Corollary A}
\newtheorem*{corollaryB*}{Corollary B}
\newtheorem*{corollaryC*}{Corollary C}
\newtheorem*{corollaryD*}{Corollary D}
\newtheorem*{corollaryE*}{Corollary E}
\newtheorem*{corollaryF*}{Corollary F}
\newtheorem*{corollaryG*}{Corollary G}
\newtheorem*{corollaryH*}{Corollary H}
\begin{document} 
\title{crossed product approach to equivariant localization algebras} 

\author{Shintaro Nishikawa}

\address{S.N.: Mathematisches Institut, Fachbereich Mathematik und Informatik der Universit\"at M\"unster, Einsteinstrasse 62, 48149 M\"unster, Germany.} 
\email{snishika@uni-muenster.de}

\thanks{This research was supported by the Deutsche Forschungsgemeinschaft (DFG, German Research Foundation) under Germany's Excellence Strategy EXC 2044-390685587, Mathematics M\"unster: Dynamics-Geometry-Structure.}


\subjclass[2020]{Primary 19K35, 46L80, Secondary 19K33}

\keywords{equivariant K-homology, localization algebra, the Baum--Connes conjecture, the gamma element}

\date{\today}

\maketitle

\begin{abstract} The goal of this article is to provide a bridge between the gamma element method for the Baum--Connes conjecture (the Dirac dual-Dirac method) and the controlled algebraic approach of Roe and Yu (localization algebras). For any second countable, locally compact group $G$, we study the reduced crossed product algebras of the representable localization algebras for proper $G$-spaces. We show that the naturally defined forget-control map is equivalent to the Baum--Connes assembly map for any locally compact group $G$ and for any coefficient $G$-$C^*$-algebra $B$. We describe the gamma element method for the Baum--Connes conjecture from this controlled algebraic perspective. As an application, we extend the recent new proof of the Baum--Connes conjecture with coefficients for CAT(0)-cubical groups to the non-cocompact setting. 
 \end{abstract}
 
\tableofcontents

\section{Introduction} 
Let $G$ be a second countable, locally compact group. The Baum--Connes conjecture with coefficients (BCC) states that the Baum--Connes assembly map (see \cite{BCH93}, \cite{Valette02}, \cite{GJV19}) 
\begin{equation*}
\mu_r^{B, G}\colon K_\ast^{\mathrm{top}}(G; B)  =\varinjlim_{Y\subset \EG, \mathrm{Gcpt}}KK^G(C_0(Y), B) \to K_\ast(B\rtimes_rG)
\end{equation*} 
is an isomorphism for any separable $G$-$C^*$-algebra $B$. Here, $B\rtimes_rG$ is the reduced crossed product of $B$ and $\EG$ is the universal proper $G$-space \cite{BCH93} which exists uniquely up to $G$-equivariant homotopy. The classical Baum--Connes conjecture (BC) \cite{BaumConnes2000} corresponds to the case when $B=\bC$.

For a countable discrete group $G$, there is a controlled algebraic reformulation of the Baum--Connes assembly map, due to Yu, in terms of the forget-control map \cite{Yu10}. For simplicity, we let $B=\bC$ and assume that a proper $G$-space $X$ is equipped with a proper $G$-invariant metric $d$ compatible with its topology. The equivariant Roe algebra $C^*(H_X)^G$ \cite{Roe96} is defined as the norm-completion of the $\ast$-algebra $\bC(H_X)^G$ consisting of $G$-equivariant, locally compact operators with finite propagation on an $X$-$G$-module $H_X$, a $G$-Hilbert space equipped with a non-degenerate representation of the $G$-$C^*$-algebra $C_0(X)$. Here, an operator $S$ on $H_X$ is locally compact if $\phi S, S\phi$ is compact for any $\phi$ in $C_0(X)$ and has finite propagation if there is $r\geq0$ such that $\phi S\psi=0$ whenever the distance between the support of $\phi, \psi$ in $C_0(X)$ is greater than $r$. The infimum of such $r$ for $S$ is called the propagation of $S$. An important fact is an isomorphism (see \cite{Roe96})
\[
C^*(H_X)^G \cong \Compacts(l^2(\N))\otimes C^*_r(G)
\]
when $X$ is $G$-compact and when $H_X$ is big enough (if $H_X$ is ample as an $X$-$G$-module). The equivariant localization algebra (or the localized equivariant Roe algebra) $C_L^*(H_X)^G$ is defined as the norm-completion of the $\ast$-algebra consisting of uniformly norm-continuous functions $t\mapsto S_t \in \bC(H_X)^G$ on $[1, \infty)$ such that the propagation $\prop(S_t)\to 0$ as $t\to \infty$. The evaluation at $t=1$ induces a $\ast$-homomorphism (the forget-control map)
\[
\mathrm{ev}_1\colon C_L^*(H_X)^G \to C^*(H_X)^G 
\]
whose induced map on K-theory groups 
\[
\mathrm{ev}_{1\ast} \colon K_\ast(C_L^*(H_X)^G) \to K_\ast(C^*(H_X)^G)
\]
is known to be equivalent to the Baum--Connes assembly map
\[
\mu^G_X\colon KK_\ast^G(C_0(X), \bC) \to K_\ast(C^*_r(G))
\]
if $X$ is $G$-compact (see \cite{Shan08},\cite{Yu10}, \cite{FuWang}, \cite{FuWangYu}). Taking the inductive limit of $\mathrm{ev}_{1\ast}$ over the $G$-compact $G$-invariant closed subspaces $Y$ of $\EG$, we get 
\[
\mathrm{ev}_{1\ast} \colon \lim_{Y\subset \EG, \mathrm{Gcpt}}K_\ast(C_L^*(H_Y)^G) \to K_\ast(C^*_r(G))
\]
which is equivalent to the Baum--Connes assembly map $\mu_r^G$. This is the controlled algebraic reformulation of the Baum--Connes conjecture due to Yu.

For a (second countable) locally compact space $X$ and an $X$-module $H_X$, a representable localization algebra $RL^*_c(H_X)$ (see below for definition) is introduced in \cite[Section 9.4]{WY2020} (see \cite{Yu1997} for a localization algebra). It is shown that for a suitable choice of an $X$-module $H_X$ for each locally compact space $X$ and for a suitable choice of covering isometries $(V^f_t\colon H_X\to H_Y)_{t\in [1,\infty)}$ for each continuous map $f\colon X\to Y$, the assignment 
\[
X\mapsto \bD_\ast(X)=K_\ast(RL^*_c(H_X)), \,\,\,f\mapsto \bD_\ast(f)=\mathrm{Ad}_{V^f_t\ast}
\]
is a functor from the category $\mathcal{LC}$ of locally compact spaces to the category $\mathcal{GA}$ of abelian groups. It is shown in \cite{WY2020} that this functor is naturally the representable K-homology $RK_\ast$ on locally compact spaces. 

\noindent \textbf{Main Results:}
Let $G$ be a second countable, locally compact group, $X$ be a (second countable) locally compact, proper $G$-space and $B$ be a separable $G$-$C^*$-algebra. For any $X$-$G$-module $H_X$, we define the $G$-$C^*$-algebra (the representable localization algebra) $RL^*_c(H_X\otimes B)$ as the completion of the $\ast$-algebra of bounded, $G$-continuous, norm-continuous $\Compacts(H_X)\otimes B$-valued functions $T$ on $[1, \infty)$ such that 
 \begin{enumerate}
\item $\lim_{t\to \infty}\lVert[\phi, T_t]\rVert= \lim_{t\to \infty}\lVert\phi T_t - T_t \phi\rVert = 0$ for any $\phi \in C_0(X)$, and
 \item $T$ has uniform compact support in a sense that for some compact subset $K$ of $X$, $T_t=\chi_KT_t\chi_K$ for all $t\geq1$ ($\chi_K$ is the characteristic function of $K$).
 \end{enumerate}
 Here, $\Compacts(H_X)$ is the $G$-$C^*$-algebra of compact operators on $H_X$. 
We have a natural evaluation map (the forget-control map)
 \[
 \mathrm{ev}_{1}\colon RL^*_c(H_X\otimes B) \to \Compacts(H_X)\otimes B
 \]
 at $t=1$ and it induces a $\ast$-homomorphism 
 \[
  \mathrm{ev}_{1}\colon RL^*_c(H_X\otimes B)\rtimes_rG \to (\Compacts(H_X)\otimes B)\rtimes_rG.
  \]

 For a suitable choice of an $X$-$G$-module $H_X$ (a universal $X$-$G$-module, see Definition \ref{def_universalXG}) for each proper $G$-space $X$ and for a suitable choice of $G$-equivariant covering isometries $(V^f_t\colon H_X\to H_Y)_{t\in [1,\infty)}$ for each $G$-equivariant continuous map $f\colon X\to Y$, we obtain a well-defined functor (see Definition \ref{def_DBG})
\[
X\mapsto \bD_\ast^{B, G}(X)=K_\ast(RL^*_c(H_X\otimes B)\rtimes_rG),  \,\,\,f \mapsto \bD^{B, G}_\ast(f) = \mathrm{Ad}_{V^f_t\ast}
\]
from the category $\mathcal{PR}^G$ of (second countable, locally compact) proper $G$-spaces to the category $\mathcal{GA}$. The functor $\bD_\ast^{B, G}\colon \mathcal{PR}^G \to \mathcal{GA}$ satisfies several expected properties for the representable $G$-equivariant K-homology (see Theorem \ref{thm_coeff}) and one may extend $\bD_\ast^{B, G}$ to all (not necessarily locally compact) proper $G$-spaces by declaring $\bD_\ast^{B, G}(X)=\varinjlim_{Y\subset X, \mathrm{Gcpt}}\bD_\ast^{B, G}(Y)$.

 The forget-control map $\mathrm{ev}_{1}$ induces a group homomorphism (the forget-control map)
 \[
\cF \colon  \bD_\ast^{B, G}(X)  \to K_\ast(B\rtimes_rG).
 \]
 
The following is one of our main results.
\begin{theoremA*}(Theorem \ref{thm_main_isom}, Theorem \ref{thm_main_equivalent}) The forget-control map $\cF\colon  \bD_\ast^{B, G}(X)  \to K_\ast(B\rtimes_rG)$ is naturally equivalent to the Baum--Connes assembly map
\[
\mu_X^{B, G}\colon \varinjlim_{Y\subset X, \mathrm{Gcpt}}KK_\ast^G(C_0(Y), B) \to KK_\ast(\bC, B\rtimes_rG)
\]
for any second countable, locally compact group $G$, for any proper $G$-space $X$ and for any separable $G$-$C^*$-algebra $B$. That is, there is a natural isomorphism 
\[
\rho_X\colon  \bD_\ast^{B, G}(X) \to   \varinjlim_{Y\subset X, \mathrm{Gcpt}}KK_\ast^G(C_0(Y), B) 
\]
of the functors from $\mathcal{PR}^G$ to $\mathcal{GA}$ and the following diagram commutes
\begin{equation*}
\xymatrix{ \bD_\ast^{B, G}(X)   \ar[dr]^{\rho_X}_-{\cong}  \ar[rr]^{\cf} & &  K_\ast(B\rtimes_rG)  \\
   &  \varinjlim_{Y\subset X, \mathrm{Gcpt}}KK_\ast^G(C_0(Y), B). \ar[ur]^{\mu^{B, G}_X}   & 
   }
   \end{equation*}
\end{theoremA*}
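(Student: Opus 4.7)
My plan is to build the natural isomorphism $\rho_X$ first on $G$-compact subsets and then pass to the inductive limit; commutativity of the triangle will follow by unpacking both maps in terms of Kasparov descent. For a $G$-compact $Y \subset X$ with universal $Y$-$G$-module $H_Y$, any class in $KK^G_\ast(C_0(Y), B)$ is represented, after stabilization against $H_Y$, by a triple $(H_Y \otimes B, \pi, F)$ with $F \in \mathcal{L}(H_Y \otimes B)^G$. A standard partition-of-unity refinement along the parameter $t \in [1, \infty)$, smoothed by a Bruhat cut-off to ensure $G$-continuity, produces a norm-continuous family $F_t$ with $\prop(F_t) \to 0$, $\lVert [\pi(\phi), F_t] \rVert \to 0$ for $\phi \in C_0(Y)$, and with $(F_t^2 - 1)$ and $[\pi(\phi), F_t]$ controlled in $\Compacts(H_Y) \otimes B$. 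The index construction applied to $(F_t)$ inside the unitization of $RL^*_c(H_Y \otimes B)$ then yields a class in $K_\ast(RL^*_c(H_Y \otimes B) \rtimes_r G)$, which I take as $\rho_Y^{-1}([\mathcal{E}, \pi, F])$.

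\textbf{Isomorphism.} To show that $\rho_Y^{-1}$ descends to a well-defined bijection I would verify independence of all choices (partition, stabilization, Kasparov representative) and vanishing on degenerate cycles by straight-line homotopies that remain inside $RL^*_c(H_Y \otimes B) \rtimes_r G$. The inverse $\rho_Y$ is built by viewing a localized family $(T_t)$ as the data of a $G$-equivariant asymptotic morphism $C_0(Y) \dashrightarrow \Compacts(H_Y) \otimes B$, and invoking Morita invariance together with the identification of $E$-theory with $KK$-theory for the nuclear domain $C_0(Y)$. Mutual inversion of $\rho_Y$ and $\rho_Y^{-1}$ then reduces to a rescaling homotopy $F \leadsto F_t$ that lives inside the asymptotic category.

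\textbf{Commutativity and inductive limit.} For commutativity, the forget-control map $\cF$ evaluates $(F_t)$ at $t=1$, landing in $K_\ast((\Compacts(H_Y) \otimes B) \rtimes_r G) \cong K_\ast(B \rtimes_r G)$ via the Morita equivalence provided by the universal $Y$-$G$-module. Unwinding $\mu_Y^{B, G}$ as the Kasparov descent $j_G$ paired with the canonical class $[\lambda_Y] \in K_0(C_0(Y) \rtimes_r G)$ coming from $H_Y$, this index matches $\mu_Y^{B, G}([\mathcal{E}, \pi, F])$ on the nose. Naturality of $\rho_Y$ under $G$-equivariant inclusions $Y \hookrightarrow Y'$ (built into $\bD^{B, G}_\ast$ through the covering isometries $V^f_t$) then yields both $\rho_X$ and the commuting triangle for arbitrary $X$ after passing to the inductive limit over $G$-compact $Y \subset X$.

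\textbf{Main obstacle.} The hardest part is the isomorphism step: making the correspondence $F \leftrightarrow (F_t)$ canonical for locally compact, non-discrete $G$. One must carefully handle $G$-continuity, the reduced-crossed-product norm, and the uniform compact-support condition on families, all while translating Kasparov-operator homotopies into honest paths in $RL^*_c \rtimes_r G$. The crossed-product formulation (rather than taking fixed points) is precisely what allows these technicalities to be treated uniformly, including the non-discrete case, so I expect the argument to consist largely of checking that every classical step in the Willett--Yu theory survives the descent to $B \rtimes_r G$.
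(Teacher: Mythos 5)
There is a genuine gap, and it sits exactly where you flag the ``main obstacle.'' Your plan for the isomorphism is to build an explicit inverse $\rho_Y^{-1}$ by localizing a Kasparov cycle $(H_Y\otimes B,\pi,F)$ into a family $(F_t)$ with $\prop(F_t)\to 0$, and to verify well-definedness ``by straight-line homotopies that remain inside $RL^*_c(H_Y\otimes B)\rtimes_rG$'' and mutual inversion ``by a rescaling homotopy.'' Neither claim is justified, and both are the actual content of the theorem. Two different localizations $F_t, F_t'$ of the same $F$ only satisfy $a(F_t-F)\in C_b([1,\infty),\Compacts(H_Y\otimes B))$ for $a\in C_0(Y)$; the convex combination $sF_t+(1-s)F_t'$ then has $1-(sF_t+(1-s)F_t')^2$ involving cross terms $F_tF_t'$ whose membership in the localization algebra is not automatic, so the straight-line homotopy does not obviously stay where you need it. More seriously, ``mutual inversion reduces to a rescaling homotopy'' does not address surjectivity of the localization procedure: you must show every class of $K_\ast(RL^*_c(H_Y\otimes B)\rtimes_rG)$ arises from a localized cycle, and that is precisely what the paper spends three sections proving by a completely different mechanism --- comparison with the localized equivariant Roe algebra via the right-regular representation, a $G$-equivariant version of the Qiao--Roe/DWW dual-algebra argument ($K_\ast(\cD_L^{\,G}/\cC_L^{\,G})\xrightarrow{\partial}K_{\ast+1}(\cC_L^{\,G})$), and Thomsen-style equivariant Paschke duality $K_{\ast+1}(D^*(\pi)^G/C^*(\pi)^G)\cong KK^G_\ast(C_0(X),B)$ built on an absorbing representation. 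No inverse map is ever constructed cycle-by-cycle.

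A second, more elementary gap: for non-discrete $G$ an element of $RL^*_c(H_X\otimes B)\rtimes_rG$ is not a localized family of operators on $H_X\otimes B$, so your description of $\rho_Y$ as ``viewing a localized family $(T_t)$ as an asymptotic morphism $C_0(Y)\dashrightarrow\Compacts(H_Y)\otimes B$'' does not define a map on the crossed product. The paper's device is the right-regular representation $\rho$ into $\Linears(\tilde H_X\otimes B)$ with $\tilde H_X=H_X\otimes L^2(G)$, which lands in the localized equivariant Roe algebra with $G$-compact support and asymptotically commutes with $C_0(X)$; only then does the asymptotic morphism $\pi_X\otimes\rho$ on $C_0(X)\otimes(RL^*_c(H_X\otimes B)\rtimes_rG)$ exist. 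Your commutativity argument (descend, compress by the cut-off projection $p_c$, observe the result is homotopic to $\mathrm{ev}_1$) is essentially the paper's Lemma on $j^G_r([\pi_X\otimes\rho])\circ[p_c]$ and is fine once $\rho_X$ is correctly defined, as is the reduction to $G$-compact pieces; but as written the proposal does not contain a proof of the isomorphism.
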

 
 Let $RL^0_c(H_X\otimes B)$ be the kernel of the evaluation map $\mathrm{ev}_1$ on $RL^*_c(H_X\otimes B)$. The short exact sequence
 \[
 0 \to RL^0_c(H_X\otimes B) \to RL^*_c(H_X\otimes B) \to \Compacts(H_X)\otimes B \to 0
 \] 
 admits a $G$-equivariant c.c.p.\ splitting and thus it descends to the short exact sequence
 \[
  0 \to RL^0_c(H_X\otimes B)\rtimes_rG \to RL^*_c(H_X\otimes B)\rtimes_rG \to (\Compacts(H_X)\otimes B)\rtimes_rG \to 0.
  \]
  Hence, the following is a consequence of Theorem A.
 
 \begin{corollaryB*}(Corollary \ref{cor_N}) Let $G$ be a second countable, locally compact group and $B$ be a separable $G$-$C^*$-algebra. The Baum--Connes assembly map $\mu^{B, G}_r$ is an isomorphism if and only if
 \[
 K_\ast(RL^0_c(H_X\otimes B)\rtimes_rG)=0
 \]
for a universal $X$-$G$-module $H_X$ for $X=\EG$.
 \end{corollaryB*}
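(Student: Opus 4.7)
The plan is to apply the K-theory six-term exact sequence to the short exact sequence displayed immediately before the corollary, and to combine this with the identification of the forget-control map with the Baum--Connes assembly map furnished by Theorem A.

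Set $X=\EG$ and fix a universal $X$-$G$-module $H_X$. Applying $K_\ast$ to the short exact sequence
\begin{equation*}
0 \to RL^0_c(H_X\otimes B)\rtimes_rG \to RL^*_c(H_X\otimes B)\rtimes_rG \xrightarrow{\mathrm{ev}_1} (\Compacts(H_X)\otimes B)\rtimes_rG \to 0
\end{equation*}
produces a cyclic six-term exact sequence. The middle group is by definition $\bD_\ast^{B,G}(\EG)$, and after identifying $K_\ast((\Compacts(H_X)\otimes B)\rtimes_rG)$ with $K_\ast(B\rtimes_rG)$ (as implicit in the paper's definition of $\cF$), the induced horizontal map becomes exactly the forget-control map $\cF$.

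Next, I would apply Theorem A with $X=\EG$: there is a natural isomorphism $\rho_{\EG}$ such that $\cF = \mu^{B,G}_r \circ \rho_{\EG}$, and hence $\cF$ is an isomorphism in both $\Z/2$-degrees if and only if $\mu^{B,G}_r$ is. By exactness of the six-term sequence, $\cF$ is an isomorphism in both degrees precisely when the two flanking groups $K_0(RL^0_c(H_X\otimes B)\rtimes_rG)$ and $K_1(RL^0_c(H_X\otimes B)\rtimes_rG)$ both vanish. Chaining these two equivalences yields the corollary in both directions.

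There is essentially no obstacle here once Theorem A is granted: the exactness of the crossed-product short sequence is already supplied by the $G$-equivariant c.c.p.\ splitting stated in the excerpt, and the remaining identifications are built into the setup of $\cF$. The only mildly delicate point worth checking is that the comparison of $\cF$ with $\mu^{B,G}_r$ supplied by Theorem A is indeed an isomorphism on the source (i.e.\ that $\rho_{\EG}$ is an isomorphism), but this is precisely the content of Theorem A applied at $X=\EG$.
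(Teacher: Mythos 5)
Your proposal is correct and matches the paper's own (largely implicit) argument: the corollary is deduced exactly by applying the six-term K-theory sequence to the split short exact sequence of crossed products and identifying the boundary map's neighbour, namely $\mathrm{ev}_{1\ast}=\cF$, with $\mu^{B,G}_r$ via the isomorphism $\rho_{\EG}$ from Theorem A. No further comment is needed.
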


 We remark that for any ample $X$-module $H_X$, the $X$-$G$-module $H_X\otimes L^2(G)$ with structure given by the left (or right) regular representation of $C_0(X)$, is an example of a universal $X$-$G$-module (Proposition \ref{prop_universal}).

 One of the motivations of this article is to describe the so-called gamma element method (see below) for the Baum--Connes conjecture using the representable localization algebra $RL^*_c(H_X)$.
 
Recall that a $G$-$C^*$-algebra $A$ is called a proper $G$-$C^*$-algebra if for some proper $G$-space $X$, there is a non-degenerate, central representation of $C_0(X)$ to the multiplier algebra $M(A)$ of $A$. Kasparov's equivariant $KK$-theory $KK^G$ \cite{Kasparov88} is an additive category with separable $G$-$C^*$-algebras as objects and with Kasparov's group $KK^G(B_1, B_2)$ as the morphism group between two $G$-$C^*$-algebras $B_1, B_2$. The composition law is given by the Kasparov product. In particular, the group $KK^G(\bC, \bC)$ has a commutative ring structure and it is often denoted as $R(G)$ and called Kasparov's representation ring. The unit of  the ring $R(G)$ is denote by $1_G$. We say that an element $x$ in $R(G)$ factors through a proper $G$-$C^*$-algebra if there is a proper $G$-$C^*$-algebra $A$ and elements $y\in KK^G(\bC, A)$, $z\in KK^G(A, \bC)$ such that $x=z\circ y$ in $R(G)=KK^G(\bC, \bC)$. There is a natural restriction functor $KK^{G}(A, B) \to KK^{H}(A, B)$ for any closed subgroup $H$ of $G$, and in particular we have a ring homomorphism $R(G) \to R(H)$.

The following is a formulation of the gamma element method by Tu, formalizing the work of Kasparov \cite{Kasparov88}.

\begin{theorem*}(The gamma element method \cite{Tu00}) Suppose there is an element $x \in R(G)$ such that
\begin{enumerate}
\item[(a)] $x$ factors through a proper $G$-$C^*$-algebra, and  
\item[(b)]  $x=1_K$ in $R(K)$ for any compact subgroup $K$ of $G$.
\end{enumerate}
Then, $x$ is the unique idempotent in $R(G)$ characterized by these properties and called the gamma element $\gamma$ for $G$. The existence of $\gamma$ implies 
\begin{enumerate}
\item the Baum--Connes assembly map $\mu_r^{B, G}$ is split injective for any separable $G$-$C^*$-algebra $B$, and 
\item the image of $\mu_r^{B, G}$ coincides with the image of the action of $\gamma$ on the K-theory group $K_\ast(B\rtimes_rG)$ defined by the canonically defined ring homomorphism
\[
R(G) \to \mathrm{End} (K_\ast(B\rtimes_rG) ).
\]
\end{enumerate}
In particular, if $\gamma=1_G$ in $R(G)$, BCC holds for $G$. 
\end{theorem*}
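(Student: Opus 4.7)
The plan is the classical Dirac/dual-Dirac argument: exploit the factorization of $\gamma$ through a proper $G$-$C^*$-algebra $A$ to produce a retraction of $\mu_r^{B,G}$, and use condition (b) to identify the resulting self-map of topological $K$-theory as the identity.

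First, I would handle uniqueness (and idempotence) of $\gamma$. Given $x,x'$ both satisfying (a) and (b), I would compute $x\cdot x'$ by expanding $x=z\circ y$ with $y\in KK^G(\bC,A)$, $z\in KK^G(A,\bC)$, $A$ proper. Commutativity of $R(G)$ rewrites $x\cdot x'=z\circ(x'\cdot y)$, where $x'\cdot y\in KK^G(\bC,A)$ denotes the Kasparov product. The essential lemma is that for any proper $G$-$C^*$-algebra $A$, any element of $R(G)$ restricting to $1_K$ on every compact subgroup $K$ acts as the identity on $KK^G(\bC,A)$. Granting this, $x\cdot x'=z\circ y=x$, and by symmetry $=x'$; setting $x=x'=\gamma$ also yields $\gamma^2=\gamma$. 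The lemma itself is the heart of the matter: via the $C_0(X)$-structure of $A$ and a partition-of-unity/Mayer--Vietoris reduction on the underlying proper $G$-space $X$, one reduces to algebras of the form $C_0(G\times_K Y)\otimes A_0$ for compact stabilizers $K$, and then via Green's imprimitivity to the trivial statement $x'|_K=1_K$.

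For split injectivity and the image computation I would arrange the commutative diagram
\begin{equation*}
\xymatrix{
K_\ast^{\mathrm{top}}(G;B) \ar[r]^{\mu^B} \ar[d]_{y_\ast} & K_\ast(B\rtimes_rG) \ar[d]^{y_\ast} \\
K_\ast^{\mathrm{top}}(G;A\otimes B) \ar[r]^-{\mu^{A\otimes B}}_-{\cong} \ar[d]_{z_\ast} & K_\ast((A\otimes B)\rtimes_rG) \ar[d]^{z_\ast} \\
K_\ast^{\mathrm{top}}(G;B) \ar[r]^{\mu^B} & K_\ast(B\rtimes_rG)
}
\end{equation*}
whose vertical compositions realize the $\gamma$-actions on each side. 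The middle horizontal map is an isomorphism by the Kasparov/Green--Julg theorem for proper coefficients (alternatively, from the point of view of Theorem A, the $\gamma$-factorization gives a quick proof that the forget-control map for proper $A\otimes B$ is an iso). By the uniqueness lemma above, the $\gamma$-action is the identity on $K_\ast^{\mathrm{top}}(G;B)=\varinjlim_{Y\subset\EG,\mathrm{Gcpt}}KK^G(C_0(Y),B)$, since every $C_0(Y)$ is proper. Define $\sigma:=z_\ast\circ(\mu^{A\otimes B})^{-1}\circ y_\ast$. Then the bottom square gives $\mathrm{im}(\sigma)\subseteq\mathrm{im}(\mu^B)$, and chasing the diagram yields $\sigma\circ\mu^B=\mu^B$ (the left vertical composition is the identity), so $\sigma$ is a splitting, and $\mathrm{im}(\mu^B)=\mathrm{im}(\sigma)=\gamma\cdot K_\ast(B\rtimes_rG)$. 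The last assertion is then immediate: if $\gamma=1_G$ the action is the identity, so $\mu^B$ is surjective and therefore an isomorphism.

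The principal obstacle I anticipate is the uniqueness lemma for proper coefficients; the classical argument is somewhat delicate, requiring a slice-by-slice analysis using the $C_0(X)$-structure. Within the framework of this article one might hope to streamline it by working directly on $RL^\ast_c(H_X\otimes A)\rtimes_rG$ for $X=\EG$, where $G$-compactness and proper stabilizers are encoded in the control data and the factorization of $\gamma$ can be realized through operators of shrinking propagation.
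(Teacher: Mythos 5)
This theorem is recalled from \cite{Tu00} and used as a black box; the paper contains no proof of it, so there is no in-paper argument to compare against line by line. Your proposal is the standard Dirac--dual-Dirac proof and its overall structure is correct: the factorization $x=z\circ y$ through a proper algebra $A$, the three-row diagram with $\mu^{A\otimes B}$ invertible in the middle (this is Chabert--Echterhoff--Meyer for general locally compact $G$; Green--Julg proper is only the compact-group case), and the identification of the left vertical composition with the identity assemble exactly as you say. Two small points: as literally defined, your $\sigma$ lands in $K_\ast^{\mathrm{top}}(G;B)$, so the assertions $\mathrm{im}(\sigma)\subseteq\mathrm{im}(\mu^B)$ and $\sigma\circ\mu^B=\mu^B$ only typecheck for the endomorphism $\mu^B\circ\sigma$ of $K_\ast(B\rtimes_rG)$ (harmless, but fix the bookkeeping). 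The substantive incompleteness is the one you flag yourself: the lemma that $\sigma_A(x')=1_A$ in $KK^G(A,A)$ (hence that $x'$ acts as the identity on $KK^G(\bC,A)$ and on each $KK^G(C_0(Y),B)$) for every proper $A$ whenever $x'|_K=1_K$. For a general second countable locally compact $G$ this needs Abels' slice theorem to produce neighborhoods of the form $G\times_K S$, a Mayer--Vietoris/five-lemma induction over the orbit space, and a continuity argument for non-$G$-compact bases; it is the actual content of \cite{Tu00} and cannot be reduced to a remark, though your sketch of it is the right strategy.

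It is worth contrasting your route with what the paper does for its own analogue of conclusions (1) and (2), namely Theorem \ref{thm_XGfactor} and Theorem \ref{thm_XGgamma} combined with Theorem \ref{thm_XGgamma0}. There the splitting $\nu^{B,T}$ is constructed by Kasparov product with an $X$-$G$-localized cycle directly into $\bD^{B,G}_\ast(X)=K_\ast(RL^\ast_c(H_X\otimes B)\rtimes_rG)$, the identity $\cf\circ\nu^{B,T}=x^{B\rtimes_rG}_\ast$ is verified at the level of cycles, and $\nu^{B,T}\circ\mu_r^{B,G}=\mathrm{id}$ is deduced from $x=1_K$ via the naturality trick of \cite[Proposition 5.3]{Nishikawa19}, i.e. by reducing along $G$-equivariant $\ast$-homomorphisms to classes induced from compact subgroups. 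That route avoids both the isomorphism $\mu^{A\otimes B}$ for proper coefficients and the full strength of $\sigma_A(\gamma)=1_A$, at the cost of the analytic identification of the forget-control map with assembly (Theorem \ref{thm_forget_factor}). The uniqueness and idempotency of $\gamma$ are not reproved anywhere in the paper, so for that part your argument (which is the classical one) stands on its own.
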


Based on Theorem A, we seek a natural assumption under which we have a splitting (natural with respect to $B$)
\[
K_\ast(B\rtimes_rG) \to   \bD_\ast^{B, G}(X)  
\]
of the forget-control map $\cf$. By Theorem $A$, this automatically gives us a natural splitting of the Baum--Connes assembly map $\mu_r^{B, G}$. Our next result provides us such an assumption which is analogous to the one for the gamma element method, but it is purely in term of the representable localization algebra $RL^*_c(H_X)$. Since this assumption is satisfied when the gamma element exists, we would like to think this as a controlled algebraic aspect of the gamma element method.

For this, we first recall the definition of cycles of $R(G)=KK^G(\bC, \bC)$. A cycle for $R(G)$ consists of pairs of the form $(H, T)$ where $H$ is a (separable) graded $G$-Hilbert space and $T$ is an odd, self-adjoint, bounded, $G$-continuous operator on $H$ satisfying
\[
1-T^2, \,\,\, g(T)-T \in \Compacts(H)  \,\,\,\,\, (g\in G).
\]

A graded $X$-$G$-module is just the direct sum of $X$-$G$-modules $H_X^{0}$ (even space) and $H_X^{(1)}$ (odd space). 
\begin{definition*}[Definition \ref{def_XGlocalized}] An $X$-$G$-localized Kasparov cycle for $KK^G(\bC, \bC)$ is a pair $(H_X, T)$ of a graded $X$-$G$-module $H_X$ and an odd, self-adjoint, $G$-continuous element $T$ in the multiplier algebra $M(RL_c^*(H_X))$ satisfying for any $g\in G$,
\[
1-T^2, \,\,\, g(T)-T \in RL^*_c(H_X).
\]
\end{definition*}

If one prefers, an $X$-$G$-localized Kasparov cycle for $KK^G(\bC, \bC)$ is a cycle for $KK^G(\bC,  RL^*_c(H_X))$ but we do not take it as our definition.

The evaluation at $t=1$,
\[
\mathrm{ev}_{1}\colon RL^*_c(H_X) \to \Compacts(H_X)
\]
extends to
\[
\mathrm{ev}_{1}\colon M(RL^*_c(H_X)) \to \Linears(H_X).
\]
For any $T \in M(RL^*_c(H_X))$, we write $T_1 \in \Linears(H_X)$, its image by $\mathrm{ev}_{1}$.
 
 For any $X$-$G$-localized Kasparov cycle $(H_X, T)$, the pair $(H_X, T_1)$ is a cycle for $KK^G(\bC, \bC)$.

\begin{definition*}(Definition \ref{def_XGlocalized_element}) Let $X$ be a proper $G$-space. We say that an element $x$ in $R(G)=KK^G(\bC, \bC)$ is $X$-$G$-localized if there is an $X$-$G$-localized Kasparov cycle $(H_X, T)$ for $KK^G(\bC, \bC)$ such that 
\[
[H_X, T_1] = x  \,\,\text{in}\,\,\, KK^G(\bC, \bC).
\]
\end{definition*}

That is, an element $x$ in $R(G)$ is $X$-$G$-localized if it can be represented by a cycle $(H_X, T_1)$ with a graded $X$-$G$-module $H_X$, which extends to a continuous family $(H_X, T_t)_{t\in [1,\infty)}$ of cycles of $R(G)$ so that the family $T=(T_t)_{t\in [1,\infty)}$ of operators on $H_X$ multiplies the representable localization algebra $RL^*_c(H_X)$ and satisfies the support condition
\[
1-T^2, \,\,\, g(T)-T \in RL_c^*(H_X)  \,\,\,\,\, (g\in G).
\]
We remark that $T\in M(RL^*_c(H_X))$ holds if $\lim_{t\to \infty}\lVert[T_t, \phi]\rVert = 0$. 

The following is our controlled algebraic reformulation of the gamma element method. For $x\in R(G)$, let us denote by $x^{B\rtimes_rG}_\ast$, its image under the canonically defined ring homomorphism
\[
\xymatrix{
KK^G(\bC, \bC) \ar[r]^{\sigma_B} & KK^G(B, B) \ar[r]^-{j^G_r} & KK(B\rtimes_rG, B\rtimes_rG)  \ar[r]^-{} & \mathrm{End}(K_\ast(B\rtimes_rG) ).
}
\]

\begin{theoremC*}(Theorem \ref{thm_XGfactor}, Theorem \ref{thm_XGgamma}) Let $X$ be a proper $G$-space.
\begin{enumerate} 
\item Suppose that an element $x\in R(G)$ is $X$-$G$-localized, that is $x=[H_X, T_1]$ for an $X$-$G$-localized Kasparov cycle $(H_X, T)$ for $KK^G(\bC, \bC)$. Then, there is a homomorphism
\[
\nu^{B, T}\colon K_\ast(B\rtimes_rG) \to \bD^{B, G}_\ast(X)
\]
for any separable $G$-$C^*$-algebra $B$ which is natural with respect to a $G$-equivariant $\ast$-homomorphism $B_1\to B_2$, such that
\[
x^{B\rtimes_rG}_\ast =  \cf \circ \nu^{B, T} \colon K_\ast(B\rtimes_rG)  \to \bD^{B, G}_\ast(X)  \to K_\ast(B\rtimes_rG).
\]
In particular, $x^{B\rtimes_rG}_\ast$ factors through the Baum--Connes assembly map $\mu_r^{B ,G}$.
\item  If there is an $X$-$G$-localized element $x\in R(G)$ such that $x=1_K$ in $R(K)$ for any compact subgroup $K$, the Baum--Connes assembly map $\mu_r^{B, G}$ is split-injective for any $B$ and in this case the image of $\mu_r^{B, G}$ coincides with the image of  $x^{B\rtimes_rG}_\ast$. In particular, if $x=1_G$, BCC holds for $G$.
\end{enumerate}
\end{theoremC*}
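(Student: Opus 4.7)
My strategy is first to construct $\nu^{B,T}$ from the $X$-$G$-localized cycle $(H_X,T)$ via Kasparov descent applied to a cycle built by tensoring with $B$, and then to deduce part (2) from part (1) by combining the identification in Theorem A with the standard consequence of $x=1_K$ in $R(K)$ on proper $G$-$C^*$-algebras (Tu's theorem).

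For part (1), the first step is to promote the cycle $(H_X,T)$ to a Kasparov cycle for $KK^G(B,\, RL^*_c(H_X\otimes B))$ by tensoring with $B$: the graded $X$-$G$-module $H_X$ together with the scalar action of $B$ yields a graded $G$-equivariant Hilbert $RL^*_c(H_X\otimes B)$-module, and $T$ acts diagonally as the Kasparov operator $T\otimes 1$. The defining conditions $1-T^2,\, g(T)-T\in RL^*_c(H_X)$ translate directly into the Kasparov-cycle conditions with the enlarged coefficient algebra. This produces a class $[y_T]\in KK^G(B,\, RL^*_c(H_X\otimes B))$. Kasparov descent then yields $\alpha_T=j^G_r[y_T]\in KK(B\rtimes_rG,\, RL^*_c(H_X\otimes B)\rtimes_rG)$, and I define $\nu^{B,T}$ as Kasparov product against $\alpha_T$. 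Naturality in $B$ follows from the functoriality of tensoring and of descent. The factorization $\cf\circ \nu^{B,T}=x^{B\rtimes_rG}_\ast$ is then verified by applying $(\mathrm{ev}_1)_\ast$ at the level of cycles: this sends $[y_T]$ to the class of $\sigma_B(x)\in KK^G(B,\Compacts(H_X)\otimes B)\cong KK^G(B,B)$ under the Morita equivalence, whose descent by definition induces $x^{B\rtimes_rG}_\ast$ on $K_\ast(B\rtimes_rG)$.

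For part (2), the additional hypothesis $x=1_K$ in $R(K)$ for every compact subgroup $K$ together with Tu's theorem \cite{Tu00} implies that $x$ acts as the identity on $KK^G_\ast(A,B)$ for any proper $G$-$C^*$-algebra $A$; specializing to $A=C_0(Y)$ for $G$-compact proper $Y$ and passing to the inductive limit, the action $x^{\mathrm{top}}_\ast$ of $x$ on $K^{\mathrm{top}}_\ast(G;B)$ is the identity. I define the candidate section $s=\rho_X\circ \nu^{B,T}\colon K_\ast(B\rtimes_rG)\to K^{\mathrm{top}}_\ast(G;B)$. Using Theorem A in the form $\mu_r^{B,G}=\cf\circ \rho_X^{-1}$ together with $R(G)$-equivariance of $\rho_X$ and of the assembly map, one checks that $s\circ \mu_r^{B,G}=x^{\mathrm{top}}_\ast=\mathrm{id}$, giving the desired splitting. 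The equality of images $\mathrm{image}(\mu_r^{B,G})=\mathrm{image}(x^{B\rtimes_rG}_\ast)$ is then formal: $x^{B\rtimes_rG}_\ast=\mu_r^{B,G}\circ s$ gives one inclusion, while $R(G)$-equivariance combined with $x^{\mathrm{top}}_\ast=\mathrm{id}$ gives $\mu_r^{B,G}=x^{B\rtimes_rG}_\ast\circ \mu_r^{B,G}$ and hence the reverse inclusion. Finally, if $x=1_G$, then $x^{B\rtimes_rG}_\ast=\mathrm{id}$ forces $\mu_r^{B,G}$ to be surjective as well, hence an isomorphism.

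The main obstacle I foresee is the identity $s\circ \mu_r^{B,G}=x^{\mathrm{top}}_\ast$. This is a compatibility statement between the Kasparov-product definition of $\nu^{B,T}$, the Mishchenko--Fomenko description of $\mu_r^{B,G}$, and the cycle-level construction of $\rho_X$ from Theorem A in terms of covering isometries on a universal $X$-$G$-module. I expect it to reduce to a formal KK-theoretic calculation showing that both sides arise as Kasparov product against a common class derived from $\sigma_B(x)$, but carefully bookkeeping the universal $X$-$G$-module data entering $\rho_X$ is where the real technical work lives.
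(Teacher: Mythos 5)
Your overall architecture matches the paper's: $\nu^{B,T}$ is obtained by descending the localized cycle and mapping into $RL^*_c(H_X\otimes B)\rtimes_rG$, the identity $\cf\circ\nu^{B,T}=x^{B\rtimes_rG}_\ast$ is checked by evaluating at $t=1$ (where $\mathrm{ev}_1$ represents a class whose Kasparov product with the cycle recovers $x=[H_X,T_1]$), and part (2) is deduced from the section $s=\rho_X\circ\nu^{B,T}$. One minor point first: $RL^*_c(H_X\otimes B)$ is neither separable nor $\sigma$-unital, so the Kasparov product and descent cannot be applied to your class $[y_T]\in KK^G(B,\,RL^*_c(H_X\otimes B))$ off the shelf. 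The paper circumvents this by choosing a separable $G$-$C^*$-subalgebra $A_X\subset RL^*_c(H_X)$ containing the ideal $RL^*_0(H_X)$ with $T\in M(A_X)$ and $1-T^2,\ g(T)-T\in A_X$, running the entire construction through the honest cycle $(A_X,T)$ for $KK^G(\bC,A_X)$, and only then composing with $(A_X\otimes B)\rtimes_rG\to RL^*_c(H_X\otimes B)\rtimes_rG$. You should do the same or justify the non-separable formalism you invoke.

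The substantive gap is the identity $s\circ\mu_r^{B,G}=\mathrm{id}$ in part (2), which you correctly flag as the crux but do not prove, and for which your proposed route is both unproven and harder than necessary. You aim to establish $s\circ\mu_r^{B,G}=x^{\mathrm{top}}_\ast$ by a direct cycle-level comparison involving $\rho_X$ and the universal-module data; none of your stated ingredients ($\mu_r^{B,G}=\cf\circ\rho_X^{-1}$, ``$R(G)$-equivariance'') formally yields this, since $\mu\circ s=x_\ast$ only controls the composition in the other order, and $\nu^{B,T}\circ\cf$ is an uncontrolled endomorphism of $\bD^{B,G}_\ast(X)$. The paper never computes $s\circ\mu$ as multiplication by $x$ at all. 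It instead runs the abstract argument of the proof of Proposition 5.3 in \cite{Nishikawa19}: from (i) $\mu_r^{B,G}\circ s=x^{B\rtimes_rG}_\ast$, (ii) naturality of both $\mu_r^{B,G}$ and $s$ with respect to $G$-equivariant $\ast$-homomorphisms $B_1\to B_2$, and (iii) $x=1_K$ in $R(K)$ for every compact $K$, one factors an arbitrary class in $\varinjlim KK^G_\ast(C_0(Y),B)$ through a proper coefficient algebra by an honest $\ast$-homomorphism, where the assembly map is an isomorphism and $x$ acts as the identity, and concludes $s\circ\mu_r^{B,G}=\mathrm{id}$ by chasing naturality squares. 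To close your proof you should either import that argument (this is exactly why the naturality clause in part (1) is part of the statement) or genuinely carry out the cycle-level identification you sketch, which is real work rather than a formality. The remaining deductions in part (2) (split injectivity, $\mathrm{image}(\mu_r^{B,G})=\mathrm{image}(x^{B\rtimes_rG}_\ast)$, and the case $x=1_G$) are then formal, as you say.
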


The relationship with the gamma element is as follows:

\begin{theoremD*}(Theorem \ref{thm_XGgamma0})  The gamma element for $G$, if exists, is $X$-$G$ localized for $X=\EG$.
\end{theoremD*}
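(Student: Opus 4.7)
The plan is to construct an $\EG$-$G$-localized Kasparov cycle representing $\gamma$, exploiting the factorization of $\gamma$ through a proper $G$-$C^\ast$-algebra to secure the compact-support condition built into the definition of $RL_c^\ast$. By hypothesis $\gamma = z\circ y$ for some $y\in KK^G(\bC,A)$ and $z\in KK^G(A,\bC)$ with $A$ a proper $G$-$C^\ast$-algebra, so there is a nondegenerate $G$-equivariant central $\ast$-homomorphism $\pi\colon C_0(X_0)\to ZM(A)$ for some proper $G$-space $X_0$. By the universal property of $\EG$ there is a $G$-equivariant continuous map $f\colon X_0\to \EG$; composing $f^\ast\colon C_0(\EG)\to C_b(X_0)$ with the strict extension of $\pi$ produces a nondegenerate $G$-equivariant central $\ast$-homomorphism $\tilde\pi\colon C_0(\EG)\to M(A)$.

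I would then realize $\gamma$ as an explicit Kasparov product. Picking representatives $(E_y,0)$ for $y$ with $E_y$ a graded Hilbert $A$-module and $(H_0,\phi_0,F_0)$ for $z$, the product yields a cycle $(H,F)$ with $H=E_y\otimes_{\phi_0}H_0$ and $F$ an $F_0$-connection. Through $\tilde\pi$, the space $H$ becomes a nondegenerate $C_0(\EG)$-$G$-module, and the Kasparov product axioms yield $1-F^2,\, g(F)-F,\, [F,\psi]\in\Compacts(H)$ for every $g\in G$ and $\psi\in C_0(\EG)$. After stabilizing by a degenerate $\EG$-$G$-module I may take $H=H_{\EG}$ to be a universal $\EG$-$G$-module. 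It remains to upgrade $F$ to a family $T=(T_t)_{t\in[1,\infty)}\in M(RL_c^\ast(H_{\EG}))$ with $1-T^2,\, g(T)-T\in RL_c^\ast(H_{\EG})$ and $[H_{\EG},T_1]=\gamma$. I would first perturb $F$ by a compact operator to $F'$ whose defects $1-(F')^2$ and $g(F')-F'$ have exact compact support in a fixed compact set $K\subset \EG$, using that $\chi\,\Compacts(H_{\EG})\,\chi$ for $\chi\in C_c(\EG)$ is norm-dense in $\Compacts(H_{\EG})$, together with a polar-decomposition argument replacing $F$ outside $K$ by a $G$-equivariant symmetry $S$ with $F-S\in\Compacts(H_{\EG})$. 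Then I would define $T_t$ as a $t$-continuous deformation of $F'$ obtained by smoothing against an approximate identity of $C_0(\EG)$ whose localization scale tends to zero, enforcing $\|[T_t,\psi]\|\to 0$ for each $\psi\in C_0(\EG)$ while keeping the defects supported inside $K$; a standard operator homotopy then gives $[H_{\EG},T_1]=\gamma$ in $KK^G(\bC,\bC)$.

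The main obstacle is balancing the \emph{uniform} compact support of $1-T_t^2$ and $g(T_t)-T_t$ (the same $K$ working for all $t\geq 1$, and for each $g\in G$ the combined support $K\cup g(K)$ remaining fixed in $t$) against the asymptotic commutation with $C_0(\EG)$, without disturbing the $KK^G$-class at $t=1$. The proper factorization is essential to both: the $C_0(\EG)$-action on $H$ inherited from $\tilde\pi$ is precisely what permits the reduction of $F$ to a representative whose defects live on a compact subset of $\EG$, a step that fails for a generic Fredholm module representing an arbitrary element of $R(G)$; and the same structure provides the room to smooth the already compact commutators $[F,\psi]$ to zero in the limit $t\to\infty$. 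Tracking the $G$-continuity of all deformations and verifying that the equivariance defects stay uniformly supported as $g$ ranges over $G$ will form the technical core of the argument.
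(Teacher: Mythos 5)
Your overall strategy --- factor $\gamma$ through a proper $G$-$C_0(X_0)$-algebra $A$, transport the $C_0$-structure to $\EG$ along the classifying map, and produce the asymptotic commutation with $C_0(\EG)$ by averaging against $G$-translates of partitions of unity with shrinking supports --- is the same as the paper's (Proposition \ref{prop_XGlocalize} via Lemmas \ref{lem_XG1} and \ref{lem_XG2}). But the order of operations you choose creates a genuine gap. You first form the Kasparov product to get a single operator $F$ on $H=E_y\otimes_{\phi_0}H_0$ and then try to smooth $F$ into a family $T_t$. The smoothing correction is
\[
T_t-F \;=\; \sum_{k}\int_{G} g(\phi_{k,n})^{1/2}\,[F,\,g(\phi_{k,n})^{1/2}]\,d\mu_G(g),
\]
an integral over the (noncompact) group of translated commutators. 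Even granting that each $[F,\psi]$ is compact, this integral converges only strictly, is not a compact operator, and is spread over all of $X$; consequently $1-T_t^2$ fails to be compact for $t>1$, let alone uniformly compactly supported. What survives is only the statement $a(T_t-F)\in\Compacts$ for $a\in A$ --- which is precisely the ``modulo $A$'' conclusion of Lemma \ref{lem_XG1}, and is the reason the paper cannot, and does not, discard the first factor $y$ after localizing. Your proposed rescue --- perturb $F$ compactly to $S+R$ with $S$ a $G$-equivariant symmetry and $R$ supported in a fixed compact $K$ --- only repairs this if $S$ commutes \emph{exactly} with $C_0(\EG)$, so that the averaging fixes $S$ and moves only $R$; but a symmetry $S$ with $F-S$ compact and $[S,C_0(\EG)]=0$ would exhibit the $K$-homology class of $(H,F)$ over $\EG$ as degenerate, which is false for the gamma element (its Dirac-type representative is nontrivial in equivariant $K$-homology). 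A further unproved assertion is that the product axioms give $[F,\psi]\in\Compacts(H)$ for $\psi\in C_0(\EG)$: the function $\psi$ acts through $M(A)\otimes 1$, not $\Compacts(E_y)\otimes 1$, and an $F_0$-connection only controls commutators with the latter.

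The paper's proof avoids all of this by localizing \emph{before} taking the product. Lemma \ref{lem_XG1} averages only the operator $F_0$ of the second factor $z\in KK^G(A,\bC)$, producing $F\in M(RL^*_c(H_X))$ for which $a(1-F^2)$, $[a,F]$, $a(g(F)-F)$ lie in $RL^*_c(H_X)$ --- conditions that hold only after multiplication by $a\in A$. The first factor is represented by $b\in M(A)$ with $1-b^2\in A$, and Kasparov's technical theorem adapted to $RL^*_c(H_X)$ (Lemma \ref{lem_XG2}) supplies $M,N=1-M$ built from quasi-central approximate units of $A$ and of a separable subalgebra of $RL^*_c(H_X)$. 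For $T=M^{1/4}bM^{1/4}+N^{1/4}FN^{1/4}$ one gets $1-T^2\equiv M(1-b^2)+N(1-F^2)$ modulo $RL^*_c(H_X)$, and both terms land in $RL^*_c(H_X)$ because $M$ absorbs elements of $A$ (which are approximately compactly supported, $A$ being a nondegenerate $C_0(X)$-algebra) and $N$ absorbs the elements $d$ with $Ad\subset RL^*_c(H_X)$. In other words, the uniform compact support of the defects is manufactured from $1-b^2\in A$ and the $C_0(X)$-structure of $A$, not from any support property of the product operator at $t=1$; this is the idea your write-up identifies as ``the technical core'' but does not supply.
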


According to Mayer and Nest \cite{MeyerNest}, for any (second countable) locally compact group $G$, there is a separable $G$-$C^*$-algebra $\cP$ and the canonical Dirac element
\[
D \in KK^G(\cP, \bC)
\]
such that for $\cP_B=\cP\otimes B$, $D_B=D\otimes 1_B \in KK^G(\cP_B, B)$, the induced map
\[
j^G_r(D_B)_\ast \colon K_\ast(\cP_B\rtimes_rG) \to  K_\ast(B\rtimes_rG)
\]
on K-theory is naturally equivalent to the Baum--Connes assembly map $\mu^{B, G}_r$ for any separable $G$-$C^*$-algebra $B$.

According to Theorem A, the representable localization algebra $RL^*_c(H_X\otimes B)$ (for a universal $X$-$G$-module $H_X$ for $X=\EG$) can be, at a formal level, regarded as $\cP_B$ and the evaluation map
\[
\mathrm{ev}_1\colon RL^*_c(H_X\otimes B) \to \Compacts(H_X)\otimes B
\]
can be regarded as $D_B$. The identification is formal since, for example, $RL^*_c(H_X\otimes B)$ is not separable.

At least when $G$ is discrete, the comparison between $\cP$ and $RL^*_c(H_X)$ can be made stronger because of the following: 
\begin{theoremE*}(Theorem \ref{thm_discrete_natural_isom}, Theorem \ref{thm_otimesB}) Let $G$ be a countable discrete group, $X$ be a proper $G$-space which is $G$-equivariantly homotopic to a $G$-CW complex, and $H_X$ be a universal $X$-$G$-module. Then, the (well-defined) inclusion
\[
RL^*_c(H_X)\otimes B \to RL^*_c(H_X\otimes B)
\]
induces an isomorphism
\[
K_\ast((RL^*_c(H_X)\otimes B)\rtimes_rG) \cong  K_\ast(RL^*_c(H_X\otimes B)\rtimes_rG).
\]
Hence, the group homomorphism 
\[
K_\ast((RL^*_c(H_X)\otimes B)\rtimes_rG) \to K_\ast((\Compacts(H_X)\otimes B)\rtimes_rG) \cong K_\ast(B\rtimes_rG)
\]
induced by the evaluation map $\mathrm{ev}_1\colon RL^*_c(H_X) \to \Compacts(H_X)$ is naturally equivalent to the Baum--Connes assembly map $\mu^{B, G}_X$. 
\end{theoremE*}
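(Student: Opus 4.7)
The plan is to exhibit both $F_1(X) := K_\ast((RL^*_c(H_X)\otimes B)\rtimes_r G)$ and $F_2(X) := K_\ast(RL^*_c(H_X\otimes B)\rtimes_r G)$ as $G$-equivariant homology theories on the category of proper $G$-CW complexes, to observe that the inclusion $RL^*_c(H_X)\otimes B \hookrightarrow RL^*_c(H_X\otimes B)$ is $G$-equivariant and natural in $X$ and so induces a natural transformation $\iota\colon F_1 \to F_2$, and to check that $\iota$ is an isomorphism on every orbit $G/H$ with $H\leq G$ finite. An induction on the $G$-CW skeleta via Mayer--Vietoris then forces $\iota$ to be an isomorphism for every $X$ in the stated class; the second assertion follows by composing with the evaluation at $t=1$ and applying Theorem A to identify $F_2(X)$ with $\bD^{B,G}_\ast(X)$.

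To verify naturality I would check that the inclusion commutes with the diagonal $G$-action and with conjugation by the covering isometries $V^f_t$ that define the functoriality on $\mathcal{PR}^G$, so that it descends to the reduced crossed products and to K-theory. For the homology axioms, Theorem A already packages Mayer--Vietoris and $G$-homotopy invariance for $F_2 = \bD^{B,G}_\ast$. For $F_1$ the usual restriction short exact sequences of representable localization algebras admit $G$-equivariant c.c.p.\ splittings (analogous to the splitting preceding Corollary B); minimal tensoring with $B$ preserves this splitting, and for discrete $G$ the reduced crossed product preserves c.c.p.-split exact sequences. Homotopy invariance then follows from the standard cylinder construction together with a continuous family of covering isometries.

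For the base case $X = G/H$ with $H\leq G$ finite, Theorem A gives $F_2(G/H) \cong KK^G_\ast(C_0(G/H), B) \cong KK^H_\ast(\bC, B) \cong K_\ast(B\rtimes H)$. For $F_1(G/H)$ I would choose an explicit universal $X$-$G$-module, say $\ell^2(G)\otimes \ell^2(\N)$ with $C_0(G/H)$ acting through the quotient $G\to G/H$, so that the localization condition reduces, in the limit $t\to \infty$, to block-diagonality along the discrete orbit. A Morita-type reduction then identifies $F_1(G/H)$ with $K_\ast(B\rtimes H)$, and a diagram chase recognizes $\iota_{G/H}$ as the identity under this identification.

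Induction over $G$-CW pushouts $G/H\times S^{n-1}\hookrightarrow G/H\times D^n$, combined with Mayer--Vietoris and the base case, yields the isomorphism on every proper $G$-CW complex, and colimits over $G$-compact closed subspaces extend the conclusion to all $X$ that are $G$-equivariantly homotopic to such a complex. The main obstacle will be the base-case computation of $F_1(G/H)$: although $F_2(G/H)$ is immediate from Theorem A, matching $F_1(G/H)$ with $K_\ast(B\rtimes H)$ and identifying $\iota_{G/H}$ with the identity requires a careful Künneth-type argument that compares a constant-in-$B$ family of localized operators with a family of $B$-valued symbols, and it is precisely here that the discreteness of $G$ and the $G$-CW hypothesis enter in an essential way.
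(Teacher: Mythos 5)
Your overall architecture --- realize both $K_\ast((RL^*_c(H_X)\otimes B)\rtimes_r G)$ and $K_\ast(RL^*_c(H_X\otimes B)\rtimes_r G)$ as functors satisfying the homology axioms (these are Theorems \ref{thm_coeff2} and \ref{thm_coeff} in the paper), note that the inclusion gives a natural transformation, check it on orbits $G/H$ with $H$ finite, and induct over $G$-CW skeleta --- is exactly the paper's route, so the framework is sound. The gap is precisely where you locate it: the base case. But the tool you reach for there (an explicit block-diagonal model plus ``a careful K\"unneth-type argument'' comparing a constant-in-$B$ family with a $B$-valued family) is not the right one and would likely stall. The algebras $C_b([1,\infty),\Compacts(H_0))_{\mathrm{Gcont}}\otimes B$ and $C_b([1,\infty),\Compacts(H_0)\otimes B)_{\mathrm{Gcont}}$ are genuinely different $C^*$-algebras, and there is no K\"unneth-type formula relating their K-theories directly.

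What closes the base case in the paper is Proposition \ref{prop_pointcase}. After reducing $G/H$ to a point over $H$ via the induction isomorphism (Proposition \ref{prop_compactopen}, which you correctly anticipate as a Morita-type reduction), one does not compare the two algebras with each other at all: one shows instead that the evaluation map $\mathrm{ev}_1$ at $t=1$ induces a K-theory isomorphism for \emph{each} of $(C_b([1,\infty),\Compacts(H_0))_{H\mathrm{cont}}\otimes B)\rtimes_r H$ and $C_b([1,\infty),\Compacts(H_0)\otimes B)_{H\mathrm{cont}}\rtimes_r H$, because in each case the kernel of $\mathrm{ev}_1$ has vanishing K-theory by an Eilenberg swindle (descending \cite[Proposition 6.3.3]{WY2020} and \cite[Theorem 3.4]{WY2021} to the crossed product). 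Since the inclusion $\iota$ intertwines the two evaluation maps into the common target $K_\ast((\Compacts(H_0)\otimes B)\rtimes_r H)\cong K_\ast(B\rtimes_r H)$, it is an isomorphism on orbits by two-out-of-three. With that substitution for your base case the rest of your argument goes through, and the final identification with the assembly map is, as you say, Theorem A composed with $\mathrm{ev}_1$.
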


That is, at a formal level, $RL^*_c(H_X)\otimes B$ (for $X=\EG$) can be regarded as $\cP_B=\cP\otimes B$ for any separable $G$-$C^*$-algebra $B$ (at least for $G$ discrete).

 \begin{corollaryF*}(Corollary \ref{cor_discrete_N})   Let $G$ be a countable discrete group and $B$ be a separable $G$-$C^*$-algebra. The Baum--Connes assembly map $\mu^{B, G}_r$ is an isomorphism if and only if
 \[
 K_\ast((RL^0_c(H_X)\otimes B)\rtimes_rG)=0
 \]
for a universal $X$-$G$-module $H_X$ for $X=\EG$.
 \end{corollaryF*}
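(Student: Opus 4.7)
The plan is to mirror the argument for Corollary B, leveraging the discrete-group simplification afforded by Theorem E. Fix $X = \EG$ and a universal $X$-$G$-module $H_X$, and let $RL^0_c(H_X)$ denote the kernel of the evaluation $\mathrm{ev}_1\colon RL^*_c(H_X) \to \Compacts(H_X)$. I would first observe that the short exact sequence
\[
0 \to RL^0_c(H_X) \to RL^*_c(H_X) \xrightarrow{\mathrm{ev}_1} \Compacts(H_X) \to 0
\]
admits a $G$-equivariant c.c.p.\ splitting; this is the $B=\bC$ case of the splitting already recorded in the excerpt for $RL^*_c(H_X\otimes B)$, and the construction is identical.

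Second, tensoring this c.c.p.\ splitting with the identity on $B$ remains c.c.p.\ and $G$-equivariant, so the sequence
\[
0 \to RL^0_c(H_X)\otimes B \to RL^*_c(H_X)\otimes B \to \Compacts(H_X)\otimes B \to 0
\]
is $G$-equivariantly c.c.p.\ split exact for any $C^*$-tensor product between minimal and maximal. Consequently it descends to an exact sequence of reduced crossed products
\[
0 \to (RL^0_c(H_X)\otimes B)\rtimes_r G \to (RL^*_c(H_X)\otimes B)\rtimes_r G \to (\Compacts(H_X)\otimes B)\rtimes_r G \to 0.
\]

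The associated six-term exact sequence in K-theory has middle-to-right map
\[
K_\ast((RL^*_c(H_X)\otimes B)\rtimes_r G) \to K_\ast((\Compacts(H_X)\otimes B)\rtimes_r G) \cong K_\ast(B\rtimes_r G),
\]
which Theorem E identifies with the Baum--Connes assembly map $\mu^{B, G}_r$ (since $X=\EG$). A routine diagram chase on the six-term exact sequence then shows that this map is an isomorphism in both even and odd degrees if and only if $K_\ast((RL^0_c(H_X)\otimes B)\rtimes_r G) = 0$, which is precisely the claim.

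The only substantive inputs beyond formal exact-sequence manipulations are Theorem E (identifying the forget-control map with the assembly map for discrete $G$) and the existence of a $G$-equivariant c.c.p.\ splitting of $\mathrm{ev}_1$; the latter is granted by the setup in the excerpt, so no genuine new obstacle arises. If anything, the only subtlety to spell out is checking that the splitting of the $B=\bC$ sequence is compatible with tensoring and with descent to reduced crossed products, but this is standard once a $G$-equivariant c.c.p.\ splitting is in hand.
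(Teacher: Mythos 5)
Your proof is correct and follows essentially the same route as the paper: Corollary \ref{cor_discrete_N} is obtained there by combining the c.c.p.-split short exact sequence
\[
0 \to (RL^0_c(H_X)\otimes B)\rtimes_rG \to (RL^*_c(H_X)\otimes B)\rtimes_rG \to (\Compacts(H_X)\otimes B)\rtimes_rG \to 0
\]
with Theorem \ref{thm_otimesB} (the $\bD^{\otimes B,G}$ version of Theorem E) and the six-term exact sequence, exactly as you do. The only point left tacit in both arguments is that $\EG$ admits a model $G$-equivariantly homotopy equivalent to a proper $G$-CW complex, which is what licenses the appeal to Theorem E.
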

 
In the last section, as an application of Theorem C, we extend the recently obtained new proof \cite{BGHN} of the Baum--Connes conjecture with coefficients for CAT(0)-cubical groups to the non-cocompact setting:

\begin{theoremG*}(Theorem \ref{thm_cube}) Let $G$ be a second countable, locally compact group $G$ which acts properly and continuously on a finite-dimensional CAT(0)-cubical space with bounded geometry by automorphisms. Then, the Baum--Connes assembly map $\mu^{B, G}_r$ is an isomorphism for any separable $G$-$C^*$-algebra $B$, i.e. BCC holds for $G$.
\end{theoremG*}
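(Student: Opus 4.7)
The plan is to apply Theorem C(2) with $X$ the CAT(0)-cubical space itself and an $X$-$G$-localized element equal to $1_G \in R(G)$. Since $G$ acts properly on $X$, the space $X$ is a proper $G$-space in the sense required; Theorem C(2) will then immediately yield BCC for $G$ with any separable coefficient algebra $B$. It therefore suffices to exhibit an $X$-$G$-localized Kasparov cycle $(H_X, T)$ for $KK^G(\bC, \bC)$ with $[H_X, T_1] = 1_G$ in $R(G)$.

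To produce this cycle, I would reinterpret the construction of \cite{BGHN}. In the cocompact setting those authors build, on a $G$-equivariant graded Hilbert bundle $E \to X$ of exterior-algebra type coming from the cubical structure, an explicit one-parameter family of self-adjoint odd fibrewise operators $(T_t)_{t \ge 1}$ of Bott--Dirac type whose propagation tends to $0$ as $t \to \infty$ and whose class at $t=1$ in $KK^G(\bC,\bC)$ equals $1_G$. I take $H_X$ to be a universal $X$-$G$-module built from $L^2(E)$ (for instance $L^2(E)\otimes L^2(G)$, which is universal by Proposition \ref{prop_universal}) and extend the BGHN operators by tensoring with the identity on the auxiliary factor; this preserves both the propagation behaviour and the $K$-theory class at $t=1$.

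The substantive verification is that $T = (T_t)_t$ defines an $X$-$G$-localized cycle in the sense of Definition \ref{def_XGlocalized}. Membership $T \in M(RL^*_c(H_X))$ follows from the fact that the BGHN family is $G$-continuous, norm-bounded, and of propagation tending to $0$, so $\lim_{t\to \infty}\lVert[T_t,\phi]\rVert = 0$ for every $\phi \in C_0(X)$ and $T$ preserves the ideal of uniformly compactly supported elements. The key conditions are
\[
1 - T^2, \quad g(T) - T \in RL^*_c(H_X) \qquad (g \in G),
\]
and it is precisely here that bounded geometry and finite-dimensionality of $X$ enter: the Bott--Dirac construction is local on cubes and depends only on a cube's combinatorial type up to uniformly controlled error, so the defects $1 - T_t^2$ and $g(T_t) - T_t$ can be arranged, by a $G$-almost-invariant cutoff subordinate to a $G$-compact exhaustion of $X$, to be supported in a single fixed compact subset. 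The identity $[H_X, T_1] = 1_G$ then follows by restricting to this exhaustion and invoking the corresponding cocompact identification from \cite{BGHN}.

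The main obstacle is precisely this globalization of the cocompact BGHN construction: the natural globally defined family $T_t$ does not literally have compact support, and the cutoff producing compactly supported defects must be performed while preserving both the vanishing-propagation property and the $K$-homology class at $t=1$. I expect this to rely on the fact that the relevant support conditions only need to hold modulo elements already in $RL^*_c(H_X)$, together with a Mayer--Vietoris/excision argument over the exhaustion that transports the cocompact computation of \cite{BGHN} to the global cycle. Once these ingredients are in place, Theorem C(2) closes the argument.
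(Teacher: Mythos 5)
Your high-level strategy coincides with the paper's: exhibit an $X$-$G$-localized Kasparov cycle representing $1_G$ on the $L^2$-de Rham complex of the cube complex and invoke Theorem C(2). However, two points in your construction do not work as written. First, Definition \ref{def_XGlocalized} does not require $H_X$ to be universal, and your proposed passage to $L^2(E)\otimes L^2(G)$ by ``tensoring with the identity on the auxiliary factor'' destroys the cycle condition: $1-T_1^2=(1+\cD_1^2)^{-1}$ is a nonzero (indeed injective, positive) compact operator, so $(1-T_1^2)\otimes 1_{L^2(G)}$ is not compact and the pair is no longer a Kasparov cycle. The paper simply takes $H_X=\Omega^\ast_{L^2}(X)$ itself.

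Second, and more substantively, the mechanism you propose for the non-cocompact globalization --- a $G$-almost-invariant cutoff forcing the defects $1-T_t^2$ and $g(T_t)-T_t$ into a fixed compact set, followed by a Mayer--Vietoris argument over a $G$-compact exhaustion --- is not how the difficulty is resolved, and you yourself flag it as an unresolved obstacle. The paper performs no surgery at all: it chooses a proper, $G$-adapted weight function $w$ (e.g.\ $w(H)=\mathrm{distance}(P,H)$) and the explicitly rescaled family $\cD_t=t^{-1}(d_{tw}+d_{tw}^\diamond)+(\sqrt{t})^{-1}(e_{tw}+e_{tw}^\diamond)$, which equals the BGHN operator at $t=1$. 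The block-diagonal spectral estimate of Lemma \ref{lem_testimate} bounds $\cD_t$ below, uniformly in $t$, by a quantity tending to infinity as the vertex $Q$ leaves compact sets, so $(\cD\pm i)^{-1}$ is automatically approximately compactly supported (condition (3.2) of Proposition \ref{prop_unboundedXG}); the commutators $[\cD_t,\phi]=t^{-1}c(\phi)+(\sqrt{t})^{-1}[e_{tw}+e_{tw}^\diamond,\phi]$ tend to zero in norm; and $g(\cD_t)-\cD_t$ is bounded uniformly in $t$ (Lemma \ref{lem_Glem}). These three verifications are exactly the hypotheses of Proposition \ref{prop_unboundedXG} and constitute the substance of the proof. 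Your proposal defers them to a hoped-for excision argument that is not supplied and would be delicate: a literal cutoff of $T_t$ introduces new non-compact defects, and the defects need not (and do not) lie in a single compact set --- they only need to lie in the completion $RL^*_c(H_X)$, which is what the properness of $w$ delivers. Note also that \cite{BGHN} construct a single operator, not a family with asymptotically vanishing commutators; producing that family, via the $t^{-1}$ and $(\sqrt{t})^{-1}$ scalings, is precisely the new content you would need to make explicit.
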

We remark that any group $G$ which acts properly on a CAT(0)-cubical space has the Haagerup approximation property \cite{CCJJV}, so $G$ is a-T-menable. Thus, BCC for these groups are already known by the Higson--Kasparov Theorem \cite{HK97}, \cite{HigsonKasparov}.

\section*{Acknowledgement } I would like to thank Arthur Bartels, Siegfried Echterhoff, Julian Kranz, Rufus Willett, Rudolf Zeidler for helpful discussions and comments. I would also like to thank Jacek Brodzki, Erik Guentner and Nigel Higson for their encouragement on extending the work \cite{BGHN} to the non-cocompact setting. 



\section*{Notations} 

Let $A$ be a $C^*$-algebra, $\E_1$, $\E_2$ be Hilbert $A$-modules and $X$ be a locally compact topological space.
\begin{itemize}

\item $\Linears(\E_1)$, resp. $\Compacts(\E_1)$, is the $C^*$-algebra of adjointable bounded, resp. compact, operators on $\E_1$.

\item $\Linears(\E_1, \E_2)$, resp. $\Compacts(\E_1, \E_2)$, is the space of the adjointable bounded, resp. compact, operators from $\E_1$ to $\E_2$.

\item $C_0(X, A)$ is the $C^*$-algebra of $A$-valued, bounded, norm-continuous functions on $X$ vanishing at infinity.

\item $C_b(X, A)$, resp. $C_{b, u}(X, A)$, is the $C^*$-algebra of $A$-valued, bounded, norm-continuous, resp. uniformly norm-continuous, functions on $X$.

\item $M(A)$ is the multiplier algebra of $A$.

\end{itemize}

We use the notations $\otimes$, resp. $\otimes_{\rmax}$, for the minimal, resp. maximal, tensor product and  the notations $\rtimes_r$, resp. $\rtimes_{\rmax}$, for the reduced, resp. maximal, crossed product.


\section{Representable localization algebra}

Most of the materials in this and the next section are basically what are proven in \cite{WY2020}. We will carefully review these because they will be important in later sections.

Let $X$ be a second countable, locally compact topological space (a locally compact space in short). An $X$-module $H_X$ (or a module over $X$) is a separable Hilbert space equipped with a representation of the $C^*$-algebra $C_0(X)$ which is non-degenerate, i.e. $C_0(X)H_X$ is dense in $H_X$. Any Borel function on $X$ is naturally represented on $H_X$ by Borel functional calculus. The characteristic function of a Borel subset $E$ of $X$ is denoted by $\chi_E$. We will use a convention that a module over the empty set is the zero Hilbert space.

An $X$-module $H_X$ is ample if no nonzero element in $C_0(X)$ acts as a compact operator.

Let $X, Y$ be locally compact spaces. The support $\mathrm{supp}(T)$ of a bounded operator $T$ from an $X$-module $H_X$ to a $Y$-module $H_Y$ is defined as the set of $(y, x)$ in $Y\times X$ such that $\chi_VT\chi_U\neq0$ for all open neighborhoods $U$ of $x$ and $V$ of $y$. The support is a closed subset of $Y\times X$.

When $X$ is a metric space, the propagation of $T$ in $\Linears(H_X)$ is defined as 
\[
\mathrm{prop}(T) = \sup\{ \, d(x,y)\mid  \, (x, y) \in \mathrm{supp}(T)\, \}.
\]

For an $X$-module $H_X$, we introduce a representable localization algebra $RL^*_c(H_X)$ as follows. It is slightly different from the representable localization algebra $RL^*(H_X)$ defined in \cite[Definition 9.4.1]{WY2020} but the two algebras behave basically in the same way. Our definition has an advantage that the ideal $RL^*_0(H_X)$ of ``the negligible part'' of $RL^*_c(H_X)$ is $C_0([1,\infty), \Compacts(H_X))$, not $C_0([1,\infty), \Linears(H_X))$. This will be important when we define the forget-control map, for example.

\begin{definition}\label{def_alg}  Let $H_X$ be an $X$-module. We define a $\ast$-subalgebra $RL^{\mathrm{alg}}_c(H_X)$ of $C_b([1, \infty), \Compacts(H_X))$ consisting of bounded, norm-continuous $\Compacts(H_X)$-valued functions $T\colon t\mapsto T_t$ on $[1, \infty)$, such that
\begin{enumerate}
\item $T$ has uniform compact support in a sense that there is a compact subset $K$ of $X$ such that $T_t=\chi_KT_t\chi_K$ (i.e. $\supp(T_t)\subset K\times K$) for all $t\geq1$, 
\item for any $\phi$ in $C_0(X)$, we have
\[
 \lim_{t\to \infty }\lVert[\phi, T_t]\rVert=\lim_{t\to \infty}\lVert\phi T_t-T_t\phi\rVert = 0.
\]
\end{enumerate}
We define $RL^{\mathrm{alg}}_u(H_X)$ to be a subalgebra of $RL^{\mathrm{alg}}_c(H_X)$ consisting of uniformly norm-continuous functions.
\end{definition}

\begin{definition} Let $H_X$ be an $X$-module. We define a $C^*$-algebra $RL_c^\ast(H_X)$ as the norm completion of $RL^{\mathrm{alg}}_c(H_X)$ inside $C_b([1, \infty), \Compacts(H_X))$. A $C^*$-algebra $RL_u^\ast(H_X)$ is defined as the completion of $RL^{\mathrm{alg}}_u(H_X)$. We call the algebras $RL_c^\ast(H_X)$, $RL_u^\ast(H_X)$ as the (continuous/ uniformly continuous) representable localization algebras.
\end{definition}

\begin{remark} When $X$ is compact, the first condition in Definition \ref{def_alg} is vacuous and the algebra $RL_u^\ast(H_X)$ is same as the localization algebra $\mathcal{C}_L(\pi)$ introduced and studied in \cite{DWW18} for the structure map $\pi\colon C(X) \to \Linears(H_X)$. For general $X$, $RL_u^\ast(H_X)$ is just the inductive limit (the union) of  $RL_u^\ast(H_Y)$ over the compact subspaces $Y$ of $X$ where $H_Y=\chi_YH_X$.
\end{remark}

We will mostly study $RL_c^\ast(H_X)$ but the entire discussion and results on $RL_c^\ast(H_X)$ have obvious analogues for $RL_u^\ast(H_X)$.

The non-degeneracy of the representation of $C_0(X)$ on $H_X$ implies that $RL_u^\ast(H_X)$ and hence $RL_c^\ast(H_X)$ contain the following (essential) ideal
\[
RL_0^\ast(H_X)=C_0([1, \infty), \Compacts(H)).
\] 
We define $RL_{c, Q}^\ast(H_X)$ to be the quotient of $RL_c^\ast(H_X)$ by the ideal $RL_0^\ast(H_X)$. Similarly, $RL_{u, Q}^\ast(H_X)$ is the quotient of $RL_u^\ast(H_X)$ by $RL_0^\ast(H_X)$. Thus, we have the following diagram of short exact sequences
\begin{equation*}
\xymatrix{
0  \ar[r]^-{} &  RL_0^\ast(H_X)   \ar[r]^-{} \ar[d]^-{=} &     RL_u^\ast(H_X) \ar[r]^-{} \ar[d]^-{} &   RL_{u, Q}^\ast(H_X)  \ar[r]^-{} \ar[d]^-{} &    0   \\
0  \ar[r]^-{}     &   RL_0^\ast(H_X)            \ar[r]^-{}         &  RL_c^\ast(H_X)           \ar[r]^-{} &               RL_{c, Q}^\ast(H_X) \ar[r]^-{}       &           0.
}
\end{equation*}

\begin{proposition}\label{prop_quotient_isom} The quotient map from $RL^*_c(H_X)$ to $RL^*_{c, Q}(H_X)$ induces an isomorphism on the K-theory groups. The same holds for the quotient map from $RL^*_u(H_X)$ to $RK^*_{u, Q}(H_X)$.
\end{proposition}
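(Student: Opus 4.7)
The plan is to deduce the proposition directly from the six-term exact sequence in K-theory applied to the defining extension
\[
0 \to RL_0^*(H_X) \to RL_c^*(H_X) \to RL^*_{c,Q}(H_X) \to 0,
\]
by showing that the ideal $RL_0^*(H_X)$ has vanishing K-theory. This reduces everything to recognizing $RL_0^*(H_X)$ as a cone.

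More concretely, by the very definition given just above the proposition, $RL_0^*(H_X) = C_0([1,\infty), \Compacts(H_X))$. The homeomorphism $[1,\infty) \to (0,1]$, $t \mapsto 1/t$, induces a $\ast$-isomorphism
\[
RL_0^*(H_X) \cong C_0((0,1], \Compacts(H_X)) \cong C_0((0,1]) \otimes \Compacts(H_X),
\]
so this algebra is (Morita equivalent to) the cone $C_0((0,1])$ on $\bC$. A cone is contractible via the homotopy $h_s(f)(x) = f(sx)$ from the identity to the zero map, hence $K_\ast(RL_0^*(H_X)) = 0$.

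Plugging this into the six-term exact sequence gives that the quotient map induces an isomorphism $K_\ast(RL_c^*(H_X)) \xrightarrow{\cong} K_\ast(RL^*_{c,Q}(H_X))$. The identical argument, verbatim, applies to the uniformly continuous version $RL_u^*(H_X) \to RL^*_{u,Q}(H_X)$, since the ideal is the same $RL_0^*(H_X)$. I do not anticipate a serious obstacle: the only thing to double-check is that the ideal in both rows of the diagram really is all of $C_0([1,\infty), \Compacts(H_X))$, which follows from the non-degeneracy of the $C_0(X)$-representation on $H_X$ (an approximate unit for $C_0(X)$ sandwiched against any $\phi \in C_c([1,\infty)) \otimes \Compacts(H_X)$ yields elements of $RL_c^{\mathrm{alg}}(H_X)$ and $RL_u^{\mathrm{alg}}(H_X)$ that are norm-close to it, so $C_0([1,\infty), \Compacts(H_X))$ sits inside both algebras as a norm-closed ideal).
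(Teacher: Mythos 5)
Your argument is correct and is exactly the paper's proof: the ideal $RL_0^*(H_X)=C_0([1,\infty),\Compacts(H_X))$ is a cone, hence has trivial K-theory, and the six-term exact sequence gives the isomorphism in both the continuous and uniformly continuous cases. The auxiliary check that $C_0([1,\infty),\Compacts(H_X))$ really sits inside both algebras via non-degeneracy is also the justification the paper gives.
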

\begin{proof} This follows from $K_\ast(RL_0^\ast(H_X))=K_\ast(C_0([1, \infty), \Compacts(H)))=0$.
\end{proof}

\begin{lemma}(See \cite[Proposition 6.1.1]{WY2020})   \label{lem_commutator0} Let $X$ be a compact space, $H_X$ be an $X$-module and $T \in C_b([1, \infty), \Linears(H_X))$. Fix any metric $d$ on $X$. The following are equivalent:
\begin{enumerate}
\item For any $\phi \in C(X)$, $\lim_{t\to \infty }\lVert[\phi, T_t]\rVert= 0$.
\item There is $S \in C_b([1, \infty), \Linears(H_X))$ such that propagation $\prop(S_t) \to 0$ as $t \to \infty$ and such that $\limt\lVert T_t-S_t\rVert= 0$.
\item There is  $S \in C_b([1, \infty), \Linears(H_X))$ such that for any open neighborhood $U$ of the diagonal in $X\times X$ there exists $t_U\geq1$ so that for all $t> t_U$, $\mathrm{supp}(S_t)\subset U$ and such that $\limt\lVert T_t-S_t\rVert= 0$.
\end{enumerate}
The same is true if we replace $C_b([1, \infty), \Linears(H_X))$ to any one of $C_{b, u}([1, \infty), \Linears(H_X))$, $C_b([1, \infty), \Compacts(H_X))$ and $C_{b, u}([1, \infty), \Compacts(H_X))$ everywhere.
\end{lemma}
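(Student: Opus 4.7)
The plan is to establish $(2) \Leftrightarrow (3)$ as a soft consequence of compactness, and then close the remaining triangle by proving $(2) \Rightarrow (1)$ and $(1) \Rightarrow (2)$, with the last being the main content. For $(2) \Leftrightarrow (3)$: since $X \times X$ is compact and the diagonal $\Delta$ is closed, any open neighborhood $U$ of $\Delta$ contains a tubular neighborhood $\{(y,x) : d(x,y) < \epsilon\}$ for some $\epsilon > 0$ (otherwise one extracts by compactness a point of $\Delta$ lying outside $U$). Thus $\prop(S_t) < \epsilon$ forces $\supp(S_t) \subset U$; conversely, applying (3) to $U_n = \{(y,x) : d(x,y) < 1/n\}$ gives $\prop(S_t) \to 0$.

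For $(2) \Rightarrow (1)$: the triangle inequality $\lVert [\phi, T_t]\rVert \leq \lVert [\phi, S_t] \rVert + 2 \lVert \phi \rVert_\infty \lVert T_t - S_t \rVert$ reduces this to showing that $\lVert [\phi, S_t] \rVert \to 0$ whenever $\prop(S_t) \to 0$. Using uniform continuity of $\phi$ on the compact space $X$, one approximates $\phi$ in sup-norm by a Borel simple function adapted to a partition into sets of small diameter, and bounds the commutator by (a constant times) $\omega_\phi(\prop(S_t)) \lVert S_t \rVert$, where $\omega_\phi$ denotes the modulus of continuity.

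For $(1) \Rightarrow (2)$ (the main direction): for each $n \in \N$, compactness yields a finite continuous partition of unity $\{\phi_i^{(n)}\}_{i=1}^{N_n}$ with $\sum_i (\phi_i^{(n)})^2 = 1$ and $\mathrm{diam}(\supp \phi_i^{(n)}) < 1/n$ (take $\phi_i^{(n)} = \sqrt{\psi_i^{(n)}}$ for a standard partition $\{\psi_i^{(n)}\}$). Set $S_t^{(n)} := \sum_i \phi_i^{(n)} T_t \phi_i^{(n)}$. Each summand has support in $\supp(\phi_i^{(n)}) \times \supp(\phi_i^{(n)})$, so $\prop(S_t^{(n)}) \leq 2/n$. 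The identity $\sum_i (\phi_i^{(n)})^2 = 1$ yields
\[
T_t - S_t^{(n)} \;=\; \sum_i \phi_i^{(n)} [\phi_i^{(n)}, T_t],
\]
whence $\lVert T_t - S_t^{(n)} \rVert \leq N_n \max_i \lVert [\phi_i^{(n)}, T_t] \rVert \to 0$ as $t \to \infty$ by hypothesis (1). Choose $1 \leq t_1 < t_2 < \cdots \to \infty$ with $\lVert T_t - S_t^{(n)} \rVert < 1/n$ for all $t \geq t_n$, and interpolate: on $[t_n, t_{n+1}]$ set $S_t := \lambda(t) S_t^{(n)} + (1 - \lambda(t)) S_t^{(n+1)}$ with $\lambda(t) := (t_{n+1} - t)/(t_{n+1} - t_n)$, extending by $S_t^{(1)}$ on $[1, t_1]$. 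Then $S \in C_b([1, \infty), \Linears(H_X))$, with $\prop(S_t) \leq 2/n$ and $\lVert T_t - S_t \rVert \leq 1/n$ on $[t_n, t_{n+1}]$, establishing (2).

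The variants for $C_{b,u}$ and for $\Compacts(H_X)$-valued functions follow from the same construction: for each fixed $n$, the map $A \mapsto \sum_i \phi_i^{(n)} A \phi_i^{(n)}$ is a unital c.p.\ contraction on $\Linears(H_X)$ that preserves compactness and does not worsen the modulus of uniform norm-continuity in the parameter $t$; the piecewise-linear interpolation preserves both properties. The main technical obstacle is the interpolation step in $(1) \Rightarrow (2)$: one must verify that the propagation bound and the norm error both survive the convex combination, which they do since a convex combination cannot enlarge the propagation beyond the maximum of the two inputs, and the error is dominated by the maximum of the two endpoint errors.
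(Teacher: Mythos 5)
Your overall architecture is sound, and it is worth noting that the paper itself does not reprove this lemma: it cites \cite[Proposition 6.1.1]{WY2020} for the equivalence of (1) and (2) and only observes directly that (2) and (3) say the same thing and that the $\Compacts$-valued and uniformly continuous variants go through. Your $(2)\Leftrightarrow(3)$ argument (Lebesgue-number/tube argument for neighborhoods of the diagonal in a compact square) is correct, and your $(1)\Rightarrow(2)$ is essentially the standard construction and is correct: for fixed $n$ the crude bound $\lVert T_t-S_t^{(n)}\rVert\le N_n\max_i\lVert[\phi_i^{(n)},T_t]\rVert$ is perfectly adequate since $N_n$ is a fixed finite number, and the convex interpolation does preserve both the propagation bound and the error bound. (There is a harmless off-by-one in the interpolation: on $[t_n,t_{n+1}]$ you invoke $\lVert T_t-S_t^{(n+1)}\rVert<1/(n+1)$, which your choice of $t_{n+1}$ only guarantees for $t\ge t_{n+1}$; fix this by choosing $t_n$ large enough to control both $S^{(n)}$ and $S^{(n+1)}$, and take $t_{n+1}-t_n\ge 1$ so the interpolation does not degrade uniform continuity.)

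The genuine gap is in $(2)\Rightarrow(1)$. You assert that approximating $\phi$ by a simple function over a Borel partition into sets of small diameter bounds $\lVert[\phi,S_t]\rVert$ by a constant times $\omega_\phi(\prop(S_t))\lVert S_t\rVert$. Writing $\psi=\sum_i c_i\chi_{E_i}$ gives $[\psi,S]=\sum_{i,j}(c_i-c_j)\chi_{E_i}S\chi_{E_j}$, where only pairs with $d(E_i,E_j)<\prop(S)$ survive and each surviving coefficient is small; but summing these off-diagonal blocks costs you, a priori, a factor of the number of cells (or of the maximal number of mutually close cells), and for a general compact metric space that number is unbounded as the mesh shrinks --- there is no bounded-geometry hypothesis here. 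Since the mesh must shrink with $\prop(S_t)$, the ``constant'' in your bound is not actually a constant, and the argument as described does not close. The estimate you want is nevertheless true with a universal constant, but the partition must be taken along the level sets of $\phi$ rather than into metrically small sets: for real $\phi$ and $\epsilon=\omega_\phi(\prop(S))$, let $P_k$ be the spectral projection of $\phi$ for $[k\epsilon,(k+1)\epsilon)$; then $\supp(S)\subset\{|\phi(x)-\phi(y)|\le\epsilon\}$ forces $P_kSP_m=0$ unless $|k-m|\le 1$, so $S=S_{-1}+S_0+S_{+1}$ with $S_l=\sum_kP_kSP_{k+l}$, each $S_l$ a block operator with mutually orthogonal row and column supports. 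On each block one has $[\phi,P_kSP_{k+l}]=(\phi-c)P_kSP_{k+l}-P_kSP_{k+l}(\phi-c)$ with $\lVert(\phi-c)P_k\rVert,\lVert(\phi-c)P_{k+l}\rVert\le 2\epsilon$ for a suitable centre $c$, and the orthogonality gives $\lVert[\phi,S_l]\rVert=\sup_k\lVert[\phi,P_kSP_{k+l}]\rVert\le 4\epsilon\lVert S\rVert$, hence $\lVert[\phi,S]\rVert\le 12\,\omega_\phi(\prop(S))\lVert S\rVert$ (twice that for complex $\phi$). Alternatively, simply cite \cite[Lemma 6.1.2]{WY2020} for this step, as the paper implicitly does.
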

\begin{proof} The equivalence of (1) and (2) is proven in \cite[Proposition 6.1.1]{WY2020} and the condition on $S_t$ in (2) and the one in (3) are equivalent. The proof is still valid when we use $C_b([1, \infty), \Compacts(H_X))$ instead. One can replace continuous to uniform continuous because if $T_t$ is uniformly continuous, $S_t$ is continuous and if $\lVert T_t-S_t\rVert\to 0$ as $t\to \infty$, then $S_t$ is uniformly continuous. 
\end{proof}

For any locally compact space $X$, let $X^+=X\cup \{\infty\}$ be the one-point compactification of $X$. Any $X$-module is naturally an $X^+$-module.

\begin{lemma}\label{lem_commutator} Let $X$ be a locally compact space, $H_X$ be an $X$-module and $T \in C_b([1, \infty), \Linears(H_X))$. Fix any metric $d$ on $X^+$. The following are equivalent:
\begin{enumerate}
\item For any $\phi \in C_0(X)$, $\lim_{t\to \infty }\lVert[\phi, T_t]\rVert= 0$.
\item There is $S \in C_b([1, \infty), \Linears(H_X))$ such that propagation $\mathrm{prop}(S_t) \to 0$ as $t \to \infty$ and such that $\limt\lVert T_t-S_t\rVert= 0$.
\item There is  $S \in C_b([1, \infty), \Linears(H_X))$ such that for any open neighborhood $U$ of the diagonal in $X^+\times X^+$ there exists $t_U\geq1$ so that for all $t> t_U$, $\mathrm{supp}(S_t)\subset U$ and such that $\limt\lVert T_t-S_t\rVert= 0$.
\end{enumerate}
The same is true if we replace $C_b([1, \infty), \Linears(H_X))$ to any one of $C_{b, u}([1, \infty), \Linears(H_X))$, $C_b([1, \infty), \Compacts(H_X))$ or $C_{b, u}([1, \infty), \Compacts(H_X))$ everywhere.
\end{lemma}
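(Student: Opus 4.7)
The plan is to reduce the statement to the compact case already proven in Lemma \ref{lem_commutator0} by passing to the one-point compactification $X^+$. The point is that all the geometric data in the statement — the metric $d$, the supports of $S_t$, and the neighborhoods of the diagonal — already live on $X^+\times X^+$, so this reduction will align perfectly with the hypotheses of Lemma \ref{lem_commutator0} applied to $X^+$.

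First, I would make precise the sense in which $H_X$ is an $X^+$-module. Because the representation of $C_0(X)$ on $H_X$ is non-degenerate, it extends uniquely to a unital $*$-representation of the unitization $C(X^+)=C_0(X)\oplus \bC\cdot 1$ in which the constant function $1$ acts as the identity operator. This endows $H_X$ with the structure of an $X^+$-module in the sense of the paper, and the induced metric structure comes from the fixed metric $d$ on $X^+$.

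Next, I would verify that condition (1) of Lemma \ref{lem_commutator} coincides with condition (1) of Lemma \ref{lem_commutator0} applied to the compact space $X^+$ and the $X^+$-module $H_X$. Every $\phi\in C(X^+)$ is of the form $\phi=\psi+c\cdot 1$ with $\psi\in C_0(X)$ and $c\in \bC$, so
\[
[\phi,T_t]=[\psi,T_t]
\]
for every $t\geq 1$, since the scalar $c\cdot 1$ is central. Hence asymptotic commutation with all of $C_0(X)$ is equivalent to asymptotic commutation with all of $C(X^+)$.

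Finally, I would invoke Lemma \ref{lem_commutator0} for the compact space $X^+$ verbatim: its conditions (2) and (3) are word-for-word the conditions (2) and (3) of the present lemma, since they refer to propagation with respect to $d$ on $X^+$ and to supports of $S_t$ inside neighborhoods of the diagonal in $X^+\times X^+$. The four variants with $C_{b,u}$ or $\Compacts(H_X)$ in place of $\Linears(H_X)$ are handled by the corresponding variants already built into Lemma \ref{lem_commutator0}. I do not foresee any real obstacle: the only substantive observation is that constants in $C(X^+)$ are central, after which the result is a direct appeal to the compact case.
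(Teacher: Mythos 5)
Your proposal is correct and follows exactly the paper's own argument, which simply says the lemma "follows from Lemma \ref{lem_commutator0} by viewing $H_X$ as an $X^+$-module"; you have just filled in the routine details (extension of the non-degenerate representation to $C(X^+)$, and the vanishing of commutators with constants) that the paper leaves implicit.
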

\begin{proof} This follows from Lemma \ref{lem_commutator0} by viewing $H_X$ as an $X^+$-module.
\end{proof}

\begin{proposition}\label{prop_same} The $C^*$-algebra $RL_c^\ast(H_X)$ coincides with the completion of the $\ast$-subalgebra $R\bL^{\mathrm{alg}}_c(H_X)$ of $C_b([1, \infty), \Compacts(H_X))$ that consists of $T$ such that
\begin{enumerate}
\item $T$ has eventually, uniform compact support in a sense that there is a compact subset $K$ of $X$ and $t_0\geq1$ such that $T_t=\chi_KT\chi_K$ for all $t\geq t_0$, 
\item for any open neighborhood $U$ of the diagonal in $X^+\times X^+$ there exists $t_U\geq1$ such that for all $t> t_U$, $\mathrm{supp}(T_t)\subset U$.
\end{enumerate}
\end{proposition}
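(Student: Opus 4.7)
The plan is to prove the two inclusions $R\bL^{\mathrm{alg}}_c(H_X) \subset RL^*_c(H_X)$ and $RL^{\mathrm{alg}}_c(H_X) \subset \overline{R\bL^{\mathrm{alg}}_c(H_X)}$ modulo the common ideal $RL^*_0(H_X) = C_0([1,\infty), \Compacts(H_X))$, which sits inside both completions. The bridge between the commutator condition used in Definition \ref{def_alg} and the diagonal-support condition in the proposition is exactly Lemma \ref{lem_commutator}, so the only real content beyond that lemma will be cleaning up the support behavior at small $t$ via cut-offs by compactly supported functions.

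For the inclusion $\supset$, I would start with $T \in R\bL^{\mathrm{alg}}_c(H_X)$ and let $K$ and $t_0$ witness the eventually uniform compact support. By Lemma \ref{lem_commutator} ($(3)\Rightarrow (1)$), $T$ automatically satisfies $\lim_{t\to\infty}\lVert[\phi,T_t]\rVert = 0$ for every $\phi\in C_0(X)$. Define $T'_t = \chi_K T_t\chi_K$ for all $t\ge 1$; since $\chi_K$ commutes with any multiplication operator $\phi$, one has $[\phi,T'_t] = \chi_K[\phi,T_t]\chi_K$, so $T'$ still satisfies the commutator condition and has uniform compact support for all $t$, placing $T'\in RL^{\mathrm{alg}}_c(H_X)$. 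The difference $T - T'$ vanishes on $[t_0,\infty)$, hence lies in $RL^*_0(H_X)\subset RL^*_c(H_X)$, giving $T\in RL^*_c(H_X)$.

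For the reverse inclusion, take $T\in RL^{\mathrm{alg}}_c(H_X)$ supported in $K\times K$ uniformly in $t$, and choose $\phi\in C_c(X)$ with $0\le\phi\le 1$ and $\phi\equiv 1$ on $K$, so that $\phi T_t\phi = T_t$. By Lemma \ref{lem_commutator} ($(1)\Rightarrow (3)$) applied inside $C_b([1,\infty),\Compacts(H_X))$, there exists $S$ satisfying the diagonal-support condition with $\lVert T_t - S_t\rVert\to 0$. Setting $S'_t := \phi S_t\phi$, one checks that $\supp(S'_t)\subset \supp(\phi)\times\supp(\phi)$ is a fixed compact set, that $\supp(S'_t)\subset \supp(S_t)$ (using commutativity of $\phi$ with characteristic functions of open sets), and that $\lVert T_t - S'_t\rVert = \lVert\phi(T_t-S_t)\phi\rVert\to 0$. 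Thus $S'\in R\bL^{\mathrm{alg}}_c(H_X)$ and $T - S'\in RL^*_0(H_X)$.

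It remains to show $RL^*_0(H_X)\subset \overline{R\bL^{\mathrm{alg}}_c(H_X)}$, which is the only part that is not an immediate consequence of Lemma \ref{lem_commutator}, but is still straightforward: for $f\in C_0([1,\infty),\Compacts(H_X))$, pick a bump $\eta\in C_c([1,\infty))$ with $\eta\equiv 1$ on $[1,t_0]$ and use that $\lVert f - \eta f\rVert \le \sup_{t\ge t_0}\lVert f(t)\rVert\to 0$ as $t_0\to\infty$; using non-degeneracy of $C_0(X)$ on $H_X$ together with norm-compactness of $\{(\eta f)(t):t\ge 1\}$ in $\Compacts(H_X)$, approximate $\eta f$ uniformly in $t$ by $\phi(\eta f)\phi$ for a sufficiently large $\phi\in C_c(X)$. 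The resulting function is compactly supported in $t$, has uniform compact support in $X$, and trivially satisfies the diagonal-support condition (it is zero for large $t$), so it lies in $R\bL^{\mathrm{alg}}_c(H_X)$. The only mild obstacle in the whole argument is this last approximation of arbitrary elements of $RL^*_0(H_X)$; once this is handled, combining the two inclusions modulo $RL^*_0(H_X)$ yields the equality of completions.
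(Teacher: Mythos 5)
Your proof is correct and follows the same strategy as the paper's (very terse) argument: the paper simply notes that both completions contain the common ideal $RL^*_0(H_X)$ and that, by Lemma \ref{lem_commutator}, each algebra contains the generators of the other modulo that ideal. Your write-up supplies exactly the cut-off and approximation details that the paper leaves implicit, including the containment $RL^*_0(H_X)\subset\overline{R\bL^{\mathrm{alg}}_c(H_X)}$, so nothing further is needed.
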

\begin{proof} Since both $RL_c^\ast(H_X)$ and the completion of $R\bL^{\mathrm{alg}}_c(H_X)$ contain the ideal $RL_0^*(H_X)$, using Lemma \ref{lem_commutator}, we can see that each one of the algebras contains all the generators of the other.
\end{proof}

Any bounded Borel function on $X$ multiplies $RL_c^\ast(H_X)$ and hence the ideal $RL_0^\ast(H_X)$ so it naturally defines a multiplier on the quotient $RL_{c, Q}^\ast(H_X)$. In particular, we have a $\ast$-homomorphism
\begin{equation}\label{eq_Xalgebra}
C_b(X) \to M(RL_{c, Q}^\ast(H_X)).
\end{equation}

Recall that a $C_0(X)$-algebra is a $C^*$-algebra $A$ equipped with a representation 
\[
C_0(X) \to M(A)
\]
from $C_0(X)$ to the multiplier algebra of $A$ which is non-degenerate and central. Here, non-degenerate means $C_0(X)A$ is dense in $A$ and central means it maps $C_0(X)$ to the center of the multiplier algebra.

\begin{proposition} The $C^*$-algebra $RL_{c, Q}^\ast(H_X)$ is naturally a $C_0(X)$-algebra. That is, the natural representation \eqref{eq_Xalgebra} of $C_0(X)$ to the multiplier algebra $M(RL_{c, Q}^\ast(H_X))$ is non-degenerate and central. 
\end{proposition}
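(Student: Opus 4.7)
The plan is to verify the two defining axioms of a $C_0(X)$-algebra separately: non-degeneracy, and centrality of the image of $C_0(X)$ in $M(RL_{c,Q}^\ast(H_X))$. Both will be deduced from the defining properties of the dense $\ast$-subalgebra $RL^{\mathrm{alg}}_c(H_X)$ of Definition \ref{def_alg}, by passing to norm limits and then to the quotient by $RL_0^\ast(H_X)$.

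For non-degeneracy, I would start with an element $T \in RL^{\mathrm{alg}}_c(H_X)$ and exploit condition (1) in Definition \ref{def_alg}: there is a compact $K \subset X$ with $T_t = \chi_K T_t \chi_K$ for all $t \geq 1$. Choosing any $\phi \in C_c(X) \subset C_0(X)$ with $\phi \equiv 1$ on $K$, one sees $\phi T = T$ pointwise in $t$, hence $T \in C_0(X) \cdot RL_c^\ast(H_X)$. Since $RL^{\mathrm{alg}}_c(H_X)$ is dense in $RL_c^\ast(H_X)$, this already gives non-degeneracy of the representation $C_0(X) \to M(RL_c^\ast(H_X))$, and the property descends to the quotient $RL_{c,Q}^\ast(H_X)$.

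For centrality, it suffices (as $RL_{c,Q}^\ast(H_X)$ is non-degenerately represented in its multiplier algebra) to show that every $\phi \in C_0(X)$ commutes with every element of $RL_{c,Q}^\ast(H_X)$. Take $T \in RL^{\mathrm{alg}}_c(H_X)$; the commutator $[\phi, T_t]$ lies in $\Compacts(H_X)$ for each $t$ (since $T_t$ does), depends norm-continuously on $t$, and satisfies $\lim_{t \to \infty}\lVert[\phi, T_t]\rVert = 0$ by condition (2) of Definition \ref{def_alg}. Consequently $[\phi, T] \in C_0([1,\infty), \Compacts(H_X)) = RL_0^\ast(H_X)$, so $[\phi, T]$ vanishes in the quotient. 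Norm-density of $RL^{\mathrm{alg}}_c(H_X)$ in $RL_c^\ast(H_X)$, together with the fact that $\phi$ acts as a bounded multiplier, extends this to all of $RL_c^\ast(H_X)$, hence $\phi$ is central in $M(RL_{c,Q}^\ast(H_X))$.

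I do not anticipate a serious obstacle: the content of the proposition is essentially a repackaging of the two defining conditions in Definition \ref{def_alg}, with condition (1) yielding non-degeneracy and condition (2) yielding asymptotic (hence quotient-level) commutativity. The only minor subtlety is the standard fact that a non-degenerate $C^\ast$-subalgebra of $M(B)$ which commutes with $B$ automatically lies in the center of $M(B)$, which I would invoke without further comment.
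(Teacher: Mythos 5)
Your proof is correct and follows exactly the paper's (much terser) argument: non-degeneracy from the uniform compact support condition (1) of Definition \ref{def_alg}, and centrality from the asymptotic commutation condition (2), which places $[\phi,T]$ in the ideal $RL_0^\ast(H_X)$ that is quotiented out. The extra details you supply, including the standard fact that the commutant of $A$ in $M(A)$ is the center of $M(A)$, are all sound.
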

\begin{proof} The image of $RL^{\mathrm{alg}}_c(H_X)$ in $RL_{c, Q}^\ast(H_X)$ is dense. On this dense subalgebra, the non-degeneracy follows from the first condition in Definition \ref{def_alg}. The centrality follows from the second condition in Definition \ref{def_alg}.
\end{proof}

\begin{remark} In fact, we can see that $RL_{c, Q}^\ast(H_X)$ is the largest possible $C_0(X)$-subalgebra inside the quotient $C_b([1, \infty),  \Compacts(H_X)) / C_0([1, \infty), \Compacts(H_X))$ with respect to the natural $C_0(X)$-action. 
\end{remark}

Let $H_X$ be an $X$-module. Let $L=C_0[1, \infty)$. Consider a Hilbert $L$-module $H_X\otimes L$. The $C^*$-algebra $\Compacts(H_X\otimes L)$ of ($L$-)compact (adjointable) operators on $H_X\otimes L$ is naturally identified as $\Compacts(H_X)\otimes L \cong C_0([1, \infty), \Compacts(H_X))$. Similarly, the $C^*$-algebra $\Linears(H_X\otimes L)$ of adjointable operators on $H_X\otimes L$ is identified as $C_{b, \mathrm{SOT}^*}([1, \infty), \Linears(H_X))$ consisting of bounded, $\mathrm{SOT}^*$-continuous $\Linears(H_X)$-valued functions. We sometimes view $RL_c^*(H_X)$ as a subalgebra of $\Linears(H_X\otimes L)$ so we have
\begin{align*}
\Compacts(H_X\otimes L) \cong C_0([1, \infty), \Compacts(H_X)) = RL^*_0(H_X) \subset RL_c^*(H_X)   \\ 
 \subset C_b([1, \infty), \Compacts(H_X)) \subset  C_{b, \mathrm{SOT}^*}([1, \infty), \Linears(H_X)) \cong \Linears(H_X\otimes L).
\end{align*}
Note that the multiplier algebra $M(RL^*_c(H_X))$ of $RL^*_c(H_X)$ is naturally a subalgebra of $\Linears(H_X\otimes L)$. This is because for any $C^*$-algebra $J$, if a $C^*$-subalgebra $B\subset M(J)$ contains $J$, then $M(B)$ naturally coincides with the subalgebra of $M(J)$ consisting of multipliers of $B$ inside $M(J)$.

The following is an easy exercise.

\begin{lemma}\label{lem_mult} Let $T\in \Linears(H_X\otimes L)\cong C_{b, \mathrm{SOT}^*}([1, \infty), \Linears(H_X))$ such that for any $\phi \in C_0(X)$, $\limt \lVert[T_t, \phi]\rVert= 0$. Then $T\in M(RL^*_c(H_X))$. In particular, if $\prop(T_t)\to 0$ as $t\to \infty$ with respect to a (any) fixed metric on $X^+$, then  $T\in M(RL^*_c(H_X))$. 
\end{lemma}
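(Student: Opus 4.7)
The plan is to verify directly that $T$ multiplies the dense $\ast$-subalgebra $RL_c^{\alg}(H_X)$ into $RL_c^*(H_X)$ from the left; the right-multiplier property will then follow by taking adjoints, since the hypothesis on $T$ is preserved under $T\mapsto T^*$ (because $\lVert[T_t^*,\phi]\rVert=\lVert[T_t,\phi^*]\rVert$ for $\phi\in C_0(X)$). By density of $RL_c^{\alg}(H_X)$ in $RL_c^*(H_X)$ and boundedness of $T$ on $H_X\otimes L$, this will suffice to conclude $T\in M(RL_c^*(H_X))$.

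So I would fix $S\in RL_c^{\alg}(H_X)$ with $S_t=\chi_K S_t\chi_K$ for some compact $K\subset X$, and choose cut-offs $\psi,\phi\in C_c(X)$ with $\psi\equiv 1$ on a neighborhood of $K$ and $\phi\equiv 1$ on a neighborhood of $\supp\psi$, so that $\psi S=S=S\psi$ and $(1-\phi)\psi=0$. The decomposition
\[
 TS \;=\; \phi\,TS \;+\; (1-\phi)\,TS
\]
then reduces the problem to handling each summand. For the remainder, using $S=\psi S$ and $(1-\phi)\psi=0$, I compute
\[
 (1-\phi)TS \;=\; (1-\phi)T\psi S \;=\; (1-\phi)[T,\psi]\,S,
\]
and the hypothesis $\lVert[T_t,\psi]\rVert\to 0$ together with the uniform bound on $\lVert S_t\rVert$ gives $\lVert(1-\phi)T_tS_t\rVert\to 0$; combined with compactness of each $(1-\phi)T_tS_t$ and norm-continuity in $t$, this places $(1-\phi)TS$ in $C_0([1,\infty),\Compacts(H_X))=RL_0^*(H_X)\subset RL_c^*(H_X)$.

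For the main term $\phi TS$ I would check the defining properties of $RL_c^{\alg}(H_X)$ one at a time: compactness of each $\phi T_tS_t$; uniform compact support inside $K'=\supp\phi\cup K$, using $\phi=\chi_{\supp\phi}\phi$ on the left and $S_t=S_t\chi_K$ on the right; norm-continuity of $t\mapsto \phi T_tS_t$ (see below); and the asymptotic-commutator condition for $\phi'\in C_0(X)$ via
\[
 [\phi',\phi T_tS_t] \;=\; \phi\,[\phi',T_t]\,S_t \;+\; \phi T_t\,[\phi',S_t],
\]
whose first summand vanishes in norm as $t\to\infty$ by hypothesis on $T$ and whose second vanishes by the commutator condition built into $S$. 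The ``in particular'' clause is then immediate from Lemma~\ref{lem_commutator}, viewing $H_X$ as an $X^+$-module: $\prop(T_t)\to 0$ forces $\lVert[T_t,\phi]\rVert\to 0$ for every $\phi\in C_0(X)$, reducing to the main assertion.

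The one step requiring genuine care is the norm-continuity of $t\mapsto \phi T_tS_t$, since $T$ is only SOT$^*$-continuous as an element of $\Linears(H_X\otimes L)$ and $T_tS_t$ is not norm-continuous for arbitrary $S$. It becomes norm-continuous here precisely because $S_t$ is norm-continuous with values in $\Compacts(H_X)$: in the splitting $T_{t_n}S_{t_n}-T_tS_t=T_{t_n}(S_{t_n}-S_t)+(T_{t_n}-T_t)S_t$ the first term tends to zero by uniform boundedness of $T$ and norm-continuity of $S$, while the second tends to zero by SOT-convergence of $T_{t_n}-T_t$ against the compact operator $S_t$. Apart from this standard compactness-upgrade, the whole argument is a single cut-off decomposition exploiting that $T$ asymptotically commutes with $C_0(X)$ while $S$ is already compactly cut off.
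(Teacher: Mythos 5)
Your proof is correct. The paper states this lemma without proof (calling it an easy exercise), and your cut-off decomposition $TS=\phi TS+(1-\phi)TS$ — together with the compactness upgrade from SOT$^*$-continuity to norm-continuity of $t\mapsto T_tS_t$ and the adjoint trick for right multiplication — is exactly the standard argument one would supply.
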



\section{Representable K-homology}
Almost all the materials in this section are what are proven in \cite{WY2020}, in particular in \cite[Section 9.4]{WY2020}. We will review these because they will be important in later sections.
As before, we mainly study $RL_{c}^\ast(H_X)$ but the entire discussion and results on $RL_{c}^\ast(H_X)$ have obvious analogues for $RL^*_u(H_X)$.

Let $X$ and $Y$ be locally compact spaces. Let $H_X , H_Y$ be an $X$-module and a $Y$-module respectively. 

Given a continuous map $f\colon X\to Y$, a family of isometries $(V_t\colon H_X \to H_Y)_{t\in[1, \infty)}$ is called a continuous cover of $f$ \cite[Definition 4.4.8]{WY2020} if 
 \begin{enumerate} 
\item the function $t\mapsto V_t$ from $[1, \infty)$ to $\Linears(H_X, H_Y)$ is uniformly norm-continuous, and
\item for any open neighborhood $U$ of the diagonal in $Y^+\times Y^+$, there exists $t_U\geq1$ such that for all $t\geq t_U$, 
\[
\mathrm{supp}(V_t) \subset \{\, (y, x)\in Y\times X \mid  (y, f(x)) \in U \}. 
\]
\end{enumerate}

For example, if $H_X$ is an $X$-module and $f\colon X\to Y$ is a continuous map, we may view $H_X$ as a $Y$-module via $f^*\colon C_0(Y)\to C_b(X)$. The obtained representation of $C_0(Y)$ on $H_X$ is non-degenerate because $f^\ast\colon C_0(X) \to M(C_0(X))=C_b(X)$ is non-degenerate on $C_0(X)$. Let $(H_X)_Y$ be this $Y$-module. Then, the identity map $V\colon H_X\to (H_X)_Y$ is a strict cover of $f$ in a sense that for any open neighborhood $U$ of the diagonal in $Y^+\times Y^+$, we have
\[
\mathrm{supp}(V) \subset \{\, (y, x)\in Y\times X \mid  (y, f(x)) \in U \},
\]
or equivalently, the support of $V$ is contained in the graph of $f$. In particular, $V$ as a constant family is a continuous cover of $f$ from $H_X$ to $(H_X)_Y$.

\begin{lemma}\label{lem_coverid} A family of isometries $(V_t\colon H_X \to H_Y)_{t\in[1, \infty)}$ is a continuous cover of $f\colon X\to Y$ if and only if it is a continuous cover of the identity map on $Y$ when we view $H_X$ as a $Y$-module $(H_X)_Y$ via $f^\ast\colon C_0(Y) \to C_b(X)$.  
\end{lemma}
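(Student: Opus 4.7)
The plan is to unpack both continuous cover conditions and observe that the only non-trivial thing to compare is the support condition, since the uniform norm continuity of $t \mapsto V_t$ is intrinsic to $\Linears(H_X, H_Y)$ and does not depend on whether one regards $H_X$ as an $X$-module or as a $Y$-module via $f^{\ast}$.

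Write $\supp_1(V_t) \subset Y \times X$ for the support computed using the $X$-module structure of $H_X$, and $\supp_2(V_t) \subset Y \times Y$ for the support computed using the $Y$-module structure of $(H_X)_Y$. The key observation is that for any open $U' \subset Y$, the characteristic function $\chi_{U'}$ acts on $(H_X)_Y$ as the Borel operator $\chi_{f^{-1}(U')}$ on $H_X$, because the $C_0(Y)$-action on $(H_X)_Y$ factors through $f^{\ast}\colon C_0(Y) \to C_b(X)$. From the definition of support via characteristic functions, this gives the identity
\[
\supp_2(V_t) \;=\; \overline{\{\, (y, f(x)) : (y, x) \in \supp_1(V_t)\,\}}
\]
with the closure taken in $Y \times Y$.

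With this identity, the equivalence is straightforward. For the backward implication, if eventually $\supp_2(V_t) \subset U$ for a given neighborhood $U$ of the diagonal in $Y^+ \times Y^+$, then for any $(y, x) \in \supp_1(V_t)$ the point $(y, f(x))$ sits in $\supp_2(V_t) \subset U$, which is exactly the cover-of-$f$ condition. For the forward implication, given an open neighborhood $U$ of the diagonal in $Y^+ \times Y^+$, I would use normality of the compact Hausdorff space $Y^+ \times Y^+$ to choose an open neighborhood $U'$ of the diagonal with $\overline{U'} \subset U$; then for large $t$ the cover-of-$f$ hypothesis places $\{(y, f(x)) : (y, x) \in \supp_1(V_t)\}$ inside $U'$, and taking closure yields $\supp_2(V_t) \subset \overline{U'} \subset U$.

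The only point that requires a little care is the identity relating $\supp_2$ to $\supp_1$; here one must combine the Borel extension of $f^{\ast}$ with the standard characterization that $(y, y') \in \supp_2(V_t)$ iff $\chi_V V_t \chi_{f^{-1}(U')} \neq 0$ for every pair of open neighborhoods $V \ni y$ and $U' \ni y'$ in $Y$. Beyond this bookkeeping and the one shrinkage trick needed to pass an open inclusion into a closure, there is no significant obstacle.
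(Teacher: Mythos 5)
Your proposal is correct and follows essentially the same route as the paper: the paper's proof is the chain of implications (cover of $\mathrm{id}_Y$ with $U$) $\implies$ (cover of $f$ with $U$) $\implies$ (cover of $\mathrm{id}_Y$ with $\bar U$), combined with the same shrinkage trick $\overline{U'}\subset U$, and your support identity $\supp_2(V_t)=\overline{\{(y,f(x)):(y,x)\in\supp_1(V_t)\}}$ is just a compact repackaging of those two implications. The only ingredient you leave implicit in the harder inclusion is the standard fact that an operator with empty support is zero, which the paper also suppresses.
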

\begin{proof} It can be checked directly that for any $V\colon H_X\to H_Y$ and for any open neighborhood $U$ of the diagonal in $Y^+\times Y^+$, for the following conditions
\begin{enumerate}
\item $\supp(V\colon (H_X)_Y \to H_Y) \subset \{\, (y_1, y_2)\in Y\times Y \mid  (y_1, y_2) \in U \}$,
\item $\supp(V\colon H_X \to H_Y) \subset \{\, (y, x)\in Y\times X \mid  (y, f(x)) \in U \}$,
\item $\supp(V\colon (H_X)_Y \to H_Y) \subset \{\, (y_1, y_2)\in Y\times Y \mid  (y_1, y_2) \in \bar U \}$,
\end{enumerate}
we have (1) $\implies$ (2) $\implies$ (3). The assertion follows from this.
\end{proof}

Given a continuous cover $(V_t\colon H_X \to H_Y)_{t\in[1, \infty)}$ of a continuous map $f\colon X\to Y$, the conjugation by $V_t$ defines a $\ast$-homomorphism
\[
\mathrm{Ad}_{V_t} \colon RL_{c}^\ast(H_X) \to RL_{c}^\ast(H_Y).
\]
This is because $\mathrm{Ad}_{V_t}$ maps $R\mathbb{L}^{\mathrm{alg}}_c(H_X)$ (see Proposition \ref{prop_same} for this algebra) to $R\mathbb{L}^{\mathrm{alg}}_c(H_Y)$. This $\ast$-homomorphism depends on the continuous cover $V_t$ of $f$, but the induced map on their K-theory groups is independent of the choice of $V_t$. This is because given two continuous covers $(V_{i, t}\colon H_X\to H_Y)_{t\in [1, \infty)}$ $(i=1,2)$ of $f$, the two maps
\[
T_t \mapsto \begin{bmatrix} \mathrm{Ad}_{V_{1,t}}(T_t) & 0  \\ 0 & 0  \end{bmatrix}, \,\,\,
T_t \mapsto \begin{bmatrix} 0 & 0  \\ 0 & \mathrm{Ad}_{V_{2,t}}(T_t)  \end{bmatrix}
\]
from $RL_{c}^\ast(H_X)$ to the matrix algebra $M_2(RL_{c}^\ast(H_Y))$ are conjugate to each other by the unitary 
\[
 \begin{bmatrix} 1-V_{1,t}V_{1,t}^\ast & V_{1,t}V_{2,t}^\ast  \\ V_{2,t}V_{1,t}^\ast & 1-V_{2,t}V_{2,t}^\ast  \end{bmatrix}
\]
in the $2\times 2$ matrix algebra $M_2(M(RL_{c}^\ast(H_Y)))$ of the multiplier algebra. Here, the following important fact is used: for any two continuous covers $V_{1,t}$, $V_{2,t}$ of $f$, the partial isometries $V_{1,t}V_{2,t}^\ast$ multiplies $RL_{c}^\ast(H_Y)$. Indeed, for a fixed metric $d$ on $Y^+$ and for any $\epsilon>0$, $\supp(V_{1,t}V_{2,t}^\ast)$ is contained in the closure of 
\[
\{ (y_1, y_2) \mid \text{there is $x \in X$ such that $d(y_1, f(x)), d(y_2, f(x)) < \epsilon$ } \}
\]
for large enough $t$. It follows that $\prop (V_{1,t}V_{2,t}^\ast) \to 0$ as $t\to 0$ with respect to $d$. By Lemma \ref{lem_mult}, the family $t\mapsto V_{1,t}V^\ast_{2,t}$ multiplies $RL^*_c(H_Y)$.

Note that $\mathrm{Ad}_{V_t}$ for a cover $V_t$ of $f\colon X\to Y$ induces a $\ast$-homomorphism from $RL_{c, Q}^\ast(H_X)$ to $RL_{c, Q}^\ast(H_Y)$ as it maps the ideal $RL_{0}^\ast(H_X)$ to $RL_{0}^\ast(H_Y)$ and the induced map on their K-theory groups is also independent of the choice of a cover.

If $H_Y$ is an ample $Y$-module, a continuous cover $(V_t\colon H_X \to H_Y)_{t\in[1, \infty)}$ exists for any continuous map $f\colon X\to Y$ and for any $X$-module $H_X$ \cite[Corollary 4.4.7]{WY2020}. 

\begin{definition}\cite[Definition 9.4.5]{WY2020} Choose any ample $X$-module $H_X$ for each locally compact space $X$ and any continuous cover $V^f_t\colon H_X\to H_Y$ for each continuous map $f\colon X\to Y$. A functor $\mathbb{D}_\ast$ from the category $\mathcal{LC}$ of (second countable) locally compact spaces to the category $\mathcal{GA}$ of graded abelian groups is defined as
\[
\mathbb{D}_\ast(X)=K_\ast(RL^\ast_c(H_X)),
\]
\[
\mathbb{D}_\ast(f\colon X\to Y)=\mathrm{Ad}_{V^f_t\ast} \colon K_\ast(RL^\ast_c(H_X)) \to K_\ast(RL^\ast_c(H_Y)).
\]
\end{definition}

\begin{proposition}\cite[Theorem 9.4.4]{WY2020}\label{prop_welldef} The functor  $\mathbb{D}_\ast$ from $\mathcal{LC}$ to $\mathcal{GA}$ is well-defined. The functor does not depend on the choice of ample modules $H_X$ up to canonical equivalence.
\end{proposition}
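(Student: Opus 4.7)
The plan is to verify two assertions in turn, both of which reduce to the key principle already established in the preceding discussion: any two continuous covers of the same continuous map induce the same map on K-theory, via the $2\times 2$ unitary-conjugation trick together with the fact (Lemma \ref{lem_mult}) that the partial isometries $V_{1,t}V^*_{2,t}$ have propagation tending to zero and so multiply $RL^*_c$.

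First I would dispatch well-definedness. Cover-independence on K-theory already tells us $\mathbb{D}_\ast(f)$ does not depend on the particular cover chosen to represent $f$, so I only need to check identities and composition. For the identity map, I would observe that the constant family $V^{\mathrm{id}}_t = \mathrm{id}_{H_X}$ is a continuous cover of $\mathrm{id}_X$ (its support is exactly the diagonal), and $\mathrm{Ad}_{\mathrm{id}_{H_X}}$ is the identity on $RL^*_c(H_X)$; hence cover-independence forces $\mathbb{D}_\ast(\mathrm{id}_X) = \mathrm{id}$ irrespective of the $V^{\mathrm{id}}_t$ originally selected. For composition of $f\colon X\to Y$ and $g\colon Y\to Z$ with chosen covers $V^f_t$ and $V^g_t$, I would verify by a short open-neighborhood calculation that the uniformly norm-continuous product $V^g_t V^f_t\colon H_X\to H_Z$ is a continuous cover of $g\circ f$, after which cover-independence yields
\[
\mathbb{D}_\ast(g\circ f) = \mathrm{Ad}_{V^g_t V^f_t *} = \mathrm{Ad}_{V^g_t *}\circ \mathrm{Ad}_{V^f_t *} = \mathbb{D}_\ast(g)\circ \mathbb{D}_\ast(f).
\]

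Next I would handle independence of the choice of ample modules. Given two systems $\{H_X\}$ and $\{H'_X\}$ yielding functors $\mathbb{D}_\ast$ and $\mathbb{D}'_\ast$, I would choose for each $X$ a continuous cover $U^X_t\colon H_X\to H'_X$ of $\mathrm{id}_X$ (which exists since $H'_X$ is ample) together with a cover $W^X_t\colon H'_X\to H_X$ in the reverse direction, and define
\[
\rho_X := \mathrm{Ad}_{U^X_t *}\colon \mathbb{D}_\ast(X) \to \mathbb{D}'_\ast(X).
\]
For naturality at $f\colon X\to Y$, I would note that both $U^Y_t V^f_t$ and $V'^f_t U^X_t$ are continuous covers of $f$ from $H_X$ into $H'_Y$, so cover-independence gives $\rho_Y\circ \mathbb{D}_\ast(f) = \mathbb{D}'_\ast(f)\circ \rho_X$. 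For invertibility, I would compare $W^X_t U^X_t$ and $U^X_t W^X_t$ with the constant identity covers on $H_X$ and $H'_X$ respectively, to conclude that $\mathrm{Ad}_{W^X_t *}$ is a two-sided inverse of $\rho_X$ on K-theory. This yields the asserted canonical equivalence of functors.

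The main obstacle I expect is simply the support-and-continuity verification that the composition of two continuous covers is again a continuous cover; once that is in hand, the rest is bookkeeping that recycles cover-independence and the multiplier-of-$RL^*_c$ criterion from the excerpt. Should the direct support calculation become awkward, an alternative route would be to invoke Lemma \ref{lem_coverid} to reduce every statement to covers of identity maps (by replacing $H_X$ with the module $(H_X)_Y$), for which products and compositions are transparently well-behaved.
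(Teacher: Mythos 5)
There is a genuine gap at the composition step, and it is exactly the technical point the paper flags as missing from \cite{WY2020}. You assert that a ``short open-neighborhood calculation'' shows the product $V^g_tV^f_t\colon H_X\to H_Z$ is a continuous cover of $g\circ f$. This is only true when $g$ is \emph{proper}. The cover condition for $V^f_t$ controls $\mathrm{supp}(V^f_t)$ relative to neighborhoods of the diagonal in $Y^+\times Y^+$, and to push this forward through $g$ you need the extension of $g$ to $Y^+\to Z^+$ (sending $\infty_Y\mapsto\infty_Z$) to be continuous, i.e.\ uniformly continuous on the compact space $Y^+$ --- which holds precisely when $g$ is proper. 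If $g$ is not proper, a point $y$ close to $f(x)$ in $Y^+$ with both near $\infty_Y$ need not have $g(y)$ close to $g(f(x))$ in $Z^+$, so $V^g_tV^f_t$ can fail the support condition for a cover of $g\circc f$; moreover, even if one picks an honest cover $V^{g\circ f}_t$ of the composite, the comparison isometries $V^{g\circ f}_t(V^g_tV^f_t)^*$ need not multiply $RL^*_c(H_Z)$, so the $2\times2$ conjugation trick is not available either. The same defect infects your naturality verification: $V'^f_tU^X_t$ is a composition whose second factor covers $f$, so it is only known to cover $f$ when $f$ is proper. Your fallback via Lemma \ref{lem_coverid} does not repair this, since rewriting everything as covers of identity maps does not change which map's properness is at issue.

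The paper's proof takes a different route precisely to avoid this: it first establishes representability, $\varinjlim_{i}\bD_\ast(K_i)\cong\bD_\ast(X)$ over compact subsets $K_i$ (using functoriality only for proper maps, where your composition argument is valid, and the continuity of $K$-theory applied to the increasing subalgebras $RL^*_c(H_{K_n})=\chi_{K_n}RL^*_c(H_X)\chi_{K_n}$), then checks that this identification intertwines $\bD_\ast(f)$ with the limit of the maps $\bD_\ast(f\mid_{K_i})$. Since every continuous map out of a compact space is proper, the general composition law follows from the proper case by passing to these limits. Your argument for identity maps and for independence of the ample module (for the identity map, which is proper) is fine and matches the paper's implicit reasoning; the missing ingredient is the reduction of non-proper functoriality to the proper case via representability.
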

\begin{proof} There is a technical point which is not mentioned in \cite{WY2020}, which we now explain. Let  $f_1\colon X\to Y$, $f_2\colon Y\to Z$ be continuous maps and $V_1\colon H_X\to H_Y$ and $V_2\colon H_Y\to H_Z$ be continuous covers of $f_1$ and $f_2$ respectively. The thing is that $V_2V_1$ is a continuous cover of $f_2\circ f_1$ if $f_2$ is proper but it may not be in general. Also, if $V_3\colon H_X\to H_Z$ is a continuous cover of $f_2\circ f_1$, $V_3 (V_2V_1)^*$ may not multiply $RL^*_c(H_Z)$ if $f_2$ is not proper. Because of this, we reduce the functoriality to the case of proper maps. Thus, we first show the representability of $\mathbb{D}_\ast(X)$: if $(K_i)_{i \in I}$ is the net of compact subsets of $X$, ordered by the inclusion, the canonical maps $\bD_\ast(K_i) \to \bD_\ast(X)$ induce a natural isomorphism 
\begin{equation}\label{eq_representable}
\varinjlim_{i \in I}\bD_\ast(K_i) \cong \bD_\ast(X).
\end{equation}
Note that the functoriality for proper maps is used for defining this inductive system. For this, it suffices to show that if $U_n$ is an increasing sequence of relatively compact, open subsets of $X$ such that $\cup_n U_n=X$, then for their closures $K_n=\bar U_n$, the natural inclusions induce an isomorphism 
\[
\varinjlim_nK_\ast(RL_c^\ast(H_{K_n})) \cong K_\ast(RL_c^\ast(H_{X})).
\]
Since $K_n=\bar U_n$, the subspace $\chi_{K_n}H_X$ is an ample $K_n$-module and we may assume $H_{K_n}=\chi_{K_n}H_X$. Then, we see that $RL_c^\ast(H_{K_n})=\chi_{K_n}RL_c^\ast(H_{X})\chi_{K_n}$ and $RL_c^\ast(H_{K_n})$ is an increasing sequence of $C^*$-subalgebras of $RL_c^\ast(H_{X})$ whose union is dense in $RL_c^\ast(H_{X})$. The claim follows from the continuity of K-theory. Next, we note that as in \cite[Corollary 9.4.10]{WY2020}, this identification \eqref{eq_representable} is compatible with $\mathbb{D}_\ast(f\colon X\to Y)$: if $(K_i)_{i \in I}$, $(K'_j)_{j\in J}$ are the nets of compact subsets of $X$ and $Y$ respectively, and if we consider the map
\[
\varinjlim_{i\in I} \bD_\ast(f\mid_{K_i})\colon  \varinjlim_{i \in I} \bD_\ast(K_i)   \to  \varinjlim_{j \in J}\bD_\ast(K'_j) 
\]
defined as the limit of 
\[
\bD_\ast(K_i)  \to \bD_\ast(f(K_i))    \to  \varinjlim_{j \in J}\bD_\ast(K'_j) 
\]
which is the composition of $\bD_\ast(f\mid_{K_i}\colon K_i\to f(K_i))$ and the natural map, the following diagram commutes
\begin{equation*}
\xymatrix{
\varinjlim_{i \in I}\bD_\ast(K_i) \ar[r]^-{\varinjlim_{i\in I} \bD_\ast(f\mid_{K_i})} \ar[d]^-{\cong} &     \varinjlim_{j \in J}\bD_\ast(K_j) \ar[d]^-{\cong}  \\
K_\ast(RL_c^\ast(H_{X}))           \ar[r]^-{\mathbb{D}_\ast(f)}         &  K_\ast(RL_c^\ast(H_{Y})). 
}
\end{equation*}
The functoriality $\bD_\ast(f_2)\circ \bD_\ast(f_1) = \bD_\ast(f_2\circ f_1)$ for not necessarily proper maps $f_1\colon X\to Y$, $f_2\colon Y\to Z$ now follows from this.
\end{proof}

Note that we have naturally
\[
\mathbb{D}_\ast(X)=K_\ast(RL^\ast_c(H_X)) \cong K_\ast(RL^\ast_{c, Q}(H_X))
\]
by Proposition \ref{prop_quotient_isom}.

\begin{theorem}\label{thm_homology}\cite[Section 9.4]{WY2020} The functor $\mathbb{D_\ast}$ satisfies the following: 
\begin{enumerate}
\item  $\mathbb{D}_\ast(\mathrm{empty\, set})\cong 0$.
\item $\mathbb{D}_\ast(\mathrm{point})\cong  \bigg{\{ } \begin{array}{cc}  \mathbb{Z} & \ast=0, \\ 0 & \ast=1.  \end{array} $
\item Representable: if $(K_i)_{i \in I}$ is the net of compact subsets of $X$, ordered by the inclusion, the canonical maps $\bD_\ast(K_i) \to \bD_\ast(X)$ induce a natural isomorphism 
\[
\varinjlim_{i \in I}\bD_\ast(K_i) \cong \bD_\ast(X).
\]
\item Mayer--Vietoris sequence for an open cover: if $X=U\cup V$ for open subsets $U$ and $V$ of $X$, we have a natural Meyer--Vietoris sequence
\[
\xymatrix{
 \bD_0(U\cap V) \ar[r]^-{}&    \bD_0(U) \oplus \bD_0(V) \ar[r]^-{} &   \bD_0(X) \ar[d]^-{}   \\
\bD_1(X)     \ar[u]^-{}         &   \bD_1(U) \oplus \bD_1(V)        \ar[l]^-{} &            \bD_1(U\cap V) \ar[l]^-{}.
}
\]
\item  Homotopy invariance: if $h\colon X\times[0 ,1]\to Y$ is a continuous homotopy between $f_0, f_1\colon X\to Y$, then $\bD_\ast(f_0)=\bD_\ast(f_1)$.
\end{enumerate}
\end{theorem}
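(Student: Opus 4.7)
The plan is to verify the five axioms in order, drawing on the techniques of Section~9.4 of \cite{WY2020}. Axiom~(1) is immediate from the convention that a module over the empty set is the zero Hilbert space, so $RL^*_c(H_\emptyset)=0$ and its K-theory vanishes. For (2), when $X$ is a point both conditions of Definition~\ref{def_alg} are trivially satisfied (uniform compact support is automatic and $C_0(\mathrm{pt})=\bC$ acts by scalars), so $RL^*_c(H_{\mathrm{pt}})=C_b([1,\infty),\Compacts(H))$. The reparametrization $h_s(f)(t)=f((1-s)t+s)$ gives a norm-continuous homotopy on the dense subalgebra of uniformly continuous sections between the identity and the composition of $\mathrm{ev}_1$ with the inclusion of $\Compacts(H)$ as constants, so $\mathrm{ev}_1$ induces a K-theory isomorphism onto $\Compacts(H)$, yielding $\bD_0(\mathrm{pt})\cong\Z$ and $\bD_1(\mathrm{pt})\cong 0$.

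Property (3) is essentially already embedded in the proof of Proposition~\ref{prop_welldef}: for an increasing exhaustion $U_n$ of $X$ by relatively compact opens with closures $K_n=\bar U_n$, the subspaces $\chi_{K_n}H_X$ are ample $K_n$-modules, the cutdowns $\chi_{K_n}RL^*_c(H_X)\chi_{K_n}=RL^*_c(H_{K_n})$ form an increasing sequence of $C^*$-subalgebras whose union is dense in $RL^*_c(H_X)$, and continuity of K-theory then gives the claim. Compatibility with the maps $\bD_\ast(f)$ is the content of \cite[Corollary~9.4.10]{WY2020}.

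For (4), the Mayer--Vietoris sequence, I would exploit the $C_0(X)$-algebra structure \eqref{eq_Xalgebra} on $RL^*_{c,Q}(H_X)$. Given an open cover $X=U\cup V$, setting $H_U=\chi_UH_X$ and similarly for $V$, $U\cap V$, the natural restrictions identify the fiber algebras with $RL^*_{c,Q}(H_U)$, $RL^*_{c,Q}(H_V)$ and $RL^*_{c,Q}(H_{U\cap V})$. The key technical step is to verify that the resulting square of restrictions is a pullback of $C^*$-algebras; using Proposition~\ref{prop_same} and Lemma~\ref{lem_commutator}, a family with asymptotically small propagation in $X^+$ and support in $U\cup V$ can, modulo $RL^*_0$, be decomposed via a partition of unity subordinate to $\{U,V\}$. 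The six-term exact sequence associated to this pullback, combined with Proposition~\ref{prop_quotient_isom}, then produces the Mayer--Vietoris sequence for $\bD_\ast$.

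Finally, for (5), I would choose a norm-continuous family of continuous covers $s\mapsto V^s_t\colon H_X\to H_{X\times[0,1]}$ of the slice inclusions $i_s(x)=(x,s)$; composition with any fixed continuous cover of $h$ yields a continuous cover of $f_s=h\circ i_s$, whence $\bD_\ast(f_s)=\bD_\ast(h)\circ\bD_\ast(i_s)$. As in the uniqueness-of-cover argument inside the proof of Proposition~\ref{prop_welldef}, the $\ast$-homomorphisms $\mathrm{Ad}_{V^s_t}$ for different values of $s$ differ by conjugation by a multiplier unitary built from the partial isometries $V^{s_1}_t(V^{s_2}_t)^*$, whose propagation tends to zero in $(X\times[0,1])^+$ and which depend norm-continuously on $(s_1,s_2)$; hence $\bD_\ast(i_s)$ is independent of $s$ and $\bD_\ast(f_0)=\bD_\ast(f_1)$. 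I expect the principal technical obstacle to lie in (4): the passage from the $C_0(X)$-algebra structure on $RL^*_{c,Q}$ to a genuine $C^*$-algebra pullback requires careful use of Proposition~\ref{prop_same} to smooth the partition-of-unity decomposition while preserving both the propagation and the commutator conditions.
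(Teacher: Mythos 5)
Your treatments of (1), (3) and (4) are fine and follow the paper's route (the Mayer--Vietoris argument via the $C_0(X)$-structure on $RL^*_{c,Q}(H_X)$ and the ideals generated by $C_0(U)$, $C_0(V)$ is exactly what the paper does, phrased as a pullback rather than as the sum/intersection of ideals). However, there are two genuine gaps, in (2) and in (5).

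For (2), the reparametrization $h_s(f)(t)=f((1-s)t+s)$ is \emph{not} a point-norm continuous homotopy, even on uniformly continuous sections: the two arguments $(1-s)t+s$ and $(1-s')t+s'$ differ by $|s-s'|\,(t-1)$, which is unbounded in $t$, so uniform continuity of $f$ gives no control. Concretely, for $f(t)=e^{it}p$ with $p$ a rank-one projection one has $\lVert h_s(f)-h_{s'}(f)\rVert_\infty=2$ for all $s\neq s'$. Thus $C_b([1,\infty),\Compacts(H))$ does not retract onto the constants in this way, and the computation $K_0\cong\Z$, $K_1=0$ really does require the Eilenberg swindle showing that the kernel of $\mathrm{ev}_1$ on $C_{b,u}([1,\infty),\Compacts(H))$ has trivial K-theory, together with the (nontrivial, separately proved) fact that the inclusion $C_{b,u}\subset C_b$ is a K-theory isomorphism.

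For (5), the claimed mechanism fails: if $V^{s_i}_t$ covers the slice inclusion $i_{s_i}$, then the support of $V^{s_1}_t(V^{s_2}_t)^\ast$ concentrates near $\{((x,s_1),(x,s_2)):x\in X\}$, so its propagation in $(X\times[0,1])^+$ tends to $|s_1-s_2|$, not to $0$; hence for $s_1\neq s_2$ this partial isometry does not multiply $RL^*_c(H_{X\times[0,1]})$ and there is no conjugating multiplier unitary. The independence-of-cover argument applies only to two covers of the \emph{same} map, and $i_{s_1}$, $i_{s_2}$ are different maps; your argument as stated would prove that arbitrary maps induce equal maps on $\bD_\ast$. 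Homotopy invariance genuinely requires letting the interpolation parameter depend on $t$ — isometries that equal $W(0)$ for $t<n$ and drift to $W(1)$ by time $2n$, so that each individual propagation error tends to $0$ — and then an Eilenberg swindle over $n$ in the double $M(\cA^\infty)\oplus_{\cA^\infty}M(\cA^\infty)$ to compare consecutive stages uniformly. This is the substantive content of the paper's proof and is missing from the proposal.
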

\begin{proof}
We review proofs of these properties of $\bD_\ast$ because they will be relevant when we consider an equivariant setting.

(1) If $X$ is the empty set, $H_X=0$ and $RL_c^\ast(H_X)=0$.

(2) If $X$ is a point, $RL_c^\ast(H_X)=C_b([1, \infty), \Compacts(H))$ for a separable, infinite-dimensional Hilbert space $H$. In the uniformly-continuous case, 
\[
K_\ast(C_{b, u}([1, \infty), \Compacts(H))\cong  \bigg{\{}  \begin{array}{cc}  \mathbb{Z} & \ast=0 \\ 0 & \ast=1  \end{array}
\]
is proven in \cite[Proposition 6.3.3]{WY2020}  by showing that the kernel $I_u$ of the evaluation map at $1$
\[
\mathrm{ev}_1\colon C_{b, u}([1, \infty), \Compacts(H)) \to \Compacts(H),
\]
has zero K-theory groups by a simple Eilenberg swindle argument. In \cite[Theorem 3.4]{WY2021}, it is shown that the canonical inclusion induces an isomorphism
\[
 K_\ast(C_{b, u}([1, \infty), \Compacts(H))) \cong  K_\ast(C_{b}([1, \infty), \Compacts(H))),
\] 
with a more elaborate Eilenberg Swindle argument. The assertion follows from these.

(3) We have already proved this in the proof of Proposition \ref{prop_welldef}.

(4) For any open subset $U$ of $X$, $C_0(U)H_X$ is an ample $U$-module and we may assume $H_U=C_0(U)H_X$. We see that $RL_c^\ast(H_{U})$ coincides with the $C^*$-subalgebra of $RL_c^\ast(H_{X})$ generated by $C_0(U)RL_c^\ast(H_{X})C_0(U)$. Now let $X=U\cup V$ with $U, V$ open. Recall that $RL_{c, Q}^\ast(H_X)$ is a $C_0(X)$-algebra.  Let $RL_{c, Q}^\ast(H_X)_U$ be the ideal of $RL_{c, Q}^\ast(H_X)$ generated by $C_0(U)RL_{c, Q}^\ast(H_X)$. Define $RL_{c, Q}^\ast(H_X)_V$, $RL_{c, Q}^\ast(H_X)_{U\cap V}$ analogously. We see that $RL_{c, Q}^\ast(H_{U})$, $RL_{c, Q}^\ast(H_{V})$ and $RL_{c, Q}^\ast(H_{U\cap V})$ are naturally identified with $RL_{c, Q}^\ast(H_X)_U$, $RL_{c, Q}^\ast(H_X)_V$, and $RL_{c, Q}^\ast(H_X)_{U\cap V}$ respectively. These ideals of $RL_{c, Q}^\ast(H_{X})$ satisfy
\[
RL_{c, Q}^\ast(H_X)_U + RL_{c, Q}^\ast(H_X)_V = RL_{c, Q}^\ast(H_X),
\]
\[
RL_{c, Q}^\ast(H_X)_U \cap RL_{c, Q}^\ast(H_X)_V = RL_{c, Q}^\ast(H_X)_{U\cap V}.
\]
The corresponding to these ideals, we have a Mayer--Vietoris sequence (see \cite[Proposition 2.7.15]{WY2020}) which is the desired one. The naturality can be checked directly. This argument is essentially same as the one given in \cite[Proposition 9.4.13]{WY2020}.

(5) We recall a proof of the homotopy invariance in detail for later use. We follow the argument in \cite[Proposition 6.4.14]{WY2020}. See also the proof of \cite[Proposition 3.7]{Yu1997}. For any locally compact space $X$ and for $ r \in[0 ,1]$, let 
\[
f_r\colon X\times [0, 1] \to X\times [0 ,1], \,\, (x, t)\to (x, (1-r)t).
\]
It suffices to show the following claim:
\[
\bD_\ast(f_1) = \bD_\ast(f_0) (=\mathrm{Id}).
\]
We let $Z=[0 ,1]\cap \mathbb{Q}$, regarded as a discrete set, and consider $l^2$-space $l^2(Z)$. We fix a separable infinite-dimensional Hilbert space $H$ with a decomposition
\[
H=\bigoplus_{z\in Z} H_z
\]
with each $H_z$ infinite-dimensional. Let $W_z\colon H\to H$ be an isometry with range $H_z$.

For an ample $X$-module $H_X$, we use the following ample $X\times [0, 1]$-module
\[
H_{X\times[0,1]}=H_X\otimes l^2(Z)\otimes H
\]
where $\phi \in C_0(X)$ acts as
\[
\phi (u\otimes \delta_z \otimes v)  = \phi u \otimes \delta_z\otimes v,
\]
and $\phi \in C[0, 1]$ acts as
\[
\phi (u\otimes \delta_z \otimes v)  = u \otimes  \phi(z) \delta_z\otimes v.
\]
For any $r\in Z$, we define an isometry $W(r)$ on $H_{X\times[0,1]}$ by
\[
W(r)\colon u\otimes \delta_z \otimes v \mapsto u\otimes \delta_{(1-r)z} \otimes W_zv.
\]
The isometries $W(r)$ satisfy the following:
\begin{enumerate}
\item The support $\mathrm{supp}(W(r))$ is the graph of $f_r\colon X\times [0, 1] \to X\times [0, 1]$. That is,
\[
\mathrm{supp}(W(r)) = \{ \, ((x, (1-r)s), (x, s) ) \mid (x, s) \in X\times[0, 1] \}.
\]
\item For $T\in \Linears(H_{X\times[0,1]})$, if $((x_1, s_1), (x_2, s_2)) \in \supp( W(r)TW(r)^\ast )$, then
\[
\bigg{\{} \begin{array}{cc} ((x_1, (1-r)^{-1}s_1), (x_2, (1-r)^{-1}s_2))  \in \supp(T) & (r\neq 1)   \\ ((x_1, s_0), (x_2, s'_0))  \in \supp(T)  \,\,\,  \text{for some $s_0, s_0'$ in $[0,1]$ and $s_1=s_2=0$} &  (r=1).   \end{array} 
\]
\end{enumerate}
For $n \in \mathbb{N}_{>0}\cup \{\infty\}$, we define a uniformly continuous family $(V_{n,t}\colon H_{X\times[0,1]} \to H_{X\times[0,1]})_{t\in[1, \infty)}$ of isometries. For $n=\infty$, we set
\[
V_{\infty,t}=W(0) \,\,\, (1\leq t< \infty).
\]
For $n\in \N$, we set
\[
V_{n,t}=\bigg{\{}  \begin{array}{cc}  W(0) & (0\leq t < n),   \\ W(1) &  (2n \leq  t < \infty).   \end{array}.
\]
For $j\in \{0, 1, \cdots, n-1\}$ and $n+j\leq t \leq n+j+1$, we set 
\[
V_{n,t}=  |\cos(\frac{\pi}{2}(t-n-j))|W(\frac{j}{n}) +  |\sin(\frac{\pi}{2}(t-n-j))|W(\frac{j+1}{n}) 
\]
on $u\otimes \delta_z \otimes v$, $(z\neq0)$ and 
\[
V_{n,t}(u\otimes \delta_0 \otimes v)= u\otimes \delta_0 \otimes W_0v.
\]
The isometries $V_{n,t}$ satisfy the following:
\begin{enumerate}
\item The conjugation by $(V_{1,t})_{1\leq t< \infty}$ on $RL_c^\ast(H_{X\times[0,1]})$ induces the same map as $\bD_\ast(f_1)$ on the K-theory groups.
\item The conjugation by $(V_{\infty,t})_{1\leq t< \infty}$ on $RL_c^\ast(H_{X\times[0,1]})$ induces the same map as $\bD_\ast(f_0)(=\mathrm{Id})$ on the K-theory groups.
\item For any $1\leq t< \infty$, $V_{n,t}=V_{\infty,t}=W(0)$ for almost all $n$.
\item For any metric $d_X$ on $X$ and for the standard metric $d_{[0 ,1]}$, define a metric $d_{X\times[0, 1]}=d_X + d_{[0, 1]}$ on $X\times [0,1]$. With respect to $d_{X\times[0 ,1]}$, the propagation $\prop(V_{n+1,t}V_{n,t}^\ast) \to 0$ uniformly in $n$ as $t\to \infty$.
\end{enumerate}

Let $\cA=RL_{c}^\ast(H_{X\times[0, 1]})$ and $\cA^\infty= RL_c^\ast(H_{X\times[0, 1]}\otimes l^2(\mathbb{N}))$ where $H_{X\times[0, 1]}\otimes l^2(\mathbb{N})$ is naturally viewed as an $X\times[0,1]$-module. Let $U_n$ be an isometry from $H_{X\times[0, 1]}$ to $H_{X\times[0, 1]}\otimes l^2(\mathbb{N})$ defined as $v\to v\otimes \delta_n$ for $v$ in $H_{X\times[0, 1]}$. It follows from the property (2) of $W(r)$ and from the definition of $V_{n,t}$ not only that for any $n\in \N_{>0}\cup \{\infty\}$, 
\[
\mathrm{Ad}_{V_{n,t}}\colon \cA \to \cA
\]
is well-defined but also that the diagonal map
\[
\alpha = \sum_{1\leq n < \infty} \mathrm{Ad}_{U_nV_{n,t}}  
\]
induces a $\ast$-homomorphism from $\cA$ to the multiplier algebra $M(\cA^\infty)$. Similarly, both
\[
\beta=  \sum_{1\leq n < \infty} \mathrm{Ad}_{U_nV_{n+1,t}}  , \,\,\, \gamma=   \sum_{1\leq n < \infty} \mathrm{Ad}_{U_nV_{\infty,t}}  
\]
map $\cA$ to $M(\cA^\infty)$. Since for any $1\leq t< +\infty$, $V_{n,t}=V_{\infty,t}$ for almost all $n$, for any $T_t$ in $\cA$, both pairs
\[
(\alpha, \gamma)(T_t) = ( \alpha(T_t), \gamma(T_t)   ), \,\, (\beta, \gamma)(T_t) = ( \beta(T_t), \gamma(T_t)   ), \,\, 
\]
of elements in $M(\cA^\infty)$ define elements in the double
\[
D=M(\cA^\infty)\oplus_{\cA^\infty}M(\cA^\infty) = \{ (a_1, a_2) \in M(\cA^\infty)\oplus M(\cA^\infty) \mid a_1-a_2 \in \cA^\infty  \}.
\]
Note that the pairs $(\alpha, \gamma)(T_t)$, $(\beta, \gamma)(T_t)$ define elements in the subalgebra
\[
C=\{ (a_1, a_2)\in D \mid  a_2= \sum_{1\leq n < \infty} \mathrm{Ad}_{U_nV_{\infty, t}}(T_t), T_t \in \cA  \}
\]
of $D$. The $\ast$-homomorphisms $(\alpha, \gamma)$ and $(\beta, \gamma)$ from $\cA$ to $C$ and hence as maps from $\cA$ to $D$, induce the same map on the K-theory groups. This is because $(\alpha, \gamma)$ and $(\beta, \gamma)$ are conjugate in $C$ by a partial isometry $w=(w_1, w_2)$ in the multiplier algebra $M(C)$ of $C$ where
\[
w_1= \sum_{1\leq n < \infty} U_n V_{n+1,t}V_{n,t}^\ast U_n^\ast,  \,\,w_2= \sum_{1\leq n < \infty} U_n V_{\infty, t}V_{\infty, t}^\ast U_n^\ast.
\]
Indeed, it can be directly checked that
\[
w(\alpha(a), \gamma(a))w^\ast = (\beta(a), \gamma(a)) \,\,\, (\alpha(a), \gamma(a))w^\ast w = (\alpha(a), \gamma(a))
\]
for $a\in \cA$. The fact that $w \in M(C)$ can be checked directly using $w_1, w_2 \in M(\cA^\infty)$ and 
\[
\sum_{1\leq n<\infty} U_n ( V_{n+1,t}V_{n,t}^\ast-V_{\infty,t} V_{\infty,t}^\ast ) T_t U_n^\ast  \in \cA^\infty
\]
for any $T_t$ in $\cA$. This follows from the properties (3) and (4) of $V_{n,t}$. Now we have 
\[
(\alpha, \gamma)_\ast = (\beta, \gamma)_\ast \colon K_\ast(\cA) \to K_\ast(D).
\]
The unilateral shift $U$ on $l^2(\N)$ defines an isometry $(U, U)$ in $M(D)$ and using this, we see that a $\ast$-homomorphism
\[
T_t \mapsto ( \mathrm{Ad}_{U_1V_{1,t}}(T_t),  \mathrm{Ad}_{U_1V_{\infty, t}}(T_t) )
\]
from $\cA$ to $D=M(\cA^\infty)\oplus_{\cA^\infty} M(\cA^\infty)$ is zero on the K-theory groups. From here, it is routine to see that the two maps $\mathrm{Ad}_{V_{1,t}}$, $\mathrm{Ad}_{V_{\infty,t}}$ from $\cA$ to $\cA$ induce the same map on the K-theory groups. Thus, we have $\bD_\ast(f_0)=\bD_\ast(f_1)$.
\end{proof}

\begin{remark} What is shown in \cite[Theorem 3.4]{WY2021} implies that the canonical inclusion induces an isomorphism
\[
K_\ast(RL^*_u(H_X)) \cong K_\ast(RL^*_c(H_X))
\]
for any ample $X$-module $H_X$. Alternatively, we can see this by noting that Theorem \ref{thm_homology} also holds if we use $RL^*_u(H_X)$ in place of $RL^*_c(H_X)$. The listed properties of two functors can be used to deduce the isomorphisms for general $X$ from the case when $X$ is the point. 
\end{remark}


\section{Crossed product of representable localization algebra}

Let $G$ be a second countable, locally compact group (locally compact group in short). We fix a left-Haar measure $\mu_G$ on $G$ and use it to define $L^1(G)$ and $L^2(G)$. We have for $f\in L^1(G)$,
\[
\int_{s\in G} f(s^{-1})\Delta(s)^{-1}d\mu_G(s)= \int_{s\in G} f(s)d\mu_G(s),
\]
\[
\int_{s\in G} f(st)\Delta(t)d\mu_G(s)= \int_{s\in G} f(s)d\mu_G(s),
\]
where $\Delta$ is the modular function.

A $G$-Hilbert space is a separable Hilbert space $H$ equipped with a unitary representation $g\mapsto u_g$ of $G$. We use $g$ for $u_g$ when there is no confusion. The representation is continuous with respect to the strong topology, i.e. $G\times H \ni (g, v)\mapsto gv \in H$ is continuous with respect to the norm topology on $H$.

A $C^*$-algebra $A$ equipped with an action of $G$ by $\ast$-automorphisms is called a $G$-$C^*$-algebra if all elements in $A$ are $G$-continuous. Here, $a\in A$ is $G$-continuous if $G \ni g\mapsto g(a) \in A$ is continuous in norm. A representation of a $G$-$C^*$-algebra is always assumed to be $G$-equivariant.

A proper $G$-space $X$ is a locally compact space $X$ equipped with a continuous action of $G$ by homeomorphisms such that for any compact subset $X_0$ of $X$,  $gX_0 \cap X_0 =\emptyset$ for any $g$ in $G$ outside a compact subset $K$ of $G$ (which depends on $X_0$). This is same as saying that the map $G\times X \ni (g, x) \mapsto (gx, x) \in X\times X$ is proper.

A $G$-$C_0(X)$-algebra is a $G$-$C^*$-algebra $A$ equipped with a non-degenerate, central representation of $G$-$C^*$-algebra $C_0(X)$ to the multiplier algebra $M(A)$ of $A$. A $G$-$C^*$-algebra is proper if it is a $G$-$C_0(X)$-algebra for some proper $G$-space $X$.

Let $X$ be a proper $G$-space. An $X$-$G$-module is a $G$-Hilbert space equipped with a non-degenerate representation of $G$-$C^*$-algebra $C_0(X)$.

Let $H_X$ be an $X$-$G$-module. There is a natural $G$-action on the algebras $RL_{0}^\ast(H_X)$, $RL^\ast_c(H_X)$, $RL^\ast_u(H_X)$, $RL_{c, Q}^\ast(H_X)$, $RL_{u, Q}^\ast(H_X)$ which is not necessarily continuous except on $RL_{0}^\ast(H_X)$. The subalgebra $RL^\ast_c(H_X)_{\Gcont}$ of $RL^\ast_c(H_X)$ consisting of $G$-continuous elements is naturally a $G$-$C^*$-algebra. For discrete $G$, there is no difference between the two. To make our notations clean, {\bf from now on, we will use the notation $RL^\ast_c(H_X)$ to denote the $G$-$C^*$-algebra $RL^\ast_c(H_X)_{\Gcont}$}. This definition implicitly depends on the group $G$ but it should be clear from the context. The same remark applies to the algebras $RL^\ast_u(H_X)$, $RL_{c, Q}^\ast(H_X)$, $RL_{u, Q}^\ast(H_X)$.

We shall mostly study $RL^\ast_c(H_X)$ but the entire discussion and results on $RL^\ast_c(H_X)$ have obvious analogues for $RL_{u}^\ast(H_X)$. 

We have the following  short exact sequence of $G$-$C^*$-algebras,
\begin{equation}\label{eq_seqG}
\xymatrix{
0  \ar[r]^-{}     &   RL_0^\ast(H_X)            \ar[r]^-{}         &  RL_c^\ast(H_X)           \ar[r]^-{} &               RL_{c, Q}^\ast(H_X) \ar[r]^-{}       &           0.
}
\end{equation}

The natural representation of $C_0(X)$ to $M(RL_{c}^\ast(H_X))$ and to $M(RL_{c, Q}^\ast(H_X))$ are $G$-equivariant.

\begin{proposition} The $G$-$C^*$-algebras $RL_{c, Q}^\ast(H_X)$ is naturally a $G$-$C_0(X)$-algebra. That is, the natural representation of the $G$-$C^*$-algebra $C_0(X)$ to the multiplier algebra $M(RL_{c, Q}^\ast(H_X))$ is non-degenerate and central. 
\end{proposition}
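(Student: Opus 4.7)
The plan is to reduce this to the non-equivariant version of the proposition proved above and then verify the additional $G$-compatibility. The non-degeneracy and centrality of the representation $\pi\colon C_0(X) \to M(RL_{c,Q}^\ast(H_X))$ are already established there, so the only additional content in the equivariant statement is (a) that $\pi$ restricts to a representation into the multiplier algebra of the $G$-continuous part $RL_{c,Q}^\ast(H_X)_{\Gcont}$, and (b) that this representation is $G$-equivariant.

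For (a), since $X$ is a proper $G$-space, $C_0(X)$ is a $G$-$C^\ast$-algebra, so each $\phi \in C_0(X)$ is itself $G$-continuous. Given $T \in RL_c^\ast(H_X) = RL_c^\ast(H_X)_{\Gcont}$, the identity
\[
g(\phi T) - \phi T = (g(\phi) - \phi)\, g(T) + \phi\, (g(T) - T)
\]
together with $G$-continuity of $\phi$ and $T$ shows $\phi T \in RL_c^\ast(H_X)_{\Gcont}$, and the same argument descends to the quotient, so $\pi$ takes values in $M(RL_{c,Q}^\ast(H_X)_{\Gcont})$. For (b), the essential input is that $H_X$ is an $X$-$G$-module, which by definition means the representation of $C_0(X)$ on $H_X$ is $G$-equivariant: $u_g \phi u_g^{-1} = g(\phi)$ in $\Linears(H_X)$. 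It follows immediately that for $T \in RL_{c,Q}^\ast(H_X)$ and $g \in G$,
\[
g \cdot (\pi(\phi)\, T) = \pi(g(\phi))\, g(T),
\]
i.e.\ $\pi$ is $G$-equivariant. Combined with the previously established non-degeneracy (from the uniform compact support condition in Definition \ref{def_alg}) and centrality (from the commutator condition), this shows that $RL_{c,Q}^\ast(H_X)$ is a $G$-$C_0(X)$-algebra.

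I do not expect any real obstacle: this is essentially bookkeeping that promotes the earlier non-equivariant proposition to its $G$-equivariant analogue, with all compatibilities built into the definition of an $X$-$G$-module and into the convention that $RL_{c,Q}^\ast(H_X)$ denotes its $G$-continuous part. The mild subtlety to keep in mind, which justifies writing down the argument rather than merely asserting it, is precisely that the $G$-action on $RL_c^\ast(H_X)$ is not norm-continuous in general, so one must check that multiplication by elements of $C_0(X)$ preserves $G$-continuity — which is exactly what the displayed identity above does.
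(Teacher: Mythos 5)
Your proof is correct and matches what the paper intends: the paper states this proposition without proof, relying on the non-equivariant version (non-degeneracy from uniform compact support, centrality from the asymptotic commutation condition) together with the observation, recorded just beforehand, that the $C_0(X)$-representation is $G$-equivariant. The one point genuinely worth writing down — that multiplication by $\phi\in C_0(X)$ preserves $G$-continuity, which you verify with the identity $g(\phi T)-\phi T=(g(\phi)-\phi)g(T)+\phi(g(T)-T)$ — is exactly the right check given the convention that $RL^*_{c,Q}(H_X)$ now denotes its $G$-continuous part.
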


Since $RL_{c, Q}^\ast(H_X)$ is a proper $G$-$C^*$-algebra, the reduced crossed product $RL_{c, Q}^\ast(H_X)\rtimes_rG$ and the maximal crossed product $RL_{c, Q}^\ast(H_X)\rtimes_{\rmax}G$ coincide \cite[Theorem 5.3]{AD02}.

\begin{lemma}\label{lem_shortG} The short exact sequence \eqref{eq_seqG} of $G$-$C^*$-algebras descends to the short exact sequence of reduced crossed product algebras,
\[
\xymatrix{
0  \ar[r]^-{}     &   RL_0^\ast(H_X)\rtimes_rG            \ar[r]^-{}         &  RL_c^\ast(H_X)\rtimes_rG           \ar[r]^-{} &               RL_{c, Q}^\ast(H_X)\rtimes_rG \ar[r]^-{}       &           0.
}
\]
\end{lemma}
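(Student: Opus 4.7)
The plan is to deduce exactness of the reduced crossed product sequence from exactness of the maximal crossed product sequence, using the properness of the quotient $RL_{c,Q}^\ast(H_X)$ as the bridge between the two. Applying the (always exact) maximal crossed product functor to the sequence \eqref{eq_seqG} produces the exact sequence
\[
0 \to RL_0^\ast(H_X)\rtimes_{\rmax}G \to RL_c^\ast(H_X)\rtimes_{\rmax}G \xrightarrow{q_{\rmax}} RL_{c,Q}^\ast(H_X)\rtimes_{\rmax}G \to 0.
\]
Since $RL_{c,Q}^\ast(H_X)$ is a proper $G$-$C^*$-algebra, the cited result \cite[Theorem 5.3]{AD02} identifies $RL_{c,Q}^\ast(H_X)\rtimes_{\rmax}G$ with $RL_{c,Q}^\ast(H_X)\rtimes_{r}G$ via the canonical surjection. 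This yields a commutative diagram with the top row exact, the left and middle vertical arrows being the canonical surjections from maximal to reduced crossed products, and the rightmost vertical arrow being an isomorphism.

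Next I would invoke the standard fact that, for any $G$-invariant closed ideal $I$ in a $G$-$C^*$-algebra $A$, the inclusion $C_c(G,I) \hookrightarrow C_c(G,A)$ extends to an injective $\ast$-homomorphism $I\rtimes_rG \hookrightarrow A\rtimes_rG$ whose image is a closed ideal. This follows from the fact that a faithful non-degenerate representation of $I$ extends uniquely to a (still faithful when restricted to $I$) representation of $A$ via the multiplier algebra, so the two reduced norms agree on $C_c(G,I)$. Applied to $I=RL_0^\ast(H_X)$, this establishes injectivity of the left-most map in the prospective short exact sequence, while the composition $RL_0^\ast(H_X)\rtimes_rG \to RL_c^\ast(H_X)\rtimes_rG \to RL_{c,Q}^\ast(H_X)\rtimes_rG$ already vanishes at the level of the convolution algebras.

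It remains to show exactness at the middle term, which is a diagram chase: given $z \in \ker q_r$, lift $z$ to some $\tilde z \in RL_c^\ast(H_X)\rtimes_{\rmax}G$ using surjectivity of the maximal-to-reduced map; commutativity of the right square together with the isomorphism on the quotient then forces $q_{\rmax}(\tilde z) = 0$, so by exactness of the top row $\tilde z$ lies in $RL_0^\ast(H_X)\rtimes_{\rmax}G$, and hence $z$ lies in the image of $RL_0^\ast(H_X)\rtimes_rG$ by commutativity of the left square. The only substantive obstacle is the fact that the reduced crossed product functor need not be exact for general locally compact $G$, and the properness of $RL_{c,Q}^\ast(H_X)$ (itself a consequence of the uniform compact support condition and the non-degenerate $C_0(X)$-action on $RL_{c,Q}^\ast(H_X)$) is exactly what sidesteps this issue.
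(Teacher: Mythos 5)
Your proposal is correct and follows essentially the same route as the paper: both arguments rest on exactness of the maximal crossed product together with the identification $RL_{c,Q}^\ast(H_X)\rtimes_{\rmax}G = RL_{c,Q}^\ast(H_X)\rtimes_rG$ coming from properness of the quotient. The paper packages the final step as the observation that the left vertical and bottom horizontal maps in the comparison square are surjective while their composite is an isomorphism, whereas you run the equivalent elementwise diagram chase; the content is the same.
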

\begin{proof}
The exactness can be seen from the diagram 
\begin{equation*}
\xymatrix{
RL_c^\ast(H_X)\rtimes_{\rmax}G  /     RL_0^\ast(H_X)\rtimes_{\rmax}G     \ar[d]^-{}       \ar[r]^-{\cong} &               RL_{c, Q}^\ast(H_X)\rtimes_{\rmax}G \ar[d]^-{=}      \\
RL_c^\ast(H_X)\rtimes_rG  /     RL_0^\ast(H_X)\rtimes_rG            \ar[r]^-{} &               RL_{c, Q}^\ast(H_X)\rtimes_rG,
}
\end{equation*}
where the left vertical map and the bottom horizontal map are surjective from which their injectivity follows.
\end{proof}

\begin{proposition}\label{prop_quotient_isomG1} The quotient map from $RL_c^\ast(H_X)\rtimes_rG$ to $RL_{c, Q}^\ast(H_X)\rtimes_rG$ induces an isomorphism on the K-theory groups. The same holds for the maximal crossed product. 
\end{proposition}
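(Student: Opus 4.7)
The plan is to deduce this from the six-term K-theory exact sequence applied to the short exact sequence of Lemma \ref{lem_shortG}, by showing that $K_\ast(RL_0^\ast(H_X)\rtimes_r G) = 0$ and likewise for the maximal crossed product.

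First I would identify $RL_0^\ast(H_X) = C_0([1,\infty), \Compacts(H_X))$ with the tensor product $C_0([1,\infty)) \otimes \Compacts(H_X)$ of $G$-$C^*$-algebras, where $G$ acts trivially on the first factor. Because $C_0([1,\infty))$ is commutative, hence nuclear, and the $G$-action on it is trivial, the crossed product distributes over the tensor product, giving natural isomorphisms
\[
RL_0^\ast(H_X)\rtimes_r G \cong C_0([1,\infty)) \otimes \bigl(\Compacts(H_X)\rtimes_r G\bigr),
\]
and similarly for $\rtimes_{\rmax}$ (nuclearity also making min and max tensor products coincide on the first factor, so the two cases are handled uniformly).

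Next, $C_0([1,\infty))$ is isomorphic via $t\mapsto 1/t$ to the cone $C_0((0,1])$ over $\bC$, which is contractible and has trivial K-theory. The standard Eilenberg swindle that this supports — essentially the one already invoked in the proof of Theorem \ref{thm_homology}(2) — shows that tensoring with $C_0([1,\infty))$ yields a $C^*$-algebra with vanishing K-theory in each variable, so
\[
K_\ast\bigl(C_0([1,\infty)) \otimes A\bigr) = 0
\]
for any $C^*$-algebra $A$. Taking $A = \Compacts(H_X)\rtimes_r G$ (respectively the maximal analogue), the kernel in Lemma \ref{lem_shortG} has trivial K-theory, and the six-term exact sequence then gives the claimed isomorphism.

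The only point to verify carefully is the distributivity of the crossed product over the tensor product with a trivially acted nuclear algebra; this is a standard fact and should not be a real obstacle. Everything else is a direct consequence of the structure of $RL_0^\ast(H_X)$ as a cone-like algebra and the triviality of the $G$-action on the parameter $t\in [1,\infty)$.
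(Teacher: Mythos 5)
Your proposal is correct and follows the same overall strategy as the paper: show that the kernel $RL_0^\ast(H_X)\rtimes G$ of the sequence in Lemma \ref{lem_shortG} has vanishing K-theory (exactness being automatic for $\rtimes_{\rmax}$) and invoke the six-term sequence. The only difference is the mechanism for the vanishing: the paper observes that the $G$-action on $C_0([1,\infty),\Compacts(H_X))$ is \emph{inner}, so the crossed product is $C_0([1,\infty),\Compacts(H_X))\otimes C^*_r(G)$ (resp.\ $\otimes_{\rmax}C^*_{\rmax}(G)$), whereas you pull out the trivially-acted nuclear factor $C_0([1,\infty))$ and use that the crossed product commutes with tensoring by it. Both are standard one-line facts; your version is marginally more robust in that it never uses innerness and would apply verbatim with $\Compacts(H_X)$ replaced by an arbitrary $G$-$C^*$-algebra. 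One small correction: no Eilenberg swindle is needed (nor is the one from Theorem \ref{thm_homology}(2) the relevant one, since that argument is for the non-vanishing-at-infinity algebras $C_{b,u}$); $C_0([1,\infty))\otimes A\cong C_0((0,1],A)$ is a cone, hence contractible, and homotopy invariance of K-theory already gives $K_\ast=0$.
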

\begin{proof} Note that
\[
RL_0^\ast(H_X)\rtimes_rG = C_0([1, \infty), \Compacts(H_X))\rtimes_rG \cong C_0([1, \infty), \Compacts(H_X))\otimes C^*_r(G)
\]
and 
\[
RL_0^\ast(H_X)\rtimes_{\rmax}G = C_0([1, \infty), \Compacts(H_X))\rtimes_{\rmax}G \cong C_0([1, \infty), \Compacts(H_X))\otimes_{\rmax} C^*_{\rmax}(G)
\]
since the $G$-action on $RL_0^\ast(H_X)=C_0([1, \infty), \Compacts(H_X))$ is inner. We see that both $RL_0^\ast(H_X)\rtimes_rG$ and $RL_0^\ast(H_X)\rtimes_{\rmax}G$ have zero K-theory groups. The claim follows from this and Lemma \ref{lem_shortG}.

\end{proof}

\begin{proposition}\label{prop_quotient_isomG} The quotient map from $RL_c^\ast(H_X)\rtimes_{\rmax}G$ to $RL_{c}^\ast(H_X)\rtimes_rG$ induces an isomorphism on the K-theory groups. 
\end{proposition}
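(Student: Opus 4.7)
The plan is to deduce the result by comparing the maximal and reduced crossed product sequences with their proper quotients, using that $RL_{c,Q}^\ast(H_X)$ is a proper $G$-$C^*$-algebra.

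First I would set up the commutative diagram of $G$-equivariant quotient maps
\[
\xymatrix{
RL_c^\ast(H_X)\rtimes_{\rmax}G   \ar[r]^-{q_{\rmax}} \ar[d]^-{\lambda_c} &     RL_{c, Q}^\ast(H_X)\rtimes_{\rmax}G \ar[d]^-{\lambda_Q}  \\
RL_c^\ast(H_X)\rtimes_rG            \ar[r]^-{q_r}         &  RL_{c, Q}^\ast(H_X)\rtimes_rG,
}
\]
where the horizontal arrows quotient by $RL_0^\ast(H_X)\rtimes_{\rmax}G$ and $RL_0^\ast(H_X)\rtimes_rG$ respectively, and the vertical arrows are the canonical surjections from the maximal to the reduced crossed products. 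The target is to show $\lambda_{c,\ast}$ is an isomorphism on K-theory, so I would prove the other three arrows induce isomorphisms on K-theory.

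For the horizontal arrows, Proposition \ref{prop_quotient_isomG1} already gives exactly that both $q_{\rmax}$ and $q_r$ induce isomorphisms on K-theory: the argument there works verbatim in either setting, as the $G$-action on $RL_0^\ast(H_X)=C_0([1,\infty),\Compacts(H_X))$ is inner, so the crossed product identifies with a tensor product with $C^*_{\rmax}(G)$ or $C^*_r(G)$, which is K-contractible in each case because $C_0([1,\infty),\Compacts(H_X))$ is. For the right vertical arrow, I would invoke the fact, noted just above Lemma \ref{lem_shortG}, that $RL_{c, Q}^\ast(H_X)$ is a proper $G$-$C^*$-algebra; by \cite[Theorem 5.3]{AD02} the canonical map from the maximal to the reduced crossed product of any proper $G$-$C^*$-algebra is a $\ast$-isomorphism, so $\lambda_Q$ is an isomorphism, hence an isomorphism on K-theory.

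Commutativity of the diagram then forces $\lambda_{c,\ast}$ to be an isomorphism on K-theory, which is the claim. There is no real obstacle here: everything is assembled from earlier results in the section together with the quoted properness-implies-$\rtimes_r=\rtimes_{\rmax}$ fact. The one small point worth noting is that $RL_{c,Q}^\ast(H_X)$ really is proper in the $G$-equivariant sense—i.e.\ the $C_0(X)$-structure on it is $G$-equivariant and central—but this has already been recorded in the proposition preceding Lemma \ref{lem_shortG}, so no additional verification is needed.
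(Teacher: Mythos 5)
Your proposal is correct and follows essentially the same route as the paper: the same commutative square, with the horizontal K-theory isomorphisms supplied by Proposition \ref{prop_quotient_isomG1} and the right vertical map an isomorphism because $RL_{c,Q}^\ast(H_X)$ is a proper $G$-$C^*$-algebra, so its maximal and reduced crossed products coincide. Nothing is missing.
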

\begin{proof}
This follows from the following commutative diagram 
\begin{equation*}
\xymatrix{
   K_\ast( RL_c^\ast(H_X)\rtimes_{\rmax}G )\ar[r]^-{\cong} \ar[d]^-{} &   K_\ast( RL_{c, Q}^\ast(H_X)\rtimes_{\rmax}G  ) \ar[d]^-{=}    \\
     K_\ast(  RL_c^\ast(H_X) \rtimes_rG )         \ar[r]^-{\cong} &          K_\ast(     RL_{c, Q}^\ast(H_X)\rtimes_rG ) .
}
\end{equation*}
\end{proof}

Let $H$ be an open subgroup of $G$. Let $Y$ be a proper $H$-space and consider the balanced product
\[
G\times _H Y = ( G\times Y ) / H
\]
where the right $H$-action on $G\times Y$ is given by $(g, y)\mapsto (gh, h^{-1}y)$. The balanced product $G\times _H Y $ is a proper $G$-space by the left-translation. The $H$-space $Y$ is naturally an $H$-invariant open subset of $G\times _H Y$. 

\begin{proposition}\label{prop_compactopen} Let $H$ be an open subgroup of $G$, $Y$ be a proper $H$-space and $X$ be the balanced product $G\times _H Y$. For any $X$-$G$-module $H_X$, let $H_Y=\chi_YH_X$ which we naturally view as an $H$-$Y$-module. Then, the natural inclusion
\[
RL^*_c(H_Y)\rtimes_rH \to RL^*_c(H_X)\rtimes_rG
\]
induces an isomorphism on K-theory groups.
\end{proposition}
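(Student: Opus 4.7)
The plan is to exhibit $RL^*_c(H_Y)\rtimes_rH$ as a full corner of $RL^*_c(H_X)\rtimes_rG$ cut down by the projection $\chi_Y$, and then invoke the standard fact that the inclusion of a full corner induces an isomorphism on K-theory.

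Since $H$ is open in $G$, the proper $G$-space $X = G\times_H Y$ decomposes as a topologically disjoint union $X = \bigsqcup_{gH\in G/H} gY$, so $\chi_Y$ is continuous on $X$ and defines a projection in $C_b(X)\subset M(RL^*_c(H_X))$; moreover $gY\cap Y\neq\emptyset$ iff $g\in H$. Since $H$ preserves $Y$, the projection $\chi_Y$ is $H$-fixed and commutes with every $u_h$, $h\in H$, inside $M(RL^*_c(H_X)\rtimes_rG)$. A direct verification from Definition \ref{def_alg} then shows that the compression $T\mapsto \chi_Y T\chi_Y$ identifies $RL^*_c(H_Y)$ with the hereditary subalgebra $\chi_Y RL^*_c(H_X)\chi_Y$ as $H$-$C^*$-algebras; for the commutator condition one uses the identity $\chi_Y[\phi,T]\chi_Y = [\phi,\chi_Y T\chi_Y]$ valid for $\phi\in C_0(Y)$.

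Next I would identify $\chi_Y(RL^*_c(H_X)\rtimes_rG)\chi_Y$ with $RL^*_c(H_Y)\rtimes_rH$. On the dense subalgebra $C_c(G, RL^*_c(H_X))$, using $u_g\chi_Y u_g^{-1}=\chi_{gY}$, one computes
\[
\chi_Y\Bigl(\int_G f(g)u_g\,dg\Bigr)\chi_Y = \int_G \chi_Y f(g)\chi_{gY} u_g\,dg,
\]
which is supported on $g\in H$ because $\chi_Y\chi_{gY}=0$ unless $g\in H$, and hence restricts to an element of $C_c(H, RL^*_c(H_Y))$. Conversely, the natural inclusion $RL^*_c(H_Y)\rtimes_rH\to RL^*_c(H_X)\rtimes_rG$ takes values in this corner because $\chi_Y$ commutes with $u_h$ for $h\in H$. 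That the two reduced crossed product norms agree uses openness of $H$: the regular representation of $H$ on $L^2(H)$ sits isometrically inside the restriction of the regular representation of $G$ via the decomposition $L^2(G)=\bigoplus_{gH\in G/H}L^2(gH)$, and the faithful regular representations of the two crossed products intertwine on the closed subspace $H_Y\otimes L^2(H)\subset H_X\otimes L^2(G)$.

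Finally I would verify fullness of the corner. For every $g\in G$, $u_g\chi_Y u_g^{-1}=\chi_{gY}$ lies in the closed two-sided ideal $I$ generated by $\chi_Y$ in $RL^*_c(H_X)\rtimes_rG$, and the family $\{\chi_{gY}\}_{gH\in G/H}$ generates $C_0(X)$ inside $M(RL^*_c(H_X))$. Since $H_X$ is an $X$-$G$-module, $C_0(X)$ acts non-degenerately on $RL^*_c(H_X)$, so $I$ contains all of $RL^*_c(H_X)$ and hence equals the full crossed product. Brown's theorem (a full corner inclusion is a Morita equivalence) then yields the desired K-theory isomorphism. The main obstacle I expect is the isometric identification of the reduced norms in the middle step; openness of $H$ is exactly what makes this routine, rather than requiring a more delicate measure-theoretic analysis as for general closed subgroups.
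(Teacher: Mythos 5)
Your overall strategy---realize $RL^*_c(H_Y)\rtimes_rH$ as a full corner of $RL^*_c(H_X)\rtimes_rG$ and invoke Brown/Morita---is essentially the route the paper takes, but there is a genuine gap in your middle step. The identity
\[
\chi_Y\Bigl(\int_G f(g)u_g\,dg\Bigr)\chi_Y=\int_G\chi_Y f(g)\chi_{gY}u_g\,dg
\]
is correct, but the conclusion that the integrand vanishes for $g\notin H$ is not: $\chi_Y f(g)\chi_{gY}$ is not $\chi_Y\chi_{gY}f(g)$, because an element of $RL^*_c(H_X)$ only \emph{asymptotically} commutes with $C_0(X)$ (and with bounded Borel functions such as $\chi_{gY}$). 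At any finite $t$ the operator $f(g)_t$ is merely a compact operator with uniform compact support, and it can have nonzero ``off-diagonal blocks'' $\chi_Y f(g)_t\chi_{gY}$ between distinct cosets (e.g.\ a rank-one operator $\theta_{v,w}$ with $v$ supported in $Y$ and $w$ in $gY$). What is true is that $\lVert\chi_Y f(g)_t\chi_{gY}\rVert\to 0$ as $t\to\infty$ for $g\notin H$, so the corner $\chi_Y\bigl(RL^*_c(H_X)\rtimes_rG\bigr)\chi_Y$ strictly contains the image of $RL^*_c(H_Y)\rtimes_rH$, the excess being absorbed by the ideal $RL^*_0(H_X)\rtimes_rG$. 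So the asserted identification of $C^*$-algebras fails, and with it the literal application of Brown's theorem. (Your step identifying $RL^*_c(H_Y)$ with $\chi_Y RL^*_c(H_X)\chi_Y$ is fine, since there $\chi_Y$ genuinely commutes with $C_0(X)$; the problem arises only once the group elements enter.)

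The repair is exactly the paper's first move: pass to the quotient $RL^*_{c,Q}(H_X)=RL^*_c(H_X)/RL^*_0(H_X)$, which costs nothing on K-theory by Proposition \ref{prop_quotient_isomG1}, and in which the representation of $C_0(X)$---hence of $C_0(G/H)$---is \emph{central}. There $\chi_Y$ is a central projection, your computation becomes literally correct, and the compression by $\chi_Y$ identifies the corner with $RL^*_{c,Q}(H_Y)\rtimes_rH$. The paper phrases this as: $RL^*_{c,Q}(H_X)$ is a $G$-$C_0(G/H)$-algebra with fiber $RL^*_{c,Q}(H_Y)$ at the identity coset (a subalgebra as well as a quotient since $G/H$ is discrete), and for such an algebra $A$ one has $A\rtimes_rG\cong(A_0\rtimes_rH)\otimes\Compacts(l^2(G/H))$ with the natural inclusion being the corner embedding---i.e.\ the same full-corner argument you intend, performed after the quotient. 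Your fullness argument and your remark about matching reduced norms (which is indeed where openness of $H$ enters) are fine once transplanted to that setting.
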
 
\begin{proof} Note that the natural inclusion $RL^*_c(H_Y)  \to RL^*_c(H_X)$ is well-defined since $H$ is open so $H$-continuous elements are automatically $G$-continuous. In view of Proposition \ref{prop_quotient_isomG1}, it is enough to show that the natural inclusion
\[
RL^*_{c, Q}(H_Y)\rtimes_rH  \to RL^*_{c, Q}(H_X)\rtimes_rG
\]
induces an isomorphism on K-theory groups. Note that $RL^*_{c, Q}(H_X)$ is a $G$-$C_0(G/H)$-algebra whose fiber at the coset $H$ is naturally the $H$-$C^*$-algebra $RL^*_{c, Q}(H_Y)$. Note that since $G/H$ is discrete, the fiber is not only a quotient but also a subalgebra. In general, for any $G$-$C_0(G/ H)$-algebra $A$, with fiber $A_0$ at $H$, the natural inclusion
\[
A_0\rtimes_rH \to A\rtimes_rG
\]
induces an isomorphism on K-theory since we have an isomorphism $A\rtimes_rG\cong (A_0\rtimes_rH)\otimes \Compacts(l^2(G/H))$ and the inclusion corresponds to the corner embedding of $A_0\rtimes_rH$ into $(A_0\rtimes_rH)\otimes \Compacts(l^2(G/H))$. This is just a special case of a more general Morita-equivalence between $(\mathrm{Ind}_H^GA_0)\rtimes_rG$ and $A_0\rtimes_rH$ which holds for any closed subgroup $H$ of $G$ and for any $H$-$C^*$-algebra $A_0$ \cite[Theorem 17]{Green78}.
\end{proof}


\section{Universal $X$-$G$-module}

For discrete $G$, there is a notion of an ample $X$-$G$-module. An $X$-$G$-module $H_X$ is ample (as an $X$-$G$-module) if it is ample as an $X$-module and if it is locally free in a sense that for any finite subgroup $F$ of $G$ and for any $F$-invariant Borel subset $E$ of $X$, $\chi_EH_X$ is $F$-equivariantly isomorphic to $l^2(F)\otimes H_0$ for some Hilbert space $H_0$ where $l^2(F)$ is equipped with the left-regular representation of $F$. For a locally compact group $G$, the exact analogue would not give us a good definition. For example, for $G=\R$ and $X=\R$, an $X$-$G$-module $L^2(\R)$ would be ample in such a definition, although it is rather the smallest $X$-$G$-module. In this article, we refrain from defining an ample $X$-$G$-module for locally compact $G$ but we will define some substitute, a universal $X$-$G$-module (see below). We also refer the reader to \cite[Section 2.1]{GHM} for a supplement to this section.

Let $X, Y$ be proper $G$-spaces and $H_X, H_Y$ be an $X$-$G$-module and a $Y$-$G$-module respectively.

For a $G$-equivariant continuous map $f\colon X\to Y$, a family of isometries $(V_t\colon H_X \to H_Y)_{t\in[1, \infty)}$ is called an equivariant continuous cover of $f$ (\cite[Definition 4.5.11]{WY2020}) if it is a continuous cover of $f$ (ignoring the $G$-actions) and if $V_t$ is $G$-equivariant for all $t\geq1$. For discrete $G$, such an equivariant continuous cover of $f$ exists whenever $H_Y$ is an ample $Y$-$G$-module \cite[Proposition 4.5.12]{WY2020}.  

\begin{lemma}\label{lem_coveridG} Let $f\colon X\to Y$ be a $G$-equivariant continuous map of proper $G$-spaces. A family of isometries $(V_t\colon H_X \to H_Y)_{t\in[1, \infty)}$ is an equivariant continuous cover of $f$ if and only if it is an equivariant continuous cover of the identity on $Y$ by regarding $H_X$ as a $Y$-$G$-module $(H_X)_Y$ via $f^*\colon C_0(Y)\to C_b(X)$. 
\end{lemma}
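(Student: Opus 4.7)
The plan is to reduce to the non-equivariant Lemma \ref{lem_coverid} and then observe that the added $G$-equivariance condition is independent of whether we view $H_X$ as an $X$-$G$-module or as a $Y$-$G$-module via $f^\ast$.

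First I would note that the underlying Hilbert space of the $Y$-$G$-module $(H_X)_Y$ is identical to that of $H_X$, with the same unitary $G$-representation; only the representation of a continuous-functions algebra has been changed, from $C_0(X)\to \Linears(H_X)$ to $C_0(Y)\xrightarrow{f^\ast} C_b(X)\to \Linears(H_X)$. Since $f$ is $G$-equivariant, $f^\ast$ is a morphism of $G$-$C^\ast$-algebras, so the new $C_0(Y)$-action on $(H_X)_Y$ is also $G$-equivariant, and the pointwise condition ``$V_t$ is $G$-equivariant'' is therefore the same condition in either viewpoint.

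Next, the non-equivariant part (uniform norm-continuity of $t\mapsto V_t$ together with the support condition) is exactly the content of Lemma \ref{lem_coverid} applied to the underlying (non-equivariant) situation. That lemma shows that $(V_t)$ is a continuous cover of $f\colon X\to Y$ iff $(V_t)$, viewed as a family $H_X=(H_X)_Y\to H_Y$, is a continuous cover of $\mathrm{id}_Y\colon Y\to Y$.

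Combining these two observations gives both implications of the lemma. The only mildly technical point is to check that nothing about the support condition depends on distinguishing the $X$-module structure from the $Y$-module structure on $H_X$; but this is exactly the content of the three-way equivalence in Lemma \ref{lem_coverid}, so no new work is needed. There is no real obstacle here — the lemma is essentially a bookkeeping statement that upgrades the non-equivariant Lemma \ref{lem_coverid} using that $G$-equivariance of a single operator is a module-structure-independent condition once the $G$-representations agree.
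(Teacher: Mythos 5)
Your proof is correct and is essentially the paper's argument: the paper simply states that the proof of Lemma \ref{lem_coverid} works verbatim, and your reduction to that lemma for the support/continuity conditions, together with the observation that $G$-equivariance of each $V_t$ is unaffected by swapping the $C_0(X)$-structure for the $C_0(Y)$-structure (same underlying Hilbert space and unitary representation), is exactly that. No gaps.
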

\begin{proof} The proof of Lemma \ref{lem_coverid} works verbatim.
\end{proof}

\begin{definition}\label{def_universalXG} We say that an $X$-$G$-module $H_X$ is a universal $X$-$G$-module if there is an equivariant continuous cover $(V_t\colon H^0_X \to H_X)_{t\in[1, \infty)}$ of the identity map on $X$ for any $X$-$G$-module $H^0_X$.
\end{definition}

Hence, for discrete $G$, any ample $X$-$G$-module is universal. 

\begin{lemma} Let $H_Y$ be a universal $Y$-$G$-module. Then, for any $G$-equivariant continuous map $f\colon X\to Y$ and for any $X$-$G$-module $H_X$, there is an equivariant continuous cover $(V_t\colon H_X \to H_Y)_{t\in[1, \infty)}$ of $f$.
\end{lemma}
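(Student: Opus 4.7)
The plan is to reduce the assertion to the already-established identity-map case via Lemma \ref{lem_coveridG}, and then invoke the defining property of a universal $Y$-$G$-module. Concretely, one turns $H_X$ into a $Y$-$G$-module $(H_X)_Y$ by composing the $C_0(X)$-action on $H_X$ with $f^\ast\colon C_0(Y)\to C_b(X)\subset \Linears(H_X)$; the given $G$-action on $H_X$ remains $G$-equivariant for this new structure because $f$ is $G$-equivariant.

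The one point that requires verification is that the resulting representation of $C_0(Y)$ on $H_X$ is non-degenerate, so that $(H_X)_Y$ is genuinely a $Y$-$G$-module in the sense of the paper. I would argue as follows: for any $\psi\in C_0(X)$, the image $f(\supp\psi)\subset Y$ is compact, so one may choose $\phi\in C_0(Y)$ with $\phi\equiv 1$ on $f(\supp\psi)$; then $f^\ast(\phi)\psi=\psi$ as elements of $C_b(X)$, and therefore $C_0(X) H_X\subset f^\ast(C_0(Y))H_X$. Since $H_X$ is non-degenerate as an $X$-module, $C_0(X)H_X$ is dense in $H_X$, which yields the non-degeneracy of $(H_X)_Y$.

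With $(H_X)_Y$ established as a $Y$-$G$-module, Definition \ref{def_universalXG} applied to the $Y$-$G$-module $H_X^0=(H_X)_Y$ directly produces an equivariant continuous cover $(V_t\colon (H_X)_Y\to H_Y)_{t\in[1,\infty)}$ of the identity map on $Y$. By Lemma \ref{lem_coveridG}, this same family, now viewed as $V_t\colon H_X\to H_Y$, is an equivariant continuous cover of $f$, completing the construction. There is no real obstacle in this proof; the content of the lemma is essentially a packaging of Lemma \ref{lem_coveridG} and the definition of universality, the only computation being the non-degeneracy check above. This formulation is what makes the universal module convenient for defining the functor $\bD_\ast^{B,G}$ on $\mathcal{PR}^G$, since one no longer needs to re-verify the existence of equivariant covers for each continuous map.
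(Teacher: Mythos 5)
Your proof is correct and follows the paper's own route exactly: regard $H_X$ as a $Y$-$G$-module $(H_X)_Y$ via $f^\ast$, apply the definition of a universal $Y$-$G$-module to obtain an equivariant continuous cover of the identity on $Y$, and convert it into a cover of $f$ using Lemma \ref{lem_coveridG}. The only quibble is in your non-degeneracy check: a general $\psi\in C_0(X)$ need not have compact support, so the argument should be run with $\psi\in C_c(X)$ (which suffices by density) --- equivalently, as the paper notes, non-degeneracy follows because $f^\ast\colon C_0(Y)\to C_b(X)=M(C_0(X))$ is a non-degenerate $\ast$-homomorphism.
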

\begin{proof} This follows from Lemma \ref{lem_coveridG} and from the definition of a universal $Y$-$G$-module.
\end{proof}

We will show that a universal $X$-$G$-module exists for any locally compact group $G$ and for any proper $G$-space $X$ but we first do some preparation.

\begin{definition}\label{def_amplify} Let $X$ be a proper $G$-space. Let $H_X$ be an $X$-module. We define an $X$-$G$-module 
\[
\bar H_X=H_X\otimes L^2(G) \cong L^2(G, H_X)
\]
where a representation of $G$ on $\bar H_X$ is given by the left-regular representation on $L^2(G)$,
\[
(gf)(h)=f(g^{-1}h)
\]
for $g\in G$ and $(f\colon G\ni h\mapsto f(h) \in H_X)\in L^2(G, H_X)$ and where the representation of $C_0(X)$ on $\bar H_X$ is given by
\[
(\phi f)(h)=h^{-1}(\phi) f(h)
\]
for $\phi \in C_0(X)$. 
\end{definition}

\begin{definition} We say that an $X$-$G$-module $H_X$ is very ample (as an $X$-$G$-module) if it is, up to isomorphisms, of the form $\bar H_X^0$ for some ample $X$-module $H_X^0$.
\end{definition}

\begin{lemma} Let $H_X$ be a very ample $X$-$G$-module. Let $Y$ be any $G$-invariant closed subset of $X$ which is the closure of an open subset of $X$. Then, $\chi_YH_X$ is a very ample $Y$-$G$-module. Similarly, if $U$ is any $G$-invariant open subset of $X$, then $\chi_UH_X$ is a very ample $U$-$G$-module. 
\end{lemma}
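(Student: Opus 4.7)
The plan is to reduce both assertions to the non-equivariant fact that the ``cut'' of an ample module by $\chi_Y$ is still ample, and then to transport this through the explicit fiber description of $\bar H_X^0$. Throughout, fix an ample $X$-module $H_X^0$ with $H_X \cong \bar H_X^0 = L^2(G, H_X^0)$ in the sense of Definition \ref{def_amplify}. Since $Y$ (resp.\ $U$) is $G$-invariant, $h^{-1}(\chi_Y)=\chi_Y$ for every $h \in G$, so the multiplier $\chi_Y$ on $\bar H_X^0$ acts fiber-wise as the projection $\chi_Y$ on $H_X^0$; this yields a canonical identification of $G$-Hilbert spaces $\chi_Y\bar H_X^0 = L^2(G, \chi_Y H_X^0)$, and analogously for $U$.

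Next I would verify that this identification is $C_0(Y)$-equivariant, where the $C_0(Y)$-module structure on $\chi_Y\bar H_X^0$ is induced from $C_0(X)$ via Tietze extension. For $\phi\in C_0(Y)$ with extension $\tilde\phi\in C_0(X)$ and $f \in L^2(G,\chi_YH_X^0)$, the ambient formula $(\tilde\phi f)(h)=h^{-1}(\tilde\phi)f(h)$ restricts on the $Y$-supported fiber to $(h^{-1}(\tilde\phi)|_Y)\cdot f(h)=h^{-1}(\phi)\cdot f(h)$, using $G$-invariance of $Y$ to see that $h^{-1}(\tilde\phi)|_Y$ depends only on $\phi$. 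This is precisely the canonical $Y$-$G$-module structure on $\overline{\chi_Y H_X^0}$, so the problem is reduced to showing that $\chi_Y H_X^0$ is ample as a $Y$-module (resp.\ $\chi_U H_X^0$ as a $U$-module).

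For the open case, any $\phi \in C_0(U)$ extends by zero to $\tilde\phi\in C_0(X)$, and its action on $\chi_UH_X^0$ agrees with that of $\tilde\phi$ on $H_X^0$ (since $\tilde\phi=\tilde\phi\chi_U$), so ampleness of $H_X^0$ delivers non-compactness whenever $\phi\neq 0$. For the closed case $Y=\bar U$, take a nonzero $\phi\in C_0(Y)$ and a Tietze extension $\tilde\phi\in C_0(X)$; by density of $U$ in $\bar U$ there exists $x \in U$ with $\phi(x)\neq 0$, and I pick $\psi\in C_c(X)$ supported in $U$ with $\psi(x)=1$, so $\tilde\phi\psi\in C_0(X)$ is nonzero. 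The identity $(\tilde\phi\chi_{\bar U})\psi = \tilde\phi\psi$ (which holds because $\chi_{\bar U}\psi=\psi$) then shows that compactness of $\tilde\phi\chi_{\bar U}$ on $H_X^0$ would force compactness of $\tilde\phi\psi$, contradicting ampleness of $H_X^0$. The only point requiring genuine care is this last bump-function bookkeeping, where the hypothesis $Y=\bar U$ is used in an essential way; absent density of $U$ in $Y$ one could not guarantee that $\phi$ is detected by a test function of compact support inside $U$, and the conclusion would fail for general closed $Y$.
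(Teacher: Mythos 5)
Your proof is correct and follows essentially the same route as the paper's: identify $\chi_Y\bar H_X^0$ with $\overline{\chi_YH_X^0}$ (using $G$-invariance of $Y$ to see that $\chi_Y$ acts fiberwise), and then check that $\chi_YH_X^0$ is an ample $Y$-module because $Y$ is the closure of an open set. The paper leaves both of these steps as "easy to see"; your bump-function argument for the closed case is a valid way to fill in the second one, and correctly isolates where the hypothesis $Y=\bar U$ is needed.
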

\begin{proof} If $H_X=\bar H_X^0=H^0_X\otimes L^2(G)$ for an ample $X$-module $H^0_X$, it is easy to see that $\chi_YH_X=\bar H^0_Y$ where $H^0_Y=\chi_YH^0_X$. Since $Y$ is the closure of an open subset of $X$, $H^0_Y$ is an ample $Y$-module. The claim now follows. The case of a $G$-invariant open subset is proved in the same way.
\end{proof}

Recall that for a proper $G$-space $X$, $c\in C_b(X)$ is a cut-off function on $X$ if $c\geq0$, $\int_Gg(c)^2d\mu_G(g)=1$, and for any compact subset $X_0$ of $X$, $\supp(c)\cap gX_0=\emptyset$ for $g$ outside a compact subset of $G$. When $X$ is $G$-compact, $c$ is (automatically) compactly supported. We fix any cut-off function $c$ on $X$. For any $X$-$G$-module $H_X$, an $X$-$G$-module $\bar H_X$ is defined as in Definition \ref{def_amplify} by regarding $H_X$ as an $X$-module. We define an isometry 
\[
V_c\colon H_X\to \bar H_X=H_X\otimes L^2(G)
\]
by sending $v\in H_X$ to 
\begin{equation}\label{eq_V_c}
V_c(v) (h) = cu_h^{-1}v\,\,\, (\text{$h\in G$}),
\end{equation}
where $u_h$ is the unitary operator on $H_X$ corresponding to $h\in G$.

\begin{lemma} The isometry $V_c\colon H_X \to \bar H_X$ is $G$-equivariant and intertwines the representations of $C_0(X)$. That is, $V_c$ is a strict cover of the identity map on $X$.
 \end{lemma}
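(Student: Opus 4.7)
The plan is straightforward: everything reduces to three direct computations using only the definition of $V_c$ and the cut-off relation $\int_G g(c)^2\,d\mu_G(g)=1$. The only small subtleties are keeping track of which side $G$ acts on, and using the covariance relation $u_h\phi u_h^{-1}=h(\phi)$ for $\phi\in C_0(X)$.

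First, I would check that $V_c$ is well-defined and an isometry. Given $v\in H_X$, the map $h\mapsto cu_h^{-1}v$ lies in $L^2(G,H_X)=\bar H_X$ because
\[
\lVert V_c v\rVert^2=\int_G\lVert cu_h^{-1}v\rVert^2\,d\mu_G(h)=\int_G\langle v,u_h c^2 u_h^{-1}v\rangle\,d\mu_G(h)=\Bigl\langle v,\int_G h(c^2)\,d\mu_G(h)\,v\Bigr\rangle=\lVert v\rVert^2,
\]
using $u_h c^2 u_h^{-1}=h(c^2)$ and the defining property of the cut-off function. Strong measurability/continuity of $h\mapsto cu_h^{-1}v$ is clear since $G$ acts strongly continuously on $H_X$ and $c$ is a bounded multiplier.

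Second, for $G$-equivariance I would compute, for $g,h\in G$ and $v\in H_X$,
\[
V_c(gv)(h)=cu_h^{-1}u_g v=cu_{(g^{-1}h)^{-1}}v=V_c(v)(g^{-1}h)=(gV_c(v))(h),
\]
by the definition of the left regular representation on $L^2(G,H_X)$ in Definition \ref{def_amplify}.

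Third, for the $C_0(X)$-intertwining, using $u_h^{-1}\phi=h^{-1}(\phi)u_h^{-1}$ and the fact that $c$ and $h^{-1}(\phi)$ are both in $C_b(X)$ and hence commute, I get
\[
V_c(\phi v)(h)=cu_h^{-1}\phi v=c\,h^{-1}(\phi)u_h^{-1}v=h^{-1}(\phi)\,cu_h^{-1}v=(\phi V_c v)(h),
\]
which is exactly the $C_0(X)$-action on $\bar H_X$. From this, the strict cover statement is immediate: if $\psi,\phi\in C_0(X)$ satisfy $\psi\phi=0$, then $\psi V_c\phi=V_c(\psi\phi)=0$, so $\supp(V_c)$ is contained in the diagonal of $X\times X$, which is what it means for $V_c$ to be a strict cover of $\mathrm{id}_X$.

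There is no real obstacle here; the only thing to watch is the convention $u_h\phi u_h^{-1}=h(\phi)$ (equivalently $u_h^{-1}\phi u_h=h^{-1}(\phi)$), which is what makes the $C_0(X)$-intertwining work with the twisted action $(\phi f)(h)=h^{-1}(\phi)f(h)$ on $\bar H_X$. This twist is precisely designed so that $V_c$ is equivariant for both structures.
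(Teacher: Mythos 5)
Your proof is correct and is exactly the direct verification the paper intends (its own proof is just ``This can be checked directly''): the isometry property follows from $\int_G h(c)^2\,d\mu_G(h)=1$, equivariance from the left-regular action, and the $C_0(X)$-intertwining from the covariance relation $u_h^{-1}\phi u_h=h^{-1}(\phi)$ matching the twisted action $(\phi f)(h)=h^{-1}(\phi)f(h)$ on $\bar H_X$. All three computations check out against Definition \ref{def_amplify}, and the deduction that the support lies in the diagonal (hence in every neighborhood of it) is the correct reading of ``strict cover of the identity''.
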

 \begin{proof} This can be checked directly.
 \end{proof}

\begin{lemma}\label{lem_universal} Let $H_X$ be an ample $X$-module. Let $\bar H_X$ be the $X$-$G$-module as in Definition \ref{def_amplify}. Then, for any $X$-$G$-module $H^0_X$, and for any open neighborhood $U$ of the diagonal in $X^+\times X^+$, there is a $G$-equivariant isometry $V$ from $H^0_X$ to $\bar H_X$ such that $\supp(V) \subset U$.
\end{lemma}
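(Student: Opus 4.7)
The plan is to construct $V$ as a composition $V = W \circ V_c$ of two manifestly $G$-equivariant isometries that each intertwine the $C_0(X)$-actions. Once one has a $G$-equivariant isometry $V \colon H^0_X \to \bar H_X$ that intertwines $C_0(X)$, the support condition $\supp(V) \subset U$ is automatic. Indeed such a $V$ also intertwines $C(X^+) = C_0(X) + \bC \cdot 1$, so for any off-diagonal pair $(y, x) \in X^+ \times X^+$ one may pick $\phi_1, \phi_2 \in C(X^+)$ with disjoint supports, $\phi_1 \equiv 1$ near $y$ and $\phi_2 \equiv 1$ near $x$; then $\phi_1 V \phi_2 = V \phi_1 \phi_2 = 0$, and so $(y, x) \notin \supp(V)$. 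Hence $\supp(V)$ is contained in the diagonal of $X^+ \times X^+$, and in particular in $U$.

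For $V_c$ I take the $G$-equivariant strict cover of the identity map on $X$ furnished by the cut-off function $c$ as in \eqref{eq_V_c}; this is a $G$-equivariant, $C_0(X)$-intertwining isometry $V_c \colon H^0_X \to \bar H^0_X$ by what has just been verified in the preceding lemma. For $W \colon \bar H^0_X \to \bar H_X$ I first invoke the standard absorption statement for ample $X$-modules to obtain an $X$-module isometry $A \colon H^0_X \to H_X$, i.e.\ an isometry of Hilbert spaces that intertwines the $C_0(X)$-representations. Then I set $W := A \otimes 1_{L^2(G)}$. The map $W$ is $G$-equivariant because the $G$-action on both $\bar H^0_X$ and $\bar H_X$ sees only the $L^2(G)$-factor via the left regular representation, and $A \otimes 1$ commutes with this action. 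It intertwines the $C_0(X)$-actions because these are given fiberwise by $(\phi f)(h) = (h^{-1}\phi) f(h)$ and $A$ commutes with multiplication by every element of $C_0(X)$ (and in particular by every $h^{-1}\phi$).

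Finally, $V := W \circ V_c$ is the required $G$-equivariant, $C_0(X)$-intertwining isometry from $H^0_X$ to $\bar H_X$, and the opening paragraph shows $\supp(V) \subset U$. The only genuinely non-formal step is the existence of the $X$-module isometry $A$, which is the classical absorption theorem for ample $X$-modules and presents no real obstacle; the remainder of the argument is just the book-keeping needed to confirm that $A \otimes 1_{L^2(G)}$ is compatible with the twisted $C_0(X)$-representation $(\phi f)(h) = (h^{-1}\phi) f(h)$ on the $\bar H^{\bullet}_X$.
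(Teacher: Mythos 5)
There is a genuine gap, and it sits exactly at the step you call ``the only genuinely non-formal step'': the claimed existence of an isometry $A\colon H^0_X\to H_X$ that \emph{exactly} intertwines the $C_0(X)$-representations. No such exact intertwiner exists in general, even when $H_X$ is ample. For a concrete obstruction, take $X=[0,1]$, $H_X=L^2[0,1]$ with the multiplication representation (this is ample), and $H^0_X=\bC$ with $\phi\mapsto\phi(0)$. An exact intertwiner would produce a unit vector $\xi\in L^2[0,1]$ with $\phi\xi=\phi(0)\xi$ for all $\phi\in C[0,1]$, i.e.\ a vector supported on the Lebesgue-null set $\{0\}$, which is impossible. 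What ampleness actually buys (e.g.\ \cite[Corollary 4.4.7]{WY2020}, or \cite[Theorem 2.3]{DWW18} as used later in Lemma \ref{lem_absorbing}) is only an \emph{approximate} intertwiner: an isometry whose support lies in a prescribed neighborhood of the diagonal, equivalently with small but nonzero propagation, and with $[A,\phi]$ compact of small norm on a prescribed compact set of $\phi$'s.

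Once $A$ is only approximate, the rest of your argument collapses, and in a way that reveals why the paper's proof is as involved as it is. Your $W=A\otimes 1$ must commute with the twisted action $(\phi f)(h)=h^{-1}(\phi)f(h)$, which requires controlling $[A,h^{-1}(\phi)]$ \emph{uniformly over all $h\in G$}; an approximate intertwiner controls this only for finitely many test functions, and the translates $h^{-1}(\phi)$ form a non-compact family. The paper's proof handles exactly this: it chooses $V_0$ with propagation $<\delta$ and with a support-confinement property $V_0\chi_A=\chi_BV_0\chi_A$, observes that only $h$ in a compact set $K$ contribute when one localizes to a compact piece $X_0$, and then picks $\delta$ via the compactness of the set $C=K^{-1}\{(x,y)\in X_2\times X_2\mid d(x,y)\ge\epsilon/2\}$ so that propagation $<\delta$ survives translation by every $k\in K$. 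It also first reduces to the $G$-compact case (needed because the cut-off function $c$ is compactly supported only then). Your opening paragraph (strict cover $\Rightarrow$ support in the diagonal) is correct, but the hypothesis it needs cannot be met; the lemma genuinely only produces covers with support in a neighborhood $U$ of the diagonal, not in the diagonal itself.
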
 
\begin{proof} Fix any metric $d$ on $X^+$. It is enough to find, for any compact subset $X_0$ of $X$ and $\epsilon>0$, a $G$-equivariant isometry $V$ from $H^0_X$ to $\bar H_X$ such that the support $\supp(V)$ is contained in 
\[
\{ (x, y) \in X_0\times X \mid d(x, y)<\epsilon   \} \cup \{ (x, y) \in X\times X_0 \mid d(x, y)<\epsilon   \}   \cup (X-X_0) \times (X-X_0).
\] 
We first explain how we can assume $X$ is $G$-compact for this. We let $U$ be any relatively compact, open subset of $X$ containing $X_0$ and let $Y$ be the closure of $GU$ which is a $G$-compact closed subspace of $X$. Suppose we get a $G$-equivariant isometry $V$ from $H^0_Y=\chi_YH^0_X$ to $\bar H_Y =\chi_Y\bar H_X$ (where $H_Y=\chi_YH_X$), then we obtain a desired isometry $V$ from $H^0_X$ to $\bar H_X$ by adding $V$ to any $G$-equivariant isometry from the complement $H^0_{X-Y}=\chi_{X-Y}H^0_X$ to $\chi_{X-Y}\bar H_{X}=\bar H_{X-Y}$ (where $H_{X-Y}=\chi_{X-Y}H_X$). Such an (arbitrary) $G$-equivariant isometry from $H^0_{X-Y}$ to $\bar H_{X-Y}$ exists, for example we can take the composition of $V_c\colon H^{0}_{X-Y} \to \bar H^0_{X-Y}$ and the amplification
\[
W\otimes1 \colon H^0_{X-Y}\otimes L^2(G) \to H_{X-Y}\otimes L^2(G)
\]
of any isometry $W\colon  H^0_{X-Y} \to H_{X-Y}$. 

Now, we assume $X$ is $G$-compact. Let $\epsilon>0$, $X_0$ be a compact subset of $X$ and $X_1$ be any compact neighborhood of $X_0$. Let $A$ be the support of $c$ which is compact (here we used $G$-compactness) and $B$ be any compact neighborhood of $A$ in $X$. We let $K$ be a compact subset of $G$ such that $X_1\cap gB=\emptyset$ for $g\in G\backslash K$. Let $X_2$ be any compact neighborhood of $KB$. Note that we have $X_2\supset KB \supset X_1 \supset X_0$. Since
\[
C= K^{-1} \{(x,y)\in X_2\times X_2 \mid d(x, y)\geq \epsilon/2 \}  
\]
\[
= \{(k^{-1}x,k^{-1}y)\in X \times X \mid k\in K, (x, y) \in X_2\times X_2, d(x, y)\geq \epsilon/2 \}
\]
is a compact subset of $K^{-1}X_2\times K^{-1}X_2$ which does not contain the diagonal, there is $\delta>0$ such that any $(x, y)\in C$ satisfies $d(x, y)\geq \delta$.

We let $V_0\colon H_X^0 \to H_X$ be an isometry so that the following is satisfied (here we used the ampleness of $H_X$):
\begin{enumerate}
\item $V_0\chi_A = \chi_BV_0\chi_A$.
\item $\prop(V_0)<\delta$.
\end{enumerate}

 Let $V_c\colon H^0_X \to \bar H^0_X$ be as in \eqref{eq_V_c} and let $\bar V_0\colon \bar H^0_X \to \bar H_X$ be given by
\[
\bar V_0=V_0\otimes 1\colon  H^0_X\otimes L^2(G)\to H_X\otimes L^2(G).
\] 
We show that a $G$-equivariant isometry $V=\bar V_0V_c\colon H_X^0 \to \bar H_X$ satisfies the required support condition. More explicitly, the isometry $V$ sends $v\in H_X^0$ to  
\[
(V(v)\colon h \mapsto V(v)(h)= V_0cu_h^{-1}v\in H_X)  \in L^2(G, H_X)
\]
where $u_h$ is the unitary on $H^0_X$ corresponding to $h\in G$. We see that for any Borel functions $\phi_1, \phi_2$ on $X$, $\phi_1 V \phi_2=0$ if 
\[
h^{-1}(\phi_1)V_0ch^{-1}(\phi_2)=0  \,\,\, \text{in $\Linears(H_X^0, H_X)$ for all $h\in G$}.
\]
We have
\[
h^{-1}(\phi_1)V_0ch^{-1}(\phi_2) = h^{-1}(\phi_1)V_0\chi_Ach^{-1}(\phi_2) = h^{-1}(\phi_1)\chi_BV_0\chi_Ach^{-1}(\phi_2),
\]
so if either of $\phi_1$, $\phi_2$ has support contained in $X_1$, this is zero unless $h\in K$. Moreover, if $\supp(\phi_1)\subset X_1$ (resp. $\supp(\phi_2)\subset X_1$), then this is zero for all $h\in G$ if $\supp(\phi_2)\cap KB=\emptyset$ (resp. $\supp(\phi_1)\cap KB=\emptyset$). In particular, $X_0\times (X-KB)$ and $(X-KB)\times X_0$ are disjoint from $\supp(V)$.

Now, let $(x, y)\in X_0\times X$ such that $d(x, y)\geq \epsilon$. We show that $(x, y) \notin \supp(V)$. The case when $y\in (X-KB)$ is already proved so we assume $y\in KB$. Then, there are open sets $U_x\ni x$ in $X_1$ and $U_y\ni y$ in $X_2$ such that the distance $d(U_x, U_y)\geq \epsilon/2$. It follows from the definition of $\delta>0$ that $d(k^{-1}U_x, k^{-1}U_y)\geq \delta$ for any $k\in K$. It follows $\chi_{U_x}V\chi_{U_y}=0$ i.e.
\[
\chi_{h^{-1}U_x}V_0c\chi_{h^{-1}U_y}=0  \,\,\, \text{for all $h\in G$}.
\]
For $h \in G\backslash K$, this is because $U_x\subset X_1$, and for $h\in K$, this is because $\prop(V_0)<\delta$. Similarly, we can show that $(x, y) \notin \supp(V)$ for any $(x, y)\in X\times X_0$ such that $d(x, y)\geq \epsilon$. 
\end{proof}

\begin{proposition}\label{prop_universal} For any second countable, locally compact group $G$ and for any proper $G$-space $X$, a universal $X$-$G$ module exists. In fact, for any ample $X$-$G$-module $H_X$, the $X$-$G$-module $\bar H_X$  (as in Definition \ref{def_amplify}) is a universal $X$-$G$ module, i.e. any very ample $X$-$G$-module is universal.
\end{proposition}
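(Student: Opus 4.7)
The plan is to apply Lemma \ref{lem_universal} along a shrinking sequence of neighborhoods of the diagonal and then glue the resulting $G$-equivariant isometries into a uniformly continuous path, using orthogonal decompositions of the target to keep the intermediate operators isometric. First I note that if the conclusion holds for some ample $X$-module $H_X$, it holds for any other ample $H'_X$: any two ample $X$-modules absorb each other, and in particular $H_X$ and $H_X \otimes l^2(\N)$ are isomorphic as $X$-modules, so $\bar H_X$ and $H_X \otimes l^2(\N) \otimes L^2(G)$ are isomorphic as $X$-$G$-modules. Hence I may replace $H_X$ by $H_X \otimes l^2(\N)$ and assume there is a $G$-equivariant orthogonal decomposition
\[
\bar H_X = \bigoplus_{n=1}^{\infty} \bar H_X^{(n)},
\]
where each $\bar H_X^{(n)}$ is itself the amplification of an ample $X$-module.

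Fix any metric on $X^+$ and let $U_n = \{(x,y) \in X^+ \times X^+ \mid d(x,y) < 1/n\}$, a neighborhood basis of the diagonal. Given an arbitrary $X$-$G$-module $H_X^0$, I apply Lemma \ref{lem_universal} inside each summand $\bar H_X^{(n)}$ to obtain a $G$-equivariant isometry $W_n \colon H_X^0 \to \bar H_X^{(n)} \subset \bar H_X$ with $\supp(W_n) \subset U_n$. Because the summands $\bar H_X^{(n)}$ are mutually orthogonal, the ranges of consecutive isometries $W_n, W_{n+1}$ are orthogonal.

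I then build the cover by rotation: for $t \in [n, n+1]$, set
\[
V_t = \cos\!\bigl(\tfrac{\pi}{2}(t-n)\bigr) W_n + \sin\!\bigl(\tfrac{\pi}{2}(t-n)\bigr) W_{n+1},
\]
with $V_1 = W_1$ on $[1,2]$ as the starting piece. Each $V_t$ is a $G$-equivariant isometry because the ranges of $W_n$ and $W_{n+1}$ are orthogonal, and the family is uniformly norm-continuous on $[1,\infty)$ since $\cos$ and $\sin$ have bounded derivatives and the pieces glue smoothly at integer times. Moreover,
\[
\supp(V_t) \subset \supp(W_n) \cup \supp(W_{n+1}) \subset U_n
\]
for $t \in [n, n+1]$, so for any open neighborhood $U$ of the diagonal in $X^+ \times X^+$ one finds $n_U$ with $U_{n_U} \subset U$, whence $\supp(V_t) \subset U$ for all $t \geq n_U$. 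This is precisely the condition that $(V_t)$ is an equivariant continuous cover of $\mathrm{id}_X$ in the sense of Lemma \ref{lem_coveridG}, so $\bar H_X$ is universal.

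The only delicate point is arranging the orthogonal decomposition of $\bar H_X$ into countably many very ample summands to which Lemma \ref{lem_universal} applies; this is why the initial reduction to $H_X \otimes l^2(\N)$ (and the absorption principle for ample $X$-modules) is essential. Once this is in place the cosine-sine interpolation is standard and the support condition is immediate from the nesting $U_{n+1} \subset U_n$.
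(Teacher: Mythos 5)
Your core construction --- producing $G$-equivariant isometries $W_n$ supported in a shrinking basis of neighbourhoods of the diagonal, arranging consecutive ones to have orthogonal ranges, and interpolating by rotation --- is exactly the skeleton of the paper's proof (the paper uses $\sqrt{n+1-t}\,V_n+\sqrt{t-n}\,V_{n+1}$ in place of your cosine--sine combination; both yield isometries once the ranges are orthogonal, and the support estimate is identical). The gap is in how you obtain the orthogonality. Your reduction rests on the claim that every ample $X$-module satisfies $H_X\cong H_X\otimes l^2(\N)$ as $X$-modules. This is false: non-degenerate representations of $C_0(X)$ are classified by their multiplicity functions, and ampleness does not force infinite multiplicity. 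For example $L^2[0,1]$ is an ample $[0,1]$-module of uniform multiplicity one, hence not isomorphic to $L^2[0,1]\otimes l^2(\N)$. The companion assertion that universality of $\bar H_X$ for one ample $H_X$ implies it for all others is also unjustified: two ample $X$-modules mutually admit covering isometries, but they are not mutually isomorphic, and transferring universality along a cover from $\overline{H_X\otimes l^2(\N)}$ down to $\bar H_X$ would require precisely the kind of equivariant cover into $\bar H_X$ that you are trying to construct. (The case of trivial $G$, or of a compact group acting trivially, already defeats the reduction: there $\bar H_X$ is a finite amplification of $H_X$ and need not absorb $l^2(\N)$.) As a result your argument establishes the proposition only for modules of the special form $\overline{H^0_X\otimes l^2(\N)}$ --- enough for the existence of a universal module, but not for the stated claim that \emph{every} very ample $X$-$G$-module is universal.

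The repair does not require $C_0(X)\rtimes G$-invariant ample summands at all. In the proof of Lemma \ref{lem_universal} the equivariant isometry is $V=(V_0\otimes 1)V_c$, where $V_0\colon H^0_X\to H_X$ is a plain, non-equivariant isometry of small propagation. The ampleness of $H_X$ allows one to choose the sequence $V_0^{(n)}$ with $\prop(V_0^{(n)})<1/n$ \emph{and} with mutually orthogonal ranges; these ranges need not be $C_0(X)$-invariant, so no decomposition of $H_X$ into ample submodules is needed. Then
\[
(V^{(n)})^\ast V^{(n+1)}=V_c^\ast\bigl((V_0^{(n)})^\ast V_0^{(n+1)}\otimes 1\bigr)V_c=0,
\]
and your interpolation goes through verbatim. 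This is what the paper means by ``we can arrange $V_n$ and $V_{n+1}$ to have mutually orthogonal ranges using the ampleness of $H_X$.''
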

\begin{proof}  Let $H_X$ be any ample $X$-module and let $\bar H_X$ be the $X$-$G$-module as in Definition \ref{def_amplify}. For any $X$-$G$-module $H^0_X$, it is enough to find a uniformly continuous family $(V_t\colon H^0_X\to \bar H_X)_{t\in[1,\infty)}$ of $G$-equivariant isometries such that $\prop(V_t)\to 0$ as $t \to \infty$ for a fixed metric $d$ on $X^+$. Lemma \ref{lem_universal} gives us for any $n>0$, a $G$-equivariant isometry $V_n\colon H^0_X\to \bar H_X$ with $\prop(V_n)<1/n$. It is easy to see from the construction (using the ampleness of $H_X$) that we can arrange $V_n$ and $V_{n+1}$ to have mutually orthogonal ranges for any $n>0$. For $t\in (n, n+1)$, we let
\[
V_t= \sqrt{n+1-t} V_n + \sqrt{t-n} V_{n+1}.
\] 
The family $(V_t\colon H^0_X\to \bar H_X)_{t\in[1,\infty)}$ is an equivariant continuous cover of the identity.

\end{proof}


\section{Representable equivariant K-homology}

Let $X, Y$ be proper $G$-spaces and $H_X, H_Y$ be an $X$-$G$-module and a $Y$-$G$-module respectively. 
Given a $G$-equivariant continuous cover $(V_t\colon H_X \to H_Y)_{t\in[1, \infty)}$ of a $G$-equivariant continuous map $f\colon X\to Y$, the conjugation by $V_t$ defines a $G$-equivariant $\ast$-homomorphism
\[
\mathrm{Ad}_{V_t} \colon RL_{c}^\ast(H_X) \to RL_{c}^\ast(H_Y).
\]
Thus, it defines a $\ast$-homomorphism
\[
\mathrm{Ad}_{V_t} \colon RL_{c}^\ast(H_X)\rtimes_rG \to RL_{c}^\ast(H_Y)\rtimes_rG.
\]
This $\ast$-homomorphism depends on the cover $V_t$ of $f$ but the induced map on their K-theory groups is independent of the choice of $V_t$. This is because given two continuous covers $(V_{i, t}\colon H_X\to H_Y)_{t\in [1,\infty)}$ $(i=1,2)$ of $f$, the two maps
\[
a \mapsto \begin{bmatrix} \mathrm{Ad}_{V_{1,t}}(a) & 0  \\ 0 & 0  \end{bmatrix}, \,\,\,
a\mapsto \begin{bmatrix} 0 & 0  \\ 0 & \mathrm{Ad}_{V_{2,t}}(a)  \end{bmatrix}
\]
from $RL_{c}^\ast(H_X)\rtimes_rG$ to the matrix algebra $M_2(RL_{c}^\ast(H_Y)\rtimes_rG)$ are conjugate to each other by the unitary 
\[
 \begin{bmatrix} 1-V_{1,t}V_{1,t}^\ast & V_{1,t}V_{2,t}^\ast  \\ V_{2,t}V_{1,t}^\ast & 1-V_{2,t}V_{2,t}^\ast  \end{bmatrix}
\]
in the $2\times2$-matrix algebra $M_2( M(RL_{c}^\ast(H_Y))) \subset M_2( M(RL_{c}^\ast(H_Y)\rtimes_rG))$ of the multiplier algebra.

Note $\mathrm{Ad}_{V_t}$ induces a $\ast$-homomorphism on the quotients  
\[
\mathrm{Ad}_{V_t} \colon RL_{c, Q}^\ast(H_X)\rtimes_rG \to RL_{c, Q}^\ast(H_Y)\rtimes_rG
\]
as it maps the ideal $RL_{0}^\ast(H_X)\rtimes_rG$ into $RL_{0}^\ast(H_Y)\rtimes_rG$ and the induced map on their K-theory groups is again independent of the choice of a cover $V_t$ of $f$.

\begin{definition} Choose any universal $X$-$G$-module $H_X$ for each proper $G$-space $X$ and any $G$-equivariant continuous cover $(V^f_t\colon H_X\to H_Y)_{t\in [1,\infty)}$ for each $G$-equivariant continuous map $f\colon X\to Y$. A functor $\mathbb{D}^G_\ast$ from the category $\mathcal{PR}^G$ of (second countable, locally compact) proper $G$-spaces to the category $\mathcal{GA}$ of graded abelian groups is defined as
\[
\mathbb{D}^G_\ast(X)=K_\ast(RL^\ast_c(H_X)\rtimes_rG),
\]
\[
\mathbb{D}^G_\ast(f\colon X\to Y)=\mathrm{Ad}_{V^f_t \ast}\colon K_\ast(RL^\ast_c(H_X)\rtimes_rG) \to K_\ast(RL^\ast_c(H_Y)\rtimes_rG).
\]
\end{definition}

\begin{proposition}\label{prop_welldefG} The functor  $\mathbb{D}^G_\ast$ from  $\mathcal{PR}^G$ to $\mathcal{GA}$ is well-defined. The functor does not depend on the choice of universal $X$-$G$-modules $H_X$ up to canonical equivalence.
\end{proposition}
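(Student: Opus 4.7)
The plan is to mimic the proof of Proposition \ref{prop_welldef} in the equivariant, crossed-product setting, with universal $X$-$G$-modules replacing ample $X$-modules. The same obstruction reappears: for a composition $f_2 \circ f_1 \colon X \to Y \to Z$ of $G$-equivariant continuous maps with chosen equivariant covers $V_i^{f_i}$, the product $V_2^{f_2} V_1^{f_1}$ is an equivariant cover of $f_2 \circ f_1$ only when $f_2$ is proper, and $V_3^{f_2\circ f_1} (V_2^{f_2} V_1^{f_1})^*$ only multiplies $RL^*_c(H_Z)$ under the same hypothesis. We therefore first establish functoriality on proper equivariant maps and extend by a representability argument.

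The first step is to establish the equivariant representability
\begin{equation*}
\varinjlim_{Y \subset X,\, G\text{-cpt}} \mathbb{D}^G_\ast(Y) \cong \mathbb{D}^G_\ast(X),
\end{equation*}
where the limit is indexed by $G$-compact, $G$-invariant closed subspaces $Y$ of $X$ with bonding maps induced by the inclusions (which are proper $G$-equivariant). To this end, by Proposition \ref{prop_universal} we may (and do) choose the universal $X$-$G$-module $H_X$ to be very ample. Pick an increasing exhaustion of $X$ by relatively $G$-compact, $G$-invariant open sets $U_n$, let $Y_n = \overline{U_n}$, and set $H_{Y_n} = \chi_{Y_n} H_X$. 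The earlier lemma on restrictions of very ample modules together with Proposition \ref{prop_universal} shows $H_{Y_n}$ is a universal $Y_n$-$G$-module, and one checks directly that $RL^*_c(H_{Y_n}) = \chi_{Y_n} RL^*_c(H_X) \chi_{Y_n}$ and that these form an increasing sequence of $G$-$C^*$-subalgebras of $RL^*_c(H_X)$ whose union is norm-dense. Since $G$-equivariant inductive limits commute with the reduced crossed product, and K-theory commutes with inductive limits, the desired isomorphism follows.

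Next, one checks that the representability isomorphism is natural with respect to $\mathbb{D}^G_\ast(f)$ for any $G$-equivariant continuous $f \colon X \to Y$: any $G$-compact $K \subset X$ maps into some $G$-compact $G$-invariant closed subset of $Y$, and the restriction $f|_K$ is proper and $G$-equivariant, so covers compose correctly. Running the argument of \cite[Corollary 9.4.10]{WY2020} in the equivariant and crossed-product setting yields a commutative square identifying $\mathbb{D}^G_\ast(f)$ with $\varinjlim \mathbb{D}^G_\ast(f|_K)$. Functoriality $\mathbb{D}^G_\ast(f_2 \circ f_1) = \mathbb{D}^G_\ast(f_2) \circ \mathbb{D}^G_\ast(f_1)$ then reduces to the case of proper maps, where composition of covers is legitimate and the argument in the text above applies directly.

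For independence of choices, if $H_X$ and $H'_X$ are two universal $X$-$G$-modules, equivariant continuous covers of the identity in both directions exist by Definition \ref{def_universalXG}, and the same partial-isometry-in-$M_2$ argument given earlier shows $\mathrm{Ad}_{V_t}$ is a two-sided inverse on K-theory of $\mathrm{Ad}_{W_t}$; independence from the choice of cover of a given $f$ is handled by the identical argument. The main obstacle is Step one: verifying representability at the level of reduced crossed products, which hinges on choosing $H_X$ very ample so that the restrictions $\chi_{Y_n} H_X$ remain universal on $G$-compact pieces. Once this is settled, the rest is routine adaptation of the non-equivariant arguments, with equivariant covers and crossed products in place of their non-equivariant analogues.
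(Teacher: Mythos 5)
Your proposal is correct and follows essentially the same route as the paper: reduce functoriality to proper maps, prove equivariant representability by exhausting $X$ with $G$-compact $G$-invariant closed subsets $Y_n=\overline{U_n}$ and using that a very ample (hence universal) $H_X$ restricts to very ample modules $\chi_{Y_n}H_X$ with $RL^*_c(H_{Y_n})=\chi_{Y_n}RL^*_c(H_X)\chi_{Y_n}$ an increasing dense family, then apply continuity of K-theory after taking reduced crossed products, check compatibility of this identification with $\mathbb{D}^G_\ast(f)$, and deduce general functoriality. The independence-of-choices argument via the $2\times 2$ partial isometry is also the one used in the text, so nothing further is needed.
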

\begin{proof} As in the non-equivariant case (Proposition \ref{prop_welldef}), there is a technical point concerning the functoriality. This can be handled in the same manner as in the proof of Proposition \ref{prop_welldef}. A-priori, we only have the functoriality 
\[ 
\mathbb{D}^G_\ast(f_2) \circ  \mathbb{D}^G_\ast(f_1)   = \mathbb{D}^G_\ast(f_2\circ f_1)
\]
for $f_1\colon X\to Y$, $f_2\colon Y\to Z$ when $f_2$ is proper. Note that any $G$-equivariant continuous map from a $G$-compact proper $G$-space $X$ to a proper $G$-space $Y$ is proper. We reduce the functoriality to the case of proper maps. Thus, we first show the representability of $\mathbb{D}^G_\ast(X)$: if $(X_i)_{i \in I}$ is the net of $G$-compact, $G$-invariant closed subsets of $X$, ordered by the inclusion, the canonical maps $\bD_\ast(X_i) \to \bD_\ast(X)$ induce a natural isomorphism 
\begin{equation} \label{eq_representableG}
\varinjlim_{i \in I}\bD^G_\ast(X_i) \cong \bD^G_\ast(X).
\end{equation}
Note that the functoriality for proper maps is used for defining this inductive system. It suffices to show that if $U_n$ is an increasing sequence of relatively $G$-compact $G$-invariant open subsets of $X$ such that $\cup_n U_n=X$, then for their closures $X_n=\bar U_n$, the natural inclusions induce an isomorphism 
\[
\varinjlim_nK_\ast(RL_c^\ast(H_{X_n})\rtimes_rG)\cong K_\ast(RL_c^\ast(H_{X})\rtimes_rG).
\]
We may assume $H_X$ is very ample. Since $X_n=\bar U_n$, the subspace $\chi_{X_n}H_X$ is a very ample $X_n$-$G$-module and we may assume $H_{X_n}=\chi_{X_n}H_X$. Then, we see that $RL_c^\ast(H_{X_n})=\chi_{X_n}RL_c^\ast(H_{X})\chi_{X_n}$ and $RL_c^\ast(H_{X_n})$ is an increasing sequence of $G$-$C^*$-subalgebras of $RL_c^\ast(H_{X})$ whose union is dense in $RL_c^\ast(H_{X})$. The claim follows from the continuity of K-theory.

Note that this identification \eqref{eq_representableG} is compatible with $\mathbb{D}^G_\ast(f\colon X\to Y)$: if $(X_i)_{i \in I}$, $(Y_j)_{j\in J}$ are the nets of $G$-compact, $G$-invariant closed subsets of $X$ and $Y$ respectively, and if we consider the map
\[
\varinjlim_{i\in I} \bD^G_\ast(f\mid_{X_i})\colon  \varinjlim_{i \in I} \bD^G_\ast(X_i)   \to  \varinjlim_{j \in J}\bD^G_\ast(Y_j) 
\]
defined as the limit of 
\[
\bD^G_\ast(X_i)  \to \bD^G_\ast(f(X_i))    \to  \varinjlim_{j \in J}\bD^G_\ast(Y_j) 
\]
which is the composition of $\bD^G_\ast(f\mid_{X_i}\colon X_i\to f(X_i))$ and the natural map (note $f(X_i)$ is a $G$-compact, $G$-invariant closed subset of $Y$), the following diagram commutes
\begin{equation*}
\xymatrix{
\varinjlim_{i \in I}\bD^G_\ast(X_i) \ar[r]^-{\varinjlim_{i\in I} \bD^G_\ast(f\mid_{X_i})} \ar[d]^-{\cong} &     \varinjlim_{j \in J}\bD^G_\ast(Y_j) \ar[d]^-{\cong}  \\
K_\ast(RL_c^\ast(H_{X}))           \ar[r]^-{\mathbb{D}^G_\ast(f)}         &  K_\ast(RL_c^\ast(H_{Y})). 
}
\end{equation*}
This follows from the independence for $\mathbb{D}^G_\ast(f)$ of the choice of an equivariant continuous cover $V_t\colon H_X\to H_Y$ and from that for any $G$-compact $G$-invariant closed subset $X_0$ of $X$, there is a $G$-compact $G$-invariant closed subset $Y_0$ of $Y$ such that $V_t\chi_{X_0}=\chi_{Y_0}V_t\chi_{X_0}$ for large enough $t$.

The functoriality $\bD^G_\ast(f_2)\circ \bD^G_\ast(f_1) = \bD^G_\ast(f_2\circ f_1)$ for not necessarily proper maps $f_1\colon X\to Y$, $f_2\colon Y\to Z$ now follows.
\end{proof}

\begin{remark} Since $\mathbb{D}^G_\ast$ is representable as we explained in the proof of Proposition \ref{prop_welldefG}, we may extend this functor to any (not necessarily locally compact) proper $G$-spaces $X$ by defining 
\[
\mathbb{D}^G_\ast(X) = \varinjlim_{Y\subset X, \mathrm{Gcpt}} \mathbb{D}^G_\ast(Y)
\]
where the limit is over all $G$-compact, $G$-invariant subspaces $Y$ of $X$.
\end{remark}

Note that we have naturally
\[
\mathbb{D}^G_\ast(X)=K_\ast(RL^\ast_c(H_X)\rtimes_rG) \cong K_\ast(RL^\ast_{c, Q}(H_X)\rtimes_rG)
\]
by Proposition \ref{prop_quotient_isomG1} and 
\[
\mathbb{D}^G_\ast(X) = K_\ast(RL^*_c(H_X)\rtimes_rG) \cong K_\ast(RL^*_c(H_X)\rtimes_{\rmax}G)
\]
by Proposition \ref{prop_quotient_isomG}.

When $G$ is trivial, $\mathbb{D}^G_\ast$ is nothing but the representable K-homology $\mathbb{D}_\ast$.

\begin{theorem}\label{thm_Ghomology} The functor $\mathbb{D}^G_\ast$ satisfies the following: 
\begin{enumerate}
\item  $\mathbb{D}^G_\ast(\mathrm{empty\, set})\cong 0$.
\item Induction from an open subgroup: for any open subgroup $H$ of $G$ and any proper $H$-space $Y$, consider the balanced product $G\times_HY$ which is a proper $G$-space. We have a natural isomorphism $\mathbb{D}^G_\ast(G\times_H Y)\cong \mathbb{D}_\ast^H(Y)$. If $H$ is a compact open subgroup of $G$, we have $\mathbb{D}^G_\ast(G/H\times \Delta^k)\cong K_\ast(C^*_r(H))$ for any k-simplex (ball) $\Delta^k$ with the trivial $G$-action.

\item Representable: if $(X_i)_{i \in I}$ is the net of $G$-compact, $G$-invariant closed subsets of $X$, ordered by the inclusion, the canonical maps $\mathbb{D}^G_\ast(X_i) \to \mathbb{D}^G_\ast(X)$ induce a natural isomorphism 
\[
\varinjlim_{i \in I}\mathbb{D}^G_\ast(X_i) \cong \mathbb{D}^G_\ast(X).
\]
\item Mayer--Vietoris sequence for a $G$-invariant open cover: if $X=U\cup V$ for $G$-invariant open subsets $U$ and $V$ of $X$, we have a natural Meyer--Vietoris sequence
\[
\xymatrix{
 \mathbb{D}^G_0(U\cap V) \ar[r]^-{}&    \mathbb{D}^G_0(U) \oplus \mathbb{D}^G_0(V) \ar[r]^-{} &   \mathbb{D}^G_0(X) \ar[d]^-{}   \\
\mathbb{D}^G_1(X)     \ar[u]^-{}         &   \mathbb{D}^G_1(U) \oplus \mathbb{D}^G_1(V)        \ar[l]^-{} &            \mathbb{D}^G_1(U\cap V) \ar[l]^-{}.
}
\]
\item  Homotopy invariance: if $h\colon X\times[0 ,1]\to Y$ is a $G$-equivariant continuous homotopy between $f_0, f_1\colon X\to Y$, then $\mathbb{D}^G_\ast(f_0)=\mathbb{D}^G_\ast(f_1)$.
\end{enumerate}
\end{theorem}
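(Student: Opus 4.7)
The plan is to lift each of the five properties in Theorem \ref{thm_homology} to the crossed-product setting, using two technical levers: $RL^*_{c,Q}(H_X)$ is a proper $G$-$C^*$-algebra (so $\rtimes_rG = \rtimes_{\rmax}G$ and is exact on short exact sequences), and the quotient map $RL^*_c(H_X)\rtimes_rG \to RL^*_{c,Q}(H_X)\rtimes_rG$ is a K-theory isomorphism by Proposition \ref{prop_quotient_isomG1}. Properties (1) and (3) come essentially for free: for the empty set every universal module is zero, and representability was already established in the course of proving Proposition \ref{prop_welldefG}.

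For (2), the first statement is precisely Proposition \ref{prop_compactopen}. For the second, I would first apply the first statement to obtain $\mathbb{D}^G_\ast(G/H\times\Delta^k) = \mathbb{D}^G_\ast(G\times_H\Delta^k) \cong \mathbb{D}^H_\ast(\Delta^k)$, then collapse to $\mathbb{D}^H_\ast(\mathrm{point})$ via homotopy invariance (property (5)), and finally compute for compact $H$ using the universal point-$H$-module $H_X = L^2(H)\otimes H_0$. The propagation condition is vacuous at a point, so $RL^*_c(H_X) = C_b([1,\infty),\Compacts(H_X))_{\Gcont}$, and the Eilenberg-swindle arguments of \cite[Prop. 6.3.3]{WY2020} and \cite[Thm. 3.4]{WY2021} are implemented by isometric amplification on an auxiliary $l^2(\N)$ carrying the trivial $H$-action, so they descend to the reduced crossed product; this shows that $\mathrm{ev}_1$ induces an isomorphism $\mathbb{D}^H_\ast(\mathrm{point}) \cong K_\ast(\Compacts(H_X)\rtimes_rH)$, whose target is Morita equivalent to $C^*_r(H)$ via the imprimitivity bimodule arising from the regular representation on $L^2(H)$.

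For (4), fix a very ample $X$-$G$-module $H_X$ and set $H_W = \chi_W H_X$ for $W\in\{U, V, U\cap V\}$; these are very ample $W$-$G$-modules, and the corresponding $G$-invariant ideals $I_W\subset RL^*_{c,Q}(H_X)$ generated by $C_0(W)$ satisfy $I_U + I_V = RL^*_{c,Q}(H_X)$ and $I_U\cap I_V = I_{U\cap V}$ by the argument of Theorem \ref{thm_homology}(4). Applying exactness of $\rtimes_{\rmax}G = \rtimes_rG$ to the short exact sequence $0 \to I_U\cap I_V \to I_U\oplus I_V \to I_U + I_V \to 0$ preserves both identities, and the resulting standard Mayer--Vietoris six-term sequence transfers to $\mathbb{D}^G_\ast$ via Proposition \ref{prop_quotient_isomG1}; naturality follows from the compatibility of equivariant continuous covers with the inclusions $H_W \hookrightarrow H_X$.

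For (5), I would run the proof of Theorem \ref{thm_homology}(5) verbatim inside crossed products. If $H_X$ is a universal $X$-$G$-module, then $H_{X\times[0,1]} = H_X\otimes l^2(Z)\otimes H$ (with $G$ acting only on the first factor) is a universal $X\times[0,1]$-$G$-module, and every operator $W(r)$ and $V_{n,t}$ in the construction acts only on the $l^2(Z)\otimes H$ factors, hence is automatically $G$-equivariant with unchanged support and propagation estimates. The partial-isometry comparison in the double $M(\cA^\infty)\oplus_{\cA^\infty}M(\cA^\infty)$ then descends to the reduced crossed product without modification. The hardest step, in the sense that it is the least formal, is the final computation in (2): explicitly verifying that the swindle is $H$-equivariant and that the Morita equivalence with $C^*_r(H)$ is implemented correctly. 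The remaining parts are essentially formal equivariantisations of arguments already present in Theorem \ref{thm_homology}.
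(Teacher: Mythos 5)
Your proposal is correct and follows essentially the same route as the paper: (1) and (3) are immediate/already established, (2) combines Proposition \ref{prop_compactopen} with homotopy invariance and the point computation (the paper's Proposition \ref{prop_pointcase}, whose proof is exactly your equivariant Eilenberg swindle plus the [WY2021] continuous-versus-uniformly-continuous comparison descended to crossed products), (4) is the ideal-theoretic Mayer--Vietoris argument transported through $RL^*_{c,Q}$ using properness (the paper phrases the sum/intersection identities via the $C_0(X/G)$-algebra structure rather than your exact-sequence bookkeeping, but this is cosmetic), and (5) is the verbatim equivariantisation of the non-equivariant homotopy-invariance proof with the $G$-equivariant partial isometry $w$ living in $M(C_{\mathrm{Gcont}})\subset M(C_{\mathrm{Gcont}}\rtimes_rG)$.
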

\begin{proof} We prove (1)-(5)  following the argument in the non-equivariant case (see the proof of Theorem \ref{thm_homology}). We would like to thank Rufus Willett for telling the author that the non-equivariant proof of (5), the homotopy invariance, should extend to this equivariant setting.

(1) If $X$ is the empty set, $H_X=0$ and $RL_c^\ast(H_X)\rtimes_rG=0$.

(2) The first claim: Let $X=G\times_HY$. Here, $Y$ is a $H$-invariant open subset of $X$. We may assume that the chosen $X$-$G$-module $H_X$ is very ample and that the chosen $Y$-$H$-module $H_Y$ is $\chi_YH_X$ as this is a very ample $Y$-$H$-module. By Proposition \ref{prop_compactopen}, the inclusion
\[
RL^*_c(H_Y)\rtimes_rH \to RL^*_c(H_X)\rtimes_rG 
\]
induces an isomorphism on K-theory groups. This gives
\[
 \mathbb{D}_\ast^H(Y) \cong \mathbb{D}^G_\ast(G\times_H Y).
\]
Naturality (with respect to $Y$) can be checked directly. The second claim will be proved after we prove the homotopy invariance (5).

(3) We have already proved this in the proof of Proposition \ref{prop_welldefG}.

(4) As in the proof of the non-equivariant case, consider $G$-invariant ideals $RL_{c, Q}^\ast(H_X)_U$, $RL_{c, Q}^\ast(H_X)_V$, $RL_{c, Q}^\ast(H_X)_{U\cap V}$ of $RL_{c, Q}^\ast(H_X)$. These are naturally identified as $RL_{c, Q}^\ast(H_U)$, $RL_{c, Q}^\ast(H_V)$, $RL_{c, Q}^\ast(H_{U\cap V})$ respectively. Note that $RL_{c, Q}^\ast(H_X)\rtimes_rG$ is naturally a $C_0(X/G)$-algebra and let 
\[
(RL_{c, Q}^\ast(H_X)\rtimes_rG)_{U/G}, \,\, (RL_{c, Q}^\ast(H_X)\rtimes_rG)_{V/G},\,\, (RL_{c, Q}^\ast(H_X)\rtimes_rG)_{(U\cap V)/G}
\]
be its ideals corresponding to open subsets $U/G$, $V/G$ and $(U\cap V)/G$ of $X/G$. It is not hard to see that
\[
RL_{c, Q}^\ast(H_X)_U \rtimes_rG= (RL_{c, Q}^\ast(H_X)\rtimes_rG)_{U/G}
\]
inside $RL_{c, Q}^\ast(H_X) \rtimes_rG$ and similarly for other two. These ideals of $RL_{c, Q}^\ast(H_{X})\rtimes_rG$ satisfy
\[
(RL_{c, Q}^\ast(H_X)\rtimes_rG)_{U/G} +  (RL_{c, Q}^\ast(H_X)\rtimes_rG)_{V/G} = (RL_{c, Q}^\ast(H_X)\rtimes_rG),
\]
\[
(RL_{c, Q}^\ast(H_X)\rtimes_rG)_{U/G} \cap (RL_{c, Q}^\ast(H_X)\rtimes_rG)_{V/G} = (RL_{c, Q}^\ast(H_X)\rtimes_rG)_{(U\cap V)/G}.
\]
The corresponding Mayer--Vietoris sequence is the desired one. The naturality can be checked directly.

(5) For any proper $G$-space $X$ and for $ r \in[0 ,1]$, let 
\[
f_r\colon X\times [0, 1] \to X\times [0 ,1], \,\, (x, t)\to (x, (1-r)t).
\]
As in the proof of the non-equivariant case, it suffices to show the following claim:
\[
\bD^G_\ast(f_1) = \bD^G_\ast(f_0) (=\mathrm{Id}).
\]
We reuse the notations from the proof of the non-equivariant case except that now $H_X$ is a universal $X$-$G$-module. Note for example, the isometries $W_z$ for $z\in Z$ and $V_{n,t}$ are all $G$-equivariant and all the properties proved in the non-equivariant case are still valid in this context.
 
 As in the non-equivariant case, we let $\cA=RL_{c}^\ast(H_{X\times[0, 1]})$ and $\cA^\infty= RL_c^\ast(H_{X\times[0, 1]}\otimes l^2(\mathbb{N}))$. As before, we have $G$-equivariant $\ast$-homomorphisms
\[
\mathrm{Ad}_{V_{n,t}}\colon \cA \to \cA
\]
for $n\in \mathbb{N}_{>0}\cup\{\infty\}$ and 
\[
\alpha = \sum_{1\leq n < \infty} \mathrm{Ad}_{U_nV_{n,t}},\,\, \beta=  \sum_{1\leq n < \infty} \mathrm{Ad}_{U_nV_{n+1,t}}, \,\,\, \gamma=   \sum_{1\leq n < \infty} \mathrm{Ad}_{U_nV_{\infty,t}}
\]
 from $\cA$ to the multiplier algebra $M(\cA^\infty)$.
 
 We obtain $\ast$-homomorphisms
 \[
 \alpha \rtimes_r1, \beta\rtimes_r1, \gamma\rtimes_r1\colon \cA\rtimes_rG \to M(\cA^\infty)_\Gcont\rtimes_rG \to M(\cA^\infty\rtimes_rG)
 \]
where $M(\cA^\infty)_\Gcont$ consists of $G$-continuous elements of $M(\cA^\infty)$.

Each of the following pairs of $\ast$-homomorphisms
\[
( \alpha\rtimes_r1, \gamma\rtimes_r1), \,\,\, ( \beta\rtimes_r1, \gamma\rtimes_r1)
\]
defines a $\ast$-homomorphism from $\cA\rtimes_rG$ to the double
\[
D'=M(\cA^\infty\rtimes_rG)\oplus_{\cA^\infty\rtimes_rG}M(\cA^\infty\rtimes_rG).
\]
The two $\ast$-homomorphisms naturally factor through $C_{\mathrm{Gcont}}\rtimes_rG$ and $D_{\mathrm{Gcont}}\rtimes_rG$ as
\[
\cA\rtimes_rG \to C_{\mathrm{Gcont}}\rtimes_rG \to D_{\mathrm{Gcont}}\rtimes_rG \to D'
\]
where the first map is $(\alpha, \gamma)\rtimes_r1$ or $(\beta, \gamma)\rtimes_r1$. Here, we recall that
\[
D=M(\cA^\infty)\oplus_{\cA^\infty}M(\cA^\infty) = \{ (a_1, a_2) \in M(\cA^\infty)\oplus M(\cA^\infty) \mid a_1-a_2 \in \cA^\infty  \},
\]
and 
\[
C=\{ (a_1, a_2)\in D \mid  a_2= \sum_{1\leq n < \infty} \mathrm{Ad}_{U_nV_{\infty, t}}(T_t), T_t \in \cA  \}.
\]

The $\ast$-homomorphisms $(\alpha, \gamma)\rtimes_r1$ and $(\beta, \gamma)\rtimes_r1$ from $\cA\rtimes_rG$ to $C_{\mathrm{Gcont}}\rtimes_rG$  are conjugate in $C_{\mathrm{Gcont}}\rtimes_rG$ by a partial isometry $w=(w_1, w_2)$ in the multiplier algebra $M(C_{\mathrm{Gcont}})\subset M(C_{\mathrm{Gcont}}\rtimes_rG)$ (note $w$ is $G$-equivariant).

It follows $(\alpha\rtimes_r1, \gamma\rtimes_r1)$ and $( \beta\rtimes_r1, \gamma\rtimes_r1)$ from $\cA\rtimes_rG$ to $D'$ induce the same map on the K-theory groups. 

Now we have that 
\[
(\alpha\rtimes_r1, \gamma\rtimes_r1)_\ast = ( \beta\rtimes_r1, \gamma\rtimes_r1)_\ast \colon K_\ast(\cA\rtimes_rG) \to K_\ast(D').
\]

From here, by the same argument as in the non-equivariant case, we see that the two maps $\mathrm{Ad}_{V_{1,t}}\rtimes_r1$, $\mathrm{Ad}_{V_{\infty,t}}\rtimes_r1$ from $\cA\rtimes_rG$ to $\cA\rtimes_rG$ induce the same map on the K-theory groups. Thus, we have $D^G_\ast(f_0)=D^G_\ast(f_1)$.

(2) The second claim: by homotopy invariance, it is enough to show $\bD_\ast^{H}(\mathrm{point}) \cong K_\ast(C^*_r(H))$. This follows from the following proposition.
\end{proof}

\begin{proposition}\label{prop_pointcase} For any locally compact group $G$, for any $G$-$C^*$-algebra $B$, let
\[
\mathrm{ev}_1 \colon C_b([1, \infty), \Compacts(l^2(\N))\otimes B)_{\Gcont} \to  \Compacts(l^2(\N))\otimes B
\]
be the evaluation at $t=1$. Then, its reduced crossed product
\[
\mathrm{ev}_1 \colon C_b([1, \infty), \Compacts(l^2(\N))\otimes B)_{\Gcont}\rtimes_rG \to  \Compacts(l^2(\N))\otimes B\rtimes_rG
\]
induces an isomorphism on the K-theory groups. The same is true if we replace the reduced crossed product to the maximal one. More generally for any $G$-$C^*$-algebra $B_1$,  the evaluation map
\[
\mathrm{ev}_1\colon (C_b([1, \infty), \Compacts(l^2(\N))\otimes B)_{\Gcont}\otimes B_1) \rtimes_rG \to ( \Compacts(l^2(\N))\otimes B\otimes B_1) \rtimes_rG
\]
induces an isomorphism on the K-theory groups. The same is true if we replace either the minimal tensor product or the reduced crossed product or both to the maximal one.

\end{proposition}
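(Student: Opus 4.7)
The plan is to show that the kernel of $\mathrm{ev}_1$ has vanishing $K$-theory after crossed product, and then to deduce the proposition via a split short exact sequence. Write $A = \Compacts(\ell^2(\N)) \otimes B$ (or $A \otimes B_1$ in the generalized statement), $\widetilde A = C_b([1, \infty), A)_{\Gcont}$, and $J = \ker(\mathrm{ev}_1 \colon \widetilde A \to A)$. The $G$-equivariant embedding $\mathrm{const} \colon A \to \widetilde A$ of $A$ as constant functions is a $\ast$-homomorphic splitting of $\mathrm{ev}_1$, so the short exact sequence
\[
0 \to J \to \widetilde A \to A \to 0
\]
is split-exact and descends to both the reduced and the maximal crossed product. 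Hence it suffices to prove $K_\ast(J \rtimes_r G) = K_\ast(J \rtimes_{\rmax} G) = 0$.

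\textbf{Eilenberg swindle in the uniformly continuous subalgebra.} Fix isometries $V_n \in \Linears(\ell^2(\N))$ ($n \geq 0$) with mutually orthogonal ranges and $\sum_n V_n V_n^\ast = 1$ in the strict topology; regarded as $V_n \otimes 1 \in M(A)$ they are $G$-equivariant because $G$ acts trivially on $\ell^2(\N)$. Exploiting the vanishing condition $f(1) = 0$ together with the $G$-equivariant time shifts $\sigma^n(f)(t) = f(\max(t - n, 1))$ (which preserve $J_u = J \cap C_{b, u}([1, \infty), A)_{\Gcont}$), one constructs a $G$-equivariant Eilenberg-swindle $\ast$-endomorphism of $J_u$, roughly
\[
\Phi(f)(t) = \sum_{n \geq 0} (V_n \otimes 1)\, \sigma^n(f)(t)\, (V_n \otimes 1)^\ast,
\]
and verifies an absorption relation $\Phi \oplus \mathrm{id} \cong \Phi$ up to conjugation by a partial isometry in the multiplier algebra of $J_u \rtimes G$. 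This is essentially the argument of \cite[Proposition 6.3.3]{WY2020}, and it forces $K_\ast(J_u \rtimes G) = 0$.

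\textbf{Passage from $J_u$ to $J$.} To upgrade the vanishing to $K_\ast(J \rtimes G) = 0$, one shows the inclusion $J_u \hookrightarrow J$ induces a $K$-theory isomorphism after crossed product. This follows the more elaborate swindle of \cite[Theorem 3.4]{WY2021}, which uses partitions of unity on $[1, \infty)$ together with additional stability isometries to exchange continuous and uniformly continuous layers. All operators involved act trivially under $G$ (they only touch the time variable and the $\ell^2(\N)$ factor), so the construction is $G$-equivariant and descends cleanly to the crossed product, for both reduced and maximal completions.

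\textbf{Variations and main obstacle.} Tensoring with an additional $G$-$C^\ast$-algebra $B_1$ (via either the minimal or the maximal tensor product), and/or replacing the reduced by the maximal crossed product, does not affect any step: the splitting by constants survives tensoring, the kernel becomes the corresponding tensor product of $J$ with $B_1$, and all swindle operators commute with insertion of $B_1$ because they live in the $\ell^2(\N)$ factor with trivial $G$-action. The main technical obstacle is the passage from $J_u$ to $J$: the naive contraction $H_s(f)(t) = f(1 + s(t-1))$ fails to be norm-continuous in $s$ for merely continuous $f$, so one cannot simply write down a contracting homotopy; instead, the indirect swindle of \cite{WY2021} is needed, and one must verify that every approximating partial isometry in that argument is indeed $G$-equivariant.
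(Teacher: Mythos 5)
Your proposal is correct and follows essentially the same route as the paper: split the short exact sequence via the constant-function section, kill the uniformly continuous kernel with the $G$-equivariant Eilenberg swindle of \cite[Proposition 6.3.3]{WY2020}, and pass from uniformly continuous to continuous functions via the argument of \cite[Theorem 3.4]{WY2021}, checking that everything descends to the (reduced or maximal) crossed product. The only cosmetic difference is that you apply the \cite{WY2021} comparison to the kernels $J_u\subset J$ rather than to the full algebras, which is equivalent by split-exactness; the paper additionally spells out why the pullback squares in that argument survive the reduced crossed product (the surjections admit $G$-equivariant c.c.p.\ splittings), a point your write-up gestures at but does not make explicit.
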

\begin{proof} Let $I_u$ be the kernel of the evaluation map 
\[
\mathrm{ev}_1\colon  C_{b, u}([1, \infty), \Compacts(l^2(\N))\otimes B)_{\Gcont}\to \Compacts(l^2(\N))\otimes B
\]
at $t=1$. It is easy see that the Eilenberg swindle argument in \cite[Proposition 6.3.3]{WY2020} descends to the crossed product to show $K_\ast(I_u\rtimes_rG)=0$. Since the evaluation map admits a $G$-equivariant splitting,  it follows that its reduced crossed product
\[
\mathrm{ev}_1 \colon C_{b, u}([1, \infty), \Compacts(l^2(\N))\otimes B)_{\Gcont}\rtimes_rG \to  \Compacts(l^2(\N))\otimes B\rtimes_rG
\]
induces an isomorphism on the K-theory groups. Thus, it suffices to show the natural inclusion 
\[
C_{b, u}([1, \infty), \Compacts(l^2(\N))\otimes B)_{\Gcont}\rtimes_rG \to C_{b}([1, \infty), \Compacts(l^2(\N))\otimes B)_{\Gcont}\rtimes_rG
\]
induces an isomorphism on the $K$-theory groups. For this, we just need to check that the argument in the proof of \cite[Theorem 3.4]{WY2021} generalizes to this setting. Indeed, the relevant pushout (pullback) squares that appear in the proof \cite[Theorem 3.4]{WY2021} are preserved by the reduced crossed product. 
This is because for any $G$-$C^*$-algebra $B_1$, in the following pushout square 
\[
\xymatrix{
C_b([1, \infty), B_1)_{\Gcont}\ar[d]^-{}    \ar[r]^-{}   &   C_b(E, B_1)_{\Gcont}   \ar[d]^-{}      \\
C_b(O, B_1)_{\Gcont}    \ar[r]^-{}     &           C_b(\N, B_1)_{\Gcont} 
}
\]
for $E=\sqcup_{n\geq1}[2n, 2n+1]$ and $O=\sqcup_{n\geq1}[2n-1, 2n]$, each of the two vertical (and horizontal) surjections admits a $G$-equivariant c.c.p.\ splitting (by extending functions constantly and by multiplying a bump function). The uniformly continuous case is same. The rest of the argument in their proof works verbatim except a few points we have to be careful which we explain. We refer the reader to the proof of  \cite[Theorem 3.4]{WY2021} to follow the following explanations. Let
\[
 C^0_b(E) \subset C_b(E,  \Compacts(l^2(\N))\otimes B)_{\Gcont}  
\]
be the subalgebra of functions $f$ on $E$ such that $f(2n)=0$ for all $n$. Using the above pullback square and similar ones, it comes down to showing that the K-theory of $C^0_b(E) \rtimes_rG$ is zero. In fact, the K-theory of $(C^0_b(E)\otimes B_1)\rtimes_rG$ is zero for any $G$-$C^*$-algebra $B_1$. In this generality, it is enough to consider $K_0$ (taking $B_1=C_0(\R)$). We consider 
\[
x = [p] - [1_k] \in K_0( (C^0_b(E)\otimes B_1) \rtimes_rG)
\] 
for $p$ in the matrix algebra of the unitization of $(C^0_b(E)\otimes B_1)\rtimes_rG$. The first point we explain is that using the inclusion (this is well-defined)
\[
(C^0_b(E)\otimes B_1) \rtimes_rG \to C_b(E,     (\Compacts(l^2(\N))\otimes B \otimes B_ 1)\rtimes_rG),
\]
we view $p$ as a function on $E$ with values in the matrix algebra of the unitization of $(\Compacts(l^2(\N))\otimes B \otimes B_ 1)\rtimes_rG$. Using the uniform continuity of $p$ on each interval $[2n, 2n+1]$, we choose $r_n\in ( 0, 1)$ as in their proof. The second point we explain is that the element
\[
x_\infty= [\sum_{l=0}^\infty s_l p^{(l)}s_l^* ] - [\sum_{l=0}^{\infty} s_l1_k s_l^\ast]
\]
is well-defined in our context, that is $x_\infty \in K_0((C^0_b(E)\otimes B_1) \rtimes_rG )$. Here, $s_l$ are the isometries on $l^2(\N)$ such that $\sum_{l=0}^\infty s_l s_l^\ast=1$. This is because the function
\[
\sum_{l=0}^\infty s_l p^{(l)}s_l^*
\]
on $E$ is defined by the reduced crossed product of the tensor product of the identity on $B_1$ with a $G$-equivariant $\ast$-homomorphism on $C^0_b(E)$ which sends $T \in C^0_b(E)$ to
\[
\sum_{l=0}^\infty s_l T^{(l)}s_l^*
\]
where 
\[
T^{(l)}_t=  T_{2n+(t-2n)(r_n)^l}\,\,\,\, \text{for $t\in [2n, 2n+1]$}.
\]
Only $r_n\in (0 ,1)$ is used for this map to send $C^0_b(E)$ to itself. From here, their proof of $x_\infty +x = x_\infty$ works verbatim. The case of maximal crossed product and the case with an extra coefficient  $B_1$ can be proved in the same way.
\end{proof}

In fact, it is not hard to see that the argument in the proof \cite[Theorem 3.4]{WY2021} generalizes to show the following:

\begin{proposition}\label{prop_ucsame} For any universal $X$-$G$-module $H_X$ or for any $X$-$G$-module of the form $H_X^0\otimes l^2(\N)$ for an $X$-$G$-module $H_X^0$, the natural inclusion induces an isomorphism
\[
K_\ast(RL^*_u(H_X)\rtimes_rG) \cong K_\ast(RL^*_c(H_X)\rtimes_rG).
\]
The same is true if we replace the reduced crossed product to the maximal one. More generally, for any $G$-$C^*$-algebra $B$, the natural inclusion 
induces an isomorphism
\[
K_\ast((RL^*_u(H_X)\otimes B) \rtimes_rG) \cong K_\ast((RL^*_c(H_X)\otimes B)\rtimes_rG).
\]
The same is true if we replace either the minimal tensor product or the reduced crossed product or both to the maximal one.
\end{proposition}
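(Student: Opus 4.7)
The plan is to reduce this to the point-case argument of Proposition \ref{prop_pointcase} (i.e. the Eilenberg swindle of \cite[Theorem 3.4]{WY2021}) by first arranging a tensor-product structure on $H_X$ in the fiber direction. First, I would observe that if $H_X$ is a universal $X$-$G$-module, so is $H_X \otimes l^2(\N)$, and any $G$-equivariant continuous cover $W\colon H_X \to H_X \otimes l^2(\N)$ of the identity (which exists by universality) yields a $\ast$-homomorphism $\mathrm{Ad}_W$ between the corresponding representable localization algebras. Using the argument from Proposition \ref{prop_welldefG} (conjugation of two covers by a $2\times 2$ unitary in the multiplier algebra) together with the fact that the corner embedding of $\Compacts(H_X)$ into $\Compacts(H_X \otimes l^2(\N))$ is a K-theory equivalence, these inclusions induce isomorphisms on $K_\ast(-\rtimes_r G)$ in both the continuous and uniformly continuous cases. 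Hence it suffices to treat the case $H_X = H_X^0 \otimes l^2(\N)$, and the same reduction handles an extra coefficient $B$.

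Next, in the tensor-product case, I would fix isometries $\tilde s_l$ on $l^2(\N)$ with $\sum_l \tilde s_l \tilde s_l^{\ast} = 1$ (strongly), and set $s_l = 1_{H_X^0} \otimes \tilde s_l$. The key point is that $s_l$ is $G$-equivariant, commutes exactly with the $C_0(X)$-action on $H_X$, and preserves the $X$-support of every operator. Therefore the reparametrized swindle $T_t \mapsto \sum_l s_l T_t^{(l)} s_l^{\ast}$, with $T_t^{(l)} = T_{2n + (t-2n) r_n^l}$ on $[2n,2n+1]$, maps $RL_c^{\ast}(H_X)$ (and similarly $RL_u^{\ast}(H_X)$, and their tensor products with $B_1$) into themselves. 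One checks directly that uniform compact support in $X$ is preserved (the $s_l$ act only on the $l^2(\N)$-factor), and that $\lim_{t\to\infty}\lVert [\phi, \sum_l s_l T_t^{(l)} s_l^{\ast}]\rVert = 0$ for $\phi \in C_0(X)$, because each $s_l$ commutes with $\phi$ and the swindle is only a reparametrization in time.

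Having defined the swindle, I would then run the proof of \cite[Theorem 3.4]{WY2021} verbatim as in Proposition \ref{prop_pointcase}: decompose $[1,\infty)$ into the even/odd interval unions $E$ and $O$, obtain the pushout/pullback squares among $C_b([1,\infty),-)_{\Gcont}$, $C_b(E,-)_{\Gcont}$, $C_b(O,-)_{\Gcont}$, $C_b(\N,-)_{\Gcont}$ (with coefficients in $\Compacts(H_X) \otimes B$ or $\Compacts(H_X) \otimes B \otimes B_1$), and note that these squares descend to the reduced (or maximal) crossed product because all the surjections split $G$-equivariantly via c.c.p.\ maps (extend constantly and multiply by a bump function). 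This reduces the claim to showing that $K_0$ of $(C_b^0(E, \Compacts(H_X) \otimes B) \cap RL_c^{\ast}(H_X \otimes B))_{\Gcont} \rtimes_r G$ (with optional extra $B_1$) vanishes, which is proved by forming the formal sum $x_\infty = \bigl[\sum_l s_l p^{(l)} s_l^{\ast}\bigr] - \bigl[\sum_l s_l 1_k s_l^{\ast}\bigr]$ and verifying $x_\infty + x = x_\infty$ exactly as in \cite{WY2021}.

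The main obstacle I expect is verifying that the swindled element $\sum_l s_l p^{(l)} s_l^{\ast}$ genuinely lies in (the unitization of matrices over) the relevant localization algebra crossed product, not just in $C_b([1,\infty), \Compacts(H_X) \otimes B \otimes B_1)\rtimes_r G$. This requires care because uniform continuity on each $[2n,2n+1]$ (used to choose $r_n$ so that $\sum_l s_l p^{(l)} s_l^{\ast}$ is well-defined) must be combined with the localization decay conditions; the commutator estimate $\lVert [\phi, \sum_l s_l p^{(l)}_t s_l^{\ast}]\rVert = \sup_l \lVert [\phi, p^{(l)}_t]\rVert$ can be controlled uniformly in $l$ because reparametrization of time only pushes things further into the regime where the commutator is already small, and the support in $X$ is unchanged by the $s_l$. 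Once these routine but careful checks are done, the rest of the argument transcribes mechanically, in both reduced and maximal crossed product variants and with either minimal or maximal tensor factor.
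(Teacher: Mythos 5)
Your proposal is correct and follows essentially the same route as the paper, which simply defers to the generalization of \cite[Theorem 3.4]{WY2021} explained in the proof of Proposition \ref{prop_pointcase}; your explicit reduction from a universal $X$-$G$-module to one of infinite multiplicity (via covers of the identity) makes precise a step the paper leaves implicit, and your verification that the isometries $s_l=1\otimes\tilde s_l$ preserve uniform compact support and the asymptotic commutation with $C_0(X)$ is exactly the point that needs checking.
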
 
\begin{proof} The proof of  \cite[Theorem 3.4]{WY2021} generalizes to this setting just as we explained in the proof of Proposition \ref{prop_pointcase}. 
\end{proof}


\section{With coefficients}
Let $B$ be a separable $G$-$C^*$-algebra $B$. We will generalize our previous results on $G$-equivariant $K$-homology to the case with coefficient $B$. This will be quite straightforward so it will be brief.

\begin{definition} For any proper $G$-space $X$ and for any $X$-$G$-module $H_X$, we define a $G$-$C^*$-algebra $RL^\ast_c(H_X\otimes B)$ as the norm completion of the $G$-$\ast$-subalgebra $RL^{\mathrm{alg}}_c(H_X\otimes B)$ of $C_b([1, \infty), \Compacts(H_X)\otimes B)$ that consists of bounded, $G$-continuous, norm-continuous $\Compacts(H_X)\otimes B$-valued functions $T_t$ on $[1, \infty)$, such that
\begin{enumerate}
\item $T_t$ has uniform compact support in a sense that there is a compact subset $K$ of $X$ such that $T_t=\chi_KT_t\chi_K$ for all $t\geq1$, and 
\item for any $\phi$ in $C_0(X)$, we have
\[
 \lim_{t\to \infty }\lVert[\phi, T_t]\rVert=\lim_{t\to \infty}\lVert\phi T_t-T_t\phi\rVert = 0.
\]
\end{enumerate}
A $G$-$C^*$-algebra $RL^\ast_u(H_X\otimes B)$ is similarly defined in $C_{b, u}([1, \infty), \Compacts(H_X)\otimes B)$.
\end{definition}

Both $RL^\ast_c(H_X\otimes B)$ and $RL^\ast_u(H_X\otimes B)$ contain a $G$-invariant ideal $RL^\ast_{0}(H_X\otimes B)=C_0([1,\infty), \Compacts(H_X)\otimes B)$. We denote by $RL^\ast_{c, Q}(H_X\otimes B)$, resp. $RL^\ast_{u, Q}(H_X\otimes B)$, the quotient of $RL^\ast_c(H_X\otimes B)$, resp. $RL^\ast_u(H_X\otimes B)$ by the ideal $RL^\ast_{0}(H_X\otimes B)$. These quotients are naturally $G$-$C_0(X)$-algebras.

Let $X, Y$ be proper $G$-spaces and $H_X, H_Y$ be an $X$-$G$-module and a $Y$-$G$-module respectively. Given a $G$-equivariant continuous cover $(V_t\colon H_X \to H_Y)_{t\in[1, \infty)}$ of a $G$-equivariant continuous map $f\colon X\to Y$, the conjugation by $V_t$ defines a $G$-equivariant $\ast$-homomorphism
\[
\mathrm{Ad}_{V_t} \colon RL_{c}^\ast(H_X\otimes B) \to RL_{c}^\ast(H_Y\otimes B).
\]
Thus, it defines a $\ast$-homomorphism
\[
\mathrm{Ad}_{V_t}\colon RL_{c}^\ast(H_X\otimes B)\rtimes_rG \to RL_{c}^\ast(H_Y\otimes B)\rtimes_rG.
\]
This $\ast$-homomorphism depends on the cover $V_t$ of $f$ but the induced map on their K-theory groups is independent of the choice of $V_t$.

\begin{definition}\label{def_DBG} Choose any universal $X$-$G$-module $H_X$ for each proper $G$-space $X$ and any $G$-equivariant continuous cover $(V^f_t\colon H_X\to H_Y)_{t\in [1,\infty)}$ for each $G$-equivariant continuous map $f\colon X\to Y$. A functor $\mathbb{D}^{B, G}_\ast$ from the category $\mathcal{PR}^G$ of (second countable, locally compact) proper $G$-spaces to the category $\mathcal{GA}$ of graded abelian groups is defined as
\[
\mathbb{D}^{B, G}_\ast(X) = K_\ast(RL^\ast_c(H_X\otimes B)\rtimes_rG),
\]
\[
\mathbb{D}^{B, G}_\ast(f\colon X\to Y)=\mathrm{Ad}_{V^f_t \ast} \colon K_\ast(RL^\ast_c(H_X\otimes B)\rtimes_rG) \to K_\ast(RL^\ast_c(H_Y\otimes B)\rtimes_rG).
\]
\end{definition}

\begin{proposition}The functor  $\mathbb{D}^{B, G}_\ast$ from  $\mathcal{PR}^G$ to $\mathcal{GA}$ is well-defined. The functor does not depend on the choice of universal $X$-$G$-modules $H_X$ up to canonical equivalence.
\end{proposition}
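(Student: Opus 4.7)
The plan is to imitate the proof of Proposition~\ref{prop_welldefG} almost verbatim, with $RL^*_c(H_X)$ replaced by $RL^*_c(H_X\otimes B)$ throughout. The only structural ingredients used there were (i) that $\mathrm{Ad}_{V_t}$ gives a well-defined $G$-equivariant $\ast$-homomorphism, independent of the cover up to K-theoretic conjugation by the unitary $\left[\begin{smallmatrix}1-V_{1,t}V_{1,t}^*&V_{1,t}V_{2,t}^*\\V_{2,t}V_{1,t}^*&1-V_{2,t}V_{2,t}^*\end{smallmatrix}\right]$, (ii) the fact that the partial isometries $V_{1,t}V_{2,t}^*$ multiply $RL^*_c(H_Y)$ because their propagation tends to zero, and (iii) the behaviour of $RL^*_c$ under restriction to $G$-invariant subspaces. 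All three ingredients pass to the coefficient setting: conjugation by a $G$-equivariant cover is a $G$-equivariant $\ast$-endomorphism of $C_b([1,\infty),\Compacts(H_X)\otimes B)$ preserving the defining conditions, and $V_{1,t}V_{2,t}^*\in M(RL^*_c(H_Y))\subset M(RL^*_c(H_Y\otimes B))$ by the same propagation argument (Lemma~\ref{lem_mult} applied to the multiplier of the first tensor factor), hence also multiplies the crossed product.

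The main step, as in the non-coefficient case, is handling the fact that functoriality $\mathrm{Ad}_{V_2V_1}=\mathrm{Ad}_{V_2}\circ \mathrm{Ad}_{V_1}$ requires the second map to be proper so that the composition of covers is itself a cover. I would first establish representability, namely that for the net $(X_i)_{i\in I}$ of $G$-compact, $G$-invariant closed subsets of $X$ ordered by inclusion, the canonical maps induce a natural isomorphism
\begin{equation*}
\varinjlim_{i\in I}\mathbb{D}^{B,G}_\ast(X_i)\cong \mathbb{D}^{B,G}_\ast(X).
\end{equation*}
Choosing $H_X$ very ample and $H_{X_i}=\chi_{X_i}H_X$, one has $RL^*_c(H_{X_i}\otimes B)=\chi_{X_i}RL^*_c(H_X\otimes B)\chi_{X_i}$, and these form an increasing sequence of $G$-$C^*$-subalgebras whose union is dense in $RL^*_c(H_X\otimes B)$. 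Since the reduced crossed product with $G$ preserves inductive limits of increasing families of $G$-invariant $C^*$-subalgebras, continuity of K-theory gives the desired identification.

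Next I would verify that this identification is compatible with $\mathbb{D}^{B,G}_\ast(f)$ for a $G$-equivariant $f\colon X\to Y$, using that for any $G$-compact $G$-invariant closed subset $X_0\subset X$, the image $f(X_0)$ is $G$-compact and $G$-invariant in $Y$, and one can arrange $V^f_t\chi_{X_0}=\chi_{Y_0}V^f_t\chi_{X_0}$ for some $G$-compact $Y_0$ and all large $t$. Combined with the independence of $\mathbb{D}^{B,G}_\ast(f)$ on the choice of equivariant continuous cover, this yields the commutative square expressing compatibility, and hence reduces general functoriality $\mathbb{D}^{B,G}_\ast(f_2\circ f_1)=\mathbb{D}^{B,G}_\ast(f_2)\circ \mathbb{D}^{B,G}_\ast(f_1)$ to the case where both $f_1,f_2$ restrict to proper maps between $G$-compact pieces, where covers compose to covers.

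Finally, for independence of the choice of universal $X$-$G$-modules up to canonical equivalence: given two choices $H_X$ and $H_X'$, by universality one has $G$-equivariant continuous covers $(V_t\colon H_X\to H_X')$ and $(W_t\colon H_X'\to H_X)$ of the identity, and the induced maps $\mathrm{Ad}_{V_t}$ and $\mathrm{Ad}_{W_t}$ on K-theory of the reduced crossed products are mutually inverse by the same two-by-two matrix conjugation trick used for uniqueness of covers. The only subtle point will be ensuring the conjugating unitary lies in $M_2(M(RL^*_c(H_X\otimes B)\rtimes_rG))$, but this reduces, as before, to verifying that propagation-zero partial isometries on the module side lift to multipliers of the coefficient algebra via the canonical inclusion $M(RL^*_c(H_X))\hookrightarrow M(RL^*_c(H_X\otimes B))$; I expect this to be the most delicate bookkeeping step, but conceptually no new idea is required beyond what appears in Proposition~\ref{prop_welldefG}.
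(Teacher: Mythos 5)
Your proposal is correct and follows exactly the route the paper takes: the paper's own proof of this proposition is literally the single sentence that the proof of Proposition~\ref{prop_welldefG} works verbatim with $RL^*_c(H_X)$ replaced by $RL^*_c(H_X\otimes B)$, and your write-up is a faithful (and more detailed) execution of that adaptation, including the correct identification of the only points needing re-checking (that $V_{1,t}V_{2,t}^*\otimes 1_B$ multiplies $RL^*_c(H_Y\otimes B)$, and that representability passes to the coefficient setting via $RL^*_c(H_{X_i}\otimes B)=\chi_{X_i}RL^*_c(H_X\otimes B)\chi_{X_i}$).
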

\begin{proof} The proof of \ref{prop_welldefG} works verbatim.
\end{proof}

\begin{theorem}\label{thm_coeff} The functor $\mathbb{D}^{B, G}_\ast$ satisfies the following: 
\begin{enumerate}
\item  $\mathbb{D}^{B, G}_\ast(\mathrm{empty\, set})\cong 0$.
\item  Induction from an open subgroup: for any open subgroup $H$ of $G$ and any proper $H$-space $Y$, we have a natural isomorphism $\mathbb{D}^{B, G}_\ast(G\times_H Y)\cong \mathbb{D}_\ast^{B, H}(Y)$. If $H$ is a compact open subgroup of $G$, we have $\mathbb{D}^{B, G}_\ast(G/H\times \Delta^k)\cong K_\ast(B\rtimes_rH)$ for any k-simplex (ball) $\Delta^k$ with the trivial $G$-action.

\item Representable: if $(X_i)_{i \in I}$ is the net of $G$-compact, $G$-invariant closed subsets of $X$, ordered by the inclusion, the canonical maps $\bD^{B, G}_\ast(X_i) \to \bD^{B, G}_\ast(X)$ induce a natural isomorphism 
\[
\varinjlim_{i \in I}\bD^{B, G}_\ast(X_i) \cong \bD^{B, G}_\ast(X).
\]
\item Mayer--Vietoris sequence for a $G$-invariant open cover: if $X=U\cup V$ for open $G$-invariant subsets $U$ and $V$ of $X$, we have a natural Meyer--Vietoris sequence
\[
\xymatrix{
 \bD^{B, G}_0(U\cap V) \ar[r]^-{}&    \bD^{B, G}_0(U) \oplus \bD^{B, G}_0(V) \ar[r]^-{} &   \bD^{B, G}_0(X) \ar[d]^-{}   \\
\bD^{B, G}_1(X)     \ar[u]^-{}         &   \bD^{B, G}_1(U) \oplus \bD^{B, G}_1(V)        \ar[l]^-{} &            \bD^{B, G}_1(U\cap V) \ar[l]^-{}.
}
\]
\item  Homotopy invariance: if $h\colon X\times[0 ,1]\to Y$ is a $G$-equivariant continuous homotopy between $f_0, f_1\colon X\to Y$, then $\bD^{B, G}_\ast(f_0)=\bD^{B, G}_\ast(f_1)$. 
\end{enumerate}
\end{theorem}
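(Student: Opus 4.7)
The plan is to mirror the proof of Theorem \ref{thm_Ghomology} essentially verbatim, replacing $RL^\ast_c(H_X)$ by $RL^\ast_c(H_X \otimes B)$ throughout. The key structural observation is that all of the isometries and $\ast$-homomorphisms produced there—the continuous covers $V^f_t$, the amplification isometry $V_c$, the $G$-equivariant isometries $W(r)$, $V_{n,t}$, $U_n$ from the homotopy-invariance argument, and the $C_0(X)$-algebra decomposition used for Mayer--Vietoris—are constructed purely on the underlying $X$-$G$-modules and do not involve $B$. They therefore automatically induce the corresponding maps on $RL^\ast_c(H_X \otimes B)$ by tensoring with $1_B$. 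Moreover, since $RL^\ast_0(H_X \otimes B) = C_0([1, \infty), \Compacts(H_X) \otimes B)$ carries an inner $G$-action, its reduced crossed product is isomorphic to $C_0([1, \infty), \Compacts(H_X) \otimes B) \otimes C^\ast_r(G)$, which has vanishing K-theory; this yields the coefficient analogue of Proposition \ref{prop_quotient_isomG1}, allowing us to freely pass between $RL^\ast_c(H_X \otimes B) \rtimes_r G$ and its quotient $RL^\ast_{c, Q}(H_X \otimes B) \rtimes_r G$.

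Given these observations, items (1), (3) and (4) require essentially no change. The empty-set case is trivial; representability follows from writing $H_{X_n} = \chi_{X_n} H_X$ for $X_n$ an increasing exhaustion of $X$ by $G$-compact $G$-invariant closed sets and appealing to continuity of K-theory as in Proposition \ref{prop_welldefG}; and Mayer--Vietoris follows from the fact that $RL^\ast_{c, Q}(H_X \otimes B)$ is a $G$-$C_0(X)$-algebra, so $RL^\ast_{c, Q}(H_X \otimes B) \rtimes_r G$ is a $C_0(X/G)$-algebra to which the standard ideal-intersection argument for a $G$-invariant open cover applies directly.

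The main technical step is (5), the homotopy invariance. I will reuse the construction from the proof of Theorem \ref{thm_Ghomology}(5) by setting $\cA = RL^\ast_c(H_{X \times [0, 1]} \otimes B)$ and $\cA^\infty = RL^\ast_c(H_{X \times [0, 1]} \otimes l^2(\N) \otimes B)$. The same $G$-equivariant isometries $V_{n,t}$ and $U_n$ conjugate $\cA$ and $\cA^\infty$ into themselves and into their multiplier algebras via $T \mapsto V_{n,t} T V_{n,t}^\ast$, acting only on the $H_X$-factor. The $\ast$-homomorphisms $\alpha, \beta, \gamma$, the doubling algebras $C, D$ and $D'$, and the conjugating partial isometry $w = (w_1, w_2) \in M(C_{\Gcont}) \subset M(C_{\Gcont} \rtimes_r G)$ are then defined by the same formulas; the verification that $w \in M(C_{\Gcont})$ and the deduction $(\alpha, \gamma)_\ast = (\beta, \gamma)_\ast$ go through word-for-word, since the only fact used is that $V_{n+1,t} V_{n,t}^\ast - V_{\infty,t} V_{\infty,t}^\ast$ applied to any element of $\cA$ lies in $\cA^\infty$, which depends only on the propagation/compact-support properties of the isometries and not on $B$. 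This is the one place where I would want to double-check that no subtle $B$-dependent continuity issue arises in the Gcont-subalgebra of the multiplier algebra, but since $V_{n,t}$ is $G$-equivariant and independent of $B$, the $G$-continuity of the conjugated elements follows from that of the original elements of $\cA$ and $\cA^\infty$.

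Finally, for (2), the first assertion reduces to a coefficient version of Proposition \ref{prop_compactopen}: for an open subgroup $H \leq G$, the natural inclusion $RL^\ast_c(H_Y \otimes B) \rtimes_r H \hookrightarrow RL^\ast_c(H_X \otimes B) \rtimes_r G$ is a K-theory isomorphism. The proof of Proposition \ref{prop_compactopen} applies verbatim, since $RL^\ast_{c, Q}(H_X \otimes B)$ is still a $G$-$C_0(G/H)$-algebra whose fiber at the trivial coset is the $H$-$C^\ast$-algebra $RL^\ast_{c, Q}(H_Y \otimes B)$, and Green's Morita equivalence $(\mathrm{Ind}_H^G A_0) \rtimes_r G \sim A_0 \rtimes_r H$ holds for any $H$-$C^\ast$-algebra $A_0$. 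The second assertion, for $H$ compact open, then follows by combining this induction isomorphism with the homotopy invariance (5) and the point computation $\bD^{B, H}_\ast(\mathrm{point}) \cong K_\ast(B \rtimes_r H)$, which is precisely Proposition \ref{prop_pointcase}.
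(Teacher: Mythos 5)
Your proposal is correct and follows the same route as the paper, which likewise proves the theorem by observing that the proof of Theorem \ref{thm_Ghomology} carries over with $RL^*_c(H_X\otimes B)$ in place of $RL^*_c(H_X)$ (all covers, isometries and $C_0(X)$-algebra structures being built from the underlying $X$-$G$-modules alone), and which handles the second assertion of (2) exactly as you do, via homotopy invariance, the induction isomorphism, and Proposition \ref{prop_pointcase}. One small correction to your justification: for general $B$ the $G$-action on $RL^*_0(H_X\otimes B)=C_0([1,\infty),\Compacts(H_X)\otimes B)$ is \emph{not} inner (only the $\Compacts(H_X)$ factor is acted on by conjugation); the vanishing of $K_\ast(RL^*_0(H_X\otimes B)\rtimes_rG)$ instead follows because the crossed product is isomorphic to the cone $C_0([1,\infty),(\Compacts(H_X)\otimes B)\rtimes_rG)$, which is contractible, so the coefficient analogue of Proposition \ref{prop_quotient_isomG1} still holds.
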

\begin{proof} It is straightforward to see that the proof of Theorem \ref{thm_Ghomology} generalizes to prove all the assertions except the second assertion of (2). Using the homotopy invariance (5) and the first assertion of (2), it is enough to show
\[
\mathbb{D}^{B, H}_\ast(\mathrm{point}) = K_\ast(C_b([1, \infty), \Compacts(H_0\otimes B))_{\mathrm{Hcont}}\rtimes_rH)\cong K_\ast(B\rtimes_rH),
\]
for a separable infinite-dimensional $H$-Hilbert space $H_0=l^2(\N)\otimes L^2(H)$. This follows from Proposition \ref{prop_pointcase}.
\end{proof}

We have the following analogue of Proposition \ref{prop_ucsameB}

\begin{proposition}\label{prop_ucsameB} For any universal $X$-$G$-module $H_X$ or for any $X$-$G$-module of the form $H_X^0\otimes l^2(\N)$ for an $X$-$G$-module $H_X^0$, the natural inclusion induces an isomorphism
\[
K_\ast(RL^*_u(H_X\otimes B)\rtimes_rG) \cong K_\ast(RL^*_c(H_X\otimes B)\rtimes_rG).
\]
The same is true if we replace the reduced crossed product to the maximal one. More generally, for any $G$-$C^*$-algebra $B_1$, the natural inclusion 
induces an isomorphism
\[
K_\ast((RL^*_u(H_X\otimes B)\otimes B_1) \rtimes_rG) \cong K_\ast((RL^*_c(H_X\otimes B)\otimes B_1)\rtimes_rG).
\]
The same is true if we replace either the minimal tensor product or the reduced crossed product or both to the maximal one.
\end{proposition}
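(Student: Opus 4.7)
The plan is to run the same argument that was used for Proposition \ref{prop_ucsame} with the coefficient $G$-$C^*$-algebras $B$ and $B_1$ inserted throughout. The key structural fact, already observed in the proof of Proposition \ref{prop_pointcase}, is that every short exact sequence and pushout square used in \cite[Theorem 3.4]{WY2021} has the form of restriction maps on function algebras on $[1,\infty)$, and in each case the surjections admit $G$-equivariant c.c.p.\ splittings by extension and cutoff. Consequently, both the reduced and maximal crossed products preserve all these diagrams, and the entire proof descends to the crossed-product level irrespective of whether one tensors with $B$, $B_1$, or both.

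First, by the Bott/suspension reduction used in the proof of Proposition \ref{prop_pointcase} (replacing $B_1$ by $C_0(\mathbb{R})\otimes B_1$ if needed), I would reduce to showing $K_0$ vanishes on the ``difference'' algebra of functions vanishing at each $2n$. Concretely, with $E=\sqcup_{n\geq 1}[2n,2n+1]$ and $O=\sqcup_{n\geq 1}[2n-1,2n]$, the pushout/pullback squares relating the four algebras $C_b([1,\infty),\cdot)$, $C_b(E,\cdot)$, $C_b(O,\cdot)$, $C_b(\N,\cdot)$ (and their uniformly continuous analogues) are preserved by $(-\otimes B_1)\rtimes_r G$, so it suffices to show $K_0$ vanishes on the $G$-invariant subalgebra of $C^0_b(E,\Compacts(H_X)\otimes B\otimes B_1)_{\Gcont}$ cut out by the support and commutator conditions from the definition of $RL^*_c(H_X\otimes B)$, where $C^0_b$ denotes functions vanishing at each $2n$.

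Second, I would carry out the Eilenberg-swindle argument of \cite[Theorem 3.4]{WY2021} as adapted in Proposition \ref{prop_pointcase}. Here the hypothesis on $H_X$ plays its role: when $H_X$ is universal, Proposition \ref{prop_universal} and the construction in Definition \ref{def_amplify} furnish a $G$-equivariant unitary identification $H_X\cong H_X\otimes l^2(\N)$ compatible with the $C_0(X)$-action (the $l^2(\N)$ factor carries trivial $G$- and $C_0(X)$-structure), and in the form $H_X^0\otimes l^2(\N)$ this is tautological. This yields isometries $s_l$ with $\sum_l s_ls_l^*=1$ that are $G$-equivariant and commute with $C_0(X)$, hence multiply $(RL^*_c(H_X\otimes B)\otimes B_1)\rtimes_rG$. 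Given a class $x=[p]-[1_k]$, one chooses $r_n\in(0,1)$ using uniform continuity of $p$ on each $[2n,2n+1]$, defines the reparametrization $T^{(l)}_t=T_{2n+(t-2n)(r_n)^l}$ on $[2n,2n+1]$, and sets
\[
x_\infty = \Bigl[\sum_l s_l p^{(l)} s_l^*\Bigr] - \Bigl[\sum_l s_l 1_k s_l^*\Bigr].
\]
The identity $x_\infty+x=x_\infty$ then forces $x=0$, exactly as in \cite[Theorem 3.4]{WY2021}.

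The main obstacle is verifying that both the reparametrization $T\mapsto T^{(l)}$ and the swindle $T\mapsto \sum_l s_l T^{(l)} s_l^*$ preserve all the defining data of $RL^*_c(H_X\otimes B)\otimes B_1$ and descend to well-defined $G$-equivariant $\ast$-homomorphisms on the crossed products. The uniform compact support in $X$ is preserved because $s_l$ acts trivially in the $X$-direction and the reparametrization does not move $X$-support; the commutator condition $\lVert[\phi,T_t]\rVert\to 0$ as $t\to\infty$ survives because the reparametrization fixes the endpoints $2n,\,2n+1$ and maps $[2n,2n+1]$ into itself, so the original decay along $\{2n+1\}_{n\geq 1}$ is inherited; the condition $T(2n)=0$ is preserved by choice of reparametrization, and $G$-continuity is automatic since every ingredient ($s_l$, the reparametrization in the $t$-variable) is $G$-equivariant. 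Once these checks are completed, the statements for $RL^*_u$ versus $RL^*_c$, for reduced versus maximal crossed products, and for arbitrary $B$ and $B_1$, all follow by the identical formal argument.
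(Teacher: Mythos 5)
Your proposal follows essentially the same route as the paper, which simply invokes the adaptation of \cite[Theorem 3.4]{WY2021} already spelled out in the proof of Proposition \ref{prop_pointcase}; the checks you list (preservation of the pullback/pushout squares under $(-\otimes B_1)\rtimes_rG$ via $G$-equivariant c.c.p.\ splittings, well-definedness of the reparametrization $T\mapsto T^{(l)}$ and of $x_\infty$) are exactly the points the paper emphasizes. One small correction: a universal $X$-$G$-module need not be unitarily isomorphic to $H_X\otimes l^2(\N)$ as an $X$-$G$-module, so for that case you should instead reduce to the infinite-multiplicity case via an equivariant continuous cover $(V_t\colon H_X\to H_X\otimes l^2(\N))_{t\in[1,\infty)}$ of the identity, whose induced map $\mathrm{Ad}_{V_t\ast}$ is an isomorphism on $K$-theory and is compatible with the inclusion $RL^*_u\subset RL^*_c$.
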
 
\begin{proof} Again, the proof of  \cite[Theorem 3.4]{WY2021} generalizes to this setting just as we explained in the proof of Proposition \ref{prop_pointcase}. 
\end{proof}

Any $G$-equivariant $\ast$-homomorphism $\pi \colon B_1\to B_2$ induces 
\[
\pi\colon RL^*_c(H_X\otimes B_1)\rtimes_rG \to RL^*_c(H_X\otimes B_2)\rtimes_rG.
\]
This defines a natural transformation
\[
\pi_\ast\colon \bD^{B_1, G}_\ast(X) \to  \bD_\ast^{B_2, G}(X)
\]
of the functors from $\mathcal{PR}^G$ to $\mathcal{GA}$.

We may also define a variant of representable $G$-equivariant K-homology with coefficient $B$ in the following way. 

\begin{definition} Choose any universal $X$-$G$-module $H_X$ for each proper $G$-space $X$ and any $G$-equivariant continuous cover $(V^f_t\colon H_X\to H_Y)_{t\in [1,\infty)}$ for each $G$-equivariant continuous map $f\colon X\to Y$. A functor $\mathbb{D}^{\otimes B, G}_\ast$ from $\mathcal{PR}^G$ to $\mathcal{GA}$ is defined as
\[
\mathbb{D}^{\otimes B, G}_\ast(X) = K_\ast((RL^\ast_c(H_X)\otimes B )\rtimes_rG),
\]
\[
\mathbb{D}^{\otimes B, G}_\ast(f\colon X\to Y)=\mathrm{Ad}_{V^f_t \ast} \colon K_\ast((RL^\ast_c(H_X)\otimes B )\rtimes_rG) \to K_\ast((RL^\ast_c(H_Y)\otimes B )\rtimes_rG).
\]
\end{definition}

Although the following sequence may not be exact in general,
\begin{equation*}
\xymatrix{
0  \ar[r]^-{}     &   RL_0^\ast(H_X)\otimes B          \ar[r]^-{}         &  RL_c^\ast(H_X)\otimes B          \ar[r]^-{} &               RL_{c, Q}^\ast(H_X)\otimes B \ar[r]^-{}       &           0,
}
\end{equation*}
this would not be a problem. The quotient of $RL_c^\ast(H_X)\otimes B$ by $RL_0^\ast(H_X)\otimes B$ is a $G$-$C_0(X)$-algebra which can be used as an enough substitute for $RL_{c, Q}^\ast(H_X\otimes B)$ for our purpose.

\begin{proposition}The functor  $\mathbb{D}^{\otimes B, G}_\ast$ from  $\mathcal{PR}^G$ to $\mathcal{GA}$ is well-defined. The functor does not depend on the choice of universal $X$-$G$-modules $H_X$ up to canonical equivalence.
\end{proposition}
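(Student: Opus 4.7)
The plan is to follow the proof of Proposition \ref{prop_welldefG} almost verbatim, adapting only the handful of places where $RL_c^\ast(H_X\otimes B)$ has been replaced by $RL_c^\ast(H_X)\otimes B$. The only structural input needed beyond what was used there is that the minimal (or maximal) tensor product with the fixed coefficient $B$ and the reduced (or maximal) crossed product by $G$ both commute with directed unions of $C^\ast$-subalgebras, which combined with the continuity of $K$-theory is what drives the representability argument.

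First I would check that the induced map $\bD^{\otimes B,G}_\ast(f)$ is independent of the chosen $G$-equivariant continuous cover $V_t^f\colon H_X\to H_Y$. This follows exactly as in the proof of Proposition \ref{prop_welldefG}: two covers $V_{1,t}, V_{2,t}$ are intertwined by the $2\times 2$ partial isometry
\[
\begin{bmatrix} 1-V_{1,t}V_{1,t}^\ast & V_{1,t}V_{2,t}^\ast  \\ V_{2,t}V_{1,t}^\ast & 1-V_{2,t}V_{2,t}^\ast  \end{bmatrix},
\]
whose entries sit in $M_2(M(RL_c^\ast(H_Y)))\subset M_2(M((RL_c^\ast(H_Y)\otimes B)\rtimes_rG))$, using that $V_{1,t}V_{2,t}^\ast$ multiplies $RL_c^\ast(H_Y)$ by Lemma \ref{lem_mult} and that multipliers of $RL_c^\ast(H_Y)$ multiply $RL_c^\ast(H_Y)\otimes B$ in the obvious way.

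Next I would address the functoriality obstruction exactly as in Proposition \ref{prop_welldefG}. A priori we only have $\bD^{\otimes B,G}_\ast(f_2)\circ \bD^{\otimes B,G}_\ast(f_1)=\bD^{\otimes B,G}_\ast(f_2\circ f_1)$ for proper $f_2$, because only then does $V_2V_1$ still satisfy the cover condition and does $V_3(V_2V_1)^\ast$ multiply $RL_c^\ast(H_Z)\otimes B$. To remove this restriction we prove representability: for the net $(X_i)_{i\in I}$ of $G$-compact $G$-invariant closed subspaces of $X$ one has a natural isomorphism $\varinjlim_{i\in I} \bD^{\otimes B,G}_\ast(X_i)\cong \bD^{\otimes B,G}_\ast(X)$. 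As in the earlier argument, this reduces to the case of an increasing exhaustion $X_n=\overline{U_n}$ by closures of relatively $G$-compact $G$-invariant open sets, where we may assume $H_X$ is very ample and take $H_{X_n}=\chi_{X_n}H_X$ (still very ample on $X_n$). Then $RL_c^\ast(H_{X_n})=\chi_{X_n}RL_c^\ast(H_X)\chi_{X_n}$ is an increasing sequence of $G$-invariant $C^\ast$-subalgebras whose union is dense in $RL_c^\ast(H_X)$. The minimal tensor product with $B$ preserves this directed union (since $(\bigcup A_n)\otimes B=\bigcup (A_n\otimes B)$ as a $C^\ast$-algebra), and the reduced crossed product by $G$ likewise preserves directed unions, so
\[
\varinjlim_n (RL_c^\ast(H_{X_n})\otimes B)\rtimes_rG \;\cong\; (RL_c^\ast(H_X)\otimes B)\rtimes_rG,
\]
and continuity of $K$-theory gives the required isomorphism.

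Finally I would verify, as in Proposition \ref{prop_welldefG}, that this representable identification is compatible with $\bD^{\otimes B,G}_\ast(f)$ for any $G$-equivariant $f\colon X\to Y$: given a cover $V_t\colon H_X\to H_Y$, for each $G$-compact $G$-invariant closed $X_0\subset X$ there exists a $G$-compact $G$-invariant closed $Y_0\subset Y$ (one may take $Y_0$ to be a $G$-compact closed neighborhood of $f(X_0)$) such that $V_t\chi_{X_0}=\chi_{Y_0}V_t\chi_{X_0}$ for all sufficiently large $t$; combined with independence of the cover this yields the desired commutative square at the level of the inductive systems. Composition then factors through $X\to f_1(X_0)\to Z$ with $f_1(X_0)$ $G$-compact, where the restriction of $f_2$ is automatically proper, establishing $\bD^{\otimes B,G}_\ast(f_2)\circ \bD^{\otimes B,G}_\ast(f_1)=\bD^{\otimes B,G}_\ast(f_2\circ f_1)$ in general. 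Independence (up to canonical equivalence) of the choice of universal $X$-$G$-modules follows by the same intertwining-by-isometry argument, taking two universal choices and $G$-equivariant continuous covers of the identity between them in both directions. The only step where one must be mildly careful is the inductive-limit argument above, since here the tensor factor $B$ is new relative to Proposition \ref{prop_welldefG}; but the point is simply that $-\otimes B$ and $-\rtimes_rG$ are continuous functors, so no new difficulty arises.
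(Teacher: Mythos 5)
Your proposal is correct and takes the same route as the paper, which simply observes that the proof of Proposition \ref{prop_welldefG} works verbatim for $\mathbb{D}^{\otimes B, G}_\ast$. The one genuinely new point you flag — that $-\otimes B$ and $-\rtimes_rG$ commute with directed unions of ($G$-invariant) $C^*$-subalgebras, so the representability/continuity-of-$K$-theory step goes through — is exactly the right thing to check and is handled correctly.
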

\begin{proof} The proof of \ref{prop_welldefG} works verbatim.
\end{proof}

\begin{theorem}\label{thm_coeff2} The functor $\mathbb{D}^{\otimes B, G}_\ast$ satisfies the following: 
\begin{enumerate}
\item  $\mathbb{D}^{\otimes B, G}_\ast(\mathrm{empty\, set})\cong 0$.
\item Induction from an open subgroup: for any open subgroup $H$ of $G$ and any proper $H$-space $Y$, we have a natural isomorphism $\mathbb{D}^{\otimes B, G}_\ast(G\times_H Y)\cong \mathbb{D}_\ast^{B, H}(Y)$. If $H$ is a compact open subgroup, we have $\mathbb{D}^{\otimes B, G}_\ast(G/H\times \Delta^k)\cong K_\ast(B\rtimes_rH)$ for any k-simplex (ball) $\Delta^k$ with the trivial $G$-action.

\item Representable: if $(X_i)_{i \in I}$ is the net of $G$-compact, $G$-invariant closed subsets of $X$, ordered by the inclusion, the canonical maps $\bD^{\otimes B, G}_\ast(X_i) \to \bD^{\otimes B, G}_\ast(X)$ induce a natural isomorphism 
\[
\varinjlim_{i \in I}\bD^{\otimes B, G}_\ast(X_i) \cong \bD^{\otimes B, G}_\ast(X).
\]
\item Mayer--Vietoris sequence for a $G$-invariant open cover: if $X=U\cup V$ for open $G$-invariant subsets $U$ and $V$ of $X$, we have a natural Meyer--Vietoris sequence
\[
\xymatrix{
 \bD^{\otimes B, G}_0(U\cap V) \ar[r]^-{}&    \bD^{\otimes B, G}_0(U) \oplus \bD^{\otimes B, G}_0(V) \ar[r]^-{} &   \bD^{\otimes B, G}_0(X) \ar[d]^-{}   \\
\bD^{\otimes B, G}_1(X)     \ar[u]^-{}         &   \bD^{\otimes B, G}_1(U) \oplus \bD^{\otimes B, G}_1(V)        \ar[l]^-{} &            \bD^{\otimes B, G}_1(U\cap V) \ar[l]^-{}.
}
\]
\item  Homotopy invariance: if $h\colon X\times[0 ,1]\to Y$ is a $G$-equivariant continuous homotopy between $f_0, f_1\colon X\to Y$, then $\bD^{\otimes B, G}_\ast(f_0)=\bD^{\otimes B, G}_\ast(f_1)$. 
\end{enumerate}
\end{theorem}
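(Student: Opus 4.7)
The plan is to mirror, almost verbatim, the proofs of Theorems \ref{thm_Ghomology} and \ref{thm_coeff}, replacing $RL_c^\ast(H_X\otimes B)$ by $RL_c^\ast(H_X)\otimes B$ throughout, and compensating for the (possibly non-exact) sequence
\[
 0 \to RL_0^\ast(H_X)\otimes B \to RL_c^\ast(H_X)\otimes B \to A_X^B := (RL_c^\ast(H_X)\otimes B)/(RL_0^\ast(H_X)\otimes B) \to 0
\]
by systematically using the $G$-$C_0(X)$-algebra $A_X^B$ as a substitute for $RL_{c, Q}^\ast(H_X\otimes B)$, as suggested in the paragraph preceding the theorem.

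For (1), $H_X=0$ forces the algebra to vanish. For (3), I use the usual continuity of K-theory, together with the observation that if $X_n$ is an increasing exhaustion of $X$ by $G$-compact $G$-invariant closed subsets and $H_X$ is very ample with $H_{X_n}=\chi_{X_n}H_X$, then $RL_c^\ast(H_{X_n})\otimes B$ embeds as an increasing family of $G$-$C^*$-subalgebras of $RL_c^\ast(H_X)\otimes B$ with dense union; applying $\rtimes_rG$ preserves this. For (2), the first assertion follows by the same argument as in Proposition \ref{prop_compactopen}, noting that $(RL_c^\ast(H_X)\otimes B)\rtimes_rG$ is a $C_0(G/H)$-algebra (fiber at $H$ being $(RL_c^\ast(H_Y)\otimes B)\rtimes_rH$) and invoking the Green-type Morita equivalence \cite[Theorem 17]{Green78}. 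The point case reduces via homotopy invariance (5) and (3) to
\[
\bD_\ast^{\otimes B, H}(\mathrm{point}) = K_\ast\bigl((C_b([1,\infty), \Compacts(H_0))_{\mathrm{Hcont}}\otimes B)\rtimes_rH\bigr)\cong K_\ast(B\rtimes_rH),
\]
which is exactly the content of the last part of Proposition \ref{prop_pointcase} (applied with $B_1=B$ and with $B$ in that statement set to $\bC$).

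For (4), instead of working directly with $RL_c^\ast(H_X)\otimes B$, I pass to the quotient $A_X^B$, which is a $G$-$C_0(X)$-algebra. For $G$-invariant open $U\subset X$ define the ideal $(A_X^B)_U$ of $A_X^B$ generated by $C_0(U)\cdot A_X^B$; when $H_X$ is very ample and $H_U=\chi_UH_X$, one checks $(A_X^B)_U \cong A_U^B$. The relations $(A_X^B)_U+(A_X^B)_V=A_X^B$ and $(A_X^B)_U\cap(A_X^B)_V=(A_X^B)_{U\cap V}$ descend through $\rtimes_rG$ (by the same argument as in the proof of Theorem \ref{thm_Ghomology}(4)), yielding the desired Mayer--Vietoris sequence. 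To transfer it from $K_\ast(A_X^B\rtimes_rG)$ back to $\bD_\ast^{\otimes B,G}(X)=K_\ast((RL_c^\ast(H_X)\otimes B)\rtimes_rG)$, I use the six-term sequence associated to $RL_0^\ast(H_X)\otimes B \hookrightarrow RL_c^\ast(H_X)\otimes B \twoheadrightarrow A_X^B$ in whichever form is exact (noting $RL_0^\ast(H_X)\otimes B\cong C_0([1,\infty),\Compacts(H_X))\otimes B$ has trivially vanishing K-theory after $\rtimes_rG$, since the $G$-action on the $C_0([1,\infty))$-factor is trivial and the crossed product factors accordingly) to conclude that $K_\ast((RL_c^\ast(H_X)\otimes B)\rtimes_rG)\cong K_\ast(A_X^B\rtimes_rG)$ naturally.

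For (5), the homotopy invariance, I transcribe the Eilenberg-swindle-style argument from the proof of Theorem \ref{thm_Ghomology}(5) verbatim, now taking
\[
\cA = (RL_c^\ast(H_{X\times[0,1]})\otimes B)_{\Gcont}, \qquad \cA^\infty = (RL_c^\ast(H_{X\times[0,1]}\otimes l^2(\N))\otimes B)_{\Gcont}.
\]
The $G$-equivariant isometries $V_{n,t}$ and $W(r)$ constructed on $H_{X\times[0,1]}$ amplify to $H_{X\times[0,1]}\otimes l^2(\N)$ by tensoring with the identity of $B$ (at the multiplier level), and the resulting $\ast$-homomorphisms $\alpha,\beta,\gamma$ and the partial isometry $w\in M(C_\Gcont)$ all retain their properties. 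Taking $\rtimes_rG$ then gives $\mathrm{Ad}_{V_{1,t}}{\rtimes_r}1=\mathrm{Ad}_{V_{\infty,t}}{\rtimes_r}1$ on K-theory, hence $\bD_\ast^{\otimes B, G}(f_1)=\bD_\ast^{\otimes B, G}(f_0)$.

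The main (and only real) obstacle is the failure of exactness of $0\to RL_0^\ast(H_X)\otimes B\to RL_c^\ast(H_X)\otimes B\to RL_{c,Q}^\ast(H_X)\otimes B\to 0$ in the non-exact $B$ case; this is precisely what forces the use of the quotient $A_X^B$ in step (4). Everything else is a routine transcription of the already-established arguments, exploiting that tensoring with $B$ and passing to $G$-continuous elements commutes well with the various multiplier, swindle, and partial isometry constructions used previously.
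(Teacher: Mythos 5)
Your proposal is correct and follows essentially the same route as the paper: transfer the proof of Theorem \ref{thm_Ghomology} verbatim, handle the point case of (2) via Proposition \ref{prop_pointcase}, and run the Mayer--Vietoris argument on the quotient $(RL_c^\ast(H_X)\otimes B)/(RL_0^\ast(H_X)\otimes B)$ as a substitute $G$-$C_0(X)$-algebra. One small correction to your citation in the point case: since the universal point-$H$-module is $H_0=l^2(\N)\otimes L^2(H)$, Proposition \ref{prop_pointcase} must be invoked with the algebra called $B$ in that statement equal to $\Compacts(L^2(H))$ (and $B_1$ equal to your coefficient algebra), not with $B=\bC$, as otherwise the $L^2(H)$-factor carrying the regular representation is not accounted for.
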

\begin{proof} Again, the proof of Theorem \ref{thm_Ghomology} works in this setting in a straightforward way except a few points which we explain. The second assertion of the (2) is proven by using the homotopy invariance (5), the first assertion of (2) and $\mathbb{D}^{\otimes B, H}_\ast(\mathrm{point})\cong K_\ast(B\rtimes_rH)$ which follows from Proposition \ref{prop_pointcase} with $B=\Compacts(L^2(H))$ and $B_1=B$ (for $G=H$). For the Mayer--Vietoris sequence (4), the previous argument works verbatim if we use, instead of $RL_{c, Q}^*(H_X)\otimes B$, the quotient of
$RL_{c}^*(H_X)\otimes B$ by $RL_0^*(H_X)\otimes B$ which is a $G$-$C_0(X)$-algebra.
\end{proof}

We have a natural inclusion
\[
(RL_c^*(H_X)\otimes B ) \rtimes_rG \to RL_c^*(H_X\otimes B)\rtimes_rG
\]
induced from the inclusion 
\[
RL_c^*(H_X)\otimes B  \to  RL_c^*(H_X\otimes B).
\]
The latter inclusion is well-defined since the inclusion 
\[
C_b([1 ,\infty), \Compacts(H_X)) \otimes B \to C_b([1 ,\infty), \Compacts(H_X)\otimes B) 
\]
is well-defined. This follows, for example, from that in general, the inclusion
\[
\Linears(\E) \otimes B \to \Linears(\E\otimes B)
\]
is well-defined for any Hilbert $A$-module $\E$ for a $C^*$-algebra $A$. The inclusion induces a natural transformation 
\[
\mathbb{D}^{\otimes B, G}_\ast(X)  \to    \mathbb{D}^{B, G}_\ast(X)
\]
of functors from $\mathcal{PR}^G$ to $\mathcal{GA}$.

We refer the reader to \cite[Appendix]{Valette02} for the definition of a proper $G$-CW complex for a discrete group $G$.

\begin{theorem}\label{thm_discrete_natural_isom} Let $G$ be a countable discrete group, $B$ be a separable $G$-$C^*$-algebra and $X$ be a proper $G$-space $X$ which is $G$-equivariantly homotopy equivalent to a proper $G$-CW complex. Then, for any universal $X$-$G$-module $H_X$, the natural inclusion
\[
(RL_c^*(H_X)\otimes B ) \rtimes_rG \to RL_c^*(H_X\otimes B)\rtimes_rG
\]
induces an isomorphism on $K$-theory groups. In other words, the natural transformation 
\[
\mathbb{D}^{\otimes B, G}_\ast(X) \to   \mathbb{D}^{B, G}_\ast(X)
\]
is an isomorphism if $X$ is $G$-equivariantly homotopy equivalent to a proper $G$-CW complex.
\end{theorem}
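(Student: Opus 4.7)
The plan is to show that the natural transformation $\eta_X\colon \bD^{\otimes B, G}_\ast(X) \to \bD^{B, G}_\ast(X)$ is an isomorphism by a standard comparison-of-homology-theories argument on proper $G$-CW complexes. Both functors satisfy the same list of axioms recorded in Theorems \ref{thm_coeff} and \ref{thm_coeff2} --- representability, Mayer--Vietoris for $G$-invariant open covers, homotopy invariance, and induction from open subgroups of $G$ (for discrete $G$, every subgroup is open, so in particular every finite subgroup) --- and the natural inclusion $(RL^*_c(H_X)\otimes B)\rtimes_r G \to RL^*_c(H_X\otimes B)\rtimes_r G$ manifestly respects each of these structures. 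The strategy is therefore to verify that $\eta_X$ is an isomorphism on the ``building blocks'' of proper $G$-CW complexes and then bootstrap.

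First I would establish the base case: $\eta_{G/H}$ is an isomorphism for every finite subgroup $H \leq G$. By the naturality of the induction isomorphisms in part (2) of Theorems \ref{thm_coeff} and \ref{thm_coeff2} --- both of which come from the concrete corner-embedding argument of Proposition \ref{prop_compactopen} applied to the $Y$-$H$-submodule $\chi_YH_X$ --- the claim reduces to showing that, when $X$ is a point and $H$ is finite, the inclusion
\[
\bigl(C_b([1,\infty),\Compacts(H_0))_{\mathrm{Hcont}}\otimes B\bigr)\rtimes_r H \ \longrightarrow\ C_b([1,\infty),\Compacts(H_0)\otimes B)_{\mathrm{Hcont}}\rtimes_r H
\]
induces a K-theory isomorphism, for $H_0 = l^2(\N)\otimes \C[H]$. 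For this I would apply Proposition \ref{prop_pointcase} twice: once with coefficient $B_1=B$ and trivial point-coefficient, and once with trivial $B_1$ and point-coefficient $B$. Both applications identify the K-theory of the algebras above with $K_\ast((\Compacts(H_0)\otimes B)\rtimes_r H)$ via the evaluation $\mathrm{ev}_1$; since the natural inclusion intertwines the two evaluations, a two-out-of-three argument gives the required isomorphism. Homotopy invariance then upgrades the base case to $\eta_{G/H\times \Delta^k}$ for any $k$-simplex with trivial action.

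Next, I would induct on the number of equivariant cells of a finite proper $G$-CW complex. If $X = X' \cup_{G/H\times S^{k-1}} G/H\times D^k$, choose a $G$-invariant open cover consisting of a small equivariant neighbourhood of $X'$ (equivariantly homotopy equivalent to $X'$) and the open cell (equivariantly homotopy equivalent to $G/H\times \Delta^k$), whose intersection is equivariantly homotopy equivalent to $G/H\times S^{k-1}$. Combining the Mayer--Vietoris sequences for the two functors with the five-lemma, using the inductive hypothesis on $X'$, the base case on the cell, and the inductive hypothesis or base case on the overlap, yields that $\eta_X$ is an isomorphism on any finite proper $G$-CW complex. The representability axiom then extends the result to all proper $G$-CW complexes by passing to the inductive limit over $G$-compact subcomplexes, and a final application of homotopy invariance transfers the conclusion to any proper $G$-space $X$ that is $G$-equivariantly homotopy equivalent to a proper $G$-CW complex.

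I expect Step 1 to be the main obstacle. The abstract isomorphisms $\bD^{\otimes B,G}_\ast(G/H) \cong K_\ast(B\rtimes_r H) \cong \bD^{B,G}_\ast(G/H)$ from the axioms are of little use unless one checks that the natural transformation $\eta_{G/H}$ actually realizes the identity under these identifications. Tracing through the proofs of Proposition \ref{prop_compactopen} and Proposition \ref{prop_pointcase} shows that both identifications are induced by the same concrete evaluation-at-$t=1$ and corner-embedding maps; this compatibility is what makes the two-out-of-three argument legitimate. The restriction to \emph{discrete} $G$ enters precisely here: it is needed so that finite subgroups are open, so that Proposition \ref{prop_compactopen} applies to both sides at once.
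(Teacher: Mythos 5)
Your proposal is correct and follows essentially the same route as the paper: the paper's own proof simply observes that the transformation is an isomorphism for $X=G/H$ with $H$ finite and then invokes the common axioms of Theorems \ref{thm_coeff} and \ref{thm_coeff2}, which is exactly your comparison-of-homology-theories bootstrap. Your more detailed treatment of the base case (two applications of Proposition \ref{prop_pointcase} intertwined by $\mathrm{ev}_1$, plus two-out-of-three) correctly fills in what the paper leaves implicit.
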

\begin{proof} The natural transformation $\mathbb{D}^{\otimes B, G}_\ast \to \mathbb{D}^{B, G}_\ast$ is an isomorphism when $X=G/H$ for any finite subgroup $H$. The claim follows from Theorem \ref{thm_coeff} and Theorem \ref{thm_coeff2}.
\end{proof}


\section{Comparison with the localized Roe algebra}
In this section, we compare the crossed product algebra $RL^*_u(H_X)\rtimes_rG$ with the localized equivariant Roe algebra / the equivariant localization algebra $C^*_L(X)^G$ for any discrete group  $G$ and for any proper $G$-space $X$. We refer the reader to \cite[Section 2.2]{GHM} for a supplement to this section.

For later purpose, most of the results in this section will be proved in a general setting where $G$ is any locally compact group and $X$ is any proper $G$-space. We also let $B$ be any separable $G$-$C^*$-algebra which will be used  as a coefficient.

For any $X$-$G$-module $H_X$, we consider $H_X\otimes B$ as a $G$-Hilbert $B$-module equipped with the diagonal $G$-action. Any Borel function $\phi$ on $X$ is represented as $\phi\otimes 1\in \Linears(H_X\otimes B)$ and we write $\phi=\phi\otimes 1$ as long as there is no confusion.

Recall that $T \in \Linears(H_X\otimes B)$ for an $X$-module $H_X\otimes B$ is locally compact if $\phi T, T\phi \in \Compacts(H_X\otimes B)$ for any $\phi \in C_0(X)$.

\begin{definition}(c.f.\ \cite[Definition 5.2.1]{WY2020}) For any $X$-$G$-module $H_X$, we define the equivariant Roe algebra $C^*(H_X\otimes B)^G_{\mathrm{Gcpt}}$ with $G$-compact support as the norm completion of the $\ast$-algebra $\bC(H_X\otimes B)_{\mathrm{Gcpt}}^G$ consisting of $T \in \Linears(H_X\otimes B)$ satisfying:
\begin{enumerate}
\item $T$ has $G$-compact support, i.e. $\mathrm{supp}(T)\subset Y\times Y$ for some $G$-compact subset $Y$ of $X$.
\item $T$ is $G$-equivariant.
\item $T$ is locally compact.
\item $T$ is properly supported, i.e. for any compact subset $A$, there is a compact subset $B$ so that $T\chi_A=\chi_BT\chi_A$ and $\chi_AT=\chi_AT\chi_B$.
\end{enumerate}
If $X$ is $G$-compact, we write $C^*(H_X\otimes B)^G=C^*(H_X\otimes B)^G_{\mathrm{Gcpt}}$ and call it the equivariant Roe algebra.
\end{definition}
\begin{remark}\label{rem_same} If $X$ has a $G$-invariant proper metric $d$ and if $T$ is $G$-equivariant and has $G$-compact support, the condition (4) is same as saying $T$ has finite propagation with respect $d$. Indeed, if $Y=GY_0$ is the support of $T$ for a compact subset $Y_0$ of $X$ and if we let $Y_1$ be any compact neighborhood of $Y_0$, we have $T\chi_{Y_1}=\chi_{Y_2}T\chi_{Y_1}$ and $\chi_{Y_1}T=\chi_{Y_1}T\chi_{Y_2}$  for some compact subset $Y_2$ of $Y$. This condition already implies that if $(x, y)\in \supp(T)$ and if either $x\in gY_0$ or resp. $y \in gY_0$, then $y \in gY_2$, resp $x\in gY_2$. Hence, $\prop(T)\leq d(Y_0, Y_2)$. The converse is obvious.
\end{remark}

\begin{proposition} Let $X$ be a $G$-compact proper $G$-space with a $G$-invariant proper metric $d$ and $H_X$ be an $X$-$G$-module. Then, the equivariant Roe algebra $C^*(H_X\otimes B)^G$ is same as the norm completion of the $\ast$-algebra $\bC(H_X\otimes B)^G$ consisting of $G$-equivariant, locally compact operator in $\Linears(H_X\otimes B)$ with finite propagation with respect to $d$. 
\end{proposition}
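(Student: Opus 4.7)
The plan is to show directly that the two $\ast$-algebras $\bC(H_X\otimes B)^G_{\mathrm{Gcpt}}$ and $\bC(H_X\otimes B)^G$ coincide as subsets of $\Linears(H_X\otimes B)$; the equality of their norm completions then follows immediately.

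First, since $X$ is assumed $G$-compact, the support condition (1) in the definition of $\bC(H_X\otimes B)^G_{\mathrm{Gcpt}}$ is automatic for every operator, as $\supp(T)\subset X\times X$ and $X$ itself is $G$-compact. Thus both algebras consist of $G$-equivariant, locally compact operators, and the only remaining point is to compare the proper-support condition (4) with the finite-propagation condition, under the additional hypothesis of $G$-equivariance.

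Next I would verify the two implications. One direction is essentially Remark \ref{rem_same}: if $T$ is $G$-equivariant with $G$-compact support (automatic here) and properly supported, then choosing a compact $Y_0$ with $X=GY_0$, a compact neighborhood $Y_1$ of $Y_0$, and the compact $Y_2$ produced by the proper-support condition applied to $Y_1$, one sees by $G$-equivariance that $\prop(T)\leq d(Y_0,Y_2)<\infty$ with respect to the $G$-invariant proper metric $d$. For the reverse, suppose $T$ is $G$-equivariant with finite propagation $r=\prop(T)$. Given any compact $A\subset X$, the set $B=\{x\in X\mid d(x,A)\leq r\}$ is compact because $d$ is proper, and $\supp(T\chi_A)\subset B\times A$, $\supp(\chi_AT)\subset A\times B$ by the definition of propagation, so $T\chi_A=\chi_BT\chi_A$ and $\chi_AT=\chi_AT\chi_B$. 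Hence $T$ is properly supported.

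These two verifications show $\bC(H_X\otimes B)^G_{\mathrm{Gcpt}}=\bC(H_X\otimes B)^G$ as $\ast$-subalgebras of $\Linears(H_X\otimes B)$, so their closures in operator norm coincide, which is the claim. There is no serious obstacle; the only mildly delicate point is to use $G$-invariance of $d$ together with properness to produce the compact neighborhood $B$ in the finite-propagation $\Rightarrow$ properly supported direction, but this is immediate from the properness of $d$ on the proper $G$-space $X$.
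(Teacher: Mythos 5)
Your proof is correct and follows essentially the same route as the paper, which simply cites Remark \ref{rem_same}: your first implication is a restatement of that remark's argument, and your second implication (finite propagation together with properness of $d$ implies properly supported) is exactly the direction the remark dismisses as "obvious," which you have correctly filled in.
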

\begin{proof} This follows from Remark \ref{rem_same}. 
\end{proof}

In general, $C^*(H_X\otimes B)^G_{\mathrm{Gcpt}}$ is just the inductive limit (union) of $C^*(H_{X_n}\otimes B)^G$ where $X_n$ are $G$-compact, $G$-invariant closed subsets of $X$ with $\cup X_n=X$ and $H_{X_n}=\chi_{X_n}H_X$.

\begin{definition}\label{def_tildeH} Let $H_X$ be an $X$-$G$ module. We define an $X$-$G$-module
\[
\tilde H_X = H_X\otimes L^2(G)
\]
equipped with the following covariant representation of $G$ and $C_0(X)$: for $\phi \in C_0(X)$ 
\[
\phi \mapsto  \phi\otimes 1 \in \Linears(H_X\otimes L^2(G)),
\] 
and for $g \in G$
\[
g \mapsto u_g\otimes \lambda_g \in \Linears(H_X\otimes L^2(G))
\] 
where $u_g$ is the unitary on $H_X$ corresponding to $g$ and $\lambda_g$ is the left-translation by $g$.
\end{definition}

\begin{definition}\label{def_rightreg} The right-regular representation
\[
\rho\colon\Linears(H_X\otimes B)_{\mathrm{Gcont}}\rtimes_rG \to  \Linears(\tilde H_X\otimes B)
\] 
of $\Linears(H_X\otimes B)_{\mathrm{Gcont}}\rtimes_rG$ is defined for $T \in \Linears(H_X\otimes B)$, 
\[
(\rho(T)f)(h)=h(T)f(h) 
\]
for $(f\colon h\mapsto f(h)\in H_X \otimes B) \in L^2(G, H_X\otimes B)$ and for $g\in G$,
\[
\rho(g)= 1\otimes 1\otimes \rho_g \in  \Linears(H_X\otimes B \otimes L^2(G))
\]
where $\rho_g$ is the right-regular representation of $g\in G$, 
\[
\rho_gf(s) = f(sg)\Delta^{1/2}(g) 
\]
for $f \in L^2(G)$ where $\Delta$ is the modular function.
\end{definition}
 
\begin{lemma} For any locally compact group $G$, for any proper $G$-space $X$ and for any $X$-$G$-module $H_X$, the right-regular representation $\rho$ maps $\Compacts(H_X\otimes B)\rtimes_rG$ into the equivariant Roe algebra $C^*(\tilde H_X\otimes B)_{\mathrm{Gcpt}}^G \subset \Linears(\tilde H_X\otimes B)$ with $G$-compact support.
\end{lemma}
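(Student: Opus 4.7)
The plan is to reduce to a dense $\ast$-subalgebra and verify the four defining conditions of the equivariant Roe algebra directly. First, I would identify a norm-dense $\ast$-subalgebra $\as$ of $\Compacts(H_X\otimes B)\rtimes_rG$ consisting of compactly supported continuous functions $f\in C_c(G,\Compacts(H_X\otimes B))$ for which there exists a compact $K\subset X$ (depending on $f$) with $\chi_Kf(g)\chi_K=f(g)$ for every $g\in G$. Density follows from the non-degeneracy of the $C_0(X)$-action on $H_X\otimes B$: any compact operator on $H_X\otimes B$ can be norm-approximated by $\chi_K$-sandwiched ones. Since $\rho$ is a continuous $\ast$-homomorphism and the equivariant Roe algebra is norm-closed, it suffices to show $\rho(f)\in\bC(\tilde H_X\otimes B)^G_{\mathrm{Gcpt}}$ for every $f\in\as$.

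Next, I would write $\rho(f)=\int_G\rho(f(g))\rho(g)\,d\mu_G(g)$ and, after the substitution $s=hg$ using left-invariance of $\mu_G$, identify $\rho(f)$ as an integral operator on $L^2(G,H_X\otimes B)$ with kernel
\[
K_f(h,s)=h(f(h^{-1}s))\Delta^{1/2}(h^{-1}s)\in\Compacts(H_X\otimes B).
\]
This explicit formula is the main bookkeeping device for all subsequent checks.

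With this in hand, I would verify the four defining properties in turn. The $G$-compact support condition follows from $h(f(g))=\chi_{hK}h(f(g))\chi_{hK}$, giving $\supp(\rho(f))\subset GK\times GK$, which is $G$-compact. For $G$-equivariance with respect to the diagonal action $\tilde U_s$ on $\tilde H_X\otimes B=H_X\otimes L^2(G)\otimes B$, a direct computation shows $\tilde U_s\rho(T)\tilde U_s^{-1}=\rho(T)$ using the identity $s\cdot((s^{-1}h)(T))=h(T)$ for $T\in \Compacts(H_X\otimes B)$, and $\tilde U_s\rho(g)\tilde U_s^{-1}=\rho(g)$ since right and left translations on $L^2(G)$ commute. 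For local compactness, given $\phi\in C_c(X)$, the kernel $\phi K_f(h,s)$ vanishes unless $h\in\supp(\phi)K^{-1}$ and $h^{-1}s\in\supp_G(f)$, which are both compact in $G$ by properness of the $G$-action on $X$; hence $\phi\rho(f)$ is an integral operator with norm-continuous, compactly supported (in $G\times G$), $\Compacts(H_X\otimes B)$-valued kernel, and is therefore a compact operator on $\tilde H_X\otimes B$. Passing to a general $\phi\in C_0(X)$ by approximation and taking adjoints yields local compactness. The proper support condition follows from the same support analysis: for compact $A\subset X$, the values of $\rho(f)\chi_A\xi$ at $h$ are nonzero only when $h\in AK^{-1}$ and then lie in $\chi_{hK}(H_X\otimes B)\subset\chi_{B_0}(H_X\otimes B)$ for $B_0=AK^{-1}K$, which is compact by properness.

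The main obstacle in this outline is the interplay between the properness of the $G$-action on $X$ and the support analysis of $K_f$: both local compactness and proper support rest on the compactness of sets of the form $\{h\in G:hK\cap A\neq\emptyset\}$ for compact $K,A\subset X$. Everything else is routine integral-kernel calculus together with the reduction to the dense subalgebra $\as$.
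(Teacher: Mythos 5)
Your proof is correct and follows essentially the same route as the paper: reduce to a dense set of generators with compact support in both $G$ and $X$, and verify the four defining conditions of $\bC(\tilde H_X\otimes B)^G_{\mathrm{Gcpt}}$ directly, with properness of the action converting compact support in $X$ into compact support in $G$ at the two places where it matters (local compactness and proper support). The only difference is cosmetic: the paper factors its generators as $\rho(T)\rho(f)$ with $T\in\Compacts(H_X\otimes B)$ compactly supported and $f\in C_c(G)$ scalar, treating $\rho(T)$ as the diagonal field $h\mapsto h(T)$, whereas you compute the full integral kernel $K_f(h,s)$ of $\rho(f)$ for $f\in C_c(G,\Compacts(H_X\otimes B))$ — the same support analysis in either bookkeeping (and note your set ``$AK^{-1}$'' should be read as $\{h\in G: hK\cap A\neq\emptyset\}$, which is what properness makes compact).
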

\begin{proof}
It is easy to see that the image of $\rho$ consists of $G$-equivariant operators on $\tilde H_X\otimes B$ and that $\rho(g)$ for any $g$ in $G$ commutes with $C_0(X)$. We show that for any $T \in \Compacts(H_X\otimes B)$ with compact support $X_0$ of $X$, and for any function $f \in C_c(G)$, $\rho(T)\rho(f)\in \bC(\tilde H_X\otimes B)_{\mathrm{Gcpt}}^G$. This implies our assertion. The operator $\rho(T)$ is of the form
\[
(h \mapsto h(T)) \in C_b(G, \Compacts(H_X\otimes B)) 
\]
acting on $L^2(G, H_X\otimes B)$ in an obvious way and it is easy to see that $\rho(T)$ has $G$-compact support $GX_0$. Moreover, for any compact subset $A$ of $X$,
\[
\chi_Ah(T)=h(T)\chi_A=0
\]
for all $h$ in $G$ outside a compact subset $K$. In particular, by letting $B=KX_0$ which is compact, we have
\[
\chi_Ah(T)= \chi_Ah(T)\chi_B,  h(T)\chi_A=\chi_Bh(T)\chi_A
\] 
for all $h$ in $G$. This shows $\rho(T)$ is properly supported. Finally, for any $\phi\in C_c(X)$, the operator $\phi \rho(T)$ is of the form
\[
(h \mapsto \phi h(T)) \in C_c(G, \Compacts(H_X\otimes B)) 
\]
acting on $L^2(G, H_X\otimes B)$ in an obvious way. It follows $\phi \rho(T)\rho(f) \in \Compacts(\tilde H_X\otimes B)$ and similarly $\rho(T)\rho(f)\phi  \in \Compacts(\tilde H_X\otimes B)$. We see that $\rho(T)\rho(f)$ is locally compact so we are done.
\end{proof}

\begin{proposition}\label{prop_isom}(c.f. \cite[Theorem 2.11]{GHM}) For any locally compact group $G$, for any proper $G$-space $X$ and for any $X$-$G$-module $H_X$, the right-regular representation 
\[
\rho\colon \Compacts(H_X\otimes B)\rtimes_rG \to  \Linears(\tilde H_X\otimes B)
\]
induces an isomorphism 
\[
\Compacts(H_X\otimes B)\rtimes_rG  \cong   C^*(\tilde H_X\otimes B)_{\mathrm{Gcpt}}^G.
\]
\end{proposition}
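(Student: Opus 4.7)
The plan is to establish two things: (a) $\rho$ is faithful (hence isometric), and (b) its image is norm-dense in $C^\ast(\tilde H_X\otimes B)_{\mathrm{Gcpt}}^G$. Combined with the preceding lemma, which already shows $\rho$ lands inside this Roe algebra, these will give the desired isomorphism.

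For (a), the key observation is that $\rho$ is unitarily equivalent to the standard left-regular covariant representation defining the reduced crossed product. The plan is to introduce the unitary involution
\[
V\colon L^2(G,H_X\otimes B)\to L^2(G,H_X\otimes B),\qquad (V\xi)(h)=\Delta^{-1/2}(h)\xi(h^{-1}),
\]
whose unitarity follows from the modular identity $\int_G f(h^{-1})\Delta^{-1}(h)\,d\mu_G(h)=\int_G f(h)\,d\mu_G(h)$. A direct calculation, using the defining formulas for $\rho$ from Definition \ref{def_rightreg}, then yields
\[
(V\rho(T)V^{-1}\xi)(h)=h^{-1}(T)\xi(h),\qquad (V\rho(g)V^{-1}\xi)(h)=\xi(g^{-1}h)
\]
for $T\in\Compacts(H_X\otimes B)$ and $g\in G$. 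This is precisely the covariant pair obtained by inducing the faithful representation $\Compacts(H_X\otimes B)\subset \Linears(H_X\otimes B)$ up to $G$, whose integrated form is by definition a faithful representation of $\Compacts(H_X\otimes B)\rtimes_rG$. Hence $\rho$ is faithful and therefore isometric.

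For (b), the plan is a cutoff-function argument to realise every element of the dense $\ast$-subalgebra $\bC(\tilde H_X\otimes B)_{\mathrm{Gcpt}}^G$ as $\rho(f_T)$ for some $f_T \in C_c(G,\Compacts(H_X\otimes B))$. Given such a $T$ with support in $GY_0\times GY_0$ for a compact $Y_0\subset X$, fix a cutoff $c\in C_c(X)$ for $GY_0$, i.e.\ $c\geq0$, with compact support, and $\int_G g(c)^2\,d\mu_G(g)=1$ on a neighbourhood of $GY_0$. The associated $G$-equivariant isometry $V_c\colon H_X\otimes B\to \tilde H_X\otimes B$ of Section~5 (given by $(V_cv)(h)=(c u_h^{-1}\otimes 1_B)v$) then allows one to extract from $T$ an explicit function $f_T\colon G\to \Compacts(H_X\otimes B)$ (built from $V_c^\ast$, $T$, a translate by $g$, and $V_c$, together with a modular factor). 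Compactness of each $f_T(g)$ follows from local compactness of $T$ and compact support of $c$; compact support of $f_T$ in $g$ follows from proper support and $G$-compactness of $\mathrm{supp}(T)$ (combined with the compact support of $c$); norm-continuity of $g\mapsto f_T(g)$ follows from $G$-continuity of elements of $\bC(\tilde H_X\otimes B)_{\mathrm{Gcpt}}^G$; and the identity $\rho(f_T)=T$ is verified by unwinding the definition of $\rho$ and using the cutoff identity $\int_G g(c)^2\,d\mu_G=1$ together with the $G$-equivariance of $T$.

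The main obstacle is step (b), specifically writing down the correct formula for $f_T$ and verifying $\rho(f_T)=T$. Conceptually, inserting the partition-of-unity identity $1=\int_G g(c^2)\,d\mu_G$ on both sides of $T$ produces a double integral over $G\times G$; the $G$-equivariance of $T$ collapses this to a single integral over a shift parameter $g=g_1^{-1}g_2$, which after a modular bookkeeping is exactly the convolution formula defining $\rho$ on $C_c(G,\Compacts(H_X\otimes B))$. This is a standard, if technical, calculation in the theory of reduced crossed products for proper actions, and once carried out it shows $T\in\rho(\Compacts(H_X\otimes B)\rtimes_rG)$, completing the density step and hence the proposition.
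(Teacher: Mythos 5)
Your overall architecture (faithfulness of $\rho$ plus density of its image) is the right one, and your step (a) is fine: conjugating by the involution $V$ exhibits $\rho$ as the standard regular covariant pair, so it is faithful on the reduced crossed product. This fills in a point the paper leaves implicit.

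Step (b), however, contains a genuine error. You claim that every $T$ in the dense $\ast$-subalgebra $\bC(\tilde H_X\otimes B)^G_{\mathrm{Gcpt}}$ is \emph{exactly} of the form $\rho(f_T)$ with $f_T\in C_c(G,\Compacts(H_X\otimes B))$, and that ``the identity $\rho(f_T)=T$ is verified by unwinding the definition.'' This is false for non-discrete $G$. Already for $G=\mathbb{T}$ acting on $X=\mathrm{point}$ with $H_X=\bC$, $B=\bC$: here $\tilde H_X=L^2(\mathbb{T})$, every compact operator is automatically locally compact and properly supported, so $\bC(\tilde H_X)^G_{\mathrm{Gcpt}}$ is the set of \emph{all} $\mathbb{T}$-invariant compact operators, i.e.\ all diagonal operators in $c_0(\Z)$; but $\rho(C(\mathbb{T}))$ consists only of those sequences in $c_0(\Z)$ that arise as Fourier coefficients of continuous functions, a proper dense subset. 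The obstruction is general: $cT$ is merely a compact operator on $H_X\otimes B\otimes L^2(G)$, and a generic compact operator on $L^2(G,\,\cdot\,)$ does not have a norm-continuous, compactly supported kernel in the $G$-variable, so no exact formula for $f_T$ exists.

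The correct repair --- and what the paper does --- is to settle for approximation and then use that the image of a $\ast$-homomorphism of $C^*$-algebras is closed, so density of the image suffices. Concretely (after first reducing to $G$-compact $X$, since both sides are unions over $G$-compact invariant closed subspaces): write $T=\int_G g(c)\,g(cT)\,g(\chi)\,d\mu_G(g)$ with $\chi=1$ on the support of $cT$; approximate the compact operator $cT$ in norm by some $S'\in C_c(G,C_c(G,\Compacts(H_X\otimes B)))$, which is possible only up to $\epsilon$, not exactly; then apply the averaging $\int_G g(c)(\cdot)\,g(\chi)\,d\mu_G(g)$ to $S'$. The uniform estimate of Lemma \ref{lem_sum} shows the averaged error is at most $C\epsilon$, and the averaged $S'$ \emph{is} exactly $\rho(S)$ for an explicit $S\in C_c(G,\Compacts(H_X\otimes B))$. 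Your ``collapse the double integral'' heuristic is essentially this computation, but it must be applied to the continuous approximant $S'$, not to $cT$ itself.
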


Before giving a proof of this, we recall a useful lemma. 

\begin{lemma}\label{lem_sum}\cite[Lemma 2.5, 2.6]{Nishikawa19}  Let $X$ be a proper $G$-space and $H_X$ be an $X$-$G$-module. For any $\phi_0,\phi_1 \in C_c(X)$, there is a constant $C>0$ such that the following holds. 
Let $(T_g)_{g \in G}$ be a uniformly bounded family of operators on $H_X$ which defines a bounded operator on $L^2(G, H_X)$. Then, the map
\[
v\mapsto \int_{g\in G}g(\phi_0)T_g g(\phi_1)vd\mu_G(g)
\]
on $H_X$ defines the bounded operator $\int_{g\in G}g(\phi_0)T_g g(\phi_1)d\mu_G(g)$ on $H_X$, and we have
\[
\lVert\int_{g\in G}g(\phi_0)T_g g(\phi_1)d\mu_G(g)\rVert \leq C\sup_{g\in G}\lVert T_g\rVert.
\]
Moreover, if the function $g\mapsto T_g$ is in $C_0(G, \Compacts(H_X))$, the integral converges in norm to a compact operator on $H_X$.  More generally, the same assertion holds if we consider, in place of $H_X$, $\E_X$ a $G$-Hilbert $B$-module equipped with a non-degenerate representation of $G$-$C^*$-algebra $C_0(X)$ by adjointable operators (in this case, operators are all assumed to be adjointable).
\end{lemma}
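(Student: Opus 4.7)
The plan is to realise the integral as a factorisation of three bounded operators and reduce the whole lemma to a single properness estimate. For $\phi \in C_c(X)$, introduce the adjointable map
\[
W_\phi \colon H_X \to L^2(G, H_X), \quad (W_\phi v)(g) = g(\phi) v,
\]
and let $\mathcal{T} \in \Linears(L^2(G, H_X))$ denote the fibrewise multiplication operator $(\mathcal{T} f)(g) = T_g f(g)$, whose norm equals $\sup_g \lVert T_g\rVert$ by hypothesis. A direct computation of matrix coefficients shows that
\[
\int_G g(\phi_0) T_g g(\phi_1)\, d\mu_G(g) = W_{\bar\phi_0}^{\ast}\, \mathcal{T}\, W_{\phi_1}
\]
in the weak sense, so existence, boundedness, and the norm estimate are all reduced to bounding $\lVert W_\phi\rVert$ in terms of $\phi$.

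Boundedness of $W_\phi$ is equivalent to essential boundedness of the positive element $A_\phi := W_\phi^{\ast} W_\phi$, which a routine calculation identifies with the $C_0(X)$-multiplier
\[
A_\phi(x) = \int_G |\phi(g^{-1}x)|^2\, d\mu_G(g).
\]
This is the main step. By left-invariance of Haar measure, $A_\phi$ is a $G$-invariant Borel function on $X$, and since $\supp(\phi)$ is compact, $A_\phi$ vanishes off $G \cdot \supp(\phi)$. Writing $K = \supp(\phi)$, the pointwise bound
\[
A_\phi(x) \leq \lVert \phi\rVert_\infty^2 \cdot \mu_G\bigl(\{g \in G \mid g^{-1}K \cap K \neq \emptyset\}\bigr) \quad (x \in K),
\]
combined with properness of the $G$-action on $X$ (which makes this measure finite) and with the $G$-invariance of $A_\phi$ yields $A_\phi \in C_b(X)$. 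Thus $\lVert W_\phi\rVert^2 = \lVert A_\phi\rVert_\infty < \infty$, and one can take $C = \lVert A_{\phi_0}\rVert_\infty^{1/2} \lVert A_{\phi_1}\rVert_\infty^{1/2}$.

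For the compactness statement, approximate $g \mapsto T_g \in C_0(G, \Compacts(H_X))$ uniformly by elements of $C_c(G, \Compacts(H_X))$; by the norm estimate just established, it suffices to verify compactness for continuous, compactly supported families. In that case, the strong continuity of the $G$-action on $C_0(X)$ combined with the standard principle that strong convergence upgrades to norm convergence after composition with a compact operator makes $g \mapsto g(\phi_0) T_g g(\phi_1)$ a norm-continuous, compactly supported $\Compacts(H_X)$-valued function on $G$, so its Bochner integral is a norm limit of finite Riemann-type sums of compact operators and hence compact.

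The Hilbert $B$-module generalisation to $\E_X$ requires no new argument: $W_\phi \colon \E_X \to L^2(G, \E_X) \cong \E_X \otimes L^2(G)$ is adjointable with $W_\phi^{\ast} W_\phi$ still equal to multiplication by $A_\phi \in C_b(X)$, the multiplication operator $\mathcal{T}$ is adjointable on $L^2(G, \E_X)$ with norm $\sup_g \lVert T_g\rVert$, and both the factorisation and the compact-approximation arguments transfer verbatim with $\Compacts(\E_X)$ in place of $\Compacts(H_X)$. The sole place where properness enters is the finiteness of $\mu_G(\{g \mid g^{-1}K \cap K \neq \emptyset\})$ in the estimate of $A_\phi$; this is the main technical obstacle of the proof, and everything else is a formal manipulation with factorisations and approximations.
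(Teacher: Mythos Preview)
Your proof is correct. The paper itself does not give a detailed argument here but simply refers to \cite[Lemma 2.5, 2.6]{Nishikawa19}, and your factorisation $W_{\bar\phi_0}^{\ast}\,\mathcal{T}\,W_{\phi_1}$ together with the bound $\lVert W_\phi\rVert^2=\lVert A_\phi\rVert_\infty$ via properness is precisely the standard way such estimates are established, so your approach almost certainly coincides with the cited one. One very minor remark: since $C_0(X)$ is a $G$-$C^*$-algebra, the map $g\mapsto g(\phi_0)$ is already norm-continuous in $C_0(X)$, so in the compactness step you do not actually need to invoke the ``strong-to-norm upgrade via compacts'' principle; norm continuity of $g\mapsto g(\phi_0)T_g\,g(\phi_1)$ follows directly from the product of three norm-continuous factors.
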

\begin{proof} The proof in \cite[Lemma 2.5, 2.6]{Nishikawa19} easily generalizes to the case of a $G$-Hilbert $B$-module $\E_X$.
\end{proof}

\begin{proof}[Proof of Proposition~\ref{prop_isom}]

For any $G$-invariant closed subspace $Y$ of $X$, let $H_Y=\chi_YH_X$. Then, it is easy to see from the definition of $\tilde H_X$ and $\rho$, the following diagram commutes
\[
\xymatrix{
\Compacts(H_X\otimes B)\rtimes_rG \ar[r]^-{\rho}  &    C^*(\tilde H_X\otimes B)_{\mathrm{Gcpt}}^G    \\
 \Compacts(H_Y\otimes B)\rtimes_rG  \ar[u]^-{}  \ar[r]^-{\rho} &       C^*(\tilde H_Y\otimes B)_{\mathrm{Gcpt}}^G.     \ar[u]^-{}
}
\]
where the vertical maps are natural inclusions. Since both $\Compacts(H_X\otimes B)\rtimes_rG$ and resp. $C^*(\tilde H_X\otimes B)_{\mathrm{Gcpt}}^G$, are the inductive limit (the union) of  $\Compacts(H_Y\otimes B)\rtimes_rG$, resp.$C^*(\tilde H_Y\otimes B)_{\mathrm{Gcpt}}^G$ for $G$-compact $G$-invariant closed subspaces $Y$ of $X$, it is enough to show the isomorphism when $X$ is $G$-compact. 

Now, suppose $X$ is $G$-compact. Let $T \in \bC(\tilde H_X\otimes B)^G_{\mathrm{Gcpt}}$. We show that for any $\epsilon>0$, there is $(S\colon h\mapsto S_h) \in C_c(G, \Compacts(H_X\otimes B)) \subset \Compacts(H_X\otimes B)\rtimes_rG$ such that 
\[
\lVert T - \rho(S) \rVert < \epsilon. 
\]
Let $c \in C_c(X)$ be a cut-off function on $X$. Since $T$ is locally compact and properly supported, $cT$ is a compact operator whose support is contained in $X_0\times X_0\subset X\times X$ for some compact subset $X_0$. We have
\[
T = \int_{g\in G} g(c)^2T d\mu_G(g) = \int_{g\in G} g(c) g(cT) d\mu_g(g)=  \int_{g\in G} g(c) g(cT) g(\chi)d\mu_G(g)
\]
where $\chi \in C_c(X)$ is fixed so that $\chi=1$ on $X_0$. These integrals converge weakly. Here we used $g(T)=T$.

 Let $C$ be as in Lemma \ref{lem_sum} for $\phi_0=c$, $\phi_1=\chi$. We approximate the compact operator $cT$ on $H_X\otimes B \otimes L^2(G)$ by any 
 \[
 S'\colon h_1\mapsto (S'_{h_1}\colon h_2\mapsto S'_{h_1}(h_2) ) 
 \]
 in 
 \[
 C_c(G, C_c(G, \Compacts(H_X\otimes B)) \subset C_0(G, \Compacts(H_X\otimes B))\rtimes_rG\cong \Compacts(H_X\otimes B \otimes L^2(G))
 \] so that $\lVert cT-S'\rVert<\epsilon/C$. Here, the identification 
 \[
 C_0(G, \Compacts(H_X\otimes B))\rtimes_rG\cong \Compacts(H_X\otimes B \otimes L^2(G))
 \]
 is given by letting $C_0(G, \Compacts(H_X\otimes B))$ act on $H_X\otimes B \otimes L^2(G)$ in an obvious way and by letting the group $G$ act on $H_X\otimes B \otimes L^2(G)$ by $\rho$ i.e. the right-regular representation.

 We can arrange $\supp( S'_{h_1}(h_2) )\subset X_0\times X_0$ for all $h_1, h_2$, by multiplying $\chi_{X_0}$. This is possible because $\rho(g)$ commutes with functions $C_0(X)$.
   Let
\[
\tilde S' = \int_{g \in G} g(c) g(S') d\mu_G(g) = \int_{g \in G} g(c) g(S') g(\chi) d\mu_G(g)
\]
which converges weakly on $\tilde H_X\otimes B$.  By Lemma \ref{lem_sum} and from $\lVert cT - S'\rVert < \epsilon/C$, we have
\[
\lVert T - \tilde S'\rVert = \lVert\int_{g\in G} g(c)( g(cT) - g(S')) g(\chi) d\mu_g(g) \rVert < C \cdot \epsilon/C =\epsilon.
\]
It is not hard to see that $\tilde S' $ is a locally compact, $G$-equivariant, properly supported operator and it is of the form $\rho(S)$ where 
\[
(S\colon h\mapsto S_h) \in C_c(G, \Compacts(H_X\otimes B)) \subset \Compacts(H_X\otimes B)\rtimes_rG,
\]
\[
S_h = \int_{g\in G} g(c) g((S'_h)(g^{-1})) g(\chi) d\mu_G(g) \in \Compacts(H_X\otimes B).
\]
Note that the integral is norm convergent as $S'_h(g^{-1})=0$ for a.e. $g\in G$. That is we obtained $S\in \Compacts(H_X\otimes B)\rtimes_rG$ such that $\lVert T-\rho(S)\rVert<\epsilon$ as desired.
\end{proof}

\begin{remark} This provides us an alternative proof of the well-known fact \cite{Roe96} that for any discrete group $G$, for any $G$-compact proper $G$-space $X$ and for any ample $X$-$G$-module $H_X$, $C^*(H_X)^G\cong \Compacts(l^2(\N))\otimes C^*_r(G)$ at least when $H_X$ is of the form $\tilde H^0_X$. Of course, the general case follows from this if use the fact that the equivariant Roe algebra $C^*(H_X)^G$ is independent of an ample $X$-$G$-module $H_X$ up to isomorphisms.
\end{remark}

\begin{definition}(c.f.\  \cite[Definition 6.6.1]{WY2020}) \label{def_localized} For any $X$-$G$-module $H_X$, we define the localized Roe algebra $C_{L, u}^*(H_X\otimes B)^G_{\mathrm{Gcpt}}$ with $G$-compact support as the norm completion of the $\ast$-algebra $\bC_{L, u}(H_X\otimes B)_{\mathrm{Gcpt}}^G$ consisting of uniformly norm-continuous $\bC(H_X\otimes B)_{\mathrm{Gcpt}}^G$-valued functions $T\colon t\mapsto T_t$ on $[1, \infty)$ satisfying:
\begin{enumerate}
\item $T$ has uniform $G$-compact support, i.e. $\mathrm{supp}(T_t)\subset Y\times Y$ for some $G$-compact subset $Y$ of $X$.
\item For any open neighborhood $U$ of the diagonal in $X^+\times X^+$, there is $t_U\geq1$ such that for all $t> t_U$, $\supp(T_t)\subset U$.
\end{enumerate}
Note that the second condition is same as saying that $\prop(T_t)\to 0$ with respect to a (any) fixed metric on $X^+$. Similarly, $C_{L, c}^*(H_X\otimes B)^G_{\mathrm{Gcpt}}$ is defined by replacing uniformly norm-continuous to norm-continuous. If $X$ is $G$-compact, we write $C_{L, u}^*(H_X\otimes B)^G=C_{L, u}^*(H_X\otimes B)^G_{\mathrm{Gcpt}}$, $C_{L, c}^*(H_X\otimes B)^G=C_{L, c}^*(H_X\otimes B)^G_{\mathrm{Gcpt}}$ and call them the localized equivariant Roe algebras.
\end{definition}

\begin{remark}\label{rem_same2} If $X$ has a $G$-invariant proper metric $d$, the condition (2) is same as saying $\prop(T_t)\to 0$ as $t\to \infty$ with respect $d$ when $T$ is $G$-equivariant and has uniform $G$-compact support. Indeed, if $Y=GY_0$ is the support of $T$ for a compact subset $Y_0$ of $X$ and if $Y_1$ is any compact neighborhood of $Y_0$, (2) implies $\prop(\chi_{Y_1}T_t) \to 0$ with respect to $d$. Using $G$-equivariance, we can easily see $\prop(T_t)\to 0$ with respect to $d$. The converse is easier.
\end{remark}

\begin{remark} We used the terminology, the localized Roe algebra, from \cite[Section 6.6]{WY2020} where the terminology, the equivariant localization algebra, is reserved for some other slightly bigger algebra. 
\end{remark}

\begin{proposition} Let $X$ be a $G$-compact proper $G$-space with a $G$-invariant proper metric $d$. For any $X$-$G$-module $H_X$, the localized equivariant Roe algebra $C^*_{L, u}(H_X\otimes B)^G$ is same as the norm completion of the $\ast$-algebra $\bC_{L, u}(H_X\otimes B)^G$ of uniformly norm-continuous $\bC(H_X\otimes B)^G$-valued functions $T\colon t\mapsto T_t$ on $[1, \infty)$ with $\prop(T_t)\to 0$ as $t\to \infty$.
\end{proposition}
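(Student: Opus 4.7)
The plan is to verify directly that the two $\ast$-algebras named $\bC_{L,u}(H_X\otimes B)^G$ in Definition \ref{def_localized} and in the statement of the proposition coincide as subsets of $C_{b,u}([1,\infty), \Linears(H_X\otimes B))$; once this is checked, the equality of the two $C^*$-completions is automatic. Both algebras consist of uniformly norm-continuous paths $T\colon t\mapsto T_t$ whose values $T_t$ lie in $\bC(H_X\otimes B)^G$, so the only thing to verify is that conditions (1) and (2) of Definition \ref{def_localized} together are equivalent, in the present situation, to the single requirement $\prop(T_t)\to 0$ with respect to $d$.

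The first step is to observe that under the $G$-compactness hypothesis, condition (1) of Definition \ref{def_localized} (uniform $G$-compact support) is automatic: for any $T\colon t\mapsto T_t$ with values in $\bC(H_X\otimes B)^G$ one simply takes $Y = X$, so that trivially $\supp(T_t)\subset X\times X$ for all $t\geq 1$. Thus, modulo the automatic part, the question reduces to comparing condition (2) of Definition \ref{def_localized} with the propagation condition $\prop(T_t)\to 0$.

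The second and final step is to invoke Remark \ref{rem_same2}, which is precisely designed for this situation: for a $G$-equivariant function with uniform $G$-compact support (which we have for free), the diagonal-neighborhood support condition (2) and the condition $\prop(T_t)\to 0$ with respect to the $G$-invariant proper metric $d$ are equivalent. Combining the two steps shows that the two $\ast$-algebras have exactly the same elements, hence the same closure.

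The proof is essentially formal given the earlier remark; there is no real obstacle. The only point worth being careful about is that Remark \ref{rem_same2}'s equivalence genuinely requires both $G$-equivariance and some form of uniform support control, both of which are built into the definition of $\bC_{L,u}(H_X\otimes B)^G$ here (the former from $T_t \in \bC(H_X\otimes B)^G$, the latter for free from $G$-compactness of $X$).
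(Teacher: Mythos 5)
Your proof is correct and is essentially the paper's own argument: the paper's proof of this proposition is a one-line appeal to Remark \ref{rem_same2}, which is exactly the equivalence you invoke, and your preliminary observation that uniform $G$-compact support is automatic when $X$ is $G$-compact (take $Y=X$) is the implicit reduction the paper also relies on. Nothing further is needed.
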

\begin{proof} This follows from Remark \ref{rem_same2}. 
\end{proof}

Let $H_X$ be an $X$-$G$-module and let $\tilde H_X$ be the $X$-$G$-module as in Definition \ref{def_tildeH}. Recall that the right-regular representation (Definition \ref{def_rightreg}) 
\[
\rho\colon \Linears(H_X\otimes B)_{\mathrm{Gcont}}\rtimes_rG \to  \Linears(\tilde H_X\otimes B)
\]
restricts to an isomorphism 
\[
\rho\colon \Compacts(H_X\otimes B)\rtimes_rG \cong C^*(\tilde H_X\otimes B)^G_{\mathrm{Gcpt}}.
\]

Applying the right-regular representation $\rho$ for each $t\in [1, \infty)$, $\rho$ extends to a $\ast$-homomorphism 
\[
\rho\colon C_{b}([1, \infty), \Linears(H_X\otimes B))_{\mathrm{Gcont}}\rtimes_rG \to C_b([1, \infty), \Linears(\tilde H_X\otimes B)).
\]

\begin{proposition}\label{prop_rightreg_localized} For any locally compact group $G$, for any proper $G$-space $X$ and for any $X$-$G$-module $H_X$, the right-regular representation 
\[
\rho\colon C_{b}([1, \infty), \Linears(H_X\otimes B))_{\mathrm{Gcont}}\rtimes_rG \to C_b([1, \infty), \Linears(\tilde H_X\otimes B))
\]
maps $RL^*_u(H_X\otimes B)\rtimes_rG\subset C_b([1, \infty), \Compacts(H_X\otimes B))_{\mathrm{Gcont}}\rtimes_rG$ into the localized equivariant Roe algebra $C_{L, u}^*(\tilde H_X\otimes B)_{\mathrm{Gcpt}}^G$ with $G$-compact support. Similarly, $\rho$ maps $RL^*_c(H_X\otimes B)\rtimes_rG$ into $C_{L, c}^*(\tilde H_X\otimes B)_{\mathrm{Gcpt}}^G$.
\end{proposition}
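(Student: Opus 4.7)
The plan is to verify the statement on a norm-dense subalgebra and extend by continuity of $\rho$. Finite sums of elementary tensors $f=\phi\otimes T$ with $\phi\in C_c(G)$ and $T\in RL_u^{\mathrm{alg}}(H_X\otimes B)$ are dense in $RL_u^*(H_X\otimes B)\rtimes_r G$; I fix such a generator and let $K\subset X$ be the compact set with $T_t=\chi_K T_t\chi_K$ for all $t\geq 1$. Proposition~\ref{prop_isom} applied pointwise in $t$ immediately gives $\rho(f)_t\in C^*(\tilde H_X\otimes B)^G_{\mathrm{Gcpt}}$; the remaining work is to establish the two conditions of Definition~\ref{def_localized} for the function $t\mapsto\rho(f)_t$, modulo a norm approximation.

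The key tool is a kernel computation. After the change of variable $k=hg$, $\rho(f)_t$ acts on $L^2(G, H_X\otimes B)$ as an integral operator with kernel
\[
K_t(h, k)=\phi(h^{-1}k)\,h(T_t)\,\Delta^{1/2}(h^{-1}k)\Delta(h^{-1})
\]
valued in $\Linears(H_X\otimes B)$. Since $C_0(X)$ acts on $\tilde H_X\otimes B=H_X\otimes L^2(G)\otimes B$ through the $H_X$-factor only, for open $V,U\subset X$ the vanishing $\chi_V\rho(f)_t\chi_U=0$ is equivalent to $\chi_V\,h(T_t)\,\chi_U=0$ for every $h\in G$ (using non-emptiness of $\supp(\phi)$, which gives, for each $h$, some $k$ with $\phi(h^{-1}k)\neq 0$). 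Consequently
\[
\supp(\rho(f)_t)=\overline{\bigcup_{h\in G}h\cdot\supp(T_t)}\subset GK\times GK,
\]
where $G$ acts diagonally on $X\times X$. Properness of the $G$-action on $X$ forces $GK$ to be closed in $X$, hence a $G$-compact closed subset, so $\rho(f)$ has uniform $G$-compact support.

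For the propagation condition, I fix a $G$-invariant proper metric $d$ on $X$ (which exists for any second countable proper $G$-space). Given $\epsilon>0$, Proposition~\ref{prop_same} supplies $\tilde S\in R\bL_u^{\mathrm{alg}}(H_X\otimes B)$ with $\|T-\tilde S\|_\infty<\epsilon$, having eventually uniform compact support in some compact $K'\subset X$ and $\prop(\tilde S_t)\to 0$ in $d$. Fix a compact $K''\supset K\cup K'$ and set $\tilde S'=\chi_{K''}\tilde S\chi_{K''}$, which now has truly uniform compact support in $K''$ and still satisfies $\prop(\tilde S'_t)\leq\prop(\tilde S_t)\to 0$. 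Because $T=\chi_{K''}T\chi_{K''}$, one has $\|T-\tilde S'\|_\infty\leq\|T-\tilde S\|_\infty<\epsilon$. Repeating the kernel analysis for $\rho(\phi\otimes\tilde S')$ and using $G$-invariance of $d$, one obtains
\[
\prop(\rho(\phi\otimes\tilde S')_t)\leq\prop(\tilde S'_t)\longrightarrow 0,
\]
together with uniform $G$-compact support in $GK''\times GK''$, so by Remark~\ref{rem_same2} this element lies in $\bC_{L,u}(\tilde H_X\otimes B)^G_{\mathrm{Gcpt}}$. Since
\[
\|\rho(f)-\rho(\phi\otimes\tilde S')\|\leq\|\phi\|_1\,\|T-\tilde S'\|_\infty<\|\phi\|_1\,\epsilon,
\]
the element $\rho(f)$ belongs to $C^*_{L,u}(\tilde H_X\otimes B)^G_{\mathrm{Gcpt}}$. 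The continuous version $RL^*_c\to C^*_{L,c}$ is identical with ``uniformly norm-continuous'' replaced by ``norm-continuous'' throughout.

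The principal obstacle I anticipate is the kernel bookkeeping in the second paragraph: one must carefully identify the $X$-support of $\rho(f)_t$ on the composite module $\tilde H_X\otimes B=H_X\otimes L^2(G)\otimes B$ whose $C_0(X)$-structure sits only on the $H_X$-factor, and then translate this into the $G$-orbit description above. Once that identification is in place, the propagation estimate reduces to $G$-invariance of $d$, and the uniform compact support assertion reduces to properness of the action.
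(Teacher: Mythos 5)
Your proof follows the same overall strategy as the paper's: reduce to generators $\rho(T)\rho(\phi)$ with $T$ of uniform compact support, get pointwise membership in $C^*(\tilde H_X\otimes B)^G_{\mathrm{Gcpt}}$ and the uniform $G$-compact support $GK\times GK$ essentially for free (via Proposition \ref{prop_isom} and properness), and then concentrate all the work on condition (2) of Definition \ref{def_localized}. The one place where you genuinely diverge is the propagation estimate. The paper never introduces a $G$-invariant metric on $X$: it fixes a metric on $X^+$, notes that $\chi_{X_1}h(T_t)=h(T_t)\chi_{X_1}=0$ for $h$ outside a compact $K\subset G$, and then proves $\prop(h(T_t))\to 0$ \emph{uniformly} for $h\in K$ by a compactness/equicontinuity argument in $X^+\times X^+$. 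You instead invoke a $G$-invariant proper compatible metric $d$ on $X$, which makes $\prop(h(\tilde S'_t))=\prop(\tilde S'_t)$ for every $h\in G$ and renders the uniformity automatic. That is a clean shortcut, but the parenthetical ``which exists for any second countable proper $G$-space'' is carrying real weight: for non-$G$-compact $X$ this is a nontrivial theorem (Abels--Manoussos--Noskov, \emph{Proper actions and proper invariant metrics}), not something supplied by the paper --- indeed Remark \ref{rem_same2} is explicitly conditional on such a metric existing. You should either cite that theorem or replace this step by the paper's uniform estimate over compact subsets of $G$, which costs only a few lines. Two smaller points: (i) Proposition \ref{prop_same} controls $\prop(\tilde S_t)$ with respect to a metric on $X^+$, not with respect to $d$; this transfers to $d$ only because $\tilde S'$ is supported in the fixed compact set $K''\times K''$, where any two compatible metrics are uniformly equivalent, and this deserves a sentence. (ii) The modular factors in your kernel formula are not quite right, but they play no role in the support identity, which is all you use.
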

\begin{proof} Fix any metric $d$ on $X^+$. It is enough to show that for any $T \in RL^*_u(H_X)$ with uniform compact support $X_0$ such that $\prop(T_t)\to 0$ with respect to $d$, and for any $f \in C_c(G)$, 
\[
\rho(T)\rho(f) \in C_{L, u}^*(\tilde H_X\otimes B)_{\mathrm{Gcpt}}^G.
\]
We already know that $\rho(T_t)\rho(f) \in   \bC(\tilde H_X\otimes B)_{\mathrm{Gcpt}}^G$ for any $t\geq1$. Moreover it has uniform $G$-compact support $GX_0$. Note that $\rho(f)$ commutes with $C_0(X)$ so we just need to show the condition (2) in Definition \ref{def_localized} is satisfied for $\rho(T)$. The condition (2) for $\rho(T)$ is equivalent to that for any compact subset $X_1$ of $X$, $\prop(\rho(T_t)\chi_{X_1}), \prop(\chi_{X_1}\rho(T_t)) \to 0$ as $t\to \infty$ with respect to $d$. On the other hand, $\chi_{X_1}h(T_t)=h(T_t)\chi_{X_1}=0$ for $h \in G\backslash K$ for some compact subset $K$ of $G$ since $T_t$ has uniform compact support $X_0$. Thus, it is enough to show that $\prop(h(T_t))\to 0$ uniformly in $h\in K$. This can be proved as follows. For any $\epsilon>0$, let
\[
A=\{(x, y)\in X^+ \times X^+ \mid d(x, y)\geq \epsilon \}.
\]
Then, $K^{-1}A$ is a compact subset of $X^+\times X^+$ which does not contain the diagonal, so there is $\delta>0$ such that all $(x, y)\in K^{-1}A$ satisfy $d(x, y)\geq \delta$. It follows for any $(x, y)\in X_0\times X_0$ with $d(x, y)<\delta$, $d(kx, ky)<\epsilon$ for all $k\in K$. We see that $\prop(h(T_t))\to 0$ uniformly in $h\in K$ as desired. The continuous case is proved in the same way.
\end{proof}

\begin{remark} The inclusion $\rho\colon  RL^*_u(H_X\otimes B)\rtimes_rG \to C_{L, u}^*(\tilde H_X\otimes B)_{\mathrm{Gcpt}}^G$ is never surjective unless $G$ is finite. For example, if $G$ is compact and if $X$ is a point, then the inclusion is identified as the inclusion 
\[
C_{b, u}([1, \infty), \Compacts(H_X)\otimes B)\rtimes_rG \to C_{b, u}([1, \infty), (\Compacts(H_X)\otimes B)\rtimes_rG)
\]
which is not surjective unless $G$ is finite even if $H_X=\bC$ and $B=\bC$.
\end{remark}

Now we go back to the classical setting when $G$ is discrete. Here is the promised comparison between the crossed product algebra $RL^*_u(H_X)\rtimes_rG$ and the localized equivariant Roe algebra $C^*_{L, u}(H_X)^G$. A generalization with coefficient $B$ is possible but we only consider $B=\bC$ here. The continuous case can be shown too but with an extra effort.

\begin{theorem} Let $G$ be a countable discrete group, $X$ be a proper $G$-space which is $G$-equivariantly homotopic to a $G$-$CW$ complex and $H_X$ be any ample $X$-$G$-module. Then, the inclusion $\rho\colon  RL^*_u(H_X)\rtimes_rG \to C_{L, u}^*(\tilde H_X)_{\mathrm{Gcpt}}^G$ induces an isomorphism 
\[
\rho_\ast\colon K_\ast( RL^*_u(H_X)\rtimes_rG) \cong K_\ast(C_{L, u}^*(\tilde H_X)_{\mathrm{Gcpt}}^G).
\]
In particular, when $X$ is $G$-compact, the inclusion $\rho\colon  RL^*_u(H_X)\rtimes_rG \to C_{L, u}^*(\tilde H_X)^G$ induces  an isomorphism
\[
\rho_\ast\colon K_\ast( RL^*_u(H_X)\rtimes_rG) \cong K_\ast(C_{L, u}^*(\tilde H_X)^G).
\]

\end{theorem}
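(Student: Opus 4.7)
The plan is to identify both K-theories as equivariant homology theories on proper $G$-$CW$ complexes satisfying the same axioms, and conclude by axiomatic comparison via the natural transformation $\rho_\ast$. Fixing an ample $X$-$G$-module $H_X$ for each proper $G$-space $X$, the module $\tilde H_X = H_X \otimes L^2(G)$ is universal by Proposition~\ref{prop_universal}, and the assignment $\mathbb{D}'^G_\ast(X) := K_\ast(C^*_{L,u}(\tilde H_X)^G_{\mathrm{Gcpt}})$ can be upgraded to a functor $\mathcal{PR}^G \to \mathcal{GA}$ using $G$-equivariant continuous covers exactly as in Definition~\ref{def_DBG}. I would first check that verbatim translations of the arguments in Sections~4--6 show $\mathbb{D}'^G_\ast$ satisfies the five axioms of Theorem~\ref{thm_Ghomology}: homotopy invariance uses only the abstract existence of $G$-equivariant isometries with small propagation and orthogonal ranges; Mayer--Vietoris follows from the $G$-$C_0(X)$-algebra structure on the localized Roe algebra modulo its evaluation kernel; representability is a colimit argument; and induction from an open subgroup is the localized-Roe analogue of Proposition~\ref{prop_compactopen}. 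Naturality of $\rho$ is immediate since a covering isometry $V_t \colon H_X \to H_Y$ lifts to $V_t\otimes 1 \colon \tilde H_X \to \tilde H_Y$, and conjugation by both commutes with $\rho$.

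Next I would verify that $\rho_\ast$ is an isomorphism on the orbit cells $G/F$ for finite $F \leq G$. By the induction axiom and homotopy invariance, this reduces to $X = \text{pt}$ with $G$ replaced by $F$. At a point the propagation condition is vacuous, so both $RL^*_u(H_{\text{pt}})\rtimes_r F$ and $C^*_{L,u}(\tilde H_{\text{pt}})^F$ are built from uniformly continuous bounded $[1,\infty)$-parameterized families of compact operators, and the kernels of the evaluation maps $\mathrm{ev}_1$ at $t=1$ have vanishing K-theory by the Eilenberg swindle of Proposition~\ref{prop_pointcase}. Hence $\mathrm{ev}_{1\ast}$ is an isomorphism on both sides, and the resulting commutative square has bottom edge the isomorphism $\Compacts(H_{\text{pt}}) \rtimes_r F \cong C^*(\tilde H_{\text{pt}})^F$ of Proposition~\ref{prop_isom}. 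The five lemma then gives that $\rho_\ast$ is an isomorphism at $G/F$.

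The rest is standard cellular induction: attaching cells $G/F \times D^k$ and applying Mayer--Vietoris with the five lemma, $\rho_\ast$ is an isomorphism on every finite-dimensional proper $G$-$CW$ complex; representability over $G$-compact skeleta extends this to all proper $G$-$CW$ complexes, and homotopy invariance extends it further to $G$-spaces $G$-equivariantly homotopy equivalent to such a complex. The hard part will be the axiomatic setup of $\mathbb{D}'^G_\ast$: though each of the five axioms is individually standard in equivariant coarse geometry, assembling them with the functoriality needed to make the comparison rigorous---in particular the induction-from-open-subgroup isomorphism for the localized Roe algebra and the handling of the $G$-continuity conditions---requires the same level of care already developed for the representable localization algebra in Sections~4--6.
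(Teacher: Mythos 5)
Your proposal is correct and follows essentially the same strategy as the paper: realize $K_\ast(C^*_{L,u}(\tilde H_X)^G_{\mathrm{Gcpt}})$ as a functor on $\mathcal{PR}^G$ satisfying the five axioms of Theorem \ref{thm_Ghomology}, check that $\rho_\ast$ is an isomorphism on orbits $G/F$ by comparing the evaluation-at-$t=1$ isomorphisms on both sides (with bottom edge $\Compacts(H_Y)\rtimes_rF\cong C^*(\tilde H_Y)^F$), and conclude by axiomatic comparison of the two functors. The only notable difference is that the paper routes through the larger Willett--Yu equivariant localization algebra $L^*(\tilde H_X)^G_{\mathrm{Gcpt}}$ in order to import already-established facts (the $K$-theory equivalence with $C^*_{L,u}$ for ample modules and the induction isomorphism), rather than re-proving the axioms directly for $C^*_{L,u}(\tilde H_X)^G_{\mathrm{Gcpt}}$ as you propose.
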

\begin{proof}  It suffices to show the case when $X$ is $G$-compact. For any $X$-$G$-module $H_X$, let  $L^*(H_X)^G$ be the equivariant localization algebra as defined in \cite[Definition 6.5.1]{WY2020}. The $C^*$-algebra $L^*(H_X)^G$ is the norm completion of the $\ast$-algebra $\mathbb{L}[H_X]^G$ of all bounded functions $T_t$ on $[1 ,\infty)$ to $\Linears(H_X)$ such that
\begin{enumerate}
\item $T_t$ is $G$-equivariant,
\item for any compact subset $K$ of $X$, there exists $t_K\geq1$ such that for all $t\geq t_k$, $\chi_KT_t$, $T_t\chi_K$ are compact, and the functions $t\mapsto \chi_KT_t$, $t\mapsto T_t\chi_K$ are uniformly norm-continuous when restricted to $[t_k, \infty)$,
\item  for any open neighborhood of the diagonal in $X^+\times X^+$, there exists $t_U\geq1$ such that for all $t>t_U$, $\supp(T_t)\subset U$.
\end{enumerate}
 It is proved in \cite[Proposition 6.6.2]{WY2020} that the natural inclusion induces an isomorphism
\[
 K_\ast(C_{L, u}^*(H_X)^G) \cong  K_\ast(L^*(H_X)^G)
 \]
whenever $H_X$ is ample. Thus, to prove our claim, it suffices to show that when $X$ is $G$-compact, for any ample $X$-$G$-module $H_X$, the natural inclusion
\[
\rho\colon RL^*_u(H_X)\rtimes_rG \to C_{L, u}^*(\tilde H_X)^G \to L^*(\tilde H_X)^G
\]
induces an isomorphism on K-theory. For this, we consider more generally, for any not-necessary $G$-compact $X$, the equivariant localization algebra $L^*(H_X)^G_{\mathrm{Gcpt}}$ with $G$-compact support to be the completion of $\mathbb{L}[H_X]_{\mathrm{Gcpt}}^G$ which is the subalgebra of $\mathbb{L}[H_X]^G$ consisting of $T_t$ that has eventually uniform $G$-compact support, that is there is $t_0\geq1$ and a $G$-compact subset $X_0$ of $X$ such that for all $t\geq t_0$, $\supp(T_t)\subset X_0\times X_0$. Then, our claim is that the natural inclusion 
\[
\rho\colon RL^*_u(H_X)\rtimes_rG \to C_{L, u}^*(\tilde H_X)_{\mathrm{Gcpt}}^G \to L^*(\tilde H_X)^G_{\mathrm{Gcpt}}
\]
induces an isomorphism on K-theory. Now, the point is that just as in the case of our functor $\bD_\ast^G$, the assignment
\[
X \mapsto RK_\ast^G(X)=K_\ast(L^*(H_X)^G_{\mathrm{Gcpt}}), \,\,\,f \mapsto  RK_\ast^G(f)= \mathrm{Ad}_{V^f_t \ast}
\]
becomes a functor from $\mathcal{PR}^G$ to $\mathcal{GA}$, where $H_X$ is a chosen ample $X$-$G$-module for a proper $G$-space $X$ and $V^f_t$ is a chosen $G$-equivariant continuous cover of a $G$-equivariant continuous map $f\colon X\to Y$. As in the case of $\bD^G_\ast$ (see the proof of Proposition \ref{prop_welldefG}), one shows the composition law for not-necessarily proper maps by reducing it to the $G$-compact setting after showing the representability of the functor $RK^G_\ast$. Moreover, the functor $RK_\ast^G$ satisfies the five listed properties in Theorem \ref{thm_Ghomology}. Since this does not involve any new idea, we explain this very briefly. The first part of the second property (2) (induction from a finite subgroup) follows from Proposition \cite[Proposition 6.5.13]{WY2020} and from (3) the representability of $RK^G_\ast$. The fourth property (4) (the Mayer--Vietoris sequence) can be shown just as in the case of $\bD_\ast^G$ by using the quotient $L_Q^*(H_X)^G_{\mathrm{Gcpt}}$ of $L^*(H_X)^G_{\mathrm{Gcpt}}$ by the ideal $L_0^*(H_X)^G_{\mathrm{Gcpt}}$ which is the completion of the subalgebra of $\mathbb{L}[H_X]_{\mathrm{Gcpt}}^G$ consisting of $T_t$ such that for any compact subset $K$ of $X$, $\chi_KT_t, T_t\chi_K=0$ eventually. The quotient $L_Q^*(H_X)^G_{\mathrm{Gcpt}}$ is a $C_0(X/G)$-algebra (see \cite[Lemma 6.4.18]{WY2020}) and the quotient map induces an isomorphism
\[
K_\ast(L^*(H_X)^G_{\mathrm{Gcpt}}) \cong K_\ast(L_Q^*(H_X)^G_{\mathrm{Gcpt}})
\]
because $K_\ast(L_0^*(H_X)^G_{\mathrm{Gcpt}})=0$ (see \cite[Lemma 6.5.12]{WY2020}). For the fifth property (5) (the homotopy invariance), the proof for $\bD_\ast^G$ works verbatim. It remains to prove the second part of (2), that is
\[
RK^H_\ast(\mathrm{point}) \cong K_\ast(C^*_r(H))
\]
for any finite group $H$.  Using $K_\ast(C_{L, u}^*(H_X)^G) \cong  K_\ast(L^*(H_X)^G)$, we just need to show
\[
K_\ast(C_{b, u}([1, \infty), \Compacts(l^2(\N)\otimes l^2(H)))^H) \cong K_\ast(C^*_r(H)),
\]
but since $\Compacts(l^2(\N)\otimes l^2(H))^H \cong \Compacts(l^2(\N))\otimes C^*_r(H)$, this follows because the evaluation 
\[
\mathrm{ev}_{1}\colon C_{b, u}([1,\infty),   \Compacts(l^2(\N))\otimes C^*_r(H)) \to \Compacts(l^2(\N))\otimes C^*_r(H)
\]  
at $t=1$ is an isomorphism on K-theory.  

Now, the inclusion 
\[
\rho\colon  RL^*_u(H_X)\rtimes_rG  \to L^*(\tilde H_X)_{\mathrm{Gcpt}}^G
\]
induces a natural transformation of the functors $\bD^G_\ast$ to $RK_\ast^G$. We can see that this is an isomorphism when $X=G/H$ for any finite subgroup $H$ of $G$, by considering
\[
\rho\colon  RL^*_u(H_X)\rtimes_rG  \to C_{L, u}^*(\tilde H_X)^G
\]
instead. Indeed, at the level of K-theory, this inclusion is isomorphic to the inclusion 
\[
C_{b, u}([1 ,\infty), \mathrm{Ind}_H^G\Compacts(H_Y))\rtimes_rG \to C_{b, u}([1 ,\infty), \mathrm{Ind}_H^G\Compacts(H_Y)\rtimes_rG)
\] 
where $\mathrm{Ind}_H^G\Compacts(H_Y)$ is the $G$-$C_0(G/H)$-algebra with fiber $\Compacts(H_Y)$ at the coset $H$, or more precisely
\[
\mathrm{Ind}_H^G\Compacts(H_Y) = \{ f \in C_0(G, \Compacts(H_Y)) \mid f(gh)=h^{-1}(f(g)) \,\, \text{for $h \in H$} \}
\]
equipped with the left translation $G$-action. It is now easy to see that the above inclusion is an isomorphism on K-theory because both are, at the level of K-theory, isomorphic to $\mathrm{Ind}_H^G\Compacts(H_Y)\rtimes_rG$ via the evaluation at $t=1$. Of course, we have $K_\ast(\mathrm{Ind}_H^G\Compacts(H_Y)\rtimes_rG) \cong K_\ast(C^*_r(H))$. It follows since both functors $\bD^G_\ast$, $RK_\ast^G$
 satisfy properties (1) - (5) in Theorem \ref{thm_Ghomology}, the natural transformation 
\[
\rho\colon \bD^G_\ast(X) \to RK^G_\ast(X)
\]
is an isomorphism for all $X$ which is $G$-equivariantly homotopic to a $G$-CW complex. Going back to the original question, we just showed that the inclusion $\rho$ induces an isomorphism
\[
K_\ast(RL^*_u(H_X)\rtimes_rG) \cong K_\ast(C_{L, u}^*(\tilde H_X)_{\mathrm{Gcpt}})
\]
for all such $X$.
 \end{proof}



\section{The forget-control map and the Baum--Connes assembly map} \label{sec_forget}

Let $G$ be a (second countable) locally compact group and $B$ be a separable $G$-$C^*$-algebra. Let $X$ be a proper $G$-space and $H_X$ be an $X$-$G$-module.

The evaluation map
\[
\mathrm{ev}_1\colon RL^\ast_c(H_X\otimes B) \to \Compacts(H_X)\otimes B 
\]
at $1$, which we call the forget-control map, induces a $\ast$-homomorphism 
\[
\mathrm{ev}_1\colon RL^\ast_c(H_X\otimes B)\rtimes_rG \to (\Compacts(H_X)\otimes B ) \rtimes_rG \cong  \Compacts(H_X)\otimes (B\rtimes_rG).
\]
Here, the isomorphism on the right is obtained by trivializing the inner $G$-action on $\Compacts(H_X)$.  

It induces a group homomorphism (the forget-control map)
\begin{equation}\label{eq_forgetK}
\cf = \mathrm{ev}_{1\ast}\colon K_\ast(RL^\ast_c(H_X\otimes B)\rtimes_rG) \to K_\ast(B\rtimes_rG).
\end{equation}

\begin{proposition} The forget-control map $\cf$ \eqref{eq_forgetK} is functorial in $X$ and in $B$ in a sense that the following diagrams commute. For any $G$-equivariant continuous map $f\colon X\to Y$ and for any (if exists) $G$-equivariant continuous cover $(V_t\colon H_X\to H_Y)_{t\in[1,\infty)}$ of $f$,
\[
\xymatrix{
K_\ast(RL^\ast_c(H_X\otimes B)\rtimes_rG)  \ar[r]^-{\cf}  \ar[d]^-{\mathrm{Ad}_{V_t\ast}}  &   K_\ast(B\rtimes_rG)   \ar[d]^-{=}       \\
K_\ast(RL^\ast_c(H_Y\otimes B)\rtimes_rG)    \ar[r]^-{\cf}         &   K_\ast(B\rtimes_rG),
}
\]
and for any $G$-equivariant $\ast$-homomorphism $\pi\colon B_1\to B_2$,
\[
\xymatrix{
 K_\ast(RL^\ast_c(H_X\otimes B_1)\rtimes_rG)   \ar[r]^-{\cf}  \ar[d]^-{\pi_\ast}    & K_\ast(B_1\rtimes_rG)   \ar[d]^-{\pi\rtimes_r1_\ast}   \\
K_\ast(RL^\ast_c(H_X\otimes B_2)\rtimes_rG)    \ar[r]^-{\cf}         &  K_\ast(B_2\rtimes_rG). 
}
\]
In particular, $\cf$ $\eqref{eq_forgetK}$ induces a group homomorphism
\begin{equation}\label{eq_forgetD}
\cf \colon \bD^{B, G}_\ast(X) \to  K_\ast(B\rtimes_rG)
\end{equation}
which is natural in $X$ and in $B$.
\end{proposition}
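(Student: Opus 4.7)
The plan is to verify the two commutative squares in turn, reducing each to compatibility at the $\ast$-homomorphism level wherever possible.

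For the second diagram (naturality in $B$), I would start by noting that the $G$-equivariant $\ast$-homomorphism $\pi\colon B_1\to B_2$ extends to $G$-equivariant $\ast$-homomorphisms $\mathrm{id}\otimes\pi\colon RL^\ast_c(H_X\otimes B_1)\to RL^\ast_c(H_X\otimes B_2)$ (applied pointwise in $t$) and $\mathrm{id}\otimes\pi\colon\Compacts(H_X)\otimes B_1\to\Compacts(H_X)\otimes B_2$, and these manifestly intertwine $\mathrm{ev}_1$. Passing to reduced crossed products preserves the square, and K-theory then gives the diagram.

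For the first diagram (naturality in $X$), for any $T\in RL^\ast_c(H_X\otimes B)$ the identity
\[
\mathrm{ev}_1(\mathrm{Ad}_{V_t}(T)) = V_1 T_1 V_1^\ast = \mathrm{Ad}_{V_1}(\mathrm{ev}_1(T))
\]
shows that $\mathrm{ev}_1\circ\mathrm{Ad}_{V_t} = \mathrm{Ad}_{V_1}\circ\mathrm{ev}_1$ as $\ast$-homomorphisms from $RL^\ast_c(H_X\otimes B)\rtimes_rG$ to $(\Compacts(H_Y)\otimes B)\rtimes_rG$. It will therefore remain to show that, under the canonical identifications $(\Compacts(H)\otimes B)\rtimes_rG\cong\Compacts(H)\otimes(B\rtimes_rG)$ built into the definition of $\cf$, the induced map $(\mathrm{Ad}_{V_1})_\ast$ becomes the identity on $K_\ast(B\rtimes_rG)$.

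This last step is the main technical content. The trivialization isomorphism above relies on the $G$-action on $\Compacts(H)$ being inner via the given unitary representation on $H$. Because $V_1$ is $G$-equivariant, it intertwines the unitary representations on $H_X$ and $H_Y$, and a direct check shows that $\mathrm{Ad}_{V_1}$ then corresponds to $\mathrm{Ad}_{V_1}\otimes\mathrm{id}_{B\rtimes_rG}$ on the trivialized sides. The latter is a corner embedding induced by the isometry $V_1\colon H_X\to H_Y$, inducing the identity on $K_\ast(B\rtimes_rG)$ by the standard Morita equivalence $K_\ast(\Compacts(H)\otimes A)\cong K_\ast(A)$. The main (minor) obstacle will be verifying the naturality of the trivialization with respect to $\mathrm{Ad}_{V_1}$, which uses the $G$-equivariance of $V_1$ in an essential way; once this is in hand, the first diagram follows.
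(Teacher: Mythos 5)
Your proposal is correct and follows essentially the same route as the paper, whose proof consists of the two observations that the second square commutes at the level of $\ast$-homomorphisms and that $\mathrm{Ad}_{V_1}$ on $(\Compacts(H_X)\otimes B)\rtimes_rG$ induces the identity on K-theory. Your elaboration of the compatibility of the trivialization isomorphism with $\mathrm{Ad}_{V_1}$ (using $G$-equivariance of $V_1$) is precisely the detail implicit in the paper's one-line justification.
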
 
\begin{proof} The first diagram commutes since $\mathrm{Ad}_{V_t}$ on $(\Compacts(H_X)\otimes B)\rtimes_rG$ is the identity on K-theory. The second diagram commutes at the level of $\ast$-homomorphisms.
\end{proof}

\begin{definition} We call the group homomorphism $\cf$ in \eqref{eq_forgetD}, the forget-control map for the functor $\bD_\ast^{B, G}$.
\end{definition}

In the rest of this section, our goal is to show that the forget-control map $\cf$ naturally factors through the Baum--Connes assembly map (see \cite{BCH93}, \cite{Valette02}, \cite{GJV19})
\begin{equation}\label{eq_BCKK}
\mu_X^{B, G}\colon \varinjlim_{Y\subset X, \mathrm{Gcpt}}KK_\ast^G(C_0(Y), B) \to K_\ast(B\rtimes_rG)
\end{equation}
via a group homomorphism
\[
\rho_X\colon \bD_\ast^{B, G}(X) \to  \varinjlim_{Y\subset X, \mathrm{Gcpt}} KK_\ast^G(C_0(Y), B)
\]
which we will define first. We will also show that $\rho_X$ is an isomorphism for any $B$ when $G$ is a discrete group and $X$ is $G$-equivariantly homotopy equivalent to a $G$-CW complex.

To obtain these, for technical reasons, we will use equivariant $E$-theory $E^G$ of Guentner, Higson and Trout (\cite{GHT}, see also \cite[Chapter 2]{HG04}) in place of equivariant $KK$-theory $KK^G$ of Kasparov \cite{Kasparov88}. For our purpose, this is no problem because the canonical natural transformation (\cite[Appendix]{KasparovSkandalis03}, see also \cite[Definition 7.2]{HigsonKasparov})
\begin{equation}\label{eq_KKE}
KK_\ast^G(A, B) \to E_\ast^G(A, B)
\end{equation}
is an isomorphism when $A$ is a proper, nuclear $G$-$C^*$-algebra (more generally when $A\mapsto KK_\ast^G(A, B)$ is half-exact), in particular when $A=C_0(X)$ for a proper $G$-space $X$ (\cite[Corollary A.3, A.4]{KasparovSkandalis03}) and because the Baum--Connes assembly map in equivariant $E$-theory (see \cite{GHT})
\begin{equation}\label{eq_BCE}
\mu^{B, G}_X\colon  \varinjlim_{Y\subset X, \mathrm{Gcpt}}E_\ast^G(C_0(Y), B) \to  K_\ast(B\rtimes_rG)
\end{equation}
is known to be equivalent to the one \eqref{eq_BCKK} in $KK^G$ via \eqref{eq_KKE} (see \cite[Remark A.5]{KasparovSkandalis03}).

We first recall some materials from \cite{GHT}. For any (not necessarily separable) $G$-$C^*$-algebras $A$ and  $B$, we let
\[
\mathfrak{T}(B)= C_b([1, \infty), B)_{\mathrm{Gcont}},  \,\,\, \mathfrak{T}_0(B) = C_0([1, \infty), B).
\]
The asymptotic algebra of $B$ is defined as 
\[
\mathfrak{A}(B)=  \mathfrak{T}(B) / \mathfrak{T}_0(B).
\]
An (equivariant) asymptotic morphism from $A$ to $B$ is an equivariant $\ast$-homomorphism from $A$ to $\mathfrak{A}(B)$. A homotopy of asymptotic morphisms is given by an asymptotic morphism from $A$ to $BI=B\otimes C[0 ,1]$. The set of homotopy equivalence classes of asymptotic morphisms from $A$ to $B$ is denoted by $[[A, B]]_1$. More generally, $[[A, B]]_n$ is the set of $n$-homotopy classes of equivariant $\ast$-homomorphisms from $A$ to $\mathfrak{A}^n(B)$ where $\mathfrak{A}^n$ is the $n$-fold composition of the functor $\mathfrak{A}$ with itself and an $n$-homotopy is given by an equivariant $\ast$-homomorphism from $A$ to $\mathfrak{A}^n(BI)$ (see \cite [Definition 2.6]{GHT}). The set $[[A, B]]$ is defined as the natural inductive limit of $[[A, B]]_n$ (see \cite[Definition 2.7]{GHT}). If $A$ is separable, the set $[[A, B]]$ can be naturally identified as $[[A, B]]_1$ (\cite[Theorem 2.16]{GHT}). 

For any $G$-$C^*$-algebras $A, B, C$, we have a well-defined associative composition law
\[
[[A, B]]\times [[B, C]] \to [[A, C]]
\]
\cite[Proposition 2.12]{GHT}. If $A$ is separable, the composition of asymptotic morphisms $(\phi_t)_{t\in [1, \infty)}\colon A\to \as(B)$ and $(\psi_t)_{t\in [1, \infty)}\colon B \to \as(C)$ can be represented by an asymptotic morphism $\psi_{s(t)} \circ \phi_t \colon A\to \as(C)$ where $(t\mapsto s(t))$ is an increasing function on $[1 ,\infty)$ such that $s(t)\to \infty$ as $t\to \infty$ sufficiently fast \cite{ConnesHigson} or alternatively by an asymptotic morphism $\psi_{t} \circ \phi_{r(t)}\colon A\to \as(C)$ where $(t\mapsto r(t))$ is a continuous function on $[1 ,\infty)$ such that $r(t)\to \infty$ sufficiently slowly \cite[Lemma 2.17, Claim 2.18]{GHT} (both ways of representing the compositions are homotopic).

For any $G$-$C^*$-algebras $A, B, C$, we have a well-defined functor given by the maximal tensor product with the identity 
 \[
\otimes_{\rmax} \mathrm{id}_C\colon [[A, B]] \to [[A\otimes_{\rmax} C, B\otimes_{\rmax}C]]
 \]
\cite[Proposition 4.4]{GHT} and the maximal descent 
\[
[[A, B]] \to [[A\rtimes_{\rmax}G,  B\rtimes_{\rmax}G]]
\]
\cite[Theorem 4.12]{GHT}. 

Let $\hill_G=l^2(\N)\otimes L^2(G)$ and $\Sigma= C_0(\R)\otimes$ be the suspension functor. For any (not necessarily separable) $G$-$C^*$-algebras $A , B$, the equivariant $E$-theory group $E^G(A, B)$ \cite[Definition 6.8]{GHT} is defined as
\[
E^G(A, B)=[[\Sigma A\otimes \Compacts(\hill_G), \Sigma B\otimes \Compacts(\hill_G) ]].
\]
If $A, B$ are separable, we define for $i=0, 1$,
\[
E_i^G(A, B)=E^G(\Sigma^{i}A, B).
\]
For any $G$-$C^*$-algebras $A , B$, $E^G(A, B)$ is an abelian group and the composition law 
\[
E^G(A, B)\times E^G(B, C) \to E^G(A, C)
\]
is bilinear. In this way, we have the additive category $E^G$ whose objects are $G$-$C^*$-algebras and the morphisms groups are $E^G(A, B)$ \cite[Theorem 6.9]{GHT}. There are natural isomorphisms $K_0(B) \cong E(\bC, B)$, $K_1(B) \cong E(\Sigma, B)$ for any $B$ (with trivial $G$-action). \cite[Theorem 6.24]{GHT}.

The Bott periodicity theorem and the half-exactness of the bi-functor $(A, B)\mapsto E^G(A, B)$ are only proved for separable $A ,B$. On the other hand, as we recalled above, the composition law, the (maximal) tensor product and the (maximal) crossed product are defined for general $A, B$. We have a bilinear map
\[
E^G(A, D_1\otimes_{\rmax}B)\times E^G(B\otimes_{\rmax} D_2, C) \to E^G(A\otimes_{\rmax}D_2, D_1\otimes_{\rmax}C)
\]
given by the maximal tensor product with the identity on $\mathrm{id}_{D_2}$ on the first slot and with the identity on $\mathrm{id}_{D_1}$ on the second slot, followed by the composition law. The maximal decent defines a group homomorphism 
\[
j^G_{\rmax}\colon E^G(A, B) \to E(A\rtimes_{\rmax}G,B\rtimes_{\rmax}G)
\]
which is functorial in both variables. When $A$ is a proper algebra (more generally, if $A\rtimes_rG=A\rtimes_{\rmax}G$), we define
\[
j^G_{r}\colon E^G(A, B) \to E(A\rtimes_{r}G,B\rtimes_{r}G)
\]
by the composition of $j^G_{\rmax}$ and the map $E(A\rtimes_{r}G, B\rtimes_{\rmax}G) \to E(A\rtimes_{r}G, B\rtimes_{r}G)$ induced by the quotient map $B\rtimes_{\rmax}G \to B\rtimes_rG$.

For any separable $G$-Hilbert space $H$, any asymptotic morphism $\phi\colon A \to \as(B\otimes \Compacts(H))$ defines an element in $E^G(A, B)$. This is given by the suspension of the tensor product
\[
\phi\otimes \mathrm{id}_{\Compacts(\hill_G)}\colon   A\otimes  \Compacts(\hill_G)\to  \as(B \otimes \Compacts(H)\otimes  \Compacts(\hill_G)) \cong \as(B\otimes \Compacts(\hill_G))
\]
using any isomorphism $\hill\otimes \hill_G\cong \hill_G$.

With these in mind, we do some preparation for constructing a group homomorphism
\[
\rho_X\colon \bD_\ast^{B, G}(X) \to \varinjlim_{Y\subset X, \mathrm{Gcpt}}E_\ast^G(C_0(Y), B).
\]

\begin{lemma} For any $X$-$G$-module $H_X$ and for any $T\in C_{L, c}^*(H_X\otimes B)_{\mathrm{Gcpt}}^G$
\[
\phi T \in C_b([1, \infty), \Compacts(H_X\otimes B)), \,\, [\phi, T] \in C_0([1, \infty), \Compacts(H_X\otimes B))
\]
holds for any $\phi \in C_0(X)$.
\end{lemma}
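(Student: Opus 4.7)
The plan is to reduce to the dense $\ast$-subalgebra and use the defining properties of its elements together with Lemma \ref{lem_commutator}.

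First I would observe that both maps $T\mapsto \phi T$ and $T\mapsto [\phi, T]$ are norm-continuous from $C^*_{L, c}(H_X\otimes B)^G_{\mathrm{Gcpt}}$ into $C_b([1,\infty), \Linears(H_X\otimes B))$. Since $C_b([1,\infty), \Compacts(H_X\otimes B))$ and $C_0([1,\infty), \Compacts(H_X\otimes B))$ are both norm-closed subspaces of $C_b([1,\infty), \Linears(H_X\otimes B))$, it suffices to establish the two conclusions for $T$ in the dense $\ast$-subalgebra $\bC_{L,c}(H_X\otimes B)^G_{\mathrm{Gcpt}}$.

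For such a $T$, the first claim is immediate: each $T_t$ is locally compact by condition (3) of the Roe algebra definition, so $\phi T_t\in \Compacts(H_X\otimes B)$ for every $t$, and the function $t\mapsto \phi T_t$ is automatically bounded and norm-continuous since $T$ is. For the second claim I would split into two parts, compactness and norm-vanishing. Compactness of $[\phi, T_t] = \phi T_t - T_t\phi$ for each $t$ again follows from local compactness of $T_t$, which makes both $\phi T_t$ and $T_t\phi$ compact.

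The main point is the norm-vanishing $\lim_{t\to\infty}\lVert[\phi, T_t]\rVert = 0$. Here the propagation condition in Definition \ref{def_localized} kicks in: for any open neighborhood $U$ of the diagonal in $X^+\times X^+$, one has $\supp(T_t)\subset U$ for all $t$ large enough. Since $\phi\in C_0(X)$ extends continuously (by $\phi(\infty)=0$) to a uniformly continuous function on the compact space $X^+$, this is precisely the implication (3)$\Rightarrow$(1) of Lemma \ref{lem_commutator} applied to $T_t$ itself (viewed as its own trivial cover with $S_t = T_t$). Concretely, one can fix a metric $d$ on $X^+$, approximate $\phi$ in sup-norm by Lipschitz functions on $X^+$, and use the standard bound $\lVert[\psi, T_t]\rVert \leq L\cdot \prop(T_t)\cdot \lVert T_t\rVert$ for $\psi$ Lipschitz with constant $L$.

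No step is a real obstacle; the whole argument is purely a repackaging of the properties already built into the definition of $C^*_{L,c}(H_X\otimes B)^G_{\mathrm{Gcpt}}$, together with the density/closedness reduction and Lemma \ref{lem_commutator}.
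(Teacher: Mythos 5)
Your proof is correct and follows essentially the same route as the paper: reduce by density and norm-closedness to the dense subalgebra $\bC_{L,c}(H_X\otimes B)^G_{\mathrm{Gcpt}}$, deduce the first claim and the compactness of $[\phi,T_t]$ from local compactness, and obtain $\lim_{t\to\infty}\lVert[\phi,T_t]\rVert=0$ from the propagation condition (2) of Definition \ref{def_localized} via Lemma \ref{lem_commutator}. The only difference is that you spell out the Lipschitz-approximation mechanism behind Lemma \ref{lem_commutator}, which the paper leaves implicit.
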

\begin{proof} The first condition follows since $T_t$ is locally compact for each $t$. The second one holds because for any $T$ in the dense subalgebra $\bC_{L, c}(H_X\otimes B)_{\mathrm{Gcpt}}^G$ which satisfies the second condition (2) in Definition \ref{def_localized}, we have $\lVert[\phi, T_t]\rVert \to 0$ as $t\to \infty$. This follows from Lemma \ref{lem_commutator}.
\end{proof}

An important example of asymptotic morphisms is obtained by the (maximal) tensor product of two asymptotically commuting $\ast$-homomorphisms:

\begin{lemma} Let $A_1, A_2$ and $B$ be $G$-$C^*$-algebras and let 
\[
\phi_i\colon A_i \to C_b([1,\infty), M(B))
\]
(i=1, 2) be equivariant $\ast$-homomorphisms such that 
\[
\phi_1(a_1)\phi_2(a_2) \in C_b([1, \infty), B), \,\, [\phi(a_1), \phi(a_2)]\in C_0([1, \infty), B)
\]
for any $a_1\in A$ and $a_2 \in A_2$. Then, there is a (unique) equivariant asymptotic morphism 
\[
\phi_1\otimes \phi_2\colon A_1\otimes_{\rmax}A_2 \to \mathfrak{A}(B)
\]
such that the image of $a_1\otimes a_2$ is represented by 
\[
\phi_1(a_1)\phi_2(a_2) \in C_b([1, \infty), B). 
\]
\end{lemma}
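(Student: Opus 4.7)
The plan is to reduce the construction to the universal property of the maximal tensor product by passing to a common $C^*$-algebraic quotient in which the two given maps acquire honest (not merely asymptotic) commutation. First I would introduce the $C^*$-algebra
\[
Q := C_b([1,\infty), M(B))_{\mathrm{Gcont}} / \mathfrak{T}_0(B),
\]
where $\mathfrak{T}_0(B)= C_0([1,\infty),B)$. Since $B \triangleleft M(B)$, pointwise products of an element of $\mathfrak{T}_0(B)$ with a bounded $M(B)$-valued function still vanish at infinity, so $\mathfrak{T}_0(B)$ is indeed a closed two-sided ideal in $C_b([1,\infty), M(B))_{\mathrm{Gcont}}$. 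By construction, $\as(B)=\mathfrak{T}(B)/\mathfrak{T}_0(B)$ embeds canonically as a closed $\ast$-subalgebra of $Q$.

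\textbf{Construction.} Composing each $\phi_i$ with the quotient map gives equivariant $\ast$-homomorphisms $\bar\phi_i \colon A_i \to Q$, and the hypothesis $[\phi_1(a_1), \phi_2(a_2)] \in \mathfrak{T}_0(B)$ translates into $[\bar\phi_1(a_1), \bar\phi_2(a_2)] = 0$ in $Q$. The universal property of the maximal tensor product then produces a unique equivariant $\ast$-homomorphism
\[
\Phi \colon A_1 \otimes_{\rmax} A_2 \to Q, \qquad \Phi(a_1 \otimes a_2) = \bar\phi_1(a_1)\bar\phi_2(a_2),
\]
which is automatically continuous for the maximal norm. The other hypothesis $\phi_1(a_1)\phi_2(a_2) \in C_b([1,\infty), B)$ places $\Phi(a_1 \otimes a_2)$ inside $\as(B) \subset Q$ on elementary tensors; since $\as(B)$ is a closed $\ast$-subalgebra of $Q$, a density argument (passing from finite sums of elementary tensors to the closure) shows that the entire image of $\Phi$ lies in $\as(B)$. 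This yields the desired asymptotic morphism $\phi_1 \otimes \phi_2$. Uniqueness is immediate: any candidate is determined on the algebraic tensor product by the prescribed formula, hence everywhere by continuity.

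\textbf{Main point to check.} The principal technical verification is that $\phi_1(a_1)\phi_2(a_2)$ is actually $G$-continuous as an element of $C_b([1,\infty),B)$, so that it genuinely lies in $\mathfrak{T}(B)$ and descends to $\as(B)$. This follows from the equivariance of the $\phi_i$ together with the hypothesis that each $A_i$ is a $G$-$C^*$-algebra: the map $g \mapsto g(\phi_i(a_i)) = \phi_i(g(a_i))$ is norm-continuous into $C_b([1,\infty), M(B))$, $G$-continuity is stable under products, and the norm on $C_b([1,\infty), B)$ coincides with the one inherited from $C_b([1,\infty), M(B))$. The only other mild subtlety is the ideal containment $\mathfrak{T}_0(B) \triangleleft C_b([1,\infty), M(B))_{\mathrm{Gcont}}$ used to form $Q$; once both are in place the lemma is a clean application of the universal property.
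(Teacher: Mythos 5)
Your proof is correct and is essentially the standard argument the paper has in mind: the paper's own "proof" simply declares the lemma trivial and records the one point you also verify, namely that the image of an equivariant $\ast$-homomorphism from a $G$-$C^*$-algebra consists of $G$-continuous elements, so that $\phi_1(a_1)\phi_2(a_2)$ really lies in $\mathfrak{T}(B)$ and descends to $\as(B)$. Your passage to the quotient $Q$ and appeal to the universal property of $\otimes_{\rmax}$ is a clean and accurate way of writing out what the paper leaves implicit.
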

\begin{proof} This is trivial. We just note here that the image of any equivariant $\ast$-homomorphism from a $G$-$C^*$-algebra consists of $G$-continuous elements.
\end{proof}

\begin{lemma} (c.f.\ \cite[Section 5]{DWW18}, \cite[Construction 6.7.5]{WY2020}) For any $X$-$G$-module $H_X$, the natural map
\[
\pi_X\colon C_0(X) \to C_b([1,\infty), \Linears(H_X\otimes B)) 
\]
and the inclusion 
\[
\iota \colon C_{L, c}^*(H_X\otimes B)_{\mathrm{Gcpt}}^G \subset C_b([1,\infty), \Linears(H_X\otimes B) )
\]
induce an asymptotic morphism
\[
\pi_X\otimes \iota \colon C_0(X)\otimes C_{L, c}^*(H_X\otimes B)_{\mathrm{Gcpt}}^G \to \mathfrak{A}(\Compacts(H_X\otimes B))
\]
such that the image of $\phi \otimes T$ is represented by 
\[
\phi T \in C_b([1, \infty), \Compacts(H_X\otimes B)). 
\]
\end{lemma}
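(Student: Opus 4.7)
The plan is to apply the immediately preceding lemma on tensor products of asymptotically commuting $\ast$-homomorphisms, with $A_1=C_0(X)$, $A_2=C_{L,c}^*(H_X\otimes B)_{\mathrm{Gcpt}}^G$, and target $C^*$-algebra $\Compacts(H_X\otimes B)$. First I would verify that $\pi_X$ is a genuine equivariant $\ast$-homomorphism into $C_b([1,\infty),M(\Compacts(H_X\otimes B)))=C_b([1,\infty),\Linears(H_X\otimes B))$: it is the constant embedding of the structural representation $\phi\mapsto\phi\otimes1$, whose $G$-equivariance is built into the definition of an $X$-$G$-module. Similarly, the inclusion $\iota$ is an equivariant $\ast$-homomorphism into $C_b([1,\infty),\Linears(H_X\otimes B))$, since the elements of $C_{L,c}^*(H_X\otimes B)_{\mathrm{Gcpt}}^G$ are $G$-equivariant by definition.

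Next I would check the two asymptotic commutation hypotheses required by the preceding tensor-product lemma, namely
\[
\pi_X(\phi)\,\iota(T)\in C_b([1,\infty),\Compacts(H_X\otimes B)),\qquad [\pi_X(\phi),\iota(T)]\in C_0([1,\infty),\Compacts(H_X\otimes B)),
\]
for all $\phi\in C_0(X)$ and $T\in C_{L,c}^*(H_X\otimes B)_{\mathrm{Gcpt}}^G$. But these are exactly the content of the unlabelled lemma just above: local compactness of each $T_t$ yields $\phi T\in C_b([1,\infty),\Compacts(H_X\otimes B))$, while the shrinking support condition (2) in Definition \ref{def_localized}, combined with Lemma \ref{lem_commutator}, forces $\lVert[\phi,T_t]\rVert\to0$ as $t\to\infty$.

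Invoking the tensor-product lemma then produces a unique equivariant asymptotic morphism
\[
C_0(X)\otimes_{\rmax}C_{L,c}^*(H_X\otimes B)_{\mathrm{Gcpt}}^G\to\mathfrak{A}(\Compacts(H_X\otimes B))
\]
sending $\phi\otimes T$ to the class of $\phi T$. Since $C_0(X)$ is nuclear, the maximal tensor product coincides with the minimal one, so the domain is $C_0(X)\otimes C_{L,c}^*(H_X\otimes B)_{\mathrm{Gcpt}}^G$ as stated, giving the desired $\pi_X\otimes\iota$. There is no genuine obstacle: the lemma is essentially a repackaging of the two lemmas that precede it, the only minor point being the invocation of nuclearity of $C_0(X)$ to identify the two tensor products.
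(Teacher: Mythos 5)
Your proposal is correct and matches the paper's proof, which simply states that the lemma "follows from the previous two lemmas" — exactly the two ingredients you combine (the asymptotic commutation facts for elements of $C_{L,c}^*(H_X\otimes B)_{\mathrm{Gcpt}}^G$, and the tensor-product lemma for asymptotically commuting $\ast$-homomorphisms). Your additional remark that nuclearity of $C_0(X)$ identifies $\otimes_{\rmax}$ with $\otimes$ is a correct clarification of a point the paper leaves implicit.
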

\begin{proof} This follows from the previous two lemmas.
\end{proof}

We recall from the previous section that the right-regular representation
\[
\rho\colon  C_{b}([1, \infty), \Linears(H_X\otimes B))_{\mathrm{Gcont}}\rtimes_rG \to C_{b}([1, \infty), \Linears(\tilde H_X\otimes B))
\]
restricts to 
\[
\rho\colon RL^*_c(H_X\otimes B)\rtimes_rG \to C_{L, c}^*(\tilde H_X\otimes B)_{\mathrm{Gcpt}}^G.
\]

Thus, we obtain the following asymptotic morphism.

\begin{proposition}\label{prop_asympXG} For any $X$-$G$-module $H_X$, the natural map
\[
\pi_X\colon C_0(X) \to C_b([1,\infty), \Linears(\tilde H_X\otimes B)) 
\]
and the right-regular representation 
\[
\rho \colon RL^*_c(H_X\otimes B)\rtimes_rG  \to C_b([1,\infty), \Linears(\tilde H_X\otimes B)) 
\]
induce an asymptotic morphism
\[
\pi_X\otimes \rho \colon C_0(X)\otimes (RL^*_c(H_X\otimes B)\rtimes_rG )       \to \mathfrak{A}(\Compacts(\tilde H_X\otimes B))
\]
such that the image of $\phi \otimes T$ is represented by 
\[
\phi \rho(T) \in C_b([1, \infty), \Compacts(\tilde H_X\otimes B)). 
\]
\end{proposition}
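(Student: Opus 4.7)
The plan is to invoke directly the preceding lemma on tensor products of asymptotically commuting $\ast$-homomorphisms, applied to $\pi_X$ and $\rho$ viewed as $G$-equivariant $\ast$-homomorphisms into $C_b([1,\infty),\Linears(\tilde H_X\otimes B))=C_b([1,\infty),M(\Compacts(\tilde H_X\otimes B)))$. Since $C_0(X)$ is nuclear, the maximal and minimal tensor products with $RL^*_c(H_X\otimes B)\rtimes_rG$ coincide, so the resulting asymptotic morphism will indeed be defined on $C_0(X)\otimes(RL^*_c(H_X\otimes B)\rtimes_rG)$, as stated. The verification thus reduces to checking, for all $\phi\in C_0(X)$ and $T\in RL^*_c(H_X\otimes B)\rtimes_rG$, the two conditions
\[
\phi\,\rho(T)\in C_b([1,\infty),\Compacts(\tilde H_X\otimes B)),\qquad [\phi,\rho(T)]\in C_0([1,\infty),\Compacts(\tilde H_X\otimes B)).
\]

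The key tool for both conditions is Proposition \ref{prop_rightreg_localized}, which places $\rho(T)$ inside the localized equivariant Roe algebra $C^*_{L,c}(\tilde H_X\otimes B)^G_{\mathrm{Gcpt}}$. I would first verify the two conditions on the dense $\ast$-subalgebra $\bC_{L,c}(\tilde H_X\otimes B)^G_{\mathrm{Gcpt}}$: the first is immediate from fiberwise local compactness built into Definition \ref{def_localized}, and the second follows from condition~(2) of that definition combined with Lemma \ref{lem_commutator} applied to $\tilde H_X$ regarded as an $X$-module (which forces $\lVert[\phi,S_t]\rVert\to 0$). Both statements then extend to the norm closure by continuity of multiplication and of the commutator bracket; in the commutator case this is a routine $\epsilon/3$-approximation in which one writes $\rho(T)=S+(\rho(T)-S)$ with $S\in \bC_{L,c}(\tilde H_X\otimes B)^G_{\mathrm{Gcpt}}$ close in norm to $\rho(T)$, and uses $\lVert[\phi,\rho(T)_t-S_t]\rVert\le 2\lVert\phi\rVert\,\lVert\rho(T)-S\rVert$.

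Once both conditions are established, the preceding lemma produces a unique asymptotic morphism $\pi_X\otimes\rho\colon C_0(X)\otimes(RL^*_c(H_X\otimes B)\rtimes_rG)\to\as(\Compacts(\tilde H_X\otimes B))$ whose value on an elementary tensor $\phi\otimes T$ is represented by $\phi\rho(T)\in C_b([1,\infty),\Compacts(\tilde H_X\otimes B))$. I do not anticipate any substantive obstacle here: once Proposition \ref{prop_rightreg_localized} is invoked, the result is a formal consequence of the tensor-product construction of asymptotic morphisms, and the only moderately delicate point is the passage from the dense $\ast$-subalgebra to its closure for the commutator condition.
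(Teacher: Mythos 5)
Your proposal is correct and follows essentially the same route as the paper: Proposition \ref{prop_rightreg_localized} places $\rho(T)$ in $C^*_{L,c}(\tilde H_X\otimes B)^G_{\mathrm{Gcpt}}$, the two conditions ($\phi\rho(T)$ bounded compact-valued, $[\phi,\rho(T)]$ vanishing at infinity) are checked on the dense subalgebra via Lemma \ref{lem_commutator} and extended by continuity, and the tensor-product lemma for asymptotically commuting $\ast$-homomorphisms then yields the asymptotic morphism. Your explicit remark that nuclearity of $C_0(X)$ identifies the maximal and minimal tensor products is a point the paper leaves implicit, but it is not a different argument.
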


\begin{definition} For any $X$-$G$-module $H_X$, the element
\[
[\pi_X\otimes \rho]  \in E^G(C_0(X)\otimes(RL^*_c(H_X\otimes B)\rtimes_rG ), B)
\]
is defined by the asymptotic morphism $\pi_X\otimes \rho$ in Proposition \ref{prop_asympXG}.
\end{definition}

\begin{definition} We define a group homomorphism 
\[
\rho_X \colon K_i(RL^*_c(H_X\otimes B)\rtimes_rG) \to E_i^G(C_0(X), B)
\]
for $i=0, 1$, by sending 
\[
K_i(RL^*_c(H_X\otimes B)\rtimes_rG) \cong  E(\Sigma^i , RL^*_c(H_X\otimes B)\rtimes_rG)  
\]
to $E_i^G(C_0(X), B)$ by the composition with the class $[\pi_X\otimes \rho]$ under the composition law
\[
E^G(\Sigma^i , RL^*_c(H_X\otimes B)\rtimes_rG) \times  E^G(C_0(X)\otimes(RL^*_c(H_X\otimes B)\rtimes_rG ), B)
\]
\[
\to E^G(\Sigma^iC_0(X), B).
\]
\end{definition}

\begin{lemma}\label{lem_functorialB} The group homomorphism $\rho_X$ is natural with respect to any $G$-equivariant $\ast$-homomorphism $\pi\colon B_1\to B_2$ in a sense that the following diagram commutes
\begin{equation*}
\xymatrix{ 
K_\ast(RL^*_c(H_X\otimes B_1)\rtimes_rG)  \ar[d]^{\pi_\ast}  \ar[r]^-{\rho_X } &  E_\ast^G(C_0(X), B_1)   \ar[d]^{\pi_\ast}  \\
K_\ast(RL^*_c(H_X\otimes B_2)\rtimes_rG) \ar[r]^-{\rho_X } & E_\ast^G(C_0(X), B_2).    
   }
\end{equation*}
\end{lemma}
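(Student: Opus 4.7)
The plan is to reduce the lemma to the naturality of the asymptotic morphism $\pi_X\otimes \rho$ of Proposition~\ref{prop_asympXG} in the variable $B$, and then invoke the functoriality and associativity of the E-theory composition product.

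First I would observe that the $G$-equivariant $\ast$-homomorphism $\pi\colon B_1\to B_2$ induces $G$-equivariant $\ast$-homomorphisms $\pi_\ast=\mathrm{id}\otimes \pi$ at every level: from $\Compacts(H_X)\otimes B_1$ to $\Compacts(H_X)\otimes B_2$, hence (since the uniform compact support and commutator conditions are preserved pointwise in $t$) from $RL^*_c(H_X\otimes B_1)$ to $RL^*_c(H_X\otimes B_2)$, descending to the reduced crossed products; and likewise from $\Compacts(\tilde H_X\otimes B_1)$ to $\Compacts(\tilde H_X\otimes B_2)$, passing to the asymptotic algebras. These induced maps commute with the representation $\pi_X\colon C_0(X)\to C_b([1,\infty),\Linears(\tilde H_X\otimes B_\bullet))$, which acts only on the first tensor factor of $H_X\otimes B_\bullet\otimes L^2(G)$, and with the right-regular representation $\rho$ of Definition~\ref{def_rightreg}, whose formulas involve only the $H_X$ and $L^2(G)$ slots.

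Next I would verify that the square of asymptotic morphisms
\[
\xymatrix{
C_0(X)\otimes (RL^*_c(H_X\otimes B_1)\rtimes_rG) \ar[r]^-{\pi_X\otimes \rho} \ar[d]_-{\mathrm{id}\otimes \pi_\ast} & \as(\Compacts(\tilde H_X\otimes B_1)) \ar[d]^-{\as(\pi_\ast)} \\
C_0(X)\otimes (RL^*_c(H_X\otimes B_2)\rtimes_rG) \ar[r]^-{\pi_X\otimes \rho} & \as(\Compacts(\tilde H_X\otimes B_2))
}
\]
commutes strictly on the nose: both compositions send a simple tensor $\phi\otimes T$ to the class represented by $t\mapsto \phi\cdot \rho(\pi_\ast T_t) = \pi_\ast(\phi\cdot \rho(T_t))$ in $C_b([1,\infty),\Compacts(\tilde H_X\otimes B_2))$. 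After suspending and tensoring with $\Compacts(\hill_G)$, this yields the identity
\[
\pi_\ast\circ [\pi_X\otimes \rho]_{B_1} \;=\; [\pi_X\otimes \rho]_{B_2}\circ (\mathrm{id}_{C_0(X)}\otimes \pi_\ast)
\]
in $E^G(C_0(X)\otimes (RL^*_c(H_X\otimes B_1)\rtimes_rG), B_2)$.

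Finally, unwinding the definition $\rho_X([x])=[\pi_X\otimes \rho]\circ(\mathrm{id}_{C_0(X)}\otimes [x])$ and applying associativity of the E-theory composition product, the naturality identity $\rho_X\circ \pi_\ast = \pi_\ast\circ \rho_X$ drops out immediately. The argument is essentially a formal naturality check; the only mild obstacle is the bookkeeping verification that $\mathrm{id}\otimes \pi$ genuinely restricts and descends through each of the algebras in play (preservation of support and proper-support conditions, compatibility with $\rtimes_r G$, and passage to $\mathfrak{A}$), but each of these steps is routine and can be read off the defining formulas.
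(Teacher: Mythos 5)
Your proposal is correct and follows essentially the same route as the paper: the paper's proof consists precisely of observing that the square of asymptotic morphisms intertwining $\pi_X\otimes\rho$ with $\mathrm{id}_{C_0(X)}\otimes\pi_\ast$ and $\mathfrak{A}(\pi_\ast)$ commutes, and then letting the $E$-theoretic composition law do the rest. Your additional bookkeeping about $\mathrm{id}\otimes\pi$ descending through the various algebras is a harmless elaboration of what the paper leaves implicit.
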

\begin{proof} This follows because the following diagram commutes
\begin{equation*}
\xymatrix{ 
  C_0(X)\otimes ( RL^*_c(H_X\otimes B_1)\rtimes_rG)  \ar[r]^-{ \pi_X\otimes \rho}  \ar[d]^-{\mathrm{id}_{C_0(X)}\otimes \pi} & \mathfrak{A}(\Compacts(\tilde H_X\otimes B_1))  \ar[d]^{\pi}  \\
 C_0(X)\otimes ( RL^*_c(H_X\otimes B_2)\rtimes_rG )  \ar[r]^-{ \pi_X\otimes \rho}    &  \mathfrak{A}(\Compacts(\tilde H_X\otimes B_2)).
   }
\end{equation*}
\end{proof}

Now suppose that a $G$-equivariant continuous map $f\colon X\to Y$ is proper, so that we have $f^\ast\colon C_0(Y) \to C_0(X)$. It defines
\[
f_\ast = (f^\ast)^\ast \colon E_\ast^G(C_0(X), B) \to E_\ast^G(C_0(Y), B).
\]
Let $H_X$ and $H_Y$ be an $X$-$G$-module and a $Y$-$G$-module respectively and suppose that there is an equivariant continuous cover $(V_t\colon H_X\to H_Y)_{t\in [1\infty)}$ of $f$. 

\begin{lemma}\label{lem_functorialX} The following diagram commutes for any proper $f\colon X\to Y$ and for any equivariant continuous cover $(V_t\colon H_X\to H_Y)_{t\in [1\infty)}$ of $f$:
\begin{equation*}
\xymatrix{ 
K_\ast(RL^*_c(H_X\otimes B)\rtimes_rG)    \ar[d]^{\mathrm{Ad}_{V_t, \ast}}  \ar[r]^-{\rho_X } &   E_\ast^G(C_0(X), B) \ar[d]^{f_\ast}    \\
K_\ast(RL^*_c(H_Y\otimes B)\rtimes_rG)     \ar[r]^-{\rho_Y} &   E_\ast^G(C_0(Y), B).
   }
\end{equation*}
\end{lemma}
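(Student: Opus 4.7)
The plan is to unwind both compositions in the square as concrete asymptotic morphisms and show that they represent the same class in $E^G(\Sigma^i C_0(Y), B)$. First, unpacking the definitions of $\rho_X$, $\rho_Y$, $f_\ast$ and $\mathrm{Ad}_{V_t,\ast}$, the composition $f_\ast\circ\rho_X$ is represented by the asymptotic morphism
\[
\alpha_t\colon C_0(Y)\otimes(RL^*_c(H_X\otimes B)\rtimes_rG)\to \mathfrak{A}(\Compacts(\tilde H_X\otimes B)), \quad \phi\otimes T\mapsto (f^*\phi)\rho(T),
\]
while $\rho_Y\circ\mathrm{Ad}_{V_t,\ast}$ is represented by
\[
\beta_t\colon C_0(Y)\otimes(RL^*_c(H_X\otimes B)\rtimes_rG)\to \mathfrak{A}(\Compacts(\tilde H_Y\otimes B)), \quad \phi\otimes T\mapsto \phi\,\tilde V_t\rho(T)\tilde V_t^{\ast},
\]
where $\tilde V_t := V_t\otimes 1_{L^2(G)}\colon \tilde H_X\to \tilde H_Y$ is the extension; here I have used the identity $\tilde V_t\rho(T)\tilde V_t^{\ast} = \rho(\mathrm{Ad}_{V_t}(T))$, which follows from the $G$-equivariance of $V_t$.

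Second, I show that $\beta_t$ is asymptotically equivalent to $\mathrm{Ad}_{\tilde V_t}\circ\alpha_t$. The extension $\tilde V_t$ has the same support as $V_t$ (the $L^2(G)$ factor is transparent to the $C_0(X)$- and $C_0(Y)$-actions), so by Lemma \ref{lem_coveridG} it is an equivariant continuous cover of $\mathrm{id}_Y$ once $\tilde H_X$ is regarded as a $Y$-$G$-module via $f^*$. From this support concentration together with uniform continuity of any $\phi\in C_0(Y)$ on $Y^+$, one obtains
\[
\lim_{t\to\infty}\lVert\phi\tilde V_t - \tilde V_t f^*\phi\rVert = 0 \qquad (\phi\in C_0(Y)),
\]
hence $\beta_t(\phi\otimes T) - \tilde V_t\alpha_t(\phi\otimes T)\tilde V_t^{\ast}\to 0$ in norm. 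Thus it suffices to show that $\alpha_t$ and $\mathrm{Ad}_{\tilde V_t}\circ\alpha_t$ represent the same element in $E^G(\Sigma^i C_0(Y), B)$.

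Third, after the stabilization by $\Compacts(\hill_G)$ built into the definition of $E^G$, both $\alpha_t$ and $\mathrm{Ad}_{\tilde V_t}\circ\alpha_t$ become asymptotic morphisms into $\mathfrak{A}(\Compacts(\hill_G)\otimes B)$ via $G$-equivariant identifications $\tilde H_X\otimes \hill_G\cong \hill_G\cong \tilde H_Y\otimes \hill_G$, and under these identifications they are related by conjugation by a norm-continuous family of $G$-equivariant isometries of $\hill_G$. Any such conjugation induces the identity on $E^G(-, B)$ via the standard rotation homotopy (using an identification $\hill_G\oplus\hill_G\cong \hill_G$ to extend an isometry to a unitary and then deforming to the identity), yielding $[\alpha_t]=[\beta_t]$ in $E^G$. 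The main technical point is this last step; it is essentially the same absorption/rotation argument that was used in the proofs of Theorem \ref{thm_homology} and Theorem \ref{thm_Ghomology} to establish independence of $\mathrm{Ad}_{V_t,\ast}$ from the choice of equivariant continuous cover, so there is no genuinely new difficulty beyond transposing that argument into the $E^G$-theoretic language.
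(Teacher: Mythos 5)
Your proposal is correct and follows essentially the same route as the paper: both identify the two compositions with the asymptotic morphisms $\phi\otimes T\mapsto \pi(f^*\phi)\rho(T)$ and $\phi\otimes T\mapsto \pi(\phi)\tilde V_t\rho(T)\tilde V_t^*$, use the cover condition (via Lemma \ref{lem_coveridG}) to show that $\tilde V_t=V_t\otimes 1$ asymptotically conjugates one to the other, and conclude by the standard fact that asymptotic conjugation by a continuous family of $G$-equivariant isometries does not change the $E^G$-class. The only cosmetic difference is that the paper first reduces explicitly to the case of a cover of $\mathrm{id}_Y$ and works in the direct sum $\tilde H_X\oplus \tilde H_Y$, whereas you keep $f$ throughout; the substance is identical.
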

\begin{proof} Recall that $V_t\colon H_X\to H_Y$ is a cover of $f$ if and only if it is a cover of the identity when we view $H_X$ as a $Y$-module $(H_X)_Y$ via $f^\ast$. It is clear from the definition of $\rho$ that the diagram commutes when $V_t$ is the identity map $H_X \to (H_X)_Y$. Thus, it is enough to show the claim when $X=Y$ and $(V_t\colon H^0_X\to H^1_X)_{t\in [1\infty)}$ is a cover of the identity map on $X$. For $i=0,1$, we have
\[
\pi_X^i\otimes \rho^i \colon C_0(X)\otimes RL^*_c(H^i_X\otimes B)\rtimes_rG    \to \as(\Compacts(\tilde H_X^i\otimes B)),  
\]
defined by the two asymptotically commuting $\ast$-homomorphisms
\[
\pi^i_X\colon C_0(X) \to C_b([1,\infty), \Linears(\tilde H^i_X\otimes B)),
\]
\[
\rho^i \colon RL^*_c(H^i_X\otimes B)\rtimes_rG  \to C_b([1,\infty), \Linears(\tilde H^i_X\otimes B)). 
\]
We consider $H_X^0\oplus H_X^1$ and $\tilde H_X^0\oplus \tilde H_X^1$. We show that two asymptotic morphisms
\[
\pi_X^0\otimes \rho^0, \, \pi_X^1\otimes (\rho^1\circ \mathrm{Ad}_{V_t}) \colon C_0(X)\otimes RL^*_c(H^0_X\otimes B)\rtimes_rG    \to \as(\Compacts((\tilde H_X^0\otimes B \oplus \tilde H_X^1\otimes B))
\]
are homotopic. Note that $V_t\colon H^0_X\to H^1_X$ defines an $G$-equivariant isometry $\tilde V_t=V_t\otimes 1 \colon \tilde H^0_X\to \tilde H^1_X$ in $C_b([1,\infty),   \Linears(\tilde H^0_X\otimes B \oplus \tilde H^1_X\otimes B ))$ which conjugates $\rho^0$ to $\rho^1\circ \mathrm{Ad}_{V_t}$. Furthermore, since $V_t$ is a cover of the identity on $X$, it satisfies 
\[
\pi_X^1(\phi) \tilde V_t - \tilde V_t\pi_X^0(\phi)  \in C_0([1,\infty),   \Linears(\tilde H^0_X\otimes B \oplus \tilde H^1_X\otimes B )),
\]
and so
\[
 \pi_X^1(\phi) \tilde V_t\tilde V_t^\ast - \tilde V_t\pi_X^0(\phi)\tilde V_t^\ast  \in C_0([1,\infty),   \Linears(\tilde H^0_X\otimes B \oplus \tilde H^1_X\otimes B )).
\]
From this, we see that the isometry $\tilde V_t$ asymptotically conjugates $\pi_X^0\otimes \rho^0$ to $ \pi_X^1\otimes (\rho^1\circ \mathrm{Ad}_{V_t})=  \pi_X^1\otimes (\tilde V_t\rho^0 \tilde V_t^\ast)$ in a sense that 
\[
\mathrm{Ad}_{\tilde V_t} ( \pi_X^0\otimes \rho^0   ) =  \pi_X^1\otimes (\rho^1\circ \mathrm{Ad}_{V_t})
\]
in $\as(\Compacts((\tilde H_X^0\otimes B \oplus \tilde H_X^1\otimes B))$. It follows that the two asymptotic morphisms are homotopic. The claim follows from this.

\end{proof}

Thanks to the previous lemma, and since $X\to E^G(C_0(X), B)$ is functorial for proper $f\colon X\to Y$, the following definition is well-defined.

\begin{definition} For any $G$-compact proper $G$-space $X$, we define a group homomorphism 
\[
\rho_X \colon \bD^{B, G}_\ast(X) \to E_\ast^G(C_0(X), B)
\]
by
\[
\rho_X\colon  K_\ast(RL^*_c(H_X\otimes B)\rtimes_rG ) \to E_\ast^G(C_0(X), B)
\]
where $H_X$ is the chosen universal $X$-$G$-module. More generally for any proper $G$-space $X$, we define a group homomorphism 
\[
\rho_X \colon \bD^{B, G}_\ast(X) \to \varinjlim_{Y\subset X, \mathrm{Gcpt}}E_\ast^G(C_0(Y), B)
\]
so that the following diagram commutes:
\begin{equation*}
\xymatrix{ 
\bD^{B, G}_\ast(X) \cong  \varinjlim_{Y\subset X, \mathrm{Gcpt}}\bD^{B, G}(Y)  \ar[r]^{\rho_X } &   \varinjlim_{Y\subset X, \mathrm{Gcpt}}E_\ast^G(C_0(Y), B)    \\
\bD^{B, G}_\ast(Y)  \ar[u]^{}  \ar[r]^{\rho_Y } &   E_\ast^G(C_0(Y), B) \ar[u]^{}.
   }
\end{equation*}
\end{definition}

\begin{theorem} The group homomorphisms 
\[
\rho_X \colon \bD^{B, G}_\ast(X) \to \varinjlim_{Y\subset X, \mathrm{Gcpt}}E_\ast^G(C_0(Y), B)
\] 
define a natural transformation of functors from the category $\mathcal{PR}^G$ of (second countable, locally compact) proper $G$-spaces to the category $\mathcal{GA}$ of graded abelian groups. Furthermore, the transformation is natural with respect to a  $G$-equivariant $\ast$-homomorphism $\pi\colon B_1\to B_2$ in a sense that the following diagram commutes
\begin{equation*}
\xymatrix{ 
\bD^{B_1, G}_\ast(X) \ar[d]^{\pi_\ast}  \ar[r]^-{\rho_X} &   \varinjlim_{Y\subset X, \mathrm{Gcpt}}E_\ast^G(C_0(Y), B_1)    \ar[d]^{\pi_\ast}  \\
\bD^{B_2, G}_\ast(X) \ar[r]^-{\rho_X } &   \varinjlim_{Y\subset X, \mathrm{Gcpt}}E_\ast^G(C_0(Y), B_2).
   }
\end{equation*}
\end{theorem}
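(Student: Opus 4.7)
The plan is to deduce both naturality statements from the $G$-compact cases already established in Lemma \ref{lem_functorialB} and Lemma \ref{lem_functorialX}. The key observation underlying the strategy is that both the source functor $X\mapsto \bD^{B, G}_\ast(X)$ and the target functor $X\mapsto \varinjlim_{Y\subset X, \mathrm{Gcpt}} E^G_\ast(C_0(Y), B)$ are built from their values on $G$-compact spaces by the same inductive-limit procedure over the directed net of $G$-compact, $G$-invariant closed subsets. First I would verify that $\rho_{(-)}$ intertwines the transition maps of these two systems, i.e., that the naturality square commutes for any inclusion $X' \hookrightarrow X''$ of two $G$-compact, $G$-invariant closed subsets; since such an inclusion is automatically proper, this is a direct application of Lemma \ref{lem_functorialX}. (This compatibility is implicit in the definition of $\rho_X$ for non-$G$-compact $X$ given just above the theorem.)

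For naturality in $\pi\colon B_1 \to B_2$, I would apply Lemma \ref{lem_functorialB} to each $G$-compact $G$-invariant closed subset $X' \subset X$ to get the required square at the level of $X'$. Combined with the compatibility of $\rho_{(-)}$ with transition maps established above, taking the colimit of these squares yields the square for $X$ itself.

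For naturality in a $G$-equivariant continuous map $f\colon X\to Y$, I would recall from the proof of Proposition \ref{prop_welldefG} that $\bD^{B, G}_\ast(f)$ is defined as the inductive limit of the induced maps $\bD^{B, G}_\ast(f\mid_{X_i}\colon X_i \to f(X_i))$ over $G$-compact, $G$-invariant closed subsets $X_i \subset X$; each restriction $f\mid_{X_i}$ is proper since $X_i$ is $G$-compact. An entirely analogous inductive-limit recipe applies to the $E$-theoretic target using the pull-backs $(f\mid_{X_i})^\ast\colon C_0(f(X_i)) \to C_0(X_i)$ composed with the structural maps of the target system. Then Lemma \ref{lem_functorialX} applied to each proper restriction $f\mid_{X_i}$ provides commutativity at the level of each $X_i$, and passing to the limit yields the desired square for $f$. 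The main obstacle is purely bookkeeping: one must match up the two inductive-limit definitions carefully, but once both systems are recognized as indexed by the same net of $G$-compact subsets with proper inclusion maps, the rest follows by applying the $G$-compact lemmas on each piece.
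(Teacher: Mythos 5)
Your proposal is correct and follows essentially the same route as the paper, whose proof simply cites Lemma \ref{lem_functorialX} for naturality in $X$ and Lemma \ref{lem_functorialB} for naturality in $B$; your additional unwinding of the inductive-limit bookkeeping over $G$-compact invariant subsets is exactly what those citations implicitly rely on (and matches the construction of $\rho_X$ for non-$G$-compact $X$ given just before the theorem).
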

\begin{proof} The first assertion follows from Lemma \ref{lem_functorialX}. The second assertion follows from Lemma \ref{lem_functorialB}.
\end{proof}

The next goal is to show the following:
\begin{theorem}\label{thm_forget_factor} The forget-control map $\cf\colon \bD_\ast^{B, G}(X) \to K_\ast(B\rtimes_rG)$ factors through the Baum--Connes assembly map $\mu_X^{B, G}$ via $\rho_X$ in a sense that the following diagram commutes:
\begin{equation*}
\xymatrix{ \bD_\ast^{B, G}(X)   \ar[dr]^{\rho_X}  \ar[rr]^{\cf} & &  K_\ast(B\rtimes_rG)  \\
   &  \varinjlim_{Y\subset X, \mathrm{Gcpt}}E_\ast^G(C_0(Y), B). \ar[ur]^{\mu^{B, G}_X}  & 
   }
   \end{equation*}
\end{theorem}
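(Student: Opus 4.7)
By Theorem~\ref{thm_coeff}(3), $\bD^{B,G}_\ast(X) = \varinjlim_{Y\subset X,\,\mathrm{Gcpt}} \bD^{B,G}_\ast(Y)$, and by construction the same holds for the source $\varinjlim_{Y\subset X,\,\mathrm{Gcpt}} E^G_\ast(C_0(Y), B)$ of the assembly map. Each of $\cf$, $\rho_X$, and $\mu^{B,G}_X$ is compatible with these colimits, so it suffices to prove commutativity of the diagram when $X$ is itself $G$-compact. Fix such an $X$, a cut-off function $c\in C_c(X)$, and set $A := RL^*_c(H_X\otimes B)\rtimes_r G$, viewed as a $C^*$-algebra with trivial residual $G$-action.

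\textbf{Both sides as classes in $E(A, B\rtimes_r G)$.} The forget-control map $\cf$ corresponds to the $E$-theory class $[\mathrm{ev}_1]$ induced by the $\ast$-homomorphism $\mathrm{ev}_1\colon A \to (\Compacts(H_X)\otimes B)\rtimes_r G \cong \Compacts(H_X)\otimes (B\rtimes_r G)$. Unwinding the definition of $\rho_X$ and of the $E$-theoretic Baum--Connes assembly map, and using that the trivial $G$-action on $A$ yields $(C_0(X)\otimes A)\rtimes_r G \cong (C_0(X)\rtimes_r G)\otimes A$, the composite $\mu^{B,G}_X\circ \rho_X$ corresponds to the class $\alpha \mapsto \bigl(j^G_r[\pi_X\otimes\rho]\bigr)_\ast\bigl(\alpha\otimes [p_c]\bigr)$, where $[p_c]\in K_0(C_0(X)\rtimes_r G)$ is the Mishchenko cut-off class built from $c$. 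The diagram therefore commutes if and only if the two $E$-theory classes $[\mathrm{ev}_1]$ and $j^G_r[\pi_X\otimes\rho]\circ (\mathrm{id}_A\otimes [p_c])$ agree in $E(A, B\rtimes_r G)$.

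\textbf{Key computation via the right regular representation.} To compare these classes, the plan is to produce an explicit asymptotic morphism $\Phi\colon A \to \mathfrak{A}\bigl(\Compacts(\tilde H_X\otimes B)\rtimes_r G\bigr)$ representing the right-hand class: $\Phi_t(a)$ is obtained by applying the descent of $\pi_X\otimes\rho$ at time $t$ to $a\otimes p_c$, written explicitly as a convolution integral over $G$ against the cut-off. The key step is to show that, as $t\to \infty$, the cut-off identity $\int_G g(c)^2\, d\mu_G(g)=1$, together with the asymptotic commutation $\lVert[\pi_X(\phi),\rho(a_t)]\rVert \to 0$ and the shrinking propagation of $\rho(a_t)$ guaranteed by Proposition~\ref{prop_rightreg_localized}, force the integrand defining $\Phi_t(a)$ to localize onto the diagonal and the integral to collapse to $\rho(\mathrm{ev}_1(a))$. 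Hence $\Phi$ is asymptotically equivalent to $\rho\circ \mathrm{ev}_1$; since $\rho$ gives the isomorphism $\Compacts(H_X\otimes B)\rtimes_r G \cong C^*(\tilde H_X\otimes B)^G_{\mathrm{Gcpt}}$ of Proposition~\ref{prop_isom}, this yields $[\Phi]=[\mathrm{ev}_1]$ in $E(A, B\rtimes_r G)$, completing the proof.

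\textbf{Main obstacle.} The crux is the final asymptotic collapse: because $\pi_X\otimes\rho$ is only asymptotically multiplicative, this requires the simultaneous use of the localization condition on $A$ and the cut-off identity. It seems cleanest to verify the asymptotic equivalence $\Phi\sim \rho\circ\mathrm{ev}_1$ first on a dense subalgebra of $A$ (e.g.\ elementary products $f\cdot u_g$ with $f\in RL^{\mathrm{alg}}_c(H_X\otimes B)$ and $g\in G$) and extend by continuity, taking care with the modular function of $G$ to match the right regular representation conventions of Definition~\ref{def_rightreg}.
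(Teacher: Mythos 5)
Your overall strategy is the same as the paper's: reduce to $G$-compact $X$, identify $\mu^{B,G}_X\circ\rho_X$ with composition against the class $j^G_r([\pi_X\otimes\rho])\circ[p_c]\in E(A,B\rtimes_rG)$, and compute that class by compressing the right-regular representation with the cut-off projection $(\pi_X\rtimes_r1)(p_c)$ (this is exactly the content of Lemma~\ref{lem_isom_Uc} and Lemma~\ref{lem_compute}). Up to that point the proposal is sound.

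The gap is in your final ``asymptotic collapse'' step. After conjugating by the isometry $U_c$ of Lemma~\ref{lem_isom_Uc}, the compression of $\rho$ by $p_c$ at time $t$ is the averaging $T_t\mapsto\int_G g(c)\,T_t\,g(c)\,d\mu_G(g)$ applied to the value of $T$ \emph{at time $t$}, and the cut-off identity together with uniform compact support and $\lVert[g(c),T_t]\rVert\to0$ show that this differs from $T_t$ by something of norm tending to $0$. So $\Phi$ is asymptotically equivalent to the \emph{family of evaluations} $t\mapsto\mathrm{ev}_t$, not to the constant $\mathrm{ev}_1$: for a general $a\in RL^*_c(H_X\otimes B)\rtimes_rG$ there is no reason that $\lVert a_t-a_1\rVert\to0$, since elements of the localization algebra are merely bounded continuous functions of $t$. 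As stated, the claimed asymptotic equivalence $\Phi\sim\rho\circ\mathrm{ev}_1$ is false. The fix is standard but is a genuinely separate step: a norm-continuous family of $\ast$-homomorphisms $(\mathrm{ev}_t)_{t\in[1,\infty)}$, regarded as an asymptotic morphism, is \emph{homotopic} (e.g.\ via the reparametrizations $t\mapsto\mathrm{ev}_{1+s(t-1)}$, $s\in[0,1]$) to the constant asymptotic morphism $\mathrm{ev}_1$. With that homotopy inserted, your argument matches the paper's proof of Lemma~\ref{lem_factor_Gcompact}; without it, the identification of classes in $E(A,B\rtimes_rG)$ does not follow.
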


For this, we need some preparation. Recall that for any $G$-Hilbert $B$-module $\E$, a Hilbert $B\rtimes_rG$-module $\E\rtimes_rG$ is defined \cite[Defintion 3.8]{Kasparov88}. For simplicity (for our purpose, this is enough) we will only consider $\E$ of the form $H\otimes B$ where $H$ is a $G$-Hilbert space and in this case $\E\rtimes_rG=H\otimes B \rtimes_rG $ is canonically identified as the Hilbert $B\rtimes_rG$-module $H\otimes (B\rtimes_rG)$. Kasparov defined for any $G$-equivariant $\ast$-homomorphism $\pi\colon A\to \Linears(\E)$, its crossed product $\pi\rtimes_r1\colon A\rtimes_rG \to \Linears(\E\rtimes_rG)$. In the case when $\E=H\otimes B$, the $\ast$-homomorphism $\pi\rtimes_r1$ is defined by sending $a \in A$ to
\[
\pi(a) \in  \Linears(\E) \subset  \Linears(\E\rtimes_rG)
\]
and $g \in G$ to 
\[
( g\otimes u_g\colon v\otimes f \mapsto gv\otimes u_gf  )  \in \Linears(H\otimes B \rtimes_rG)
\]
where $u_g\in M(B\rtimes_rG)$ is the unitary corresponding to $g$.

\begin{lemma}\label{lem_isom_Uc}(c.f.\ \cite[Proposition 5.2]{Nishikawa19}) Let $X$ be a $G$-compact proper $G$-space. Let $H_X$ be an $X$-$G$-module and let $\tilde H_X$ be the $X$-$G$-module as in Definition \ref{def_tildeH}. Consider 
\[
\pi_X\colon C_0(X) \to \Linears(\tilde H_X\otimes B),
\]
the structure map for the $X$-$G$-module $\tilde H_X\otimes B$ and consider its Kasparov's crossed product (see the explanation right before the lemma)
\[
\pi_X\rtimes_r1 \colon C_0(X)\rtimes_rG \to \Linears(\tilde H_X\otimes B\rtimes_rG). 
\]
Let $p_c(g)=\Delta(g)^{-1/2}cg(c)$ be the cut-off projection in $C_0(X)\rtimes_rG$ for a cut-off function $c \in C_c(X)$ and let
\[
\rho\colon \Linears(H_X\otimes B)_{\mathrm{Gcont}}\rtimes_rG \to \Linears(\tilde H_X\otimes B) \subset  \Linears(\tilde H_X\otimes B\rtimes_rG)
\]
be the right-regular representation as in Definition \ref{def_rightreg}. There is an adjointable isometry
\[
U_c\colon H_X\otimes B\rtimes_rG \to  \tilde H_X\otimes B\rtimes_rG
\]
of Hilbert $B\rtimes_rG$-modules such that 
\[
U_cU_c^\ast =  (\pi_X\rtimes_r1)(p_c) 
\]
and such that the c.c.p.\ map
\[
U_c^\ast \rho  U_c \colon \Linears(H_X\otimes B)_{\mathrm{Gcont}}\rtimes_rG \to \Linears(H_X\otimes B\rtimes_rG)
\]
is identified as the c.c.p.\ map on $\Linears(H_X\otimes B)_{\mathrm{Gcont}}\rtimes_rG$ (naturally viewed as a subalgebra of $\Linears(H_X\otimes B\rtimes_rG)$ via Kasparov's crossed product) defined by the $G$-equivariant c.c.p.\ map
\[
 \Linears(H_X\otimes B)\ni T  \mapsto T'= \int_{g\in G} g(c)Tg(c) d\mu_G(g)  \in  \Linears(H_X\otimes B).
\]
\end{lemma}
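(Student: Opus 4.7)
The plan is to construct $U_c$ explicitly and verify both claims by direct computation, relying on the cut-off identity $\int_G g(c)^2 d\mu_G(g)=1$ and the Haar identity $\int_G f(h^{-1})\Delta(h)^{-1}d\mu_G(h)=\int_G f(h)d\mu_G(h)$. Realising elements of $\tilde H_X\otimes (B\rtimes_r G)$ as functions $h\mapsto F(h)\in H_X\otimes (B\rtimes_r G)$ in the usual way, I define
\[
U_c(v\otimes x)(h)=\Delta(h)^{-1/2}\,c\,u_h v\otimes u_h x,\quad v\in H_X,\ x\in B\rtimes_r G,\ h\in G,
\]
where the first $u_h$ denotes the $G$-representation on $H_X$ and the second $u_h$ denotes the canonical unitary multiplier in $M(B\rtimes_r G)$. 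That $U_c$ is well-defined as an adjointable map of Hilbert $B\rtimes_r G$-modules is a routine application of Lemma \ref{lem_sum}.

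For the isometry property, one computes
\[
\langle U_c(v\otimes x),U_c(v\otimes x)\rangle=\int_G \Delta(h)^{-1}\langle v,(h^{-1})(c)^2 v\rangle d\mu_G(h)\cdot x^*x,
\]
and the substitution $h\mapsto h^{-1}$ absorbs the modular factor and reduces the integral to $\int_G k(c)^2 d\mu_G(k)=1$, giving $\|v\|^2 x^*x$. Reading $U_c^*$ off from the pairing gives $U_c^*F=\int_G(\Delta(h)^{-1/2}u_h^*c\otimes u_h^*)F(h)d\mu_G(h)$, and direct composition yields
\[
(U_cU_c^*F)(k)=\int_G \Delta(k)^{-1/2}\Delta(h)^{-1/2}(c\,u_{kh^{-1}}\,c\otimes u_{kh^{-1}})F(h)d\mu_G(h).
\]
On the other hand, expanding $(\pi_X\rtimes_r 1)(p_c)=\int_G \Delta(g)^{-1/2}(cg(c)u_g\otimes u_g)d\mu_G(g)$, simplifying via $cg(c)u_g=cu_gc$, and substituting $g=kh^{-1}$ (so that $d\mu_G(g)=\Delta(h)^{-1}d\mu_G(h)$) yields the same integrand; hence $U_cU_c^*=(\pi_X\rtimes_r 1)(p_c)$.

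The c.c.p.\ identification is verified on generators of $\Linears(H_X\otimes B)_{\Gcont}\rtimes_r G$ by linearity, density, and continuity. For $T\in \Linears(H_X\otimes B)_{\Gcont}$, substituting $\rho(T)U_c(v\otimes x)(h)=\Delta(h)^{-1/2}h(T)(cu_hv\otimes u_hx)$ into the formula for $U_c^*$ and using $u_h^*h(T)u_h=T$ produces
\[
U_c^*\rho(T)U_c(v\otimes x)=\Bigl(\int_G \Delta(h)^{-1}(h^{-1})(c)\,T\,(h^{-1})(c)\,d\mu_G(h)\Bigr)(v\otimes x),
\]
which the inversion substitution converts into $T'(v\otimes x)$ with $T'=\int_G g(c)Tg(c)d\mu_G(g)$; this coincides with the image of $T'\in \Linears(H_X\otimes B)$ inside $\Linears(H_X\otimes B\rtimes_r G)$ under Kasparov descent. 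For $T=g_0\in G$ viewed as a multiplier, the formula $(\rho(g_0)F)(h)=\Delta(g_0)^{1/2}F(hg_0)$ together with modular-factor cancellations and $\int_G \Delta(h)^{-1}(h^{-1})(c^2)d\mu_G(h)=1$ gives $U_c^*\rho(g_0)U_c(v\otimes x)=u_{g_0}v\otimes u_{g_0}x$, matching Kasparov's formula for $g_0$.

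The main obstacle is bookkeeping rather than concept: discovering the correct placement of $\Delta(h)^{-1/2}$ inside $U_c$ so that the operator parts on $H_X$ (namely $\Delta(h)^{-1/2}cu_h$) and on $B\rtimes_r G$ (namely $u_h$) combine, via a single Haar-measure substitution, to reproduce $(\pi_X\rtimes_r 1)(p_c)$; thereafter everything reduces to the cut-off identity and standard Haar identities.
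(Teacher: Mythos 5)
Your construction of $U_c$, the placement of the modular factor $\Delta(h)^{-1/2}$, and the three verifications (isometry, $U_cU_c^\ast=(\pi_X\rtimes_r1)(p_c)$, and the computation of $U_c^\ast\rho(T)U_c$ and $U_c^\ast\rho_{g_0}U_c$ on generators) coincide with the paper's proof, which states the same formulas for $U_c$ and $U_c^\ast$ and leaves the computations as direct checks. Your proposal is correct and simply carries out explicitly what the paper asserts "can be checked directly."
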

\begin{proof} An isometry $U_c\colon H_X\otimes B\rtimes_rG \to \tilde H_X\otimes B\rtimes_rG$ is defined by sending $v \in H_X\otimes B\rtimes_rG$ to 
\[
(G\ni h\mapsto  \Delta(h)^{-1/2} (\pi_X\rtimes_r1)(c)(h\otimes u_h) v  \in H_X\otimes B\rtimes_rG  )
\]
 in $L^2(G)\otimes H_X\otimes B\rtimes_rG$. Its adjoint $U_c^\ast$ is defined by sending 
\[
(G\ni h \mapsto  v_h \in H_X\otimes B\rtimes_rG  ) \in L^2(G)\otimes H_X\otimes B\rtimes_rG
\]
to
 \[
 \int_{h \in G}  \Delta(h)^{-1/2} (h^{-1}\otimes u_{h^{-1}})((\pi_X\rtimes_r1)(c) v_h)  d\mu_G(h)
 \]
 which converges weakly in $H_X\otimes B\rtimes_rG$. We can first see that $U_c$ is well-defined and $\lVert U_c\rVert=1$ (as an a-priori not necessarily adjointable map) and from which the weak convergence of the formula of $U_c^\ast$ can be deduced. The equalities $U_c^\ast U_c=1$ and $U_cU_c^\ast=(\pi_X\rtimes_r1)(p_c)$ can be checked directly.
 
 Now, the following equalities can be checked directly,
 \[
 U_c^\ast \rho(T)U_c = \int_{h\in G} h(c)Th(c) d\mu_G(h) \in \Linears(H_X\otimes B) \subset \Linears(H_X\otimes B\rtimes_rG)
 \]
for any $T\in \Linears(H_X\otimes B)$ and 
\[
 U_c^\ast \rho_g U_c = g\otimes u_g \in \Linears(H_X\otimes B\rtimes_rG)
\]
for any $g\in G$.
\end{proof}

\begin{lemma}\label{lem_compute} Let $X$ be a $G$-compact proper $G$-space. Let $H_X$ be any $X$-$G$-module. Consider the element
\[
[\pi_X\otimes \rho]  \in E^G(C_0(X)\otimes(RL^*_c(H_X\otimes B)\rtimes_rG ), B).
\]
Let 
\[
j^G_r([\pi_X\otimes \rho]) \in    E(C_0(X)\rtimes_rG \otimes(RL^*_c(H_X\otimes B)\rtimes_rG ), B\rtimes_rG)
\]
be its reduced crossed product (which is defined since the first variable is a proper $G$-$C^*$-algebra). Let 
\[
[p_c] \in E(\bC, C_0(X)\rtimes_rG)
\]
be the element corresponding to the cut-off projection $p_c$ in $C_0(X)\rtimes_rG$ for a cut-off function $c$ on $X$. Then, the class
\[
j^G_r([\pi_X\otimes \rho]) \circ [p_c]  \in   E(RL^*_c(H_X\otimes B)\rtimes_rG, B\rtimes_rG)
\]
is equal to the class associated to the natural inclusion map
\[
RL^*_c(H_X\otimes B)\rtimes_rG \to C_b([1, \infty), \Compacts(H_X\otimes B)\rtimes_rG).
\]
\end{lemma}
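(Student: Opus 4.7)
The plan is to unpack the composition as an explicit asymptotic morphism and identify it with the natural inclusion via the isometry $U_c$ supplied by Lemma \ref{lem_isom_Uc}. I would first observe that, since $RL^*_c(H_X\otimes B)\rtimes_rG$ carries the trivial $G$-action (it is itself a reduced crossed product), the descent $j^G_r([\pi_X\otimes \rho])$ is naturally an $E$-theory element from $(C_0(X)\rtimes_rG)\otimes (RL^*_c(H_X\otimes B)\rtimes_rG)$ to $\Compacts(\tilde H_X\otimes B)\rtimes_rG$. The composition with $[p_c]$ amounts to evaluation of the first tensor slot on $p_c$, so (invoking the multiplier extension of the asymptotic morphism and the identifications $\Compacts(\tilde H_X\otimes B)\rtimes_rG\cong \Compacts_{B\rtimes_rG}(\tilde H_X\otimes B\rtimes_rG)$ coming from the spatial implementation of the $G$-action on $\Compacts(\tilde H_X)$) the composition $j^G_r([\pi_X\otimes \rho])\circ [p_c]$ is represented by the asymptotic morphism
\[
RL^*_c(H_X\otimes B)\rtimes_rG\ni T\;\longmapsto\; \bigl(t\mapsto (\pi_X\rtimes_r1)(p_c)\cdot \rho(T)_t\bigr).
\]

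I would next apply Lemma \ref{lem_isom_Uc}. The identities $(\pi_X\rtimes_r1)(p_c)=U_cU_c^*$ and $U_c^*U_c=1$ allow a standard Murray--von Neumann equivalence in $E$-theory: conjugating by the adjointable isometry $U_c\colon H_X\otimes B\rtimes_rG\to \tilde H_X\otimes B\rtimes_rG$ identifies the class of the asymptotic morphism $T\mapsto U_cU_c^*\rho(T)_t$ with the class of $T\mapsto U_c^*\rho(T)_tU_c \in \Linears(H_X\otimes B\rtimes_rG)$. By the last assertion of Lemma \ref{lem_isom_Uc}, the right-hand side equals $(\Phi\rtimes_r 1)(T_t)$, where $\Phi\colon \Linears(H_X\otimes B)\to \Linears(H_X\otimes B)$ is the $G$-equivariant c.c.p.\ map $S\mapsto \int_G g(c)Sg(c)\,d\mu_G(g)$. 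For $T\in RL^*_c(H_X\otimes B)\rtimes_rG$, the values land in $C_b([1,\infty),\Compacts(H_X\otimes B)\rtimes_rG)$ as required.

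Finally, it remains to show that this asymptotic morphism agrees with the natural inclusion modulo $C_0([1,\infty),\Compacts(H_X\otimes B)\rtimes_rG)$. For an elementary element $T=Su_g$ with $S\in RL^*_c(H_X\otimes B)$ and $g\in G$, I would compute
\[
(\Phi\rtimes_r 1)(S_tu_g)-S_tu_g=\Bigl(\int_G h(c)[S_t,h(c)]\,d\mu_G(h)\Bigr)u_g + S_t\Bigl(\int_G h(c)^2\,d\mu_G(h)-1\Bigr)u_g.
\]
The second term is identically zero because $\int_G h(c)^2\,d\mu_G(h)=1$ as a function on $X$. The first term tends to zero in norm as $t\to\infty$: properness of the action combined with the uniform compact support of $S_t$ restricts the integrand to a compact set $L\subset G$, and the defining asymptotic commutation $\lim_{t\to\infty}\|[S_t,\phi]\|=0$ holds uniformly in $\phi$ on the compact subset $\{h(c):h\in L\}$ of $C_0(X)$ by a standard compactness argument. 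Lemma \ref{lem_sum} then provides the uniform norm bounds needed to extend this estimate by linearity and continuity from the dense subalgebra $C_c(G, RL^*_c(H_X\otimes B))$ to all of $RL^*_c(H_X\otimes B)\rtimes_rG$. The main technical obstacle is precisely this last uniform control across the crossed product, and it is exactly what Lemma \ref{lem_sum} is designed to supply.
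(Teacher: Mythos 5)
Your proposal is correct and follows essentially the same route as the paper's own proof: represent $j^G_r([\pi_X\otimes\rho])\circ[p_c]$ by the compression of $\rho$ by $(\pi_X\rtimes_r 1)(p_c)$, use Lemma \ref{lem_isom_Uc} to identify this (via the isometry $U_c$) with the natural inclusion precomposed with $T_t\mapsto\int_G h(c)T_t h(c)\,d\mu_G(h)$, and then check that this averaging map is asymptotic to the identity by reducing to uniformly compactly supported $T$ and using $\int_G h(c)^2\,d\mu_G(h)=1$ together with the vanishing of $\|[T_t,h(c)]\|$ uniformly over the relevant compact set of $h$. The only differences are cosmetic (one-sided multiplication by $p_c$ in place of the two-sided compression, which agree asymptotically, and a slightly more explicit density/equicontinuity argument at the end).
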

\begin{proof} The crossed product $j^G_r([\pi_X\otimes \rho])$ is represented by the product of the two asymptotically commuting representations
\[
\pi_X\rtimes_r1 \colon C_0(X)\rtimes_rG \to \Linears(\tilde H_X\otimes B\rtimes_rG) \subset C_b([1, \infty),  \Linears(\tilde H_X\otimes B\rtimes_rG))
\]
and
\[
\rho\colon  RL^*_c(H_X\otimes B)\rtimes_rG  \to C_b([1, \infty),  \Linears(\tilde H_X\otimes B)) \subset C_b([1, \infty),  \Linears(\tilde H_X\otimes B\rtimes_rG)).
\]
The class $j^G_r([\pi_X\otimes \rho])\circ [p_c]$ is represented by the asymptotic morphism from $RL^*_c(H_X\otimes B)\rtimes_rG$ to $\as(  \Compacts(\tilde H_X\otimes B\rtimes_rG))$ which is represented by the c.c.p.\ map
\[
\pi_X\rtimes_r1 (p_c) \rho \pi_X\rtimes_r1 (p_c) \colon  RL^*_c(H_X\otimes B)\rtimes_rG  \to C_b([1, \infty),  \Compacts(\tilde H_X\otimes B\rtimes_rG)).
\]
In view of Lemma \ref{lem_isom_Uc}, this map can be identified as the c.c.p.\ map 
\[
RL^*_c(H_X\otimes B)\rtimes_rG  \to C_b([1, \infty),  \Compacts(H_X\otimes B\rtimes_rG))
\]
defined as the natural inclusion 
\[
RL^*_c(H_X\otimes B)\rtimes_rG  \to C_b([1, \infty),  \Compacts(H_X\otimes B\rtimes_rG))
\]
preceded by the c.c.p.\ map on $RL^*_c(H_X\otimes B)\rtimes_rG$ induced by the $G$-equivariant c.c.p.\ map 
\[
T_t \mapsto  \int_{h\in G} h(c)T_th(c) d\mu_G(h)
\]
on $RL_c^\ast(H_X\otimes B)$. Thus, to prove our claim, we just need to show that for any $T \in RL^*_c(H_X\otimes B)$, 
\[
\lVert T_t -  \int_{h\in G} h(c)T_th(c) d\mu_G(h)\rVert \to 0 
\] 
as $t\to \infty$. We may assume $T$ has uniform compact support in $X$. In this case, $h(c)T_t=0$ for $h\in G\backslash K$ for some compact subset $K$ of $G$ and $\lVert[h(c), T_t]\rVert\to 0$ uniformly in $h\in K$ as $t\to \infty$. We see the last assertion holds so we are done. 
\end{proof}

\begin{lemma}\label{lem_factor_Gcompact} Let $X$ be a $G$-compact proper $G$-space and $H_X$ be an $X$-$G$-module. The forget-control map $\cf\colon K_\ast(RL^*_c(H_X\otimes B)\rtimes_rG) \to K_\ast(B\rtimes_rG)$ factors through $E_\ast^G(C_0(X), B)$ via $\rho_X$. That is, the following diagram commutes,
\begin{equation*}
\xymatrix{ K_\ast(RL^*_c(H_X\otimes B)\rtimes_rG)    \ar[dr]^{\rho_X}  \ar[rr]^{\cf} & &  K_\ast(B\rtimes_rG)  \\
   & E_\ast^G(C_0(X), B) \ar[ur]^{\mu^G_X}.   & 
   }
\end{equation*}
\end{lemma}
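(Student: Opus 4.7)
The plan is to trace through the definitions of $\mu^G_X$ (in equivariant $E$-theory) and of $\rho_X$, and then reduce the commutativity of the diagram to Lemma \ref{lem_compute} combined with a standard homotopy identifying an asymptotic morphism that lifts to a $*$-homomorphism with its evaluation at $t=1$.

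Recall that for $G$-compact $X$, the Baum--Connes assembly map in $E$-theory sends a class $x\in E^G(C_0(X),B)$ to $j_r^G(x)\circ [p_c]\in E(\bC,B\rtimes_r G)=K_*(B\rtimes_r G)$, where $[p_c]\in K_0(C_0(X)\rtimes_r G)$ is the cut-off projection. For $y\in K_i(RL_c^*(H_X\otimes B)\rtimes_r G)\cong E(\Sigma^i,RL_c^*(H_X\otimes B)\rtimes_r G)$, the definition of $\rho_X$ gives
\[
\rho_X(y) = [\pi_X\otimes\rho]\circ(\mathrm{id}_{\Sigma^iC_0(X)}\otimes y)\in E^G(\Sigma^iC_0(X),B).
\]
Since $RL_c^*(H_X\otimes B)\rtimes_r G$ carries the trivial $G$-action, bifunctoriality of the reduced descent together with the identification $(C_0(X)\otimes (RL_c^*(H_X\otimes B)\rtimes_r G))\rtimes_r G\cong(C_0(X)\rtimes_r G)\otimes(RL_c^*(H_X\otimes B)\rtimes_r G)$ yields
\[
\mu^G_X(\rho_X(y)) = j_r^G([\pi_X\otimes\rho])\circ([p_c]\otimes y).
\]
Associativity of composition with the external product then permits rewriting this as $(j_r^G([\pi_X\otimes\rho])\circ[p_c])\circ y$ inside $E(\Sigma^i,B\rtimes_r G)$.

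At this point Lemma \ref{lem_compute} identifies $j_r^G([\pi_X\otimes\rho])\circ[p_c]$ with the class in $E(RL_c^*(H_X\otimes B)\rtimes_r G,\Compacts(H_X\otimes B)\rtimes_r G)$ associated to the natural inclusion
\[
i\colon RL_c^*(H_X\otimes B)\rtimes_r G \longrightarrow C_b([1,\infty),\Compacts(H_X\otimes B)\rtimes_r G).
\]
It remains to observe that this class coincides with the $E$-theory class of the $*$-homomorphism $\mathrm{ev}_1\colon RL_c^*(H_X\otimes B)\rtimes_r G\to\Compacts(H_X\otimes B)\rtimes_r G$; indeed, for any $*$-homomorphism $\phi\colon A\to C_b([1,\infty),D)$, the assignment $(t,s)\mapsto\phi_{(1-s)+st}$ exhibits a homotopy of asymptotic morphisms between $\phi$ (viewed in $\mathfrak{A}(D)$) and the constant family $\mathrm{ev}_1\circ\phi$. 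Composing with $y$ and identifying $\Compacts(H_X\otimes B)\rtimes_r G\cong\Compacts(H_X)\otimes(B\rtimes_r G)$ yields exactly $\cf(y)=\mathrm{ev}_{1*}(y)$ in $K_*(B\rtimes_r G)$, closing the diagram.

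The main technical obstacle is the bookkeeping that justifies the interchange of $j_r^G$ with the external tensor product when one factor has trivial $G$-action, i.e.\ the identity $j_r^G(\mathrm{id}_{C_0(X)}\otimes y)\circ[p_c]=[p_c]\otimes y$ and the rearrangement into $(j_r^G([\pi_X\otimes\rho])\circ[p_c])\circ y$. These are formal consequences of the bifunctoriality of the reduced descent and the associativity of the $E^G$-product, but require a careful check in the setting of asymptotic morphisms. Once that is settled, Lemma \ref{lem_compute} and the $\mathrm{ev}_1$-homotopy above deliver the result without further work.
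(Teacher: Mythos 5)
Your proposal is correct and follows essentially the same route as the paper: both reduce the commutativity to identifying $\mu^G_X\circ\rho_X$ with composition against $j^G_r([\pi_X\otimes\rho])\circ[p_c]$, invoke Lemma \ref{lem_compute} to recognize that class as the natural inclusion into $C_b([1,\infty),\Compacts(H_X\otimes B)\rtimes_rG)$, and then contract the continuous family of evaluations to the constant family $\mathrm{ev}_1$ by the standard reparametrization homotopy. Your explicit formula for that homotopy and your remarks on the descent/external-product bookkeeping merely spell out steps the paper treats as formal.
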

\begin{proof} Let $[\phi] \in K_i(RL^*_c(H_X\otimes B)\rtimes_rG) = E(\Sigma^{i}, RL^*_c(H_X\otimes B)\rtimes_rG)$. Using the functoriality of the crossed product functor and the composition law, we see that the assembly map $\mu^G_X$ sends $\rho_X([\phi])$ to the composition of $[\phi]$ with the element
 \[
j^G_r([\pi_X\otimes \rho])\circ [p_c] \in   E(RL^*_c(H_X\otimes B)\rtimes_rG, B\rtimes_rG)
 \]
 under the composition law
 \[
 E(\Sigma^i, RL^*_c(H_X\otimes B)\rtimes_rG) \times  E(RL^*_c(H_X\otimes B)\rtimes_rG, B\rtimes_rG) \to  E(\Sigma^i, B\rtimes_rG).
 \]
 By Lemma \ref{lem_compute},  the element $j^G_r([\pi_X\otimes \rho])\circ [p_c]$ is represented by the natural inclusion 
 \[
 RL^*_c(H_X\otimes B)\rtimes_rG \to C_b([1, \infty), \Compacts(H_X\otimes B)\rtimes_rG).
 \]
That is $j^G_r([\pi_X\otimes \rho])\circ [p_c]$ is represented by a continuous family $(\mathrm{ev}_{t})_{t\in [1, \infty)}$ of $\ast$-homomorphisms (evaluation at $t$). On the other hand, such a continuous family of $\ast$-homomorphisms is homotopic (as an asymptotic morphism) to the constant one $\mathrm{ev}_{1}$. It is now clear that the element  $\mu^G_X \circ \rho_X ([\phi])$ coincides in $E_\ast(\bC, B\rtimes_rG)$ with the one represented by the composition of $\phi$ with the evaluation map $RL^*_c(H_X\otimes B)\rtimes_rG \to \Compacts(H_X)\otimes B\rtimes_rG$ at $t=1$ so we are done.
\end{proof}

\begin{proof}[Proof of Theorem \ref{thm_forget_factor}]
The theorem follows from Lemma \ref{lem_factor_Gcompact}. 
\end{proof}

As the last thing in this section, we prove that the natural transformation
\[
\rho_X\colon \bD_\ast^{B, G}(X)  \to \varinjlim_{Y\subset X, \mathrm{Gcpt}}E_\ast^G(C_0(Y), B)
\]
is an isomorphism when $G$ is discrete and if $X$ is $G$-equivariantly homotopic to a $G$-CW complex. Recall that for any open subgroup $H$ of $G$ and for any proper $H$-space $Y$, if $X$ is the balanced product $G\times_HY$, we have a natural isomorphism (see Theorem \ref{thm_coeff} (2))
\[
\bD_\ast^{B, G}(X) \cong \bD_\ast^{B, H}(Y).
\]
We also have a natural isomorphism (\cite[Lemma 12.11]{GHT}, \cite[Proposition 5.14]{CE01})
\[
E_\ast^G(C_0(X), B) \cong E_\ast^H(C_0(Y), B).
\]
The rightward map is obtained by the restriction to the $H$-$C^*$-subalgebra $C_0(Y) \subset C_0(X)$. 

\begin{lemma} The natural transformation $\rho_X$ commutes with the induction. That is, for any open subgroup $H$ of $G$, for any proper $H$-space $Y$ and for $X=G\times_HY$, the following diagram commutes.
\begin{equation*}
\xymatrix{ \bD_\ast^{B, G}(X)\ar[d]^-{\cong} \   \ar[r]^-{\rho_X}   &  E^G(C_0(X), B) \ar[d]^-{\cong} \\
\bD_\ast^{B, H}(Y)   \ar[r]^-{\rho_Y}   &  E^H(C_0(Y), B). \\
   }
   \end{equation*}
\end{lemma}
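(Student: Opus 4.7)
The plan is to unwind the definitions and verify compatibility at the level of the asymptotic morphism $\pi_X\otimes \rho$ defining $\rho_X$. First I fix a very ample $X$-$G$-module $H_X=\bar H_X^0$ (for an ample $X$-module $H_X^0$) and take $H_Y=\chi_YH_X$, which is a very ample $Y$-$H$-module and hence universal by Proposition~\ref{prop_universal}. With these choices the left vertical isomorphism is induced by the inclusion $RL^\ast_c(H_Y\otimes B)\rtimes_rH \hookrightarrow RL^\ast_c(H_X\otimes B)\rtimes_rG$ of Proposition~\ref{prop_compactopen} (applied with coefficient $B$), while the right vertical isomorphism is restriction of asymptotic morphisms along the $H$-equivariant inclusion $C_0(Y)\hookrightarrow C_0(X)$, where $Y$ sits as a clopen $H$-invariant subspace of $X=\bigsqcup_{gH\in G/H} gY$ (using openness of $H$).

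The key point is then a Hilbert-space decomposition: because $H$ is open, $L^2(G)=\bigoplus_{gH\in G/H} L^2(gH)$, and right-translation by any $h\in H$ preserves each summand. Consequently $\tilde H_Y\otimes B=H_Y\otimes L^2(H)\otimes B$ embeds $H$-equivariantly into $\tilde H_X\otimes B$ via $\chi_Y\otimes \chi_H\otimes 1$; let $P$ denote the corresponding projection. I will show that for any $\phi\in C_0(Y)$ and any $T$ in the dense $\ast$-subalgebra of $RL^\ast_c(H_Y\otimes B)\rtimes_rH$ generated by $RL^\ast_c(H_Y\otimes B)$ and $H\subset G$,
\[
\pi_X(\phi)\,\rho(T) \;=\; P\,\pi_X(\phi)\,\rho(T)\,P,
\]
and under the identification $P(\tilde H_X\otimes B)\cong \tilde H_Y\otimes B$ this operator coincides with the analogous expression $\pi_Y(\phi)\,\rho(T)$ built for the crossed product by $H$. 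Two observations suffice: for $T\in RL^\ast_c(H_Y\otimes B)$ supported in $Y$, $g(T)$ is supported in $gY$, which is disjoint from $\supp(\phi)\subseteq Y$ unless $g\in H$, forcing the $P$ on both sides; and for $h\in H$, $\rho(h)$ is right-translation on $L^2(G)$, which preserves $L^2(H)$ and there restricts to the right-translation used in the $H$-version of $\rho$.

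Granting this identity, the asymptotic morphism $\pi_X\otimes \rho$ restricted to $C_0(Y)\otimes (RL^\ast_c(H_Y\otimes B)\rtimes_rH)$ factors through $\as(\Compacts(\tilde H_Y\otimes B))$ and agrees with the asymptotic morphism defining $\rho_Y$; composing with any class $[\phi]\in K_\ast(RL^\ast_c(H_Y\otimes B)\rtimes_rH)$ and tracing through the definitions of $\rho_X$ and $\rho_Y$ yields the commutativity of the square. The main technical point I expect to be careful about is the stabilization underlying the definitions of $E^G$ and $E^H$: one must verify that the inclusion $\Compacts(\tilde H_Y\otimes B)\hookrightarrow \Compacts(\tilde H_X\otimes B)$ produced above is compatible with the standard absorption identifying $\hill_G$ (viewed as an $H$-Hilbert space) with $\hill_H$ which underlies the cited natural isomorphism $E^G(C_0(X),B)\cong E^H(C_0(Y),B)$. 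This is the only place where the $E$-theoretic framework enters nontrivially, but the clean clopen decomposition $X=\bigsqcup gY$ provided by openness of $H$ should make this reduce to routine Hilbert-space bookkeeping.
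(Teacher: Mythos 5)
Your proposal is correct and follows the same route as the paper, whose entire proof consists of recalling that the left vertical isomorphism is induced by the canonical inclusion $RL^*_c(H_Y\otimes B)\rtimes_rH \to RL^*_c(H_X\otimes B)\rtimes_rG$ and asserting that the claim is then "direct to check." Your elaboration — the clopen decomposition $X=\bigsqcup gY$, the projection $P$ cutting $\tilde H_X\otimes B$ down to $\tilde H_Y\otimes B$, and the verification that $\pi_X\otimes\rho$ compresses to $\pi_Y\otimes\rho$ — is exactly the direct check the paper has in mind.
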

\begin{proof} Recall that the isomorphism $\bD_\ast^{B, H}(Y) \cong  \bD_\ast^{B, G}(X)$ is obtained by the canonical inclusion
\[
RL^*_c(H_Y\otimes B)\rtimes_rH \to RL^*_c(H_X\otimes B)\rtimes_rG.
\]
With this in mind, the claim is direct to check. 
\end{proof}

\begin{theorem}\label{thm_discrete_isom} Let $G$ be a countable discrete group and $X$ be a proper $G$-space which is $G$-equivariantly homotopy equivalent to a $G$-CW complex. Then, the group homomorphism 
\[
\rho_X\colon \bD_\ast^{B, G}(X)  \to \varinjlim_{Y\subset X, \mathrm{Gcpt}}E_\ast^G(C_0(Y), B) \cong  \varinjlim_{Y\subset X, \mathrm{Gcpt}}KK_\ast^G(C_0(Y), B)
\]
is an isomorphism for any separable $G$-$C^*$-algebra $B$.
\end{theorem}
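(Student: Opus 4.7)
The plan is to give an axiomatic comparison argument between two functors on $\mathcal{PR}^G$ that satisfy the same list of properties. By Theorem~\ref{thm_coeff}, the functor $\bD_\ast^{B,G}$ satisfies (1)--(5): vanishing on the empty set, induction from open subgroups (with $\bD^{B,G}_\ast(G/H\times \Delta^k)\cong K_\ast(B\rtimes_r H)$ for finite $H$), representability as a filtered colimit over $G$-compact closed subsets, Mayer--Vietoris for $G$-invariant open covers, and $G$-homotopy invariance. The same five properties are classical for the functor $X\mapsto \varinjlim_{Y\subset X,\,\mathrm{Gcpt}} KK_\ast^G(C_0(Y), B)$, with the base value at $G/H$ for finite $H$ equal to $KK_\ast^H(\mathrm{pt}, B)\cong K_\ast(B\rtimes_r H)$ by the Green--Julg theorem. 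The transformation $\rho_X$ is a natural transformation by the theorem preceding; the lemma before the statement already verifies compatibility with induction from open subgroups, and representability compatibility is built into the definition.

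The base case is $X=G/H$ for a finite subgroup $H\leq G$. Using the induction-compatibility lemma, $\rho_{G/H}$ is identified with $\rho_{\mathrm{pt}}^H\colon \bD_\ast^{B,H}(\mathrm{pt}) \to KK_\ast^H(\bC, B)$. The domain is $K_\ast(B\rtimes_r H)$ by Proposition~\ref{prop_pointcase}, and the codomain is $K_\ast(B\rtimes_r H)$ by Green--Julg. The factorization $\cf = \mu_{\mathrm{pt}}^{B,H}\circ \rho_{\mathrm{pt}}^H$ from Theorem~\ref{thm_forget_factor} then forces $\rho_{\mathrm{pt}}^H$ to be an isomorphism, since the forget-control map $\cf$ at a point is given by the evaluation $\mathrm{ev}_1$ (an isomorphism by Proposition~\ref{prop_pointcase}) and the Baum--Connes assembly map $\mu_{\mathrm{pt}}^{B,H}$ for finite $H$ is classically an isomorphism.

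For the inductive step, first use $G$-homotopy invariance on both sides to replace $X$ by a proper $G$-CW complex. By the representability axiom, it suffices to treat $G$-finite proper $G$-CW complexes, and these are built from finitely many cells $G/H\times D^n$ with $H$ finite. One inducts on the number of $G$-cells: if $X$ is obtained from $X'$ by attaching a single $G$-cell $G/H\times D^n$ along $G/H\times S^{n-1}$, write $X=U\cup V$ for a $G$-invariant open thickening of $X'$ and an open neighborhood of the new cell, with $U\cap V$ $G$-homotopy equivalent to $G/H\times S^{n-1}$. Comparing the Mayer--Vietoris sequences on the two sides via $\rho$ and applying the five-lemma reduces the inductive step to the three spaces $U,V,U\cap V$, each of which is $G$-homotopy equivalent to $X'$, to $G/H$, and to $G/H\times S^{n-1}$ respectively; by homotopy invariance and the inductive hypothesis these are handled. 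Finally, representability propagates the result from $G$-finite to arbitrary proper $G$-CW complexes.

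The main obstacle is verifying that $\rho_X$ intertwines the Mayer--Vietoris connecting maps. On the $\bD^{B,G}$-side these arise from the six-term sequence of the ideal decomposition of $RL^\ast_{c,Q}(H_X\otimes B)\rtimes_r G$ induced by the cover $X=U\cup V$, while on the $KK^G$-side they come from the Puppe sequence for $C_0(U\cap V)\hookrightarrow C_0(U)\oplus C_0(V)\twoheadrightarrow C_0(X)$. Compatibility is established by unpacking the asymptotic morphism $\pi_X\otimes \rho$ of Proposition~\ref{prop_asympXG}: both the localization data and the structure map $\pi_X$ respect the $C_0(X)$-algebra decomposition (the ideals $RL^\ast_{c,Q}(H_X\otimes B)_{U/G}$ correspond to $C_0(U)\subset C_0(X)$), so restriction to $G$-invariant open subsets commutes with $\rho_X$ strictly at the level of $E^G$-classes, and the naturality of connecting maps in short exact sequences gives the required intertwining.
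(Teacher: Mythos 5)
Your proposal is correct and follows essentially the same route as the paper: reduce via the axioms (1)--(5) of Theorem \ref{thm_coeff} (homotopy invariance, representability, Mayer--Vietoris, induction) to the base case $X=G/H$ with $H$ finite, and there deduce that $\rho$ is an isomorphism from the factorization $\cf=\mu^{B,H}_{\mathrm{pt}}\circ\rho^{H}_{\mathrm{pt}}$ together with Proposition \ref{prop_pointcase} and Baum--Connes for finite groups. The only difference is that you spell out the cell-by-cell induction and, usefully, flag and address the compatibility of $\rho_X$ with the Mayer--Vietoris boundary maps, a point the paper leaves implicit.
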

\begin{proof} Since both functors satisfy the axioms (1)-(5) in Theorem \ref{thm_coeff} and since $\rho_X$ is a natural transformation which commutes with the induction from a finite subgroup (more generally from an open subgroup), it is enough to prove that $\rho_X$ is an isomorphism when $G$ is a finite group $H$ and $X$ is a point. On the other hand, we have the following commutative diagram
\begin{equation*}
\xymatrix{ \bD_\ast^{B, H}(\mathrm{point})   \ar[dr]^{\rho_X}  \ar[rr]^{\cf} & &  K_\ast(B\rtimes_rH)  \\
   & E_\ast^H(\bC, B). \ar[ur]^{\mu_r^{B, H}}  & 
   }
   \end{equation*}
   We know that the assembly map $\mu_r^{B, H}$ is an isomorphism for any finite group $H$. We also know that the forget-control map is an isomorphism for any finite group $H$ when $X$ is a point (see Proposition \ref{prop_pointcase}). Thus, $\rho_X$ is an isomorphism. Of course, we may also directly show that $\rho_X$ is an isomorphism.
\end{proof}

\begin{theorem} Let $G$ be a countable discrete group $G$ and $X$ be a proper $G$-space which is $G$-equivariantly homotopy equivalent to a $G$-CW complex. The forget-control map $\cf\colon \bD_\ast^{B, G}(X) \to K_\ast(B\rtimes_rG)$ is naturally equivalent to the Baum--Connes assembly map $\mu_{X}^{B, G}$ for any separable $G$-$C^*$-algebra $B$.
\end{theorem}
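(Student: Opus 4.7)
The plan is to assemble the theorem from two results already proven in this section: Theorem \ref{thm_forget_factor}, which gives the commutative triangle
\begin{equation*}
\xymatrix{ \bD_\ast^{B, G}(X)   \ar[dr]^{\rho_X}  \ar[rr]^{\cf} & &  K_\ast(B\rtimes_rG)  \\
   &  \varinjlim_{Y\subset X, \mathrm{Gcpt}}E_\ast^G(C_0(Y), B) \ar[ur]^{\mu^{B, G}_X}   &
   }
\end{equation*}
for any second countable, locally compact $G$ and any proper $G$-space $X$; and Theorem \ref{thm_discrete_isom}, which asserts that under the standing hypotheses of the present statement (namely $G$ countable discrete and $X$ $G$-equivariantly homotopy equivalent to a $G$-CW complex) the natural transformation $\rho_X$ is an isomorphism. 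The conjunction of these two facts is exactly the desired natural equivalence: $\rho_X$ is an isomorphism that intertwines $\cf$ with $\mu_X^{B,G}$.

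Concretely, the plan is first to invoke Theorem \ref{thm_discrete_isom} to upgrade the triangle above so that its diagonal arrow becomes an isomorphism, and then to read off $\mu_X^{B,G} = \cf \circ \rho_X^{-1}$. Naturality in $X$ (with respect to $G$-equivariant continuous maps) and naturality in $B$ (with respect to $G$-equivariant $\ast$-homomorphisms) do not require additional argument: they are already built into the construction of $\rho_X$ via Lemma \ref{lem_functorialX} and Lemma \ref{lem_functorialB}, and the forget-control map $\cf$ enjoys the analogous naturality directly from its definition.

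The only remaining bookkeeping is to pass from $E$-theoretic assembly to $KK$-theoretic assembly. For this I would appeal to the comparison map $KK_\ast^G(C_0(Y), B) \to E_\ast^G(C_0(Y), B)$ of \cite[Appendix]{KasparovSkandalis03}; since $C_0(Y)$ is a proper, nuclear $G$-$C^*$-algebra for each $G$-compact $Y \subset X$, this comparison map is an isomorphism, and by \cite[Remark A.5]{KasparovSkandalis03} it intertwines the $KK$-theoretic assembly map with the $E$-theoretic one. Taking inductive limits over $G$-compact closed subspaces $Y \subset X$ preserves the isomorphism, so we may equivalently replace $E_\ast^G$ by $KK_\ast^G$ throughout the triangle.

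There is no genuine obstacle in this final step—all of the substantive work has been done in Theorems \ref{thm_forget_factor} and \ref{thm_discrete_isom}. If anything deserves extra care, it is merely the verification that the identifications between $E$- and $KK$-theoretic assembly behave well under the inductive limit over $G$-compact $G$-invariant closed subsets, but this is immediate from naturality of the comparison map with respect to the inclusions $C_0(Y_1) \hookleftarrow C_0(Y_2)$ arising from $Y_1 \subset Y_2$ (equivalently, $Y_2 \twoheadrightarrow Y_1$-induced restrictions in the compactly supported setting).
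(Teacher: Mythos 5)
Your proposal is correct and matches the paper's proof exactly: the theorem is obtained by combining Theorem \ref{thm_forget_factor} with Theorem \ref{thm_discrete_isom}, and the $E$-theory/$KK$-theory comparison you invoke is the same one already built into the statement of Theorem \ref{thm_discrete_isom}. No further comment is needed.
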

\begin{proof} This follows from Theorem \ref{thm_forget_factor} and Theorem \ref{thm_discrete_isom}. 
\end{proof}

 Let $RL^0_c(H_X\otimes B)$ be the kernel of the evaluation map $\mathrm{ev}_1$ on $RL^*_c(H_X\otimes B)$. The short exact sequence
 \[
 0 \to RL^0_c(H_X\otimes B) \to RL^*_c(H_X\otimes B) \to \Compacts(H_X)\otimes B \to 0
 \] 
 admits a $G$-equivariant c.c.p.\ splitting (by extending constantly and by multiplying a bump function) and thus it descends to the short exact sequence
 \[
  0 \to RL^0_c(H_X\otimes B)\rtimes_rG \to RL^*_c(H_X\otimes B)\rtimes_rG \to (\Compacts(H_X)\otimes B)\rtimes_rG \to 0.
  \]
   
 \begin{corollary} For any countable discrete group $G$, the Baum--Connes assembly map $\mu^{B, G}_r$ is an isomorphism if and only if
 \[
 K_\ast(RL^0_c(H_X\otimes B)\rtimes_rG)=0
 \]
for a universal (or ample) $X$-$G$-module $H_X$ for $X=\EG$.
 \end{corollary}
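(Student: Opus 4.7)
The plan is to combine the theorem immediately preceding this corollary with the six-term exact sequence in K-theory associated to the evaluation short exact sequence displayed just above the statement. Since $G$ is countable discrete, the universal proper $G$-space $\EG$ admits a model as a proper $G$-CW complex, so the hypothesis of that theorem is satisfied with $X = \EG$. It therefore identifies the forget-control map
\[
\cF \colon \bD_\ast^{B,G}(\EG) \to K_\ast(B\rtimes_rG)
\]
naturally with the Baum--Connes assembly map $\mu_r^{B,G} = \mu_{\EG}^{B,G}$; in particular $\mu_r^{B,G}$ is an isomorphism if and only if $\cF$ is.

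Next I would invoke the short exact sequence
\[
0 \to RL^0_c(H_X\otimes B) \to RL^*_c(H_X\otimes B) \to \Compacts(H_X)\otimes B \to 0
\]
of $G$-$C^*$-algebras. As noted in the excerpt, this sequence admits a $G$-equivariant c.c.p.\ splitting (extend the value at $t=1$ constantly in $t$ and multiply by a compactly supported bump function on $[1,\infty)$), hence remains exact after taking the reduced crossed product with $G$. Since the $G$-action on $\Compacts(H_X)$ is inner (it comes from the unitary representation defining the $G$-Hilbert space $H_X$), there is a canonical untwisting isomorphism $(\Compacts(H_X)\otimes B)\rtimes_rG \cong \Compacts(H_X)\otimes (B\rtimes_rG)$, whose K-theory is naturally $K_\ast(B\rtimes_rG)$; under this identification the map on K-theory induced by $\mathrm{ev}_1\rtimes_r G$ is exactly the forget-control map $\cF$, by the definition given in Section~\ref{sec_forget}.

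The associated six-term exact sequence therefore reads
\[
\cdots \to K_\ast(RL^0_c(H_X\otimes B)\rtimes_rG) \to \bD_\ast^{B,G}(\EG) \xrightarrow{\cF} K_\ast(B\rtimes_rG) \to K_{\ast-1}(RL^0_c(H_X\otimes B)\rtimes_rG) \to \cdots,
\]
so $\cF$ is an isomorphism if and only if both $K_0$ and $K_1$ of $RL^0_c(H_X\otimes B)\rtimes_rG$ vanish, which combined with the first paragraph yields the claim. No step is substantive beyond citing the preceding theorem; the ``or ample'' alternative in the statement is handled by the fact, recorded earlier in the paper, that for a discrete group any ample $X$-$G$-module is automatically universal.
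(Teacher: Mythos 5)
Your proof is correct and is essentially the argument the paper intends: the corollary is stated without separate proof immediately after the theorem identifying the forget-control map $\cF$ with $\mu_r^{B,G}$ for $G$-CW models of $\EG$ and after the split short exact sequence, and your six-term exact sequence argument (including the converse direction, which needs exactness at both $K_\ast(RL^*_c(H_X\otimes B)\rtimes_rG)$ and the quotient) is the standard way to conclude. Your handling of the ``or ample'' clause via the paper's observation that ample $X$-$G$-modules are universal for discrete $G$ is also the intended one.
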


We also have the following short exact sequence
 \[
  0 \to (RL^0_c(H_X)\otimes B)\rtimes_rG \to( RL^*_c(H_X)\otimes B)\rtimes_rG \to (\Compacts(H_X)\otimes B)\rtimes_rG \to 0.
  \]
 Recall that the natural transformation $\mathbb{D}^{\otimes B, G}_\ast(X) \to   \mathbb{D}^{B, G}_\ast(X)$ is an isomorphism if $G$ is discrete and $X$ is $G$-equivariantly homotopy equivalent to a proper $G$-CW complex (Theorem \ref{thm_discrete_natural_isom}). Hence, we also have the following:
 
 \begin{theorem}\label{thm_otimesB} Let $G$ be a countable discrete group $G$ and $X$ be a proper $G$-space which is $G$-equivariantly homotopy equivalent to a $G$-CW complex. The forget-control map $\cf\colon \bD_\ast^{\otimes B, G}(X) \to K_\ast(B\rtimes_rG)$ is naturally equivalent to the Baum--Connes assembly map $\mu_{X}^{B, G}$ for any separable $G$-$C^*$-algebra $B$.
\end{theorem}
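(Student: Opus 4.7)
The plan is to reduce this to the preceding theorem (the analogous statement for $\bD^{B, G}_\ast$) via the natural inclusion of $G$-$C^*$-algebras
\[
\iota\colon RL_c^*(H_X)\otimes B \to RL_c^*(H_X\otimes B),
\]
whose well-definedness is recorded in the discussion preceding Theorem \ref{thm_discrete_natural_isom}.

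First I would note that $\iota$ intertwines the evaluation maps at $t=1$: on an elementary tensor $T\otimes b$, both $\mathrm{ev}_1\circ \iota$ and $\mathrm{ev}_1\otimes \mathrm{id}_B$ send it to $T_1\otimes b$. Taking reduced crossed products and passing to K-theory using the canonical identification $K_\ast((\Compacts(H_X)\otimes B)\rtimes_r G)\cong K_\ast(B\rtimes_r G)$, I obtain a commutative triangle
\begin{equation*}
\xymatrix{
\bD^{\otimes B, G}_\ast(X) \ar[rr]^-{\iota_\ast} \ar[dr]_-{\cf} & & \bD^{B, G}_\ast(X) \ar[dl]^-{\cf} \\
& K_\ast(B\rtimes_r G). &
}
\end{equation*}
The top arrow $\iota_\ast$ is precisely the natural transformation $\bD^{\otimes B, G}_\ast\to \bD^{B, G}_\ast$ introduced just before Theorem \ref{thm_discrete_natural_isom}.

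Next I would invoke Theorem \ref{thm_discrete_natural_isom}, which asserts that $\iota_\ast$ is a natural isomorphism under the hypotheses on $G$ and $X$. Combined with the immediately preceding theorem, which produces a natural isomorphism $\rho_X\colon \bD^{B, G}_\ast(X)\xrightarrow{\cong} \varinjlim_{Y\subset X, \mathrm{Gcpt}} KK^G_\ast(C_0(Y), B)$ identifying $\cf$ with $\mu_X^{B, G}$, the composite $\rho_X\circ \iota_\ast$ furnishes the required natural isomorphism exhibiting the forget-control map for $\bD^{\otimes B, G}_\ast$ as the Baum--Connes assembly map.

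There is no real obstacle. The only thing to verify is naturality of the triangle above in both variables: naturality in $X$ is immediate from the fact that an equivariant continuous cover $V_t\colon H_X\to H_Y$ conjugates the two sides of $\iota$ compatibly, and naturality in $B$ follows from a direct check at the level of $\ast$-homomorphisms $B_1\to B_2$, analogous to Lemma \ref{lem_functorialB}. The substantive content of the theorem therefore resides entirely in Theorem \ref{thm_discrete_natural_isom} and the preceding comparison theorem.
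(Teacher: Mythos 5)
Your proposal is correct and follows essentially the same route as the paper: the paper deduces Theorem \ref{thm_otimesB} precisely by combining Theorem \ref{thm_discrete_natural_isom} (the natural transformation $\bD^{\otimes B, G}_\ast\to\bD^{B, G}_\ast$ is an isomorphism) with the preceding identification of $\cf$ on $\bD^{B,G}_\ast$ with $\mu_X^{B,G}$, the compatibility of the evaluation maps being immediate. Your extra remarks on naturality are a sensible elaboration of what the paper leaves implicit.
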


 \begin{corollary}\label{cor_discrete_N} For any countable discrete group $G$, the Baum--Connes assembly map $\mu^{B, G}_r$ is an isomorphism if and only if
 \[
 K_\ast((RL^0_c(H_X)\otimes B)\rtimes_rG)=0
 \]
for a universal (or ample) $X$-$G$-module $H_X$ for $X=\EG$.
 \end{corollary}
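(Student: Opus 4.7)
The plan is to deduce the corollary by combining Theorem \ref{thm_otimesB} with the six-term $K$-theory exact sequence applied to the short exact sequence
\[
0 \to (RL^0_c(H_X)\otimes B)\rtimes_rG \to (RL^*_c(H_X)\otimes B)\rtimes_rG \to (\Compacts(H_X)\otimes B)\rtimes_rG \to 0
\]
displayed just above the corollary. This sequence is exact because the evaluation map $\mathrm{ev}_1 \colon RL^*_c(H_X) \to \Compacts(H_X)$ admits a $G$-equivariant c.c.p.\ splitting (extend constantly in $t$ and multiply by a bump function on $[1,\infty)$), and such a splitting is preserved under tensoring with $B$ and taking the reduced crossed product with $G$.

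Since the $G$-action on $\Compacts(H_X)$ is inner (implemented by the unitaries coming from the representation on $H_X$), one has the natural isomorphism $(\Compacts(H_X)\otimes B)\rtimes_rG \cong \Compacts(H_X)\otimes (B\rtimes_rG)$ used elsewhere in the paper, and hence $K_\ast((\Compacts(H_X)\otimes B)\rtimes_rG) \cong K_\ast(B\rtimes_rG)$. Under this identification, the homomorphism on $K$-theory induced by $\mathrm{ev}_1$ is precisely the forget-control map $\cf \colon \bD^{\otimes B, G}_\ast(X) \to K_\ast(B\rtimes_rG)$ for the functor $\bD^{\otimes B, G}_\ast$.

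For a countable discrete group $G$, the universal proper $G$-space $X = \EG$ can be taken to be a proper $G$-CW complex, so Theorem \ref{thm_otimesB} applies and identifies $\cf$ with the Baum--Connes assembly map $\mu^{B, G}_r$. The six-term exact sequence in $K$-theory associated to the displayed short exact sequence then immediately yields that $\mu^{B, G}_r$ is an isomorphism if and only if $K_\ast((RL^0_c(H_X)\otimes B)\rtimes_rG) = 0$. There is no substantive obstacle, since all of the real work has already been carried out in Theorem \ref{thm_otimesB}; the only subtle point is the corollary's parenthetical reference to an ample $X$-$G$-module, which is justified by the observation recorded in Section 5 that for discrete $G$ every ample $X$-$G$-module is automatically universal, so the choice of $H_X$ does not affect the conclusion.
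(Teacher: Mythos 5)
Your proposal is correct and is essentially the argument the paper intends: the displayed short exact sequence (exact after tensoring with $B$ since the quotient $\Compacts(H_X)$ is nuclear, and preserved by $\rtimes_rG$ thanks to the $G$-equivariant c.c.p.\ splitting), combined with Theorem \ref{thm_otimesB} identifying $\mathrm{ev}_{1\ast}=\cf$ with $\mu^{B,G}_r$ via a $G$-CW model for $\EG$, yields the equivalence through the six-term sequence. Your remark that the ample case reduces to the universal case because every ample $X$-$G$-module is universal for discrete $G$ is exactly the justification recorded in Section 5.
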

 


\section{$\rho_X$ is an isomorphism, part I}

We begin our proof of the following:

\begin{theorem*} Let $G$ be a locally compact group. The natural transformation
\[
\rho_X\colon  \bD_\ast^{B, G}(X)  \to \varinjlim_{Y\subset X, \mathrm{Gcpt}}E_\ast^G(C_0(Y), B)  \cong  \varinjlim_{Y\subset X, \mathrm{Gcpt}}KK_\ast^G(C_0(Y), B) 
\]
is an isomorphism for any proper $G$-space $X$ and for any separable $G$-$C^*$-algebra $B$.
\end{theorem*}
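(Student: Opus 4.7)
The plan is to prove the isomorphism by reducing to the $G$-compact case via representability and then constructing an explicit inverse to $\rho_X$ using the asymptotic-morphism model of $E$-theory.

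First I would reduce to $G$-compact $X$. By the definition of the right-hand side and by the representability property of $\bD_\ast^{B,G}$ (Theorem \ref{thm_coeff}(3)), both functors take inductive limits over the net of $G$-compact, $G$-invariant closed subspaces, and the natural transformation $\rho_X$ is compatible with inclusions. Hence it suffices to show that for every $G$-compact proper $G$-space $X$ and every universal $X$-$G$-module $H_X$, the map
\[
\rho_X\colon K_\ast(RL^*_c(H_X\otimes B)\rtimes_r G) \to E_\ast^G(C_0(X), B)
\]
is an isomorphism.

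Next I would construct a candidate inverse $\sigma_X$. A class in $E_i^G(C_0(X), B)$ is represented by an equivariant asymptotic morphism $\phi\colon \Sigma^{i}C_0(X)\otimes \Compacts(\hill_G) \to \as(B \otimes \Compacts(\hill_G))$. Using a cut-off function $c$ on $X$ together with a sequence of equivariant partitions of unity $(\chi_{n,\alpha})_\alpha$ whose supports have diameter tending to zero as $n\to\infty$, I assemble from $\phi$ a family $t\mapsto T_t$ with uniform compact support and with $\prop(T_t)\to 0$, which lies in $RL^*_c(H_X\otimes B)\rtimes_r G$ after applying an inverse of the isomorphism $\Compacts(H_X\otimes B)\rtimes_r G \cong C^*(\tilde H_X\otimes B)^G_{\mathrm{Gcpt}}$ of Proposition \ref{prop_isom}. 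The uniform compact support comes from $G$-compactness of $X$ together with the cut-off function, and the vanishing propagation comes from the refinement of the partition of unity indexed by $t$; $G$-continuity is arranged via an averaging with the Haar measure. That $\sigma_X$ descends to homotopy classes follows because an $n$-homotopy of asymptotic morphisms transports to a homotopy of the assembled families.

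I would then check $\sigma_X$ is a two-sided inverse of $\rho_X$. For $\rho_X\circ \sigma_X=\mathrm{id}$: the composition unwinds to the observation that the asymptotic morphism $\pi_X\otimes \rho$ of Proposition \ref{prop_asympXG}, applied to the assembled family and composed with the right-regular representation, returns the original $\phi$ up to a partition-of-unity resolution that is asymptotically the identity. For $\sigma_X\circ \rho_X=\mathrm{id}$: for a class $[\phi]\in K_\ast(RL^*_c(H_X\otimes B)\rtimes_r G)$, the operators in $RL^*_c(H_X\otimes B)$ already have propagation tending to zero, so the partition-of-unity localization used in $\sigma_X$ is an asymptotically trivial perturbation. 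The independence of covering isometries proved in Section 6 ensures that the result is independent of the auxiliary choices.

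The main obstacle will be the second step: producing $\sigma_X$ genuinely inside the $G$-continuous algebra $RL^*_c(H_X\otimes B)\rtimes_r G$ rather than merely in its non-continuous enlargement, for a locally compact (non-discrete) $G$. In the discrete case (Theorem \ref{thm_discrete_isom}) one reduces via Mayer--Vietoris and the induction axiom from finite subgroups, bypassing an explicit $\sigma_X$; no such reduction is available in general. The technical heart therefore is the coupling of the asymptotic morphism $\phi$ with an equivariant resolution of $C_0(X)$ by cut-off-averaged partitions of unity that (a) respect the $G$-action up to terms vanishing in norm as $t\to \infty$, (b) give uniformly compactly supported elements, and (c) preserve uniform norm-continuity in $t$; orchestrating these three requirements simultaneously, particularly using the Haar measure averaging estimates in the spirit of Lemma \ref{lem_sum}, is where I expect the bulk of the work to lie, and this is presumably the content of this Part~I together with the companion Part~II.
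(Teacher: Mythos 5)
Your reduction to the $G$-compact case via representability is correct and matches the paper. But the core of your argument --- the construction of the inverse $\sigma_X$ --- has a genuine gap: you propose to ``assemble from $\phi$ a family $t\mapsto T_t$'' where $\phi$ is an equivariant asymptotic morphism $\Sigma^i C_0(X)\otimes \Compacts(\hill_G)\to\as(B\otimes\Compacts(\hill_G))$ representing a class in $E^G_i(C_0(X),B)$. An asymptotic morphism is a family of (asymptotically multiplicative) linear maps, not an operator on an $X$-$G$-module, and no amount of cut-off averaging or refining partitions of unity turns it into an element of $\Compacts(H_X\otimes B)$, let alone into a $K$-theory class of $RL^*_c(H_X\otimes B)\rtimes_rG$. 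The missing ingredient is a duality mechanism that converts cycles for $E^G_\ast(C_0(X),B)$ (or $KK^G_\ast(C_0(X),B)$) into $K$-theory classes of a localization algebra; this is precisely what the paper supplies and what your sketch omits. Concretely, the paper (i) first proves that the right-regular representation induces an isomorphism $K_\ast(RL^*_u(H_X\otimes B)\rtimes_rG)\cong K_\ast(C^*_{L,u}(\tilde H_X\otimes B)^G)$ for universal $H_X$, by comparing the two compositions with a compression by the cut-off projection $p_c$ and identifying each with conjugation by the strict cover $V_c$; (ii) then runs an equivariant Paschke-duality argument: the dual algebras $D^*(\pi)^G/C^*(\pi)^G$ compute $KK^G_{\ast}(C_0(X),B)$ via Thomsen's equivariant absorption theorem, and the boundary map of the extension $0\to\cC_L\to\cD_L\to\cD_L/\cC_L\to 0$ (with $K_\ast(\cD_L)=0$ by an Eilenberg swindle) transports this to $K_\ast(C^*_{L,u}(\tilde H_X\otimes B)^G)$; and (iii) finally checks that the resulting composite isomorphism agrees with $\rho_X$. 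None of these steps is addressed in your proposal.

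A secondary problem is your verification that $\sigma_X\circ\rho_X=\mathrm{id}$ and $\rho_X\circ\sigma_X=\mathrm{id}$: both are asserted as ``unwinding'' to asymptotically trivial perturbations, but without a concrete definition of $\sigma_X$ these cannot be checked, and in the paper the analogous compatibility (Lemma on the commuting diagram in Part IV) requires a delicate argument with quasi-central approximate units, Bartle--Graves continuous lifts, and reparametrizations $t\mapsto s(t)$ to compare the boundary-map description with the asymptotic-morphism description. You correctly identify that the hard work lies exactly where your sketch stops; as written, the proposal does not contain a proof.
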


The proof will be given over the next four sections. We will entirely focus on the case when $X$ is $G$-compact. Note that the general case follows from this. Here is a rough idea of the proof. Let $X$ be a $G$-compact proper $G$-space. We first show (in this section) that the right-regular representation induces an isomorphism
\[
\rho \colon K_\ast(RL^*_u(H_X\otimes B)\rtimes_rG) \cong  K_\ast(C^*_{L, u}(\tilde H_X\otimes B)^G)
\] 
whenever $H_X$ is a universal $X$-$G$-module. Then, we prove an isomorphism 
\[
K_\ast(C^*_{L, u}(\tilde H_X\otimes B)^G) \cong   E_\ast^G(C_0(X), B)  \cong KK_\ast^G(C_0(X), B) 
\]
following the idea of \cite{DWW18}.

Let $X$ be a $G$-compact proper $G$-space and $H_X$ be an $X$-$G$-module. We recall that the equivariant Roe algebra $C^*(H_X\otimes B)^G=C^*(H_X\otimes B)^G_{\mathrm{Gcpt}}$ is defined as the completion of the $\ast$-algebra $\bC(H_X\otimes B)^G$ consisting of $G$-equivariant, locally compact operators in $\Linears(H_X\otimes B)$ that are properly supported. Note if $X_0$ is any compact subset of $X$ such that $GX_0=X$ then a $G$-equivariant operator $T$ on $H_X\otimes B$ is properly supported if and only if there is a compact subset $X_1$ of $X$ so that $\chi_{X_0}T=\chi_{X_0}T\chi_{X_1}$ and  $T\chi_{X_0}=\chi_{X_1}T\chi_{X_0}$. We also recall that the localized equivariant Roe algebra $C^*_{L, u}(H_X\otimes B)^G=C^*_{L, u}(H_X\otimes B)^G_{\mathrm{Gcpt}}$ is the norm completion of the $\ast$-subalgebra $\bC_{L, u}(H_X\otimes B)^G$ of $C_{b, u}([1, \infty), \Linears(H_X\otimes B))$ consisting of $\bC(H_X\otimes B)^G$-valued functions $T_t$ with $\prop(T_t)\to 0$ with respect to a (any) fixed metric $d$ on $X^+$.

Let $H_X$ be any $X$-$G$-module. We introduce several $C^*$-subalgebras of $C_{b, u}([1, \infty), \Linears(H_X\otimes B))$ containing the localized equivariant Roe algebra. Let 
\[
\pi\colon C_0(X) \to  \Linears(H_X\otimes B)
\] 
be the structure map for the $X$-$G$-module $H_X\otimes B$. We let $\cC(\pi)^G$ to be the $C^*$-subalgebra of $\Linears(H_X\otimes B)$ consisting of $G$-equivariant, locally compact operators. We have an inclusion 
\[
C^*(H_X\otimes B)^G \subset \cC(\pi)^G.
\]
\begin{itemize}
\item $\cC_L(H_X\otimes B)^G$ is the $C^*$-subalgebra of $C_{b, u}([1, \infty), C^*(H_X\otimes B)^G)$ consisting of $T_t$ such that $\limt\lVert[\phi, T_t]\rVert= 0$ for any $\phi \in C_0(X)$.
\item $\cC_L(\pi)^G$ is the $C^*$-subalgebra of $C_{b, u}([1, \infty), \cC(\pi)^G)$ consisting of functions $T_t$ such that $\limt\lVert[\phi, T_t]\rVert= 0$ for any $\phi \in C_0(X)$.
\end{itemize}

We have inclusions
\[
C^*_{L, u}(H_X\otimes B)^G \subset \cC_L(H_X\otimes B)^G \subset  \cC_L(\pi)^G.
\]
Let $c$ be a cut-off function on $X$. A $G$-equivariant u.c.p.\ map $\psi_c$ on $\Linears(H_X\otimes B)$ defined as
\[
\psi_c\colon T\mapsto \int_{g\in G} g(c)T g(c) d\mu_G(g)
\]
extends to a $G$-equivariant u.c.p.\ map on $C_{b, u}([1, \infty), \Linears(H_X\otimes B))$. Note that $\psi_c$ sends any $G$-equivariant, locally compact operator to a $G$-equivariant, locally compact operator which is properly supported. Thus, $\psi_c$ maps $\cC(\pi)^G$ to $\bC(H_X\otimes B)^G\subset C^*(H_X\otimes B)^G$. Moreover, $\psi_c$ sends $\cC_L(\pi)^G$ to $\cC_L(H_X\otimes B)^G$.

\begin{lemma}\label{lem_properly} Let $T \in \cC_L(\pi)^G$. The following are equivalent: 
\begin{enumerate}
\item $\lim_{t\to \infty} \lVert T_t - \psi_c(T_t)\rVert = 0$ for any cut-off function $c$ on $X$.
\item $\lim_{t\to \infty} \lVert T_t - \psi_c(T_t)\rVert = 0$ for some cut-off function $c$ on $X$.
\item There is $S$ in $\cC_L(H_X\otimes B)^G$ such that $\lim_{t\to \infty}\lVert T_t-S_t\rVert= 0$ and $S$ is properly supported in a sense that for any compact subset $A$ of $X$, there is a compact subset $B$ of $X$ such that $\chi_AS=\chi_A S\chi_B$ and $S\chi_A=\chi_B S\chi_A$.
\item There is $S$ in $\cC_L(\pi)^G$ such that $\lim_{t\to \infty} \lVert T_t-S_t\rVert= 0$ and $S$ is properly supported.
\end{enumerate}
\end{lemma}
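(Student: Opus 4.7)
The plan is to prove the cycle $(1) \Rightarrow (2) \Rightarrow (3) \Rightarrow (4) \Rightarrow (1)$. The implications $(1) \Rightarrow (2)$ and $(3) \Rightarrow (4)$ are immediate (the latter from the inclusion $\cC_L(H_X \otimes B)^G \subset \cC_L(\pi)^G$), so the substance is in $(2) \Rightarrow (3)$ and $(4) \Rightarrow (1)$.

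For $(2) \Rightarrow (3)$, I would take $S := \psi_c(T)$ as the witness. The paragraph preceding the lemma already records that $\psi_c$ maps $\cC_L(\pi)^G$ into $\cC_L(H_X \otimes B)^G$; the commutator estimate is implicit in the identity $[\phi, \psi_c(T_t)] = \psi_c([\phi, T_t])$, which holds because the functions $g(c)$ commute with $C_0(X)$, and $\psi_c$ is contractive. So only uniform proper support of $\psi_c(T_t)$ needs to be checked. This follows from properness of the $G$-action on $X$: for a compact $A \subset X$, the integrand $g(c) T_t g(c) \chi_A$ vanishes unless $gK_c \cap A \neq \emptyset$, which restricts $g$ to a compact set $K_A \subset G$ independent of $t$. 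Thus the range of $\psi_c(T_t)\chi_A$ lies in the $t$-independent compact set $K_A \cdot K_c$, and symmetrically for $\chi_A \psi_c(T_t)$.

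The core step is $(4) \Rightarrow (1)$. Since $\psi_c$ is contractive, the triangle inequality
\[
\|T_t - \psi_c(T_t)\| \leq 2\|T_t - S_t\| + \|S_t - \psi_c(S_t)\|
\]
reduces the task to proving $\|S_t - \psi_c(S_t)\| \to 0$ for the properly supported $S \in \cC_L(\pi)^G$ from (4), for any chosen cut-off $c$. Using $\int g(c)^2 d\mu_G(g) = 1$, I would rewrite
\[
S_t - \psi_c(S_t) = \int_G g(c)\,[g(c), S_t]\, d\mu_G(g)
\]
in the strong operator topology, and then, with the proper-support constant $B$ of $S$ and a bump $\chi \in C_c(X)$ equal to $1$ on $B \cup K_c$, use $S_t g(c) = g(\chi) S_t g(c)$ and $g(c) S_t = g(c) S_t g(\chi)$ to obtain $[g(c), S_t] = [g(c), S_t]\, g(\chi)$. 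The decisive observation is that $G$-equivariance of $S_t$ gives $[g(c), S_t] = g \cdot [c, S_t] \cdot g^{-1}$, so the family $g \mapsto [g(c), S_t]$ has norm uniformly equal to $\|[c, S_t]\|$ and defines a bounded fibrewise operator on $L^2(G, H_X \otimes B)$ with that same norm. Lemma \ref{lem_sum} applied with $\phi_0 = c$ and $\phi_1 = \chi$ then yields a constant $C = C(c,\chi)$ with
\[
\|S_t - \psi_c(S_t)\| \leq C \, \|[c, S_t]\|,
\]
and the right-hand side tends to $0$ since $S \in \cC_L(\pi)^G$.

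The main obstacle I anticipate is purely book-keeping for the proper-support manipulation: inserting $g(\chi)$ on the correct side of each commutator, and verifying the hypothesis of Lemma \ref{lem_sum} that $g \mapsto [g(c), S_t]$ defines a bounded operator on $L^2(G, H_X \otimes B)$. Both points rest on the $G$-equivariance of $S$, without which the norm $\|[g(c), S_t]\|$ would depend on $g$ and the uniform bound would collapse.
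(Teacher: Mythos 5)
Your proof is correct and follows essentially the same route as the paper: the same cycle of implications, $\psi_c(T)$ as the witness for $(2)\Rightarrow(3)$, and for $(4)\Rightarrow(1)$ the same identity $S_t-\psi_c(S_t)=\int_G g(c)[g(c),S_t]g(\chi)\,d\mu_G(g)$ combined with $G$-equivariance (so that $\lVert[g(c),S_t]\rVert=\lVert[c,S_t]\rVert$ uniformly in $g$) and the integral bound of Lemma \ref{lem_sum}. No gaps.
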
 
\begin{proof} (1) $\implies$ (2): Obvious. (2) $\implies$ (3): This is because $\psi_c(T)$ is properly supported for any $T$. 
(3) $\implies$ (4): Obvious. (4) $\implies$ (1): It is enough to show that if $S$ in $\cC_L(\pi)^G$ is properly supported, then $\limt\lVert S_t - \psi_c(S_t)\rVert= 0$ for any cut-off function $c$ on $X$.  Since $S$ is properly supported, there is $\chi\in C_c(X)$ such that $c=c\chi$ and $cS=cS\chi$. We have
\[
S_t - \psi_c(S_t) = \int_{g\in G} g(c)^2S_t - g(c)S_tg(c) d\mu_G(g) 
\]
\[
=  \int_{g\in G} g(c)^2S_tg(\chi) - g(c)S_tg(c)g(\chi) d\mu_G(g)  =   \int_{g\in G} g(c)[g(c), S_t]g(\chi) d\mu_G(g).
\]
We have $\limt\lVert[g(c), S_t]\rVert = \limt\lVert[c, S_t]\rVert = 0$ (uniformly in $g$ in $G$). It follows $\limt\lVert S_t - \psi_c(S_t)\rVert=0$.
\end{proof}

It is easy to see that the conditions (3) and (4) are preserved by taking sums, products and adjoint and that  the conditions (1) and (2) pass to the norm limit. We define some $C^*$-subalgebras of $\cC_L(H_X\otimes B)^G$ and $\cC_L(\pi)^G$ as follows:
\begin{itemize}
\item $\cC_L(H_X\otimes B)^G_{\proper}$ is the $C^*$-subalgebra of $\cC_L(H_X\otimes B)^G$ consisting of $T$ satisfying the four equivalent conditions in Lemma \ref{lem_properly}.
\item $\cC_L(\pi)^G_{\proper}$ is the $C^*$-subalgebra of $\cC_L(\pi)^G$ consisting of $T$ satisfying the four equivalent conditions in Lemma \ref{lem_properly}.
\end{itemize}
We have inclusions
\[
\xymatrix{
 \cC_L(H_X\otimes B)^G_{\proper}  \ar[r]^-{} \ar[d]^-{}   & \cC_L(H_X\otimes B)^G \ar[d]^-{}   \\
\cC_L(\pi)^G_{\proper}   \ar[r]^-{}         &       \cC_L(\pi)^G.
}
\]
We have the following commutative diagram of short exact sequences
\[
\xymatrix{
0   \ar[r]^-{}         &       C_0([1, \infty), C^*(H_X\otimes B)^G)  \ar[r]^-{} \ar[d]^-{}   &   \cC_L(H_X\otimes B)^G_{\proper}  \ar[r]^-{} \ar[d]^-{}   & \cC_{L, Q}(H_X\otimes B)^G_{\proper} \ar[d]^-{\cong}   \ar[r]^-{}         &      0  \\
0   \ar[r]^-{}         &        C_0([1, \infty), \cC(\pi)^G) \ar[r]^-{}   &  \cC_L(\pi)^G_{\proper}   \ar[r]^-{}         &       \cC_{L, Q}(\pi)^G_{\proper}    \ar[r]^-{}         &      0.
}
\]
In particular, the inclusion induces an isomorphism
\[
K_\ast( \cC_L(H_X\otimes B)^G_{\proper} ) \cong K_\ast(  \cC_{L}(\pi)^G_{\proper}).
\]

\begin{proposition} The two $C^*$-subalgebras $C^*_{L, u}(H_X\otimes B)^G$ and $\cC_L(H_X\otimes B)^G_{\proper}$ in $C_{b, u}([1, \infty), \Linears(H_X\otimes B))$ are identical.
\end{proposition}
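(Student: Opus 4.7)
The inclusion $C^*_{L, u}(H_X\otimes B)^G \subseteq \cC_L(H_X\otimes B)^G_{\proper}$ is the easy direction: I will verify it on the dense subalgebra $\bC_{L, u}(H_X\otimes B)^G$. A generator $T$ there has $T_t \in \bC(H_X\otimes B)^G$ (so each $T_t$ is properly supported with uniform $G$-compact support), is uniformly continuous, and satisfies $\prop(T_t) \to 0$; by Lemma \ref{lem_commutator} this forces $\|[\phi, T_t]\| \to 0$ for all $\phi \in C_0(X)$, so $T \in \cC_L(H_X\otimes B)^G$, and $S = T$ itself witnesses condition (3) of Lemma \ref{lem_properly}.

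For the reverse inclusion I would first reduce to the properly supported case. By Lemma \ref{lem_properly}~(3), any $T \in \cC_L(H_X\otimes B)^G_{\proper}$ differs from a properly supported $S \in \cC_L(H_X\otimes B)^G$ by an element of $C_0([1, \infty), C^*(H_X\otimes B)^G)$, so it suffices to prove (i) that this ideal sits inside $C^*_{L, u}(H_X\otimes B)^G$, and (ii) that every properly supported $S$ does too. Claim (i) is immediate: a product $f \cdot a$ with $f \in C_0([1, \infty))$ and $a \in \bC(H_X\otimes B)^G$ is approximated in sup-norm by $f_N \cdot a$ with $f_N$ compactly supported in $t$, and such a function trivially has $\prop(f_N(t) a) \to 0$ while inheriting uniform $G$-compact support from $a$.

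The substantive point is claim (ii). Fix a $G$-invariant proper metric $d$ on the $G$-compact space $X$; proper support of $S$ together with $G$-equivariance forces a uniform bound $\prop(S_t) \le R$. For each $\delta > 0$ I would pick a cut-off function $c \in C_c(X)$ and decompose $c^2 = \sum_{i=1}^k \phi_i^2$ with $\phi_i \in C_c(X)$ of support diameter less than $\delta$, and set
\[
\tilde S^\delta_t = \sum_{i=1}^k \int_G g(\phi_i)\, S_t\, g(\phi_i)\, d\mu_G(g).
\]
Each $\tilde S^\delta_t$ lies in $\bC(H_X\otimes B)^G$ with propagation at most $\delta$ independent of $t$; using $G$-equivariance of $S_t$ to rewrite $g(\phi_i)[g(\phi_i), S_t] = g(\phi_i [\phi_i, S_t])$ and invoking Lemma \ref{lem_sum} on a slight enlargement of each $\phi_i$, one obtains
\[
\|S_t - \tilde S^\delta_t\| \le \sum_{i=1}^k C_i \|[\phi_i, S_t]\|,
\]
which tends to $0$ as $t \to \infty$ by the commutator hypothesis on $S$.

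A diagonal argument then assembles these approximations into a single $\hat S \in \bC_{L, u}(H_X\otimes B)^G$ of sup-norm distance $\le \varepsilon$ from $S$: choose $\delta_n = 1/n$ and $t_n \nearrow \infty$ with $\|S_t - \tilde S^{\delta_n}_t\| < 1/n$ on $[t_n, \infty)$, set $\hat S_t = \tilde S^{\delta_n}_t$ on $[t_n, t_{n+1})$ with a short linear interpolation near each $t_n$, and fill in $[1, t_1]$ by a uniform-on-compacts approximation of $S$ by elements of $\bC(H_X\otimes B)^G$. The main obstacle, though not deep, is arranging this bookkeeping so that $\prop(\hat S_t) \le \max(\delta_{n-1}, \delta_n) \to 0$ and $\|S - \hat S\|_\infty \le \varepsilon$ hold simultaneously; each individual estimate is routine, but the construction must be assembled with care.
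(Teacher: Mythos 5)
Your proof of the easy inclusion and your reduction of the hard inclusion to (i) the ideal $C_0([1,\infty), C^*(H_X\otimes B)^G)$ and (ii) a properly supported $S$ both match the paper's logic. For step (ii), however, your route genuinely differs from the paper's. The paper starts from the commutator condition on $T$ and invokes Lemma \ref{lem_commutator} to produce a \emph{non-equivariant} $S\in C_{b,u}([1,\infty),\Linears(H_X))$ with $\prop(S_t)\to 0$ and $\lVert T_t-S_t\rVert\to 0$, and then forms the single average $\tilde S_t=\int_G g(c)g(S_t)g(c)\,d\mu_G(g)$; equivariance is restored by the averaging, the small propagation of $S_t$ transfers to $\tilde S_t$ by a compactness-in-$G$ argument, and $\lVert\psi_c(T_t)-\tilde S_t\rVert\to 0$ follows at once, so no interpolation over a sequence of scales is needed. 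You instead keep the equivariant properly supported $S$ and discretize the cut-off function into pieces $\phi_i$ of small support, forcing small propagation of $\tilde S^\delta_t$ term by term at the price of only approximating $S_t$ for large $t$ (the error being controlled by $\sum_i C_i\lVert[\phi_i,S_t]\rVert$ via Lemma \ref{lem_sum}), which is why you then need the diagonal/interpolation bookkeeping over $\delta_n=1/n$. Both strategies work; the paper's avoids the partition of unity and the gluing, while yours avoids the detour through a non-equivariant approximant.

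One point you should repair or at least justify: you fix a $G$-invariant proper metric on $X$ and use it twice, first to get the uniform bound $\prop(S_t)\le R$ and then to conclude $\prop(\tilde S^\delta_t)\le\delta$ from $\mathrm{diam}(g\,\supp\phi_i)=\mathrm{diam}(\supp\phi_i)$. The paper never establishes (and deliberately avoids assuming) the existence of such a metric; its definition of $C^*_{L,u}(H_X\otimes B)^G$ is phrased via neighborhoods of the diagonal in $X^+\times X^+$, and Remark \ref{rem_same2} only identifies the two notions \emph{when} a $G$-invariant proper metric is given. Without $G$-invariance, $g\,\supp\phi_i$ need not have small diameter, and your propagation claim fails as stated. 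The first use is harmless (proper support of $S$ already gives, for each compact $A$, a compact $B$ with $\chi_A S=\chi_A S\chi_B$ uniformly in $t$, which is all Lemma \ref{lem_sum} needs). For the second, either cite the existence of a $G$-invariant proper compatible metric on a $G$-compact proper $G$-space (true, but an external input), or replace it by the paper's argument from the proof of Proposition \ref{prop_rightreg_localized}: it suffices to control $\prop(\chi_{X_0}\tilde S^\delta_t)$ and $\prop(\tilde S^\delta_t\chi_{X_0})$ for compact $X_0$, only a compact set $K\subset G$ of translates contributes there, and the compactness of $K^{-1}\{(x,y):d(x,y)\ge\epsilon\}$ converts small $\mathrm{diam}(\supp\phi_i)$ into small $\mathrm{diam}(g\,\supp\phi_i)$ uniformly for $g\in K$.
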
 
\begin{proof} $C^*_{L, u}(H_X\otimes B)^G\subset \cC_L(H_X\otimes B)^G_{\proper}$: Take any $T\in C_{b, u}([1, \infty), \bC(H_X\otimes B)^G)$ such that $\prop(T_t)\to 0$ with respect to a fixed metric $d$ on $X^+$. Let $X_0$ be a compact subset of $X$ such that $GX_0=X$ and $X_1$ be any compact neighborhood of $X_0$ in $X$. Since $\prop(T_t)\to 0$, there is $t_0\geq1$ such that for any $t>t_0$, $\chi_{X_0}T_t=\chi_{X_0}T_t\chi_{X_1}$ and $T_t\chi_{X_0}=\chi_{X_1}T_t\chi_{X_0}$. This implies that $T$ satisfies the condition (3) of Lemma \ref{lem_properly}. We already know $C^*_{L, u}(H_X\otimes B)^G\subset \cC_L(H_X\otimes B)^G$ so we have $C^*_{L, u}(H_X\otimes B)^G\subset \cC_L(H_X\otimes B)^G_{\proper}$. 

$\cC_L(H_X\otimes B)^G_{\proper} \subset C^*_{L, u}(H_X\otimes B)^G$: Let $T\in \cC_L(H_X\otimes B)^G_{\proper}$. It is enough to show that $\psi_c(T)\in C^*_{L, u}(H_X\otimes B)^G$ for a cut-off function $c$ on $X$ since both algebras contain the common ideal $C_0([1, \infty), C^*(H_X\otimes B)^G)$. Since $\lim_{t\to \infty}\lVert[T_t, \phi]\rVert= 0$ for any $\phi \in C_0(X)$, there is $S \in C_{b, u}([1, \infty), \Linears(H_X))$ such that $\lim_{t\to \infty}\lVert T_t -S_t\rVert=0$ and such that $\prop(S_t)\to 0$ as $t\to \infty$ for some (any) fixed metric $d$ on $X^+$. We can arrange it so that $S_t$  a $G$-continuous, locally compact operator in $\Linears(H_X)$ for each $t\geq1$. In this case, since $\lim_{t\to \infty}\lVert T_t -S_t\rVert=0$ with $T$ $G$-equivariant, we see that $S$ is $G$-continuous in $C_{b, u}([1, \infty), \Linears(H_X))$. Now consider $\tilde S \in C_{b, u}([1, \infty), \Linears(H_X\otimes B))$ defined (weakly) by
\[
\tilde S_t = \int_{g\in G} g(c)g(S_t) g(c) d\mu_G(g).
\]
Note this is uniformly continuous in $t$. Indeed $S \mapsto \tilde S$ is a u.c.p.\ map on $C_{b, u}([1, \infty), \Linears(H_X\otimes B))$ ($S$ does not have to be $G$-continuous for this to be well-defined). We have
\[
\psi_c(T_t) - \tilde S_t  = \int_{g\in G} g(c)(T_t- g(S_t)) g(c) d\mu_G(g).
\]
Together with $\limt\lVert T_t- g(S_t)\rVert = \limt\lVert T_t- S_t\rVert = 0$ (uniformly in $g$ in $G$), we see that $\lim_{t\to \infty}\lVert\psi_c(T_t) - \tilde S_t\rVert=0$. Since $\tilde S_t \in \bC(H_X\otimes B)^G \subset C^*(H_X\otimes B)^G$ for any $t\geq1$,  to show $\psi_c(T) \in C^*_{L, u}(H_X\otimes B)^G$ it is enough to show $\tilde S \in C^*_{L, u}(H_X\otimes B)^G$. For this, it suffices to show $\prop(\tilde S_t)\to 0$ as $t \to \infty$ with respect to the fixed metric $d$ on $X^+$. It suffices to show $\prop(\chi_{X_0}\tilde S_t)\to 0$ and $\prop(\tilde S_t\chi_{X_0})\to 0$ as $t\to \infty$ for any compact subset $X_0$ of $X$. On the other hand,
\[
\chi_{X_0}\tilde S_t = \int_{g\in G} \chi_{X_0}g(c)g(S_t) g(c) d\mu_G(g)
\]
and $\chi_{X_0}g(c)=0$ for any $g\in G\backslash K$ for some compact subset $K$ of $G$. The claim $\prop(\chi_{X_0}\tilde S_t)\to 0$ now follows since $\prop(g(S_t)) \to 0$ uniformly in $g \in K$. We also have $\prop(\tilde S_t\chi_{X_0})\to 0$ for the same reason, so we are done.
\end{proof}

Note we have a natural inclusion ($\ast$-homomorphism)
\[
 \iota\colon  \cC_L(\pi)^G \to M(RL^*_u(H_X\otimes B)) \subset  M(RL^*_u(H_X\otimes B)\rtimes_rG)
\]
just because $T\in  \cC_L(\pi)^G$ satisfies $\lim_{t\to \infty}\lVert[\phi, T_t]\rVert = 0$. Moreover, for any $T\in  \cC_L(\pi)^G$, $\phi$ in $C_0(X)$, we have
\[
\phi T \in RL^*_u(H_X\otimes B)
\]
because $T_t$ is locally compact and $\lim_{t\to \infty}\lVert[\phi, T_t]\rVert = 0$. We let
\[
\pi\rtimes_r1\colon C_0(X)\rtimes_rG \to M(RL^*_u(H_X\otimes B)\rtimes_rG)
\]
be the crossed product of the structure map $\pi\colon C_0(X) \to M(RL^*_u(H_X\otimes B))$. Let $c \in C_c(X)$ be a cut-off function on $X$ and $p_c\in C_0(X)\rtimes_rG$ be the associated cut-off projection. We have a c.c.p.\
 map \[
(\pi\rtimes_r1)(p_c) \iota (\pi\rtimes_r1)(p_c) \colon   \cC_L(\pi)^G \to RL^*_u(H_X\otimes B)\rtimes_rG
\] 
which sends $T \in  \cC_L(\pi)^G$ to the compression $(\pi\rtimes_r1)(p_c) \iota(T) (\pi\rtimes_r1)(p_c) $ in $RL^*_u(H_X\otimes B)\rtimes_rG$. For any $T \in  \cC_L(\pi)^G$, we have
\[
[\iota(T), (\pi\rtimes_r1)(p_c)] \in RL^*_0(H_X\otimes B)\rtimes_rG.
\]
Thus, after passing to the quotient $RL^*_{u,Q}(H_X\otimes B)\rtimes_rG$, the c.c.p.\ map $(\pi\rtimes_r1)(p_c) \iota (\pi\rtimes_r1)(p_c)$ becomes a $\ast$-homomorphism
\[
(\pi\rtimes_r1)(p_c) \iota (\pi\rtimes_r1)(p_c) \colon  \cC_L(\pi)^G \to RL^*_{u, Q}(H_X\otimes B)\rtimes_rG.
\]
Recall that the quotient map induces an isomorphism 
\[
K_\ast(RL^*_{u}(H_X\otimes B)\rtimes_rG) \cong  K_\ast(RL^*_{u, Q}(H_X\otimes B)\rtimes_rG).
\]
\begin{definition} We define a group homomorphism
\[
\iota_{p_c} \colon  K_\ast(\cC_L(\pi)^G) \to  K_\ast(RL^*_{u}(H_X)\rtimes_rG)
\]
as the composition of the group homomorphism 
\[
(\pi\rtimes_r1)(p_c) \iota (\pi\rtimes_r1)(p_c)_\ast \colon  K_\ast(\cC_L(\pi)^G) \to K_\ast(RL^*_{u, Q}(H_X)\rtimes_rG)
\]
and the inverse of the isomorphism 
\[
K_\ast(RL^*_{u}(H_X\otimes B)\rtimes_rG) \cong  K_\ast(RL^*_{u, Q}(H_X\otimes B)\rtimes_rG)
\]
induced by the quotient map. If $A$ is any of the subalgebras 
\[
\cC_L(H_X\otimes B)^G_{\proper}, \cC_L(\pi)^G_{\proper}, \cC_L(H_X\otimes B)^G
\]
of $\cC_L(\pi)^G$, we set
\[
\iota_{p_c} \colon  K_\ast(A) \to  K_\ast(RL^*_{u}(H_X\otimes B)\rtimes_rG)
\]
be the restriction of $\iota_{p_c}$ to the subalgebra $A$.
\end{definition}

Now, for an $X$-$G$-module $H_X$,  let $\tilde H_X=H_X\otimes L^2(G)$ be the $X$-$G$-module as before (see Definition \ref{def_tildeH}). Let
\[
\tilde\pi\colon C_0(X) \to  \Linears(\tilde H_X\otimes B)
\]
be the structure map. We have the right-regular representation (see Proposition \ref{prop_rightreg_localized})
\[
\rho \colon  RL^*_u(H_X\otimes B)\rtimes_rG \to C^*_{L, u}(\tilde H_X\otimes B)^G
\]
and the inclusions 
\[
\xymatrix{
 C^*_{L, u}(\tilde H_X\otimes B)^G = \cC_L(\tilde H_X\otimes B)^G_{\proper}  \ar[r]^-{} \ar[d]^-{}   & \cC_L(\tilde H_X\otimes B)^G \ar[d]^-{}   \\
\cC_L(\tilde \pi)^G_{\proper}   \ar[r]^-{}         &       \cC_L(\tilde\pi)^G.
}
\]

\begin{definition} If $A$ is any of the algebras 
\[
\cC_L(\tilde H_X\otimes B)^G_{\proper}, \cC_L(\tilde\pi)^G_{\proper}, \cC_L(\tilde H_X\otimes B)^G, \cC_L(\tilde \pi)^G,
\]
we set
\[
\rho_\ast \colon  K_\ast(RL^*_{u}(H_X\otimes B)\rtimes_rG) \to K_\ast(A)
\]
to be the group homomorphism induced by the right-regular representation 
\[
\rho\colon RL^*_u(H_X\otimes B)\rtimes_rG \to C^*_{L, u}(\tilde H_X\otimes B)^G
\]
followed by the inclusion from $C^*_{L, u}(\tilde H_X\otimes B)^G$ to $A$.
\end{definition}

We are now interested in computing the compositions
\[
 \iota_{p_c}  \circ \rho_\ast\colon K_\ast(RL^*_u(H_X\otimes B)\rtimes_rG) \to K_\ast(C^*_{L, u}(\tilde H_X\otimes B)^G) \to  K_\ast(RL^*_u(\tilde H_X\otimes B)\rtimes_rG)
\]
and
\[
\rho_\ast \circ \iota_{p_c}  \colon K_\ast(C^*_{L, u}(H_X\otimes B)^G) \to K_\ast(RL^*_u(H_X\otimes B)\rtimes_rG) \to  K_\ast(C^*_{L, u}(\tilde H_X\otimes B)^G).
\] 
Let us first compute the first one. We remind that a proper $G$-space $X$ has been assumed to be $G$-compact from the beginning of this section.

\begin{lemma}\label{lem_genfix} For any (not-necessarily separable) $G$-$C^*$-algebra $A$ equipped with a non-degenerate representation of the $G$-$C^*$-algebra $C_0(X)$ to $M(A)$, the right regular representation
\[
\rho\colon A\rtimes_rG \to M(A\otimes \Compacts(L^2(G))), \,\, A\ni a\mapsto (g(a))_{g\in G}, \,\, G\ni g\mapsto \rho_g, 
\]
is an isomorphism onto the $C^*$-algebra $M(A\otimes \Compacts(L^2(G)))^{G, c}$ defined as the completion of the $\ast$-algebra consisting of $G$-equivariant, locally compact, properly supported operators in $M(A\otimes \Compacts(L^2(G)))\cong \Linears(A\otimes L^2(G))$.
\end{lemma}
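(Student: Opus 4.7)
The plan is to adapt the proof of Proposition~\ref{prop_isom}, where $\Compacts(H_X\otimes B)$ is replaced by the general $G$-$C_0(X)$-algebra $A$; everything takes place on the Hilbert $A$-module $A\otimes L^2(G)$ rather than on a Hilbert space. Throughout I would use the identification $\Compacts(A\otimes L^2(G))\cong C_0(G,A)\rtimes_\rho G$, where $C_0(G,A)$ acts on $A\otimes L^2(G)$ by pointwise multiplication and $G$ by the right regular representation $\rho$, together with the associated identification $\Linears(A\otimes L^2(G))\cong M(A\otimes \Compacts(L^2(G)))$.

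First I would check that $\rho$ takes values in $M(A\otimes \Compacts(L^2(G)))^{G,c}$. For compactly $C_0(X)$-supported $a\in A$, the multiplier $(g(a))_{g\in G}$ is $G$-equivariant because $\rho$ commutes with left translation by $G$, and is locally compact and properly supported by exactly the same computation as in Proposition~\ref{prop_isom}; for $g\in G$, the operator $\rho_g$ commutes with $C_0(X)\subset M(A)$ and is $G$-equivariant in the required sense. Injectivity of $\rho$ on $A\rtimes_r G$ is the standard faithfulness of the reduced regular representation.

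For surjectivity, let $T\in M(A\otimes \Compacts(L^2(G)))^{G,c}$ be $G$-equivariant, locally compact and properly supported, and fix a cut-off function $c\in C_c(X)$ together with $\chi\in C_c(X)$ chosen so that $cT=cT\chi$, which is possible by proper support. Then $cT\in A\otimes\Compacts(L^2(G))$ by local compactness, and it may be approximated in norm by an element of $C_c(G, C_c(G,A))$ inside $C_0(G,A)\rtimes_\rho G$. Averaging by writing
\[
T=\int_G g(c)\,g(cT)\,g(\chi)\, d\mu_G(g),
\]
where the integral converges weakly thanks to $G$-equivariance of $T$, and invoking the Hilbert-module version of Lemma~\ref{lem_sum} for the cut-off data $(c,\chi)$, produces, from such an approximant of $cT$, an element $\rho(S)$ with $S\in C_c(G,A)\subset A\rtimes_r G$ satisfying $\|T-\rho(S)\|<\varepsilon$. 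Hence the image of $\rho$ is dense in $M(A\otimes \Compacts(L^2(G)))^{G,c}$; as $\rho$ is an isometric $\ast$-homomorphism its image is complete, giving $\rho(A\rtimes_r G)=M(A\otimes \Compacts(L^2(G)))^{G,c}$.

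The main technical point to verify is that Lemma~\ref{lem_sum} really does apply to the Hilbert $A$-module $A\otimes L^2(G)$ on which $C_0(X)$ acts by pointwise multiplication and $G$ by $\rho$, and that the $A$-valued function $h\mapsto S_h$ obtained from the double integral is genuinely norm-continuous and compactly supported in $h$; this step uses $G$-continuity of elements of $A$, compactness of $\supp(c)$, and the fact that the $C_0(X)$-representation $C_0(X)\to M(A)$ is non-degenerate and central (so that cut-off functions multiply $A$ in the correct sense even though $A$ is neither separable nor of the form $\Compacts(\E)$). Once this routine but slightly delicate technical point is settled, the rest of the argument is a verbatim translation of the proof of Proposition~\ref{prop_isom}, and in fact produces the same explicit formula $S_h=\int_G g(c)\,g((S'_h)(g^{-1}))\,g(\chi)\,d\mu_G(g)$ for the approximant.
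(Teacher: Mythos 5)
Your proposal is correct and is exactly the route the paper takes: the paper's proof of this lemma consists of the single sentence that the proof of Proposition~\ref{prop_isom} works verbatim, and your write-up is precisely that verbatim translation to the Hilbert $A$-module $A\otimes L^2(G)$, including the one genuine technical point (the Hilbert-module version of Lemma~\ref{lem_sum}), which the paper has already provided in the statement of that lemma.
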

\begin{proof} The proof of Proposition \ref{prop_isom} works verbatim.
\end{proof}
\begin{remark} The algebra $M(A\otimes \Compacts(L^2(G)))^{G, c}$ is known as a generalized fixed point algebra and this lemma is proved in a more general setting (\cite[Corollary 3.25, Remark 3.26]{BE15} see also \cite{BE14} \cite{BE16} for a more general theory).
\end{remark}

\begin{lemma} The composition $ \iota_{p_c}  \circ \rho_\ast\colon K_\ast(RL^*_u(H_X\otimes B)\rtimes_rG) \to K_\ast(RL^*_u(\tilde H_X\otimes B)\rtimes_rG)$ coincides with the map induced by the strict cover $V_c\colon H_X\to  \tilde H_X$ of the identity map on $X$ defined as
\[
V_c\colon H_X \ni v\mapsto (g\mapsto g(c)v) \in \tilde H_X =H_X\otimes L^2(G).
\]
\end{lemma}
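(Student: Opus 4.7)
The plan is to use Lemma~\ref{lem_isom_Uc} pointwise in the time parameter $t\in [1,\infty)$ together with a standard $2\times 2$ doubling argument to relate $\iota_{p_c}\circ\rho$ to $\mathrm{Ad}_{V_c}$.

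First, applying Lemma~\ref{lem_isom_Uc} at each $t$, one obtains, for every $T\in RL^*_u(H_X\otimes B)\rtimes_rG$,
\[
\iota_{p_c}\circ\rho(T)=(\tilde\pi\rtimes_r 1)(p_c)\,\rho(T)\,(\tilde\pi\rtimes_r 1)(p_c)=U_c\,\psi_c(T)\,U_c^{*}
\]
in $M(RL^*_u(\tilde H_X\otimes B)\rtimes_rG)$, where $\psi_c(T)_t=\int_G g(c)\,T_t\,g(c)\,d\mu_G(g)$ and $U_c\colon H_X\otimes B\rtimes_rG\to \tilde H_X\otimes B\rtimes_rG$ is the isometry of Lemma~\ref{lem_isom_Uc}. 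Next, the computation at the end of the proof of Lemma~\ref{lem_compute} shows $\|T_t-\psi_c(T_t)\|\to 0$ as $t\to\infty$ for $T$ in the dense subalgebra with uniform compact support, so $T-\psi_c(T)\in RL^*_0(H_X\otimes B)\rtimes_rG$. By Proposition~\ref{prop_quotient_isomG1}, $\psi_{c,\ast}$ is therefore the identity on $K_\ast(RL^*_u(H_X\otimes B)\rtimes_rG)$, and consequently $\iota_{p_c}\circ\rho_\ast$ agrees at the K-theory level with the map induced by $T\mapsto U_cTU_c^{*}$.

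It then remains to show $\mathrm{Ad}_{V_c,\ast}=\mathrm{Ad}_{U_c,\ast}$ on $K_\ast(RL^*_u(H_X\otimes B)\rtimes_rG)$. Both $V_c$ and $U_c$ are $G$-equivariant Hilbert $B\rtimes_rG$-module isometries $H_X\otimes B\rtimes_rG\to \tilde H_X\otimes B\rtimes_rG$, and the standard $2\times 2$ unitary
\[
W=\begin{pmatrix} 1-V_cV_c^{*} & V_cU_c^{*}\\ U_cV_c^{*} & 1-U_cU_c^{*}\end{pmatrix}
\]
in $M_2(\Linears(\tilde H_X\otimes B\rtimes_rG))$ carries $\bigl(\begin{smallmatrix}V_c\\0\end{smallmatrix}\bigr)$ to $\bigl(\begin{smallmatrix}0\\U_c\end{smallmatrix}\bigr)$; hence conjugation by $W$ implements an equivalence between the two $\ast$-homomorphisms $T\mapsto \bigl(\begin{smallmatrix}V_cTV_c^{*}&0\\0&0\end{smallmatrix}\bigr)$ and $T\mapsto \bigl(\begin{smallmatrix}0&0\\0&U_cTU_c^{*}\end{smallmatrix}\bigr)$ into $M_2(RL^*_u(\tilde H_X\otimes B)\rtimes_rG)$. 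Provided $W$ acts as a multiplier of this matrix algebra, the induced K-theory maps of $\mathrm{Ad}_{V_c}$ and $\mathrm{Ad}_{U_c}$ coincide, and combining with the previous step yields the lemma.

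The main obstacle will be verifying this multiplier condition, which reduces to showing that $V_cU_c^{*}$ multiplies $RL^*_u(\tilde H_X\otimes B)\rtimes_rG$—or that it does so modulo $RL^*_0$, which suffices by Proposition~\ref{prop_quotient_isomG1}. Since $V_c$ is a strict cover of $\mathrm{id}_X$, one has $\tilde\pi(\phi)V_c=V_c\pi(\phi)$ exactly, so $[\tilde\pi(\phi),V_cU_c^{*}]=V_c[\pi(\phi),U_c^{*}]$; this commutator is controlled using the explicit formula for $U_c$ together with the $G$-compactness of $X$ and the cut-off relation $\int_G g(c)^2\,d\mu_G(g)=1$, which restrict the relevant integrals over $G$ to compact ranges whenever one multiplies by an element of $RL^*_u$ with uniform compact support. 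A careful support analysis should then force the commutator, when applied to such an element, to land in $RL^*_0$, completing the proof.
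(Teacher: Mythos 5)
Your first two steps are sound and essentially reproduce the computation the paper carries out in Lemma~\ref{lem_compute}: Lemma~\ref{lem_isom_Uc} gives $\iota_{p_c}\circ\rho(T)=U_c\,\psi_c(T)\,U_c^{*}$, and $T-\psi_c(T)\in RL^*_0(H_X\otimes B)\rtimes_rG$ for uniformly compactly supported $T$, so on K-theory $\iota_{p_c}\circ\rho_\ast=\mathrm{Ad}_{U_c,\ast}$. The gap is in the final step, exactly where you flag it, and the strategy you sketch for closing it cannot work. The isometry $U_c$ is \emph{not} a cover of the identity on $X$: from the explicit formula, $(\chi_A U_c\chi_B v)(h)=\Delta(h)^{-1/2}\chi_A\,c\,h(\chi_B)(h\otimes u_h)v$, so $\supp(U_c)$ is essentially $\{(x,y): x\in\supp(c),\ x\in Gy\}$ --- the output is folded into $\supp(c)$ while the input ranges over all of $X$. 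Consequently $U_c\pi(\phi)-\tilde\pi(\phi)U_c$ evaluated at $h$ is multiplication by $c\,h(\phi)-\phi c$, which has norm comparable to $\lVert\phi\rVert$ whenever $\supp(\phi)\cap\supp(c)=\emptyset$; the commutator $[\tilde\pi(\phi),V_cU_c^{*}]=V_c(\pi(\phi)U_c^{*}-U_c^{*}\tilde\pi(\phi))$ is therefore not small, and no support analysis will make $[\tilde\pi(\phi),V_cU_c^{*}]S$ land in $RL^*_0$ for all $S$. In fact the multiplier property itself fails, not merely your proof of it: take $S=\theta_{w,w}$ with $w$ sharply localized near a point $x_0$ far from $\supp(c)$ (an element of $RL^*_u(\tilde H_X\otimes B)\rtimes_rG$ up to arbitrarily small error); then $S\,V_cU_c^{*}=\theta_{w,\,U_cV_c^{*}w}$ is a constant-in-$t$ rank-one operator whose support is essentially $\{x_0\}\times\supp(c)$, which stays a definite distance from $\bigcup_{g}\mathrm{graph}(g)$, whereas every element of $RL^*_u(\tilde H_X\otimes B)\rtimes_rG$ is approximated, modulo $RL^*_0$, by sums $\sum_g T_g u_g$ with $\prop(T_{g,t})\to 0$, whose supports concentrate on that union of graphs. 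So $W\notin M_2(M(RL^*_{u,Q}(\tilde H_X\otimes B)\rtimes_rG))$ and the $2\times 2$ conjugation argument does not apply.

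The paper never compares $U_c$ and $V_c$ directly. Instead it shows that both $\iota_{p_c}\circ\rho$ and $\mathrm{Ad}_{V_c}$ (the latter rewritten modulo $RL^*_0$ as the compression $\iota^2_{p_c}$ by $p_c^{(2)}=V_cV_c^{*}$) factor through the subalgebra $(RL^*_u(H_X\otimes B)\otimes\Compacts(L^2(G)))\rtimes_rG$, identifies that algebra with a generalized fixed-point algebra inside $M(RL^*_u(H_X\otimes B)\otimes\Compacts(L^2(G))\otimes\Compacts(L^2(G)))$ via Lemma~\ref{lem_genfix}, and conjugates the two compressions into each other by the flip unitary on $L^2(G\times G)$. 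The flip acts only on the two $L^2(G)$ legs, commutes with the $C_0(X)$-structure and is $G$-equivariant, so it genuinely is a multiplier --- precisely the property $V_cU_c^{*}$ lacks. To repair your argument you should replace the comparison of $U_c$ with $V_c$ by this comparison of the two compressions in the doubled picture.
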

\begin{proof} 

A c.c.p.\ map 
\[
\iota^1_{p_c}\colon RL^*_u(H_X\otimes B)\rtimes_rG  \to (RL^*_u(H_X\otimes B)\otimes \Compacts(L^2(G)) ) \rtimes_rG
\]
is defined as the compression of the right-regular representation 
\[
 RL^*_u(H_X\otimes B)\rtimes_rG \ni T \to \rho(T)  \in M(RL^*_u(H_X\otimes B)\otimes \Compacts(L^2(G)))
\]
by the projection $p^{(1)}_c$ which is the image of the cut-off projection $p_c$ in $C_0(X)\rtimes_rG$ by the map
\[
\pi^1\rtimes_r1 \colon C_0(X)\rtimes_rG \to M( (RL^*_u(H_X\otimes B)\otimes \Compacts(L^2(G)) ) \rtimes_rG)
\]
induced by the natural $G$-equivariant representation 
\[
\pi^{1}\colon C_0(X) \to M(RL^*_u(H_X)) \to  M( RL^*_u(H_X\otimes B)\otimes \Compacts(L^2(G))).
\]
Passing to the quotient, the c.c.p.\ map $\iota^1_{p_c}$ becomes a $\ast$-homomorphism
\[
\iota^1_{p_c}\colon RL^*_u(H_X\otimes B)\rtimes_rG \to (RL^*_{u, Q}(H_X\otimes B)\otimes \Compacts(L^2(G)) ) \rtimes_rG.
\]
Composing its induced map on K-theory with the inverse of the isomorphism (the quotient map)
\[
K_\ast((RL^*_{u}(H_X\otimes B)\otimes \Compacts(L^2(G)) ) \rtimes_rG) \cong  K_\ast( (RL^*_{u, Q}(H_X\otimes B)\otimes \Compacts(L^2(G)) ) \rtimes_rG),
\]
we obtain a group homomorphism
\[
\iota^1_{p_c}\colon K_\ast(RL^*_u(H_X\otimes B)\rtimes_rG) \to K_\ast( (RL^*_u(H_X\otimes B)\otimes \Compacts(L^2(G)) ) \rtimes_rG).
\]
Here, we used that the reduced crossed product preserves the exact sequence 
\[
0 \to RL^*_{0}(H_X\otimes B)\otimes \Compacts(L^2(G))  \to RL^*_{u}(H_X\otimes B)\otimes \Compacts(L^2(G)) 
\]
\[
 \to RL^*_{u, Q}(H_X\otimes B)\otimes \Compacts(L^2(G))  \to 0
\]
since the quotient is a proper $G$-$C^*$-algebra.

The composition $ \iota_{p_c}  \circ \rho_\ast$ factors through the natural inclusion
\[
(RL^*_u(H_X\otimes B)\otimes \Compacts(L^2(G))) \rtimes_rG \to RL^*_u(\tilde H_X\otimes B)\rtimes_rG
\]
via the group homomorphism $\iota^1_{p_c}$.

We now consider
\[
\mathrm{Ad}_{V_c \ast}\colon  K_\ast(RL^*_u(H_X\otimes B)\rtimes_rG) \to K_\ast(RL^*_u(\tilde H_X\otimes B)\rtimes_rG).
\]
Let $p_c^{(2)}=V_cV_c^\ast \in \Linears(\tilde H_X) \subset  \Linears(\tilde H_X\otimes B)$. The $G$-equivariant projection $p^{(2)}_c$ is the image of $p_c \in C_0(X)\rtimes_rG$ by the right-regular representation
\[
C_0(X)\rtimes_rG \to \Linears(\tilde H_X)= \Linears(H_X\otimes L^2(G)), C_0(X)\ni \phi \mapsto (g(\phi))_{g\in G}, \,\, G\ni g\mapsto \rho_g.
\] 
We claim that the $\ast$-homomorphism
\[
\mathrm{Ad}_{V_c}\colon RL^*_u(H_X\otimes B) \ni T \mapsto V_cTV_c^\ast \in RL^*_u(\tilde H_X\otimes B)
\]
and the $G$-equivariant c.c.p.\ map
\[
\iota^2_{p_c}\colon  RL^*_u(H_X\otimes B) \ni T \mapsto p^{(2)}_c(T)_{g\in G} p^{(2)}_c \in  RL^*_u(\tilde H_X\otimes B)
\]
coincide after passing to the quotient $RL^*_{u, Q}(\tilde H_X\otimes B)$. Indeed, for $S$ in $\Linears(H_X\otimes B)$, we have
\[
V_c^\ast (S)_{g\in G}V_c = \int_{g\in G}g(c)S g(c) d\mu_g(g).
\]
Thus, for any $T \in RL^*_u(H_X\otimes B)$, we have
\[
\lim_{t\to \infty}\lVert T_t - V_c^\ast (T_t)_{g\in G}V_c    \rVert = 0.
\] 
We can see this for example by considering $T$ which has uniform compact support. 

The $G$-equivariant c.c.p.\ map $\iota^2_{p_c}$ naturally factors through the inclusion
\[
RL^*_u(H_X\otimes B)\otimes \Compacts(L^2(G)) \to RL^*_u(\tilde H_X\otimes B).
\]
Overall, in order to show  $\iota_{p_c}  \circ \rho_\ast = \mathrm{Ad}_{V_c \ast}$ on  $K_\ast(RL^*_u(H_X\otimes B)\rtimes_rG)$, it suffices to show that the two group homomorphisms
\[
\iota^1_{p_c}, \iota^2_{p_c}\colon K_\ast(RL^*_u(H_X\otimes B)\rtimes_rG) \to K_\ast( (RL^*_u(H_X\otimes B)\otimes \Compacts(L^2(G)) ) \rtimes_rG)
\]
induced by the c.c.p.\ maps $\iota^1_{p_c}, \iota^2_{p_c}$ (which become $\ast$-homomorphisms after passing to the quotient) coincide. On the other hand, by viewing 
\[
(RL^*_u(H_X\otimes B)\otimes \Compacts(L^2(G)) ) \rtimes_rG \cong M(RL^*_u(H_X\otimes B)\otimes \Compacts(L^2(G)) \otimes \Compacts(L^2(G)) )^{G, c}
\]
as in Lemma \ref{lem_genfix}, we can see that the two map $\iota^1_{p_c}$ and $\iota^2_{p_c}$ are conjugate by the unitary $U$ in the multiplier algebra of 
\[
M(RL^*_u(H_X\otimes B)\otimes \Compacts(L^2(G)) \otimes \Compacts(L^2(G)) )^{G, c}
\]
defined as the flip on $G\times G$, i.e. $U \in \Linears(L^2(G\times G))$ defined by $Uf(g_1, g_2)= f(g_2, g_1)$ for $f\in L^2(G\times G)$ so we are done.
\end{proof}

\begin{corollary} \label{cor_first_composition} The composition 
\[
\iota_{p_c}  \circ \rho_\ast\colon K_\ast(RL^*_u(H_X\otimes B)\rtimes_rG) \to K_\ast(RL^*_u(\tilde H_X\otimes B)\rtimes_rG)
\]
is an isomorphism whenever $H_X$ is a universal $X$-$G$-module.
\end{corollary}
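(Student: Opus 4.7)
The plan is to reduce the corollary to the independence of $\mathrm{Ad}_{V_t\ast}$ from the choice of equivariant continuous cover, invoked twice. The previous lemma identifies $\iota_{p_c}\circ\rho_\ast$ with $\mathrm{Ad}_{V_c\ast}$, where $V_c\colon H_X\to \tilde H_X$ is the strict cover of $\mathrm{id}_X$ given by $v\mapsto(g\mapsto g(c)v)$. So it suffices to show that $\mathrm{Ad}_{V_c\ast}$ is an isomorphism at the level of K-theory of the reduced crossed product, and for this the natural candidate for its inverse is $\mathrm{Ad}_{W_t\ast}$ for some equivariant continuous cover $(W_t\colon\tilde H_X\to H_X)_{t\in[1,\infty)}$ of $\mathrm{id}_X$.

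First I would use the hypothesis that $H_X$ is a universal $X$-$G$-module to produce such a cover $(W_t)$: by definition, any $X$-$G$-module (in particular $\tilde H_X$) admits an equivariant continuous cover of $\mathrm{id}_X$ into $H_X$. Next, I would form the two composite families $(W_tV_c)_{t\in[1,\infty)}\colon H_X\to H_X$ and $(V_cW_t)_{t\in[1,\infty)}\colon\tilde H_X\to\tilde H_X$. Because $V_c$ is a strict cover of $\mathrm{id}_X$ (a constant, hence uniformly continuous family) and $W_t$ is an equivariant continuous cover of $\mathrm{id}_X$, and because $\mathrm{id}_X$ is proper, both composites are equivariant continuous covers of $\mathrm{id}_X\circ\mathrm{id}_X=\mathrm{id}_X$, as in the functoriality discussion that precedes Proposition \ref{prop_welldefG}.

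At this point I would invoke the argument recalled in the paragraph right after Proposition \ref{prop_welldefG} (and reused in Section 7): any two equivariant continuous covers of the same $G$-equivariant map induce the same homomorphism on $K$-theory of the reduced crossed product, because the corner embeddings into $M_2$ are unitarily equivalent via the natural partial-isometry unitary, whose components multiply the respective algebras thanks to Lemma \ref{lem_mult}. Applying this to the two pairs $\bigl((W_tV_c)_t,\,\mathrm{id}_{H_X}\bigr)$ and $\bigl((V_cW_t)_t,\,\mathrm{id}_{\tilde H_X}\bigr)$ yields
\[
\mathrm{Ad}_{W_t\ast}\circ\mathrm{Ad}_{V_c\ast}=\mathrm{Ad}_{(W_tV_c)\ast}=\mathrm{id}_{K_\ast(RL^*_u(H_X\otimes B)\rtimes_r G)},
\]
\[
\mathrm{Ad}_{V_c\ast}\circ\mathrm{Ad}_{W_t\ast}=\mathrm{Ad}_{(V_cW_t)\ast}=\mathrm{id}_{K_\ast(RL^*_u(\tilde H_X\otimes B)\rtimes_r G)}.
\]
Hence $\mathrm{Ad}_{V_c\ast}$, and therefore $\iota_{p_c}\circ\rho_\ast$, is an isomorphism.

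The only point requiring a bit of care is verifying that $(W_tV_c)_t$ and $(V_cW_t)_t$ really are equivariant continuous covers of $\mathrm{id}_X$ in the sense of Definition~\ref{def_universalXG}: uniform norm-continuity is immediate because $V_c$ is constant and $W_t$ is uniformly norm-continuous, and the support condition for the composite follows from the support conditions of $V_c$ (concentrated on the graph of $\mathrm{id}_X$) and $W_t$ (contained in any prescribed open neighborhood of the diagonal for large $t$). This is the only small obstacle; it is routine and parallels the composition-of-covers argument used elsewhere in the paper, so no further technical difficulty is expected.
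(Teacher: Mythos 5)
Your proposal is correct and carries out exactly the argument the paper leaves implicit: by the preceding lemma the composition is $\mathrm{Ad}_{V_c\ast}$ for the strict cover $V_c$ of $\mathrm{id}_X$, and universality of $H_X$ supplies an equivariant continuous cover $W_t\colon\tilde H_X\to H_X$ of $\mathrm{id}_X$ whose composites with $V_c$ are again covers of $\mathrm{id}_X$ (here properness of $\mathrm{id}_X$ matters), so the independence-of-cover argument identifies both composites with the identity on $K$-theory. The one point you flag — that $(W_tV_c)_t$ and $(V_cW_t)_t$ satisfy the support condition — is indeed routine and is covered by the same closure bookkeeping as in Lemma \ref{lem_coverid}.
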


We now study the other composition.
\begin{lemma}\label{lem_second_composition} The composition 
\[
\rho_\ast \circ \iota_{p_c}  \colon K_\ast(C^*_{L, u}(H_X\otimes B)^G) \to K_\ast(C^*_{L, u}(\tilde H_X\otimes B)^G)
\]
coincides with the map induced by the strict cover $V_c\colon H_X\to  \tilde H_X$ of the identity map on $X$ defined as
\[
V_c\colon H_X \ni v\mapsto (g\mapsto g(c)v) \in \tilde H_X =H_X\otimes L^2(G).
\]

\end{lemma}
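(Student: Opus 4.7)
The plan is to compute $\rho \circ \iota_{p_c}(T)$ explicitly on the $C^*$-algebra level and compare it directly with $V_c(\cdot)V_c^*$. This mirrors the proof of the preceding lemma (computing $\iota_{p_c} \circ \rho_\ast$), but with the roles reversed, and the mismatch between the two maps will vanish asymptotically because of the characterization $C^*_{L,u}(H_X\otimes B)^G = \cC_L(H_X\otimes B)^G_{\proper}$.

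First, I would establish the identity $\rho((\pi\rtimes_r 1)(p_c)) = V_cV_c^*$ as elements of $\Linears(\tilde H_X\otimes B)$. This is a straightforward integral computation using $p_c(g) = \Delta(g)^{-1/2}cg(c)$ and the definition of $\rho$ in Definition~\ref{def_rightreg}: for $f\in L^2(G, H_X\otimes B)$, one finds
\[
\rho((\pi\rtimes_r 1)(p_c))f(h) = h(c)\int_{s\in G} s(c) f(s)\, d\mu_G(s),
\]
which is exactly $V_cV_c^*f(h)$ by direct inspection of the adjoint $V_c^*$. The same calculation shows $V_c^*V_c = 1$. Next, since $T\in C^*_{L,u}(H_X\otimes B)^G$ is $G$-equivariant, $\rho(\iota(T))$ acts on $L^2(G, H_X\otimes B)$ as $T\otimes 1$, and a direct computation yields $V_c^*(T\otimes 1)V_c = \int_{h\in G} h(c)Th(c)\,d\mu_G(h) = \psi_c(T)$. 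Hence
\[
\rho(\iota_{p_c}(T)) \;=\; \rho(p_c)\rho(\iota(T))\rho(p_c) \;=\; V_c V_c^*(T\otimes 1)V_cV_c^* \;=\; V_c\psi_c(T)V_c^*.
\]

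To compare $V_c\psi_c(T)V_c^*$ with $\mathrm{Ad}_{V_c}(T) = V_cTV_c^*$, I would invoke the identification $C^*_{L,u}(H_X\otimes B)^G = \cC_L(H_X\otimes B)^G_{\proper}$ and the equivalent conditions in Lemma~\ref{lem_properly}, which give $\lim_{t\to\infty}\lVert T_t - \psi_c(T_t)\rVert = 0$. Therefore
\[
V_c T V_c^* - V_c\psi_c(T)V_c^* \;=\; V_c(T - \psi_c(T))V_c^* \;\in\; C_0([1,\infty),\, C^*(\tilde H_X\otimes B)^G).
\]
Composed with the quotient map $C^*_{L,u}(\tilde H_X\otimes B)^G \to C^*_{L,u}(\tilde H_X\otimes B)^G / C_0([1,\infty), C^*(\tilde H_X\otimes B)^G)$, the maps $\rho\circ\iota_{p_c}$ and $\mathrm{Ad}_{V_c}$ therefore agree as $\ast$-homomorphisms. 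Since this ideal has vanishing K-theory, the quotient map is a K-theory isomorphism, and the two maps induce the same homomorphism on $K_\ast$.

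The main subtlety to be careful about is that $\rho\circ\iota_{p_c}$ is only a c.c.p.\ map on the nose; its K-theoretic meaning comes from passing to the quotient $RL^*_{u,Q}(H_X\otimes B)\rtimes_r G$, on which $\iota_{p_c}$ becomes a $\ast$-homomorphism, and from the corresponding descent of $\rho$ to the analogous quotient on the target, which is a $\ast$-homomorphism. The computation above takes place most cleanly at the level of these multipliers/quotients, so I would frame the final comparison in that setting rather than trying to assert equality of c.c.p.\ maps on the ambient algebras.
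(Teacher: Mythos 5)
Your proposal is correct and follows essentially the same route as the paper's proof: both identify $\rho((\pi\rtimes_r1)(p_c))$ with $V_cV_c^*$, compute $V_c^*\rho(T)V_c=\psi_c(T)$, and then use the characterization $C^*_{L,u}(H_X\otimes B)^G=\cC_L(H_X\otimes B)^G_{\proper}$ (condition (2) of Lemma \ref{lem_properly}, i.e.\ $\lim_{t\to\infty}\lVert T_t-\psi_c(T_t)\rVert=0$) to conclude that the two maps agree modulo the ideal $C_0([1,\infty),C^*(\tilde H_X\otimes B)^G)$, which has trivial K-theory. Your closing remark about framing the comparison at the level of the quotients is exactly how the paper handles the c.c.p.-versus-$\ast$-homomorphism issue.
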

\begin{proof} We can directly see that the composition is induced by the c.c.p.\ map (which becomes a $\ast$-homomorphism after passing to the quotient),
\[
p_c \cdot p_c \colon C^*_{L, u}(H_X\otimes B)^G \ni  T  \mapsto p_c(T)_{g\in G}p_c \in C^*_{L, u}(\tilde H_X\otimes B)^G
\]
where the cut-off projection $p_c$ is represented by the right-regular representation of $C_0(X)\rtimes_rG$:
\[
C_0(X)\rtimes_rG \to \Linears(\tilde H_X)= \Linears(H_X\otimes L^2(G)), C_0(X)\ni \phi \mapsto (g(\phi))_{g\in G}, \,\, G\ni g\mapsto \rho_g.
\]
As before, more precisely, this c.c.p.\ map induces the map on K-theory as the composition of the $\ast$-homomorphism
\[
p_c \cdot p_c \colon C^*_{L, u}(H_X\otimes B)^G \to C^*_{L, Q}(\tilde H_X\otimes B)^G = C^*_{L, u}(\tilde H_X\otimes B)^G / C_0([1, \infty), C^*(\tilde H_X\otimes B)^G)
\]
and the inverse of the isomorphism (the quotient map)
\[
K_\ast(C^*_{L, u}(\tilde H_X\otimes B)^G) \to K_\ast(C^*_{L, Q}(\tilde H_X\otimes B)^G).
\]
We have $p_c=V_cV_c^\ast$ and as before, for $S\in \Linears(H_X\otimes B)$,
\[
V_c^\ast (S)_{g\in G}V_c = \int_{g\in G}g(c)S g(c) d\mu_g(g).
\]
To show that $p_c \cdot p_c$ and $\mathrm{Ad}_{V_c \ast}$ coincide on  $K_\ast(C^*_{L, u}(H_X\otimes B)^G)$, it is enough to show that for any $T \in C^*_{L, u}(H_X\otimes B)^G$, we have $\limt\lVert T_t - V^\ast_c(T_t)_{g\in G}V_c\rVert = 0$ but this is precisely the condition  (2) of Lemma \ref{lem_properly}, which is satisfied for any $T$ in $\cC_L(\pi)^G_{\proper}$ in particular for any  $T \in C^*_{L, u}(H_X\otimes B)^G= \cC_L(H_X\otimes B)^G_{\proper}$.
\end{proof}

The same proof shows the following:
\begin{lemma} The composition 
\[
\rho_\ast \circ \iota_{p_c}  \colon K_\ast(\cC_L(\pi)^G_{\proper}) \to K_\ast(\cC_L(\tilde\pi)^G_{\proper})
\]
coincides with the map induced by the strict cover $V_c\colon H_X\to  \tilde H_X$ of the identity map on $X$.
\end{lemma}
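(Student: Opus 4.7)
The plan is to adapt the argument of Lemma~\ref{lem_second_composition} essentially verbatim, since the only property of $C^*_{L, u}(H_X\otimes B)^G = \cC_L(H_X\otimes B)^G_{\proper}$ that was really used there is condition (2) of Lemma~\ref{lem_properly}, which by definition is satisfied for every element of $\cC_L(\pi)^G_{\proper}$.

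First I would show that the composition $\rho_\ast \circ \iota_{p_c}$ on $K_\ast(\cC_L(\pi)^G_{\proper})$ is induced by the c.c.p.\ map
\[
p_c \cdot p_c \colon \cC_L(\pi)^G_{\proper} \ni T \mapsto p_c (T_t)_{g\in G} p_c \in \cC_L(\tilde\pi)^G_{\proper},
\]
where $p_c = V_c V_c^\ast \in \Linears(\tilde H_X)$ is the projection realised as the image of the cut-off projection in $C_0(X)\rtimes_rG$ under the right-regular representation. To justify this, I would use the same identifications as in the proof of Lemma~\ref{lem_second_composition}: unwind the definition of $\iota_{p_c}$ as a compression by $(\pi\rtimes_r1)(p_c)$ followed by the inverse of the quotient isomorphism, then use that applying $\rho$ to an operator of the form $(\pi\rtimes_r1)(p_c) \iota(T) (\pi\rtimes_r1)(p_c)$ produces (up to the ideals that vanish after passing to the quotient) exactly $p_c (T_t)_{g \in G} p_c$. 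I also need to verify that this compression lands in $\cC_L(\tilde\pi)^G_{\proper}$, not merely in $\cC_L(\tilde\pi)^G$; this is straightforward because $p_c = V_c V_c^\ast$ with $V_c$ a strict cover of the identity, so $p_c (T_t)_{g\in G}p_c$ is properly supported whenever $T$ satisfies Lemma~\ref{lem_properly}(3).

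Next, I would check that $p_c \cdot p_c$ descends to a genuine $\ast$-homomorphism after passing to the quotient by the ideal $C_0([1,\infty), \cC(\tilde\pi)^G)$: the failure of multiplicativity is measured by commutators $[p_c, (T_t)_{g\in G}]$, which tend to zero in norm as $t\to\infty$ because $\limt \lVert[\phi, T_t]\rVert = 0$ for all $\phi\in C_0(X)$, and $p_c$ is built from such $\phi$'s via the right-regular representation together with group elements.

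Finally, to identify $(p_c \cdot p_c)_\ast$ with $\mathrm{Ad}_{V_c,\ast}$, I would observe that for any $S \in \Linears(H_X\otimes B)$,
\[
V_c^\ast (S)_{g\in G} V_c = \int_{g\in G} g(c) S g(c) \, d\mu_G(g) = \psi_c(S).
\]
Hence comparing the c.c.p.\ map $p_c\cdot p_c$ with $\mathrm{Ad}_{V_c}$ reduces (modulo the ideal that vanishes in $K$-theory) to showing $\limt \lVert T_t - \psi_c(T_t)\rVert = 0$ for all $T \in \cC_L(\pi)^G_{\proper}$. This is precisely condition (2) of Lemma~\ref{lem_properly}, which holds by definition of $\cC_L(\pi)^G_{\proper}$. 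The main (mild) obstacle is the bookkeeping in step one, namely verifying that the compression really does take values in the subalgebra with the proper-support condition; everything else transfers directly from the previous lemma.
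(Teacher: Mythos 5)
Your proposal is correct and is essentially the paper's own argument: the paper proves this lemma by remarking that the proof of the preceding lemma applies verbatim, and indeed that proof already observes that the key estimate $\limt \lVert T_t - \psi_c(T_t)\rVert = 0$ is exactly condition (2) of Lemma \ref{lem_properly}, which holds for every element of $\cC_L(\pi)^G_{\proper}$. Your additional bookkeeping (that the compression lands in the properly-supported subalgebra) is a reasonable, if implicit, point that the paper leaves to the reader.
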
 

Before jumping to the conclusion, we make an important remark here. The equivariant Roe algebra $C^*(H_X\otimes B)^G$, the localized equivariant Roe algebra $C^*_{L, u}(H_X\otimes B)^G$ as well as all the other algebras such as $\cC(\pi)^G$ and $\cC_L(\pi)^G$ are not well-behaved with respect to an arbitrary equivariant continuous cover $(V_t\colon H_X\to H_Y)_{t\in [1, \infty)}$ of a $G$-equivariant continuous map $f\colon X\to Y$ even if $f$ is the identity map on a $G$-compact space. This is because a continuous cover $V_t$ may not be properly supported in general and hence the conjugation by $V_t$ may not preserve locally compact operators. On the other hand, they are functorial with respect to a strict cover $V$, if exists, of the identity map.

More relevantly, if we focus on  $X$-$G$-modules of the form $\tilde H_X\otimes B$ for an $X$-$G$-module $H_X$, the equivariant Roe algebra $C^*(\tilde H_X\otimes B)^G$, the localized equivariant Roe algebra $C^*_{L, u}(\tilde H_X\otimes B)^G=\cC_L(\tilde H_X\otimes B)^G_{\proper}$ as well as $\cC_L(\tilde H_X\otimes B)^G$ are functorial with respect to a $G$-equivariant continuous cover of the identity map of the form $\tilde V=V\otimes 1\colon \tilde H_X \to \tilde H'_X$ for a $G$-equivariant continuous cover $V\colon H_X \to H_X'$. To see this, first, we have the following commutative diagram
\[
\xymatrix{
 C^*(\tilde H_X\otimes B)^G  \ar[r]^-{\mathrm{Ad}_{\tilde V}}   &  C^*(\tilde H'_X\otimes B)^G  \\
\Compacts(H_X\otimes B)\rtimes_rG  \ar[r]^-{\mathrm{Ad}_V}  \ar[u]^-{\cong}_{\rho}         &       \Compacts(H'_X\otimes B)\rtimes_rG  \ar[u]^-{\cong}_{\rho}      
}
\]
for any $G$-equivariant isometry $V\colon H_X \to H_X'$, which in particular says that $\mathrm{Ad}_{\tilde V}$ maps $C^*(\tilde H_X\otimes B)^G$ to $C^*(\tilde H'_X\otimes B)^G$. Now, if $(V_t\colon H_X \to H'_X)_{t\in [1, \infty)}$ is a $G$-equivariant continuous cover of the the identity map on $X$, we can see that $\mathrm{Ad}_{\tilde V_t}$ maps $\cC_L(\tilde H_X\otimes B)^G$ to $\cC_L(\tilde H'_X\otimes B)^G$ and maps $C_{L, u}^*(\tilde H_X\otimes B)^G=\cC_L(\tilde H_X\otimes B)^G_{\proper}$ to $C_{L, u}^*(\tilde H'_X\otimes B)^G=\cC_L(\tilde H'_X\otimes B)^G_{\proper}$. To see the latter, let $X_0$ be a compact subset of $X$ with $X=GX_0$. If $S\in \cC_L(\tilde H_X\otimes B)^G$ is properly supported, then $S'_t=\tilde V_tS_t\tilde V_t^\ast$ satisfies 
\[
\chi_{X_0} S'_t = \chi_{X_0} S'_t  \chi_{X_1}, \,\,\, S'_t \chi_{X_0} = \chi_{X_1} S'_t  \chi_{X_0}
\]
for some compact subset $X_1$ of $X$ for large enough $t$. This already implies that $S'$ satisfies the condition (3) of Lemma \ref{lem_properly}. Similarly, for any two covers $(V_{i, t}\colon H_X \to H'_X)_{t\in [1, \infty)}$ for $i=1, 2$, $\tilde V_{1, t} \tilde V_{2, t}^\ast$ multiplies $C_{L, u}^*(\tilde H'_X\otimes B)^G=\cC_L(\tilde H'_X\otimes B)^G_{\proper}$. Using this, we see that the induced map 
\[
\mathrm{Ad}_{\tilde V_t \ast}\colon  K_\ast(C_{L, u}^*(\tilde H_X\otimes B)^G) \to K_\ast(C_{L, u}^*(\tilde H'_X\otimes B)^G) 
\]
on $K$-theory is independent of the covers and it is an isomorphism whenever $H_X$ and $H_X'$ are universal $X$-$G$-modules.

We say that an $X$-$G$-module $H_X$ is of infinite multiplicity if $H_X\cong H^{0}_X\otimes l^2(\N)$ as an $X$-$G$-module for some $X$-$G$-module $H^0_X$. 

\begin{lemma} For any universal $X$-$G$-module $H_X$ of infinite multiplicity, the right-regular representation $\rho$ induces an isomorphism
\[
\rho_\ast \colon K_\ast(RL^*_u(H_X\otimes B)\rtimes_rG) \cong  K_\ast(C^*_{L, u}(\tilde H_X\otimes B)^G).
\]
\end{lemma}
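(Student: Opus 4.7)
The plan is to establish injectivity of $\rho_\ast$ by applying Corollary \ref{cor_first_composition} and surjectivity by combining Lemma \ref{lem_second_composition} with a Morita-type argument exploiting infinite multiplicity.

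For injectivity: since $V_c\colon H_X\to \tilde H_X$ is a strict cover of the identity, $\tilde H_X$ is itself a universal $X$-$G$-module (any equivariant continuous cover $H'\to H_X$ composed with $V_c$ yields a cover $H'\to\tilde H_X$). Corollary \ref{cor_first_composition} then gives that $\iota_{p_c}\circ\rho_\ast=\mathrm{Ad}_{V_c\,\ast}$ is an isomorphism from $K_\ast(RL^\ast_u(H_X\otimes B)\rtimes_rG)$ onto $K_\ast(RL^\ast_u(\tilde H_X\otimes B)\rtimes_rG)$, from which injectivity of $\rho_\ast$ is immediate.

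For surjectivity, I would invoke Lemma \ref{lem_second_composition}, which expresses $\rho_\ast\circ\iota_{p_c}$ as $\mathrm{Ad}_{V_c\,\ast}\colon K_\ast(C^\ast_{L,u}(H_X\otimes B)^G)\to K_\ast(C^\ast_{L,u}(\tilde H_X\otimes B)^G)$. The strict-cover property $\supp(V_c)\subset\mathrm{diag}(X\times X)$ makes $\mathrm{Ad}_{V_c}$ a bona fide $\ast$-isomorphism from $C^\ast_{L,u}(H_X\otimes B)^G$ onto the corner $p_c\,C^\ast_{L,u}(\tilde H_X\otimes B)^G\,p_c$, where $p_c=V_cV_c^\ast$; local compactness, $G$-equivariance and the propagation-to-zero condition are all preserved since the support of $V_c$ lies in the diagonal. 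To promote this corner isomorphism to a K-theory isomorphism, I plan to exploit the infinite-multiplicity decomposition $H_X\cong H_X\otimes l^2(\N)$ to build a sequence of mutually orthogonal strict covers $V_c^{(n)}\colon H_X\to \tilde H_X$ of the identity, each with range projection $p_c^{(n)}$ Murray--von Neumann equivalent to $p_c$ via a partial isometry in $M(C^\ast_{L,u}(\tilde H_X\otimes B)^G)$, with $\sum_n p_c^{(n)}=1$ in the strict topology. This implements a genuine Morita equivalence between $p_c\,C^\ast_{L,u}(\tilde H_X\otimes B)^G\,p_c$ and the whole algebra, yielding that $\mathrm{Ad}_{V_c\,\ast}$ is an isomorphism, and hence so is $\rho_\ast$.

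The main technical obstacle I anticipate will be verifying, at the level of the localized equivariant Roe algebra, that the partial isometries implementing the equivalences $p_c^{(n)}\sim p_c$ genuinely multiply $C^\ast_{L,u}(\tilde H_X\otimes B)^G$, so that the swindle-type series converges in the strict topology and the resulting corner embedding is a K-theory equivalence. I expect this to follow by adapting the cut-off averaging argument used in Proposition \ref{prop_isom} to establish the non-localized isomorphism $\Compacts(H_X\otimes B)\rtimes_rG\cong C^\ast(\tilde H_X\otimes B)^G_{\mathrm{Gcpt}}$, now carrying along a uniform propagation estimate: each $V_c^{(n)}$ is a strict cover, hence has propagation zero with respect to any metric on $X^+$, and for each fixed $t\in[1,\infty)$ only finitely many of the $p_c^{(n)}$ interact nontrivially with any given compactly supported element up to prescribed precision.
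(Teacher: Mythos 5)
Your injectivity argument is the paper's: Corollary \ref{cor_first_composition} gives that $\iota_{p_c}\circ\rho_\ast$ is an isomorphism, so $\rho_\ast$ is split injective. The gap is in the surjectivity half. You apply Lemma \ref{lem_second_composition} at the level of $H_X$, obtaining $\rho_\ast\circ\iota_{p_c}=\mathrm{Ad}_{V_c\,\ast}\colon K_\ast(C^\ast_{L,u}(H_X\otimes B)^G)\to K_\ast(C^\ast_{L,u}(\tilde H_X\otimes B)^G)$, and then try to show this is onto by exhibiting $\tilde H_X$ as an orthogonal sum $\bigoplus_n\operatorname{ran}V_c^{(n)}$ of ranges of strict covers of the identity from $H_X$, with $\sum_n p_c^{(n)}=1$ strictly. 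A strict cover of the identity is exactly a $G$-equivariant isometry intertwining the $C_0(X)$-representations, so such a decomposition is precisely an $X$-$G$-module isomorphism $\tilde H_X\cong H_X\otimes l^2(\N)\cong H_X$. Infinite multiplicity of $H_X$ does not produce this: it lets you split the range $p_c\tilde H_X$ of one strict cover into infinitely many sub-copies of $H_X$, but it says nothing about the orthogonal complement $(1-p_c)\tilde H_X$, which is where the whole difficulty sits; the swindle as described yields $\sum_n p_c^{(n)}=p_c\neq 1$. Whether $\tilde H_X\cong H_X$ holds at all depends on the representation-theoretic content of $H_X$ relative to the regular representation (for compact $G$, the range of any strict cover is constrained by the isotypic decomposition of $\tilde H_X$); it is true for very ample $H_X$ by Fell absorption, but it is neither established in the paper nor obvious for an arbitrary universal $H_X$ of infinite multiplicity, which is what the lemma assumes. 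The technical obstacle you flag (that the $V_c^{(n)}(V_c)^\ast$ multiply the localized Roe algebra) is the easy part, since strict covers have support in the diagonal; the missing piece is the existence of the $V_c^{(n)}$ with total range all of $\tilde H_X$.

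The paper avoids this by running the same argument one level up: it applies Lemma \ref{lem_second_composition} with $H_X$ replaced by $H'_X=\tilde H_X$, so the relevant map is $\mathrm{Ad}_{V_c\,\ast}\colon K_\ast(C^\ast_{L,u}(\tilde H_X\otimes B)^G)\to K_\ast(C^\ast_{L,u}(\widetilde{\tilde H_X}\otimes B)^G)$. There $\tilde H_X$ already carries an $L^2(G)$ tensor factor in the left regular representation with trivial $C_0(X)$-action, so Fell absorption together with infinite multiplicity gives $\widetilde{\tilde H_X}\cong\tilde H_X$ as $X$-$G$-modules regardless of the internal structure of $H_X$; since the induced map on $K$-theory is independent of the choice of strict cover, $\mathrm{Ad}_{V_c\,\ast}$ agrees with conjugation by an honest $X$-$G$-module isomorphism and is bijective. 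This shows $\iota_{p_c}$ on $K_\ast(C^\ast_{L,u}(\tilde H_X\otimes B)^G)$ is injective, hence bijective (surjectivity already follows from $\iota_{p_c}\circ\rho_\ast$ being an isomorphism), and $\rho_\ast=\iota_{p_c}^{-1}\circ(\iota_{p_c}\circ\rho_\ast)$ is an isomorphism. If you want to retain your Morita-style picture, you must first pass from $H_X\to\tilde H_X$ to $\tilde H_X\to\widetilde{\tilde H_X}$; at that level your orthogonal decomposition does exist.
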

\begin{proof} By Corollary \ref{cor_first_composition}, we already know that the composition 
\[
  \iota_{p_c}\circ \rho_\ast \colon K_\ast(RL^*_u(H_X\otimes B)\rtimes_rG)  \to K_\ast(RL^*_u(\tilde H_X\otimes B)\rtimes_rG) 
  \]
  is an isomorphism. It thus suffices to show that the composition
  \[
  \rho_\ast \circ   \iota_{p_c}\colon  K_\ast(C^*_{L, u}(\tilde H_X\otimes B)^G) \to   K_\ast(C^*_{L, u}(\tilde H^{(2)}_X\otimes B)^G)
  \]
  is injective (isomorphism) where $\tilde H^{(2)}_X= \tilde H'_X$ for $H'_X=\tilde H_X$. By Lemma \ref{lem_second_composition}, this composition is induced by the isometry $V_c\colon H_X' \to \tilde H_X'$ which is a strict cover of the identity map. On the other hand, $ H_X'\cong H^{0}_X\otimes l^2(\N)\otimes L^2(G)$ and $\tilde H_X' = H_X'\otimes L^2(G)$ are isomorphic as $X$-$G$-modules. From this, we see that $\rho_\ast \circ   \iota_{p_c}=\mathrm{Ad}_{V_c \ast}$ is an isomorphism.
\end{proof}

\begin{theorem}\label{thm_magic} For any universal $X$-$G$-module $H_X$, the right-regular representation $\rho$ and the natural inclusion induce isomorphisms
\[
\rho_\ast \colon K_\ast(RL^*_u(H_X\otimes B)\rtimes_rG) \cong  K_\ast(C^*_{L, u}(\tilde H_X\otimes B)^G) \cong K_\ast(\cC_L(\tilde \pi)^G_{\proper}).
\]
\end{theorem}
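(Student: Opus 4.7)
The plan is to obtain the theorem by combining the two lemmas immediately preceding it with the commutative diagram of short exact sequences exhibited earlier in this section. For the second isomorphism $K_\ast(C^*_{L,u}(\tilde H_X \otimes B)^G) \cong K_\ast(\cC_L(\tilde \pi)^G_{\proper})$, I would invoke the commutative diagram comparing $\cC_L(H_Y \otimes B)^G_{\proper}$ and $\cC_L(\pi_Y)^G_{\proper}$ applied with the choice $H_Y = \tilde H_X$: the inclusion induces an isomorphism on the quotient algebras $\cC_{L,Q}(\tilde H_X \otimes B)^G_{\proper} \cong \cC_{L,Q}(\tilde\pi)^G_{\proper}$, while the ideals $C_0([1,\infty), C^*(\tilde H_X\otimes B)^G)$ and $C_0([1,\infty), \cC(\tilde\pi)^G)$ both have vanishing K-theory by a standard Eilenberg swindle. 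The five-lemma then gives $K_\ast(\cC_L(\tilde H_X \otimes B)^G_{\proper}) \cong K_\ast(\cC_L(\tilde\pi)^G_{\proper})$, and the identification $\cC_L(\tilde H_X\otimes B)^G_{\proper} = C^*_{L,u}(\tilde H_X\otimes B)^G$ established earlier in the section completes this half.

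For the first isomorphism, the reduction is to the preceding lemma, which handles the case when $H_X$ has infinite multiplicity. Given any universal $X$-$G$-module $H_X$, I would set $H'_X := H_X \otimes l^2(\mathbb{N})$. Then $H'_X$ is still universal --- composing any equivariant continuous cover from an $X$-$G$-module $H^0_X$ to $H_X$ with the corner isometry $v \mapsto v \otimes \delta_1$ from $H_X$ to $H'_X$ produces a cover $H^0_X \to H'_X$ --- and is manifestly of infinite multiplicity, so the preceding lemma yields $\rho_\ast \colon K_\ast(RL^*_u(H'_X \otimes B) \rtimes_r G) \cong K_\ast(C^*_{L,u}(\tilde H'_X \otimes B)^G)$. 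The corner isometry $V\colon H_X \to H'_X$ is a $G$-equivariant strict cover of the identity on $X$, so it fits into a commutative square with $\rho_\ast$ on both rows and with $\mathrm{Ad}_{V\ast}$ (respectively $\mathrm{Ad}_{\tilde V \ast}$ with $\tilde V := V \otimes 1$) on the left (respectively right) vertical arrow. The left vertical arrow is an isomorphism by the well-definedness (up to canonical equivalence) of the functor $\bD_\ast^{B,G}$ on the choice of universal $X$-$G$-module, as established in Proposition \ref{prop_welldefG} in the coefficient setting. The right vertical arrow is an isomorphism by the functoriality discussion immediately preceding the theorem statement, which asserts that $\mathrm{Ad}_{\tilde V \ast}$ on the K-theory of $C^*_{L,u}(\tilde{\,\cdot\,} \otimes B)^G$ depends only on the choice of universal modules and is an isomorphism between any two of them. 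Chasing the square then gives that $\rho_\ast$ for $H_X$ is also an isomorphism.

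The main technical subtlety I expect is bookkeeping around the functoriality of the equivariant Roe algebras $C^*(\tilde H_X \otimes B)^G$ and their localized versions with respect to covers. As the paragraph right before the theorem emphasizes, equivariant Roe-type algebras behave well only under strict covers or under tilded covers of the form $V \otimes 1$, so one must verify that the commuting square truly lives in a setting where both columns are genuinely functorial for the map used. Since the corner isometry $V\colon H_X \to H_X \otimes l^2(\mathbb{N})$ has support on the diagonal, this is unproblematic on the left side, but the parallel statement for the right vertical arrow requires one to explicitly observe that $\tilde V = V \otimes 1$ preserves proper support and local compactness, and that two choices of such tilded covers yield induced maps that differ by a partial isometry multiplying $C^*_{L,u}(\tilde H'_X \otimes B)^G$.
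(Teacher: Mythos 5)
Your proposal is correct and follows essentially the same route as the paper: the second isomorphism is exactly the earlier diagram of short exact sequences (contractible ideals, isomorphic quotients) combined with the identification $C^*_{L,u}(\tilde H_X\otimes B)^G=\cC_L(\tilde H_X\otimes B)^G_{\proper}$, and the first isomorphism is obtained by reducing to the infinite-multiplicity lemma via functoriality of both sides under a $G$-equivariant cover of the identity between universal modules. Your explicit choice of the corner isometry into $H_X\otimes l^2(\N)$ and the commuting square with $\rho$ is just a spelled-out version of the paper's one-line reduction.
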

\begin{proof} The first isomorphism follows from the previous lemma using the functoriality of both $K_\ast(RL^*_u(H_X\otimes B)\rtimes_rG)$ and $K_\ast(C^*_{L, u}(\tilde H_X\otimes B)^G)$ with respect to a $G$-equivariant continuous cover $(V_t\colon H_X \to H'_X)_{t\in [1, \infty)}$ of the identity map on $X$, i.e. we can assume that the universal $X$-$G$-module $H_X$ is of infinite-multiplicity. We always have the second isomorphism.
\end{proof}



\section{$\rho_X$ is an isomorphism, part II}

In this section, we study a $G$-equivariant analogue of \cite{DWW18}.  We let $X$ be a $G$-compact proper $G$-space and $H_X$ be an $X$-$G$-module which will be assumed to be of infinite-multiplicity at some places. 
We recall that $\cC_L(H_X\otimes B)^G_\proper = C^*_{L, u}(H_X\otimes B)^G$ is the $C^*$-subalgebra of $C_{b, u}([1,\infty), C^*(H_X\otimes B)^G)$ consisting of functions $T$ such that $\lim_{t\to \infty}\lVert[\phi, T_t]\rVert=0$ and such that $T-\psi_c(T) \in C_0([1, \infty), C^*(H_X\otimes B)^G)$ for some (any) cut-off function $c$ on $X$, where 
\[
\psi_c(T) = \int_{g\in G}g(c)Tg(c)d\mu_G(g).
\]

We introduce the following $C^*$-algebras (c.f.\ \cite[Section 3]{DWW18}).
\begin{itemize}
\item $D^*(H_X\otimes B)^G$ is the $C^*$-subalgebra of $\Linears(H_X\otimes B)$ generated by $G$-equivariant, properly supported, pseudo-local operators. Here, an operator $T$ in $\Linears(H_X\otimes B)$ is pseudo-local if $[\phi, T] \in \Compacts(H_X\otimes B)$ for any $\phi$ in $C_0(X)$.
\item $\cD_L(H_X\otimes B)^G$ is the $C^*$-subalgebra of $C_{b, u}([1,\infty), D^*(H_X\otimes B)^G)$ consisting of functions $T$ such that $\lim_{t\to \infty}\lVert[\phi, T_t]\rVert=0$.
\item $\cD_L(H_X\otimes B)^G_{\proper}$ is the $C^*$-subalgebra of $\cD_L(H_X\otimes B)^G$ consisting of functions $T$ such that $T-\psi_c(T) \in C_0([1, \infty), D^*(H_X\otimes B)^G)$ for some (any) cut-off function $c$ on $X$. Similarly to Lemma \ref{lem_properly}, the second condition is equivalent to that there is a properly supported function $S$ in $\cD_L(H_X\otimes B)^G$ such that $T-S \in C_0([1, \infty), D^*(H_X\otimes B)^G)$.
\item $\cC_T(H_X \otimes B)^G_{\proper}$ is the $C^*$-subalgebra of $C_{b, u}([1, \infty), C^*(H_X\otimes B)^G)$ generated by functions $T$ which are properly supported.
\item $\cD_T(H_X\otimes B)^G_{\proper}$ is the $C^*$-subalgebra of $C_{b, u}([1, \infty), D^*(H_X\otimes B)^G)$ generated by  functions $T$ which are properly supported.
\end{itemize}

We have inclusions (each vertical map is an inclusion as an ideal)
\[
\xymatrix{
\cC_L(H_X\otimes B)^G_{\proper}  \ar[r]^-{} \ar[d]^-{} &\cC_T(H_X \otimes B)^G_{\proper} \ar[d]^-{}  \\
\cD_L(H_X\otimes B)^G_{\proper} \ar[r]^-{}  &  \cD_T(H_X \otimes B)^G_{\proper},
}
\]
as well as inclusions (horizontal maps are inclusions as constant functions)
\[
\xymatrix{
C^*(H_X\otimes B)^G \ar[r]^-{} \ar[d]^-{} &\cC_T(H_X \otimes B)^G_{\proper} \ar[d]^-{}  \\
D^*(H_X\otimes B)^G  \ar[r]^-{}  &  \cD_T(H_X \otimes B)^G_{\proper}.
}
\]

\begin{lemma} We have 
\[
\cD_L(H_X\otimes B)^G_{\proper} \cap \cC_T(H_X \otimes B)^G_{\proper}  = \cC_L(H_X\otimes B)^G_{\proper}.
\]
\end{lemma}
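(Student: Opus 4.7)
The plan is to show the two inclusions separately, with the strategy being that both directions essentially reduce to checking values and the proper-support condition $T-\psi_c(T)\in C_0$ at the level of the relevant target algebra.

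For the inclusion $\cC_L(H_X\otimes B)^G_{\proper}\subseteq \cD_L(H_X\otimes B)^G_{\proper}\cap \cC_T(H_X\otimes B)^G_{\proper}$, I would argue as follows. Given $T\in\cC_L(H_X\otimes B)^G_{\proper}$, the containment in $\cD_L(H_X\otimes B)^G_{\proper}$ is immediate since $C^*(H_X\otimes B)^G\subseteq D^*(H_X\otimes B)^G$ (locally compact operators are pseudo-local), the asymptotic commutator condition is the same, and $T-\psi_c(T)\in C_0([1,\infty),C^*(H_X\otimes B)^G)\subseteq C_0([1,\infty),D^*(H_X\otimes B)^G)$. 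For the containment in $\cC_T(H_X\otimes B)^G_{\proper}$, decompose $T=\psi_c(T)+(T-\psi_c(T))$. First, $\psi_c(T)$ is a properly supported function in $C_{b,u}([1,\infty),C^*(H_X\otimes B)^G)$: each $\psi_c(T_t)$ is supported in $\bigcup_g gA\times gA$ with $A=\supp(c)$, and $G$-compactness of $X$ combined with properness of the $G$-action makes the resulting support condition uniformly proper in $t$; so $\psi_c(T)\in \cC_T(H_X\otimes B)^G_{\proper}$. Second, $T-\psi_c(T)\in C_0([1,\infty),C^*(H_X\otimes B)^G)$, and this ideal is contained in $\cC_T(H_X\otimes B)^G_{\proper}$ by approximating continuous, compactly supported functions $[1,N]\to C^*(H_X\otimes B)^G$ by finite sums $\sum_i\phi_i(t)a_i$ with $\phi_i\in C_c[1,N]$ scalar and $a_i$ in the dense $*$-algebra of properly supported elements; each such sum is properly supported in $t$ uniformly.

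For the reverse inclusion, take $T\in \cD_L(H_X\otimes B)^G_{\proper}\cap \cC_T(H_X\otimes B)^G_{\proper}$. Membership in $\cC_T(H_X\otimes B)^G_{\proper}$ forces $T\in C_{b,u}([1,\infty),C^*(H_X\otimes B)^G)$; membership in $\cD_L(H_X\otimes B)^G_{\proper}$ delivers the asymptotic commutator condition $\lim_t\lVert[\phi,T_t]\rVert=0$ for all $\phi\in C_0(X)$, and furthermore $T-\psi_c(T)\in C_0([1,\infty),D^*(H_X\otimes B)^G)$. Since $\psi_c$ maps $C^*(H_X\otimes B)^G$ into itself (the averaged operator is still locally compact and properly supported), the difference $T-\psi_c(T)$ in fact takes values in $C^*(H_X\otimes B)^G$, so $T-\psi_c(T)\in C_0([1,\infty),C^*(H_X\otimes B)^G)$. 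This verifies all of the defining conditions of $\cC_L(H_X\otimes B)^G_{\proper}$.

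The main obstacle is really just the verification that $\psi_c(T)$ lies in $\cC_T(H_X\otimes B)^G_{\proper}$, which requires combining $G$-compactness of $X$ with properness of the $G$-action to obtain a uniform-in-$t$ proper support bound, together with the approximation argument showing $C_0([1,\infty),C^*(H_X\otimes B)^G)\subseteq \cC_T(H_X\otimes B)^G_{\proper}$; the reverse inclusion is then essentially a tautological consequence of the fact that the $D^*$-version of the proper support condition restricts to the $C^*$-version once one already knows the values lie in $C^*(H_X\otimes B)^G$.
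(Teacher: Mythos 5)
Your proof is correct and follows essentially the same route as the paper: the nontrivial inclusion is obtained exactly as there, by observing that membership in $\cC_T(H_X\otimes B)^G_{\proper}$ places the values of $T$ in $C^*(H_X\otimes B)^G$, membership in $\cD_L(H_X\otimes B)^G_{\proper}$ supplies the asymptotic commutator condition together with $T-\psi_c(T)\in C_0([1,\infty),D^*(H_X\otimes B)^G)$, and the fact that $\psi_c$ preserves $C^*(H_X\otimes B)^G$-valued functions upgrades this to the $C^*$-level. The converse, which the paper dismisses as trivial, you verify correctly via the decomposition $T=\psi_c(T)+(T-\psi_c(T))$ and the density of elementary tensors in $C_0([1,\infty),C^*(H_X\otimes B)^G)$.
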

\begin{proof}  Let $T\in \cD_L(H_X\otimes B)^G_{\proper} \cap \cC_T(H_X \otimes B)^G_{\proper}$. Then, we have $T \in C_{b, u}([1, \infty), C^*(H_X\otimes B)^G)$ and $\limt\lVert[T_t, \phi]\rVert = 0$ and $\limt\lVert T_t-\psi_c(T_t)\rVert = 0$ for any cut-off function $c$ on $X$.  Thus, $T\in \cC_L(H_X\otimes B)^G_{\proper}$.  The converse is trivial.
\end{proof}

\begin{lemma}\label{lem_ex}(c.f.\ \cite[Lemma 4.1]{DWW18}) Let $D \subset   C_{b, u}([1,\infty), D^*(H_X \otimes B)^G)$ be a separable subset. Assume that the set $D$ is properly supported in a sense that for any compact subset $A$ of $X$, there is a compact subset $B$ so that $\chi_AT=\chi_AT\chi_B$ (and $T\chi_A=\chi_BT\chi_A$) for any $T\in D$. There is $x \in C_{b, u}([1,\infty), C^*(H_X \otimes B)^G)$ which is properly supported such that 
\begin{itemize}
\item $[x, \phi] \in C_0([1, \infty), \Compacts(H_X\otimes B))$ for any $\phi \in C_0(X)$,
\item $(1-x)[T, \phi] \in C_0([1, \infty), \Compacts(H_X\otimes B))$ for any $\phi\in C_0(X)$ and $T\in D$.
\end{itemize}
\end{lemma}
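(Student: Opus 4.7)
The plan is to adapt the non-equivariant argument of \cite[Lemma 4.1]{DWW18}. In that non-equivariant case one draws positive contractions $h_n\in\Compacts(H_X\otimes B)$ from Arveson's quasi-central approximate unit theorem so that, for countable norm-dense subsets $\{T^{(m)}\}\subset D$ and $\{\phi_k\}\subset C_0(X)$, both $\lVert[h_n,\phi_k]\rVert\to 0$ and $\lVert h_n[T^{(m)}_s,\phi_k]-[T^{(m)}_s,\phi_k]\rVert\to 0$ hold; one then assembles $x_t$ by piecewise-linear interpolation of the $h_n$.

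In the present $G$-equivariant setting, $x_t$ must lie in $C^*(H_X\otimes B)^G$ rather than in $\Compacts(H_X\otimes B)$. I would exploit two structural features of $C^*(H_X\otimes B)^G$. First, for $G$-compact $X$ (with separable $H_X$, $B$ and second countable $G$), $C^*(H_X\otimes B)^G$ is a separable $C^*$-algebra acting non-degenerately on $H_X\otimes B$, so any approximate unit of $C^*(H_X\otimes B)^G$ converges strongly to $1$ and hence in norm on every compact operator (by Banach-Steinhaus applied to the norm-compact image of the unit ball under a compact operator). Second, each $T^{(m)}_s$ lies in the multiplier algebra $M(C^*(H_X\otimes B)^G)$, since $G$-equivariant properly-supported pseudo-local operators multiply $G$-equivariant locally-compact properly-supported operators back into the same class.

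With these in hand, I would produce an approximate unit $(u_n)$ of $C^*(H_X\otimes B)^G$ with uniform proper support (all $u_n$ supported, as multipliers, by a fixed compact-support condition depending only on a chosen cut-off) such that $\lVert[u_n,\phi_k]\rVert\le 1/n$ and $\lVert(1-u_n)[T^{(m)}_s,\phi_k]\rVert\le 1/n$ for all $m,k\le n$ and $s\in[1,n+1]$. The first estimate requires an Arveson-type refinement with respect to $\phi_k$, which does not itself lie in $M(C^*(H_X\otimes B)^G)$; this I would handle by passing to the $C^*$-algebra generated by $C^*(H_X\otimes B)^G$ together with $\Compacts(H_X\otimes B)$, inside which $\phi_k$ \emph{is} a multiplier and $[\phi_k, C^*(H_X\otimes B)^G]\subset\Compacts(H_X\otimes B)$ automatically. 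The second estimate then follows from strong convergence $u_n\to 1$, compactness of $[T^{(m)}_s,\phi_k]$, and the observation that the parameter family $\{[T^{(m)}_s,\phi_k]:m,k\le n,\ s\in[1,n+1]\}$ is norm-compact in $\Compacts(H_X\otimes B)$ by uniform continuity of $T^{(m)}$ in $s$. I would finally set $x_t=(n+1-t)u_n+(t-n)u_{n+1}$ for $t\in[n,n+1]$, obtaining a uniformly continuous $C^*(H_X\otimes B)^G$-valued function of uniform proper support in $t$; density of $\{\phi_k\}$ in $C_0(X)$ and of $\{T^{(m)}\}$ in $D$ then yields the two required asymptotic vanishing statements.

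The main obstacle is that the most natural equivariantization---averaging a non-equivariant quasi-central compact approximate unit $v_n$ via the cut-off, $\psi_c(v_n)=\int_G g(c)v_n g(c)\,d\mu_G(g)$---does \emph{not} produce $G$-equivariant elements, since a direct computation gives $h(\psi_c(v_n))=\psi_c(h(v_n))\neq\psi_c(v_n)$ in general. The resolution is to work directly with the equivariant Roe algebra itself, combining its $\sigma$-unitality (which supplies an approximate unit at all) with its non-degenerate action on $H_X\otimes B$ (which automatically promotes approximate-unit convergence to norm convergence on any compact operator), thereby circumventing any need to manufacture $G$-equivariant operators out of non-equivariant compacts.
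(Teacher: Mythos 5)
Your overall skeleton (discretize in $n$, extract elements satisfying the two smallness properties on $[1,n+1]$ for the first $n$ members of countable dense subsets, then interpolate linearly) matches the paper's, but the mechanism you propose for producing the elements $u_n$ has a genuine gap at the quasi-centrality step. You want an approximate unit of $C^*(H_X\otimes B)^G$ with $\lVert[u_n,\phi_k]\rVert\to 0$, justified by an ``Arveson-type refinement'' after adjoining $\Compacts(H_X\otimes B)$. But the Arveson--Akemann--Pedersen quasicentral approximate unit theorem is an \emph{ideal} phenomenon: it produces an approximate unit of an ideal $J\triangleleft A$ asymptotically commuting with $A$, and its proof hinges on the unit of $J^{**}$ being a \emph{central} projection of $A^{**}$. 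Here $C^*(H_X\otimes B)^G$ is not an ideal of $E=C^*(H_X\otimes B)^G+\Compacts(H_X\otimes B)$ (a compact operator times a $G$-equivariant one is not $G$-equivariant), so the theorem gives you nothing; an arbitrary approximate unit of $C^*(H_X\otimes B)^G$ has no reason to asymptotically commute with $C_0(X)$, nor to have the uniform proper support you assert. (A smaller but real flaw: for Hilbert $B$-modules with $B$ infinite-dimensional, compact operators do not map the unit ball to a norm-precompact set, so your Banach--Steinhaus justification of $\lVert(1-u_n)K\rVert\to 0$ is incorrect as stated; the conclusion survives via finite-rank approximation, which is a different argument.)

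Moreover, the obstacle you flag at the end is a misdiagnosis: the correct equivariant averaging is not $\int_G g(c)\,y_n\,g(c)\,d\mu_G(g)$ but $x_n=\int_G g(c)\,g(y_n)\,g(c)\,d\mu_G(g)$, which \emph{is} $G$-equivariant by left invariance of the Haar measure, is locally compact and properly supported with support controlled by $\supp(c)$ uniformly in $n$, and hence lies in $\bC(H_X\otimes B)^G$. This is exactly what the paper does: starting from an ordinary quasicentral approximate unit $(y_n)$ of $\Compacts(H_X\otimes B)$ (where Arveson does apply, the compacts being an ideal in the algebra generated together with $C_0(X)$ and the $T^{(m)}_t$), chosen in addition so that $\lVert(1-y_n)\,c\,[T^{(m)}_t,g^{-1}(\phi_k)]\rVert<1/n$ for $g$ in the compact set $K$ of group elements relevant to the supports of $c$ and $\chi$, one gets $[\phi,x_n]\to 0$ and $(1-x_n)[T_t,\phi]\to 0$ by writing $(1-x_n)[T_t,\phi]=\int_{g\in K}g(c)(1-g(y_n))g(c)[T_t,\phi]g(\chi)\,d\mu_G(g)$ and invoking the integral estimate of Lemma \ref{lem_sum}. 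You would need either to adopt this averaging or to supply a genuinely different construction of $G$-equivariant, properly supported, asymptotically $C_0(X)$-central elements; as written, that construction is missing.
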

\begin{proof} Let $c$ be a cut-off function on $X$ and $X_0$ be the support of $c$ which is a compact subset of $X$ with $GX_0=X$.  Since $D$ is properly supported, there is $\chi\in C_c(X)$ such that $c=c\chi$ and $cT=cT\chi$ for any $T\in D$. Let $X_1$ be the support of $\chi$.  Let $K$ be a compact subset of $G$ so that $X_0\cap gX_1=\emptyset$ for $g\in G\backslash K$. Let $(\phi_n)_n$ be a countable subset of $C_0(X)$ such that $\supp(\phi_n)\subset X_0$ for all $n$ and such that $G(\phi_n)_n$ is total in $C_0(X)$. Let $(T^{(m)})_{m\geq1}$ be a countable dense subset of $D$.

Let $y_n$ be an approximate unit in $\Compacts(H_X\otimes B)$ quasi-central with respect to $C_0(X)$ so that we have $\limn\lVert[\phi, y_n]\rVert= 0$ for any $\phi \in C_0(X)$ and such that 
\[
 \lVert(1-y_n) c[T^{(m)}_t, g^{-1}(\phi_k)]\rVert < 1/n
\]
for any $g\in K$, $t\in [1, n+1]$, $1\leq m \leq n$, and $1\leq k \leq n$. Let 
\[
x_n=\int_{g\in G}g(c) g(y_n) g(c) d\mu_G(g) \in  C^*(H_X \otimes B)^G.
\]
Then, we have $(n\mapsto [\phi, x_n]) \in  C_0(\N, \Compacts(H_X\otimes B))$ for any $\phi$ in $C_0(X)$. Take any $\phi$ in $C_0(X)$ with support contained in $X_0$. For any $T\in D$, we have
\[
(1-x_n)[T_t, \phi] =\int_{g\in G}g(c) (1-g(y_n)) g(c)[T_t, \phi] g(\chi) d\mu_G(g)
\]
\[
= \int_{g\in K}g(c) (1-g(y_n)) g(c)[T_t, \phi] g(\chi) d\mu_G(g),
\]
and
\[
\lVert(1-g(y_n)) g(c)[T_t, \phi]\rVert = \lVert(1-y_n) c[T_t, g^{-1}(\phi)]\rVert.
\]
In particular, we have
\[
\lVert(1-g(y_n)) g(c)[T^{(m)}_t, \phi_k]\rVert \leq 1/n
\]
for any $g\in K$, $t\in [1, n+1]$, $1\leq m \leq n$ and $1\leq k \leq n$ and so
\[
\lVert(1-x_n)[T^{(m)}_t, \phi_k]\rVert \leq C/n
\]
for any $t\in [1, n+1]$, $1\leq m \leq n$ and $1\leq k \leq n$, where the constant $C>0$ only depends on fixed functions  $c$ and $\chi$. A desired function $x$ can be obtained by
\[
x_t= (n+1-t)x_n + (t-n)x_{n+1}
\]
for $t\in [n, n+1]$. 

\end{proof}

\begin{lemma}(c.f.\ \cite[Proposition 2.3]{QiaoRoe}, \cite[Proposition 4.2]{DWW18}) \label{lem_eta} The natural map (inclusion)
\[
\eta\colon \cD_L(H_X\otimes B)^G_{\proper} / \cC_L(H_X\otimes B)^G_{\proper}  \  \to  \cD_T(H_X \otimes B)^G_{\proper} / \cC_T(H_X \otimes B)^G_{\proper}
\]
is bijective (isomorphism).
\end{lemma}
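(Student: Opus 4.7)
The plan is to prove injectivity and surjectivity separately, leveraging the preceding two lemmas. For injectivity, this is immediate from the intersection identity $\cD_L(H_X\otimes B)^G_{\proper} \cap \cC_T(H_X\otimes B)^G_{\proper} = \cC_L(H_X\otimes B)^G_{\proper}$ established in the previous lemma: if $[T] \in \cD_L^{\proper}/\cC_L^{\proper}$ maps to zero under $\eta$, then $T \in \cC_T^{\proper}$ and therefore $T \in \cD_L^{\proper} \cap \cC_T^{\proper} = \cC_L^{\proper}$, so $[T] = 0$.

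For surjectivity, since the image of a $\ast$-homomorphism of $C^\ast$-algebras is closed, it suffices to produce lifts of generators of $\cD_T^{\proper}/\cC_T^{\proper}$. By the definition of $\cD_T^{\proper}$, properly supported functions $T \in C_{b,u}([1,\infty), D^\ast(H_X\otimes B)^G)$ generate this quotient as a $C^\ast$-algebra, so it is enough to lift such a $T$. Apply Lemma \ref{lem_ex} to the singleton $D = \{T\}$ to obtain a properly supported $x \in C_{b,u}([1,\infty), C^\ast(H_X\otimes B)^G)$ with $[x,\phi], \ (1-x)[T,\phi] \in C_0([1,\infty), \Compacts(H_X\otimes B))$ for every $\phi \in C_0(X)$. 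Set $T' := T - xT$.

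Three checks remain. First, $xT$ is valued in $D^\ast(H_X\otimes B)^G$ (of which $C^\ast(H_X\otimes B)^G$ is an ideal), so $T' \in C_{b,u}([1,\infty), D^\ast(H_X\otimes B)^G)$. Second, $T'$ is properly supported as a difference of properly supported functions, and the identity
\[
[T'_t,\phi] = (1-x_t)[T_t,\phi] - [x_t,\phi]T_t
\]
shows $\lim_{t\to\infty}\lVert[T'_t,\phi]\rVert = 0$: the first term lies in $C_0([1,\infty), \Compacts)$ by the construction of $x$, and the second satisfies $\lVert[x_t,\phi]T_t\rVert \leq \lVert[x_t,\phi]\rVert\,\lVert T_t\rVert \to 0$. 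Combined with proper support (which forces $T' - \psi_c(T') \in C_0([1,\infty), D^\ast(H_X\otimes B)^G)$ by the argument of Lemma \ref{lem_properly}), this places $T' \in \cD_L(H_X\otimes B)^G_{\proper}$. Third, the product of the locally compact $x_t$ with the pseudo-local $T_t$ is locally compact, and both factors are properly supported with compatible support bounds, so $T - T' = xT$ is valued in $C^\ast(H_X\otimes B)^G$ and properly supported, i.e.\ $xT \in \cC_T(H_X\otimes B)^G_{\proper}$. Hence $[T] = \eta([T'])$.

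The main obstacle is conceptual rather than technical: the real work has already been carried out in Lemma \ref{lem_ex}, which manufactures the locally compact, asymptotically $C_0(X)$-central ``cut-off multiplier'' $x$ that absorbs the failure of $T$ to commute with $C_0(X)$ modulo compacts. Once $x$ is in hand, the lift of $T$ is the direct splitting $T = (T - xT) + xT$ into its $\cD_L^{\proper}$ and $\cC_T^{\proper}$ parts, and the two lemmas quoted above close the argument.
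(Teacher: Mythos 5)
Your proof is correct and follows essentially the same route as the paper: injectivity from the intersection identity $\cD_L^{\proper}\cap\cC_T^{\proper}=\cC_L^{\proper}$, and surjectivity by applying Lemma \ref{lem_ex} to $D=\{T\}$ and splitting $T=(1-x)T+xT$ with $(1-x)T\in\cD_L(H_X\otimes B)^G_{\proper}$ and $xT\in\cC_T(H_X\otimes B)^G_{\proper}$. The commutator identity you use to verify $\lim_{t\to\infty}\lVert[\phi,T'_t]\rVert=0$ is exactly the one in the paper's argument.
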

\begin{proof} It is enough to show that for any $T \in   C_{b, u}([1,\infty), D^*(H_X \otimes B)^G)$ which is properly supported, there is $S \in \cD_L(H_X\otimes B)^G_{\proper} $ such that $T-S \in \cC_T(H_X \otimes B)^G_{\proper}$. 

We let $x$ as given by Lemma \ref{lem_ex} for $D=\{T\}$. Set $S=(1-x)T$. Then, we have 
\[
T -S = xT\in \cC_T(H_X \otimes B)^G_{\proper}
\]
since $x \in \cC_T(H_X \otimes B)^G_{\proper}$. We claim that $S \in  \cD_L(H_X\otimes B)^G_{\proper}$. Since $x$ and $T$ are properly supported, $S=(1-x)T$ is properly supported. Thus, it is enough to show that $\lim_{t\to \infty}\lVert[\phi, S_t]\rVert = 0$ but this follows from
\[
[\phi, S]=-[\phi, x]T + (1-x)[\phi, T]
\]
and from the property of $x$.
\end{proof}

\begin{lemma}(c.f.\ \cite[Proposition 3.5]{QiaoRoe}, \cite[Proposition 4.3]{DWW18}) \label{lem_boundary_isom} Assume that $H_X$ is of infinite multiplicity. We have $K_\ast(\cD_L(H_X\otimes B)^G_{\proper})=0$. Hence the boundary map
\[
\partial \colon K_\ast(\cD_L(H_X\otimes B)^G_{\proper}/ \cC_L(H_X\otimes B)^G_{\proper}) \to K_{\ast+1}(\cC_L(H_X\otimes B)^G_{\proper})
\]
is an isomorphism.
\end{lemma}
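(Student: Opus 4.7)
The second assertion is a direct consequence of the first: once $K_\ast(\cD_L(H_X\otimes B)^G_{\proper}) = 0$, applying the six-term exact sequence in $K$-theory to the ideal inclusion $\cC_L(H_X\otimes B)^G_{\proper} \subset \cD_L(H_X\otimes B)^G_{\proper}$ makes the boundary $\partial$ an isomorphism. So the real content is the vanishing statement, which I plan to establish by an Eilenberg swindle modeled on \cite[Prop.\ 3.5]{QiaoRoe} and \cite[Prop.\ 4.3]{DWW18}, adapted to the present equivariant setting with coefficients.

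The infinite multiplicity hypothesis yields a $G$-equivariant, $C_0(X)$-linear identification $H_X \cong H_X \otimes l^2(\N)$, and hence a sequence $(V_n)_{n \geq 0}$ of $G$-equivariant isometries of $H_X$, each commuting with the representation of $C_0(X)$, with mutually orthogonal ranges summing strongly to the identity. Extending by $1_B$ and viewing them as constant-in-$t$ multipliers, I verify that each $\mathrm{Ad}_{V_n}$ preserves the ideal $\cD_L(H_X\otimes B)^G_{\proper}$: the $G$-equivariance of $V_n$ and its commutation with $C_0(X)$ preserve $G$-invariance, pseudo-locality (via $[\phi, V_n T V_n^\ast] = V_n [\phi, T] V_n^\ast$), the asymptotic commutator estimate ($\lVert[\phi, V_n T_t V_n^\ast]\rVert \leq \lVert[\phi, T_t]\rVert \to 0$), and the proper-support condition ($\psi_c(V_n T V_n^\ast) = V_n \psi_c(T) V_n^\ast$, so that $V_n T V_n^\ast - \psi_c(V_n T V_n^\ast) \in C_0([1,\infty), D^\ast(H_X\otimes B)^G)$).

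The swindle is then implemented by the strongly convergent sum $\alpha(T) = \sum_{n \geq 1} V_n T V_n^\ast$, which I plan to show defines a $\ast$-homomorphism into the multiplier algebra $M(\cD_L(H_X\otimes B)^G_{\proper})$, together with $\tilde{\alpha}(T) = V_0 T V_0^\ast + \alpha(T) = \sum_{n \geq 0} V_n T V_n^\ast$. The shift $U = \sum_{n \geq 0} V_{n+1} V_n^\ast$ lies in $M(\cD_L(H_X\otimes B)^G_{\proper})$, is $G$-equivariant, and satisfies $U \tilde{\alpha}(T) U^\ast = \alpha(T)$, so $[\tilde{\alpha}]_\ast = [\alpha]_\ast$ on $K$-theory. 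Since the ranges of $V_0(\cdot)V_0^\ast$ and $\alpha(\cdot)$ are orthogonal, $\tilde{\alpha}$ is a block-diagonal sum of $\mathrm{Ad}_{V_0}$ and $\alpha$, whence $[\tilde{\alpha}]_\ast = [\mathrm{Ad}_{V_0}]_\ast + [\alpha]_\ast$ in $\mathrm{End}(K_\ast(\cD_L(H_X\otimes B)^G_{\proper}))$. Subtracting yields $[\mathrm{Ad}_{V_0}]_\ast = 0$; but $V_0$ is a $G$-equivariant, $C_0(X)$-linear isometry, so $\mathrm{Ad}_{V_0}$ induces the identity on $K$-theory, forcing the group to vanish.

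The hard part will be the rigorous verification that $\alpha$ lands in the multiplier algebra, i.e.\ that $\alpha(T)\,S$ and $S\,\alpha(T)$ lie in $\cD_L(H_X\otimes B)^G_{\proper}$ for every $S$ in that algebra. The asymptotic commutator estimate and the proper-support condition for $\alpha(T)$ itself follow cleanly from the identities above, but pointwise pseudo-locality of the product $\alpha(T)_t\,S_t$ is delicate, because an infinite sum of compacts with pairwise orthogonal supports need not be compact. One has to combine the pointwise pseudo-locality of $S$, the orthogonality of the $V_n$, and the $G$-compactness of $X$ (which localizes $S$ in space) to control $[\phi, \alpha(T)_t]\,S_t$ and its adjoint, and this is precisely where the arguments of \cite{QiaoRoe, DWW18} do the real technical work; I expect to import their estimates essentially verbatim into the equivariant setting.
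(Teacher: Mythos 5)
There is a genuine gap, and it sits exactly where you flag "the hard part": your swindle omits the time shift, and without it the construction fails in a way that no imported estimate can repair, because the time shift \emph{is} the technical content of the arguments in Qiao--Roe and in \cite[Proposition 4.3]{DWW18} (and of the paper's own proof). Concretely, with $V_n$ commuting with $C_0(X)$ and $\alpha(T)_t=\sum_{n}V_nT_tV_n^\ast$ (no shift), one has $[\phi,\alpha(T)_t]=\sum_n V_n[\phi,T_t]V_n^\ast$, an infinite orthogonal sum of copies of the \emph{same} compact operator $[\phi,T_t]$. For $T\in\cD_L(H_X\otimes B)^G_{\proper}$ this commutator is only required to tend to $0$ as $t\to\infty$; at finite $t$ it is generally nonzero, so the sum is a non-compact operator of norm $\lVert[\phi,T_t]\rVert$. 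Hence $\alpha(T)_t\notin D^*(H_X\otimes B)^G$ and $\alpha(T)\notin\cD_L(H_X\otimes B)^G_{\proper}$. Worse, $\alpha(T)$ is not even a multiplier of that algebra: the constant function $1$ lies in $\cD_L(H_X\otimes B)^G_{\proper}$, and $\alpha(T)\cdot 1=\alpha(T)$ would then have to lie in the algebra. And a $\ast$-homomorphism into $M(\cD_L(H_X\otimes B)^G_{\proper})$ that does not land in the algebra does not induce an endomorphism of its $K$-theory, so the identity $[\tilde\alpha]_\ast=[\mathrm{Ad}_{V_0}]_\ast+[\alpha]_\ast$ that you want to subtract is simply not available.

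The repair is the one the paper uses: set $\alpha(T)_t=\sum_{n\geq0}V_nT_{t+n}V_n^\ast$. Since $\lVert[\phi,T_{t+n}]\rVert\to0$ as $n\to\infty$, the orthogonal sum $\sum_nV_n[\phi,T_{t+n}]V_n^\ast$ converges in norm to a compact operator, so $\alpha(T)$ is pointwise pseudo-local; similarly $\alpha$ preserves $G$-equivariance, proper supportedness, and the ideal $C_0([1,\infty),D^*(H_X\otimes B)^G)$ (because $\sup_{n}\lVert T_{t+n}\rVert=\sup_{s\geq t}\lVert T_s\rVert\to0$). Thus $\alpha$ is an endomorphism of $\cD_L(H_X\otimes B)^G_{\proper}$ itself, and the swindle closes essentially as you describe, with one extra ingredient: the $n\geq1$ part of the sum is conjugate to $\alpha$ precomposed with the time shift $T_t\mapsto T_{t+1}$, which one must check is homotopic to the identity within the algebra (it is, via $T_{t+s}$, $s\in[0,1]$). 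Your reduction of the boundary-map statement to the vanishing of $K_\ast(\cD_L(H_X\otimes B)^G_{\proper})$ and your use of infinite multiplicity to produce the isometries $V_n$ are both correct.
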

\begin{proof} We have $H_X\cong H^{(0)}(X)\otimes l^2(\N)$ and let $U_n$ be isometries on $l^2(\N)$ such that
\[
\sum_{n\geq0} U_nU_n^\ast = 1.
\]
The following map is well-defined on $\cD_L(H_X\otimes B)^G_{\proper}$,
\[
\alpha\colon T \mapsto \sum_{n\geq0} U_n T_{t+n} U_n^\ast.
\]
Indeed, it maps the ideal $C_0([1, \infty), D^*(H_X\otimes B)^G)$ to itself and maps properly supported functions in $\cD_L(H_X\otimes B)^G_{\proper}$ to themselves. Once we have $\alpha$ well-defined, it is routine to show that $K_\ast(\cD_L(H_X\otimes B)^G_{\proper})=0$.
\end{proof}

\begin{lemma}(c.f.\ \cite[Proposition 3.6]{QiaoRoe}, \cite[Proposition 4.3]{DWW18})\label{lem_constant_isom} Assume that $H_X$ is of infinite multiplicity. The evaluation map at $t=1$ induces an isomorphism
\[
\mathrm{ev}_{1 \ast}\colon K_\ast(\cD_T(H_X\otimes B)^G_{\proper}/  \cC_T(H_X \otimes B)^G_{\proper} ) \cong K_\ast(D^*(H_X\otimes B)^G/C^*(H_X\otimes B)^G ).
\]
\end{lemma}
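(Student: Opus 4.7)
The plan is to reduce the statement to showing that the two evaluation maps
\[
\mathrm{ev}_1\colon \cC_T(H_X\otimes B)^G_{\proper} \to C^\ast(H_X\otimes B)^G\quad\text{and}\quad \mathrm{ev}_1\colon \cD_T(H_X\otimes B)^G_{\proper}\to D^\ast(H_X\otimes B)^G
\]
are individually $K$-theory isomorphisms. Granting this, the five-lemma applied to the corresponding short exact sequences, intertwined by $\mathrm{ev}_1$, yields the desired isomorphism on the quotients. Each of these two non-quotient evaluation maps is split by the $G$-equivariant $\ast$-homomorphism $C^\ast(H_X\otimes B)^G \hookrightarrow \cC_T(H_X\otimes B)^G_{\proper}$ (respectively, $D^\ast(H_X\otimes B)^G \hookrightarrow \cD_T(H_X\otimes B)^G_{\proper}$) sending an operator to the constant-in-$t$ function with that value; these are $\ast$-homomorphisms into the proper algebras because any properly supported operator is constantly so. Writing $\cC_T^{(0)}$ and $\cD_T^{(0)}$ for the respective kernels, it therefore suffices to show $K_\ast(\cC_T^{(0)}) = 0 = K_\ast(\cD_T^{(0)})$.

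These two vanishings are to be obtained by an Eilenberg swindle, following \cite[Proposition 6.3.3]{WY2020} and the crossed-product version in Proposition~\ref{prop_pointcase}. The assumption that $H_X$ has infinite multiplicity furnishes a decomposition $H_X \cong H_X^{(0)}\otimes l^2(\N)$ and isometries $U_n$ on $l^2(\N)$ with $\sum_n U_n U_n^\ast = 1$ (acting on the $l^2(\N)$-factor), which absorb the stabilizing amplification needed for the swindle. A shifted direct-sum endomorphism $\alpha(T)_t = \sum_{n\ge 0} U_n T_{t+n} U_n^\ast$ on the kernel is expected to satisfy $[\alpha(T)] = [\alpha(T)] + [T]$ in $K$-theory by a reindexing conjugation, forcing $[T] = 0$.

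The main obstacle is to realize this swindle genuinely inside $\cC_T^{(0)}$ and $\cD_T^{(0)}$. The naive shift-sum can fail local compactness (for $\cC_T$) or pseudo-locality (for $\cD_T$), because compactness of each individual $\phi T_{t+n}$ or $[\phi, T_{t+n}]$ does not translate into norm-decay in $n$; moreover, the boundary condition $\alpha(T)_1 = 0$ required to remain in the kernel is not automatic. Following the re-parameterization technique of \cite[Theorem 3.4]{WY2021}, the plan is to replace the sharp translation $t\mapsto t+n$ by a uniformly continuous family of re-parameterizations on nested intervals, modulated by bump functions and combined with a controlled compression, so that the swindle stays inside the relevant ideal, preserves uniform continuity, proper support and (pseudo-)local compactness, and continues to satisfy the condition at $t=1$. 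Once this technical adjustment is carried out, the swindle closes and the two vanishings follow, completing the plan.
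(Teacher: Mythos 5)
Your reduction is sound and, modulo packaging, is the same as the paper's: both routes come down to showing that the two kernels $\cC_T^{0}:=\ker\bigl(\mathrm{ev}_1|_{\cC_T(H_X\otimes B)^G_{\proper}}\bigr)$ and $\cD_T^{0}:=\ker\bigl(\mathrm{ev}_1|_{\cD_T(H_X\otimes B)^G_{\proper}}\bigr)$ have vanishing $K$-theory, using the constant-function splittings of $\mathrm{ev}_1$. The gap is in the swindle itself. The forward shift $\alpha(T)_t=\sum_{n\ge0}U_nT_{t+n}U_n^\ast$ you write down is the wrong operator: as you yourself observe, it does not vanish at $t=1$, and since at each $t$ infinitely many summands are nonzero with no norm decay in $n$, it destroys local compactness (for $\cC_T^{0}$) and pseudo-locality (for $\cD_T^{0}$). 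These are not genuine difficulties of the problem but artifacts of the choice of shift, and the re-parameterization machinery of \cite[Theorem 3.4]{WY2021} that you invoke to repair them is both unexecuted and unnecessary here --- that technique is needed elsewhere in the paper to pass from uniformly continuous to merely continuous functions, whereas $\cC_T$ and $\cD_T$ already sit inside $C_{b,u}$.

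The correct swindle, and the one the paper uses, is the backward shift: $\alpha(T)_t=\sum_{n\ge0}U_nT_{t-n}U_n^\ast$, where an element of the kernel (so $T_1=0$) is extended by zero for $t<1$. This resolves every obstacle you list at once. First, $\alpha(T)_1=U_0T_1U_0^\ast=0$, so $\alpha$ maps the kernel to itself. Second, at each fixed $t$ only the finitely many indices $n\le t-1$ contribute, so on the dense subalgebra of properly supported elements of the kernel, $\alpha(T)_t$ is a finite sum of locally compact (resp.\ pseudo-local), $G$-equivariant, properly supported operators sitting in orthogonal blocks $U_n\,\cdot\,U_n^\ast$; hence $\alpha$ preserves $\cC_T^{0}$ and $\cD_T^{0}$, and uniform norm-continuity in $t$ is preserved because the blocks are orthogonal and each summand has the same modulus of continuity as $T$. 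The standard decomposition of $\alpha$ as $\mathrm{Ad}_{U_0}$ plus a copy of $\alpha$ conjugated by the shift isometry $\sum_nU_{n+1}U_n^\ast$ then gives $\alpha_\ast=\mathrm{id}_\ast+\alpha_\ast$ on $K$-theory, forcing $K_\ast(\cC_T^{0})=K_\ast(\cD_T^{0})=0$. With this replacement your argument closes and agrees with the paper's proof.
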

\begin{proof} Let $\cD^0_T(H_X\otimes B)^G_{\proper}$ be the kernel of the evaluation map $\mathrm{ev}_{1}$ on $\cD_T(H_X\otimes B)^G_{\proper}$ and let $\cC^0_T(H_X\otimes B)^G_{\proper}=\cD^0_T(H_X\otimes B)^G_{\proper}\cap  \cC_T(H_X \otimes B)^G_{\proper}$. We have a split short exact sequence
\[
0 \to \cD^0_T(H_X\otimes B)^G_{\proper}/\cC^0_T(H_X\otimes B)^G_{\proper} 
\]
\[
\to  \cD_T(H_X\otimes B)^G_{\proper}/  \cC_T(H_X \otimes B)^G_{\proper}\to  D^*(H_X\otimes B)^G/C^*(H_X\otimes B)^G  \to 0.
\]
Thus, it suffices to show that $K_\ast( \cD^0_T(H_X\otimes B)^G_{\proper})=0$ and $K_\ast( \cC^0_T(H_X\otimes B)^G_{\proper})=0$.

Let $U_n$ be isometries on $H_X\otimes B$ as in the proof of Lemma \ref{lem_boundary_isom}. The following map is well-defined on $\cC^0_T(H_X\otimes B)^G_{\proper}$ and on $\cD^0_T(H_X\otimes B)^G_{\proper}$,
\[
\alpha\colon T \mapsto \sum_{n\geq0} U_n T_{t-n} U_n^\ast
\]
where functions $T$ on $[1,\infty)$ vanishing at $t=1$ are constantly extended to the left by zero. Indeed, it maps properly supported functions in $\cC^0_T(H_X\otimes B)^G_{\proper}$, resp. in $\cD^0_T(H_X\otimes B)^G_{\proper}$, vanishing at $t=1$ to themselves and it is not to hard to see that they form a dense subalgebra of $\cC^0_T(H_X\otimes B)^G_{\proper}$, resp. of $\cD^0_T(H_X\otimes B)^G_{\proper}$.

 Once we have $\alpha$ well-defined, it is routine to show that $K_\ast(\cC^0_T(H_X\otimes B)^G_{\proper})=0$ and $K_\ast(\cD^0_T(H_X\otimes B)^G_{\proper})=0$.
\end{proof}

Let $\pi\colon C_0(X) \to \Linears(H_X\otimes B)$ be the structure map for the $X$-$G$-module $H_X\otimes B$. We define:
\begin{itemize}
\item $C^*(\pi)^G$ is the $C^*$-subalgebra of $\Linears(H_X\otimes B)$ consisting of $G$-equivariant, locally compact operators.
\item $D^*(\pi)^G$ is the $C^*$-subalgebra of $\Linears(H_X\otimes B)$ consisting of $G$-equivariant, pseudo-local operators.
\end{itemize}

The following is a standard fact (c.f.\ \cite[Lemma 6.2]{HigsonRoe95},  \cite[Lemma 5.8]{Roe96}, \cite[Lemma 12.3.2]{HigsonRoe}).
\begin{lemma}\label{lem_eta0} The natural map 
\[
\eta_0\colon D^*(H_X\otimes B)^G/ C^*(H_X\otimes B)^G \to D^*(\pi)^G/C^*(\pi)^G
\]
is bijective (isomorphism).
\end{lemma}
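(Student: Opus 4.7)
The map $\eta_0$ is well-defined since the natural inclusion $D^*(H_X\otimes B)^G \hookrightarrow D^*(\pi)^G$ sends $C^*(H_X\otimes B)^G$ into $C^*(\pi)^G$. To prove $\eta_0$ is bijective, the plan is to use the $G$-equivariant u.c.p.\ averaging $\psi_c(T) = \int_G g(c) T g(c)\, d\mu_G(g)$ associated to a cut-off function $c\in C_c(X)$ to produce a two-sided set-theoretic inverse $[T]\mapsto [\psi_c(T)]$; since $\eta_0$ is a $\ast$-homomorphism, this forces it to be an isomorphism.

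First I would record three essentially routine facts. (a) $\psi_c$ sends $C^*(\pi)^G$ into $C^*(H_X\otimes B)^G$ and $D^*(\pi)^G$ into $D^*(H_X\otimes B)^G$: pseudo-locality of $\psi_c(T)$ follows from the identity $[\phi, g(c)Tg(c)] = g(c)[\phi, T]g(c)$; local compactness follows from the fact that for any $\phi\in C_c(X)$ we have $\phi g(c)=0$ for $g$ outside a compact subset of $G$, which localizes the integral defining $\phi\psi_c(T)$ to a norm-convergent integral of compact operators; proper support follows from $G$-compactness of $X$ and compactness of $\mathrm{supp}(c)$. (b) For $T\in D^*(\pi)^G$ one has $T-\psi_c(T)=\int_G g(c)[g(c), T]\, d\mu_G(g) \in C^*(\pi)^G$, by the same localization argument applied to $\phi(T-\psi_c(T))$ together with compactness of $[g(c), T]$. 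These facts yield a well-defined map $[\psi_c]\colon D^*(\pi)^G/C^*(\pi)^G \to D^*(H_X\otimes B)^G/C^*(H_X\otimes B)^G$ and the identity $\eta_0\circ[\psi_c]=\mathrm{id}$.

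The heart of the argument is the reverse identity: for any $T\in D^*(H_X\otimes B)^G$ one needs $T-\psi_c(T) \in C^*(H_X\otimes B)^G$. Since $\psi_c$ is contractive and $C^*(H_X\otimes B)^G$ is norm-closed, it suffices to verify this on the norm-dense $\ast$-subalgebra of $G$-equivariant, properly supported, pseudo-local operators (this set is closed under sums, products and adjoints). For such $T$, proper support allows me to pick $\tilde\chi\in C_c(X)$ equal to $1$ on a compact set $B$ controlling $\chi_{\mathrm{supp}(c)}T$ and $T\chi_{\mathrm{supp}(c)}$, so that $g(c)[g(c), T] = g(\tilde\chi)\,g(c)[g(c), T]\,g(\tilde\chi)$ for every $g\in G$. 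Lemma \ref{lem_sum}, applied with $\phi_0=\phi_1=\tilde\chi$ and $T_g=g(c)[g(c), T]$ (a uniformly bounded family of compact operators, since $T$ is pseudo-local), then shows that the integral $\int_G g(c)[g(c), T]\, d\mu_G(g)$ converges in norm to a $G$-equivariant, properly supported, locally compact operator, i.e.\ to an element of $C^*(H_X\otimes B)^G$. Combined with the previous paragraph, this gives $[\psi_c]\circ\eta_0=\mathrm{id}$ as well, so $\eta_0$ is bijective.

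The main obstacle is the reverse identity just discussed: one must exploit proper support of $T$ in a careful way to localize the otherwise only weakly convergent integral $\int_G g(c)[g(c), T]\, d\mu_G(g)$ enough for Lemma \ref{lem_sum} to apply, thereby exhibiting the result as properly supported and not merely as a $G$-equivariant locally compact operator.
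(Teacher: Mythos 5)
Your proof is correct and follows essentially the same route as the paper: both use the averaging $\psi_c(T)=\int_G g(c)Tg(c)\,d\mu_G(g)$ to invert $\eta_0$, and both hinge on showing $T-\psi_c(T)\in C^*(H_X\otimes B)^G$ for $T\in D^*(H_X\otimes B)^G$ by first treating properly supported $T$ and then extending by norm-continuity of $T\mapsto T-\psi_c(T)$. The paper phrases injectivity as the identity $D^*(H_X\otimes B)^G\cap C^*(\pi)^G=C^*(H_X\otimes B)^G$ rather than as a two-sided inverse, but the underlying computations are identical.
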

\begin{proof} Surjectivity: Let $T\in D^*(\pi)^G$. For a cut-off function $c$ on $X$, consider
\[
T'= \int_{g\in G}g(c) T g(c) d\mu_G(g).
\]
Then, we have $T' \in D^*(H_X\otimes B)^G$ and $T-T' \in C^*(\pi)^G$. Injectivity: we need to show that $D^*(H_X\otimes B)^G\cap C^*(\pi)^G =  C^*(H_X\otimes B)^G$. Note that this is not a trivial consequence of the definitions. We first claim that $T - T'  \in C^*(H_X\otimes B)^G$ for any $T\in D^*(H_X\otimes B)^G$. In fact, if $T$ is properly supported, this is easy to see since then, $T -T'$ is $G$-equivariant, locally compact and properly supported. The general case follows from the continuity of the map $T\mapsto T-T'$. Let $T \in D^*(H_X\otimes B)^G\cap C^*(\pi)^G$. Then, $T\in C^*(H_X\otimes B)^G$ follows from $T'\in  C^*(H_X\otimes B)^G$ which is easy to see since $T'$ is $G$-equivariant, locally compact and properly supported.
\end{proof}

\begin{proposition}\label{prop_sequence_isom}   Assume that $H_X$ is of infinite multiplicity. We have the following sequence of isomorphisms.
\[
\xymatrix{
K_\ast(D^*(\pi)^G/C^*(\pi)^G )  \ar[r]_-{\cong}^-{\eta_0^{-1}}  & K_\ast(D^*(H_X\otimes B)^G/C^*(H_X\otimes B)^G )  \ar[d]^-{\cong}_-{\iota}   \\ 
&               K_\ast(\cD_T(H_X\otimes B)^G_{\proper}/  \cC_T(H_X \otimes B)^G_{\proper})     \ar[d]^-{\cong}_{\eta^{-1}}    \\
  K_{\ast+1}(\cC_L(H_X\otimes B)^G_{\proper})  & \ar[l]^-{\cong}_-{\partial} K_\ast(\cD_L(H_X\otimes B)^G_{\proper}/ \cC_L(H_X\otimes B)^G_{\proper}) 
}       
\]
where $\iota$ is the natural inclusion as constant functions.
\end{proposition}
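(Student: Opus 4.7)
The plan is simply to assemble the four K-theoretic isomorphisms that have just been established, reading the diagram from top to bottom. No new construction is needed, only bookkeeping to verify that each arrow lands in the right place and is known to be an isomorphism under the standing infinite-multiplicity hypothesis on $H_X$.

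First, Lemma \ref{lem_eta0} asserts that the inclusion
\[
\eta_0 \colon D^*(H_X\otimes B)^G/C^*(H_X\otimes B)^G \to D^*(\pi)^G/C^*(\pi)^G
\]
is an isomorphism of $C^*$-algebras (not merely on $K$-theory), so the top horizontal arrow $\eta_0^{-1}$ in the diagram is an isomorphism on $K$-theory. Next, the vertical map $\iota$ is induced by the inclusion of constant functions; at the level of algebras we have the splitting $\mathrm{ev}_1 \circ \iota = \mathrm{id}$ on the quotient $D^*(H_X\otimes B)^G / C^*(H_X\otimes B)^G$, and Lemma \ref{lem_constant_isom} shows that $\mathrm{ev}_{1\ast}$ is an isomorphism under the infinite-multiplicity assumption. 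Hence $\iota_\ast$ is also an isomorphism (its inverse).

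For the third arrow, Lemma \ref{lem_eta} shows that the map
\[
\eta\colon \cD_L(H_X\otimes B)^G_{\proper}/\cC_L(H_X\otimes B)^G_{\proper} \to \cD_T(H_X\otimes B)^G_{\proper}/\cC_T(H_X\otimes B)^G_{\proper}
\]
is an isomorphism of $C^*$-algebras, again giving an isomorphism of $K$-groups via $\eta^{-1}$ after passing to $K$-theory. Finally, the bottom arrow $\partial$ is the connecting homomorphism of the short exact sequence
\[
0 \to \cC_L(H_X\otimes B)^G_{\proper} \to \cD_L(H_X\otimes B)^G_{\proper} \to \cD_L(H_X\otimes B)^G_{\proper}/\cC_L(H_X\otimes B)^G_{\proper} \to 0,
\]
and by Lemma \ref{lem_boundary_isom} the Eilenberg-swindle argument (which requires exactly the infinite-multiplicity decomposition $H_X \cong H_X^{(0)}\otimes \ell^2(\mathbb{N})$) gives $K_\ast(\cD_L(H_X\otimes B)^G_{\proper}) = 0$, so the six-term exact sequence degenerates and $\partial$ is an isomorphism. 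Composing the four isomorphisms yields the required identification. There is no genuine obstacle here; the only thing to keep track of is that the infinite multiplicity assumption on $H_X$ is used in two of the four steps (Lemmas \ref{lem_boundary_isom} and \ref{lem_constant_isom}), which is why it is stated as a hypothesis of the proposition.
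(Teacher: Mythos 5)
Your proof is correct and follows exactly the paper's route: the paper's own proof is simply ``Combine Lemma \ref{lem_eta}, Lemma \ref{lem_boundary_isom}, Lemma \ref{lem_constant_isom} and Lemma \ref{lem_eta0}.'' Your more detailed bookkeeping — including the observation that $\iota_\ast$ is the inverse of $\mathrm{ev}_{1\ast}$ and that infinite multiplicity enters only through Lemmas \ref{lem_boundary_isom} and \ref{lem_constant_isom} — is accurate and matches the intended argument.
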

\begin{proof} Combine Lemma \ref{lem_eta}, Lemma \ref{lem_boundary_isom}, Lemma \ref{lem_constant_isom} and Lemma \ref{lem_eta0}.
\end{proof}



\section{$\rho_X$ is an isomorphism, part III}

 Let us consider, in general, an $X$-$G$-Hilbert $B$-module $\E$, that is a (countably generated) $G$-Hilbert $B$-module equipped with a non-degenerate representation of the $G$-$C^*$-algebra $C_0(X)$. The support and the  propagation of operators between two $X$-$G$-Hilbert $B$-modules are defined in an obvious way. We let $\tilde \E$ be an $X$-$G$-Hilbert $B$-module $\E\otimes L^2(G)$ equipped with a unitary representation 
 \[
 G\ni g\mapsto u_g\otimes \lambda_g
 \]
 where $u_g$ is the unitary on $\E$ corresponding to $g\in G$ and $\lambda_g$ is the left-translation by $g$ and with the structure map
 \[
 C_0(X) \ni \phi \mapsto \phi\otimes 1.
 \]
For any cut-off function $c$ on $X$, we have a $G$-equivariant isometry $V_c\in \Linears(\E, \tilde \E)$ defined by sending $v \in \E$ to
\[
(V_c(v)\colon h \mapsto  h(c)v ) \in \tilde \E=L^2(G, \E).
\]
The isometry $V_c$ is a strict cover of the identity, i.e. it commutes with $C_0(X)$.
\begin{lemma}\label{lem_absorbing} Let $G$ be a locally compact group and $X$ be a $G$-compact, proper $G$-space. Let $H_X$ be an $X$-$G$-module which is ample as an $X$-module and let $\E$ be the $X$-$G$-Hilbert $B$-module $\tilde H_X\otimes B$. For any $X$-$G$-Hilbert $B$-module $\E_0$, there is a sequence $V_n$ of $G$-equivariant isometries in $\Linears(\E_0, \E)$ such that for any $\phi \in C_0(X)$, 
\begin{itemize}
\item $[V_n, \phi] \in \Compacts(\E_0, \E)$,
\item $\lim_{n\to \infty}\lVert[V_n, \phi]\rVert = 0$.
\end{itemize}
\end{lemma}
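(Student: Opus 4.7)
The plan is to produce $V_n$ as a composition of two $G$-equivariant isometries, separating the $G$-equivariance from the small-commutator property. Fix a cut-off function $c \in C_c(X)$ and, following formula \eqref{eq_V_c}, define the $G$-equivariant adjointable isometry $V_c \colon \E_0 \to \E_0 \otimes L^2(G)$ by $(V_c v)(h) = h(c)\, u_h^{-1} v$, where $\E_0 \otimes L^2(G)$ carries the $X$-$G$-Hilbert $B$-module structure analogous to $\bar H_X$ of Definition \ref{def_amplify}. This $V_c$ strictly covers the identity, so $[V_c, \phi] = 0$ for every $\phi \in C_0(X)$. It therefore suffices to construct $G$-equivariant adjointable isometries $W_n \colon \E_0 \otimes L^2(G) \to \tilde H_X \otimes B$ satisfying $\lVert[W_n, \phi]\rVert \to 0$ and $[W_n, \phi] \in \Compacts$ for every $\phi \in C_0(X)$, and then set $V_n = W_n V_c$.

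To construct $W_n$, I would adapt the proof of Lemma \ref{lem_universal} and Proposition \ref{prop_universal} to the Hilbert $B$-module setting. Using the $G$-equivariant Kasparov stabilization theorem, every countably generated $X$-$G$-Hilbert $B$-module embeds as a $G$-equivariantly complemented submodule of a standard module of the form $H \otimes B$, where $H$ is a $G$-Hilbert space with a suitable $C_0(X)$-action; since $H_X$ is ample as an $X$-module and $\tilde H_X = H_X \otimes L^2(G)$, one can realize this standard module inside $\tilde H_X \otimes B$ by a Hilbert-$B$-module isometry with small propagation along $X^+$, using the $L^2(G)$ factor for the $G$-equivariantization as in the proof of Lemma \ref{lem_universal}. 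The patching argument (using isometries with mutually orthogonal ranges built from the ampleness of $H_X$) then produces a sequence $W_n$ with $\mathrm{prop}(W_n) \to 0$ with respect to any fixed metric on $X^+$.

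The norm estimate $\lVert[W_n, \phi]\rVert \to 0$ follows from the uniform continuity of $\phi$ on $X^+$ combined with $\mathrm{prop}(W_n) \to 0$, by the same argument as in Lemma \ref{lem_commutator} applied to Hilbert $B$-modules.

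The main obstacle will be verifying $[W_n, \phi] \in \Compacts(\E_0 \otimes L^2(G), \tilde H_X \otimes B)$, since propagation alone controls norm but not compactness of operators between Hilbert $B$-modules. The resolution is that $W_n$, built via cut-off-style averaging and the patching construction, will be properly supported in the $B$-module sense, meaning $\chi_K W_n$ lies in a compactly generated sub-$B$-module of $\chi_{K'} \tilde H_X \otimes B$ for compact $K' \supset K$. For $\phi \in C_c(X)$ with support $K$, this makes $[W_n, \phi]$ factor through a compact operator between sub-$B$-modules; density of $C_c(X)$ in $C_0(X)$ together with the norm-closedness of $\Compacts(\E_0, \E)$ and the norm bound above then extends the conclusion to all $\phi \in C_0(X)$.
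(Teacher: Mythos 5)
Your overall architecture matches the paper's: factor $V_n$ as a strict-cover $G$-equivariant isometry built from a cut-off function, composed with a diagonal ($g\mapsto g(W_n)$) amplification of non-equivariantly constructed isometries $W_n$, and reduce the estimates to the compact pieces of a partition of unity on $X^+$. The gap is in the step you yourself flag as the main obstacle: the compactness of $[W_n,\phi]$. Neither Kasparov stabilization nor propagation control can deliver it. Stabilization gives an isomorphism $\E_0\oplus(H\otimes B)\cong H\otimes B$ of Hilbert modules but says nothing about commutators with the $C_0(X)$-representation; and an isometry with small propagation has small commutator norm, but its commutator with $\phi\in C_c(X)$ is a difference of two non-compact operators with no reason to be compact. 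Your proposed fix --- that $\chi_K W_n$ lands in a ``compactly generated sub-$B$-module'' so that $[W_n,\phi]$ ``factors through a compact operator'' --- does not work: a countably generated Hilbert $B$-module supports plenty of non-compact adjointable operators, and proper support constrains only where an operator lives, not its compactness. (If you instead meant finitely generated projective submodules, then $\chi_K W_n$ itself would be compact, which is false for an isometry on an infinite piece of $\E_0$.)

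The missing ingredient is a genuine absorption theorem: for a compact space $X_i$ and an ample $X_i$-module $H_{X_i}$, every $X_i$-Hilbert $B$-module $\E_0^{(i)}$ admits isometries $W_n^{(i)}\colon \E_0^{(i)}\to H_{X_i}\otimes B$ with $[W_n^{(i)},\phi]\in\Compacts$ and $\lVert[W_n^{(i)},\phi]\rVert\to 0$; this is \cite[Theorem 2.3]{DWW18} and is the Voiculescu-type analytic core of the lemma. The paper applies it on the supports $X_i$ of a finite partition of unity subordinate to small balls, sums the resulting isometries with mutually orthogonal ranges to get $W_n\colon\E_0\to H_X\otimes B$ which is in addition properly supported relative to the cut-off function, and then restores $G$-equivariance by the diagonal construction, verifying compactness and decay of $[V_n,\phi]$ via an integral over a compact subset $K\subset G$ together with Lemma \ref{lem_sum}. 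If you replace your stabilization-plus-propagation step with this input (or an independent proof of it), the rest of your outline goes through.
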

\begin{proof} Let $c \in C_c(X)$ be a cut-off function and $X_0$ be the support of $c$.

Let $W_n \in \Linears(\E_0, H_X\otimes B)$ be a sequence of isometries satisfying the following three conditions: for any $\phi$ in $C_0(X)$,
\begin{itemize}
\item $[W_n, \phi] \in \Compacts(\E_0, H_X\otimes B)$,
\item $\limn\lVert[W_n, \phi]\rVert = 0$,
\item $W_nc = \chi W_n c$ for some $\chi \in C_c(X)$. 
\end{itemize}
We explain how we can find such $W_n$. Fix a metric $d$ on $X^+$ and let $\delta>0$ be small enough so that the $\delta$-neighborhood of $X_0$ is relatively compact in $X$.  Let $(\phi_i)_{i\in S}$ be a finite partition of unity in $C(X^+)$ so that the support of each $\phi_i$ is contained in a ball of radius $\delta/2$. Let $X_i$ be the support of $\phi_i$ and let $\E_0^{(i)}=C(X^+)\phi_i\E_0$ which is naturally an $X_i$-module. Then, we have an isometry
\[
\Phi\colon \E_0 \to \bigoplus_{i\in S} \E_0^{(i)}, \,\,\, v\mapsto (\phi^{1/2}_iv).
\]
Let $H_{X_i}=C(X^+)\phi_iH_X$ which is naturally an ample $X_i$-module. We take a sequence $W_n^{(i)} \in \Linears(\E^{(i)}_0, H_{X_i}\otimes B)$ of isometries satisfying, 
\begin{itemize}
\item For each $n$, $(W_n^{(i)})_{i \in S}$ have mutually orthogonal ranges in $H_X\otimes B$,
\item $[W^{(i)}_n, \phi] \in \Compacts(\E_0, H_{X_i}\otimes B)$ for any $\phi$ in $C(X_i)$,
\item $\limn\lVert[W^{(i)}_n, \phi]\rVert = 0$  for any $\phi$ in $C(X_i)$.
\end{itemize}
Such isometries exist because $H_{X_i}$ is an ample $C(X_i)$-module (see \cite[Theorem 2.3]{DWW18}). Let $W_n$ be the composition of $\Phi$ and the (orthogonal) sum of $W_n^{(i)}$. We can see that $W_n$ are isometries satisfying the three conditions above.

We now define a sequence $V_n$ of $G$-equivariant isometries in $\Linears(\E_0, \E)$ as the composition 
\[
V_n=\tilde W_nV_c\colon \E_0 \to \tilde \E_0 \to \tilde H_X\otimes B
\]
where a $G$-equivariant isometry $\tilde W_n\colon  \tilde \E_0 \to \tilde H_X\otimes B$ is of the form
\[
(\tilde W_n\colon g \mapsto  g(W_n) ) \in C_{b, \mathrm{SOT}^*}(G, \Linears(\E_0, H_X\otimes B)).
\]
Here, $g(W_n)=u'_gW_nu^{-1}_{g}$ where $u'_g$ (resp. $u_g$) is the unitary on $H_X\otimes B$, (resp. on  $\E_0$) corresponding to $g$ in $G$. Note that $g(W_n)$ is adjointable although $u'_g$ and $u_g$ may not. We also note that $g\mapsto g(W_n)$ is not necessarily norm-continuous ($W_n$ may not be $G$-continuous). 

We show that $V_n$ satisfies the two desired conditions. Concretely, $V_n$ maps $v\in \E_0$ to
\[
(h\to h(\chi) h(W_n) h(c) v) \in L^2(G, H_X\otimes B).
\] 
Let $\phi\in C_c(X)$. Then, $[V_n, \phi]$ maps $v\in \E_0$ to
\[
(h\to h(\chi) [h(W_n), \phi] h(c) v) \in L^2(G, H_X\otimes B)
\]
and we note that $h(\chi) [h(W_n), \phi] h(c)=0$ for $h\in G\backslash K$ for some compact subset $K$ of $G$ which only depends on the support of $\phi$ (and $c$ and $\chi$) but not on $n$. Now, we have
\[
[V_n, \phi]^\ast [V_n, \phi] = \int_{h\in K} h(c)[h(W_n), \phi]^\ast h(\chi^2) [h(W_n), \phi] h(c) d\mu_G(h).
\]
We note $(h \mapsto [h(W_n), \phi]) \in C_b(G, \Compacts(\E_0, H_X\otimes B))$ since we have $(h\mapsto [W_n, h^{-1}(\phi)]) \in C_b(G, \Compacts(\E_0, H_X\otimes B))$. We also have $\limn\lVert[W_n, h^{-1}(\phi)]\rVert= 0$ uniformly in $h \in K$. From these, we see that 
\begin{itemize}
\item $[V_n, \phi]^\ast [V_n, \phi] \in \Compacts(\E_0)$,
\item $\limn\lVert[V_n, \phi]^\ast [V_n, \phi] \rVert = 0$.
\end{itemize}
We are done.
 \end{proof}

Let $H_X$ be an $X$-$G$-module which is ample as an $X$-module. Let
\[
\pi\colon C_0(X) \to \Linears(\tilde H_X\otimes B)
\]
be the structure map for the $X$-$G$-Hilbert $B$-module $\tilde H_X\otimes B$. Note that we may view $\pi$ as a non-degenerate representation
\[
\pi\colon C_0(X) \to M(\Compacts(\tilde H_X)\otimes B)
\]
and $B'=\Compacts(\tilde H_X)\otimes B$ is $G$-stable in a sense that $B'\otimes \Compacts(L^2(G)) \cong B'$ as $G$-$C^*$-algebras.
 
In general, for any separable $G$-$C^*$-algebra $A$ and for any separable $G$-stable $G$-$C^*$-algebra $B'$, let us say that a non-degenerate, $G$-equivariant representation
\[
\pi\colon A \to M(B')
\]
is non-degenerately $G$-equivariantly absorbing (c.f.\ \cite[Section 2]{Thomsen01}) if for any non-degenerate $G$-equivariant representation 
\[
\pi_0\colon A\to M(B'),
\]
there is a sequence $u_n$ of $G$-equivariant unitaries in $\Linears(B'\oplus B', B')$ such that for any $a\in A$,
\begin{itemize}
\item $u_n(\pi_0(a)\oplus \pi(a)) - \pi(a)u_n \in \Compacts(B'\oplus B', B')$,
\item $\limn\lVert u_n(\pi_0(a)\oplus \pi(a)) - \pi(a)u_n \rVert = 0$.
\end{itemize}

It is routine to deduce from Lemma \ref{lem_absorbing} that the structure map $\pi$ for $\tilde H_X\otimes B$ is non-degenerately $G$-equivariantly absorbing.
\begin{proposition}\label{prop_absorbing} For any $X$-$G$-module $H_X$ which is ample as an $X$-module, the structure map 
\[
\pi\colon C_0(X) \to M(B')
\]
for the $X$-$G$-module $\tilde H_X\otimes B$ where $B'=\Compacts(\tilde H_X)\otimes B$, is non-degenerately $G$-equivariantly absorbing.
 \end{proposition}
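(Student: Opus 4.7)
The plan is to reduce the statement to Lemma~\ref{lem_absorbing} via a routine translation between representations into multiplier algebras and $X$-$G$-Hilbert $B$-modules. First I would identify $B' = \Compacts(\tilde H_X)\otimes B$ with $\Compacts(\tilde H_X\otimes B)$ as $G$-$C^*$-algebras, using that the $G$-action on $\Compacts(\tilde H_X)$ is inner; this gives $M(B') \cong \Linears(\tilde H_X\otimes B)$. A non-degenerate $G$-equivariant $\pi_0 \colon C_0(X) \to M(B')$ then amounts to equipping $\tilde H_X \otimes B$ with a second $X$-$G$-Hilbert $B$-module structure $\E_0$, while $\pi$ gives the original structure $\E$. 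Via the Morita equivalence between Hilbert $B'$-modules and Hilbert $B$-modules induced by the imprimitivity bimodule $\tilde H_X \otimes B$, the pair $(B'\oplus B', B')$ corresponds to $(\E_0 \oplus \E, \E)$, with $G$-equivariant adjointable operators, compact operators and unitaries all corresponding. It therefore suffices to produce a sequence of $G$-equivariant unitaries $\tilde u_n \in \Linears(\E_0 \oplus \E, \E)$ satisfying
\[
\tilde u_n \bigl(\pi_0(\phi) \oplus \pi(\phi)\bigr) - \pi(\phi)\tilde u_n \in \Compacts(\E_0 \oplus \E, \E)
\]
with norm tending to $0$ for every $\phi \in C_0(X)$.

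The second step is to apply Lemma~\ref{lem_absorbing} directly to the $X$-$G$-Hilbert $B$-module $\E_0 \oplus \E$ (equipped with structure map $\pi_0 \oplus \pi$) and the target $\E = \tilde H_X \otimes B$; this yields a sequence $V_n$ of $G$-equivariant isometries in $\Linears(\E_0 \oplus \E, \E)$ satisfying the commutator estimate above. To finish, one must upgrade these isometries to unitaries. Because $H_X$ is ample, and hence infinite-dimensional as a Hilbert space, Fell's absorption principle identifies $\tilde H_X$ with a countably infinite direct sum of copies of the left regular representation of $G$, so that $\tilde H_X \oplus \tilde H_X \cong \tilde H_X$ and $\tilde H_X \otimes l^2(\N) \cong \tilde H_X$ as $G$-Hilbert spaces. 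In particular, $B'$ is $G$-stable, $\E_0 \oplus \E \cong \E$ as $G$-equivariant Hilbert $B$-modules, and $M(B')$ contains a countable $G$-equivariant system of isometries with mutually orthogonal ranges summing to~$1$.

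The main obstacle will be this final surjectivity upgrade: the range projections $p_n = V_n V_n^* \in \Linears(\E)^G$ only asymptotically commute with $\pi(C_0(X))$, so the residual complement $(1-p_n)\E$ must be matched $G$-equivariantly with $\E$ in a way that is approximately $C_0(X)$-intertwining. Using the $G$-equivariant system of isometries from the previous paragraph, a standard Cuntz-algebra style calculation inside the stable algebra $M(B')$ produces $G$-equivariant partial isometries that fill the gap between $V_n(\E_0\oplus\E)$ and $\E$ while disturbing the commutator with $\pi(\phi)$ only by compact operators of vanishing norm. Combining these corrections with $V_n$ then yields the desired unitaries $\tilde u_n$, and transferring back through the Morita equivalence produces the required sequence $u_n \in \Linears(B' \oplus B', B')$, establishing that $\pi$ is non-degenerately $G$-equivariantly absorbing.
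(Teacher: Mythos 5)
Your setup is sound and matches the paper's: identifying $M(B')$ with $\Linears(\tilde H_X\otimes B)$, translating non-degenerate $G$-equivariant representations $\pi_0$ into $X$-$G$-Hilbert $B$-module structures, and invoking Lemma \ref{lem_absorbing} to produce approximately intertwining $G$-equivariant isometries is exactly the intended reduction. The genuine gap is in your final step, the upgrade from isometries to unitaries. You apply Lemma \ref{lem_absorbing} to $\E_0\oplus\E$ directly and then propose to ``fill the gap'' $(1-V_nV_n^*)\E$ using a Cuntz-type family of $G$-equivariant isometries witnessing the stability of $B'$. But those isometries are abstract elements of $M(B')$ with no relation to $\pi(C_0(X))$: there is no reason a partial isometry built from them onto the cokernel of $V_n$ should commute with $\pi(\phi)$ even modulo compacts of small norm, and the compression of $\pi$ to $(1-V_nV_n^*)\E$ is an uncontrolled representation that you would somehow have to match against a piece of $\pi_0\oplus\pi$ on the source side. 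As described, this step does not go through.

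The paper resolves this with the standard Arveson amplification trick, which is the missing ingredient. One first replaces $\pi_0$ by its infinite amplification $\pi_0^\infty$ (so that $\pi_0\oplus\pi_0^\infty$ is \emph{exactly} unitarily equivalent to $\pi_0^\infty$, using $G$-stability of $B'$), and applies Lemma \ref{lem_absorbing} to $\pi_0^\infty$ alone to get isometries $v_n\in\Linears(B',B')$ approximately intertwining $\pi_0^\infty$ with $\pi$. Setting $p_n=1-v_nv_n^*$ and $\sigma_n=p_n\pi(\cdot)p_n$, the unitary $w_n=\mathrm{id}_{p_nB'}\oplus v_n$ exhibits $\pi$ as approximately equivalent to $\sigma_n\oplus\pi_0^\infty$; then
\[
\pi_0\oplus\pi \;\approx\; \pi_0\oplus\sigma_n\oplus\pi_0^\infty \;\cong\; \sigma_n\oplus(\pi_0\oplus\pi_0^\infty)\;\cong\;\sigma_n\oplus\pi_0^\infty\;\approx\;\pi,
\]
where the leftover compression $\sigma_n$ is carried along \emph{identically} (no intertwining of it is ever needed) and the absorption of $\pi_0$ happens through an exact unitary equivalence. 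Composing these four maps gives the unitaries $u_n$. Without the amplification, the cancellation of $\sigma_n$ is unavailable, which is precisely where your argument stalls.
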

 \begin{proof} We use a standard trick (see the proof of \cite[Corollary 2, p341]{Arveson77}). Let $\pi_0\colon C_0(X) \to M(B')$ be a non-degenerate, $G$-equivariant  representation and let $\pi_0^\infty \colon C_0(X) \to M(B')$ be its amplification so that $\pi_0\oplus \pi_0^\infty$ is unitarily equivalent to $\pi_0^\infty$. 
 Lemma \ref{lem_absorbing} gives a sequence $v_n$ of $G$-equivariant isometries in $\Linears(B', B')$ such that for any $\phi \in C_0(X)$,
\begin{itemize}
\item $v_n\pi^\infty_0(\phi) - \pi(\phi)v_n \in \Compacts(B', B')$,
\item $\limn\lVert v_n\pi^\infty_0(\phi) - \pi(\phi)v_n \rVert = 0$.
\end{itemize}
Let $p_n=1-v_nv_n^\ast$ and $\sigma_n$ be a $G$-equivariant c.c.p.\ map from $C_0(X)$ to $\Linears(p_nB')$ defined by $\sigma_n(\phi)=p_n\pi(\phi)p_n$. We set $w_n$ to be a unitary in $\Linears(p_nB' \oplus B', B')$ given by the direct sum of  the identity map on $p_nB'$ and $v_n$. Then, $w_n$ is a $G$-equivariant unitary in $\Linears(p_nB' \oplus B', B')$ such that for any $\phi \in C_0(X)$,
\begin{itemize}
\item $w_n(\sigma_n(\phi)\oplus \pi^\infty_0(\phi)) - \pi(\phi)w_n \in \Compacts(p_nB' \oplus B', B')$,
\item $\limn\lVert w_n(\sigma_n(\phi)\oplus \pi^\infty_0(\phi)) - \pi(\phi)w_n \rVert = 0$.
\end{itemize}
Define unitaries $u_n$ in $\Linears(B' \oplus B', B')$ by the composition
\[
\xymatrix{
B' \oplus B'   \ar[r]^-{1_{B'}\oplus w_n^\ast} &  B' \oplus (p_nB' \oplus B') \ar[r]^-{\cong} & p_nB' \oplus (B' \oplus B') \ar[r]^-{1_{p_nB'} \oplus w} & p_nB' \oplus B'  \ar[r]^-{w_n} &   B'
}
\]
Where $w\in \Linears(B'\oplus B', B')$ is a unitary equivalence from $\pi_0\oplus \pi_0^\infty$ to $\pi_0^\infty$. Then, we have for any $\phi \in C_0(X)$,
\begin{itemize}
\item $u_n(\pi_0(\phi)\oplus \pi(\phi)) - \pi(\phi)u_n \in \Compacts(B'\oplus B', B')$,
\item $\limn\lVert u_n(\pi_0(\phi)\oplus \pi(\phi)) - \pi(\phi)u_n \rVert = 0$.
\end{itemize}
 \end{proof}
 
For any $G$-equivariant representation
\[
\pi\colon A \to M(B'),
\]
we define $\cD(\pi)^G$ to be the $C^*$-algebra of $G$-equivariant elements $x$ in $M(B')$ such that $[x, \pi(a)] \in B'$ for any $a\in A$ and $\cC(\pi)^G$ to be the $C^*$-algebra of $G$-equivariant elements $x$ in $M(B')$ such that $x\pi(a), \pi(a)x \in B'$ for any $a\in A$.

We have a group homomorphism
\[
\Theta\colon K_1(\cD(\pi)^G/\cC(\pi)^G) \to KK_0^G(A, B')
\]
which sends a unitary $u$ in $M_n(\cD(\pi)^G/\cC(\pi)^G)$ to the following even Kasparov triple for $KK_0^G(A, B')$,
\[
( B'^{\oplus n}\oplus B'^{\oplus n}, \pi^{\oplus n } \oplus \pi^{\oplus n }, \,    \begin{bmatrix} 0  & v\\  v^\ast  & 0 \end{bmatrix} )
\]
where $v$ is any lift of $u$ in $M_n(\cD(\pi)^G)$. Similarly, we have a group homomorphism
\[
\Theta \colon K_0(\cD(\pi)^G/\cC(\pi)^G) \to KK_1^G(A, B')
\]
which sends a projection $p$ in $M_n(\cD(\pi)^G/\cC(\pi)^G)$ to the following odd Kasparov triple for $KK_1^G(A, B')$,
\[
( B'^{\oplus n}, \pi^{\oplus n },    2P-1)
\]
where $P$ is any (self-adjoint) lift of $p$ in $M_n(\cD({\pi})^G)$.

We recall some terminologies from Section 3 \cite{Thomsen05}. An even Kasparov triple $\mathcal{E}=(E, \phi, F)$ for $KK_0^G(A, B')$ is called elementary if $E=B'\oplus B'$ and it it called essential if $\phi(A)E=E$. Similarly, an odd Kasparov triple $\mathcal{E}=(E, \phi, P)$ for $KK_1^G(A, B')$ is called elementary if $E=B'$ and it it called essential if $\phi(A)E=E$. 

\begin{proposition}(\cite[Theorem 3.9, Theorem 4.2]{Thomsen05}) Let $A$ be a separable proper $G$-$C^*$-algebra and $B'$ be a separable, $G$-stable $G$-$C^*$-algebra. 
\begin{enumerate}
\item[(1a)] Every element of $KK^G_0 (A, B')$ is represented by an even Kasparov triple which is both elementary and essential.
\item [(1b)] Two elementary and essential even Kasparov triples, $\cE_1$ and $\cE_2$ define the same element of $KK^G_0 (A, B')$ if and only if there are degenerate even Kasparov triples, $\cD_1$ and $\cD_2$ for $KK^G_0 (A, B')$ which are both elementary and essential, such that $\cE_1\oplus \cD_1$ is operator homotopic to $\cE_2\oplus \cD_2$.
\item [(2a)] Every element of $KK^G_1 (A, B')$ is represented by an odd Kasparov triple which is both elementary and essential.
\item [(2b)] Two elementary and essential odd Kasparov triples, $\cE_1$ and $\cE_2$ define the same element of $KK^G_1 (A, B')$ if and only if there are degenerate odd Kasparov triples, $\cD_1$ and $\cD_2$ for $KK^G_1 (A, B')$ which are both elementary and essential, such that $\cE_1\oplus \cD_1$ is operator homotopic to $\cE_2\oplus \cD_2$.
\end{enumerate}
\end{proposition}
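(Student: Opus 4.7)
The plan is to reduce everything to two inputs: the equivariant Kasparov stabilization theorem and a $G$-equivariant Voiculescu-type absorption result. Since $B'$ is separable and $G$-stable, the stabilization theorem gives $E \oplus B' \cong B'$ as $G$-Hilbert $B'$-modules (resp.\ $E \oplus B' \oplus B' \cong B' \oplus B'$ in the even graded case) for any countably generated $G$-Hilbert $B'$-module $E$. Meanwhile, the fact that $A$ is a separable proper $G$-$C^*$-algebra ensures the existence of a non-degenerate, $G$-equivariantly absorbing representation $\pi\colon A\to M(B')$ in the sense of Proposition \ref{prop_absorbing} (the proof of which only used properness of $A$ via the cut-off function $c$).

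For (1a) and (2a), start with a Kasparov triple $(E,\phi,F)$. First make $\phi$ essential: since $A$ is proper, with cut-off function $c$ on the underlying proper $G$-space, the net $\phi\bigl(\int_K g(c)^2\,d\mu_G(g)\bigr)$ converges strictly in $\Linears(E)$ to the projection $p_0$ onto $\overline{\phi(A)E}$; then $(p_0E,\phi,p_0Fp_0)$ is an essential cycle equal to $(E,\phi,F)$ modulo an elementary degenerate summand supported on $(1-p_0)E$. Next make it elementary: applying stabilization, isomorphically replace the underlying module by $B'$ (resp.\ $B'\oplus B'$), and then absorb a copy of the essential degenerate cycle built from $\pi$, using Proposition \ref{prop_absorbing} to rewrite the direct sum as a single elementary essential cycle up to unitary equivalence. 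The odd case is identical.

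For (1b) and (2b), recall that the equivalence relation defining $KK^G_\ast(A,B')$ is generated by unitary equivalence, operator homotopy, and addition of (arbitrary) degenerate cycles. On elementary essential cycles, unitary equivalence by a $G$-equivariant unitary $u\in\Linears(B',B')$ (resp.\ on $B'\oplus B'$) can be realized by an operator homotopy after adding a further elementary essential degenerate cycle, via the standard rotation trick applied in the path-connected unitary group of $M(B')$ (which is path-connected because $B'$ is $G$-stable). A general degenerate cycle $\cD$ may be replaced by an elementary essential one using (1a)/(2a), again at the cost of adding elementary essential degenerate cycles to both $\cE_1\oplus \cD_1$ and $\cE_2\oplus \cD_2$. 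Putting these reductions together reduces any $KK$-equivalence between $\cE_1$ and $\cE_2$ to an operator homotopy through elementary essential cycles plus the addition of elementary essential degenerate cycles, which is the desired statement.

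The main obstacle is the uniqueness part, (1b) and (2b): one must verify that each intermediate cycle appearing in a homotopy or in a unitary equivalence between elementary essential cycles can itself be arranged to be elementary and essential. The crucial input is the approximate unitary equivalence of any two non-degenerate $G$-equivariantly absorbing representations of $A$ in $M(B')$, together with an Eilenberg-swindle style argument showing that direct sum with infinitely many copies of an elementary essential degenerate cycle lies in the same operator homotopy class as a single copy. This is precisely the bookkeeping carried out by Thomsen \cite{Thomsen05}, and our Proposition \ref{prop_absorbing} supplies the only $G$-equivariant ingredient specific to the present setting.
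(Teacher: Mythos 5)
Your overall strategy --- defer the combinatorial bookkeeping to Thomsen and isolate the equivariant ingredients that must be supplied when $A$ is proper rather than $G$-stable --- is the same as the paper's, but you have misidentified the ingredient that actually consumes properness of $A$, and that identification is the entire content of the proposition. The paper's proof is the single observation that the only property of $A$ used in Thomsen's argument is the \emph{essential} stabilization statement: for every countably generated $G$-Hilbert $B'$-module $E$ carrying a representation $\phi$ of $A$ with $\phi(A)E=E$, one has $E\oplus B'\cong B'$ (Thomsen's Theorem 2.8 and Corollary 2.9, proved there using $G$-stability of $A$); for proper $A$ this is supplied by \cite[Proposition 8.6]{Meyer00}. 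Your first input replaces this by the claim that $G$-stability of $B'$ alone gives $E\oplus B'\cong B'$ for \emph{arbitrary} countably generated $E$. That is not what equivariant Kasparov stabilization gives: it gives $E\oplus(\hill_G\otimes B')\cong \hill_G\otimes B'$, and the hypothesis $B'\otimes\Compacts(L^2(G))\cong B'$ does not in general allow one to identify $\hill_G\otimes B'=l^2(\N)\otimes L^2(G)\otimes B'$ with $B'$ (already for $G$ compact, absorbing one copy of $\Compacts(L^2(G))$ does not force absorption of $\Compacts(l^2(\N))$ with trivial action). If your claim were correct the hypotheses on $A$ would be superfluous and the statement would hold for every separable $A$; it is precisely the essential $A$-module structure on $E$, combined with properness of $A$ (averaging against a cut-off function) or $G$-stability of $A$, that upgrades the stabilization to $E\oplus B'\cong B'$.

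The second problem is that you name Proposition \ref{prop_absorbing} (existence of a non-degenerately $G$-equivariantly absorbing representation) as the equivariant ingredient. That proposition is established in the paper only for $A=C_0(X)$ and the specific $B'=\Compacts(\tilde H_X)\otimes B$, not for a general separable proper $A$; and, more to the point, absorption is the input for the subsequent Theorem \ref{thm_KKduality}, not for the present proposition, whose proof in Thomsen's hands needs only the essential stabilization property above. Likewise, making a cycle essential does not require the cut-off averaging you describe (one splits off the orthogonal complement of $\overline{\phi(A)E}$, which carries a degenerate cycle), so properness is not consumed at that step either. The correct repair is to discard both of your stated inputs and replace them by the single citation to Meyer's essential stabilization for proper $G$-$C^*$-algebras, which is exactly what the paper does.
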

\begin{proof} In \cite{Thomsen05}, the proof was given for $A$ which is $G$-stable but the only property of $A$ which is used for the proof is that for any $G$-equivariant representation $\phi\colon A \to \Linears(E)$ on a $G$-Hilbert $B'$-module $E$ with $\phi(A)E=E$, we have $E\oplus B'\cong B'$ (see \cite[Theorem 2.8, Corollary 2.9]{Thomsen05}). Any proper $G$-$C^*$-algebra has this property (see \cite[Proposition 8.6]{Meyer00}).
\end{proof}

\begin{theorem}(c.f.\  \cite[Theorem 3.2]{Thomsen01}) \label{thm_KKduality} Let $A$ be a separable proper $G$-$C^*$-algebra and $B'$ be a separable, $G$-stable $G$-$C^*$-algebra. Suppose a non-degenerate $G$-equivariant representation $\pi\colon A \to M(B')$ is non-degenerately $G$-equivariantly absorbing. Then, the group homomorphisms $\Theta$ induce isomorphisms 
\[
K_1(\cD(\pi)^G/\cC(\pi)^G)\cong KK_0^G(A, B') \,\,\, \text{and} \,\,\,  K_0(\cD(\pi)/\cC(\pi)^G)\cong KK_1^G(A, B').
\]
\end{theorem}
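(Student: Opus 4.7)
This is an equivariant Paschke duality, and the strategy is to mirror Thomsen's proof of the $G$-stable case \cite{Thomsen01},\cite{Thomsen05}, substituting the cited reduction of $KK^G_*(A,B')$ to essential elementary triples (valid for proper $A$) for the corresponding statement Thomsen uses for $G$-stable $A$. I will sketch the odd case ($\Theta\colon K_0(\cD(\pi)^G/\cC(\pi)^G)\to KK^G_1(A,B')$); the even case is entirely parallel.

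First I would check that $\Theta$ is a well-defined group homomorphism. Given a projection $p\in M_n(\cD(\pi)^G/\cC(\pi)^G)$, pick a self-adjoint lift $P\in M_n(\cD(\pi)^G)$. Because $P^2-P$ and $g(P)-P$ lie in $M_n(\cC(\pi)^G)$, multiplying by $\pi^{\oplus n}(a)$ lands in $M_n(B')=\Compacts(B'^{\oplus n})$, so $(B'^{\oplus n},\pi^{\oplus n},2P-1)$ is an odd Kasparov cycle. Changing the lift or moving $p$ along a continuous path of projections yields an operator homotopy, and direct sums give additivity.

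For surjectivity, by the cited theorem any $x\in KK^G_1(A,B')$ is represented by an elementary essential odd Kasparov triple $\cE=(B',\phi,F)$ with $\phi(A)B'=B'$. The non-degenerate $G$-equivariant absorption of $\pi$ provides a sequence $u_n\colon B'\oplus B'\to B'$ of $G$-equivariant unitaries with $u_n(\phi(a)\oplus\pi(a))-\pi(a)u_n\in\Compacts(B'\oplus B',B')$, tending to $0$ in norm on a dense sequence of $a\in A$. Adding a degenerate essential elementary cycle built from $\pi$ to $\cE$ (allowed by the second part of the cited theorem) and working in the stabilization $B'\otimes l^2\cong B'$ (using $G$-stability), I can apply a diagonal/telescoping construction to the $u_n$'s to produce a single $G$-equivariant unitary $U$ in $\Linears(B'\oplus B',B'\oplus B')$ (on a fresh copy) that conjugates $\phi\oplus\pi$ to $\pi\oplus\pi$ modulo elements that asymptotically annihilate the image of $\phi(A)\oplus\pi(A)$; pushing $F\oplus(2P_0-1)$ through such a $U$ yields $\tilde F\in M_2(\cD(\pi)^G)$, and $\tilde P=(1+\tilde F)/2$ defines a projection in the quotient with $\Theta([\tilde P])=x$.

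For injectivity, suppose $\Theta([p])=0$ for $p\in M_n(\cD(\pi)^G/\cC(\pi)^G)$. By the cited characterization of the zero element in $KK^G_1(A,B')$, there exist essential elementary degenerate triples $\cD_1,\cD_2$ such that $(B'^{\oplus n},\pi^{\oplus n},2P-1)\oplus\cD_1$ is operator homotopic to $\cD_2$. The same absorption trick as above lets me replace $\cD_1,\cD_2$ (degenerate triples on essential Hilbert $B'$-modules) by degenerate triples of the form $(B'^{\oplus m_i},\pi^{\oplus m_i},2P_i-1)$ with $P_i$ a projection in $M_{m_i}(\cC(\pi)^G{+}\bC)$; the operator homotopy then descends to a norm-continuous path of self-adjoint elements in $M_{n+m_1}(\cD(\pi)^G)$ whose images in the quotient are projections, giving a path in the unitary/projection space of $M_*(\cD(\pi)^G/\cC(\pi)^G)$ connecting $p$ (stabilized) to a trivial projection. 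Hence $[p]=0$ in $K_0$.

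The technical heart of the argument, and the main obstacle, is step two: upgrading the asymptotic unitary equivalence $\phi\oplus\pi\approx\pi$ (available only as a sequence $u_n$, not as a single unitary modulo compacts) into a genuine intertwining isometry or unitary inside $M(B'\otimes\Compacts(l^2))\cong M(B')$ whose ``error'' genuinely lies in $\Compacts$. This requires a careful reindexing/telescoping argument, of the same flavor as Thomsen's construction in \cite[Section 3]{Thomsen01}, together with the properness hypothesis on $A$ (which is what allows direct summands like $\phi(A)E$ to be absorbed into $B'$, as used implicitly in the cited Thomsen theorem via \cite[Proposition 8.6]{Meyer00}).
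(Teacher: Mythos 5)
Your overall strategy matches the paper's: both follow Thomsen's proof of \cite[Theorem 3.2]{Thomsen01}, substituting the proper-algebra version of the reduction to elementary, essential triples (Thomsen 2005 plus \cite[Proposition 8.6]{Meyer00}), adding degenerate essential cycles and using absorption to replace the representation in a given cycle by $\pi$ up to compact perturbation. However, there is a genuine gap in your argument, and it occurs at exactly the point you do not flag. The operator $F$ in a Kasparov cycle (and the operator homotopies supplied by the Thomsen-type uniqueness statement) is only $G$-continuous with $\phi(a)(g(F)-F)$ compact; it is \emph{not} $G$-equivariant. Membership in $\cD(\pi)^G$ requires genuine $G$-equivariance. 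Conjugating by the $G$-equivariant unitaries $u_n$ does not fix this: your $\tilde F$ lands among pseudo-local operators but not in $\cD(\pi)^G$, and likewise your "norm-continuous path of self-adjoint elements in $M_{n+m_1}(\cD(\pi)^G)$" in the injectivity step is not yet a path in the $G$-equivariant algebra. The paper repairs this by averaging against a cut-off function $c$ for a proper $G$-space $X$ over which $A$ is a $G$-$C_0(X)$-algebra, replacing $v$ by $v'=\int_{g\in G}g(c)\,v\,g(c)\,d\mu_G(g)$, which is $G$-equivariant and a compact perturbation of $v$ relative to $\pi(A)$ (and similarly perturbing the whole operator homotopy). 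This is a second, essential use of the properness of $A$, beyond its role in the Thomsen reduction, and your proposal omits it entirely.

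Separately, what you call "the technical heart" — telescoping the sequence $u_n$ into a single intertwiner modulo compacts — is not actually needed. By the definition of non-degenerate $G$-equivariant absorption used here, \emph{each} $u_n$ already satisfies $u_n(\pi_0(a)\oplus\pi(a))-\pi(a)u_n\in\Compacts(B'\oplus B',B')$; the sequence only serves to make the norms tend to zero. A single fixed $u_n$ therefore gives the required unitary equivalence modulo compacts, and the reindexing/telescoping construction you describe is superfluous. (The choice to present the odd rather than the even isomorphism is immaterial; the paper does the even case and, like you, declares the other parallel.)
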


\begin{proof} The proof of \cite[Theorem 3.2]{Thomsen01} works with minor changes so we shall be brief. We give a proof for the isomorphism $K_1(\cD(\pi)^G/\cC(\pi)^G)\cong KK_0^G(A, B')$. The other one is parallel. Surjectivity: Take any even elementary, essential, Kasparov triple $\cE=(B'\oplus  B', \pi_0\oplus \pi_1,  \begin{bmatrix} 0  & x\\  x^\ast  & 0 \end{bmatrix} )$ for $KK^G_0 (A, B')$. By adding an essential, degenerate Kasparov triple 
\[
\cD= (B'^\infty \oplus  B'^\infty, (\pi^\infty_0\oplus\pi^\infty_1)\oplus  (\pi^\infty_0\oplus \pi^\infty_1),  \begin{bmatrix} 0  & 1\\  1  & 0 \end{bmatrix} )  \oplus  (B'\oplus  B', \pi\oplus \pi,  \begin{bmatrix} 0  & 1\\  1  & 0 \end{bmatrix} )
\]
to $\cE$, where $(\pi^\infty_0\oplus \pi^\infty_1)$ is the infinite direct sum of $(\pi_0 \oplus \pi_1)$, we see that the triple $\cE\oplus \cD$ is isomorphic to the essential triple $\cE_1$ of the form $(B'\oplus  B', \pi'\oplus \pi',  \begin{bmatrix} 0  & v\\  v^\ast  & 0 \end{bmatrix} )$ where $\pi'(a)-\pi(a) \in B'$ for any $a\in A$, i.e. $\cE_1$ is a compact perturbation of the triple $(B'\oplus  B', \pi\oplus \pi,  \begin{bmatrix} 0  & v\\  v^\ast  & 0 \end{bmatrix})$. Using a cut-off function (as usual), the latter triple is a compact perturbation of the triple of the same form where $v$ is $G$-equivariant. Such a triple is in the image of $\Theta$. 

Injectivity: We may assume $\pi$ is of infinite multiplicity. We take a unitary $u\in \cD(\pi)^G/\cC(\pi)^G$ and let $v \in \cD(\pi)^G$ be any lift of $u$ (it is enough to consider $1\times 1$-matrix since $\cD(\pi)^G/\cC(\pi)^G$ is properly infinite.). Suppose $\Theta(u)=0$ in $KK^G_0(A, B')$. Then, there are essential, degenerate triples $\cD_1, \cD_2$ such that  $(B'\oplus  B', \pi\oplus \pi,  \begin{bmatrix} 0  & v\\  v^\ast  & 0 \end{bmatrix} )\oplus \cD_1$ is operator homotopic to $(B'\oplus  B', \pi\oplus \pi,  \begin{bmatrix} 0  & 1\\  1  & 0 \end{bmatrix} )\oplus \cD_2$. By adding infinite copies of $\cD_1\oplus \cD_2$ to both, we may assume that $\cD_1=\cD_2$. We can further arrange that $\cD_1=\cD_2$ are of the form $(B'\oplus  B', \lambda\oplus \lambda,  \begin{bmatrix} 0  & 1\\  1  & 0 \end{bmatrix} )$ with $\lambda$ of infinite multiplicity. By adding $(B'\oplus  B', \pi\oplus \pi,  \begin{bmatrix} 0  & 1\\  1  & 0 \end{bmatrix} )$, we can assume that there is a $G$-equivariant unitary $w\in M(B')$ such that $w\lambda(a)w^\ast - \pi(a) \in B'$ for any $a\in A$. From here, using the same reasoning of the proof \cite[Theorem 3.2]{Thomsen01}, we get a $G$-equivariant norm-continuous path $G_t$ in the matrix algebra $M_n(M(B'))$ such that $G_0=1_n$ and $G_1= v\oplus 1_{n-1}$ and 
\[
(G_t^\ast G_t - 1_n)\pi(a), (G_tG_t^\ast - 1_n)\pi(a), [\pi(a), G_t] \in M_n(B')
\]
for all $t$ and for all $a \in A$. The $G$-equivariance can be arranged by perturbing the operator homotopy into $G$-equivariant one using a cut-off function $c$ for a proper $G$-space $X$ for which $A$ is a $G$-$C_0(X)$-algebra. More precisely, $G_t$ would be a homotopy between $G_0=1_n$ and $G_1= v'\oplus 1_{n-1}$ where $v'=\int_{g\in G}g(c)g(v)g(c)d\mu_G(g)=\int_{g\in G}g(c)vg(c)d\mu_G(g)$ but $v'$ is still a lift of $u$. This gives a path of unitaries in $M_n(\cD(\pi)^G/\cC(\pi)^G)$ connecting $u\oplus 1_{n-1}$ to $1_n$.
\end{proof}

\begin{corollary}\label{cor_canonical_isom} Let $G$ be a locally compact group, $X$ be a $G$-compact proper $G$-space and $B$ be a separable $G$-$C^*$-algebra. Let $H_X$ be an $X$-$G$-module which is ample as an $X$-module. Let $\pi\colon C_0(X) \to \Linears(\tilde H_X\otimes B)$ be the structure map for the $X$-$G$-Hilbert $B$-module $\tilde H_X\otimes B$. Then, there are canonical isomorphisms
\[
\Theta\colon K_1(D^*(\pi)^G/C^*(\pi)^G) \cong  KK_0^G(C_0(X), B),
\]
\[
\Theta\colon K_0(D^*(\pi)^G/C^*(\pi)^G) \cong  KK_1^G(C_0(X), B).
\]
\end{corollary}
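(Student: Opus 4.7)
The plan is to obtain the corollary as a direct specialization of Theorem \ref{thm_KKduality}, applied with $A = C_0(X)$ and $B' = \Compacts(\tilde H_X) \otimes B$, and with $\pi$ regarded as a representation $C_0(X) \to M(B') = \Linears(\tilde H_X \otimes B)$ via the standard identification $M(\Compacts(\tilde H_X \otimes B)) = \Linears(\tilde H_X \otimes B)$. Both main hypotheses of Theorem \ref{thm_KKduality} are already in hand: $A$ is separable and is a proper $G$-$C^*$-algebra because $X$ is a proper $G$-space, and Proposition \ref{prop_absorbing} asserts exactly that $\pi$ is non-degenerately $G$-equivariantly absorbing.

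The one technical point to check is the $G$-stability of $B'$. For this I would argue as follows. Using $\tilde H_X = H_X \otimes L^2(G)$ with the diagonal action $u_g \otimes \lambda_g$, Fell's absorption principle gives $\tilde H_X \otimes \mathcal{H} \cong \tilde H_X$ as $G$-Hilbert spaces for every separable infinite-dimensional $G$-Hilbert space $\mathcal{H}$, since $H_X$ is ample (hence infinite-dimensional) and $\dim H_X \cdot \aleph_0 = \dim H_X$ as cardinalities. Applying $\Compacts(-) \otimes B$ then yields $B' \otimes \Compacts(\mathcal{H}) \cong B'$ as $G$-$C^*$-algebras, which is the required $G$-stability.

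The next step is to identify the algebras $\cD(\pi)^G$ and $\cC(\pi)^G$ occurring in Theorem \ref{thm_KKduality} with the algebras $D^*(\pi)^G$ and $C^*(\pi)^G$ of the corollary. Under the identification $M(B') = \Linears(\tilde H_X \otimes B)$, the defining condition $[x, \pi(\phi)] \in B'$ is exactly the pseudo-locality condition, so $\cD(\pi)^G = D^*(\pi)^G$; likewise $x\pi(\phi), \pi(\phi)x \in B'$ is the local compactness condition, giving $\cC(\pi)^G = C^*(\pi)^G$.

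Finally, I would pass from the target $KK^G_*(C_0(X), B')$ of Theorem \ref{thm_KKduality} to $KK^G_*(C_0(X), B)$ in the statement of the corollary. The Morita $KK^G$-equivalence between $B$ and $B' = \Compacts(\tilde H_X \otimes B)$, implemented by the full $G$-Hilbert $B$-module $\tilde H_X \otimes B$, identifies a Kasparov triple with Hilbert $B'$-module $(B')^{\oplus n}$ and representation $\pi^{\oplus n}$ with the Kasparov triple for $KK^G_*(C_0(X), B)$ having Hilbert $B$-module $(\tilde H_X \otimes B)^{\oplus n}$ and the same operator. Under this natural identification, the map $\Theta$ of Theorem \ref{thm_KKduality} is precisely the canonical map $\Theta$ of the corollary, and the isomorphism statements follow. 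The only genuinely nontrivial step is the verification of $G$-stability in the second paragraph, and this is a direct consequence of Fell's absorption; everything else is essentially bookkeeping.
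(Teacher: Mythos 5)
Your proposal is correct and follows exactly the route the paper intends: the corollary is stated as an immediate consequence of Theorem \ref{thm_KKduality} applied with $A=C_0(X)$ and $B'=\Compacts(\tilde H_X)\otimes B$, with Proposition \ref{prop_absorbing} supplying the absorbing hypothesis. The additional verifications you supply (the Fell-absorption argument for $G$-stability of $B'$, the identification $\cD(\pi)^G=D^*(\pi)^G$, $\cC(\pi)^G=C^*(\pi)^G$ under $M(B')=\Linears(\tilde H_X\otimes B)$, and the stability isomorphism $KK^G_\ast(C_0(X),B')\cong KK^G_\ast(C_0(X),B)$) are exactly the routine checks the paper leaves implicit.
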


\begin{corollary}\label{cor_sequence_isom}  Let $G$ be a locally compact group, $X$ be a $G$-compact proper $G$-space and $B$ be a separable $G$-$C^*$-algebra. Let $H_X$ be a universal $X$-$G$-module. Assume that $H_X$ is of infinite multiplicity. Let $\pi\colon C_0(X) \to \Linears(\tilde H_X\otimes B)$ be the structure map for the $X$-$G$-Hilbert $B$-module $\tilde H_X\otimes B$. We have the following sequence of isomorphisms.
\[
\xymatrix{
KK_\ast^G(C_0(X), B)   & \ar[l]^-{\cong}_-{\Theta}  K_{\ast+1}(D^*(\pi)^G/C^*(\pi)^G )     \ar[d]^-{\cong}_-{\eta_0^{-1}} \\
&   K_{\ast+1}(D^*(\tilde H_X\otimes B)^G/C^*(\tilde H_X\otimes B)^G )  \ar[d]^-{\cong}_-{\iota}   \\ 
&               K_{\ast+1}(\cD_T(\tilde H_X\otimes B)^G_{\proper}/  \cC_T(\tilde H_X \otimes B)^G_{\proper} )    \ar[d]^-{\cong}_{\eta^{-1}}    \\
& K_{\ast+1}(\cD_L(\tilde H_X\otimes B)^G_{\proper}/ \cC_L(\tilde H_X\otimes B)^G_{\proper})   \ar[d]^-{\cong}_-{\partial} \\
 K_\ast(RL^*_u(H_X\otimes B)\rtimes_rG) \ar[r]^-{\cong}_-{\rho_\ast} & K_{\ast}(\cC_L(\tilde H_X\otimes B)^G_{\proper})  
}       
\]
In particular, for any universal $X$-$G$-module $H_X$, there is a canonical isomorphism 
\[
 K_\ast(RL^*_u(H_X\otimes B)\rtimes_rG) \cong KK_\ast^G(C_0(X), B).  
\]
\end{corollary}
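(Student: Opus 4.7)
The plan is simply to compose the chain of isomorphisms assembled in the displayed diagram, every link of which has already been established in the preceding sections. Under the standing infinite-multiplicity assumption, Proposition~\ref{prop_sequence_isom} (applied to the $X$-$G$-Hilbert $B$-module $\tilde H_X\otimes B$, which inherits infinite multiplicity from $H_X$) supplies the composite isomorphism
\[
K_{\ast+1}\bigl(D^*(\tilde H_X\otimes B)^G/C^*(\tilde H_X\otimes B)^G\bigr)\;\cong\; K_\ast\bigl(\cC_L(\tilde H_X\otimes B)^G_{\proper}\bigr)
\]
realised as $\partial\circ\eta^{-1}\circ\iota$. Lemma~\ref{lem_eta0} identifies the left-hand group with $K_{\ast+1}(D^*(\pi)^G/C^*(\pi)^G)$ via $\eta_0^{-1}$, and Corollary~\ref{cor_canonical_isom} identifies the latter with $KK_\ast^G(C_0(X),B)$ via the Paschke-type map $\Theta$; this last step rests on the $G$-equivariant absorption result Proposition~\ref{prop_absorbing} together with the duality Theorem~\ref{thm_KKduality}. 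On the opposite side, Theorem~\ref{thm_magic} furnishes the isomorphism $\rho_\ast$ from $K_\ast(RL^*_u(H_X\otimes B)\rtimes_r G)$ onto $K_\ast(\cC_L(\tilde H_X\otimes B)^G_{\proper})$. Composing these arrows proves the corollary in the infinite-multiplicity case.

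To deduce the unqualified final claim, let $H_X$ be an arbitrary universal $X$-$G$-module and pass to $H_X\otimes l^2(\N)$, which is again universal and now has infinite multiplicity. By universality of the target, there exists a $G$-equivariant continuous cover $(V_t\colon H_X\to H_X\otimes l^2(\N))_{t\in[1,\infty)}$ of the identity on $X$; the usual cover-independence argument (the equivariant analogue of Proposition~\ref{prop_welldefG}, already invoked in Definition~\ref{def_DBG}) shows that the induced map
\[
\mathrm{Ad}_{V_t\,\ast}\colon K_\ast(RL^*_u(H_X\otimes B)\rtimes_r G)\longrightarrow K_\ast\bigl(RL^*_u((H_X\otimes l^2(\N))\otimes B)\rtimes_r G\bigr)
\]
is an isomorphism and is independent of the chosen cover. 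Composing with the isomorphism already established for the infinite-multiplicity module $H_X\otimes l^2(\N)$ yields the canonical isomorphism $K_\ast(RL^*_u(H_X\otimes B)\rtimes_r G)\cong KK^G_\ast(C_0(X),B)$ for the original $H_X$.

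There is essentially no remaining obstacle: all the substantive work — the Eilenberg-swindle vanishings, the non-degenerate $G$-equivariant absorption, the classical Paschke duality in its equivariant form, and the right-regular representation comparison between $RL^*_u\rtimes_r G$ and $\cC_L(\tilde H_X\otimes B)^G_{\proper}$ — has been carried out in the preceding three sections, so this corollary amounts to assembly. The only point requiring a moment's attention is canonicity of the resulting isomorphism (independence of the cut-off function, of the cover $V_t$, and of the chosen universal module), but this is built into the individual statements from which it is composed.
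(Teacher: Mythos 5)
Your proof is correct and follows exactly the paper's own route: the author's proof is literally "Combine Theorem \ref{thm_magic}, Proposition \ref{prop_sequence_isom} and Corollary \ref{cor_canonical_isom}," which is the assembly you carry out, and your reduction of the final claim to the infinite-multiplicity case via a cover $H_X\to H_X\otimes l^2(\N)$ is the same device the paper already uses inside the proof of Theorem \ref{thm_magic}.
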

\begin{proof} Combine Theorem \ref{thm_magic}, Proposition \ref{prop_sequence_isom} and Corollary \ref{cor_canonical_isom}.
\end{proof}



\section{$\rho_X$ is an isomorphism, part IV}

Now we are ready to show that the group homomorphism 
\[
\rho_X\colon K_\ast(RL^*_u(H_X\otimes B)\rtimes_rG) \to E_\ast^G(C_0(X), B) \cong KK_\ast^G(C_0(X), B)
\]
is an isomorphism for any $G$-compact proper $G$-space $X$ where $H_X$ is a universal $X$-$G$-module. 

For any $X$-$G$-module $H_X$, we have a group homomorphism
\[
\iota_X\colon K_\ast(C_{L, u}^*(H_X\otimes B)^G ) \to E_\ast^G(C_0(X), B) \cong KK_\ast^G(C_0(X), B)
\]
defined in the same way as $\rho_X$ using a canonical asymptotic morphism
\[
\pi_X\otimes \iota \colon C_0(X) \otimes C_{L, u}^*(H_X\otimes B)^G \to \as(\Compacts(H_X\otimes B))
\]
such that the image of $\phi\otimes T$ is represented by
\[
\phi T \in C_b([1, \infty), \Compacts(H_X\otimes B)).
\]
It is clear that we have $\rho_X= \iota_X\circ  \rho_\ast$, that is $\rho_X$ factors through the map  
\[
\rho_\ast \colon K_\ast(RL^*_u(H_X\otimes B)\rtimes_rG) \to K_\ast(C_{L, u}^*(\tilde H_X\otimes B)^G)
\]
induced by the right-regular representation which is an isomorphism whenever $H_X$ is universal. 

\begin{lemma}(c.f.\ \cite[Theorem 5.1]{DWW18}) \label{lem_isom_commutes} Let $G$ be a locally compact group, $X$ be a $G$-compact proper $G$-space and $B$ be a separable $G$-$C^*$-algebra. Let $H_X$ be any $X$-$G$-module. Let $\pi\colon C_0(X) \to \Linears(H_X\otimes B)$ be the structure map for the $X$-$G$-Hilbert $B$-module $H_X\otimes B$. Then, the following diagram commutes.
\[
\xymatrix{
 K_{\ast+1}(D^*(H_X\otimes B)^G/C^*(H_X\otimes B)^G )  \ar[d]^-{\cong}_-{\eta_0}  \ar[r]^-{\iota}  & K_{\ast+1}(\cD_T(H_X\otimes B)^G_{\proper}/  \cC_T(H_X \otimes B)^G_{\proper})     \ar[d]^-{\cong}_{\eta^{-1}}      \\
K_{\ast+1}(D^*(\pi)^G/C^*(\pi)^G ) \ar[d]^-{}_-{\Theta}     &   K_{\ast+1}(\cD_L(H_X\otimes B)^G_{\proper}/ \cC_L(H_X\otimes B)^G_{\proper}) \ar[d]^-{}_-{\partial}   \\ 
KK_\ast^G(C_0(X), B) ) \ar[d]^-{\cong} &       K_{\ast}(\cC_L(H_X\otimes B)^G_{\proper})      \ar[d]^-{}_-{=}       \\ 
E_\ast^G(C_0(X), B) &  \ar[l]_-{\iota_X}  K_{\ast}(C^\ast_{L, u}(H_X\otimes B)^G_{\proper}).    
}       
\]
\end{lemma}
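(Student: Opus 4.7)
The plan is to adapt the proof of \cite[Theorem 5.1]{DWW18} to the present equivariant-with-coefficients setting, by chasing a generic class around the diagram and verifying that the two resulting elements of $E_\ast^G(C_0(X), B)$ agree. I will describe the $\ast=1$ case (projection lifts); the $\ast=0$ case is parallel via unitary lifts and Bott periodicity, and both reduce to the same index-theoretic identity.

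Take $[p] \in K_0(D^*(H_X\otimes B)^G/C^*(H_X\otimes B)^G)$ represented by a self-adjoint lift $P \in D^*(H_X\otimes B)^G$ with $P^2 - P \in C^*(H_X\otimes B)^G$. The left route, via $\Theta \circ \eta_0$, produces the odd Kasparov cycle $(H_X \otimes B, \pi, 2P-1)$ in $KK_1^G(C_0(X), B)$, which the canonical $KK^G \to E^G$ natural transformation presents as the Connes--Higson type asymptotic morphism sending $\phi \otimes f \in C_0(X) \otimes C_0(\R)$ to $\phi \cdot f(2P-1)$, interpreted via a suitable functional-calculus family landing in $\Compacts(H_X \otimes B)$.

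For the right route: $\iota$ sends $[p]$ to the class of the constant family $P_t=P$ in $\cD_T(H_X \otimes B)^G_{\proper}/\cC_T(H_X \otimes B)^G_{\proper}$. Applying $\eta^{-1}$ via the proof of Lemma~\ref{lem_eta}, one invokes Lemma~\ref{lem_ex} to obtain a $G$-equivariant, properly supported, asymptotically central $x_t$ with $\lim_t \lVert (1-x_t)[P, \phi] \rVert = 0$ for all $\phi \in C_0(X)$. The self-adjoint element $\tilde P_t := (1-x_t)^{1/2} P (1-x_t)^{1/2}$ then lies in $\cD_L(H_X\otimes B)^G_{\proper}$, lifts the same quotient class, and satisfies $\tilde P_t^2 - \tilde P_t \in \cC_L(H_X\otimes B)^G_{\proper}$. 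The boundary map $\partial$ sends this projection-mod-ideal to the unitary class $[e^{2\pi i \tilde P_t}] \in K_1(\cC_L(H_X\otimes B)^G_{\proper}) = K_1(C^*_{L,u}(H_X\otimes B)^G)$, and $\iota_X$ realizes it as the $E^G$-class of the asymptotic morphism $\phi \otimes f \mapsto \phi \cdot f(e^{2\pi i \tilde P_t})$, which is well-defined into $\mathfrak{A}(\Compacts(H_X\otimes B))$ because $\phi$ asymptotically commutes with $\tilde P_t$ thanks to $\tilde P_t \in \cD_L$.

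To match the two endpoints, construct the straight-line path $\tilde P_t(s) := (1-s)\tilde P_t + sP$ for $s \in [0,1]$. Although the endpoint $P$ does not asymptotically commute with elements of $C_0(X)$, the products $\phi \cdot f(2\tilde P_t(s)-1)$ nevertheless define an asymptotic morphism homotopy from $C_0(X) \otimes C_0(\R)$ to $\mathfrak{A}(\Compacts(H_X\otimes B))$, because pseudo-locality of $P$ makes $[\phi, f(2\tilde P_t(s)-1)]$ asymptotically compact uniformly in $s$, and the functional-calculus conventions $f \mapsto f(2T-1)$ and $f \mapsto f(e^{2\pi i T})$ are interchangeable up to a reparameterization of $C_0(\R)$. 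The main obstacle is the careful bookkeeping of $G$-equivariance and of the proper-support/asymptotic-centrality conditions at every intermediate step: $G$-equivariance of $x_t$ is automatic from the $G$-averaging $\int_G g(c)\, y_n\, g(c)\, d\mu_G(g)$ built into Lemma~\ref{lem_ex}, while the remaining algebraic conditions are preserved under continuous functional calculus together with Lemma~\ref{lem_properly}. Naturality of each vertical and horizontal map in $[p]$ then upgrades this pointwise identification to the commutativity of the full diagram.
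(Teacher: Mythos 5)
Your overall skeleton matches the paper's proof (which treats the $\ast=0$ case with unitary lifts, noting the other case is parallel): chase a class both ways, use Lemma~\ref{lem_ex} and the averaging $\int_G g(c)\,\cdot\,g(c)\,d\mu_G(g)$ to produce a properly supported, asymptotically local lift, and then connect the two resulting asymptotic morphisms by a homotopy. However, your final homotopy step has a genuine gap.

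First, your description of the left route is not quite right: the image of the Kasparov cycle $(H_X\otimes B,\pi,2P-1)$ in $E^G_\ast(C_0(X),B)$ is \emph{not} the constant family $\phi\otimes f\mapsto \phi\, f(2P-1)$. A constant family is not an asymptotic morphism here, because asymptotic multiplicativity requires the defect $\phi\psi\, (fg)(2P-1)-\phi f(2P-1)\,\psi g(2P-1)$ to vanish \emph{in norm} as $t\to\infty$; pseudo-locality only makes it compact, and for a $t$-independent family it stays bounded away from $0$. The Connes--Higson construction necessarily inserts an asymptotically $G$-equivariant approximate unit $u_t$ of $\Compacts(H_X\otimes B)$, quasicentral with respect to $C_0(X)$ and the lifted functional calculus, yielding $h\otimes f\otimes\phi\mapsto h(u_t)f(u)\phi$. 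The same defect kills your straight-line homotopy $\tilde P_t(s)=(1-s)\tilde P_t+sP$: at $s=1$ the putative asymptotic morphism is exactly the constant family above, so "asymptotically compact uniformly in $s$" is not the condition you need, and the homotopy is not a well-defined asymptotic morphism into $\mathfrak{A}(\Compacts(H_X\otimes B)\otimes C[0,1])$.

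The paper resolves this differently: it keeps the operator datum $(1-x(s(t)))f(u)$ fixed along the homotopy and instead interpolates between the two approximate units, setting $w_t^r=(1-r)v_t(s(t))+ru_t$, where $v_t$ is a quasicentral approximate unit for $M\cap\cC_L(H_X\otimes B)^G_{\proper}$ arising in the boundary map and $u_t$ is chosen in addition so that $\lVert(1-u_t)x(s(t))\phi\rVert\to 0$; this last condition is what lets one replace $h(u_t)f(u)\phi$ by $h(u_t)(1-x(s(t)))f(u)\phi$ and makes the interpolation asymptotically multiplicative at every $r$. To repair your argument you would need to reintroduce the approximate units on both routes and perform the homotopy at that level rather than at the level of the lifts $P$ and $\tilde P_t$.
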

\begin{proof} The argument of the proof of \cite[Theorem 5.1]{DWW18} works with minor changes. We give a proof for the case $\ast=0$. The case of $\ast=1$ is simpler and this is the one of the two cases which is explained carefully in \cite{DWW18}. Take a unitary $\dot u$ in $D^*(H_X\otimes B)^G/C^*(H_X\otimes B)^G$ (for simplicity we are taking a unitary in the $1\times 1$-matrix algebra). We first compute the image of $\dot u$ in $E_0^G(C_0(X), B)$ by the clock-wise composition. The functional calculus for $\dot u$ gives a $\ast$-homomorphism
\[
\Sigma \ni f \mapsto f(\dot u) \in D^*(H_X\otimes B)^G/C^*(H_X\otimes B)^G
\]
where $\Sigma \cong C_0(0, 1)$ is identified as $C_0(S^1-\{1\})$. We let
\[
\Sigma\ni f\mapsto f(u) \in D^*(H_X\otimes B)^G
\]
be its (not necessarily linear) continuous, bounded lift to $D^*(H_X\otimes B)^G$ (which exists by Bartle--Graves Theorem \cite[Theorem 4]{BartleGraves}). This is an abuse of notations and $f(u)$ is not the functional calculus applied to some element $u$. We may compose this map $f\mapsto f(u)$ with a c.c.p.\ map
\[
x \to \int_{g\in G}g(c)xg(c)d\mu_G(g)
\]
on $D^*(H_X\otimes B)^G$ to assume that the image of $\Sigma$ is properly supported as a set. Note that such a composition is still a lift of $f \mapsto f(\dot u)$. Let $x \in  \cC_T(H_X \otimes B)^G_{\proper}$ be as given by Lemma \ref{lem_ex} for 
\[
D=\{ f(u) \in D^*(H_X\otimes B)^G \subset \cD_T(H_X\otimes B)^G_{\proper}   \mid f\in \Sigma \}.
\]
Then, 
\[
\Sigma\ni f\mapsto (1-x)f(u) \in \cD_L(H_X\otimes B)^G_{\proper}
\]
is a lift of the $\ast$-homomorphism from $\Sigma$ to $\cD_L(H_X\otimes B)^G_{\proper}/\cC_L(H_X\otimes B)^G_{\proper}$ associated to the unitary $\dot u$, or more precisely the unitary $\iota(\dot u) \in \cD_T(H_X\otimes B)^G_{\proper}/  \cC_T(H_X \otimes B)^G_{\proper} = \cD_L(H_X\otimes B)^G_{\proper}/ \cC_L(H_X\otimes B)^G_{\proper}$. The boundary map $\partial$ sends the element $[\dot u]$ in $K_1(\cD_L(H_X\otimes B)^G_{\proper}/\cC_L(H_X\otimes B)^G_{\proper})$ to the element in $K_0(\cC_L(H_X\otimes B)^G_{\proper})$ represented by an asymptotic morphism
\[
h\otimes f \mapsto h(v_t)(1-x)f(u)
\]
from $\Sigma^2$ to $\cC_L(H_X\otimes B)^G_{\proper}$. Here, we let $M$ be a separable $C^*$-subalgebra of $\cD_L(H_X\otimes B)^G_{\proper}$ generated by $(1-x)f(u)$ and $v_t$ is a continuous approximate unit for $M\cap \cC_L(H_X\otimes B)^G_{\proper}$, quasi-central with respect to $M$. The morphism $\iota_X$ sends this element in $K_0(\cC_L(H_X\otimes B)^G_{\proper})$ to the element in $E_0^G(C_0(X), B)$ represented by an asymptotic morphism 
\[
h\otimes f\otimes \phi \mapsto h(v_t(s(t)))(1-x(s(t)))f(u) \phi
\]
from  $\Sigma^2\otimes C_0(X)$ to $\Compacts(H_X\otimes B)$ where $t\mapsto s(t)$ is a continuous, increasing map on $[1 ,\infty)$ with $s(t)\to \infty$ sufficiently fast as $t\to \infty$.

On the other hand, the downward composition sends the element $[\dot u]$ in $K_1(D^*(H_X\otimes B)^G/C^*(H_X\otimes B)^G)$ to the element in $E_0^G(C_0(X), B)$ represented by an asymptotic morphism 
\[
h\otimes f\otimes \phi \mapsto h(u_t) f(u)\phi
\]
from $\Sigma^2\otimes C_0(X)$ to $\Compacts(H_X\otimes B)$ where $u_t$ is an asymptotically $G$-equivariant, continuous approximate (see \cite[Definition 6.2]{HigsonKasparov}) in $\Compacts(H_X\otimes B)$ which is quasi-central with respect to $C_0(X)$ and $ f(u)$ for $f\in \Sigma$. We can take $u_t$ so that we have
\[
\lVert(1-u_t)x(s(t))\phi \rVert \to 0 
\]
as $t\to \infty$ for any $\phi \in C_0(X)$. Then, the above asymptotic morphism is asymptotic to the one
\[
h\otimes f\otimes \phi \mapsto h(u_t)(1-x(s(t)))f(u)\phi.
\]
The two asymptotic morphisms 
\[
h\otimes f\otimes \phi \mapsto h(u_t)(1-x(s(t)))f(u)\phi, \,\,\, h\otimes f\otimes \phi \mapsto h(v_t(s(t)))(1-x(s(t)))f(u) \phi
\]
are homotopic through an asymptotic morphism
\[
h\otimes f\otimes \phi \mapsto h(w^{r}_t)(1-x(s(t)))f(u)\phi,
\]
from $\Sigma^2\otimes C_0(X)$ to $\Compacts(H_X\otimes B)\otimes C[0 ,1]$ where 
\[
w^{r}_t = (1-r)v_t(s(t)) + ru_t
\]
for $0\leq r \leq 1$. That this is well-defined can be seen from that 
\[
f \mapsto (1-x(s(t)))f(u)
\]
is asymptotically a $\ast$-homomorphism modulo the product of $\phi\in C_0(X)$ and $h(w^{r}_t)$ and from that 
\[
\lVert [h(w^{r}_t)  (1-x(s(t)))f(u), \phi]\rVert \to 0, \,\,\, \lVert[h(w^{r}_t),  (1-x(s(t)))f(u)]\rVert \to 0
\]
as $t\to \infty$. 
\end{proof}

\begin{theorem} \label{thm_main_isom} Let $G$ be a locally compact group. The natural transformation
\[
\rho_X\colon  \bD_\ast^{B, G}(X)  \to \varinjlim_{Y\subset X, \mathrm{Gcpt}}E_\ast^G(C_0(Y), B)  \cong  \varinjlim_{Y\subset X, \mathrm{Gcpt}}KK_\ast^G(C_0(Y), B) 
\]
is an isomorphism for any proper $G$-space $X$ and for any separable $G$-$C^*$-algebra $B$.
\end{theorem}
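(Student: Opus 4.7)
The strategy is to assemble the results of the preceding four sections into a single statement. Since the functor $\bD_\ast^{B,G}$ is representable by Theorem \ref{thm_coeff}(3) and the target is defined directly as an inductive limit over $G$-compact, $G$-invariant closed subsets of $X$, and since $\rho_X$ was constructed so as to be compatible with these directed systems, it suffices to treat the case when $X$ itself is $G$-compact. So fix such an $X$ for the remainder of the argument.

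Choose a universal $X$-$G$-module $H_X$; by Proposition \ref{prop_ucsameB} the inclusion $RL^\ast_u(H_X\otimes B) \hookrightarrow RL^\ast_c(H_X\otimes B)$ induces an isomorphism on $K$-theory of the reduced crossed products, so I am free to replace $RL^\ast_c$ by $RL^\ast_u$ throughout. Replacing $H_X$ by $H_X\otimes l^2(\N)$ if necessary (this does not alter $\bD_\ast^{B,G}(X)$ up to canonical isomorphism by Proposition \ref{prop_welldefG}), I may further assume $H_X$ has infinite multiplicity so that the hypotheses of Theorem \ref{thm_magic} and Corollary \ref{cor_sequence_isom} are available.

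By construction the transformation factors as $\rho_X = \iota_X \circ \rho_\ast$, where $\rho_\ast$ is induced by the right-regular representation $RL^\ast_u(H_X\otimes B)\rtimes_rG \to C^\ast_{L,u}(\tilde H_X\otimes B)^G$ and $\iota_X$ is the map to $E^G$-theory induced by the tensor-product asymptotic morphism $\pi_X\otimes\iota$. Theorem \ref{thm_magic} tells us that $\rho_\ast$ is an isomorphism, so the problem is reduced to showing that $\iota_X$ is an isomorphism. This is precisely the content of Lemma \ref{lem_isom_commutes}: its commuting diagram identifies $\iota_X$ with the composite of the boundary isomorphism $\partial$ of Lemma \ref{lem_boundary_isom}, the inverses of the natural inclusions $\eta$ (Lemma \ref{lem_eta}) and $\iota$ (Lemma \ref{lem_constant_isom}), the inverse of $\eta_0$ (Lemma \ref{lem_eta0}), and the absorbing-representation isomorphism $\Theta$ (Corollary \ref{cor_canonical_isom}); Corollary \ref{cor_sequence_isom} packages these into the canonical isomorphism $K_\ast(C^\ast_{L,u}(\tilde H_X\otimes B)^G) \cong KK^G_\ast(C_0(X),B)$. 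Because $C_0(X)$ is proper and nuclear, the canonical map $KK^G_\ast(C_0(X),B) \to E^G_\ast(C_0(X),B)$ is an isomorphism, and so $\iota_X$ is an isomorphism; hence so is $\rho_X$.

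The genuine work has already been absorbed into the earlier sections, so the present theorem is essentially a formal assembly. The step I expect to be subtlest, and which has been handled in Lemma \ref{lem_isom_commutes}, is the explicit comparison of two a priori different asymptotic morphisms representing the same $E$-theory class: the one entering $\iota_X$ built from the structure map times an inclusion, and the one arising from the sequence of isomorphisms in Corollary \ref{cor_sequence_isom} via a Bartle--Graves lift of functional calculus and a quasi-central approximate unit. All other ingredients (representability, the right-regular representation isomorphism, and the Thomsen-type duality via non-degeneratly $G$-equivariantly absorbing representations from Theorem \ref{thm_KKduality}) enter as black boxes at this stage.
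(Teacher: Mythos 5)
Your proposal is correct and follows essentially the same route as the paper: reduction to the $G$-compact case via representability, passage from $RL^*_c$ to $RL^*_u$ by Proposition \ref{prop_ucsameB}, the factorization $\rho_X=\iota_X\circ\rho_\ast$ with $\rho_\ast$ handled by Theorem \ref{thm_magic}, and the identification of $\iota_X$ with the chain of isomorphisms of Corollary \ref{cor_sequence_isom} via the commuting diagram of Lemma \ref{lem_isom_commutes}. You have also correctly located the genuinely delicate step (the comparison of asymptotic morphisms in Lemma \ref{lem_isom_commutes}) and the role of the infinite-multiplicity reduction.
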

\begin{proof} The case when $X$ is $G$-compact follows from Proposition \ref{prop_ucsameB}, Corollary \ref{cor_sequence_isom} and Lemma \ref{lem_isom_commutes}. The general case follows since the transformation is natural with respect to $X$ and both sides are representable by $G$-compact subsets.
\end{proof}

\begin{theorem}  \label{thm_main_equivalent} Let $G$ be a locally compact group, $X$ be a proper $G$-space and $B$ be a separable $G$-$C^*$-algebra. The forget-control map 
\[
\cf\colon \bD_\ast^{B, G}(X) \to K_\ast(B\rtimes_rG) 
\]
is naturally equivalent to the Baum--Connes assembly map 
\[
\mu_{X}^{B, G}\colon \varinjlim_{Y\subset X, \mathrm{Gcpt}}KK_\ast^G(C_0(X), B) \to K_\ast(B\rtimes_rG). 
\]
That is, there is a natural isomorphism 
\[
\rho_X\colon  \bD_\ast^{B, G}(X) \to   \varinjlim_{Y\subset X, \mathrm{Gcpt}}KK_\ast^G(C_0(Y), B) 
\]
of the functors from $\mathcal{PR}^G$ to $\mathcal{GA}$ and the following diagram commutes
\begin{equation*}
\xymatrix{ \bD_\ast^{B, G}(X)   \ar[dr]^{\rho_X}_-{\cong}  \ar[rr]^{\cf} & &  K_\ast(B\rtimes_rG)  \\
   &  \varinjlim_{Y\subset X, \mathrm{Gcpt}}KK_\ast^G(C_0(Y), B). \ar[ur]^{\mu^{B, G}_X}   & 
   }
   \end{equation*}
\end{theorem}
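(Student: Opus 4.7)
The proof will be a direct synthesis of two results already established in the paper, so the work reduces to citing them and checking that their combination gives exactly the claimed statement. I will proceed in three short steps.

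First, I will invoke Theorem \ref{thm_main_isom}, which produces the natural isomorphism
\[
\rho_X\colon \bD_\ast^{B, G}(X) \xrightarrow{\cong} \varinjlim_{Y\subset X,\,\mathrm{Gcpt}} KK_\ast^G(C_0(Y), B),
\]
along with its naturality in $X\in \mathcal{PR}^G$ (and in $B$). This step uses all of the machinery from Sections on \emph{$\rho_X$ is an isomorphism, parts I--IV}: the identification of $K_\ast(RL^*_u(H_X\otimes B)\rtimes_rG)$ with $K_\ast(C^*_{L, u}(\tilde H_X\otimes B)^G)$ via the right-regular representation, followed by the Paschke-duality-style identification with $KK^G_\ast(C_0(X), B)$. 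The passage between $RL^*_u$ and $RL^*_c$ is handled by Proposition \ref{prop_ucsameB}, and the isomorphism between the $E^G$-theoretic and $KK^G$-theoretic versions is the standard natural equivalence \cite[Corollary A.3, A.4]{KasparovSkandalis03}.

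Next, I will invoke Theorem \ref{thm_forget_factor}, which asserts that the forget-control map factors as
\[
\cf = \mu^{B,G}_X \circ \rho_X \colon \bD_\ast^{B,G}(X) \to \varinjlim_{Y\subset X,\,\mathrm{Gcpt}} E_\ast^G(C_0(Y),B) \to K_\ast(B\rtimes_rG).
\]
Here the map $\rho_X$ appearing in Theorem \ref{thm_forget_factor} is the same as the one appearing in Theorem \ref{thm_main_isom} (up to the canonical natural equivalence $E_\ast^G \cong KK^G_\ast$ on the domains $C_0(Y)$). The assembly maps in $E^G$ and $KK^G$ agree under this equivalence (see \cite[Remark A.5]{KasparovSkandalis03}), so the diagram in the statement of Theorem \ref{thm_main_equivalent} commutes.

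Putting these together gives the theorem: $\rho_X$ is a natural isomorphism of functors $\mathcal{PR}^G \to \mathcal{GA}$, and $\cf$ factors through $\mu^{B,G}_X$ via this isomorphism, so $\cf$ and $\mu^{B,G}_X$ are naturally equivalent. There is no genuine obstacle here; the entire content of the theorem has been absorbed into the two prior results, and the final statement is a packaging of them. The only thing to verify explicitly is that the $\rho_X$ of Theorem \ref{thm_forget_factor} coincides, under the $E^G$-to-$KK^G$ equivalence, with the $\rho_X$ of Theorem \ref{thm_main_isom}; this is immediate from the construction, since both are defined through the same asymptotic morphism $\pi_X\otimes \rho$ of Proposition \ref{prop_asympXG}.
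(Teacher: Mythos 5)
Your proposal is correct and is essentially the paper's own proof: Theorem \ref{thm_main_equivalent} is obtained by combining Theorem \ref{thm_forget_factor} (the factorization $\cf=\mu^{B,G}_X\circ\rho_X$) with Theorem \ref{thm_main_isom} (that $\rho_X$ is a natural isomorphism), with the two occurrences of $\rho_X$ agreeing by construction via the asymptotic morphism $\pi_X\otimes\rho$. Nothing further is needed.
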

\begin{proof} Combine Theorem \ref{thm_forget_factor} and Theorem \ref{thm_main_isom}.
\end{proof}

As before, let $RL^0_c(H_X\otimes B)$ be the kernel of the evaluation map $\mathrm{ev}_1$ on $RL_c^\ast(H_X\otimes B)$ (see the end of Section \ref{sec_forget}).
  
 \begin{corollary} \label{cor_N} Let $G$ be a locally compact group and $B$ be a separable $G$-$C^*$-algebra. The Baum--Connes assembly map $\mu^{B, G}_r$ is an isomorphism if and only if
 \[
 K_\ast(RL^0_c(H_X\otimes B)\rtimes_rG)=0
 \]
for a universal $X$-$G$-module $H_X$ for $X=\EG$.
 \end{corollary}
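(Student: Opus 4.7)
The plan is to deduce this corollary directly from Theorem A (i.e.\ Theorem \ref{thm_main_equivalent}) together with the six-term exact sequence in K-theory associated to the evaluation short exact sequence. Since the short exact sequence
\[
0 \to RL^0_c(H_X\otimes B) \to RL^*_c(H_X\otimes B) \xrightarrow{\mathrm{ev}_1} \Compacts(H_X)\otimes B \to 0
\]
admits a $G$-equivariant c.c.p.\ splitting (extend a function constantly from $t=1$ and multiply by a bump function on $[1,\infty)$), exactness is preserved under the reduced crossed product, yielding
\[
0 \to RL^0_c(H_X\otimes B)\rtimes_rG \to RL^*_c(H_X\otimes B)\rtimes_rG \xrightarrow{\mathrm{ev}_1} (\Compacts(H_X)\otimes B)\rtimes_rG \to 0.
\]

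First I would apply the standard stabilization isomorphism $K_\ast((\Compacts(H_X)\otimes B)\rtimes_rG)\cong K_\ast(B\rtimes_rG)$ coming from trivialization of the inner $G$-action on $\Compacts(H_X)$, so that the induced map
\[
\mathrm{ev}_{1,\ast}\colon K_\ast(RL^*_c(H_X\otimes B)\rtimes_rG) \to K_\ast(B\rtimes_rG)
\]
coincides with the forget-control map $\cF\colon \bD^{B,G}_\ast(X) \to K_\ast(B\rtimes_rG)$ for $X=\EG$. By Theorem \ref{thm_main_equivalent} applied to the universal proper $G$-space $\EG$ (together with the definition $\bD^{B,G}_\ast(\EG)=\varinjlim_{Y\subset \EG,\,\mathrm{Gcpt}}\bD^{B,G}_\ast(Y)$ and the corresponding colimit decomposition of the right-hand side via $\rho_{\EG}$), the map $\cF$ is naturally equivalent to the Baum--Connes assembly map $\mu^{B,G}_r$. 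Hence $\mu^{B,G}_r$ is an isomorphism if and only if $\mathrm{ev}_{1,\ast}$ is.

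Next I would invoke the six-term exact sequence in K-theory attached to the short exact sequence above: $\mathrm{ev}_{1,\ast}$ is an isomorphism precisely when the flanking groups $K_\ast(RL^0_c(H_X\otimes B)\rtimes_rG)$ vanish in both degrees. Combining the two equivalences gives the stated biconditional.

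There is no real obstacle here; the only points to double-check are that the c.c.p.\ splitting of $\mathrm{ev}_1$ is genuinely $G$-equivariant (so that exactness survives $\rtimes_r G$) and that, for the chosen universal $\EG$-$G$-module $H_X$, the compatibility of the colimit identification $\bD^{B,G}_\ast(\EG)\cong \varinjlim_{Y\subset \EG,\,\mathrm{Gcpt}}KK^G_\ast(C_0(Y),B)$ with the forget-control map is the one furnished by $\rho_{\EG}$ from Theorem \ref{thm_main_equivalent}. Both are immediate from the preceding sections, so the corollary follows by concatenating the two equivalences.
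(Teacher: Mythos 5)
Your argument is correct and is essentially the paper's own: the $G$-equivariant c.c.p.\ splitting gives the short exact sequence of reduced crossed products, Theorem \ref{thm_main_equivalent} identifies $\mathrm{ev}_{1,\ast}=\cF$ with $\mu^{B,G}_r$ for $X=\EG$, and the six-term sequence converts invertibility of $\mathrm{ev}_{1,\ast}$ into vanishing of $K_\ast(RL^0_c(H_X\otimes B)\rtimes_rG)$. No gaps.
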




\section{$X$-$G$-localized element in $KK^G(\bC, \bC)$}

We recall some materials on Kasparov's $G$-equivariant $KK$-theory from \cite{Kasparov88} (see also \cite{Blackadar}). The graded (minimal) tensor product is denoted by $\hat\otimes$.

Let $G$ be a second countable, locally compact group. For any (not necessarily separable) graded $G$-$C^*$-algebras $A$ and $B$, Kasparov defines the abelian group $KK^G(A, B)=KK_0^G(A, B)$ (\cite[Definition 2.2]{Kasparov88} ) as the group of homotopy classes (up to isomorphisms) of even Kasparov triples $(E, \pi, T)$ where $E$ is a countably generated, graded $G$-Hilbert $B$-module $E$ which is equipped with a (graded, $G$-equivariant) $\ast$-homomorphism $\pi\colon A\to \Linears(E)$ and where $T$ is an odd, $G$-continuous operator in $\Linears(E)$ such that for any $a\in A$ and $g\in G$, (we write $a=\pi(a)$)
\[
a(1-T^2), \,\,[a, T], \,\,a(T-T^\ast),\,\, a(g(T)-T) \in \Compacts(\E).
\]
We often assume that $T$ is self-adjoint without loss of generality. The homotopy is defined by the even Kasparov triple for $KK^G(A, B[0 ,1])$ for $B[0, 1]=B\otimes C[0 ,1]$ and the group structure is given by the direct sum operation. For any $\ast$-homomorphisms $\phi\colon A_1 \to A_2$, $\psi\colon B_1 \to B_2$, we have canonically defined homomorphisms (\cite[Definition 2.5]{Kasparov88}
\[
\phi^\ast\colon KK^G(A_2, B) \to KK^G(A_1, B), \,\,\,  \psi_\ast\colon KK^G(A, B_1) \to  KK^G(A, B_2)
\]
and the group $KK^G(A, B)$ is homotopy invariant in both variables. If $D$ is $\sigma$-unital, we have a canonically defined homomorphism
\[
\sigma_D\colon KK^G(A, B) \to KK^G(A\hat\otimes D, B\hat\otimes D)
\]
which sends $(E, \pi, T)$ to $(E\hat\otimes D, \pi\hat\otimes \mathrm{id}_D, T\hat\otimes 1)$.  

For $A$, separable, Kasparov defines the well-defined, bilinear, product law (Kasparov product) (see \cite[Definition 2.10, Theorem 2.11]{Kasparov88})
\[
KK^G(A, B_1) \otimes KK^G(B_1, B_2) \to KK^G(A, B_2), \,\,\, (x_1, x_2) \mapsto x_1\otimes_{B_1}x_2 
\]
for any $B_1, B_2$. The Kasparov product satisfies several functorial properties, assuming separability or $\sigma$-unital for the relevant slots (see \cite[Theorem 2.14]{Kasparov88}). 

The descent homomorphism \cite[Section 3.11]{Kasparov88}
\[
j^G_r\colon KK^G(A, B) \to KK(A\rtimes_rG, B\rtimes_rG)
\]
is defined for any $A, B$ and it satisfies functorial properties, assuming separability or $\sigma$-unital for the relevant slots (see \cite[Theorem 3.11]{Kasparov88}).

The abelian group $KK^G_1(A, B)$ is defined to be 
\[
KK^G_1(A, B) = KK^G(A\hat\otimes \bC_1, B)= KK^G(A, B\hat\otimes \bC_1)
\]
where $\bC_1$ is the first Clifford algebra. We define
\[
K_\ast(A) = KK_\ast(\bC, A)
\]
for any graded $C^*$-algebra $A$ and when $A$ is ungraded, this group is canonically isomorphic to the topological K-theory of $A$.

In particular, a cycle for the commutative ring $R(G)=KK^G(\bC, \bC)$ is a pair $(H ,T)$ of an odd, self-adjoint, $G$-continuous operator $T$ on a (separable) graded $G$-Hilbert space $H$ satisfying for any $g\in G$,
\[
1-T^2, g(T)-T \in \Compacts(H).
\]
We call such a pair, a Kasparov cycle for $KK^G(\bC, \bC)$. A cycle $(\bC, 0)$ defines the unit of the ring $R(G)$, denoted by $1_G$,

For a proper $G$-space $X$, a graded $X$-$G$-module $H_X$ is a graded $G$-Hilbert space equipped with a non-degenerate (graded, $G$-equivariant) representation of $C_0(X)$. It is just a pair $H_X=H_X^{(0)}\oplus H_X^{(1)}$ of $X$-$G$-modules. For any graded $X$-$G$-module $H_X$, the representable localization algebra $RL^*_c(H_X)$ is naturally a graded $G$-$C^*$-algebra.

\begin{definition}\label{def_XGlocalized} An $X$-$G$-localized Kasparov cycle for $KK^G(\bC, \bC)$ is a pair $(H_X, T)$ of a graded $X$-$G$-module $H_X$ and an odd, self-adjoint, $G$-continuous element $T$ in the multiplier algebra $M(RL_c^*(H_X))$ satisfying for any $g\in G$,
\[
1-T^2, \,\,\, g(T)-T \in RL^*_c(H_X).
\]
\end{definition}
\begin{remark} Although $RL^*_c(H_X)$ is not $\sigma$-unital, one may think $(RL_c^*(H_X), T)$ as a cycle for $KK^G(\bC, RL^*_c(H_X))$ (see \cite[Section 3]{Skandalis85}). 
\end{remark}

The evaluation 
\[
\mathrm{ev}_{t=1}\colon RL^*_c(H_X) \to \Compacts(H_X)
\]
extends to
\[
\mathrm{ev}_{t=1}\colon M(RL^*_c(H_X)) \to \Linears(H_X).
\]
For any $T \in M(RL^*_c(H_X))$, we write $T_1 \in \Linears(H_X)$, its image by $\mathrm{ev}_{t=1}$.
 
\begin{lemma} Let $X$ be a proper $G$-space and $(H_X, T)$ be an $X$-$G$-localized Kasparov cycle for $KK^G(\bC, \bC)$. Then, a pair $(H_X, T_1)$ is a cycle for $KK^G(\bC, \bC)$. 
\end{lemma}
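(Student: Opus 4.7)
The statement is essentially a verification that applying the evaluation map $\mathrm{ev}_1$ to $T$ transports the $X$-$G$-localized cycle conditions into the standard Kasparov cycle conditions for $KK^G(\bC,\bC)$. The plan is to recall that $\mathrm{ev}_1\colon RL^*_c(H_X)\to \Compacts(H_X)$ is a $G$-equivariant, graded, surjective $\ast$-homomorphism with kernel $RL^0_c(H_X)$, whose (strictly continuous) extension
\[
\mathrm{ev}_1\colon M(RL^*_c(H_X)) \to \Linears(H_X)
\]
is again $G$-equivariant, graded, and contractive. The image of $T$ under this extension is precisely $T_1$.

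First, I would record the algebraic consequences. Oddness and self-adjointness of $T_1$ follow immediately because the extended $\mathrm{ev}_1$ is a graded $\ast$-homomorphism. For the compactness conditions, I would note that $\mathrm{ev}_1$ sends $RL^*_c(H_X)$ into $\Compacts(H_X)$, so from $1-T^2\in RL^*_c(H_X)$ we obtain
\[
1-T_1^2 \;=\; \mathrm{ev}_1(1-T^2)\;\in\;\Compacts(H_X),
\]
and from $g(T)-T\in RL^*_c(H_X)$ together with the $G$-equivariance of $\mathrm{ev}_1$ we obtain
\[
g(T_1)-T_1 \;=\; \mathrm{ev}_1(g(T)-T)\;\in\;\Compacts(H_X),
\]
for every $g\in G$.

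Second, I would address the remaining point, namely the $G$-continuity of $T_1$ in $\Linears(H_X)$. By the definition of $X$-$G$-localized Kasparov cycle, $T$ is assumed $G$-continuous as an element of $M(RL^*_c(H_X))$, i.e.\ $g\mapsto g(T)$ is norm-continuous in $M(RL^*_c(H_X))$. Since the extended evaluation $\mathrm{ev}_1\colon M(RL^*_c(H_X))\to \Linears(H_X)$ is contractive and $G$-equivariant, composition with it preserves norm continuity, so
\[
\|g(T_1)-T_1\| \;=\; \|\mathrm{ev}_1(g(T)-T)\|\;\leq\; \|g(T)-T\|\;\longrightarrow\; 0
\]
as $g\to e$, which gives $G$-continuity of $T_1$. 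Putting these together, $(H_X,T_1)$ satisfies all the requirements of a Kasparov cycle for $KK^G(\bC,\bC)$.

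There is no real obstacle here; the only point requiring a moment's care is making sure that the $G$-continuity assumption on $T\in M(RL^*_c(H_X))$ is strong enough to yield norm $G$-continuity of $T_1$, which it is, directly, by contractivity and equivariance of the extended evaluation map. Thus the verification amounts to the three-line translation above.
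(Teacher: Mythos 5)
Your verification is correct and is exactly the routine argument the paper has in mind (the lemma is stated without proof there, being an immediate consequence of the fact that $\mathrm{ev}_1$ is a graded, $G$-equivariant, contractive $\ast$-homomorphism sending $RL^*_c(H_X)$ into $\Compacts(H_X)$). No further comment is needed.
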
 

\begin{definition}\label{def_XGlocalized_element} Let $X$ be a proper $G$-space. We say that an element $x$ in $KK^G(\bC, \bC)$ is $X$-$G$-localized if there is an $X$-$G$-localized Kasparov cycle $(H_X, T)$ for $KK^G(\bC, \bC)$ such that 
\[
[H_X, T_1] = x  \,\,\text{in}\,\,\, KK^G(\bC, \bC).
\]
\end{definition}

We say that $x\in KK^G(\bC, \bC)$ factors through a $G$-$C^*$-algebra $B$ if there is $y \in KK^G(\bC, B)$ and $z\in KK^G(B, \bC)$ such that $x=y\otimes_Bz$. By definition, $x\in KK^G(\bC, \bC)$ is the gamma element if it factors through a (separable) proper $G$-$C^*$-algebra $A$ and it satisfies $x=1_K$ in $R(K)$ for any compact subgroup $K$ of $G$.

\begin{proposition}\label{prop_XGlocalize}  Suppose that an element $x \in KK^G(\bC, \bC)$ factors through a separable $G$-$C_0(X)$-algebra $A$ for a proper $G$-space $X$. Then $x$ is $X$-$G$-localized.
\end{proposition}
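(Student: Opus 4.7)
The plan is to represent $x$ by an explicit Kasparov product cycle whose underlying Hilbert space is automatically a graded $X$-$G$-module, and then to ``deform'' the operator to an $X$-$G$-localized family using cut-off functions on $X$ combined with a $C_0(X)$-linear symmetry at infinity.

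Write $x = y \otimes_A z$ with $y \in KK^G(\bC, A)$ and $z \in KK^G(A, \bC)$, and choose Kasparov cycles $(\mathcal E, F_1)$ for $y$ and $(\mathcal H, \pi, F_2)$ for $z$, with $\pi$ nondegenerate and $F_1, F_2$ self-adjoint, odd, and $G$-continuous. The graded $G$-Hilbert space $H_X := \mathcal E \otimes_\pi \mathcal H$ inherits a nondegenerate central representation $\Phi\colon C_0(X) \to \Linears(H_X)$ from the $G$-$C_0(X)$-algebra structure of $A$ (acting through $\bar\pi$ on the $\mathcal H$-factor or, by centrality, through $C_0(X) \to ZM(A) \to \Linears(\mathcal E)$ on the $\mathcal E$-factor), so $H_X$ is a graded $X$-$G$-module. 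Let $F = F_1 \# F_2$ be a Kasparov product operator; then $F$ is odd, self-adjoint, $G$-continuous, satisfies $1-F^2, g(F)-F \in \Compacts(H_X)$, and $[H_X, F] = x$ in $R(G)$.

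To build the family $(T_t)_{t \in [1, \infty)}$, first enlarge $H_X$ by adding a degenerate cycle $(H_X^{\deg}, V_0)$, where $H_X^{\deg}$ is any graded $X$-$G$-module carrying an odd self-adjoint unitary $V_0$ commuting strictly with $C_0(X)$ (e.g.\ the graded swap on a sum of two isomorphic copies of a universal $X$-$G$-module). Adding this degenerate cycle does not change the $KK^G$-class. Next, choose a continuous family $(c_t)_{t \in [1, \infty)}$ of positive functions in $C_c(X)$, all supported in a fixed compact set $K \subset X$, chosen so that the family refines: at scale $t$ the function $c_t$ is subordinate to a partition of unity on $K$ whose mesh tends to $0$ as $t \to \infty$, and $c_1$ is identically $1$ on (a core of) $K$. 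Define
\[
T_t := c_t^{1/2}\,F\,c_t^{1/2} \,\oplus\, (1 - c_t)^{1/2}\,V_0\,(1 - c_t)^{1/2}
\]
on $H_X \oplus H_X^{\deg}$ (extended in a piecewise-linear continuous way in $t$, so that $T_1 = F \oplus V_0$ represents $x$). Because $\Phi(\phi)$ commutes with $c_t$ and with $V_0$, the commutator $[\Phi(\phi), T_t]$ reduces to $c_t^{1/2}[\Phi(\phi), F]c_t^{1/2}$, which tends to $0$ in norm as the partition of unity refines — this is standard, since $[\Phi(\phi), F]$ is a compact-times-$\pi(a)$ operator and can be made arbitrarily small after compression by finer and finer subordinate cut-offs. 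Direct computation then yields that $1 - T_t^2$ equals a compact operator supported in $K \times K$ (using $F^2 - 1 \in \Compacts$ on the first summand, $V_0^2 = 1$ on the second, and mixed terms that are compactly supported in $K$), and $g(T_t) - T_t$ is a compact operator supported in $(K \cup gK) \times (K \cup gK)$ (using $g(V_0) = V_0$ and $G$-continuity of $c_t$). By Lemma~\ref{lem_mult}, $T \in M(RL^*_c(H_X))$, and the support and commutator conditions give $1 - T^2, g(T) - T \in RL^*_c(H_X)$. Thus $(H_X, T)$ is an $X$-$G$-localized Kasparov cycle with $T_1$ representing $x$.

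The main obstacle is the tension between the two defining conditions of $RL^*_c$: the uniform compact support of $1 - T_t^2$ forces $T_t$ to act as a $C_0(X)$-linear self-adjoint unitary outside a fixed compact $K$, while the vanishing of commutators $\|[\Phi(\phi), T_t]\| \to 0$ forces $T_t$ to become increasingly $C_0(X)$-central inside $K$. This conflict is resolved precisely by splitting $T_t$ into a refined cut-off of $F$ on $K$ plus a genuinely $C_0(X)$-linear symmetry $V_0$ at infinity, which is exactly why one must amplify $H_X$ by a suitable degenerate cycle. A secondary technical point is verifying that $c_t^{1/2}[\Phi(\phi), F]c_t^{1/2}$ is actually norm-small under the partition-of-unity refinement, which follows from the Kasparov-product identities $[\bar\pi(\phi), F_2]\pi(a) \in \Compacts(\mathcal H)$ combined with the density of $\pi(A)\mathcal H$ in $\mathcal H$.
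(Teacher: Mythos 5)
Your setup is sound up to a point: forming $H_X=\mathcal E\otimes_\pi\mathcal H$ as a graded $X$-$G$-module, taking a Kasparov product operator $F$ with $[H_X,F]=x$, and adding a degenerate summand does not change the class, and the ``tension'' you identify between uniform compact support of $1-T_t^2$ and vanishing of $\lVert[\phi,T_t]\rVert$ is exactly the right diagnosis. But the deformation $T_t=c_t^{1/2}Fc_t^{1/2}\oplus(1-c_t)^{1/2}V_0(1-c_t)^{1/2}$ fails on both counts. First, compressing $[\Phi(\phi),F]$ by a \emph{single} cut-off $c_t^{1/2}$ on both sides does not make its norm small, no matter how fine a partition of unity $c_t$ is ``subordinate to''; the norm of $c_t^{1/2}[\Phi(\phi),F]c_t^{1/2}$ is controlled by nothing here. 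What actually works (Lemma \ref{lem_XG1} of the paper) is the \emph{sum over all members} of a refining, $G$-translated partition of unity, $F_n=\sum_{k}\int_G g(\phi_{k,n})^{1/2}F_0\,g(\phi_{k,n})^{1/2}d\mu_G(g)$: then $[F_n,\phi]=\sum_k\int_G g(\phi_{k,n})^{1/2}[F_0,\phi-\phi(gx_{k,n})]g(\phi_{k,n})^{1/2}d\mu_G(g)$, and each term only sees the oscillation of $\phi$ over the small support of $g(\phi_{k,n})$, which is where equicontinuity gives the decay. Second, on the first summand $1-(c_t^{1/2}Fc_t^{1/2})^2$ is close to the identity on the part of $H_X$ where $c_t$ is small (i.e.\ outside $K$), hence is not compact; the degenerate operator $V_0$ lives on a \emph{separate} Hilbert space $H_X^{\deg}$ and cannot repair the behaviour of $T_t$ on $H_X$ itself. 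The symmetry at infinity must act on $H_X$.

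The paper resolves this differently: after stabilizing, $y$ is represented by an odd self-adjoint $b\in M(A)$ with $1-b^2\in A$ and $g(b)-b\in A$; since $C_0(X)$ is central in $M(A)$, the operator $b$ (acting on $H_X=H$ via the nondegenerate $\pi$) commutes exactly with $C_0(X)$ and is the ``$C_0(X)$-linear symmetry at infinity'' on the same space. The localized $F$ from Lemma \ref{lem_XG1} satisfies $a(1-F^2),[a,F],a(g(F)-F)\in RL^*_c(H_X)$ for $a\in A$ (note: only after multiplication by $a$), and the two pieces are glued by the quasi-central elements $M,N=1-M$ of Lemma \ref{lem_XG2}, built from approximate units of $A$ and of a separable subalgebra of $RL^*_c(H_X)$, via $T=M^{1/4}bM^{1/4}+N^{1/4}FN^{1/4}$. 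Then $1-T^2$ is, modulo commutators in $RL^*_c$, of the form $M(1-b^2)+N(1-F^2)$, and both terms land in $RL^*_c(H_X)$ by the absorption properties of $M,N$. Note also that membership in $RL^*_c(H_X)$ only requires \emph{approximate} uniform compact support (for every $\epsilon>0$ there is $\phi\in C_0(X)$ with $\lVert(1-\phi)x\rVert<\epsilon$), not exact support in a fixed $K\times K$; insisting on the latter is part of what forces you into the unworkable degenerate-summand construction.
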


The following is an immediate corollary.

\begin{theorem}\label{thm_XGgamma0} The gamma element $\gamma$ for $G$, if exists, is $X$-$G$-localized for $X=\EG$.
\end{theorem}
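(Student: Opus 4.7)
The plan is to deduce Theorem \ref{thm_XGgamma0} as an immediate corollary of Proposition \ref{prop_XGlocalize}, which has already done the real work. The only content remaining is to recognize that the proper $G$-$C^*$-algebra through which $\gamma$ factors can be viewed as a $G$-$C_0(\EG)$-algebra. Unpacking the definition of the gamma element, one has a separable proper $G$-$C^*$-algebra $A$ and classes $y \in KK^G(\bC, A)$, $z \in KK^G(A, \bC)$ with $\gamma = y\otimes_A z$, and by definition of ``proper'' there is a non-degenerate, central, $G$-equivariant $\ast$-homomorphism $\pi_Y \colon C_0(Y) \to M(A)$ for some proper $G$-space $Y$.

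The next step is to invoke the universal property of $\EG$: there is a $G$-equivariant continuous map $f\colon Y \to \EG$, unique up to $G$-equivariant homotopy. This produces a $G$-equivariant $\ast$-homomorphism $f^*\colon C_0(\EG) \to C_b(Y)$ (note $f$ need not be proper, so $f^*$ takes values only in $C_b(Y)=M(C_0(Y))$, not in $C_0(Y)$). Composing with the strict extension $\bar\pi_Y\colon M(C_0(Y))\to M(A)$ of the non-degenerate $\pi_Y$ gives a $G$-equivariant $\ast$-homomorphism $\pi_{\EG}=\bar\pi_Y\circ f^*\colon C_0(\EG)\to M(A)$.

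I would then verify that $\pi_{\EG}$ is central and non-degenerate, making $A$ into a $G$-$C_0(\EG)$-algebra. Centrality is immediate from centrality of $\pi_Y$ since $C_0(\EG)$ lands inside $C_b(Y)$ which sits in the center of $M(A)$. For non-degeneracy, since $\pi_Y$ is non-degenerate it suffices to approximate elements of the form $a=\pi_Y(\phi)b$ with $\phi\in C_c(Y)$ supported on a compact set $K\subset Y$; then $f(K)$ is a compact subset of $\EG$ and any $h\in C_c(\EG)$ with $h\equiv 1$ on $f(K)$ satisfies $f^*(h)\equiv 1$ on $K$, so $\pi_{\EG}(h)\,a=a$. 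This density argument gives non-degeneracy.

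With $A$ exhibited as a separable $G$-$C_0(\EG)$-algebra through which $\gamma$ factors, Proposition \ref{prop_XGlocalize} applied with $X=\EG$ yields that $\gamma$ is $\EG$-$G$-localized, which is the claim. There is no real obstacle here: the genuine analytic work lies inside Proposition \ref{prop_XGlocalize} (constructing the $\EG$-$G$-localized Kasparov cycle from a factorization through a proper algebra), while the passage from an arbitrary proper $G$-space $Y$ to the universal one $\EG$ is a purely formal consequence of the universal property combined with the strict extension of a non-degenerate representation.
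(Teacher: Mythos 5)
Your argument is correct and is exactly the route the paper takes: the paper derives Theorem \ref{thm_XGgamma0} as an ``immediate corollary'' of Proposition \ref{prop_XGlocalize}, the implicit content being precisely your observation that a proper $G$-$C^*$-algebra (a $G$-$C_0(Y)$-algebra) becomes a $G$-$C_0(\EG)$-algebra by composing the strictly extended structure map with $f^*$ for the classifying map $f\colon Y\to \EG$, with centrality and non-degeneracy checked as you do. The only point worth being explicit about (which the paper also glosses over) is that one must work with a second countable, locally compact model of $\EG$ so that the notions of $\EG$-$G$-module and $C_c(\EG)$ make sense.
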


Before giving a proof of Proposition \ref{prop_XGlocalize}, we prove two lemmas.  

Let $A$ be a graded $G$-$C_0(X)$ algebra and $(H, \pi, F)$ be a Kasparov triple for $KK^G(\bC, A)$. If $\pi$ is non-degenerate, $\pi$ induces a natural non-degenerate representation of $C_0(X)$ on $H$. We view $H$ naturally as a (graded) $X$-$G$-module through this representation and set $H_X=H$. Any element $a$ in $M(A)$ commutes with $C_0(X)$ and hence $a$ (as a constant function) is naturally an element in $M(RL^*_c(H_X))\subset \Linears(H_X\otimes C_0[1, \infty))$. 

\begin{lemma}\label{lem_XG1} Let $X$ be a proper $G$-space and $A$ be a graded, separable $G$-$C_0(X)$-algebra. Let $(H, \pi, F_0)$ be a Kasparov triple for $KK^G(A, \bC)$ with $\pi$ non-degenerate. We view $H$ naturally as a graded $X$-$G$-module through this representation and set $H_X=H$. Then, there is an odd, $G$-continuous, self-adjoint element $F$ in  $M(RL^*_c(H_X))\subset \Linears(H_X\otimes C_0[1, \infty))$ such that
\begin{enumerate}[(I)]
\item $a(F - F_0) \in C_b([1, \infty), \Compacts(H_X))$ for any $a\in A$, where $F_0\in \Linears(H_X)$ is regarded as a constant function in $\Linears(H_X\otimes C_0[1, \infty))$,
\item  for any $a\in A$ and $g\in G$,
\[
a(1-F^2),  [a, F], a(g(F)-F) \in RL^*_c(H_X).
\]
\end{enumerate}
\end{lemma}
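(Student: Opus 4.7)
The plan is to define $F_t$ as a smoothing of $F_0$ by a $t$-dependent partition of unity on $X$ whose supports shrink as $t \to \infty$. First, using the second countability and local compactness of $X$, I will choose a jointly continuous family $(c_{t,i})_{t\in[1,\infty),\, i\in I_t}$ of locally finite partitions of unity in $C_c(X)$ satisfying $\sum_i c_{t,i}^2 = 1$ (strictly on $H_X$) and $\sup_i \mathrm{diam}(\supp c_{t,i}) \to 0$ as $t \to \infty$ with respect to a fixed metric on $X^+$. Using that $X$ is a proper $G$-space, I will arrange an asymptotic $G$-compatibility of this family by averaging against a cut-off function, which will be needed both for $G$-continuity of $F$ and for the $g(F)-F$ condition in (II).

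I will then set
\[
F_t = \sum_i c_{t,i}\, F_0\, c_{t,i},
\]
which converges strictly on $H_X$ and gives a bounded, self-adjoint, odd, $G$-continuous family with $\|F_t\| \leq \|F_0\|$. By construction, $\prop(F_t) \leq \sup_i \mathrm{diam}(\supp c_{t,i}) \to 0$, hence $\|[F_t, \phi]\| \to 0$ for $\phi \in C_0(X)$; by Lemma \ref{lem_mult} this will give $F \in M(RL^*_c(H_X))$. Condition (I) will follow from the identity
\[
F_0 - F_t = \tfrac{1}{2}\sum_i [c_{t,i},[c_{t,i},F_0]],
\]
combined with the fact that $ac_{t,i} \in A$ (since $C_0(X) \subset M(A)$ centrally and $A$ is a $C_0(X)$-algebra) and the Kasparov axiom $[a', F_0] \in \Compacts(H_X)$ for $a' \in A$, which together will force $a(F_0 - F_t)$ to be a locally finite sum of compact operators on the support of $a$, hence compact, norm-continuous in $t$, and uniformly bounded.

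For condition (II), the centrality of $C_0(X)$ in $M(A)$ gives $[a, c_{t,i}] = 0$, so
\[
[a, F_t] = \sum_i c_{t,i}\,[a, F_0]\,c_{t,i},
\]
which is a partition-of-unity smoothing of the compact operator $[a, F_0]$; approximating $[a, F_0]$ by finite-rank operators makes each summand a sum of rank-one operators with shrinking support, placing the resulting family in $RL^*_c(H_X)$ via Proposition \ref{prop_same}. The identity $F_t^2 - F_0^2 = (F_t - F_0)F_t + F_0(F_t - F_0)$ together with the Kasparov axiom $a(1-F_0^2) \in \Compacts(H_X)$ and condition (I) will handle $a(1-F_t^2)$; and $a(g(F_0)-F_0) \in \Compacts(H_X)$ combined with the asymptotic $G$-compatibility of the partition will handle $a(g(F_t)-F_t)$. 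The hard part will be simultaneously ensuring joint continuity of $(c_{t,i})$ in $t$, $G$-continuity of $F$ in $M(RL^*_c(H_X))$, and the asymptotic $G$-invariance required for $a(g(F)-F) \in RL^*_c(H_X)$; I expect this to force a careful coordination between the partition-of-unity construction and the cut-off function on the proper $G$-space $X$, possibly supplemented by Kasparov's technical lemma.
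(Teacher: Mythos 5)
Your proposal is correct and follows essentially the same route as the paper: the paper's operator $F_n=\sum_{k}\int_{G}g(\phi_{k,n})^{1/2}F_0\,g(\phi_{k,n})^{1/2}\,d\mu_G(g)$, built from a finite partition of unity on $X^+$ cut down by $c^2$ and translated over $G$, is precisely your partition-of-unity smoothing $\sum_i c_{t,i}F_0c_{t,i}$ with the $G$-averaging against a cut-off function already built in (so that $F$ comes out exactly, not just asymptotically, $G$-equivariant), and conditions (I)--(II) are verified by the same commutator computations you indicate. The "careful coordination" you flag at the end is resolved in the paper exactly this way, with the only remaining subtlety being that $\lVert[F_t,\phi]\rVert\to 0$ is checked via equicontinuity of $\{g^{-1}(\phi)\}_{g\in K}$ on $X^+$ rather than a literal propagation bound, since the translated supports $g(\mathrm{supp}\,\phi_{k,n})$ need not have small diameter for the fixed metric on $X^+$.
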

\begin{proof} We assume $F_0$ is self-adjoint. We assume $F_0$ is $G$-equivariant using the standard trick: Let $c\in C_b(X)$ be a cut-off function on $X$. Then, 
\[
F_0'=\int_{g\in G} g(c)g(F_0)g(c)  d\mu_G(g)
\]
is $G$-equivariant and satisfies
\[
a(F_0' -F_0) \in \Compacts(H_X)
\]
for any $a\in A$. Here, for an isometry $V_c\colon H_X\to H_X\otimes L^2(G)$ defined by
\[
V_c\colon v\mapsto (g\mapsto g(c)v ),
\]
we have $F_0'=V_c^\ast\tilde FV_c$ where $\tilde F$ on  $H_X\otimes L^2(G)$ is the diagonal operator $(g\mapsto g(F_0)) \in C_b(G, \Linears(H_X))$. That $a(F_0' -F_0) \in \Compacts(H_X)$ can be checked by
\[
a(F_0' -F_0) = \int_{g\in G}  a g(c)(g(F_0)-F_0)g(c) +  ag(c)[F_0, g(c)] d\mu_{G}(g)
\]
which is norm convergent in $\Compacts(H_X)$ for compactly supported $a$. We remark that a cut-off function $c$ may not be $G$-continuous in general unless $X$ is $G$-compact. Nonetheless, the map
\[
g \mapsto g(c)a, g\mapsto g(c)S
\]
are norm-continuous for any $a\in A$ and for any $S\in \Compacts(H_X)$. Analogous remark applies to some of the arguments in the subsequent proof.
 
Let $X^+$ be the one-point compactification of $X$ and fix any metric on $X^+$ which is compatible with its topology. Let $A$ be the support of a cut-off function $c$ on $X$. The closed set $A\subset X$ is not compact (unless $X$ is $G$-compact) but it satisfies that for any compact subset $X_0$ of $X$, $A\cap gX_0=\emptyset$ for $g$ outside a compact set of $G$. We have $GA=X$. 
 
 For $n\geq1$, let $\mathcal{U}_n$ be a finite open cover of $X^+$ such that each $U\in \mathcal{U}_n$ is contained in a ball of radius $1/n$. Let $(\phi^0_{k, n})_{k \in S_n}$ be a finite partition of unity in $C(X^+)=C_0(X) + \bC1_{X^+}$ which is sub-ordinate to the cover $\mathcal{U}_n$. We set $\phi_{k ,n}=c^2\phi^0_{k ,n} \in C_b(X)$.  Thus, we have 
\[
\sum_{k \in S_n} \phi_{k, n}=c^2
\]
and the support of $\phi_{k ,n}$ is contained in a ball of radius $1/n$ in $A$. We have
\[
\sum_{k\in S_n}\int_{g\in G} g(\phi_{k, n}) d\mu_G(g) = 1.
\]
Let
\[
F_n=\sum_{k\in S_n}\int_{g\in G}g(\phi_{k, n})^{1/2}  F_0 g(\phi_{k, n})^{1/2} d\mu_G(g).
\] 
We see that $F_n$ is an odd, $G$-equivariant, self-adjoint operator on $H_X$ such that 
\[
\lVert F_n\rVert \leq \lVert F_0\rVert.
\]
The last one can be seen from $F_n=V^\ast_n\tilde F_0V_n$ where 
\[
V_n\colon H_X\to \bigoplus_{k\in S_n}L^2(G)\otimes H_X
\]
is an isometry which sends $v\in H_X$ to
\[
V_n(v)(k, g)=g(\phi_{k, n})^{1/2} v  \,\,\, (k\in S_n, g\in G)
\]
and where $\tilde F_0$ on $\bigoplus_{k\in S_n}L^2(G)\otimes H_X$ is the diagonal operator $( (k, g)\mapsto F_0 ) \in C_b(S_n\times G, \Linears(H_X))$. 

We have
\[
a(F_n-F_0) =\sum_{k\in S_n}\int_{G}ag(\phi_{k, n})^{1/2}  [F_0, g(\phi_{k, n})^{1/2}] d\mu_G(g) \in  \Compacts(H_X)
\]
for any $a\in A$. The integral is norm convergent in $\Compacts(H_X)$ for $a\in A$ with compact support. For $t\in (n, n+1)$, we set 
\[
F_t= (n+1-t)F_n + (t-n)F_{n+1}.
\]
It is clear that $t\mapsto F_t\in \Linears(H_X)$ is (uniformly) norm-continuous, $G$-equivariant, $\lVert F_t\rVert\leq \lVert F_0\rVert$ and 
\[
a(F-F_0)\in C_b([1, \infty), \Compacts(H_X))
\]
for any $a\in A$.
We show that $F_t$ satisfies $\limt\lVert[F_t, \phi]\rVert = 0$ for any $\phi \in C_0(X)$. Let $\phi \in  C_c(X)$. We have
\[
[F_n, \phi] = \sum_{k\in S_n}\int_{G}g(\phi_{k, n})^{1/2}  [F_0, \phi] g(\phi_{k, n})^{1/2} d\mu_G(g).
\]
Recall that for any $n\geq1$ and $k\in S_n$, the support of $\phi_{k, n}$ is contained in a ball of radius $1/n$ in $A$. There is a compact subset $K$ of $G$ such that $g(\phi_{k, n})\phi=0$ for $g \notin K$, and so 
\[
g(\phi_{k, n})^{1/2}  [F_0, \phi] g(\phi_{k, n})^{1/2} =0
\]
for any $n\geq1$, $k\in S_n$ and for any $g\notin K$. Now, consider a family of functions $g^{-1}(\phi) \in C_0(X)$ for $g\in K$. This family is compact in $C_0(X)\subset C(X^+)$ so $g^{-1}(\phi)$ $(g\in K)$ are equi-uniformly-continuous on $X^+$. It follows that for any $\epsilon$, there is $N>0$ so that 
\[
|g^{-1}(\phi)(x) - g^{-1}(\phi)(x_0)| < \epsilon
\]
holds for any $n\geq N$, for any $g\in K$ and for any $x, x_0\in \supp(\phi_{k, n})$ for any $k\in S_n$. In other words, 
\[
|\phi(x) - \phi(x_0)| < \epsilon
\]
holds for any $n\geq N$, for any $x, x_0\in \supp(g(\phi_{k, n}))$ for any $g\in K$ and $k\in S_n$. Using this, we see that 
\[
\lVert[F_n, \phi]\rVert = \vert \vert \sum_{k\in S_n}\int_{K}g(\phi_{k, n})^{1/2}  [F_0, \phi-\phi(gx_{k, n})] g(\phi_{k, n})^{1/2} d\mu_G(g) \vert \vert < 2\epsilon\lVert F_0\rVert
\]
for $n\geq N$ where $x_{k, n}$ is any point in $\supp(\phi_{k, n})$. This shows $\limt\lVert[F_t, \phi]\rVert = 0$. 

It follows that $(t\mapsto F_t) \in \Linears(H_X\otimes [1, \infty))$ defines an odd, $G$-equivariant, self-adjoint element $F$ in $M(RL^*_c(H_X))$ such that 
\[
a(F - F_0) \in C_b([1, \infty), \Compacts(H_X))
\]
for any $a\in A$. It follow from this and from the properties of $F_0$, 
\[
a(1-F^2),  [a, F],  a(g(F)-F) \in C_b([1, \infty), \Compacts(H_X))
\]
for any $a\in A$ and $g\in G$. Moreover, if $x$ is any of these elements $a(1-F^2),  [a, F],  a(g(F)-F)$, we can see that for any $\phi \in C_0(X)$,
\[
\lVert[x, \phi]\rVert \in C_0([1, \infty), \Compacts(H_X))
\]
because $[F, \phi]\in C_0([1, \infty), \Linears(H_X))$. We can also see that for any $\epsilon>0$, there is $\phi \in C_0(X)$
\[
\lVert(1-\phi)x \rVert < \epsilon.
\]
These imply $x\in RL^*_c(H_X)$ as desired. For example, to see the last assertion holds for $x=[a, F]$, we can assume $a$ has compact support so that $a=\chi a$ for $\chi \in C_c(X)$. Then, we have
\[
(1-\phi)[a, F] = (1-\phi)aF - (1-\phi)F\chi a =  (1-\phi)aF - (1-\phi)[F, \chi] a -  (1-\phi)\chi Fa 
\]
which can be made arbitrarily small for some $\phi\in C_0(X)$ since $[F, \chi]a \in C_0([1, \infty), \Compacts(H_X))$. This ends our proof.
\end{proof}

\begin{lemma}\label{lem_XG2} Let $X$ be a proper $G$-space and $A$ be a separable, graded $G$-$C_0(X)$-algebra which is non-degenerately represented on a graded $G$-Hilbert space $H$. We view $H$ naturally as an $X$-$G$-module through this representation and set $H_X=H$. Let $D_1\subset M(A)$ and $D_2\subset M(RL^*_c(H_X))$ be separable $G$-$C^*$-subalgebras such that $[A, D_2]\subset RL^*_c(H_X)$. Let $D_3$ be the subalgebra of $D_2$ consisting of $d\in D_2$ such that $ad \in RL^*_c(H_X)$ for $a\in A$. 

There are even, $G$-continuous elements $M, N=1-M$ in $M(RL^*_c(H_X))$ such that
\begin{enumerate}[(I)]
\item $[M, D_1] \subset RL^*_c(H_X)$,
\item $[M, D_2] \subset RL^*_c(H_X)$,
\item $M (D_1\cap A) \subset RL^*_c(H_X)$,
\item $ND_3 \subset  RL^*_c(H_X)$,
\item $g(M)-M \in RL^*_c(H_X)$.
\end{enumerate}
\end{lemma}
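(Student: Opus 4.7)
The plan is to deduce the statement from a graded $G$-equivariant version of Kasparov's technical theorem \cite[Theorem 3.3]{Kasparov88}, applied to the $G$-$C^*$-algebra $J := RL^*_c(H_X)$ with an appropriate choice of separable subalgebras of $M(J)$.

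First I would set up the embeddings and verify $\sigma$-unitality. The algebra $D_2$ lies in $M(J)$ by hypothesis, while $D_1 \subset M(A)$ embeds into $M(J)$ via constant-in-$t$ multiplication: the non-degenerate representation $A \to \Linears(H_X)$ extends strictly to $M(A) \to \Linears(H_X)$, each $d \in M(A)$ commutes with the central image of $C_0(X)$ in $\Linears(H_X)$, and a short direct check (using $d \cdot \chi_K = \chi_K \cdot d$ and $\|[\phi, dT_t]\| = \|d[\phi, T_t]\|$) shows that the constant function $t \mapsto d$ multiplies $RL^*_c(H_X)$. The algebra $J$ itself is $\sigma$-unital because $\Compacts(H_X)$ is separable, so the essential ideal $RL^*_0(H_X) = C_0([1,\infty),\Compacts(H_X)) \subset J$ is $\sigma$-unital.

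Next I would apply the technical theorem with $A_1 := D_1 \cap A$, $A_2 := D_3$, and $F := D_1 + D_2$. The only nontrivial hypothesis to check is the product condition $A_1 \cdot A_2 \subset J$, which is immediate from the definition of $D_3$: for $a \in D_1 \cap A \subset A$ and $d \in D_3$ one has $ad \in J$. The theorem then produces positive, $G$-continuous, even elements $M, N \in M(J)$ with $M + N = 1$ such that $M \cdot A_1 \subset J$, $N \cdot A_2 \subset J$, $[M, F] \subset J$, and $g(M) - M \in J$ for all $g \in G$; these are precisely conditions (III), (IV), (I)--(II) and (V). Evenness can be arranged because the grading preserves all of $A, D_1, D_2$, or by replacing $M$ with $(M + \varepsilon(M))/2$.

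The main technical obstacle will be the quasi-$G$-invariance condition (V). To handle it, I would first produce an approximate unit $(u_n)$ of $J$ that is quasi-central with respect to the separable subspace $A + D_1 + D_2 \subset M(J)$, and then smooth each $u_n$ by convolution with a compactly supported normalized function $\chi_n \in C_c(G)$ whose support shrinks to the identity, setting $\tilde u_n := \int_G \chi_n(g)\, g(u_n)\, d\mu_G(g)$. The $G$-continuity of $J$ and of the subalgebras ensures that $(\tilde u_n)$ remains a quasi-central approximate unit and now also satisfies $\|g(\tilde u_n) - \tilde u_n\| \to 0$ uniformly on compacta of $G$. The element $M$ is then assembled by the standard telescoping construction from the proof of \cite[Theorem 3.3]{Kasparov88}, delivering all five required conditions at once.
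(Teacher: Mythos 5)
Your instinct is right that this lemma is an instance of Kasparov's Technical Theorem, and the paper's proof is in fact the standard KTT construction carried out by hand ($M=\sum_n d_nu_nd_n$ with $d_n=(a_{n+1}-a_n)^{1/2}$ for a quasi-central approximate unit $(a_n)$ of $A$ and an approximate unit $(u_n)$ of a suitable separable subalgebra of $RL^*_c(H_X)$). But the black-box application as you set it up does not go through, for one central reason: your claim that $J=RL^*_c(H_X)$ is $\sigma$-unital is false, and the argument offered for it (that it contains the $\sigma$-unital essential ideal $RL^*_0(H_X)$) is a non sequitur --- containing a $\sigma$-unital essential ideal does not imply $\sigma$-unitality, and the paper explicitly remarks (after Definition \ref{def_XGlocalized}) that $RL^*_c(H_X)$ is \emph{not} $\sigma$-unital. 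The unavoidable fix is to run the construction inside a separable $G$-$C^*$-subalgebra $J_0\subset RL^*_c(H_X)$ containing $RL^*_0(H_X)$, $[A,D_2]$ and $AD_3$, and multiplied by $D_1$, $D_2$ and $C_0(X)$. But then the theorem only hands you $M\in M(J_0)$, and the conclusion you actually need, $M\in M(RL^*_c(H_X))$, is a genuinely additional statement: $M$ must multiply the whole non-separable localization algebra. The paper secures this by also making $(u_n)$ quasi-central with respect to $C_0(X)$, so that $[M,\phi]=\sum_n d_n[u_n,\phi]d_n$ converges absolutely in $C_0([1,\infty),\Compacts(H_X))$ and Lemma \ref{lem_mult} applies; since $C_0(X)$ appears nowhere in the data you feed to the technical theorem, this step is missing from your plan.

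A second, smaller gap: the version of the technical theorem you cite requires the separable set $\Delta=F$ to derive $A_1$, i.e.\ $[\Delta,A_1]\subseteq A_1$ (this is what lets the square roots $d_n$ be made to almost commute with $\Delta$). With $A_1=D_1\cap A$ and $\Delta=D_1+D_2$ this fails for the $D_2$ part: the hypotheses only give $[D_2,D_1\cap A]\subseteq[A,D_2]\subseteq RL^*_c(H_X)$, not $\subseteq D_1\cap A$. This is repairable --- either enlarge $A_1$ by a separable piece of $RL^*_c(H_X)$ so that the derivation condition holds, or do what the paper does: drop quasi-centrality of the $d_n$ with respect to $D_2$ entirely and absorb the offending terms $[d_n,x](1-u_n)d_n$ using $[d_n,x]\in[A,D_2]\subseteq J_0$ together with the fact that $(u_n)$ is an approximate unit for $J_0$ (condition (2.2) in the paper's proof). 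Either way, it is a hypothesis you must notice and address; the product condition $A_1\cdot A_2\subseteq J$ is not "the only nontrivial hypothesis." In short: right strategy, but the proposal as written has a real gap at the $\sigma$-unitality/multiplier step and an unverified hypothesis at the derivation step.
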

\begin{proof} We construct $M, N=1-M \in M(RL^*_c(H_X))$ following the standard argument (see \cite[Theorem 12.4.2]{Blackadar}, \cite{Higson87}, \cite[Section 1.4]{Kasparov88},\cite[Section 3.8]{HigsonRoe}). Let $U_n$ be an increasing sequence of relatively compact, open subsets of $G$ such that $\cup U_n=G$ and let $K_n$ be the closure of $U_n$. Let $Y$ be a compact total subset of $C_0(X)$. For $i=1, 2, 3$, let $Y_i$ be a compact total subset of $D_i$ such that $Y_1\cap A$ is total in $D_1\cap A$.  Let $Z$ be a compact total subset of $C_0([1, \infty), \Compacts(H_X))$.

First, let $(a_n)_{n\geq 0}$ ($a_0=0$) be an even, quasi-central approximate unit in $A\subset M(A)$ so that $d_n=(a_{n+1}-a_n)^{1/2}$ satisfies for $n\geq1$,
\begin{enumerate}[({1}.1)]
\item $\lVert[d_n, x]\rVert<2^{-n}$ for $x\in Y_1$,
\item  $\lVert d_nx\rVert<2^{-n}$ for $x\in Y_1\cap A$,
\item $\lVert g(d_n)-d_n\rVert<2^{-n}$ for $g\in K_n \subset G$,
\item $\lVert d_nx\rVert < 2^{-n}$ for $x\in Z$.
\end{enumerate}

Secondly, let $J$ be a separable $G$-$C^*$-subalgebra of $RL^*_c(H_X)$ which contains the ideal $RL^*_0(H_X)$, $[A, D_2]$ and $AD_3$. We arrange $J$ so that elements in $D_1$, $D_2$ and $C_0(X)$ multiply (and hence, derive) $J$. This is possible because $D_1$, $D_2$ and $C_0(X)$ multiply $RL^*_c(H_X)$.

Thirdly, let $u_n$ be an even, quasi-central approximate unit in $J$ so that $u_n$ satisfies
\begin{enumerate}[({2}.1)]
\item $\lVert[u_n, x]\rVert<2^{-n}$ for $x \in Y\cup Y_1 \cup Y_2$,
\item $\lVert(1-u_n)x\rVert<2^{-n}$ for $x\in d_nY_3 \cup [d_n, Y_2]\cap [d_n, Y_2]^\ast$,
\item $\lVert g(d_n)-d_n\rVert<2^{-n}$ for $g\in K_n \subset G$.
\end{enumerate}

Now, we consider
\[
M=\sum_{n\geq0} d_nu_nd_n, \,\,\, N=1-M=\sum_{n\geq0}d_n(1-u_n)d_n.
\]
By (1.4), these infinite sums converge in the strict topology in $M(\Compacts(H_X)\otimes C_0[1, \infty))$ to define an element in $\Linears(H_X\otimes C_0[1, \infty))$ (see \cite[Proposition 12.1.2]{Blackadar}) but they may not converge in the strict topology in $M(RL^*_c(H_X))$. We claim that $M \in M(RL^*_c(H_X))$ and so is $N=1-M$. This is because for any $\phi$ in $C_0(X)$, 
\[
[M, \phi] = \sum_{n\geq0} d_n[u_n, \phi]d_n \in C_0([1, \infty), \Compacts(H_X))
\]
which absolutely converges in $C_0([1, \infty), \Compacts(H_X))$ for $\phi \in Y$ by (2.1). Note that $[u_n, \phi] \in C_0([1, \infty), \Compacts(H_X))$ since $u_n\in J \subset RL^*_c(H_X)$. 

We check $M, N$ satisfy the conditions (I) - (V).
\begin{enumerate}[(I)]
\item $[M, D_1] \subset RL^*_c(H_X)$  follows from (1.1) and (2.1).
\item $[M, D_2]=[N, D_2] \subset RL^*_c(H_X)$ follows from (2.1) and (2.2).
\item $M (D_1\cap A) \subset RL^*_c(H_X)$ follows from (1.2).
\item $ND_3 \subset  RL^*_c(H_X)$ follows from (2.2).
\item $g(M)-M \in RL^*_c(H_X)$  follows from (1.3) and (2.3).
\end{enumerate}
That $M$ is $G$-continuous follows from (1.3) and (2.3).
\end{proof}

\begin{proof}[Proof of Proposition \ref{prop_XGlocalize}]
Let $y \in KK^G(\bC, A)$ and $z\in KK^G(A, \bC)$ so that $x=y\otimes_A z$. By stabilizing $A$ by $\Compacts(H_0)$ for a graded Hilbert space $H_0$ if necessary, we can assume that $y$ is represented by an odd, $G$-continuous, self-adjoint element $b \in M(A)$ such that $1-b^2 \in A$ and $g(b)-b\in A$ for $g\in G$. We can also assume that $z$ is represented by a cycle $(H, \pi, F_0)$ with $\pi$ non-degenerate. As in Lemma \ref{lem_XG1}, we set $H_X=H$ and let $F\in M(RL^*_c(H_X))$ be as given by the lemma. Now, let $M, N=1-M$ in $M(RL^*_c(H_X))$ as given by Lemma \ref{lem_XG2} for unital separable $G$-$C^*$-algebras $D_1 \subset M(A)$ containing $b$ and $D_2 \subset M(RL^*_c(H_X))$ containing $F$, $[F, b]$. Note we have
\[
1-b^2, g(b)-b \in D_1\cap A, \,\,\, 1-F^2, g(F)-F, [F, b] \in D_3
\]
for $g\in G$. Let 
\[
T= M^{1/4}bM^{1/4}  + N^{1/4}FN^{1/4} \in M(RL^*_c(H_X)).
\]
It is routine to check that this odd, $G$-continuous, self-adjoint element $T$ satisfies $1-T^2 \in RL^*_c(H_X)$ and $g(T)-T \in RL^*_c(H_X)$ for any $g$ in $G$. Thus, $(H_X, T)$ is an $X$-$G$-localized Kasparov cycle for $KK^G(\bC, \bC)$. By the construction, and since $a(F_1-F_0) \in \Compacts(H_X)$ for any $a \in A$, the pair $(H_X, T_1)$ for $KK^G(\bC, \bC)$ represents the Kasparov product of $y$ and $z$, that is $[H_X, T_1]=x$. 

\end{proof}


\section{Controlled algebraic aspect of the gamma element method}

We recall for any $x\in R(G)=KK^G(\bC, \bC)$ and for any separable (graded) $G$-$C^*$-algebra $B$, we have the following canonical ring homomorphism
\[
\xymatrix{
KK^G(\bC, \bC) \ar[r]^-{\sigma_B} & KK^G(B, B)  \ar[r]^-{j^G_r} & KK(B\rtimes_rG, B\rtimes_rG)   \ar[r] & \mathrm{End}(K_\ast(B\rtimes_rG))
}
\]
where the last map is defined by the Kasparov product. Let us write the image of $x$ by 
\[
x^{B\rtimes_rG}_\ast \in \mathrm{End}(K_\ast(B\rtimes_rG)).
\]
In this section, we prove the following:

\begin{theorem}\label{thm_XGfactor} Let $X$ be a proper $G$-space. Suppose that an element $x\in R(G)$ is $X$-$G$-localized, that is $x=[H_X, T_1]$ for an $X$-$G$-localized Kasparov cycle $(H_X, T)$ for $KK^G(\bC, \bC)$. Then, there is a natural group homomorphism
\[
\nu^{B, T}\colon K_\ast(B\rtimes_rG) \to \bD^{B, G}_\ast(X)
\]
for any separable $G$-$C^*$-algebra $B$ such that
\[
x^{B\rtimes_rG}_\ast =  \cf \circ \nu^{B, T} \colon K_\ast(B\rtimes_rG)  \to \bD^{B, G}_\ast(X)  \to K_\ast(B\rtimes_rG).
\]
In particular, $x^{B\rtimes_rG}_\ast$ factors through the Baum--Connes assembly map $\mu_r^{B ,G}$. Here, $\nu^{B, T}$ is natural with respect to a $G$-equivariant $\ast$-homomorphism $B_1\to B_2$ in a sense that the following diagram commutes 
\[
\xymatrix{
K_\ast(B_1\rtimes_rG) \ar[r]^-{\nu^{B_1, T}} \ar[d]^-{\pi\rtimes_r1_\ast}   & \bD^{B_1, G}_\ast(X) \ar[d]^-{\pi_\ast} \\
K_\ast(B_2\rtimes_rG) \ar[r]^-{\nu^{B_2, T}}  & \bD^{B_2, G}_\ast(X). 
}
\]
\end{theorem}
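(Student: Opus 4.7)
The strategy is to lift $\sigma_B(x) \in KK^G(B, B)$ to a $KK^G$-class taking values in the (non-separable) algebra $RL^*_c(H_X\otimes B)$, then apply the descent homomorphism and take Kasparov product.

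First I would show that $T_B := T\hat\otimes 1_B$, viewed in $\Linears(H_X\otimes B\otimes L)$ where $L=C_0[1,\infty)$, defines a multiplier of $RL^*_c(H_X\otimes B)$ that is odd, self-adjoint, and $G$-continuous, and satisfies
\[
b(1-T_B^2),\ b(g(T_B)-T_B),\ [b, T_B] \in RL^*_c(H_X\otimes B)
\]
for every $b\in B$ and $g\in G$. Indeed $1-T_B^2=(1-T^2)\hat\otimes 1_B$ and $g(T_B)-T_B=(g(T)-T)\hat\otimes 1_B$ lie in $RL^*_c(H_X)\otimes B\subset RL^*_c(H_X\otimes B)$ by hypothesis on $T$, while $[b,T_B]=0$ since $B$ acts only on the second factor. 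The subtler point is that $T_B$ multiplies all of $RL^*_c(H_X\otimes B)$, not just the subalgebra $RL^*_c(H_X)\otimes B$; this I would deduce from the $B$-valued analogue of Lemma~\ref{lem_mult}, using that $\lim_{t\to\infty}\|[\phi,T_t]\|=0$ for $\phi\in C_0(X)$ (a consequence of $T$ multiplying $RL^*_c(H_X)$ together with the support conditions in its definition).

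The triple $(RL^*_c(H_X\otimes B),\iota,T_B)$, where $\iota\colon B\to M(RL^*_c(H_X\otimes B))$ is the canonical central inclusion, then defines an element $z\in KK^G(B,RL^*_c(H_X\otimes B))$. Apply the descent homomorphism to get
\[
j^G_r(z)\in KK(B\rtimes_rG,\ RL^*_c(H_X\otimes B)\rtimes_rG),
\]
and define $\nu^{B,T}$ as Kasparov product with $j^G_r(z)$:
\[
\nu^{B,T}\colon K_\ast(B\rtimes_rG)=KK_\ast(\bC,B\rtimes_rG)\xrightarrow{\ \otimes_{B\rtimes_rG} j^G_r(z)\ } KK_\ast(\bC,RL^*_c(H_X\otimes B)\rtimes_rG)=\bD^{B,G}_\ast(X).
\]
Since $\bC$ and $B\rtimes_rG$ are separable, the Kasparov product is well-defined despite the non-separability of $RL^*_c(H_X\otimes B)\rtimes_rG$.

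Naturality with respect to $\pi\colon B_1\to B_2$ follows because the cycles $z_1,z_2$ constructed from $T$ satisfy $\pi_\ast(z_1)=\pi^\ast(z_2)$ in $KK^G(B_1,RL^*_c(H_X\otimes B_2))$: both triples use the same operator $T\hat\otimes 1$ with the representation of $B_1$ factoring through $\pi$, and the pushforward of the Hilbert module $RL^*_c(H_X\otimes B_1)$ along $\pi\colon RL^*_c(H_X\otimes B_1)\to RL^*_c(H_X\otimes B_2)$ agrees with the pullback of $RL^*_c(H_X\otimes B_2)$. Applying descent and functoriality of the Kasparov product yields the desired commutative square.

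For the factorization $\cf\circ\nu^{B,T}=x^{B\rtimes_rG}_\ast$, observe that the forget-control map $\cf$ is induced by the $G$-equivariant $\ast$-homomorphism $\mathrm{ev}_1\colon RL^*_c(H_X\otimes B)\to \Compacts(H_X)\otimes B$. Pushing $z$ forward along $\mathrm{ev}_1$ gives the triple $(\Compacts(H_X)\otimes B,\,\mathrm{ev}_1\circ\iota,\, T_1\hat\otimes 1_B)$, which, under the Morita equivalence $\Compacts(H_X)\otimes B\sim_M B$, represents precisely $\sigma_B([H_X,T_1])=\sigma_B(x)\in KK^G(B,B)$. Hence $(\mathrm{ev}_1)_\ast(z)=\sigma_B(x)$, and applying $j^G_r$ and Kasparov product (all compatible with the functorial formalism since the source $B$ is separable) gives
\[
\cf\circ\nu^{B,T}\;=\; -\otimes_{B\rtimes_rG} j^G_r(\sigma_B(x)) \;=\; x^{B\rtimes_rG}_\ast.
\]

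The main obstacle I anticipate is the first step: carefully verifying that $T\hat\otimes 1_B$ is a genuine multiplier of $RL^*_c(H_X\otimes B)$ and that the triple $(RL^*_c(H_X\otimes B),\iota,T_B)$ behaves as a legitimate Kasparov cycle despite the algebras involved being non-separable. The remaining steps are essentially a compatibility check within the formalism, relying on $B$'s separability to legitimize all Kasparov products and on the identification $\mathrm{ev}_1(T_B)=T_1\hat\otimes 1_B$ to connect the localized cycle to $\sigma_B(x)$.
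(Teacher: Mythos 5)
Your outline is exactly the ``formal picture'' that the paper itself sketches before the proof --- send $(H_X,T)$ through $\sigma_B$, descent, and Kasparov product --- and the paper explicitly declines to take that picture as the actual argument (``Although we can take it as the definition of $\nu^{B,T}$, we reduce this to the separable $G$-$C^*$-algebras setting''). The reason is the point you yourself flag as ``the main obstacle'' and then do not close: the triple $(RL^*_c(H_X\otimes B),\iota,T_B)$ is not a Kasparov cycle in the standard sense, because $RL^*_c(H_X\otimes B)$ is neither separable nor $\sigma$-unital, so it is not countably generated as a Hilbert module over itself. Consequently $z$ does not a priori live in $KK^G(B,RL^*_c(H_X\otimes B))$ as Kasparov defines it, the descent $j^G_r(z)$ is not covered by the cited functoriality statements, and the composition product $KK(\bC,B\rtimes_rG)\times KK(B\rtimes_rG,\,RL^*_c(H_X\otimes B)\rtimes_rG)$ is not licensed merely by the separability of $\bC$ and $B\rtimes_rG$ --- the second cycle must itself be a legitimate one over a countably generated module. (The paper's remark after Definition \ref{def_XGlocalized} points to Skandalis's treatment of non-$\sigma$-unital $KK$ precisely because of this.)

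The fix, which is the actual content of the paper's proof, is a separable-subalgebra reduction: choose a separable graded $G$-$C^*$-subalgebra $A_X\subset RL^*_c(H_X)$ containing the ideal $RL^*_0(H_X)$, containing $1-T^2$ and $g(T)-T$ for all $g$, and multiplied by $T$; then $(A_X,T)$ is an honest cycle for $KK^G(\bC,A_X)$ with $A_X$ separable, and the entire machinery ($\sigma_B$, $j^G_r$, Kasparov product on $K_\ast(B\rtimes_rG)$) applies verbatim, landing in $K_\ast((A_X\otimes B)\rtimes_rG)$. One then composes with the genuine $\ast$-homomorphism $(A_X\otimes B)\rtimes_rG\to(RL^*_c(H_X)\otimes B)\rtimes_rG\to RL^*_c(H_X\otimes B)\rtimes_rG$ at the level of $K$-theory to define $\nu^{B,T}$. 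The factorization $\cf\circ\nu^{B,T}=x^{B\rtimes_rG}_\ast$ is then obtained not by pushing forward a non-separable cycle along $\mathrm{ev}_1$, but by noting that $\mathrm{ev}_1\colon A_X\to\Compacts(H_X)$ defines a class $[H_X,\mathrm{ev}_1,0]\in KK^G(A_X,\bC)$ whose product with $[A_X,T]$ is $[H_X,T_1]=x$, and applying $j^G_r\circ\sigma_B$ to both factors; naturality in $B$ is likewise inherited from the naturality of $\sigma_B$ and $j^G_r$ for the separable algebra $A_X$. Your remaining verifications (that $T\hat\otimes 1$ multiplies $RL^*_c(H_X\otimes B)$, that $\mathrm{ev}_1(T_B)=T_1\hat\otimes 1$ recovers $\sigma_B(x)$ up to Morita equivalence) are correct in spirit, but the proof should be reorganized around $A_X$ so that every $KK$-theoretic operation is performed on separable algebras.
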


\begin{theorem}\label{thm_XGgamma} Let $X$ be a proper $G$-space. If there is an $X$-$G$-localized element $x\in R(G)$ such that $x=1_K$ in $R(K)$ for any compact subgroup $K$, the Baum--Connes assembly map $\mu_r^{B, G}$ is split-injective for any $B$ and in this case the image of $\mu_r^{B, G}$ coincides with the image of  $x^{B\rtimes_rG}_\ast$. In particular, if $x=1_G$, BCC holds for $G$.
\end{theorem}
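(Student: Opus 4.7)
My goal is to show that, under the hypothesis $x = 1_K$ in $R(K)$ for every compact subgroup $K \subset G$, the natural homomorphism $\nu^{B,T}$ provided by Theorem \ref{thm_XGfactor} is in fact a left inverse of $\cf$; more precisely, $\nu^{B,T} \circ \cf = \mathrm{id}_{\bD^{B,G}_\ast(X)}$. Combined with Theorem \ref{thm_main_equivalent}, which identifies $\cf$ with $\mu^{B,G}_X$ via the natural isomorphism $\rho_X$, this will immediately give split injectivity of the Baum--Connes assembly map.

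\textbf{Key Lemma.} $\nu^{B,T} \circ \cf$ equals the identity on $\bD^{B,G}_\ast(X)$. To prove this, I would first identify $\nu^{B,T}\circ\cf$ with the natural $R(G)$-module action of $x$ on $\bD^{B,G}_\ast(X) \cong \varinjlim_{Y\subset X,\,\mathrm{Gcpt}} KK^G_\ast(C_0(Y), B)$, where the action is via $\sigma_B$ and Kasparov product. This identification uses the naturality of $\nu^{B,T}$ in $B$, the explicit construction of $\nu^{B,T}$ from the $X$-$G$-localized cycle $(H_X, T)$, and the description of $\cf$ on a $KK^G$-cycle $[\phi, F] \in KK^G(C_0(Y), B)$ as the Kasparov product $j^G_r[\phi, F] \otimes [p_c]$ with the cut-off projection (as in the proof of Theorem \ref{thm_forget_factor}). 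Second, I would show that this $R(G)$-action of $x$ on $\bD^{B,G}_\ast(X)$ is the identity whenever $x = 1_K$ in $R(K)$ for all compact $K$; this is the standard step in gamma-element arguments (cf.\ \cite{Tu00}): using representability (Theorem \ref{thm_coeff}(3)) to reduce to $G$-compact $Y$, then homotopy invariance and Mayer--Vietoris (Theorem \ref{thm_coeff}(4),(5)) together with induction from compact-open subgroups (Theorem \ref{thm_coeff}(2)) to reduce to building blocks of the form $G \times_K \Delta^k$ for compact subgroups $K$, where $\bD^{B,G}_\ast(G\times_K \Delta^k) \cong K_\ast(B\rtimes_r K)$ and the $x$-action becomes the action of $\mathrm{res}^G_K(x) = 1_K$, which is the identity.

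\textbf{Deducing the theorem.} Granted the Key Lemma, $\nu^{B,T}$ is a left inverse of $\cf$, hence $\mu^{B,G}_r \cong \cf$ is split injective. For the image statement, one inclusion is immediate from Theorem \ref{thm_XGfactor}: $\mathrm{im}(x^{B\rtimes_r G}_\ast) = \mathrm{im}(\cf\circ\nu^{B,T}) \subseteq \mathrm{im}(\cf)$. For the reverse, any $y = \cf(z) \in \mathrm{im}(\cf)$ satisfies
\[
x^{B\rtimes_r G}_\ast(y) \;=\; \cf\bigl(\nu^{B,T}(\cf(z))\bigr) \;=\; \cf(z) \;=\; y
\]
by the Key Lemma, so $y$ is a fixed point of $x^{B\rtimes_r G}_\ast$ and in particular lies in its image. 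Finally, if $x = 1_G$ in $R(G)$, then $x^{B\rtimes_r G}_\ast = \mathrm{id}_{K_\ast(B\rtimes_r G)}$, so $\mathrm{im}(\mu^{B,G}_r) = K_\ast(B\rtimes_r G)$; combined with split injectivity this gives BCC.

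\textbf{Main obstacle.} The hard part is the identification of $\nu^{B,T}\circ\cf$ with the canonical $R(G)$-action of $x$ on $\bD^{B,G}_\ast(X)$. This requires tracing the construction of $\nu^{B,T}$ carefully, using the Dirac-type viewpoint on $RL^*_c(H_X\otimes B)$ encoded by the evaluation map $\mathrm{ev}_1$ and the identification $\rho_X$, and then manipulating Kasparov products at the (non-separable) level of the representable localization algebra. Once this naturality statement is in hand, the reduction of the $x$-action to the trivial action on building blocks $G\times_K \Delta^k$ follows the now-classical scheme for gamma-element arguments via the axiomatic properties of $\bD^{B,G}_\ast$ established in Theorem \ref{thm_coeff}.
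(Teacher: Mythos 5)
Your overall skeleton --- obtain $\cf\circ\nu^{B,T}=x^{B\rtimes_rG}_\ast$ from Theorem \ref{thm_XGfactor}, prove that the reverse composition is the identity, and then deduce split injectivity and the image statement exactly as you do --- matches the paper, and your deduction of the image statement from the Key Lemma is correct. The divergence is in how the Key Lemma is proved. The paper invokes the argument of \cite[Proposition 5.3]{Nishikawa19}: using only the naturality of $\mu_r^{B,G}$ and $\nu^{B,T}$ in the coefficient algebra $B$ (upgraded to compatibility with Kasparov products via split-exactness and stability of the two functors of $B$), one evaluates the natural transformations at the proper coefficient algebra $C_0(Y)$ for $G$-compact $Y\subset X$, where the assembly map is known to be injective and where $x$ is known to act as the identity on $K_\ast(C_0(Y)\rtimes_rG)$ because $x=1_K$ on compact subgroups; this forces $\nu_{C_0(Y)}([p_c])=[\mathrm{id}_{C_0(Y)}]$ and hence $\nu^{B,T}\circ\mu_r^{B,G}=\mathrm{id}$. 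The reduction happens entirely in the coefficient variable, not in the space variable.

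Your route has two genuine gaps. First, the identification of $\nu^{B,T}\circ\cf$ with the $R(G)$-action of $x$ on $\bD^{B,G}_\ast(X)$ is not a routine consequence of naturality: $\nu^{B,T}$ is constructed to be natural only with respect to $G$-equivariant $\ast$-homomorphisms $B_1\to B_2$, and promoting this to compatibility with Kasparov products in the $C_0(Y)$-variable (which is what your identification amounts to) is precisely the Cuntz-type argument underlying the paper's cited proof; you flag this as the ``main obstacle'' but supply no argument, and it is the crux rather than a technicality. Second, and more decisively, your reduction of the $x$-action to building blocks $G\times_K\Delta^k$ relies on Theorem \ref{thm_coeff}(2), which computes $\bD^{B,G}_\ast(G/K\times\Delta^k)$ only for $K$ a compact \emph{open} subgroup. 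For a general second countable locally compact group --- for instance any nontrivial connected group --- there are no compact open subgroups at all, and a $G$-compact proper $G$-space cannot be assembled from such cells by Mayer--Vietoris and homotopy invariance; the axioms established in Theorem \ref{thm_coeff} contain no induction isomorphism for arbitrary compact subgroups. Consequently your step 2 would prove the theorem only for (essentially) totally disconnected groups and $G$-CW models of $X$, whereas the statement is asserted for every locally compact $G$ and every proper $G$-space $X$.
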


The proof of these theorems will be quite simple and formal. We start with the construction of $\nu^{B, T}$. We should think that the map $\nu^{B, T}$ is obtained by sending an $X$-$G$-localized cycle $(H_X, T)$ through 
\[
KK^G(\bC, RL^*_c(H_X)) \to  KK^G(B, RL^*_c(H_X\otimes B)) \to KK(B\rtimes_rG, RL^*_c(H_X\otimes B)\rtimes_rG) 
\]
\[
\to  \mathrm{End}(K_\ast(B\rtimes_rG), K_\ast(RL^*_c(H_X\otimes B)\rtimes_rG))
\]
where the first map is $\sigma_B$ followed by the inclusion $RL^*_c(H_X)\otimes B \to RL^*_c(H_X\otimes B)$, the second map is Kasparov's descent $j^G_r$ and the last map is given by the Kasparov product. Although we can take it as the definition of $\nu^{B, T}$, we reduce this to the separable $G$-$C^*$-algebras setting.

Recall, for any $C^*$-subalgebra $A$ of $RL^*_c(H_X)$ containing the ideal $RL^*_0(H_X)$, the multiplier algebra $M(A)$ is naturally a subalgebra of $M(RL^*_0(H_X))$. Thus, it makes sense to ask if an operator $T$ in $M(RL^*_c(H_X)) \subset M(RL^*_0(H_X))$ belongs to $M(A)$.

\begin{lemma} Let $(H_X, T)$ be an $X$-$G$-localized Kasparov cycle for $KK^G(\bC, \bC)$. Then, there is a separable, graded $G$-$C^*$-subalgebra $A_X$ of $RL^*_c(H_X)$ containing the ideal $RL^*_0(H_X)$ such that 
\[
T \in M(A_X) \subset M(RL^*_0(H_X))
\]
and for any $g\in G$,
\[
1-T^2, g(T) - T \in A_X
\]
Note, $T$ is automatically a $G$-continuous element in $M(A_X)$.
\end{lemma}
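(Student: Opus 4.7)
The plan is to build $A_X$ as the closure of an increasing sequence of separable, graded, $G$-invariant $C^*$-subalgebras of $RL^*_c(H_X)$, starting from a countably-generated core that already contains $RL^*_0(H_X)$, $1-T^2$ and (enough of) the orbit $\{g(T)-T : g\in G\}$, and then iteratively enlarging so as to be closed under left and right multiplication by $T$ and under the $G$-action. The key observation that makes this work is that $RL^*_0(H_X) = C_0([1,\infty), \Compacts(H_X))$ is already separable, and that both left/right multiplication by $T$ and the $G$-action preserve $RL^*_c(H_X)$ and send separable subsets into separable subsets.

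Concretely, I would first fix a countable dense subset $G_0\subset G$ (using second countability of $G$) and a countable dense subset $D_0\subset RL^*_0(H_X)$. Using the $G$-continuity of $T$, the map $g\mapsto g(T)-T$ is a norm-continuous map from $G$ into $RL^*_c(H_X)$, so the whole orbit $\{g(T)-T:g\in G\}$ lies in the norm-closure of its values on $G_0$. I would then let $A_0$ be the graded $C^*$-subalgebra of $RL^*_c(H_X)$ generated by
\[
D_0\ \cup\ \{1-T^2\}\ \cup\ \{g(T)-T : g\in G_0\};
\]
this is separable, graded (since $T$ is odd, so $1-T^2$ is even and $g(T)-T$ is odd), and contains $RL^*_0(H_X)$. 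Inductively, define $A_{n+1}$ to be the graded $C^*$-subalgebra of $RL^*_c(H_X)$ generated by $A_n\cup TA_n\cup A_nT\cup G\!\cdot\! A_n$. Since $T\in M(RL^*_c(H_X))$, the sets $TA_n$ and $A_nT$ lie in $RL^*_c(H_X)$; since the $G$-action on $RL^*_c(H_X)$ is strongly continuous and $G$ is second countable, $G\!\cdot\! A_n$ is contained in a separable $C^*$-subalgebra. Hence each $A_n$ is separable, and $A_X := \overline{\bigcup_n A_n}$ is a separable, graded, $G$-invariant $C^*$-subalgebra of $RL^*_c(H_X)$.

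Checking the required properties is then routine. Containment $A_X\supset RL^*_0(H_X)$ and $1-T^2\in A_X$ hold at stage $0$. For $T\in M(A_X)$, given $a\in\bigcup_n A_n$ one has $Ta,aT\in A_{n+1}\subset A_X$, and this extends to $A_X$ by norm density; note that this also forces $T$ to be automatically $G$-continuous as a multiplier of $A_X$, inheriting this from its $G$-continuity as a multiplier of $RL^*_c(H_X)$. For $g(T)-T\in A_X$ with general $g\in G$, one approximates $g$ by elements of $G_0$ and uses norm continuity of $g\mapsto g(T)-T$. The only place where one has to be a little careful is the interplay between $G$-continuity and separability — specifically, the assertion that the $G$-orbit of a separable subset of $RL^*_c(H_X)$ is contained in a separable $C^*$-subalgebra, and that the continuous image of $G$ in $RL^*_c(H_X)$ under $g\mapsto g(T)-T$ is separable. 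Both follow from second countability of $G$ together with the norm continuity of the action on $RL^*_c(H_X)$, and this is really the only substantive point in the proof.
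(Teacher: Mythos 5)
Your proposal is correct and is essentially the paper's argument: the paper's own proof is a one-line assertion that one can ``just take'' such a separable $G$-invariant subalgebra, and your iterative construction (countable core containing $RL^*_0(H_X)$, $1-T^2$ and a countable dense part of the orbit $\{g(T)-T\}$, then closing up under multiplication by $T$ and the $G$-action, using second countability of $G$ and norm-continuity of $g\mapsto g(T)$) is exactly the standard justification the paper leaves implicit. No gaps.
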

\begin{proof} Just take any separable, graded $G$-$C^*$-subalgebra $A_X$ of $RL^*_c(H_X)$ containing the ideal $RL^*_0(H_X)$, such that it contains $1-T^2$ and $T-g(T)$ for all $g\in G$ and such that $T$ multiplies $A_X$.
\end{proof}

Now, for any $X$-$G$-localized cycle $(H_X, T)$ for $KK^G(\bC, \bC)$, let $A_X$ be a separable, graded $G$-$C^*$-subalgebra $A_X$ of $RL^*_c(H_X)$ as in the previous lemma. Then, the pair $(A_X, T)$ is a cycle for $KK^G(\bC, A_X)$ and hence by applying the usual homomorphism
\[
\xymatrix{
KK^G(\bC, A_X) \ar[r]^-{j^G_r\circ \sigma_B}  &  \mathrm{End}(K_\ast(B\rtimes_rG),  K_\ast((A_X\otimes B)\rtimes_rG)),
}
\]
we obtain a group homomorphism
\[
 (j^G_r\circ\sigma_B)[A_X, T]_\ast \colon K_\ast(B\rtimes_rG) \to K_\ast((A_X\otimes B)\rtimes_rG)
\]
which is natural with respect to a $G$-equivariant $\ast$-homomorphism $B_1\to B_2$. The desired group homomorphism
\[
\nu^{B, T} \colon K_\ast(B\rtimes_rG) \to K_\ast(RL^*_c(H_X\otimes B)\rtimes_rG)
\]
is obtained as the composition of $ (j^G_r\circ\sigma_B)[A_X, T]_\ast $ with the $\ast$-homomorphism
\[
(A_X\otimes B)\rtimes_rG \to (RL^*_c(H_X)\otimes B)\rtimes_rG  \to RL^*_c(H_X\otimes B)\rtimes_rG.
\]
It is clear, from the naturality of Kasparov product and the descent map, that $\nu^{B, T}$ is natural with respect to a $G$-equivariant $\ast$-homomorphism $B_1\to B_2$.

Note that for a graded $X$-$G$-module $H_X$, the K-theory group $K_\ast(RL^*_c(H_X\otimes B)\rtimes_rG)$ of the graded $C^*$-algebra $RL^*_c(H_X\otimes B)\rtimes_rG$ is functorial with respect to a graded, $G$-equivariant continuous cover $(V_t\colon H_X\to H_Y)_{t\in [1,\infty)}$ for a $G$-equivariant map $f\colon X\to Y$. Such a cover exists for any $H_X$ whenever $H_Y$ is universal, that is if both the even space $H_Y^{(0)}$ and the odd space $H_Y^{(1)}$ is universal. Moreover, if $H^0_X$ is a (ungraded) $X$-$G$-module and $H_X$ is a graded $X$-$G$-module given by $H_X^{(0)}= H_X^{(1)} =H^0_X$, the inclusion 
\[
RL^*_c(H^{0}_X\otimes B) \to RL^*_c(H_X\otimes B) 
\]
induces an isomorphism
\[
K_\ast( RL^*_c(H^{0}_X\otimes B)\rtimes_rG ) \cong K_\ast( RL^*_c(H_X\otimes B)\rtimes_rG ).
\]
Therefore, we may redefine the functor $\bD^{B, G}_\ast(X)$ by using a graded universal $X$-$G$-module $H_X$ instead, and this functor is naturally equivalent to the original one. Moreover, in this case, for any graded $X$-$G$-module $H_X$, we have a natural group homomorphism
\[
K_\ast(RL^*_c(H_X\otimes B)\rtimes_rG) \to \bD^{B, G}_\ast(X).
\]
The forget control map 
\[
\cf\colon \bD^{B, G}_\ast(X) \to K_\ast(B\rtimes_rG)
\]
is defined as before. With these in mind, the natural group homomorphism $\nu^{B, T}\colon  K_\ast(B\rtimes_rG) \to K_\ast(RL^*_c(H_X\otimes B)\rtimes_rG)
$ extends to  
\[
\nu^{B, T}\colon K_\ast(B\rtimes_rG) \to \bD^{B, G}_\ast(X).
 \]

\begin{lemma}\label{lem_XGfactor} Let $(H_X, T)$ be an $X$-$G$-localized cycle for $KK^G(\bC, \bC)$ and $x=[H_X, T_1]$ in $R(G)$. Then, we have
\[
\cf \circ \nu^{B, T} =  x^{B\rtimes_rG}_\ast
\]
on $K_\ast(B\rtimes_rG)$.
\end{lemma}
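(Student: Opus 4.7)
The plan is to reduce the claim to the naturality of Kasparov's descent and to the functorial behaviour of the $KK^G$-class $[A_X, T] \in KK^G(\bC, A_X)$ under push-forward along $\ast$-homomorphisms. First, I will unwind the definition of $\nu^{B, T}$: it is the composition
\[
K_\ast(B \rtimes_r G) \xrightarrow{\;(j^G_r \circ \sigma_B)[A_X, T]_\ast\;} K_\ast((A_X \otimes B)\rtimes_r G) \xrightarrow{\;\iota_\ast\;} \bD^{B, G}_\ast(X),
\]
where $\iota$ is induced by the inclusion $A_X \otimes B \hookrightarrow RL^*_c(H_X) \otimes B \hookrightarrow RL^*_c(H_X \otimes B)$. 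Analogously, the forget-control map $\cf$ on $\bD^{B, G}_\ast(X)$ is induced by the evaluation $\mathrm{ev}_1 \colon RL^*_c(H_X \otimes B) \to \Compacts(H_X) \otimes B$, followed by the identification $K_\ast((\Compacts(H_X) \otimes B) \rtimes_r G) \cong K_\ast(B \rtimes_r G)$ coming from trivializing the inner $G$-action on $\Compacts(H_X)$ and Morita invariance.

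Next, I would observe that the composition of $\ast$-homomorphisms
\[
\pi \colon A_X \hookrightarrow RL^*_c(H_X) \xrightarrow{\mathrm{ev}_1} \Compacts(H_X)
\]
is equivariant and, because $A_X \supset RL^*_0(H_X)$, extends to the multiplier algebras in a way that sends $T \in M(A_X) \subset M(RL^*_c(H_X))$ to $T_1 \in \Linears(H_X) = M(\Compacts(H_X))$. Therefore the push-forward of the cycle $(A_X, T) \in KK^G(\bC, A_X)$ along $\pi$ is exactly the cycle $(\Compacts(H_X), T_1)$, which under the canonical Morita isomorphism $KK^G(\bC, \Compacts(H_X)) \cong KK^G(\bC, \bC)$ represents $x = [H_X, T_1] \in R(G)$.

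The final step is purely formal: by functoriality of $\sigma_B$ and of Kasparov's descent $j^G_r$, the diagram
\[
\xymatrix{
KK^G(\bC, A_X) \ar[r]^-{\sigma_B} \ar[d]_-{\pi_\ast} & KK^G(B, A_X \otimes B) \ar[r]^-{j^G_r} \ar[d]_-{(\pi \otimes 1)_\ast} & KK(B\rtimes_r G, (A_X \otimes B)\rtimes_r G) \ar[d]_-{(\pi \otimes 1)_\ast} \\
KK^G(\bC, \Compacts(H_X)) \ar[r]^-{\sigma_B} & KK^G(B, \Compacts(H_X) \otimes B) \ar[r]^-{j^G_r} & KK(B\rtimes_r G, (\Compacts(H_X) \otimes B)\rtimes_r G)
}
\]
commutes. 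Chasing $[A_X, T]$ through the top row and then down via the evaluation map on the crossed product gives $\cf \circ \nu^{B, T}$, while chasing down first and then across gives the action of $j^G_r(\sigma_B(x))$ on $K_\ast(B \rtimes_r G)$, which is precisely $x^{B \rtimes_r G}_\ast$. The main point that requires some care is the identification of $\pi_\ast[A_X, T]$ with the image of $x$ under the Morita isomorphism, but this is routine because $\pi$ restricted to $RL^*_0(H_X)$ is just the evaluation $\mathrm{ev}_1 \colon C_0([1, \infty), \Compacts(H_X)) \to \Compacts(H_X)$ and hence induces the standard Morita equivalence on $K$-theory; nothing deeper is needed beyond functoriality of push-forward of Kasparov cycles.
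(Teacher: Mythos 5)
Your proposal is correct and follows essentially the same route as the paper: both arguments identify $\cf\circ\nu^{B,T}$ with the descent of $[A_X,T]$ followed by the descent of the evaluation at $t=1$, and both use that the push-forward of $(A_X,T)$ along $\mathrm{ev}_1$ is the cycle $(H_X,T_1)$ representing $x$. The only cosmetic difference is that the paper packages the evaluation as the Kasparov class $[H_X,\mathrm{ev}_1,0]\in KK^G(A_X,\bC)$ and invokes compatibility of $j^G_r\circ\sigma_B$ with the Kasparov product, whereas you invoke naturality of $\sigma_B$ and $j^G_r$ under the $\ast$-homomorphism $\mathrm{ev}_1$ together with the Morita identification; these are the same computation.
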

\begin{proof} The evaluation $\mathrm{ev}_1\colon RL^*_c(H_X)\to \Compacts(H_X)$ restricts to $A_X$ and it extends to the map
\[
\mathrm{ev}_1\colon M(A_X) \to \Linears(H_X)
\]
which sends $T$ to $T_1$. The evaluation map $\mathrm{ev}_1\colon A_X\to \Compacts(H_X)$ defines an element $[H_X, \mathrm{ev}_1, 0] \in KK^G(A_X, \bC)$ whose Kasparov product with $[A_X, T]\in KK^G(\bC, A_X)$ is just $x=[H_X, T_1]$ in $KK^G(\bC, \bC)$.
 It is easy to see that the composition $\cf \circ \nu^{B, T}$ coincides with the composition of 
 \[
 (j^G_r\circ\sigma_B)[A_X, T]_\ast\colon K_\ast(B\rtimes_rG) \to K_\ast((A_X\otimes B)\rtimes_rG)
 \]
 and 
 \[
(j^G_r\circ\sigma_B)[H_X, \mathrm{ev}_1,  0]_\ast \colon K_\ast((A_X\otimes B)\rtimes_rG) \to  K_\ast(B\rtimes_rG) 
 \]
defined by the element $[H_X, \mathrm{ev}_1,  0] \in KK^G(A_X, \bC)$ through
\[
\xymatrix{
KK^G(A_X, \bC) \ar[r]^-{j^G_r\circ \sigma_B}  &  \mathrm{End}(K_\ast(A_X\otimes B\rtimes_rG),  K_\ast(B\rtimes_rG)  ).
}
\]
From this, we can see that $\cf \circ \nu^{B, T}= x^{B\rtimes_rG}_\ast$.
\end{proof}

 \begin{proof}[Proof of Theorem ~\ref{thm_XGfactor}] The first part of the theorem follows from Lemma \ref{lem_XGfactor}. The second part follows from the first part and from Theorem \ref{thm_forget_factor}. Note that we do not need the isomorphism Theorem \ref{thm_main_isom}.
 \end{proof}

  \begin{proof}[Proof of Theorem ~\ref{thm_XGgamma}] 
 Thanks to Theorem \ref{thm_XGfactor}, there are group homomorphisms
  \[
  \nu^{B, T}\colon K_\ast(B\rtimes_rG) \to \varinjlim_{Y\subset \EG, \mathrm{Gcpt}}K_\ast^G(C_0(Y), B)
  \]
which are natural with respect to $B$ and we have 
\[
\mu_r^{B, G}\circ   \nu^{B, T} = x^{B\rtimes_rG}_\ast \colon K_\ast(B\rtimes_rG) \to K_\ast(B\rtimes_rG).
\]
Under the assumption $x=1_K$ in $R(K)$ for any compact subgroup $K$ of $G$, we can show that the other composition $\nu^{B, T} \circ \mu_r^{B, G}$ is the identity for all $B$ just as in the proof of \cite[Proposition 5.3]{Nishikawa19} using the naturality of   $\mu_r^{B, G}$ and of $\nu^{B, T}$ with respect to a $G$-equivariant $\ast$-homomorphism $\pi \colon B_1\to B_2$. It follows that $\mu_r^{B, G}$ is split-injective for all $B$ and the image of $\mu_r^{B, G}$ coincides with the idempotent $\mu_r^{B, G}\circ   \nu^{B, G} = x^{B\rtimes_rG}_\ast $ on $K_\ast(B\rtimes_rG)$.
   \end{proof}

\noindent \textbf{Description of the splitting of the forget-control map} Let $(H_X, T)$ be an $X$-$G$-localized Kasparov cycle for $KK^G(\bC, \bC)$. We give an explicit description of the map 
\[
\nu^{B, T}\colon K_0(B\rtimes_rG) \to \bD^{B, G}_0(X)
\]
which is a splitting of the forget-control map $\cf$ if $[H_X, T_1]=1_K$ in $R(K)$ for any compact subgroup $K$ of $G$.

Let $\hill_0=l^2(\N)^{(0)}\oplus l^2(\N)^{(1)}$ be the standard graded Hilbert space. For any (not necessarily separable) graded $C^*$-algebra $A$, the K-theory group $K_0(A)=KK(\bC, A)$ can be identified (see \cite[Section 3]{Skandalis85}) as the set of homotopy equivalence classes of odd, self-adjoint operators $F \in M(\Compacts(\hill_0)\hat\otimes A)\cong \Linears(\hill_0\hat\otimes A)$ such that $1-F^2 \in \Compacts(\hill_0)\hat\otimes A \cong \Compacts(\hill_0\hat\otimes A)$.

We describe the image $\nu^{B, T}([F]) \in  \bD^{B, G}_0(X)$ of $[F] \in K_0(B\rtimes_rG)$ for $F\in M(\Compacts(\hill_0)\hat\otimes B\rtimes_rG)$.

We have canonical representations 
\[
M(RL^*_c(H_X)) \to M(\Compacts(\hill_0)\hat\otimes (RL^*_c(H_X)\otimes B)\rtimes_rG ) 
\]
\[
M(\Compacts(\hill_0)\hat\otimes B\rtimes_rG) \to  M(\Compacts(\hill_0)\hat\otimes (RL^*_c(H_X)\otimes B)\rtimes_rG )
\]
We still denote by $F$, resp. by $T$, the image of $F \in M(\Compacts(\hill_0)\hat\otimes B\rtimes_rG)$, resp. $T \in M(RL^*_c(H_X))$ in $M(\Compacts(\hill_0)\hat\otimes (RL^*_c(H_X)\otimes B)\rtimes_rG )$ by these representations.

Let
\[
T \sharp F = F + (1-F^2)^{1/4} T (1-F^2)^{1/4}  \in M(\Compacts(\hill_0)\hat\otimes (RL^*_c(H_X)\otimes B)\rtimes_rG ).
\] 
It satisfies 
\[
1- (T \sharp F )^2 \in \Compacts(\hill_0)\hat\otimes (RL^*_c(H_X)\otimes B)\rtimes_rG.
\]
Hence, the odd, self-adjoint element $T \sharp F$ defines the element $[T \sharp F]$ in
\[
K_0((RL^*_c(H_X)\otimes B)\rtimes_rG) \to K_0(RL^*_c(H_X\otimes B)\rtimes_rG) \to \bD_0^{B, G}(X). 
\]
We have
\[
\nu^{B, T}[F]  = [T \sharp F] \in \bD^{B, G}_0(X).
\]
In this sense, $\nu^{B, T}$ is just given by the Kasparov product by $T$. Of course, the forget control map $\cf\colon  \bD^{B, G}_0(X) \to K_0(B\rtimes_rG)$ sends this element $[T \sharp F]$ to 
\[
[T_1 \sharp F] \in K_0(B\rtimes_rG)
\]
represented by an odd, self-adjoint operator 
\[
T_1 \sharp F = F + (1-F^2)^{1/4} T_1 (1-F^2)^{1/4}
\]
in $M(\Compacts(\hill_0)\hat\otimes (\Compacts(H_X)\otimes B)\rtimes_rG ) \cong \Linears(\hill_0\hat\otimes H_X \otimes B\rtimes_rG)$, which represents the Kasparov product $[F]\otimes_{B\rtimes_rG}j^G_r(\sigma_B [H_X, T_1] )$ in $KK(\bC, B\rtimes_rG)$.


\section{Examples}

In this section, we give some examples of $X$-$G$-localized Kasparov cycles for $KK^G(\bC, \bC)$.

Recall from \cite{BaajJulg} that an unbounded cycle for $KK^G(\bC, \bC)$ is a pair $(H, D)$ of a separable, graded $G$-Hilbert space $H$ and an odd, self-adjoint unbounded operator $D$ on $H$ such that 
\begin{itemize}
\item $(D\pm i)^{-1}\in \Compacts(H)$ ($D$ has compact resolvent) and 
\item the $G$-action on $H$ preserves the domain of $D$, and $g(D)-D$ extends to a strongly continuous, locally bounded, $\Linears(H)$-valued function of $g \in G$.
\end{itemize}
For any unbounded cycle $(H, D)$, the pair $(H, T)$ with $T=D(1+D^2)^{-1}$ is a (bounded) Kasparov cycle for $KK^G(\bC, \bC)$ \cite{BaajJulg}.
 
Now suppose we have a family $\{D_t\}_{t\geq1}$ of odd, self-adjoint unbounded operators on a graded $G$-Hilbert space $H$ which defines an odd, regular, self-adjoint unbounded operator $D$ (\cite[Chapter 10]{Lance}) on a Hilbert $C_0[1, \infty)$-module $H\otimes C_0[1, \infty)$. Then, the bounded transform $T=D(1+D^2)^{-1}$ of $D$ is an odd, self-adjoint operator in $\Linears(H\otimes C_0[1, \infty))$.

\begin{proposition}\label{prop_unboundedXG} Let $H_X$ be a graded $X$-$G$-module and $D$ be an odd, regular self-adjoint unbounded operator on a Hilbert $C_0[1, \infty)$-module $H_X\otimes C_0[1, \infty)$ such that 
\begin{enumerate}
\item there is a dense subalgebra $B$ of $C_0(X)$ which preserves the domain of $D$ such that for any $\phi \in B$, $[D, \phi]$ extends to a bounded operator $S \in \Linears(H\otimes C_0[1, \infty))$ with $\lVert S_t\rVert\to 0$ as $t \to \infty$,
\item the $G$-action on $H$ preserves the domain of $D$, and $g(D)-D$ extends to a norm-continuous $\Linears(H\otimes C_0[1, \infty))$-valued function of $g \in G$, and
\item $(D\pm i)^{-1}\in RL^*_c(H_X)$.
\end{enumerate}
Then, the pair $(H_X, T)$ with $T=D(1+D^2)^{-1}$ is an $X$-$G$-localized Kasparov cycle for $KK^G(\bC, \bC)$. If $D$ satisfies the conditions (1), (2), then the condition (3) is equivalent to the following
\begin{enumerate}
\item [(3.1)] $(D\pm i) \in C_b([1,\infty), \Compacts(H_X))$ and
\item  [(3.2)] for any $\epsilon>0$, there is $\phi \in C_0(X)$ such that $\lVert(1-\phi)(D\pm i)\rVert<\epsilon$.
\end{enumerate}
\end{proposition}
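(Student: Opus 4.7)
My plan is to reduce everything to an algebraic manipulation with resolvents. The key identities are
\[
T = \tfrac{1}{2}\bigl[(D+i)^{-1} + (D-i)^{-1}\bigr], \qquad 1-T^2 = (1+D^2)^{-1} = (D+i)^{-1}(D-i)^{-1},
\]
which follow from the self-adjointness and regularity of $D$ together with standard functional calculus. Both identities express the relevant objects as finite sums of products of the resolvents $(D\pm i)^{-1}$, so hypothesis (3) immediately places $T$ and $1-T^2$ inside $RL^*_c(H_X)$, and in particular $T\in M(RL^*_c(H_X))$. Oddness and self-adjointness of $T$ are inherited from $D$.

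For the condition $g(T)-T\in RL^*_c(H_X)$, I would apply the $g$-action to the first identity above and use the resolvent identity
\[
(g(D)\pm i)^{-1} - (D\pm i)^{-1} \;=\; (D\pm i)^{-1}\bigl(D-g(D)\bigr)(g(D)\pm i)^{-1}.
\]
By hypothesis (2), $g(D)-D$ extends to a bounded element of $\Linears(H_X\otimes C_0[1,\infty))$ depending norm-continuously on $g$; combined with hypothesis (3) this shows $g(T)-T\in RL^*_c(H_X)$ and that $g\mapsto g(T)-T$ is norm continuous, so $T$ is a $G$-continuous element of $M(RL^*_c(H_X))$. This completes the verification that $(H_X,T)$ is an $X$-$G$-localized Kasparov cycle.

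For the equivalence of (3) with (3.1)+(3.2), first assume (3). Then (3.1) is immediate from the inclusion $RL^*_c(H_X)\subset C_b([1,\infty),\Compacts(H_X))$, and (3.2) follows by approximating $(D\pm i)^{-1}$ in norm to within $\epsilon$ by an element of $RL^{\alg}_c(H_X)$ with uniform compact support $K$, then choosing $\phi\in C_c(X)$ with $\phi\equiv 1$ on $K$. Conversely, assume (1), (2), (3.1) and (3.2), and set $R=(D\pm i)^{-1}$. The plan is to approximate $R$ in norm by the compression $\phi R\phi$ for $\phi\in C_c(X)$ supplied by (3.2) (applied to both signs, using that $(D\pm i)^{-1*}=(D\mp i)^{-1}$), and then to check that $\phi R\phi\in RL^{\alg}_c(H_X)$: it takes values in $\Compacts(H_X)$ by (3.1); it has uniform compact support contained in $\supp(\phi)$; and asymptotic commutation with $C_0(X)$ follows from the commutator identity $[\psi,R]=-R[\psi,D]R$ applied first to $\psi$ in the dense subalgebra supplied by (1), and then extended by density using the uniform bound $\lVert R_t\rVert\leq 1$. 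This last verification is the only mildly delicate step, but it presents no genuine obstacle.
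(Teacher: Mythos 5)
Your two ``key identities'' are mutually inconsistent, and this is not a cosmetic slip: it breaks the whole strategy. The identity $\tfrac12\bigl[(D+i)^{-1}+(D-i)^{-1}\bigr]=D(1+D^2)^{-1}$ is correct, but for that operator one has $1-T^2=(1+D^4+D^2)(1+D^2)^{-2}\geq 3/4$, which is bounded below and therefore can never lie in $RL^*_c(H_X)\subset C_b([1,\infty),\Compacts(H_X))$; so with $T=D(1+D^2)^{-1}$ the cycle condition fails outright. The identity $1-T^2=(1+D^2)^{-1}$ holds only for the genuine bounded transform $T=D(1+D^2)^{-1/2}$, which is what the proposition intends (the exponent in the statement is a typo — the paper's proof computes $1-T^2=(1+D^2)^{-1}$ and invokes the Baaj--Julg integral formula). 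For that $T$ your central claim collapses: $D(1+D^2)^{-1/2}$ is \emph{not} a finite algebraic combination of the resolvents $(D\pm i)^{-1}$, so hypothesis (3) does not ``immediately'' place $T$ in $M(RL^*_c(H_X))$ nor give the commutator and $G$-continuity properties. The paper instead writes $T$ as the integral $\tfrac{2}{\pi}\int_0^\infty D(1+D^2+\lambda^2)^{-1}\,d\lambda$ and pushes the commutators $[T,\phi]$ and the differences $g(T)-T$ through the integrand, using the resolvent identities you quote but now with the decaying factors $(D\pm\sqrt{1+\lambda^2}\,i)^{-1}$ to make the integrals converge; that uniform control in $\lambda$ is the substance of the proof and is absent from your proposal.

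A secondary gap: even where your algebra is valid, you repeatedly conclude membership in $RL^*_c(H_X)$ from expressions of the form $(\text{element of }RL^*_c)\cdot(\text{bounded})\cdot(\text{bounded})$, e.g.\ in the resolvent identity for $(g(D)\pm i)^{-1}-(D\pm i)^{-1}$. Since $RL^*_c(H_X)$ is not an ideal in $\Linears(H_X\otimes C_0[1,\infty))$, such products are not automatically in $RL^*_c(H_X)$; one must separately verify the asymptotic commutation with $C_0(X)$ and the approximate uniform compact support (the paper does this by reducing to the criterion ``$\lim_t\lVert[x_t,\phi]\rVert=0$ for all $\phi$, plus $\lVert x(1-\phi)\rVert<\epsilon$ for suitable $\phi$''). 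Your treatment of the equivalence of (3) with (3.1)+(3.2) is essentially correct and matches the paper's argument.
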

\begin{proof} The condition (3) implies $1-T^2 = (1+D^2)^{-1} \in RL^*_c(H_X)$. We have the following Baaj--Julg formula \cite{BaajJulg}
\[
T = \frac{\pi}{2} \int_{0}^\infty D(1+D^2+\lambda^2)^{-1}d\lambda  = \frac{\pi}{4}  \int_{0}^\infty (D+\sqrt{1+\lambda^2} i)^{-1} + (D-\sqrt{1+\lambda^2} i)^{-1} d\lambda.
\]
Here, the integral converges in the strong topology on $\Linears(H\otimes C_0[1, \infty))$. For $\phi \in B$, we have
\begin{equation}\label{eq_Tphi}
[T, \phi] = \frac{\pi}{4}  \int_{0}^\infty [(D+\sqrt{1+\lambda^2} i)^{-1}, \phi] + [(D-\sqrt{1+\lambda^2} i)^{-1}, \phi] d\lambda
\end{equation}
and
\[
[(D\pm \sqrt{1+ \lambda^2} i)^{-1}, \phi]   = (D\pm \sqrt{1+ \lambda^2} i)^{-1}[\phi, D](D\pm\sqrt{1+ \lambda^2} i)^{-1}.
\]
From the condition (1), we see $[T_t, \phi] \to 0$ as $t\to \infty$. For $g\in G$, we have
\begin{equation}\label{eq_gT1}
g(T) - T = \frac{\pi}{4}  \int_{0}^\infty A_g^{+} +A_g^{-} d\lambda
\end{equation}
where 
\begin{equation}\label{eq_gT2}
A_g^{\pm} = (g(D)\pm \sqrt{1+\lambda^2} i)^{-1} (D-g(D)) (D\pm \sqrt{1+\lambda^2} i)^{-1}.
\end{equation}
Using the conditions (2), (3) and $\lim_{t\to \infty}\lVert[T_t, \phi]\rVert= 0$ for $\phi \in C_0(X)$, we see that $T$ is a $G$-continuous element in $M(RL^*_c(H_X))$ and that
\[
g(T) - T \in C_b([1 ,\infty), \Compacts(H_X)).
\] 
To see $g(T) - T \in RL^*_c(H_X)$, since we have $\lim_{t\to \infty}[T_t, \phi] = 0$, it is enough to show that for any $\epsilon>0$, there is $\phi \in C_0(X)$ such that $\lVert(g(T)-T)(1-\phi)\rVert < \epsilon$. This follows from \eqref{eq_gT1}, \eqref{eq_gT2}, using $ (D\pm \sqrt{1+\lambda^2} i)^{-1} \in RL^*_c(H_X)$ for any $\lambda\geq0$ (which follows from the condition (1)). We may also use $g(D)-D \in M(RL^*_c(H_X))$ (which follows from the conditions (1), (3)) to see $g(T) - T \in RL^*_c(H_X)$. We explain that for $D$ satisfying (1) and (2), the condition (3) is equivalent to (3.1) and (3.2).  That (3) implies (3.1) and (3.2) is obvious. On the other hand, the conditions (1) and (3.1) imply $[(D\pm i)^{-1}, \phi]\in C_0([1,\infty), \Compacts(H_X))$ and the condition (2) implies that $(D\pm i)^{-1}$ is $G$-continuous. Together with (3.2), these imply (3).

\end{proof}

\begin{example}(c.f.\ \cite[Section 5]{Kasparov88} \cite[Section 3]{Lafforgue2002} \cite[Example 9.6]{Valette02}) Let $X$ be a complete, simply connected Riemannian manifold of non-positive sectional curvature, with curvature bounded below, on which a locally compact group $G$ acts properly and isometrically. Let $x_0 \in X$, $d_X$ be the Riemannian distance function on $X$ and $\rho\in C^\infty(X)$ be defined by 
\[
\rho(x)=\sqrt{d_X(x_0, x)^2+ 1}.
\]
Let $d\rho= \xi$ be the exterior derivative of $\rho$. The norm $\lVert\xi\rVert_x$ of the co-vector $\xi$ at $x$ satisfies $\lVert\xi\rVert_x=\frac{d_X(x_0, x)}{\sqrt{d_X(x_0, x)^2+ 1}}\to 1$ as $x$ goes to infinity.
Let $H_X=\Omega^*_{L^2}(X)$ be the $L^2$-space of the de-Rham complex on $X$ and for $t>0$, let 
\[
D_t = t^{-1}(d+d^\ast) + \mathrm{ext}(\rho d\rho) +  \mathrm{int}(\rho d\rho)
\]
where $d$ is the de-Rham differential operator and $\mathrm{ext}(v)$ and resp. $\mathrm{int}(v)$ are the exterior, resp. interior, multiplication by the co-vector $v$. The odd, unbounded operators $D_t$ defined on the space $\E_c^\infty$ of compactly supported differential forms are essentially self-adjoint and have compact resolvent. We have
\[
D_t^2= t^{-2}\Delta + \rho^2\lVert\xi\rVert^2 + t^{-1}A
\]
where $\Delta=dd^\ast + d^\ast d$ and $A= [(d+d^\ast), \mathrm{ext}(\rho d\rho) +  \mathrm{int}(\rho d\rho)]$ is an order-zero, smooth bundle-endomorphism. There are constants $C_0, C_1>0$ so that we have for $x\in X$,
\[
 \lVert A_x\rVert  \leq C_0 + C_1\rho(x).
\]
Using this, we see that there is $N>0$ so that we have for $t\geq1$, 
\begin{equation}\label{eq_N}
D_t^2 + N^2 \geq  t^{-2}\Delta  +  \rho^2/2.
\end{equation}
The family $\{D_t\}_{t\geq1}$ defines an odd, essentially self-adjoint, regular operator $D$ on $H_X\otimes C_0[1, \infty)$ with domain $C_c([1, \infty), \E_c^\infty)$. We have
\[
(D\pm i)^{-1} \in C_b([1, \infty), \Compacts(H_X)).
\]
Let $T=D(1+D^2)^{-1}$ in $\Linears(H_X\otimes C_0[1, \infty))$. 

\begin{proposition} The pair $(H_X, T)$ is an $X$-$G$-localized Kasparov cycle for $KK^G(\bC, \bC)$.
\end{proposition}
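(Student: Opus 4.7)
The plan is to verify the three hypotheses of Proposition~\ref{prop_unboundedXG} with dense subalgebra $C_c^\infty(X)\subset C_0(X)$. For (1), the bundle endomorphism $B=\mathrm{ext}(\rho d\rho)+\mathrm{int}(\rho d\rho)$ commutes with scalar multiplication, so for $\phi\in C_c^\infty(X)$ we have $[D_t,\phi]=t^{-1}[d+d^*,\phi]=t^{-1}c(d\phi)$, where $c(d\phi)$ is the usual Clifford-type action, whose operator norm equals $\lVert d\phi\rVert_\infty$; hence $\lVert[D_t,\phi]\rVert\to 0$ as $t\to\infty$, uniformly across the family.

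For (2), since $G$ acts isometrically and thus commutes with $d$ and $d^*$, we have $g(D_t)-D_t=g(B)-B$, independent of $t$. Writing $\rho\,d\rho=\tfrac12 d(\rho^2)$ and using that on a Hadamard manifold $\mathrm{grad}(d_X(y,\cdot)^2/2)(x)=-\exp_x^{-1}(y)$, the standard CAT(0) comparison (law of cosines combined with the nonincreasing angle at $x$) yields the $1$-Lipschitz bound $\lVert\exp_x^{-1}(x_0)-\exp_x^{-1}(gx_0)\rVert_{T_xX}\leq d_X(x_0,gx_0)$, and therefore $\lVert g(B)_x-B_x\rVert\leq d_X(x_0,gx_0)$ pointwise in $x$, uniformly in $t$. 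Norm continuity of $g\mapsto g(D)-D$ then follows by applying the same estimate to $g_1^{-1}g_2$.

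For (3), I will verify the equivalent pair (3.1) and (3.2). From \eqref{eq_N} one extracts two uniform-in-$t$ bounds, namely $\lVert(D_t\pm i)^{-1}\rVert\leq 1$ and $\lVert\rho(D_t+iN)^{-1}\rVert\leq\sqrt{2}$. Compactness of $(D_t\pm i)^{-1}$ follows from the ellipticity implicit in \eqref{eq_N}: the form domain embeds compactly into $H_X$ by combining Rellich--Kondrachov on balls with the decay $\rho^{-1}$ at infinity. Norm continuity in $t$ follows from the resolvent identity
\[
(D_{t_1}\pm i)^{-1}-(D_{t_2}\pm i)^{-1}=(t_2^{-1}-t_1^{-1})(D_{t_1}\pm i)^{-1}(d+d^*)(D_{t_2}\pm i)^{-1}
\]
after writing $d+d^*=t_2(D_{t_2}-B)$ to bound $\lVert(d+d^*)(D_{t_2}\pm i)^{-1}\rVert\leq t_2\bigl(1+\sqrt{2}(1+N)\bigr)$. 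Finally, (3.2) follows from the factorization $(1-\phi)(D_t\pm i)^{-1}=(1-\phi)\rho^{-1}\cdot\rho(D_t\pm i)^{-1}$, since $\lVert(1-\phi)\rho^{-1}\rVert_\infty$ can be made arbitrarily small by taking $\phi$ supported on a sufficiently large ball in $X$.

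The principal technical burden lies in the uniform-in-$t$ weighted resolvent estimates of step~(3); the CAT(0) step in~(2) isolates the role of the non-positive curvature hypothesis, while the curvature-lower-bound hypothesis enters only through the fixed constant $N$ in \eqref{eq_N}, which is already supplied in the setup.
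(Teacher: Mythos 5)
Your proof is correct and follows the same overall skeleton as the paper's: both verify conditions (1)--(3) of Proposition~\ref{prop_unboundedXG} for the family $D_t$, and your treatments of (1) and of (3.2) (the weighted resolvent estimate $\lVert \rho(D_t\pm Ni)^{-1}\rVert\leq\sqrt{2}$ extracted from \eqref{eq_N}, followed by the factorization through $(1-\phi)\rho^{-1}$) are essentially identical to what the paper does. The one genuinely different step is condition (2): the paper writes $g(D_t)-D_t=\mathrm{ext}(\rho'd\rho'-\rho d\rho)+\mathrm{int}(\rho'd\rho'-\rho d\rho)$ and quotes Kasparov's estimate $\lVert d\rho'-d\rho\rVert_x\leq 6\,d_X(gx_0,x_0)(\rho(x)+\rho'(x))^{-1}$, whereas you observe that $\rho\,d\rho=\tfrac12 d(\rho^2)$ has gradient $-\exp_x^{-1}(x_0)$ and invoke the $1$-Lipschitz property of $\exp_x^{-1}$ on a Hadamard manifold to get $\lVert g(D_t)-D_t\rVert\leq C\,d_X(x_0,gx_0)$ directly. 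Both give the required boundedness and norm-continuity in $g$; your route is more self-contained and isolates exactly where non-positive curvature enters, while Kasparov's estimate carries the extra decay factor $(\rho+\rho')^{-1}$, which is not needed here. You also supply arguments for the compactness and $t$-continuity of $(D_t\pm i)^{-1}$ (condition (3.1)), which the paper simply asserts in the setup preceding the proposition; your resolvent-identity computation requires bounding $B(D_{t_2}\pm i)^{-1}$ where $B=\mathrm{ext}(\rho d\rho)+\mathrm{int}(\rho d\rho)$ is unbounded, but this is exactly what the weighted estimate $\lVert\rho(D_t\pm Ni)^{-1}\rVert\leq\sqrt2$ delivers, so the argument closes. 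Only a cosmetic remark: in the factorization for (3.2) you should pass from $(D_t\pm Ni)^{-1}$ to $(D_t\pm i)^{-1}$ via the uniformly bounded factor $(D_t\pm Ni)(D_t\pm i)^{-1}$; the paper elides the same point.
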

\begin{proof} We check that $D$ satisfies the conditions (1) -(3) of Proposition \ref{prop_unboundedXG}. (1): Let $B=C_c^\infty(X) \subset C_0(X)$. We have for any $\phi \in B$,
\[
[D_t, \phi] = t^{-1} ( \mathrm{ext}(d\phi) -  \mathrm{int}(d\phi))
\] 
from which the condition (1) holds for $D$. (2): we have
\[
g(D_t) - D_t =  \mathrm{ext}(\rho' d\rho' - \rho d\rho) +  \mathrm{int}(\rho' d\rho' - \rho d\rho)
\]
where $\rho'(x) =\sqrt{d(g(x_0), x)^2+ 1}$. We have $\lVert d\rho'-d\rho\rVert_x \leq 6d(g(x_0) ,x_0)(\rho(x)+\rho'(x))^{-1}$  \cite[Lemma 5.3]{Kasparov88}. Using this, we see that the condition (2) holds for $D$. (3): Since $(D\pm i)^{-1} \in C_b([1, \infty), \Compacts(H_X))$, it is enough to show that for any $\epsilon>0$, there is $\phi \in C_0(X)$, such that $\lVert(1-\phi)(D_t\pm i)^{-1}\rVert < \epsilon$ for all $t\geq1$. For this, we may instead prove the same claim not for $(D_t\pm i)^{-1}$ but for  $(D_t\pm Ni)^{-1}$ for the constant $N>0$ as in \eqref{eq_N}. Take any $v=(D_t \pm Ni)v_0$ for $v_0 \in \E_c^\infty$. Then, we have
\[
(1-\phi) (D_t\pm i)^{-1}v = (1-\phi)v_0
\]
and 
\[
\lVert v\rVert^2 = \lVert(D_t \pm Ni)v_0\rVert^2 = \s{(D_t^2 + N^2)v_0, v_0} \geq   \frac{1}{2} \s{\rho^2v_0, v_0}.
\]
Using these, it is not hard to see that for any $\epsilon>0$, there is $\phi\in C_0(X)$ with $\lVert(1-\phi)(D_t\pm Ni)^{-1}\rVert < \epsilon$ for all $t\geq1$. Hence, the condition (3) holds for $D$. 
\end{proof}

One can show that $[H_X, T_1]=1_K$ in $R(K)$ for any compact subgroup $K$ of $G$. By Theorem \ref{thm_XGgamma}, we see that the Baum--Connes assembly map $\mu^{B, G}_r$ is split-injective for any $G$ which acts properly and isometrically on a complete, simply connected Riemannian manifold of non-positive sectional curvature, with curvature bounded below. Of course, $[H_X, T_1]$ in $KK^G(\bC, \bC)$ is nothing but the gamma element for $G$ constructed by Kasparov \cite{Kasparov88}.
\end{example}

\begin{example} (c.f.\ \cite{JulgValette84}) Let $X$ be a locally finite tree on which a locally compact group $G$ acts properly (and continuously) by automorphisms. Let 
\[
H^{\mathrm{JV}}_X= l^2(X^0) \oplus l^2(X^1)
\]
be the direct sum of $l^2$-spaces of the set $X^0$ of vertices of $X$ and of the set $X^1$ of the edges. More precisely, we define $l^2(X^1)$ as the quotient of $l^2$-space of oriented edges by the closed subspaces spanned by $[x, y] + [y, x]$ for oriented edges $[x, y]$ of $X$. The Hilbert space $H^{\mathrm{JV}}_X$ is naturally a graded $X$-$G$-module. Fix $x_0\in X$ and $T_{\mathrm{JV}}$ be the associated Julg--Valette operator on $H^{\mathrm{JV}}_X$. The operator $T_{\mathrm{JV}}$ can be defined as follows. For any $x\in X^0\backslash \{x_0\}$, let $e_x=[x', x]$ be the unique (oriented) edge which is closest to $x_0$ among the all edges containing $x$. Then, the Hilbert space $H^{\mathrm{JV}}_X$ has the following direct sum decomposition
\[
H^{\mathrm{JV}}_X = \bC\delta_{x_0} \oplus  \bigoplus_{x\in X^0 \backslash \{x_0\}} \bC\delta_{x} \oplus \bC\delta_{e_x}.
\]
Under this decomposition of $H^{\mathrm{JV}}_X$, the Julg--Valette operator $T_{\mathrm{JV}}$ is expressed as the block-diagonal operator 
\[
 0 + \sum_{x\in X^0\backslash \{x_0\}} \begin{bmatrix} 0 & 1 \\ 1 & 0 \end{bmatrix}.
\]
The Julg--Valette operator $T_{\mathrm{JV}}$ is an, odd, self-adjoint, bounded, $G$-continuous operator on $H^{\mathrm{JV}}_X$ such that for any $g\in G$,
\[
1-T_{\mathrm{JV}}^2, \,\,\, g(T_{\mathrm{JV}})- T_{\mathrm{JV}} \in \Compacts(H^{\mathrm{JV}}_X) 
\]
(in fact they are of finite rank). Thus, $(H^{\mathrm{JV}}_X, T_{\mathrm{JV}})$ is a (bounded) Kasparov cycle for $KK^G(\bC, \bC)$ and we have $[H^{\mathrm{JV}}_X, T_{\mathrm{JV}}]=1_G$ in $KK^G(\bC, \bC)$ \cite[Proposition 1.6]{JulgValette84} (for this, the $G$-action does not have to be proper).

Now, we recall some idea from \cite{BGHN}. Instead of the $X$-$G$-module $H^{\mathrm{JV}}_X$ which is rather ``discrete'', we consider the Hilbert space $H^{\mathrm{dR}}_X$ of the de-Rham complex on $X$ which is more ``continuous''. The Hilbert space $H^{\mathrm{dR}}_X$ is defined as 
\[
H^{\mathrm{dR}}_X = \Omega^\ast_{L^2}(X) = \bigoplus_{i=0, 1} \bigoplus_{\sigma \in X^i}  \Omega^\ast_{L^2}(\sigma)
\]
where for a vertex $\sigma\in X^0$, $\Omega^\ast_{L^2}(\sigma)$ is just the one-dimensional space $\bC\delta_{\sigma}$ and for a (non-oriented) edge $\sigma\in X^1$, $\Omega^\ast_{L^2}(\sigma)$ is the space of $L^2$-sections of the de-Rham complex on $\sigma$ equipped with the standard metric so that $\sigma \cong [0 ,1]$ isometrically. The space $H^{\mathrm{dR}}_X$ is naturally a graded $X$-$G$-module and we have an embedding
\[
H^{\mathrm{JV}}_X  \to H^{\mathrm{dR}}_X 
\]
of $X$-$G$-modules which sends $\delta_x$ for $x\in X^0$ to $\delta_x \in \Omega^0_{L^2}(x)$ and $\delta_e$ for an oriented edge $e=[x, y]$ to the constant 1-form in $\Omega^1_{L^2}(e)$ which corresponds to $ds$ under the isomorphism 
\[
\Omega^1_{L^2}(e) \cong \Omega^1_{L^2}[0 ,1]  =  L^2[0, 1]ds
\]
under which the vertex $x$ is identified as $0$ and $y$ is identified as $1$ in $[0, 1]$. Fix $x_0\in X$, we have the following direct sum decomposition
\begin{equation}\label{eq_decomp_dR}
H^{\mathrm{dR}}_X  = \bC\delta_{x_0} \oplus  \bigoplus_{x\in X^0\backslash \{x_0\}} \bC\delta_{x} \oplus \Omega^\ast_{L^2}(e_x)
\end{equation}
where $e_x=[x', x]$ as before. Furthermore, we have an isomorphism
\begin{equation}\label{eq_identify}
\Omega^\ast_{L^2}(e_x) \cong \Omega^\ast_{L^2}[0 ,1] = L^2[0, 1] \oplus L^2[0, 1]ds
\end{equation}
under which $x'$ is identified as $0$ and $x$ is identified as $1$ in $[0, 1]$. For any $t>0$, let 
\[
d_t = t^{-1}d + \mathrm{ext}(ds)\colon \Omega^0_{0}[0 ,1] \to  \Omega^1[0 ,1]
\]
be the Witten-type perturbation $t^{-1}(e^{-ts}de^{ts})$ of the de-Rham differential operator $d$ on $\Omega^\ast_{L^2}[0 ,1]$ and consider it as an unbounded operator on $\Omega^\ast_{L^2}(e_x)$. Here,  $\Omega^\ast[0 ,1]$ is the space of (smooth) forms and 
\[
\Omega_0^0[0 ,1] = \{ s \in C^\infty[0 ,1]\mid s(0)=s(1)=0 \}.
\]
Let
\[
D_t = d_t + d_t^\ast \colon \Omega^\ast_{L^2}[0 ,1] \to \Omega^\ast_{L^2}[0 ,1]
\]
with domain $\Omega_0^0[0 ,1] \oplus \Omega^1[0 ,1]$. Then, $D_t$ is an odd, essentially self-adjoint, unbounded operator on $\Omega^\ast_{L^2}[0 ,1]$ with compact resolvent. For each integer $k>0$, the span of the forms
\[
\sin(\pi k s), \,\,\, d_t\sin(\pi k s) = t^{-1}\pi k \cos(\pi k s)ds + \sin(\pi k s)ds
\]
is invariant under $D_t$ and we have
\[
D_t^2 = t^{-2}\pi^2k^2 +  1
\]
on the span. These spans are mutually orthogonal in $\Omega^\ast_{L^2}[0 ,1]$ and their mutual orthogonal complement $\Omega^\ast_{L^2}[0 ,1]$ is spanned by
\[
\tilde e_t = e^{ts}ds \in \Omega^1[0 ,1]
\]
which lies in the kernel of $D_t$. Now, let
\[
S_t = \chi(D_t)
\]
where $\chi\in C_b(\R)$ is any odd, continuous, increasing function on $\R$ such that $\chi(x)=1$ for $x\geq1$. Then, we have
\[
1-S_t^2 = P_t
\]
where $P_t$ is the rank one projection onto the span of $e_t$ in $\Omega^\ast_{L^2}[0 ,1]$. Under the identification \eqref{eq_identify}, $S_t$ defines an, odd, self-adjoint operator $S_{t, x}$ on each $\Omega^\ast_{L^2}(e_x)$ for $x\in X^0 \backslash \{x_0\}$. Now, let 
\[
e_t = \frac{\tilde e_t}{\lVert\tilde e_t \rVert } = \sqrt{\frac{2t}{e^{2t}-1}} e^{ts}ds =  \sqrt{\frac{2t}{1-e^{-2t}}} e^{t(s-1)}ds \in\Omega^1[0 ,1]
\]
and $e_{t, x}$ be the corresponding unit vector in $\Omega^\ast_{L^2}(e_x)$ under the identification \eqref{eq_identify}. Then, $1-S^2_{t, x}$ on $\Omega^\ast_{L^2}(e_x)$ is the rank one projection onto the span of $e_{t, x}$. We set 
\[
R_{t, x} = \theta_{e_{t, x}, \delta_x} +  \theta_{\delta_x, e_{t, x}}
\]
be the partial isometry on $\bC\delta_{x} \oplus \Omega^\ast_{L^2}(e_x)$ which sends $e_{t, x}$ to $\delta_x$ and $\delta_x$ to $e_{t, x}$ and is zero on their complement. We define the block-diagonal operator 
\[
T_t = 0 + \sum_{x\in X^0\backslash \{x_0\}} S_{t, x} + R_{t, x} 
\]
on $H^{\mathrm{dR}}_X$ with respect to the decomposition \eqref{eq_decomp_dR}. Then, the family $\{T_t\}_{t\geq1}$ defines an odd, self-adjoint operator $T$ in $\Linears(H^{\mathrm{dR}}_X\otimes C_0[1 ,\infty))$.

\begin{proposition} The pair $(H^{\mathrm{dR}}_X, T)$ is an $X$-$G$-localized Kasparov cycle for $KK^G(\bC, \bC)$.
\end{proposition}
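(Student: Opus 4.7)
\medskip

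\noindent\textbf{Proof plan.} The operator $T_t$ is manifestly odd, self-adjoint, norm-bounded (by construction, each block operator has norm at most $2$), and strictly continuous in $t\in[1,\infty)$. To conclude that $(H_X^{\mathrm{dR}},T)$ is an $X$-$G$-localized Kasparov cycle one must therefore verify: (a) $1-T^2\in RL^*_c(H_X^{\mathrm{dR}})$; (b) $T\in M(RL^*_c(H_X^{\mathrm{dR}}))$, for which by Lemma~\ref{lem_mult} it suffices to show $\lim_{t\to\infty}\lVert[\phi,T_t]\rVert=0$ for every $\phi\in C_0(X)$; (c) $T$ is $G$-continuous; (d) $g(T)-T\in RL^*_c(H_X^{\mathrm{dR}})$ for every $g\in G$. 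The first step is to exploit the block-diagonal structure.

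\medskip

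The key algebraic observation is that within each non-basepoint block $\bC\delta_x\oplus \Omega^*_{L^2}(e_x)$ the operators $S_{t,x}$ and $R_{t,x}$ are orthogonal: since $\chi$ is odd and $e_{t,x}\in\ker D_t$, we have $S_{t,x}e_{t,x}=0$ and $S_{t,x}\delta_x=0$, so $S_{t,x}R_{t,x}=R_{t,x}S_{t,x}=0$. Consequently $(S_{t,x}+R_{t,x})^2=S_{t,x}^2+R_{t,x}^2$, where $S_{t,x}^2$ is the projection onto the orthogonal complement of $\bC e_{t,x}$ inside $\Omega^*_{L^2}(e_x)$ and $R_{t,x}^2$ is the projection onto $\bC\delta_x\oplus\bC e_{t,x}$; their sum is the identity on the block. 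Hence $1-T_t^2$ is the constant rank-one projection onto $\bC\delta_{x_0}$, independent of $t$, which has uniform compact support $\{x_0\}$ and lies trivially in $RL^*_c(H_X^{\mathrm{dR}})$. This disposes of (a).

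\medskip

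For (b), fix a Lipschitz $\phi\in C_c(X)$; by density it suffices to handle such $\phi$. On the block $\bC\delta_x\oplus \Omega^*_{L^2}(e_x)$ with $e_x\cap\supp(\phi)\neq\emptyset$, the commutator with $D_t$ satisfies $[\phi|_{e_x},D_t]=t^{-1}[\phi|_{e_x},d+d^*]$, which is bounded by $C t^{-1}\mathrm{Lip}(\phi)$ uniformly in $x$. A standard Baaj–Julg/Helffer–Sjöstrand resolvent-integral representation for $\chi(D_t)$ (choosing $\chi$ smooth with appropriate decay) then gives $\lVert[\phi,S_{t,x}]\rVert=O(t^{-1})$ uniformly in $x$. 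For $R_{t,x}$, the unit vector $e_{t,x}=\sqrt{2t/(1-e^{-2t})}\,e^{t(s-1)}ds$ concentrates at the vertex $x$ (the end $s=1$) in $L^2$-norm, so $\lVert\phi\cdot e_{t,x}-\phi(x)e_{t,x}\rVert\to 0$ uniformly in $x$ with speed controlled by $\mathrm{Lip}(\phi)/\sqrt{t}$. Combined with $\phi\delta_x=\phi(x)\delta_x$, this yields $\lVert[\phi,R_{t,x}]\rVert\to 0$ uniformly. Since $\phi$ has compact support, only finitely many (indeed boundedly many) blocks contribute nontrivially, and taking the supremum gives $\lVert[\phi,T_t]\rVert\to 0$.

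\medskip

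For (c) and (d), note that the construction of $T$ depends on the basepoint $x_0$ only through the choice, for each $x\in X^0\setminus\{x_0\}$, of the edge $e_x=[x',x]$ on the geodesic ray from $x_0$ to $x$. Given $g\in G$, the operator $g(T)$ is identical in form but built from the basepoint $g(x_0)$. Because $X$ is a tree, the orientation of $e_x$ changes under the switch $x_0\leadsto g(x_0)$ precisely for those $x$ lying on the (finite) geodesic segment $[x_0,g(x_0)]$; for all other $x$, the $x$-th block of $g(T_t)$ equals that of $T_t$. Hence $g(T_t)-T_t$ has uniform compact support inside a fixed neighborhood of $[x_0,g(x_0)]$, and within each contributing block its two terms are of the same type as the building blocks of $T_t$ (differing only by orientation, which flips $e_{t,x}$ to a vector concentrated at the opposite vertex). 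Applying the arguments of the previous paragraph to this finite sum gives $\lVert[\phi,g(T_t)-T_t]\rVert\to 0$ for all $\phi\in C_0(X)$, together with the decay $\lVert(1-\phi)(g(T_t)-T_t)\rVert\to 0$ for a suitable cutoff $\phi$, and hence $g(T)-T\in RL^*_c(H_X^{\mathrm{dR}})$. The same uniform estimates (in $g$ over compact subsets of $G$), together with properness of the $G$-action, give $G$-continuity of $T\in M(RL^*_c(H_X^{\mathrm{dR}}))$.

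\medskip

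The main technical obstacle is the quasi-locality estimate $\lVert[\phi,\chi(D_t)]\rVert=O(t^{-1})$ in step (b): it requires carefully choosing the functional calculus (which can be arranged by the freedom in picking $\chi$, or by first replacing $\chi(D_t)$ by its unbounded transform $D_t(1+D_t^2)^{-1/2}$) and integrating a norm-convergent resolvent representation; the remaining steps are bookkeeping given the block-diagonal description and the tree geometry.
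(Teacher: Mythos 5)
Your proof is correct and follows essentially the same route as the paper: compute $1-T_t^2$ exactly as the rank-one projection onto $\bC\delta_{x_0}$, handle $[\phi,T_t]$ by treating the $S_{t,x}$ part via the $O(t^{-1})$ commutator $[D_t,\phi]$ and the $R_{t,x}$ part via the $L^2$-concentration of $e_{t,x}$ at the vertex $x$, and observe that $g(T_t)-T_t$ is supported on the finitely many blocks along the geodesic from $x_0$ to $g(x_0)$. The one step you flag as the "main technical obstacle" (passing from $[D_t,\phi]=O(t^{-1})$ to $\lVert[\chi(D_t),\phi]\rVert\to 0$) is treated just as tersely in the paper, and your resolvent-integral sketch is valid since the nonzero spectrum of $D_t$ is uniformly bounded away from $0$.
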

\begin{proof}
 It is clear that $1-T_t^2$ is the rank-one projection onto the span of $\delta_{x_0}$. Moreover, as in the case of the Julg--Valette operator, for any $g\in G$, the difference $g(T_t)-T_t$ is contributed from only finitely many blocks corresponding to the vertices $x$ on the geodesics from $x_0$ to $g(x_0)$. After noticing this, it is not hard to see that $T$ is $G$-continuous and that for any $g\in G$,
 \[
 g(T) - T \in C_b([1, \infty), \Compacts(H^{\mathrm{dR}}_X))
 \]
and $g(T)-T$ has uniform compact support with respect to $X$. Finally, we have $\lim_{t\to \infty}\lVert[T_t, \phi]\rVert=0$ for any $\phi$ in $C_0(X)$. To see this, we take $\phi$ to be a compactly supported function on $X$ which is smooth on each edge. Then, we have
\[
[D_t, \phi] = t^{-1}(\mathrm{ext}(d\phi) - \mathrm{int}(d\phi) )
\]
on each block $\Omega^\ast_{L^2}(e_x)\cong \Omega^\ast_{L^2}[0 ,1]$ and this implies $\lim_{t\to \infty}\lVert[S_{t, x}, \phi]\rVert=0$ for each $x$. To see $\lim_{t\to \infty}\lVert[R_{t, x}, \phi]\rVert=0$, we compute
\[
 [\theta_{e_{t, x}, \delta_x} , \phi] = (\phi - \phi(x))\theta_{e_{t, x}, \delta_x}.
\]
We have $\lVert(\phi - \phi(x))e_{t, x}\rVert \to 0$ as $t\to \infty$, uniformly in $x\in X\backslash \{x_0\}$. This follows from the following elementary estimate:
\[
\sqrt{\frac{2t}{1-e^{-2t}}} \lVert(1-s) e^{t(s-1)}\rVert_{L^2[0, 1]} = \sqrt{\frac{2t}{1-e^{-2t}}}\lVert se^{-ts}\rVert_{L^2[0 ,1]} 
\]
\[
=\frac{1}{\sqrt{t}} \sqrt{\frac{2}{1-e^{-2t}}}\lVert tse^{-ts}\rVert_{L^2[0 ,1]} \leq \frac{1}{\sqrt{t}} \sqrt{\frac{2}{1-e^{-2t}}} \sup \{ se^{-s} \mid s\geq 0\}  
\]
\[ 
\leq  \frac{1}{\sqrt{t}} \sqrt{\frac{2}{1-e^{-2t}}}  \to 0 
\]
as $t\to \infty$. We now see that $\lim_{t\to \infty}\lVert[T_t, \phi]\rVert=0$ holds for any $\phi \in C_0(X)$. Hence, $T\in M(RL^*_c(H^{\mathrm{dR}}_X))$ is an odd, self-adjoint, $G$-continuous operator and we have for any $g\in G$, 
\[
1-T^2, \,\,\ g(T)-T \in RL^*_c(H^{\mathrm{dR}}_X).
\] 
\end{proof}
One can show (see \cite[Section 5]{BGHN}) that $[H^{\mathrm{dR}}_X, T_1]=[H^{\mathrm{JV}}_X, T_{\mathrm{JV}}]=1_G$ in $KK^G(\bC, \bC)$. In fact, $(H^{\mathrm{dR}}_X, T_t)$ for $t\in [0, 1]$ is a homotopy from $[H^{\mathrm{dR}}_X, T_1]$ to $[H^{\mathrm{JV}}_X, T_{\mathrm{JV}}]$.  By Theorem \ref{thm_XGgamma}, we see that the Baum--Connes assembly map $\mu^{B, G}_r$ is an isomorphism  for any $G$ which acts properly (and continuously) on a locally finite tree by automorphisms. See \cite{KasparovSkandalis91} and \cite{Tu1999} for the proof by the gamma element method.
\end{example}


\section{BCC for groups acting on finite-dimensional CAT(0)-cubical spaces}

In this final section, we extend the recently obtained new proof of the Baum--Connes conjecture with coefficients for groups $G$ which act properly (and continuously) and co-compactly on a finite-dimensional CAT(0)-cubical space with bounded geometry \cite{BGHN} to the not-necessarily co-compact setting. The groundwork for this is already done in \cite{BGHN} (almost all results are proven in the general, not-necessarily co-compact setting). 

We first recall some materials and established results from \cite{BGHN} (see also \cite{BGH}). For the rest of this section, $X$ will be a bounded geometry CAT(0)-cubical space, as in \cite[Section 2.2]{NibloReeves98}. In particular, $X$ is obtained by identifying cubes, each of which is isometric to the standard Euclidean cube $[0, 1]^q$ of the appropriate dimension, by isometries of their faces. We shall refer to $0$-cubes as vertices, $1$-cubes as edges, etc. The bounded geometry condition asserts that there is a uniform bound on the number of edges that contain a given vertex, and this implies that $X$ is finite dimensional. Further, we shall assume all group actions on $X$ are by automorphism of its structure as a CAT(0)-cubical space. In particular, a group acting on $X$ permutes the vertices, the edges, and the $q$-cubes for each fixed $q$.

The midplanes of a $q$-cube $C$ in $X$ correspond to the intersections of the standard cube with the coordinate hyperplanes $x_j=1/2$ ($j=1, \cdots, q$). We generate an equivalence relation on the collection of all midplanes of all cubes in $X$ by declaring as equivalent any two midplanes whose intersection is itself a midplane. The union of all midplanes in a given class is called a hyperplane, and each hyperplane $H$ satisfies the following important properties: (i) $H$ is connected, and (ii) the complement $X \backslash H$ has precisely two path components. We shall say that a hyperplane $H$ separates two subsets of $X$ if these subsets lie in distinct components of $X \backslash H$. A group acting on $X$ necessarily permutes the hyperplanes. A hyperplane $H$ is adjacent to a cube $C$ if it is disjoint from $C$, but intersects a cube that includes $C$ as a codimension-one face. 

On the cube $[0, 1]^q$, the smooth $p$-forms can be written as $\alpha=\sum_{I}f_Idx_I$ where $I=\{i_1 < \cdots < i_p \}$ is a multi-index, and $f_I$ is a smooth function on the cube $[0, 1]^q$. For each $q$-cube $C$, the space $\Omega^{p}(C)$ of $p$-forms carries a natural inner product and we let $\Omega_{L^2}^{p}(C)$ be the Hilbert space completion.

We fix a base vertex ($0$-cube) $P$ of $X$. Let $C$ be a cube in $X$ and $H$ be a hyperplane that intersects $C$. A coordinate function $x_H\colon D\to \R$ is defined as the affine function on $C$ which takes $1$ on the codimension-one face of $C$, which is disjoint from $H$ and is separated by $H$ from the base vertex, while it takes $0$ on the opposite codimension-one face disjoint from $H$ but not separated by $H$ from the base vertex. We define $\alpha_H=dx_H$. This is a constant-coefficient one-form on $C$.

A weight function $w$ for $X$ is a positive function on the set of hyperplanes. A weight function $w$ is called proper if the set $\{H \mid w(H)\leq M \}$ is finite for any $M>0$. If a locally compact group $G$ acts on $X$ by automorphisms, it is $G$-adapted if $\sup_{H}|w(H) - w(gH)| < \infty$ for any $g\in G$ and for some open subgroup $G$, $w(gH)=w(H)$ for all $H$ and for all $g$ in the subgroup. We take $w(H)=\mathrm{distace}(P, H)$ as our preferred weight function which is both proper and $G$-adapted but is is helpful for us not to fix a weight function in what follows. 

For each cube $C$, and for each $p\geq0$, we denote by $\Omega_0^p(C)\subset \Omega^p(C)$ the space of those smooth $p$-forms on $C$ that pull back to zero on each open face of $C$. We consider the de-Rham differential $d$ as an unbounded operator from $\Omega_{L^2}^p(C)$ to $\Omega_{L^2}^{p+1}(C)$ with domain $\Omega_{0}^p(C)$. We consider its formal adjoint $d^\diamond$ as an unbounded operator with domain $\Omega^{p+1}(C)$. We form the direct sum
\[
\Omega_{0}^{\ast}(C) = \bigoplus_p \Omega_{0}^{p}(C), \,\,\, \Omega_{L^2}^{\ast}(C) = \bigoplus_p \Omega_{L^2}^{p}(C),
\]
and then form the unbounded operator
\[
D = d+ d^{\diamond} \colon   \Omega_{L^2}^{\ast}(C) \to  \Omega_{L^2}^{\ast}(C).
\]
The operator $D$ on $\Omega_{L^2}^{\ast}(C)$ is essentially self-adjoint on the domain $\Omega_{0}^{\ast}(C)$, and the self-adjoint closure of $D$ has compact resolvent. The kernel of the closure is precisely the one-dimensional space of top-degree constant coefficient forms (see \cite[Lemma 4.2]{BGHN}).

Let $w$ be a weight function for $X$. For each cube $C$ in $X$, an affine function $w_C\colon C\to \R$ is defined by the formula
\[
w_C= \sum_{H\in \mathrm{Mid}(C)} w(H)x_H
\]
where $\mathrm{Mid}(C)$ is the set of hyperplanes that contain a midplane of $C$. We form the Witten-type perturbation
\begin{equation}\label{eq_dw}
d_w= e^{-w_c} d e^{w_c}\colon   \Omega_{0}^{\ast}(C) \to  \Omega^{\ast}(C).
\end{equation}
We have
\[
d_w\colon \beta \mapsto d\beta +  \sum_{H\in \mathrm{Mid}(C)} w(H)x_H\wedge \beta,
\]
so that $d_w$ is a bounded perturbation of $d$. Let $d_w^\diamond$ be its formal adjoint considered as an unbounded operator with domain $\Omega^{\ast}(C)$, then the unbounded operator
\[
D_w = d_w+ d_w^{\diamond} \colon   \Omega_{L^2}^{\ast}(C) \to  \Omega_{L^2}^{\ast}(C)
\]
with domain $\Omega_{0}^{\ast}(C)$ is essentially self-adjoint and has compact resolvent. 

 Let $C$ be a cube in $X$ and $H$ be a hyperplane that intersects $C$. We let
 \[
 y_H= x_H-1 \colon C\to \R.
 \]
Note that $dy_H=\alpha_H=dx_H$. We define a linear operator (see \cite[Definition 4.4]{BGHN})
\[
e_w\colon \Omega^p(X) \to  \Omega^{p+1}(X)
\]
by the formula
\[
e_w\colon \beta \mapsto \sum_{H\in \mathrm{SAH}(C)} w(H)e^{w(H)y_H}\alpha_H \wedge \beta
\]
for $\beta \in \Omega^\ast(C)$ where:
\begin{itemize}
\item $\mathrm{SAH}(C)$ is the set of all hyperplanes that are adjacent to $C$ and separate $C$ from the base vertex $P$.
\item  For a hyperplane $H\in \mathrm{SAH}(C)$, if $D_H$ is the cube that is intersected by $H$ and that includes $C$ as a codimension-one face, then we view $\alpha_H\wedge \beta$ as a differential form on $D_H$ by pulling back $\beta$ along the orthogonal projection from $D_H$ to $C$.
\end{itemize}

We denote by $D^{\mathrm{dR}}_w$ (see \cite[Definition 4.5]{BGHN}, it was denoted by $D_{\mathrm{dR}}$) the following symmetric operator on $\Omega^\ast_{L^2}(X)$ with domain $\Omega^\ast_{0}(X)$
\[
D^{\mathrm{dR}}_w = d_w + e_w + d_w^\diamond + e_w^\diamond.
\]
By $d_w$, we mean here the direct sum of all the Witten-type operators \eqref{eq_dw} over all cubes of $X$ and the same for the formal adjoint $d_w^\diamond$. The operator $D^{\mathrm{dR}}_w$ is essentially self-adjoint and it has compact resolvent \cite[Theorem 4.6]{BGHN} if the weight function $w$ is proper.

For any vertex $Q$ of $X$, a cube $C$ leads to $Q$ to the base vertex $P$ if (i) the cube $C$ includes the vertex $Q$ and (ii) all the hyperplanes that intersect $C$ also separate $Q$ from $P$. That is $\mathrm{Mid}(C) \subset \mathrm{SAH}(Q)$. We denote by $\Omega^\ast_{L^2}(X)_Q \subset \Omega^\ast_{L^2}(X)$ the direct sum of the spaces $\Omega^\ast_{L^2}(C)$ over those cubes $C$ that lead $Q$ to $P$. We define $\Omega^\ast_{0}(X)_Q$ and  $\Omega^\ast(X)_Q$ similarly (see \cite[Definition 4.7]{BGHN}).

We have the following Hilbert space direct sum decomposition:
\begin{equation}\label{eq_dR_decomposition}
  \Omega^\ast_{L^2}(X) = \bigoplus_Q \Omega^\ast_{L^2}(X)_Q.
\end{equation}
The operator $D^{\mathrm{dR}}_w$ is block-diagonal with respect to this decomposition \cite[Lemma 4.9]{BGHN}. We have the following useful estimate.

\begin{lemma}(See the proof of \cite[Theorem 4.6]{BGHN})\label{lem_estimate} For any $\beta \in \Omega^\ast_{0}(X)_Q$,
\[
\lVert D^{\mathrm{dR}}_w\beta \rVert^2 \geq \sum_{H \in \mathrm{SAH}(Q)} \min\{\frac12 w(H)(1-e^{-2w(H)}), \pi^2+ w(H)^2  \} \lVert\beta\rVert^2 
\]
\[
= \sum_{H \in \mathrm{SAH}(Q)} \frac12 w(H)(1-e^{-2w(H)}) \lVert\beta\rVert^2.
\]
\end{lemma}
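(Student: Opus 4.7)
The plan is to decompose $D^{\mathrm{dR}}_w$ restricted to $\Omega^\ast_{L^2}(X)_Q$ into a graded sum of one-variable operators, one per hyperplane in $\mathrm{SAH}(Q)$, and then prove the claimed estimate on each factor. Cubes $C$ leading $Q$ to $P$ are parametrized by subsets $S = \mathrm{Mid}(C) \subset \mathrm{SAH}(Q)$, with $C \cong [0,1]^S$ in coordinates $(x_H)_{H \in S}$ normalised so that $x_H(Q) = 1$; accordingly, $\Omega^\ast_{L^2}(X)_Q$ embeds as a graded subspace of the graded tensor product $\bigotimes_{H \in \mathrm{SAH}(Q)} V_H$, where $V_H := \mathbb{C} \oplus \Omega^\ast_{L^2}[0,1]$ (the summand $\mathbb{C}$ of degree $0$ corresponds to the case $H \notin S$). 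Under this embedding, I would verify that $D^{\mathrm{dR}}_w = \sum_{H} D_H$, where each $D_H$ is odd, acts nontrivially only on the $H$-th factor, and is tensored with the grading involution on the remaining factors (so that $\{D_H, D_{H'}\} = 0$ for $H \neq H'$). Explicitly, $D_H$ is the sum of the one-variable Witten differential $f \mapsto (f' + w(H) f)\,dx_H$ on $\Omega^0_0[0,1]$, its formal adjoint $g\,dx_H \mapsto -g' + w(H) g$, the $e_w$-fill-in $1 \mapsto w(H)\,e^{w(H)(x_H - 1)}\,dx_H$ (using $\alpha_H = dx_H$ and $y_H = x_H - 1$), and its adjoint $g\,dx_H \mapsto w(H)\int_0^1 g(s)\,e^{w(H)(s-1)}\,ds$. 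Anticommutation of the distinct $D_H$'s via the Koszul signs then yields $(D^{\mathrm{dR}}_w)^2 = \sum_H D_H^2$, and hence $\|D^{\mathrm{dR}}_w \beta\|^2 = \sum_H \|D_H \beta\|^2$ for every $\beta \in \Omega^\ast_{L^2}(X)_Q$.

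The remaining task is the single-hyperplane bound $D_H^2 \geq \lambda_H \cdot I$ on $V_H$, with $\lambda_H := \tfrac12 w(H)(1 - e^{-2w(H)})$. Since $D_H^2$ preserves parity, I handle the two parts separately. On the even part $\beta = c\cdot 1 + f$ with $f \in \Omega^0_0[0,1]$, a direct computation using the vanishing $\int_0^1 (f' + w(H) f)\,e^{w(H)(s-1)}\,ds = 0$ (integration by parts, killing the boundary via $f(0)=f(1)=0$) gives
\[
\|D_H \beta\|^2 = \tfrac12 w(H)(1 - e^{-2w(H)})\,|c|^2 + \|f'\|^2 + w(H)^2 \|f\|^2,
\]
and Poincar\'e's inequality $\|f'\|^2 \geq \pi^2 \|f\|^2$ then yields the bound $\min\{\lambda_H,\, \pi^2 + w(H)^2\}(|c|^2 + \|f\|^2)$. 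On the odd part, orthogonally decompose $g = \alpha\,e^{w(H) x} + g_\perp$ in $L^2[0,1]$; a short computation (the $e_w^\diamond$-integral evaluates to $\sinh(w(H))\cdot\alpha$) gives $\|D_H(\alpha\, e^{w(H) x} dx)\|^2 = \lambda_H \|\alpha\, e^{w(H) x}\|^2$, while for $g_\perp$ the $e_w^\diamond$-contribution vanishes by orthogonality and one reads off the least eigenvalue of $A_H^* A_H = -d^2/dx^2 + w(H)^2$ on $\ker(A_H)^\perp$, where $A_H = -d/dx + w(H)$. Solving $-g'' + w(H)^2 g = \mu g$ with the Robin-type conditions $g'(0) = w(H) g(0)$, $g'(1) = w(H) g(1)$ (forced by $A_H g \in \mathrm{dom}(A_H^*) = H^1_0[0,1]$) yields $\sin(\sqrt{\mu - w(H)^2}) = 0$, so $\mu_{\min} = \pi^2 + w(H)^2$. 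The elementary calculus inequality $\tfrac12 w(1 - e^{-2w}) \leq \pi^2 + w^2$ (valid for all $w \geq 0$) lets us drop the $\min$ and record the final bound as $\lambda_H$.

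The main obstacle is the first step, namely establishing the operator splitting $D^{\mathrm{dR}}_w = \sum_H D_H$ on $\Omega^\ast_{L^2}(X)_Q$. The subtle point is that, in a general CAT(0)-cubical complex, $\Omega^\ast_{L^2}(X)_Q$ can be a proper graded subspace of the formal tensor product $\bigotimes_H V_H$, because not every $S \subset \mathrm{SAH}(Q)$ need arise as $\mathrm{Mid}(C)$ of an actual cube of $X$. However, the definitions of $d_w$ and $e_w$ only produce contributions in directions indexed by $\mathrm{Mid}(C)$ and $\mathrm{SAH}(C)$ respectively, i.e.\ precisely the directions in which cubes actually exist, so $D^{\mathrm{dR}}_w$ preserves the subspace and the single-factor spectral bound transfers through the anticommutation. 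Summing over $H \in \mathrm{SAH}(Q)$ gives the asserted inequality.
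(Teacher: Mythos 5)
Your overall strategy is exactly the one used in the cited source \cite[Theorem 4.6]{BGHN} (the paper itself only quotes that proof): identify $\Omega^\ast_{L^2}(X)_Q$ with a graded tensor product over $H\in\mathrm{SAH}(Q)$ of the single-hyperplane spaces $V_H=\bC\oplus\Omega^\ast_{L^2}[0,1]$, write $D^{\mathrm{dR}}_w=\sum_H D_H$ with the $D_H$ odd and mutually anticommuting, and bound each $D_H^2$ from below. Your per-factor computations are correct: the cross term on the even part vanishes by integration by parts against $e^{w(H)(s-1)}$, the $\bC$-summand contributes exactly $\tfrac12 w(H)(1-e^{-2w(H)})$, the Poincar\'e inequality handles $\Omega^0_0$, the vector $e^{w(H)x}\,dx$ contributes $\sinh^2(w(H))$ which again gives the ratio $\tfrac12 w(H)(1-e^{-2w(H)})$, and the Robin eigenvalue problem on the orthogonal complement gives $\pi^2+w(H)^2$.

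There is, however, a genuine gap at the step you yourself single out as the main obstacle, and your proposed resolution of it does not work. You assert that $\Omega^\ast_{L^2}(X)_Q$ may be a \emph{proper} subspace of $\bigotimes_H V_H$ (because some $S\subset\mathrm{SAH}(Q)$ might not be $\mathrm{Mid}(C)$ for an actual cube), and you argue that this is harmless because $d_w$ and $e_w$ only produce contributions in directions where cubes exist, so the subspace is preserved. Invariance of the subspace is not the issue. If some direction $H\in\mathrm{SAH}(Q)$ were missing for a cube $C$ leading $Q$ to $P$ (i.e.\ $H\notin\mathrm{SAH}(C)$), then on $\Omega^\ast(C)$ the operator $D^{\mathrm{dR}}_w$ would be the \emph{compression} of $\sum_H D_H$ to the proper subspace, not its restriction; compressions only decrease norms, so the lower bound would not transfer. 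Concretely, the entire $H$-contribution $\tfrac12 w(H)(1-e^{-2w(H)})\lVert\beta\rVert^2$ for $\beta$ in the $\bC$-summand of the $H$-th factor comes from the $e_w$-term $1\mapsto w(H)e^{w(H)y_H}\alpha_H$, which exists only if the cube $D_H\supset C$ exists; if it did not, that contribution would be $0$ and the stated inequality would fail. The correct resolution, used in \cite{BGHN}, is that the obstacle does not occur: any two hyperplanes in $\mathrm{SAH}(Q)$ cross (if they were disjoint and nested, the edge at $Q$ dual to the inner one would have to cross both, which is impossible), and in a CAT(0)-cubical space any subset of a family of pairwise-crossing hyperplanes all adjacent to $Q$ is $\mathrm{Mid}(C)$ for a unique cube $C$ containing $Q$. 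Hence $\Omega^\ast_{L^2}(X)_Q$ is \emph{exactly} the tensor product, $D^{\mathrm{dR}}_w$ restricted to it is exactly $\sum_H D_H$, and your estimate then goes through as written. You need to replace your justification by this lemma (or cite it); as it stands the argument is not sound.
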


\begin{lemma}(See \cite[Lemma 4.11]{BGHN}) If the weight function $w$ is $G$-adapted, then for any $g$ in $G$, the difference $g(D^{\mathrm{dR}}_w) - D^{\mathrm{dR}}_w$ is a bounded operator. Moreover $\lVert g(D^{\mathrm{dR}}_w) - D^{\mathrm{dR}}_w\rVert$ is bounded by a constant times
\[
\sup \{ gw(H) - w(H)  \mid   \text{ $H$ is a hyperplane of $X$} \} + \max \{  w(H) \mid \text{$H$ separates $P$ and $gP$}  \}
\]
The constant depends only on the dimension of $X$.
\end{lemma}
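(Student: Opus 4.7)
I will write $g(D^{\mathrm{dR}}_w) = D^{\mathrm{dR}}_{gw, gP}$, where the subscripts indicate the weight function and the base vertex used in the construction; this identification follows by $G$-equivariance, since conjugation by $u_g$ permutes cubes and hyperplanes, relabels the base vertex to $gP$ and pulls the weight back to $(gw)(H) := w(g^{-1}H)$. Introducing the intermediate operator $D^{\mathrm{dR}}_{gw, P}$, I split
\[
g(D^{\mathrm{dR}}_w) - D^{\mathrm{dR}}_w \;=\; \bigl(D^{\mathrm{dR}}_{gw, gP} - D^{\mathrm{dR}}_{gw, P}\bigr) \;+\; \bigl(D^{\mathrm{dR}}_{gw, P} - D^{\mathrm{dR}}_{w, P}\bigr),
\]
a ``base-vertex change'' plus a ``weight change'', and bound the two summands separately.

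For the weight change, the cube-local pieces of both $d_w$ (through $w_C = \sum_{H\in \mathrm{Mid}(C)} w(H) x_H$) and $e_w$ (through the coefficients $w(H)\, e^{w(H)y_H}$) on a cube $C$ depend only on the values $w(H)$ for $H \in \mathrm{Mid}(C) \cup \mathrm{SAH}(C)$, a set of cardinality $\le 2\dim X$ by bounded geometry. Using the Leibniz identity $d_w\beta = d\beta + dw_C \wedge \beta$ and the mean value estimate $|e^{ay_H} - e^{by_H}| \le |a-b|$ on $y_H \in [-1,0]$ for $a,b \ge 0$, the $C$-local discrepancy has operator norm bounded by a dimensional constant times $\sup_H |gw(H) - w(H)|$. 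Block-diagonality of $D^{\mathrm{dR}}_{\cdot, P}$ along \eqref{eq_dR_decomposition} then promotes this to a global bound.

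For the base-vertex change the coordinate functions $x_H$ themselves depend on the base vertex; they flip to $1 - x_H$ precisely on those hyperplanes $H$ that separate $P$ from $gP$. Hence the cube-local discrepancy on $C$ receives contributions only from hyperplanes $H \in \mathrm{Mid}(C) \cup \mathrm{SAH}(C)$ separating $P$ from $gP$: for $H \in \mathrm{Mid}(C)$ the $d_w$-piece gains an extra $2w(H)\,\alpha_H \wedge \cdot$, while $H \in \mathrm{SAH}(C;P) \triangle \mathrm{SAH}(C;gP)$ introduces (or removes) an $e_w$-term of norm $\le C\, w(H)$, using $|e^{w(H)y_H}| \le 1$ on $y_H \in [-1, 0]$ and uniform control of the $L^2$-mass of $D_H$. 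Each cube sees at most $2\dim X$ such hyperplanes, so locally the norm is bounded by a dimensional constant times the \emph{maximum} of $w(H)$ over those separating hyperplanes adjacent to $C$.

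The principal technical obstacle is promoting this ``local max'' bound to a ``global max'' bound rather than a sum over all hyperplanes separating $P$ from $gP$. My plan is to write the base-vertex discrepancy as $\sum_{H \text{ sep.}} E_H$, where $E_H$ is supported only on cubes adjacent to or intersecting $H$. A single cube is touched by boundedly many hyperplanes (by bounded geometry), so the operators $E_H$ have almost disjoint support with respect to the decomposition \eqref{eq_dR_decomposition}; controlled almost-orthogonality then bounds $\bigl\|\sum_H E_H\bigr\|$ by a dimensional constant times $\max_H \|E_H\| \le C \max_{H \text{ sep.}} w(H)$. Combining this with the weight-change estimate delivers the stated bound.
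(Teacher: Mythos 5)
Your decomposition into a weight‐change piece and a base‐vertex‐change piece, via the intermediate operator $D^{\mathrm{dR}}_{gw,P}$, is exactly the strategy the paper uses: the statement is quoted from \cite[Lemma 4.11]{BGHN}, and the paper's own proof of the $t$-dependent analogue (Lemma \ref{lem_Glem}) splits $\cD_{t,gw,gP}-\cD_{t,w,gP}$ and $\cD_{t,w,gP}-\cD_{t,w,P}$ — the same two pieces with the intermediate chosen on the other side, which is harmless. Your treatment of the base-vertex change also matches the paper's: only hyperplanes separating $P$ from $gP$ contribute, each cube meets at most $O(\dim X)$ of them, and the finite-overlap/almost-orthogonality argument you sketch is a legitimate (if more explicit) way to pass from the per-cube bound to the global operator norm. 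One caution there: the bound "at most $2\dim X$ adjacent separating hyperplanes per cube" is not a consequence of bounded geometry (which only gives a valence bound); it comes from the crossing lemma of \cite{BGHN} asserting that the hyperplanes in $\mathrm{SAH}(C)$ pairwise cross and hence, together with $\mathrm{Mid}(C)$, span a cube of dimension at most $\dim X$. You need that lemma to make the constant depend only on $\dim X$.

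The one step that does not close as written is the weight-change estimate for the $e_w$-part. The coefficient being compared is $w(H)e^{w(H)y_H}$ versus $gw(H)e^{gw(H)y_H}$, and your stated inequality $|e^{ay}-e^{by}|\le|a-b|$ only controls the exponential factor: writing $ae^{ay}-be^{by}=(a-b)e^{ay}+b(e^{ay}-e^{by})$, the naive bound on the second term is $b\,|a-b|$, with an unbounded prefactor $b=w(H)$. The fix is the sharper pointwise bound $b\,|e^{ay}-e^{by}|\le (a-b)\sup_{y\le 0}b(-y)e^{by}=(a-b)/e$, giving $|ae^{ay}-be^{by}|\le(1+e^{-1})|a-b|$ on $y\le 0$; alternatively one computes the $L^2[0,1]$ norm of the difference directly, which is what the paper does in \eqref{eq_elementary} (and which is forced in the $t$-dependent version, where the prefactor $\sqrt{t}\,w(H)$ can only be absorbed by the $L^2$ decay). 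With that correction your argument goes through and coincides in substance with the paper's.
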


If $G$ acts on $X$ by automorphisms, for any proper, $G$-adapted weight function $w$, the pair $(\Omega^\ast_{L^2}(X), D^{\mathrm{dR}}_w)$ is an unbounded Kasparov cycle for $KK^G(\bC, \bC)$ and we denote by $[D^{\mathrm{dR}}_w]$ its associated class in $KK^G(\bC, \bC)$ (see \cite[Theorem 4.12, Definition 4.13]{BGHN}).

\begin{theorem}(See \cite[Theorem 5.1]{BGHN} and \cite[Theorem 9.14]{BGH}) \label{thm_dR1G} If $G$ acts on $X$ by automorphisms, the element $[D^{\mathrm{dR}}_w]\in KK^G(\bC, \bC)$ satisfies  
\[
[D^{\mathrm{dR}}_w] = 1_G \,\,\, \text{in $KK^G(\bC, \bC)$}.
\]
\end{theorem}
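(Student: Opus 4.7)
The plan is to mirror the strategy used in the tree case recalled at the end of the excerpt: reduce $(\Omega^*_{L^2}(X), D^{\mathrm{dR}}_w)$ to a combinatorial cycle concentrated on the base vertex $P$ via a Witten-style operator homotopy obtained by rescaling the weight $w$ to $tw$ and letting $t \to \infty$, and then identify that concentrated cycle with the standard representative of $1_G$. The key structural input is the block-diagonal decomposition \eqref{eq_dR_decomposition} of $\Omega^*_{L^2}(X)$, which is preserved by every rescaled operator $D_t := D^{\mathrm{dR}}_{tw}$.

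The cornerstone is the spectral estimate applied to $tw$, combined with properness of $w$: for every vertex $Q \ne P$ and $\beta \in \Omega^*_0(X)_Q$,
\[
\|D_t \beta\|^2 \geq \sum_{H \in \mathrm{SAH}(Q)} \tfrac{1}{2}\, tw(H)\bigl(1 - e^{-2 t w(H)}\bigr) \|\beta\|^2 \geq \tfrac{1}{2}\, t w_0 \bigl(1 - e^{-2 w_0}\bigr) \|\beta\|^2,
\]
where $w_0 := \inf_H w(H) > 0$, using that $|\mathrm{SAH}(Q)| \geq 1$ and that $x \mapsto x(1-e^{-2tx})$ is monotone increasing in $x > 0$. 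The block at $P$ satisfies $\Omega^*_{L^2}(X)_P = \bC\delta_P$ with $D_t|_{\bC\delta_P} = 0$, since $\mathrm{SAH}(P) = \emptyset$ forces the only cube leading $P$ to $P$ to be $P$ itself, and all of $d_{tw}$, $d_{tw}^\diamond$, $e_{tw}$, $e_{tw}^\diamond$ vanish on $\bC\delta_P$. Hence the spectrum of $D_t$ on $(\bC\delta_P)^\perp$ is bounded away from zero uniformly, and escapes to infinity as $t \to \infty$. For large $t$, the bounded transform $F_t := D_t(1+D_t^2)^{-1/2}$ is connected by a compact operator homotopy to the self-adjoint symmetry $F_\infty := 0 \oplus \mathrm{sign}(D_t)|_{(\bC\delta_P)^\perp}$; the difference lies in the compact ideal because $(1+D_t^2)^{-1}$ is compact and $x/\sqrt{1+x^2} - \mathrm{sign}(x)$ vanishes at infinity. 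The resulting cycle decomposes as $(\bC\delta_P, 0) \oplus ((\bC\delta_P)^\perp, U)$ with $U$ a $G$-equivariant-mod-compact symmetry. The second summand is degenerate and contributes $0$; the first is the standard one-dimensional representative of $1_G$, yielding $[D^{\mathrm{dR}}_w] = 1_G$ once the scaling $t = 1 \rightsquigarrow \infty$ is verified to be an operator homotopy.

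The main obstacle is justifying that rescaling, because the weights $w(H)$ are unbounded and consequently the perturbation $D_t - D_s$ is not globally norm-bounded on $\Omega^*_{L^2}(X)$, so the naive resolvent continuity argument fails. The workaround is the block-diagonal structure \eqref{eq_dR_decomposition}: one sets up the homotopy block-by-block, using the uniform spectral gap on $(\bC\delta_P)^\perp$ to extract norm-continuity of the resolvents uniformly across blocks from a block-wise resolvent identity, while the $G$-continuity estimates remain controlled by the lemma bounding $\|g(D^{\mathrm{dR}}_w) - D^{\mathrm{dR}}_w\|$. A secondary subtlety is that the splitting $\bC\delta_P \oplus (\bC\delta_P)^\perp$ is not $G$-invariant, since $G$ need not fix $P$; however, the orthogonal projection $P_0$ onto $\bC\delta_P$ satisfies $gP_0 g^{-1} - P_0 = \theta_{\delta_{gP}, \delta_{gP}} - \theta_{\delta_P, \delta_P}$, which is finite-rank, and $U$ commutes with $G$ modulo compacts by bounded Borel functional calculus applied to the fact that $g(D_t) - D_t$ is bounded while $D_t$ has compact resolvent. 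These compact and finite-rank discrepancies are harmless for the Kasparov class, so the argument goes through in $KK^G(\bC, \bC)$ and concludes $[D^{\mathrm{dR}}_w] = 1_G$.
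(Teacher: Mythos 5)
Your reduction to a cycle of the form $(\bC\delta_P,0)\oplus\bigl((\bC\delta_P)^\perp,U\bigr)$, with $U$ a symmetry commuting with $G$ only modulo compact operators, does not prove the theorem: the claim that the second summand is \emph{degenerate} is false. A degenerate cycle for $KK^G(\bC,\bC)$ must satisfy $g(U)-U=0$ exactly, not merely $g(U)-U\in\Compacts(H)$, and a cycle with $U^2=1$ but $g(U)-U$ only compact can represent a nonzero class. Already for $K=\Z/2$: take $H^{(0)}=H^{(1)}=l^2(\N)$ with the generator acting trivially on $H^{(0)}$ and by $-1$ on the basis vector $e_0$ of $H^{(1)}$, and build $U$ from $v=\mathrm{id}$; then $U^2=1$ and $g(U)-U$ is finite rank, yet averaging $v$ over $K$ yields the Fredholm operator $1-P_{e_0}$, whose equivariant index is $[\mathrm{triv}]-[\mathrm{sign}]\neq 0$ in $R(\Z/2)$. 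The vanishing of the class of the off-kernel part is precisely the nontrivial content of the cited results (\cite[Theorem 5.1]{BGHN}, \cite[Theorem 9.14]{BGH}), which --- like Julg--Valette's original Proposition 1.6 for trees \cite{JulgValette84} --- is established by constructing a genuine operator homotopy to the trivial cycle using the combinatorics of the cube complex; it cannot be discharged by inspecting the algebraic form of $U$. (If it could, one would conclude $\gamma=1_G$ for every group admitting such a cycle, contradicting the known failure for property (T) groups.) Note also that the present paper does not prove Theorem \ref{thm_dR1G} at all; it imports it from those references, so your proposal is attempting to supply a proof that the paper deliberately outsources.

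Two secondary points. First, the splitting $\bC\delta_P\oplus(\bC\delta_P)^\perp$ is not $G$-invariant, so neither summand is a Kasparov cycle on its own; observing that $gP_0g^{-1}-P_0$ is finite rank does not license computing the class as a sum of two classes without an explicit homotopy decoupling the blocks, and $(\bC\delta_P,0)$ with the induced (non-invariant) action is not literally the standard representative of $1_G$ unless $G$ fixes $P$. Second, the rescaling family you gesture at is essentially the family $\cD_t$ the paper constructs in Theorem \ref{thm_cubeXG}, with the normalization $t^{-1}(d_{tw}+d_{tw}^\diamond)+(\sqrt{t})^{-1}(e_{tw}+e_{tw}^\diamond)$ chosen exactly so that $g(\cD_t)-\cD_t$ stays uniformly bounded and the commutators with $C_0(X)$ decay; but there it is used to produce the $X$-$G$-localized cycle, with $[D^{\mathrm{dR}}_w]=1_G$ still required as an independent input.
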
  

Now, suppose the $G$-action on $X$ is proper.  Let $H_X=\Omega^\ast_{L^2}(X)$ which is naturally a graded $X$-$G$-module. We shall construct an $X$-$G$-localized Kasparov cycle $(H_X, T)$ for $KK^G(\bC, \bC)$ such that $[H_X, T_1]=[D^{\mathrm{dR}}_w] =1_G$ in $KK^G(\bC, \bC)$.

In what follows, for simplicity, we assume $w\geq1/2$. For example, we can use $w(H)=\mathrm{distace}(P, H)$.

For $t\geq1$, we define an odd, symmetric unbounded operator 
\begin{equation}\label{eq_formulaDt}
\cD_t=  t^{-1}(d_{tw} + d_{tw}^\diamond) +  (\sqrt{t})^{-1}(e_{tw} + e_{tw}^\diamond) \colon \Omega^\ast_{L^2}(X) \to \Omega^\ast_{L^2}(X)
\end{equation}
with domain $\Omega^\ast_{0}(X)$. Note that $\cD_1=D^{\mathrm{dR}}_{w}$ but $\cD_t$ is slightly different from $t^{-1}D^{\mathrm{dR}}_{tw}$. The operators $\cD_t$ are block-diagonal with respect to the decomposition \eqref{eq_dR_decomposition} and it is essentially self-adjoint with compact resolvent on each block $\Omega^\ast_{L^2}(X)_Q$. From these and from the formula \eqref{eq_formulaDt} of $\cD_t$, we see that the family $\{\cD_t\}_{t\geq1}$ defines an odd, regular, symmetric unbounded operator $\cD$ on $H_X\otimes C_0[1, \infty)$ with domain $C_c([1,\infty), \Omega^\ast_{0}(X))$ and that its closure is self-adjoint. 

We have the following estimate.

\begin{lemma}\label{lem_testimate} For $t\geq1$ and for any $\beta \in \Omega^\ast_{0}(X)_Q$,
\[
\lVert \cD_t\beta \rVert^2 \geq \sum_{H \in \mathrm{SAH}(Q)} \min\{\frac12 w(H)(1-e^{-2tw(H)}), t^{-2}\pi^2+ w(H)^2  \} \lVert\beta\rVert^2.
\]
\end{lemma}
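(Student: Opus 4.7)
Both summands $d_{tw}+d_{tw}^{\diamond}$ and $e_{tw}+e_{tw}^{\diamond}$ preserve the decomposition \eqref{eq_dR_decomposition}, by the same reasoning as \cite[Lemma 4.9]{BGHN}: on a cube $C$ leading $Q$ to $P$, both operators increase the ``support cube'' only to another cube that also leads $Q$ to $P$. Consequently $\cD_t$ is block-diagonal and it suffices to prove the inequality for $\beta\in\Omega^\ast_{0}(X)_Q$.

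The plan on a single block is to mimic, with a careful bookkeeping of the two scaling factors $t^{-1}$ and $(\sqrt{t})^{-1}$, the proof of the $t=1$ case which is Lemma \ref{lem_estimate} (i.e., \cite[Theorem 4.6]{BGHN}). Expanding
\[
\cD_t^2 \;=\; t^{-2}(d_{tw}+d_{tw}^{\diamond})^2 \;+\; t^{-1}(e_{tw}+e_{tw}^{\diamond})^2 \;+\; t^{-3/2}\bigl\{d_{tw}+d_{tw}^{\diamond},\; e_{tw}+e_{tw}^{\diamond}\bigr\},
\]
the heart of the argument in \cite{BGHN} is that on $\Omega^\ast_0(X)_Q$ the cross-term is (after the usual integration by parts identifying $d_{tw}^{\diamond}e_{tw} + e_{tw}^{\diamond}d_{tw}$ with a manifestly nonnegative boundary contribution) nonnegative, so that $\cD_t^2$ is bounded below by the sum of the ``$d$-part'' and the ``$e$-part'' alone. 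This step is purely algebraic in the operators $d,e$ and their formal adjoints and is therefore insensitive to the global scalars $t^{-1},(\sqrt{t})^{-1}$.

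For the $d$-part, on a cube $C$ leading $Q$ to $P$ the operator $(d_{tw}+d_{tw}^{\diamond})^2$ with domain $\Omega^\ast_0(C)$ has spectrum $\sum_{H\in\mathrm{Mid}(C)}(\pi^2 k_H^2 + (tw(H))^2)$ with $k_H\in\mathbb{Z}_{\geq 1}$ (the Dirichlet condition in the face-vanishing direction forces $k_H\geq 1$, cf.\ the computation in the tree example in Section~14), so
\[
t^{-2}\lVert (d_{tw}+d_{tw}^{\diamond})\beta\rVert^2 \;\geq\; \sum_{H\in\mathrm{Mid}(C)}\bigl(t^{-2}\pi^2 + w(H)^2\bigr)\lVert\beta\rVert^2.
\]
For the $e$-part, the one-variable model computation
\[
\int_0^1 \bigl(tw(H)\bigr)^2 e^{2tw(H)(x-1)}\,dx \;=\; \tfrac{tw(H)\bigl(1-e^{-2tw(H)}\bigr)}{2}
\]
combined with the orthogonality of the terms $e^{tw(H)y_H}\alpha_H\wedge\beta$ for distinct $H\in\mathrm{SAH}(C)$ (they sit in mutually orthogonal form-degree/coordinate subspaces of the enlarging cube $D_H$) yields
\[
t^{-1}\lVert (e_{tw}+e_{tw}^{\diamond})\beta\rVert^2 \;\geq\; \sum_{H\in\mathrm{SAH}(C)} \tfrac12 w(H)\bigl(1-e^{-2tw(H)}\bigr)\lVert\beta\rVert^2.
\]
As in \cite[Theorem 4.6]{BGHN}, the disjoint union $\bigsqcup_{C\text{ leads }Q\text{ to }P}\bigl(\mathrm{Mid}(C)\sqcup \mathrm{SAH}(C)\bigr)$ hits each element of $\mathrm{SAH}(Q)$ exactly once, contributing either the $d$-term $t^{-2}\pi^2+w(H)^2$ or the $e$-term $\tfrac12 w(H)(1-e^{-2tw(H)})$; taking the minimum yields the stated bound.

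\textbf{Main obstacle.} The one genuine point to verify is that the nonnegativity of the cross-term $\{d_{tw}+d_{tw}^{\diamond},\,e_{tw}+e_{tw}^{\diamond}\}$, established in \cite{BGHN} only for the unrescaled operator $D^{\mathrm{dR}}_w$, is unaffected by the asymmetric rescaling used in $\cD_t$ (the prefactor $t^{-3/2}>0$ is harmless, but one should check that no implicit cancellation against the $d$- or $e$-diagonal parts was used). Granted this, everything else is a routine tracking of the scalars through the proof of Lemma \ref{lem_estimate}.
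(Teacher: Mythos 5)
There is a genuine gap, and it sits exactly at the step you declare to be routine. Your lower bound for the $d$-part,
\[
t^{-2}\lVert (d_{tw}+d_{tw}^{\diamond})\beta\rVert^2 \;\geq\; \sum_{H\in\mathrm{Mid}(C)}\bigl(t^{-2}\pi^2 + w(H)^2\bigr)\lVert\beta\rVert^2 ,
\]
is false: the closure of $d_{tw}+d_{tw}^{\diamond}$ on $\Omega^\ast_{0}(C)$ has a nontrivial kernel, spanned by the top-degree ground state $e^{tw_C}\,dx_{H_1}\wedge\cdots\wedge dx_{H_q}$ (the paper records this for $t=1$ right after introducing $D$, and the tree example in Section 14 exhibits the zero mode $e^{ts}ds$ explicitly). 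Top-degree forms satisfy no boundary condition, so the ``Dirichlet forces $k_H\geq 1$'' justification only applies in the function direction, not in the $dx_H$ direction. On that kernel your left-hand side is $0$ while the right-hand side is positive. Consequently your combination step only yields the $\mathrm{SAH}(C)$-sum and loses the contributions of $H\in\mathrm{Mid}(C)$ precisely on the ground states — and note that if your $d$-bound were true, the $\min$ in the statement would be superfluous for $H\in\mathrm{Mid}(C)$, which should have been a warning sign.

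The missing mechanism — and the actual content of the proof of \cite[Theorem 4.6]{BGHN} that Lemma \ref{lem_estimate} quotes — is that for $H\in\mathrm{Mid}(C)$ the direction-$H$ ground state $e^{tw(H)y_H}\alpha_H$ is exactly (up to a scalar) the image under $e_{tw}$ of the codimension-one face of $C$ on the far side of $H$ from $P$; hence on that component it is $t^{-1}\lVert e_{tw}^{\diamond}\beta\rVert^2$ that contributes $\tfrac12 w(H)(1-e^{-2tw(H)})$, while on the excited components the $d$-part contributes $t^{-2}\pi^2k^2+w(H)^2\geq t^{-2}\pi^2+w(H)^2$. This is why each $H\in\mathrm{SAH}(Q)$ contributes the \emph{minimum} of the two quantities, and it is organized by writing $\cD_t$ on the block $\Omega^\ast_{L^2}(X)_Q$ as a sum of anticommuting odd operators indexed by $H\in\mathrm{SAH}(Q)$ (using that $\mathrm{SAH}(Q)=\mathrm{Mid}(C)\sqcup\mathrm{SAH}(C)$ for every cube $C$ leading $Q$ to $P$), rather than by separately bounding the global $d$- and $e$-parts below. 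Your tracking of the two scalings is the part that \emph{is} routine and correct: $t^{-1}d_{tw}$ turns $\pi^2k^2+(tw)^2$ into $t^{-2}\pi^2k^2+w^2$, and $t^{-1/2}e_{tw}$ turns $\tfrac12\,tw(1-e^{-2tw})$ into $\tfrac12\,w(1-e^{-2tw})$, which is exactly why the paper rescales the two summands asymmetrically. (For reference, the paper itself gives no details here; it simply cites the proof of \cite[Theorem 4.6]{BGHN}.)
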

\begin{proof} This can be proven as in the proof of \cite[Theorem 4.6]{BGHN}.
\end{proof}

\begin{lemma}\label{lem_Xlem} If the weight function $w$ is proper, The odd, regular, self-adjoint unbounded operator $\cD$ on $H_X\otimes C_0[1, \infty)$ satisfies 
\[
(\cD \pm i)^{-1} \in C_b([1, \infty), \Compacts(H_X)).
\]
Moreover, for any $\epsilon>0$, there is $\phi\in C_0(X)$, such that $\lVert(1-\phi)(\cD \pm i)^{-1} \rVert< \epsilon$.
\end{lemma}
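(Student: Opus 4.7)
The plan is to extract both conclusions from the uniform-in-$t$ coercivity estimate of Lemma~\ref{lem_testimate} together with the block-diagonal structure of $\cD_t$ with respect to the decomposition $H_X = \bigoplus_Q \Omega^\ast_{L^2}(X)_Q$. Since $tw(H) \geq 1/2$ for $t \geq 1$, one has $1 - e^{-2tw(H)} \geq 1 - e^{-1}$ and $t^{-2}\pi^2 + w(H)^2 \geq \tfrac{1}{2}w(H)$, so there is a universal $c > 0$ with
\[
\min\!\left\{\tfrac{1}{2}w(H)(1 - e^{-2tw(H)}),\ t^{-2}\pi^2 + w(H)^2\right\} \;\geq\; c\, w(H)
\]
for every hyperplane $H$ and every $t \geq 1$. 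Setting $W(Q) := \sum_{H \in \mathrm{SAH}(Q)} w(H)$, Lemma~\ref{lem_testimate} gives $\lVert \cD_t \beta \rVert^2 \geq c\, W(Q)\, \lVert \beta \rVert^2$ for $\beta \in \Omega^\ast_0(X)_Q$, uniformly in $t \geq 1$. Properness of $w$ together with bounded geometry ensures that $\{Q : W(Q) \leq N\}$ is finite for every $N$.

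Each $\cD_t$ preserves the block decomposition, and each block $\Omega^\ast_{L^2}(X)_Q$ is a finite direct sum $\bigoplus_{C : Q(C) = Q} \Omega^\ast_{L^2}(C)$ of cube spaces on each of which $d + d^\diamond$ is essentially self-adjoint with compact resolvent. Since $\cD_t|_Q$ is a relatively bounded perturbation of $t^{-1}(d + d^\diamond)|_Q$, the restricted resolvent $(\cD_t \pm i)^{-1}|_Q$ is again compact, and the coercivity estimate gives $\lVert(\cD_t \pm i)^{-1}|_Q\rVert \leq (1 + c W(Q))^{-1/2}$, uniformly in $t$. Consequently, for each $t$, the full resolvent $(\cD_t \pm i)^{-1}$ is a direct sum of compacts with block-norms tending to $0$, hence compact on $H_X$ with norm at most $1$. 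The ``moreover'' assertion of the lemma now follows at once: given $\epsilon > 0$, pick $N$ with $(1 + cN)^{-1/2} < \epsilon$ and choose $\phi \in C_c(X)$ equal to $1$ on the (compact) union of the finitely many cubes that make up $H_N := \bigoplus_{W(Q) \leq N} \Omega^\ast_{L^2}(X)_Q$; then $(1-\phi)$ vanishes on $H_N$, and block-diagonality yields $\lVert(1-\phi)(\cD_t \pm i)^{-1}\rVert \leq (1 + cN)^{-1/2} < \epsilon$ for every $t \geq 1$.

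It remains to promote the fiberwise compactness to $(\cD \pm i)^{-1} \in C_b([1,\infty), \Compacts(H_X))$, which requires norm continuity of $t \mapsto (\cD_t \pm i)^{-1}$. The plan is to split at a fixed $t_0$ using $H_X = H_N \oplus H_N^\perp$: on $H_N^\perp$ the triangle inequality bounds the difference of resolvents by $2(1 + cN)^{-1/2}$, arbitrarily small for large $N$; on the finite block $H_N$, the operators $\cD_t|_{H_N}$ share the common form core $\bigoplus_C \Omega^\ast_0(C)$, and the explicit formula~\eqref{eq_formulaDt} yields
\[
\cD_t - \cD_{t_0} \;=\; (t^{-1} - t_0^{-1})(d + d^\diamond)|_{H_N} + R(t, t_0),
\]
with $R(t, t_0)$ bounded and norm-continuous in $t$. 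Because $H_N$ involves only finitely many cubes, $(d + d^\diamond)(\cD_{t_0} + i)^{-1}|_{H_N}$ is bounded by standard relative-boundedness arguments, and the resolvent identity then delivers norm continuity of $t \mapsto (\cD_t \pm i)^{-1}|_{H_N}$ at $t_0$. The main obstacle is exactly this relative-boundedness step: globally on $H_X$ the norms $\lVert (d + d^\diamond)(\cD_{t_0} + i)^{-1}|_Q\rVert$ grow without bound in $Q$, so only after first discarding the spectrally cold tail $H_N^\perp$ (where the resolvent is already small by coercivity) does one reduce to a finite cube complex on which ordinary resolvent perturbation theory is available.
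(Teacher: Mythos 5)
Your proof is correct and follows essentially the same route as the paper: block-diagonality of $\cD$ with respect to the decomposition $\bigoplus_Q \Omega^\ast_{L^2}(X)_Q$, compactness of each block resolvent, and the coercivity estimate of Lemma~\ref{lem_testimate} combined with properness of $w$ to force the block norms $\lVert(\cD_t\pm i)^{-1}|_Q\rVert \le (1+cW(Q))^{-1/2}$ to tend to zero, which yields both compactness and the cut-off estimate. You in fact supply details the paper leaves implicit (the norm-continuity in $t$ via the split into $H_N\oplus H_N^\perp$ and resolvent perturbation on the finite part), and your criterion for convergence of the block sum --- supremum of tail block norms tending to zero rather than literal absolute summability --- is the correct one for orthogonal blocks.
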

\begin{proof} The unbounded operator $\cD$ is block diagonal with respect to \eqref{eq_dR_decomposition} and it is not hard to see that its restriction $\cD_Q$ on each block $\Omega^\ast_{L^2}(X)_Q$ satisfies
\[
(\cD_Q \pm i)^{-1} \in C_b([1, \infty), \Compacts(\Omega^\ast_{L^2}(X)_Q)).
\]
Lemma \ref{lem_testimate} implies the sum
\[
(\cD \pm i)^{-1}  = \sum_{Q} (\cD_Q \pm i)^{-1}
\]
is absolutely norm-convergent in $C_b([1, \infty), \Compacts(H_X))$. The second claim also follows from Lemma \ref{lem_testimate}.
\end{proof}

Note that the $G$-action preserves the domain $C_c([1,\infty), \Omega^\ast_{0}(X))$ of $\cD$.

\begin{lemma}\label{lem_Glem} If the weight function $w$ is $G$-adapted, then for any $g$ in $G$, the difference $g(\cD) - \cD$ is a bounded operator. Moreover $\lVert g(\cD) - \cD\rVert$ is bounded by a constant times
\[
\sup \{ gw(H) - w(H)  \mid   \text{ $H$ is a hyperplane of $X$} \} + \max \{  w(H) \mid \text{$H$ separates $P$ and $gP$}  \}
\]
The constant depends only on the dimension of $X$.
\end{lemma}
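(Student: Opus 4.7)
The proof plan is to adapt the argument for $D^{\mathrm{dR}}_w$ recalled as the preceding lemma (Lemma 4.11 of \cite{BGHN}), carefully tracking how the rescalings $t^{-1}$ and $t^{-1/2}$ in the definition of $\cD_t$ interact with the exponential weights appearing in $d_{tw}$ and $e_{tw}$, so as to produce a bound that is uniform in $t\geq1$.

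First, decompose $\cD_t = t^{-1}A_t + t^{-1/2}B_t$ with $A_t=d_{tw}+d_{tw}^\diamond$ and $B_t=e_{tw}+e_{tw}^\diamond$. Both $A_t$ and $B_t$ are densely defined on the $G$-invariant domain $\Omega^\ast_0(X)$, and $G$ acts on this domain by permuting cubes; hence $g(\cD_t)-\cD_t = t^{-1}(g(A_t)-A_t)+t^{-1/2}(g(B_t)-B_t)$ on $\Omega^\ast_0(X)$. The problem reduces to showing the operator bounds
\[
\lVert g(A_t)-A_t\rVert \leq C_1\,t\,\Bigl(\sup_H|gw(H)-w(H)|+\max\{w(H):H\text{ separates }P,gP\}\Bigr),
\]
\[
\lVert g(B_t)-B_t\rVert \leq C_2\,\sqrt{t}\,\Bigl(\sup_H|gw(H)-w(H)|+\max\{w(H):H\text{ separates }P,gP\}\Bigr),
\]
with constants $C_1,C_2$ depending only on $\dim X$; then the $t$-prefactors cancel after multiplying by $t^{-1}$ and $t^{-1/2}$ respectively.

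For the $d$-part, write $d_{tw}=d+tM_w$ on each cube $C$, where $M_w$ is the zeroth-order multiplication (times wedge) operator $\beta\mapsto\sum_{H\in\mathrm{Mid}(C)}w(H)x_H\wedge\beta$, and similarly for the adjoint. Since $d+d^\diamond$ is intrinsic and $G$-equivariant, $g(A_t)-A_t = t(g(M_w+M_w^\diamond)-(M_w+M_w^\diamond))$. On cubes $C$ with $gC=C$ (the generic case once the cube is far from $P$ and $gP$), the permutation of hyperplanes by $g$ together with $G$-adaptedness gives a pointwise bound by $\sup_H|gw(H)-w(H)|$; on the finitely many cubes $C$ where the sets $\mathrm{Mid}(C)$ and $\mathrm{Mid}(gC)$ involve hyperplanes that separate $P$ from $gP$, one uses that such hyperplanes contribute at most $\max\{w(H):H\text{ separates }P,gP\}$. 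Since the sums are over at most $\dim X$ midplanes per cube, the $d$-bound follows with $C_1$ depending only on $\dim X$.

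For the $e$-part, which is the main obstacle, write $e_{tw}$ on $C$ as the sum over $H\in\mathrm{SAH}(C)$ of $tw(H)e^{tw(H)y_H}\alpha_H\wedge\,\cdot\,$, acting by pullback into the adjacent cube $D_H$. The crucial observation is that on $D_H$, the coordinate $y_H=x_H-1$ takes values in $[-1,0]$, so $|e^{tw(H)y_H}|\leq 1$ uniformly in $t$, and the $L^2$-norm of the function $y\mapsto tw(H)e^{tw(H)y}$ on $[-1,0]$ equals $\sqrt{\tfrac{tw(H)(1-e^{-2tw(H)})}{2}}\leq\sqrt{tw(H)/2}$. Using the block-diagonal decomposition \eqref{eq_dR_decomposition}, the difference $g(B_t)-B_t$ decomposes into: (i) contributions from hyperplanes $H$ common to $\mathrm{SAH}(C)$ and $\mathrm{SAH}(gC)$ with weight difference $|gw(H)-w(H)|$, for which the mean value estimate $|te^{ta}-te^{tb}|\leq t|a-b|e^{t\max(a,b)}\leq t|a-b|$ (for $a,b\leq 0$) together with $G$-adaptedness gives a contribution of order $\sqrt{t}\sup_H|gw(H)-w(H)|$; and (ii) a finite number of ``boundary'' terms involving hyperplanes that separate $P$ from $gP$, each contributing at most $\sqrt{tw(H)/2}$ in $L^2$-norm, bounded by $\sqrt{t}\cdot\max\{w(H):H\text{ separates }P,gP\}$ up to constants. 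Bounded geometry ensures the number of affected hyperplanes per vertex is uniformly bounded, giving the stated estimate with $C_2$ depending only on $\dim X$. Combining the two bounds and multiplying by the prefactors yields the lemma.
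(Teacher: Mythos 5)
Your proposal follows essentially the same route as the paper: split $\cD_t$ into its $t^{-1}(d_{tw}+d_{tw}^\diamond)$ and $t^{-1/2}(e_{tw}+e_{tw}^\diamond)$ parts, separate the weight-change contribution (controlled by $G$-adaptedness) from the base-point-change contribution (supported on the hyperplanes separating $P$ from $gP$), and control the $e$-terms via the $L^2$-norm of $tw e^{twy}$ on the transverse interval; the paper organizes this as $\cD_{t,gw,gP}-\cD_{t,w,gP}$ plus $\cD_{t,w,gP}-\cD_{t,w,P}$, but the four resulting estimates are the same. The one step you should tighten is the weight-change estimate for the $e$-part: the inequality $|te^{ta}-te^{tb}|\le t|a-b|$ is a pointwise bound on the wrong quantity (it ignores the changing prefactor $gw(H)$ versus $w(H)$, and at $y=0$ the actual difference $t\,gw\,e^{t\,gw\,y}-t\,w\,e^{twy}$ has size exactly $t|gw-w|$), so a sup-norm argument alone cannot produce the claimed $\sqrt{t}\sup_H|gw(H)-w(H)|$. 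What saves you is precisely the observation you make elsewhere: the difference is concentrated on a region of length $\sim 1/(tw)$ where the exponential has not yet decayed, so its $L^2[0,1]$-norm is smaller than its $L^\infty$-norm by a factor $\sim(tw)^{-1/2}$, which (using $w\ge 1/2$) converts $t|gw-w|$ into $O(\sqrt{t}\,|gw-w|)$. This is exactly the content of the paper's elementary estimate $\sqrt{t}\,\lVert we^{-twx}-(w+m)e^{-t(w+m)x}\rVert_{L^2[0,1]}\le 2M$, which you should prove explicitly (e.g.\ by splitting off $(gw-w)e^{-t\,gw\,x}$ and bounding $w(e^{-t\,gw\,x}-e^{-twx})$ by $wtm\,x\,e^{-twx}$ and computing $L^2$-norms); with that in place the argument is complete.
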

\begin{proof} This can be proven in the same way as the proof of \cite[Lemma 4.11]{BGHN} with bit more refined estimates. We write $\cD_t=\cD_{t, w, P}$ to make it clear its dependence on the weight function $w$ and on the base vertex $P$ so that $g(\cD_t)=\cD_{t, gw, gP}$. We bound the differences
\[
\cD_{t, gw, gP} - \cD_{t, w, gP}\,\,\,  \text{and} \,\,\, \cD_{t, w, gP} - \cD_{t, w, P}
\]
separately, uniformly in $t\geq1$ to obtain the estimate in the statement. We have
\[
\cD_{t, gw, gP} - \cD_{t, w, gP}=  t^{-1}(d_{tgw, gP} - d_{tw, gP}) +  t^{-1}(d_{tgw, gP}^\diamond - d_{tw, gP}^\diamond) 
\]
\[
+  (\sqrt{t})^{-1}(e_{tgw, gP} -  e_{tw, gP}) + (\sqrt{t})^{-1}(e_{tgw, gP}^\diamond - e_{tw, gP}^\diamond).
\]
Since $w$ is $G$-adapted, there is $M\geq0$ such that $|gw(H)-w(H)| \leq M$ for all $H$. Using the formula
\[
t^{-1}d_{tw, gP}\colon \beta \mapsto t^{-1}d\beta +  \sum_{H\in \mathrm{Mid}(C)} w(H)x_{H, gP}\wedge \beta,
\]
we see that
\[
\lVert t^{-1}(d_{tgw, gP} - d_{tw, gP})\rVert  \leq \dim(X)\cdot M
\]
for all $t\geq1$ and thus $\lVert t^{-1}(d_{tgw, gP}^\diamond - d_{tw, gP}^\diamond)\rVert  \leq \dim(X)\cdot M$ for all $t\geq1$. 

For $w\geq1/2$ and for $M\geq m \geq0$, we have
\begin{equation}\label{eq_elementary}
\sqrt{t}\lVert we^{-twx}- (w+m)e^{-t(w+m)x} \rVert_{L^2[0 ,1]} \leq 2M
\end{equation}
for all $t\geq1$ since for instance we can write 
\[
|we^{-twx}- (w+m)e^{-t(w+m)x}| \leq  w(tmx)e^{-twx}\frac{(1-e^{-tmx})}{tmx}  + me^{-t(w+m)x}
\]
\[
\leq m(twx)e^{-twx} + me^{-t(w+m)x}
\]
and since (we use $w\geq1/2$)
\[
\lVert twxe^{-twx}\rVert^2_{L^2[0,1]} \leq  (tw)^{-1} \int_0^\infty y^2e^{-2y}dy \leq \frac{1}{4wt} \leq \frac{1}{2t},
\]
and
\[
\lVert e^{-t(w+m)x}\rVert^2_{L^2[0, 1]} \leq \frac{1}{t(w+m)}\int_0^\infty e^{-2y} dy \leq \frac{1}{2tw} \leq \frac{1}{t}
\]
so that $\sqrt{t}\lVert we^{-twx}- (w+m)e^{-t(w+m)x} \rVert_{L^2[0 ,1]}$ is bounded by
\[
\sqrt{t}m (\lVert twxe^{-twx}\rVert_{L^2[0,1]} + \lVert e^{-t(w+m)x}\rVert_{L^2[0, 1]} ) \leq \sqrt{t}m (\frac{1}{\sqrt{2t}} + \frac{1}{\sqrt t} ) \leq 2M.
\]
Using \eqref{eq_elementary} and the formula 
\[
(\sqrt{t})^{-1}e_{tw, gP}\colon \beta \mapsto \sum_{H\in \mathrm{SAH}(C)} \sqrt{t}w(H)e^{tw(H)y_{H, gP}}\alpha_{H, gP} \wedge \beta
\]
we obtain 
\[
\lVert(\sqrt{t})^{-1}(e_{tgw, gP} -  e_{tw, gP})\rVert \leq 2\dim(X) \cdot M
\]
for all $t\geq1$ and thus $\lVert(\sqrt{t})^{-1}(e_{tgw, gP}^\diamond -  e_{tw, gP}^\diamond)\rVert \leq 2\dim(X) \cdot M$ for all $t\geq1$. Putting everything together, we obtain
\[
\lVert\cD_{t, gw, gP} - \cD_{t, w, gP}\rVert \leq 6\dim(X)\cdot M.
\]
Next, we consider the difference $\cD_{t, w, gP} - \cD_{t, w, P}$. As in the proof \cite[Lemma 4.11]{BGHN}, 
\[
(t^{-1}d_{tw, gP} -  t^{-1}d_{tw, P})\beta = \sum_{H \in \mathrm{Mid}} w(H)(\alpha_{H, gP} - \alpha_{H, P}) \wedge \beta
\]
where the difference $\alpha_{H, gP} - \alpha_{H, P}$ is zero unless the hyperplane $H$ separates $P$ and $gP$ in which case the difference is $2\alpha_{H, gP}$. Thus, we have
\[
\lVert t^{-1}d_{tw, gP} -  t^{-1}d_{tw, P}\rVert \leq 2\dim(X) \cdot \max\{w(H) \mid \text{$H$ separates $P$ and $gP$}\}.
\]
The same is true for $\lVert t^{-1}d_{tw, gP}^\diamond -  t^{-1}d_{tw, P}^\diamond\rVert$. Lastly, we have
\[
(\sqrt{t})^{-1}(e_{tw, gP} -  e_{tw, P})\beta = \sum_{H} \sqrt{t}w(H)(e^{tw(H)y_{H, gP}}\alpha_{H, gP} - e^{tw(H)y_{H, P}} \alpha_{H, P}) \wedge \beta
\]
where again the summands are zero except when $H$ separates $P$ and $gP$ and we can bound the sum by 
\[
2\dim (X) \cdot \max\{ \sqrt{t}w(H)\lVert e^{-tw(H)x}\rVert_{L^2[0 ,1]}  \mid  \text{$H$ separates $P$ and $gP$} \}
\]
\[
\leq 2\dim (X) \cdot  \max\{ w(H) \mid  \text{$H$ separates $P$ and $gP$} \}.
\]
The same is true for $\lVert(\sqrt{t})^{-1}(e_{tw, gP}^\diamond -  e_{t, w, P}^\diamond)\rVert$. Putting everything together, we obtain
\[
\lVert\cD_{t, w, gP} - \cD_{t, w, P}\rVert \leq 8\dim(X)\cdot \max\{ w(H) \mid   \text{$H$ separates $P$ and $gP$}   \}
\]
so we are done.
\end{proof}

\begin{theorem}\label{thm_cubeXG} Let $w$ be a weight function for $X$ which is proper and $G$-adapted. Let $H_X=\Omega^\ast_{L^2}(X)$ and $T=\cD(1+\cD^2)^{-1}$. Then, the pair $(H_X, T)$ is an $X$-$G$-localized Kasparov cycle for $KK^G(\bC, \bC)$ with $[H_X, T_1]=[D^{\mathrm{dR}}_w] =1_G$ in $KK^G(\bC, \bC)$.
\end{theorem}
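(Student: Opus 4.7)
The plan is to verify the three hypotheses of Proposition \ref{prop_unboundedXG} for the odd, regular, self-adjoint unbounded operator $\cD$ on $H_X \otimes C_0[1,\infty)$, thereby establishing that $(H_X, T) = (H_X, \cD(1+\cD^2)^{-1})$ is an $X$-$G$-localized Kasparov cycle, and then to identify the associated class at $t=1$ using the results already in place.

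For condition (1), I would take $B \subset C_0(X)$ to be the algebra of compactly supported continuous functions on $X$ whose restriction to each cube is smooth. For $\phi \in B$, the operators $e_{tw}$ and $e_{tw}^\diamond$ are built from wedge and interior products with constant-coefficient forms $\alpha_H$ and from pointwise multiplication by pull-backs of $e^{tw(H)y_H}$, so they commute with $\phi$ cube by cube. Hence only the de Rham piece contributes and a direct calculation gives $[\cD_t, \phi] = t^{-1}(\mathrm{ext}(d\phi) - \mathrm{int}(d\phi))$, whose norm is $O(t^{-1})$ as $t \to \infty$. For condition (2), Lemma \ref{lem_Glem} already yields that $g(\cD)-\cD$ extends to a bounded operator on $H_X \otimes C_0[1,\infty)$ with a bound uniform in $t \geq 1$; combining strong continuity of $g \mapsto g(\cD_t)\xi$ on the common core $\Omega_0^{\ast}(X)$ with this uniform bound and the $G$-adaptedness of $w$ (which makes the local geometric data vary continuously in $g$) upgrades to norm continuity of $g \mapsto g(\cD)-\cD$. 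Condition (3) is precisely the content of Lemma \ref{lem_Xlem}, which delivers (3.1) and (3.2) of Proposition \ref{prop_unboundedXG} and hence (3).

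With the three hypotheses verified, Proposition \ref{prop_unboundedXG} gives that $(H_X, T)$ is an $X$-$G$-localized Kasparov cycle for $KK^G(\bC, \bC)$. For the identification of its class, I note that evaluating the defining formula \eqref{eq_formulaDt} of $\cD_t$ at $t=1$ yields $\cD_1 = d_w + d_w^\diamond + e_w + e_w^\diamond = D^{\mathrm{dR}}_w$, so $T_1 = D^{\mathrm{dR}}_w(1+(D^{\mathrm{dR}}_w)^2)^{-1}$ is exactly the bounded transform of the essentially self-adjoint unbounded operator representing $[D^{\mathrm{dR}}_w]$. By the standard passage from unbounded to bounded Kasparov cycles, $[H_X, T_1] = [D^{\mathrm{dR}}_w]$ in $KK^G(\bC, \bC)$, and this equals $1_G$ by Theorem \ref{thm_dR1G}.

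The main obstacle is really the uniformity in $t$ of the estimates needed for conditions (2) and (3), since both the $G$-continuity and the $X$-localization must survive as $t$ runs to infinity. This is handled by the explicit calculations already made in Lemmas \ref{lem_testimate}, \ref{lem_Xlem} and \ref{lem_Glem}: they treat each of the four summands of $\cD_t$ separately and rely only on $w \geq 1/2$ together with the elementary estimate $\sqrt{t}\,\lVert w e^{-twx} - (w+m)e^{-t(w+m)x}\rVert_{L^2[0,1]} \leq 2M$ from the proof of Lemma \ref{lem_Glem}, producing bounds that are independent of $t \geq 1$ and thus propagate from $\Compacts(H_X)$ to $C_b([1,\infty), \Compacts(H_X))$ and further to $RL^*_c(H_X)$.
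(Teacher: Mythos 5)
Your overall strategy --- verifying conditions (1)--(3) of Proposition \ref{prop_unboundedXG} for $\cD$ and then evaluating at $t=1$ to identify the class via Theorem \ref{thm_dR1G} --- is exactly the paper's, and your treatment of conditions (2) and (3) and of the identification $[H_X,T_1]=[D^{\mathrm{dR}}_w]=1_G$ is fine. However, your verification of condition (1) contains a genuine error: the operators $e_{tw}$ and $e_{tw}^\diamond$ do \emph{not} commute with multiplication by $\phi$. By definition, $e_{tw}$ sends a form $\beta$ on a cube $C$ to $\sum_{H}tw(H)e^{tw(H)y_H}\alpha_H\wedge\beta$ regarded as a form on the \emph{larger} cube $D_H$ via pullback along the orthogonal projection $p\colon D_H\to C$. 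Multiplying by $\phi$ before applying $e_{tw}$ uses $\phi|_C$ (equivalently $p^*(\phi|_C)$ on $D_H$), while multiplying afterwards uses $\phi|_{D_H}$, and these differ off the face $C$. Consequently your formula $[\cD_t,\phi]=t^{-1}(\mathrm{ext}(d\phi)-\mathrm{int}(d\phi))$ is false: the term $(\sqrt{t})^{-1}[e_{tw}+e_{tw}^\diamond,\phi]$ is missing, and it is not a priori negligible, since the coefficient $tw(H)e^{tw(H)y_H}$ has $L^2$-norm of order $\sqrt{tw(H)}$, so the crude bound $(\sqrt{t})^{-1}\cdot 2\lVert\phi\rVert\,\lVert e_{tw}\rVert$ stays of order $\sqrt{w(H)}$ and does not tend to $0$ as $t\to\infty$.

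What actually closes the gap (and what the paper does, following the proof of Theorem 5.2 of \cite{BGHN}) is a Lipschitz estimate: $|\phi(x)-\phi(p(x))|\leq C\cdot\mathrm{distance}(x,p(x))$ produces an extra factor of $x$ in the relevant one-variable coefficient, giving
\[
\lVert[e_{tw},\phi]\beta\rVert^2\;\leq\;\sum_{H\in\mathrm{SAH}(C)}C^2\,\lVert x\,tw(H)e^{-tw(H)x}\rVert^2_{L^2[0,1]}\,\lVert\beta\rVert^2\;\leq\;\dim(X)\,C^2\,\sup_{y\geq0}(ye^{-y})^2\,\lVert\beta\rVert^2,
\]
uniformly in $t\geq1$; only then does the prefactor $(\sqrt{t})^{-1}$ force $\lVert[\cD_t,\phi]\rVert\to0$. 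You need to insert this estimate explicitly --- the lemmas you invoke at the end (Lemmas \ref{lem_testimate}, \ref{lem_Xlem}, \ref{lem_Glem}) control the resolvent and the $G$-action but say nothing about $[e_{tw},\phi]$.
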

\begin{proof} We check that $\cD$ satisfies the conditions (1) -(3) of Proposition \ref{prop_unboundedXG}. (1): Let $B\subset C_0(X)$ be the subalgebra of compactly supported functions that are smooth on each cube. For any $\phi \in B$, we compute
\begin{align*}
[\cD_t, \phi] & = [t^{-1}(d_{tw} + d_{tw}^\diamond) +  (\sqrt{t})^{-1}(e_{tw} + e_{tw}^\diamond), \phi] \\
 &= t^{-1}[d_{tw} + d_{tw}^\diamond, \phi] + (\sqrt{t})^{-1}[e_{tw} + e_{tw}^\diamond, \phi] \\
& = t^{-1}c(\phi) +  (\sqrt{t})^{-1}[e_{tw} + e_{tw}^\diamond, \phi] 
\end{align*}
where $c(\phi)$ denotes the Clifford multiplication by the gradient of $\phi$ in each cube which is a bounded operator. Let $C>0$ so that 
\[
|\phi(x)-\phi(y)|\leq C\cdot \mathrm{distance}(x, y)\,\,\,  \text{for any $x, y \in X$}.
\]
We have (see the proof of \cite[Theorem 5.2]{BGHN}) for all $\beta \in \Omega^\ast(C)$
\begin{align*}
\lVert[e_{tw}, \phi]\beta \rVert^2 & \leq \sum_{H \in \mathrm{SAH}(C)}C^2 \lVert xtw(H)e^{-tw(H)x}\rVert^2_{L^2[0, 1]}\cdot \lVert\beta\rVert^2 \\
 & \leq  \dim(X) \cdot C^2 \cdot \max\{ (xe^{-x})^2 \mid x\geq0 \} \cdot \lVert\beta\rVert^2 \\
 & \leq  \dim(X) \cdot C^2 \cdot \lVert\beta\rVert^2
\end{align*}
for all $t\geq1$. From these, we see that $[\cD, \phi]$ extends to a bounded operator $S \in \Linears(H_X\otimes C_0[1, \infty))$ with $\lVert S_t\rVert\to 0$ as $t\to \infty$.  (2): This follows from Lemma \ref{lem_Glem}. (3): This follows from Lemma \ref{lem_Xlem}. Thus, by Proposition \ref{prop_unboundedXG}, $(H_X, T)$ is an $X$-$G$-localized Kasparov cycle for$KK^G(\bC, \bC)$. That $[H_X, T_1]=[D^{\mathrm{dR}}_w]$ follows from the definition of $\cD$. We have $[D^{\mathrm{dR}}_w]=1_G$ by Theorem \ref{thm_dR1G}.
\end{proof}

\begin{theorem}\label{thm_cube} Let $G$ be a second countable, locally compact group $G$ which acts properly and continuously on a finite-dimensional CAT(0)-cubical space with bounded geometry by automorphisms. Then, the Baum--Connes assembly map $\mu^{B, G}_r$ is an isomorphism for any separable $G$-$C^*$-algebra $B$, i.e. BCC holds for $G$.
\end{theorem}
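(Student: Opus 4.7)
The plan is to combine Theorem \ref{thm_cubeXG} with the controlled algebraic reformulation of the gamma element method (Theorem \ref{thm_XGgamma}). All the heavy machinery has already been assembled: it remains only to produce a suitable weight function and then invoke the two theorems.

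First I would choose a base vertex $P$ of $X$ and set $w(H) = \mathrm{distance}(P, H)$ on the set of hyperplanes of $X$. I would verify that this $w$ is both proper and $G$-adapted. Properness is immediate from the bounded geometry hypothesis on $X$: the hyperplanes at distance at most $M$ from $P$ only intersect finitely many cubes near $P$, hence form a finite set. For the $G$-adaptedness, the estimate
\[
|w(H) - w(gH)| = |d(P, H) - d(g^{-1}P, H)| \leq d(P, g^{-1}P)
\]
is finite for each $g\in G$, while the stabilizer $G_P$ of $P$ is an open (in fact compact-open) subgroup of $G$ under our hypotheses (properness plus the discrete combinatorial structure of a bounded-geometry CAT(0)-cubical complex under an automorphism action), and $G_P$ preserves $w$.

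Next I would apply Theorem \ref{thm_cubeXG} with the weight function $w$ above. This provides an $X$-$G$-localized Kasparov cycle $(H_X, T)$ with $H_X = \Omega^\ast_{L^2}(X)$ and $T = \cD(1+\cD^2)^{-1}$, such that
\[
[H_X, T_1] = [D^{\mathrm{dR}}_w] = 1_G \quad \text{in } KK^G(\bC, \bC).
\]
In particular, the element $1_G \in R(G)$ is $X$-$G$-localized, and trivially satisfies $1_G = 1_K$ in $R(K)$ for every compact subgroup $K \leq G$. Theorem \ref{thm_XGgamma}(2) then applies with $x = 1_G$, yielding that $\mu_r^{B, G}$ is an isomorphism for every separable $G$-$C^*$-algebra $B$, i.e.\ BCC for $G$.

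The content of the argument is therefore entirely contained in the two inputs, and there is no essential obstacle to overcome in this final section; the main work has been done in producing $(H_X, T)$ (Theorem \ref{thm_cubeXG}) and in the controlled gamma element formalism (Theorem \ref{thm_XGgamma}). The only point that must be checked carefully is the existence of a proper, $G$-adapted weight function in the non-cocompact setting --- and this is where the slight generalization beyond \cite{BGHN} lives, since in the cocompact case the same distance weight works for trivial reasons but the argument above shows the construction carries over without change as long as $G$ acts properly by automorphisms.
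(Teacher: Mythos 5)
Your proposal is correct and follows exactly the paper's route: the paper's proof of Theorem \ref{thm_cube} is literally ``Combine Theorem \ref{thm_XGgamma} and Theorem \ref{thm_cubeXG},'' with the weight function $w(H)=\mathrm{distance}(P,H)$ already fixed as proper and $G$-adapted in the setup of the section. Your additional verification that this $w$ is proper and $G$-adapted (via bounded geometry and openness of the vertex stabilizer) is consistent with what the paper asserts without detailed proof.
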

\begin{proof} Combine Theorem \ref{thm_XGgamma} and Theorem \ref{thm_cubeXG}.
\end{proof}

\begin{remark} Any group $G$ which acts properly on a CAT(0)-cubical space has the Haagerup Approximation property \cite[Section 1.2.7]{CCJJV}, so $G$ is a-T-menable. Thus, BCC for these groups are already known by the Higson--Kasparov Theorem \cite{HK97}, \cite{HigsonKasparov}.
\end{remark}

\bibliography{Refs}
\bibliographystyle{plain}

\end{document}